\def\qed{\vbox{\hrule
  \hbox{\vrule\hbox to 5pt{\vbox to 8pt{\vfil}\hfil}\vrule}\hrule}}
\newcommand{\SO}{\operatorname{SO}}
\newcommand{\SU}{\operatorname{SU}}
\newcommand{\Un}{\operatorname{U}}
\newcommand{\Sp}{\operatorname{Sp}}
\newcommand{\Spin}{\operatorname{Spin}}
\newcommand{\ti}{\tilde}
\newcommand{\beg}{\begin{eqnarray*}}
\newcommand{\begn}{\begin{eqnarray}}
\newcommand{\en}{\end{eqnarray*}}
\newcommand{\enn}{\end{eqnarray}}
\newcommand{\eps}{\epsilon}
\newcommand{\join}{\ast}
\newcommand{\emp}{\emptyset}
\newcommand{\rk}{{\rm rk} \,}
\newcommand{\sca}{\langle \, \cdot \,  ,\cdot \,\rangle}
\newcommand{\dsp}{\displaystyle}
\newcommand{\Ric}{\mbox{\rm Ric}}
\renewcommand{\sc}{\mbox{\rm sc}}
\newcommand{\ric}{\mbox{\rm ric}}
\newcommand{\tr}{\mbox{\rm tr\,}}
\newcommand{\ande}{\mbox{\rm and}}
\newcommand{\bs}{\backslash}
\newcommand\Ad{\operatorname{Ad}}
\newcommand\Id{\operatorname{Id}}
\newcommand\ad{\operatorname{ad}}
\newcommand{\so}{\mbox{${\mathfrak s \mathfrak o}$}}
\newcommand{\su}{\mbox{${\mathfrak s \mathfrak u}$}}
\newcommand{\sy}{\mbox{${\mathfrak s \mathfrak p}$}}
\newcommand{\un}{\mbox{${\mathfrak u}$}}
\newcommand{\af}{\mbox{${\mathfrak a}$}}
\newcommand{\cf}{\mbox{${\mathfrak c}$}}
\newcommand{\g}{\mbox{${\mathfrak g}$}}
\newcommand{\h}{\mbox{${\mathfrak h}$}}
\newcommand{\kf}{\mbox{${\mathfrak k}$}}
\newcommand{\lf}{\mbox{${\mathfrak l}$}}
\newcommand{\m}{\mbox{${\mathfrak m}$}}
\newcommand{\n}{\mbox{${\mathfrak n}$}}
\newcommand{\p}{\mbox{${\mathfrak p}$}}
\newcommand{\rf}{\mbox{${\mathfrak r}$}}
\newcommand{\tf}{\mbox{${\mathfrak t}$}}
\newcommand{\z}{\mbox{${\mathfrak z}$}}
\newcommand{\U}{\mbox{${\mathbb U}$}}
\newcommand{\CC}{\mbox{${\mathbb C}$}}
\newcommand{\HH}{\mbox{${\mathbb H}$}}
\newcommand{\NN}{\mbox{${\mathbb N}$}}
\newcommand{\PP}{\mbox{${\mathbb P}$}}
\newcommand{\UU}{\mbox{${\mathbb U}$}}
\newcommand{\Q}{\mbox{${\mathbb Q}$}}
\newcommand{\RR}{\mbox{${\mathbb R}$}}
\newcommand{\T}{\mbox{${\mathbb T}$}}
\newcommand{\ZZ}{\mbox{${\mathbb Z}$}}
\newcommand{\WW}{\mbox{${\mathbb W}$}}
\newcommand{\N}{ \mathbf{N} }
\newcommand{\B}{\mbox{${\mathbb B}$}}
\newcommand{\Bt}{\mbox{${\mathbb B}_t$}}
\newcommand{\Bth}{\mbox{$\hat {\mathbb B}_t$}}
\newcommand{\X}{\mbox{${\mathbb X}$}}
\newcommand{\XXs}{{\mathbb X}_{s}}
\newcommand{\XXas}{{\mathbb X}_{as}}
\newcommand{\Br}{\operatorname{B}}
 \newcommand{\WWs}{\WW_s}
 \newcommand{\WWst}{\WW_{s,t}}
\newcommand{\co}{\textsf{c}}	
 \newcommand{\MG}{\mathcal{M}^G}
\newcommand{\MGo}{\mathcal{M}^G_1}
\newcommand{\MGoL}{(\MGo)^L}
\newcommand{\On}{\operatorname{O}}
\newcommand{\Sph}{\operatorname{S}}
\newcommand{\Sphb}{\mbox{${\mathbb S}$}}
\newcommand{\D}{\operatorname{D}}
\newcommand{\Ps}{\operatorname{P_{alg}}}
\newcommand{\Psn}{\operatorname{P_s}}
\newcommand{\Psnm}{\operatorname{P_{min}}}
\newcommand{\Psnass}{\operatorname{P_{as}}}
\newcommand{\Psnmg}{\operatorname{P_{\langle min \rangle}}}
\newcommand{\Pst}{\operatorname{P_t}}
\newcommand{\SymmH}{{\rm Sym}_{\m}^{H}}
\newcommand{\SymgH}{{\rm Sym}_{\g}^{H}}
\newcommand{\Si}{{\operatorname{S}}}
\newcommand{\WA}{\operatorname{W}}
\newcommand{\WSi}{\WA^{\Si}}
\newcommand{\BA}{\operatorname{B}}
\newcommand{\BSi}{\BA^{\Si}}
\newcommand{\TA}{\operatorname{T}}
\newcommand{\TSi}{\TA^{\Si}}
\newcommand{\WSis}{\operatorname{\WSi_s}}
\newcommand{\USi}{\operatorname{U}^{\Si}}
\newcommand{\Usre}{\USi_{\epsilon_0}}
\newcommand{\Xsre}{\BSi_{\epsilon_0}}
\newcommand{\XXsre}{\B_{\epsilon_0}}
\newcommand{\DeltaT}{\Delta^{A}} 
\newcommand{\Deltaf}{\Delta^{\rm min}}
\newcommand{\XSGH}{\operatorname{X}^{\Si}_{G/H}}
\newcommand{\XXGH}{{\X}_{G/H}}
\newcommand{\XXGHass}{{\X}_{G/H}^{as}}
\newcommand{\XGH}{\operatorname{X}_{G/H}}
\newcommand{\XGHass}{\operatorname{X}_{G/H}^{as}}
\newcommand{\XGHmin}{\operatorname{X}_{G/H}^{min}}
\newcommand{\DGH}{\Delta_{G/H}}
\newcommand{\Flts}{\mathcal{F}_{t,s}}
\newcommand{\Fln}{\mathcal{F}_s}
\newcommand{\Flnas}{\mathcal{F}_{as}}
\newcommand{\Flt}{\mathcal{F}_t}
\newcommand{\Fl}{\mathcal{F}}
\newcommand{\FF}{\mathscr F}
\newcommand{\Sub}{\mathcal{K}}
\newcommand{\Submin}{\mathcal{K}_{min}}
\newcommand{\Submg}{\mathcal{K}_{\langle min\rangle}}
\newcommand{\Subass}{\mathcal{K}_{as}}
\newcommand{\Cl}{\mathcal{C}}
\newcommand{\Clh}{\Cl_{\rm hom}}
\newcommand{\Nl}{\mathcal{N}}
\newcommand{\blds}[1]{\mbox{\scriptsize $\kf$}}
\newcommand{\bldss}[1]{\mbox{\scriptsize ${#1}$}}
\newcommand{\bldstik}[1]{\mbox{\scriptsize $\ti\kf$}}
\newcommand{\bldsg}[1]{\mbox{\scriptsize $\g$}}
\newcommand{\bldst}[1]{\mbox{\scriptsize $\tf$}}
\newcommand{\vk}{v^{\blds \kf}}
\newcommand{\Ak}{A^{\blds \kf}}
\newcommand{\At}{A^{\bldst \tf}}
\newtheoremstyle{fancy}{5pt}{5pt}{\itshape}{-7pt}{\textsc\bgroup}{.\egroup}{8pt}{}
\newtheoremstyle{fancy2}{5pt}{5pt}{}{-7pt}{\itshape}{.}{8pt}{ }
\theoremstyle{fancy}
\newtheorem{theorem}[equation]{Theorem}
\newtheorem{lemma}[equation]{Lemma}
\newtheorem{proposition}[equation]{Proposition}
\newtheorem{corollary}[equation]{Corollary}
\newtheorem{main}{Theorem}
\newtheorem{problem}{Problem}
\theoremstyle{fancy2}
\newtheorem{definition}[equation]{Definition}
\newtheorem{example}[equation]{Example}
\newtheorem{remark}[equation]{Remark}
\def\numberwithin#1#2{\@ifundefined{c@#1}{\@nocnterrr}{%
  \@ifundefined{c@#2}{\@nocnterr}{%
  \@addtoreset{#1}{#2}%
  \toks@\expandafter\expandafter\expandafter{\csname the#1\endcsname}%
  \expandafter\xdef\csname the#1\endcsname
    {\expandafter\noexpand\csname the#2\endcsname
     .\the\toks@}}}}
\numberwithin{equation}{section}
\subjclass[2020]{Primary: 53C25; Secondary: 53C30} 
\begin{document}

\title{Homogeneous Einstein Metrics and Butterflies}
\thanks{``Funded by the Deutsche Forschungsgemeinschaft (DFG, German Research Foundation) under Germany's Excellence Strategy EXC 2044 –390685587, Mathematics Münster: Dynamics--Geometry--Structure, and the Collaborative Research Centre CRC 1442, Geometry: Deformations and Rigidity''}

\author{Christoph B\"ohm}
\address{Mathematisches Institut, Universit\"{a}t M\"unster, Einsteinstr. 62,
48149 M\"unster, Germany}
\email{cboehm@math.uni-muenster.de}
\author{Megan M. Kerr}
\address{Department of Mathematics, Wellesley College,
         106 Central St., Wellesley, MA 02481, USA}
\email{mkerr@wellesley.edu}

\keywords{compact homogeneous manifolds, Einstein manifolds}
\thanks{Statements and Declarations: The authors have no financial or proprietary interests in any material discussed in this article. 
Data sharing is not applicable to this article as no datasets were generated or analyzed during this work.}
\subjclass[2020]{53C25, 53C30}

\begin{abstract} {M.~M.~Graev associated in \cite{Gr} to a compact homogeneous space $G/H$ a nerve $\XGH$, 
whose non-contractibility implies the existence of a $G$-invariant Einstein metric on $G/H$. The nerve $\XGH$
is a compact, semi-algebraic set, defined purely Lie theoretically by intermediate subgroups. In this paper
we present a detailed description of the work of Graev and the curvature estimates of \cite{Bo}.}
 \end{abstract}

 \maketitle

\bigskip
 \begin{center}
\dedicatory{{\em This paper is dedicated to the memory of M.~M.~Graev.} } 
\end{center}
\bigskip

A Riemannian manifold $(M^n,g)$ is called Einstein if it has constant Ricci tensor, that is if $\ric(g) =
\lambda \cdot g$, $\lambda \in \RR$.
Although Einstein metrics on compact manifolds can be
characterized as the  critical points of the Hilbert
action \cite{H.D},  general existence and
non-existence results are hard to obtain and there is no clear conjecture. 
Still there exist many examples of Einstein manifolds
which have been constructed using bundle, symmetry and holonomy assumptions:
see  \cite{Bes}, \cite{L-W}, \cite{Wang}, \cite{J.D} and references therein.
Among many others, we mention here only Sasakian-Einstein metrics \cite{B-G},
Einstein metrics on  spheres \cite{Bo98}, \cite{BGK}, \cite{FH}, \cite{NW} and Ricci-flat manifolds with holonomy
$\operatorname{G}_2$ and ${\rm Spin}(7)$ \cite{Joy1}, \cite{Joy2}. Another outstanding achievement is the classification of compact K\"ahler-Einstein manifolds: see \cite{Aub}, \cite{YST}, \cite{CDS1}, \cite{CDS2}, \cite{CDS3}, \cite{Ti15}.

The main results in this paper, Theorem  \ref{theoA} and Theorem \ref{theoB}, both
 are general existence results for Einstein metrics on compact homogeneous spaces $M^n$
 (with positive Einstein constant $\lambda$).
 Theorem \ref{theoB} was crucial for the classification in \cite{B-K}:   
 Every compact, simply connected 
homogeneous space of dimension $n\leq 11$ admits a homogeneous Einstein metric, a result
which is optimal due to \cite{WZ2}.  For $n=4, 5, 6, 7$, see \cite{JG1}, \cite{A-D-F}, \cite{N-R}, \cite{N.Y.04}.  In higher dimensions though, the classification of compact homogeneous Einstein manifolds is wide open.
Concerning further results on compact homogeneous Einstein metrics we refer to 
 \cite{WZ2}, \cite{K-S}, \cite{WZ3}, \cite{A.A}, \cite{BWZ}, \cite{CS}, \cite{N.Y.19},
  \cite{L.J.21}, \cite{L-W21} and further work of M. M. Graev   \cite{Gr1} and \cite{Gr3}.


Finally, let us mention that by  \cite{BL2} a homogeneous Einstein manifold $(M^n,g)$ with negative Einstein constant must be  diffeomorphic to $\RR^n$, confirming
in the affirmative the Alekseevskii conjecture.
Moreover, homogeneous Einstein metrics on Euclidean spaces are  
by  \cite{BL1} isometric to Einstein solvmanifolds.
 As a consequence the deep structure theory developed by Heber 
 \cite{H.J} and Lauret  \cite{L.J.} applies and the classification
 of Einstein solvmanifolds can be reduced to the classification 
 of nilsolitions: see \cite{Lau01}.

A compact homogeneous space $(M^n,g)$ is a compact Riemannian manifold 
on which a  compact Lie group $G$ acts transitively by isometries. Thus, it 
has a presentation $M^n=G/H$, where $H$ is a compact subgroup of  $G$. Recall, that by the  theorem of Bochner, the Einstein constant $\lambda$ of a $G$-invariant Einstein metric on $G/H$ 
is non-negative. It is zero if and only if the metric is flat \cite{A-K}, which happens if and only if $G/H$ is a torus. If the Einstein constant is positive,  $G/H$ has  finite
fundamental group, by the theorem of Myers.

The first main result of this paper is

\begin{main}[\cite{Gr}]\label{theoA}
Let $G/H$ be a compact homogeneous space with $G,H$ connected. If 
the nerve $\XGH$ is non-contractible, then $G/H$
admits a $G$-invariant Einstein metric.
\end{main}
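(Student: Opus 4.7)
The plan is to realize the desired Einstein metric as a critical point of the total scalar curvature functional $\sc \colon \MGo \to \RR$, where $\MGo$ denotes the smooth finite-dimensional manifold of $G$-invariant unit-volume Riemannian metrics on $G/H$. Fixing an $\Ad(H)$-invariant splitting $\g = \h \oplus \m$, one identifies $\MGo$ with the space of $\Ad(H)$-invariant scalar products on $\m$ of unit determinant. By the Hilbert--Palais principle of symmetric criticality, the critical points of $\sc|_{\MGo}$ are exactly the $G$-invariant Einstein metrics on $G/H$, so it suffices to produce any one such critical point.

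The central difficulty is that $\MGo$ is non-compact and $\sc$ is generically unbounded below, so naive extremization fails because extremizing sequences may escape to infinity. The first step is therefore to understand these divergences. A sequence $g_i \in \MGo$ degenerates precisely when the metric collapses or blows up along intermediate connected subgroups $H \subsetneq K \subsetneq G$: one can rescale separately on $\kf \cap \m$ and on its complement in $\m$, and any divergent sequence reduces, after refinement, to such rescalings along a flag of intermediate subgroups. The nerve $\XGH$ is built precisely from the poset of these subgroups, and Graev's construction realizes it as a combinatorial model for the ideal boundary of $\MGo$.

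The quantitative input comes from the curvature estimates of \cite{Bo}: along any degeneration through a chain of intermediate subgroups, $\sc$ decays at a controlled rate determined by the Lie-theoretic data of the chain. Using these estimates, I would prove that for a sufficiently large constant $C > 0$, the superlevel set $\{g \in \MGo : \sc(g) \geq -C\}$ is \emph{essentially compact}, in the sense that the negative gradient flow of $\sc$ does not leave it, while its complement $\{\sc < -C\}$ admits an equivariant deformation retraction onto a subset of $\MGo$ homotopy equivalent to $\XGH$.

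The theorem then follows by a topological minimax. Since $\XGH$ is non-contractible by hypothesis, the inclusion $\{\sc < -C\} \hookrightarrow \MGo$ is not a homotopy equivalence, so there exists a non-trivial class in the relative homotopy (or homology) of the pair $(\MGo,\{\sc<-C\})$. A standard mountain-pass construction applied to $-\sc$ along such a class yields a critical value of $\sc$, and hence the sought Einstein metric. The main obstacle is establishing a suitable Palais--Smale-type condition: one must show that any sequence $g_i \in \MGo$ with $\sc(g_i)$ bounded and $\|\grad \sc(g_i)\| \to 0$ has a subsequence converging inside $\MGo$ rather than escaping to the nerve at infinity. Verifying this uniform non-degeneracy of $\grad \sc$ away from genuine critical points is exactly where the detailed curvature bounds of \cite{Bo} enter, and it is the technical heart of the argument.
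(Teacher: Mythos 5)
Your overall strategy --- a minimax argument for the scalar curvature functional $\sc$ on $\MGo$, with the nerve $\XGH$ controlling the topology of a level set and a Palais--Smale condition furnishing a critical point --- has the right skeleton and agrees with the paper in spirit. But the relationship you posit between $\XGH$ and the level sets of $\sc$ is exactly backwards, and this error is fatal to the argument as written. You claim that $\{\sc < -C\}$ deformation retracts onto something homotopy equivalent to $\XGH$, and that along a degeneration through a chain of intermediate subgroups $\sc$ \emph{decays}. In fact it is the \emph{high}-energy superlevel set $\{\sc \geq \sc_+\}$, for a large positive $\sc_+$, that inherits the homotopy type of the nerve, because along the canonical direction of a \emph{non-toral} intermediate subalgebra $\kf$ one has $\sc(\gamma_{\vk}(t)) \to +\infty$, not decay (Lemma \ref{lem:scalpos}, Proposition \ref{prop:posflat}); along toral directions $\sc \to 0$, and along generic directions not tangent to any subalgebra disk $\sc \to -\infty$ (Theorem \ref{theo:negdir}), so the very-negative region of $\sc$ is not modeled on $\XGH$ at all. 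The key point your proposal misses is that $\XGH$ is built from the poset of \emph{non-toral} intermediate subalgebras only, not from all intermediate $K$ with $H \subsetneq K \subsetneq G$ as your description allows, and that restriction is exactly what makes the correspondence with unbounded growth of $\sc$ hold. Theorem \ref{theo:scalestu} then shows that outside a neighborhood of the realization $\Xsre$ of $\XGH$ in the unit sphere $\Si$, $\sc(\gamma_v(t))$ is uniformly bounded above for all large $t$, so $\{\sc \geq \sc_+\}$ is contained in a truncated cone over that neighborhood, which via Theorems \ref{thm:homotopy2} and \ref{thm:homotopy3} has the homotopy type of $\XGH$; the uncontractible cycle lives at high energy, and one obtains a non-trivial relative class in $H_*(\MGo, \{\sc < \sc_+\})$, the opposite configuration to the one you describe.

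Two further corrections of emphasis. The Palais--Smale condition (C) on $\{\sc \geq \epsilon\}$, $\epsilon > 0$, is not extracted from the curvature estimates of \cite{Bo} as you suggest; it is imported as Theorem A of \cite{BWZ} and rests on Cheeger--Colding compactness theory, whereas the curvature estimates of \cite{Bo} serve to locate the high-energy topology, not to verify (C). And since (C) is only known to hold at positive energy levels, one must also establish a positive \emph{lower} bound for $\sc$ on the geodesic cone over $\XSGH$ --- supplied by Proposition \ref{prop:posflat} --- which is absent from your sketch but necessary to keep the minimax value strictly positive.
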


 The nerve $\XGH$ of a compact homogeneous space $G/H$ is defined as follows: to each
intermediate subalgebra $\kf$ of $\g=T_e G$ with $\h=T_e H < \kf <\g$ we associate a (self-adjoint) projection map 
$P \in {\rm End}(\g)$ with $P^2=P$ and $\ker (P)=\kf$. 
The intermediate subalgebra $\kf$ is called {\em non-toral}
if $\kf$ is not an abelian extension of $\h$. 
Now for each flag $\varphi = (\kf_1<\cdots < \kf_r)$ of non-toral  subalgebras,
 we define the simplex $\Delta_\varphi^P$ as the convex hull of
the corresponding projections in ${\rm End}(\g)$. 
The union of all such simplices is the nerve $\XGH$, a compact, 
semi-algebraic set: see  Definition  \ref{def:XGH} and Lemma \ref{lem:XGHXXGH}.

Notice that the nerve $\XGH$ of $G/H$ might be the empty set, 
which  is non-contractible, by definition. This is the case considered in \cite{WZ2} where 
Wang and Ziller observed for the very first time that global properties of the scalar curvature 
function of homogeneous unit volume metrics can be derived from purely Lie theoretic properties of $G/H$. 

Geometrically, we think of the nerve as the space of nested, non-toral foliations of $G/H$.
If a subalgebra $\kf$ is the Lie algebra of a compact subgroup $K$ of $G$ then $K$ non-toral means
geometrically that the fibre $K/H$ in the  fibration $K/H\to G/H \to G/K$ admits a $K$-invariant metric 
with positive scalar curvature. (If instead $\kf$ is an abelian extension of $\h$ then $K/H$ is a torus and all homogeneous metrics on $K/H$ are flat.) 
When $K$ is non-toral, by shrinking $K/H$ and expanding $G/K$ 
(to keep volume one), one obtains a one-parameter family of metrics in the space $\MGo$ 
of $G$-invariant, volume one metrics on $G/H$ with arbitrarily large scalar curvature.

If $\XGH$ is not connected, then non-contractibility follows, of course. Therefore,
the graph theorem of \cite{BWZ} is an immediate consequence of Theorem \ref{theoA}. 
Notice, however, that since we do not consider toral subalgebras, the
nerve $\XGH$ can be disconnected while the graph of $G/H$ is connected. An easy
example is $G/H=(\SU(2)\times \SU(2))/S^1_{k,l}$, where $S^1_{k,l}$ is embedded diagonally into
the maximal torus $T^2=S^1\times S^1$ of $G$ with slope $(k,l)$ for generic $k,l \in \ZZ$.
Whereas $\XGH$ consists of two singletons, in the graph, each of  these singletons
is connected by an edge to the node of the toral subalgebra $\tf=T_eT^2$. 

Assuming that  $G$ and $H$ are connected is not necessary; we discuss this in  Section \ref{sec:disc}.
Moreover, notice that if $M^n=G/H$ is simply connected then $M^n=G_0/H_0$, where $G_0$ denotes
the connected component of $G$ containing the identity.

 The second main theorem of this paper is

\begin{main}[\cite{Bo}]\label{theoB}
Let $G/H$ be a compact homogeneous space with finite fundamental group and $G, H$ connected.
If the simplicial complex  $\Delta_{G/H}$ is not contractible,
then $G/H$ admits a $G$-invariant Einstein metric.
\end{main}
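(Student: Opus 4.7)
The plan is to produce a $G$-invariant Einstein metric on $G/H$ as a critical point of the scalar curvature functional $\sc\colon \MGo \to \RR$, by exploiting the non-trivial topology of $\DGH$ in a Lusternik--Schnirelmann / min-max argument. Since $G/H$ has finite fundamental group, it is not a torus, so any $G$-invariant Einstein metric has positive scalar curvature and arises as a critical point of $\sc$ on the finite-dimensional manifold $\MGo$ of unit-volume $G$-invariant metrics. The functional $\sc$ is bounded above on $\MGo$, but $\MGo$ is non-compact, so the task is to rule out that every maximizing (or Palais--Smale) sequence escapes to infinity, and to detect a critical level via the topology of $\DGH$.

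I would first describe the boundary at infinity of $\MGo$ in Lie-theoretic terms. A sequence $g_i \in \MGo$ that leaves every compact set but satisfies $\sc(g_i)\geq \delta > 0$ can, after simultaneous diagonalization, be shown to degenerate along some flag $\varphi = (\h < \kf_1 < \cdots < \kf_r < \g)$ of intermediate subalgebras, with the logarithmic eigenvalues converging projectively to a point in the interior of the simplex $\Delta_\varphi$. If one of the $\kf_j$ is an abelian extension of $\h$, then the corresponding fibre is a flat torus and the collapse drives $\sc \to -\infty$ via the O'Neill term, hence such directions produce no obstruction to compactness of positive sublevel sets; this is precisely why $\DGH$ is defined using non-toral flags only. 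The genuinely problematic asymptotic directions --- those along which $\sc$ can remain positive and bounded away from zero --- glue into a continuous parametrization of the ``collar of infinity'' by $\DGH$.

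The analytic heart of the argument would be the curvature estimate of \cite{Bo}: along every non-toral degeneration as above, $\sc(g_i)$ is asymptotically bounded from above by an explicit continuous function $F\colon \DGH \to \RR$ built from Casimir and mixed-Ricci data of the flag. Choosing $c$ with $\sup_{\DGH} F < c < \sup_{\MGo}\sc$, one uses the degeneration coordinates to build a continuous retraction from a neighbourhood of infinity onto $\DGH$ and thereby identifies the sublevel set $\{\sc < c\}$, up to homotopy, with $\DGH$. Since $\MGo$ is contractible --- it is the unit-volume slice of the convex cone of $\Ad(H)$-invariant inner products on $\g/\h$ --- while $\DGH$ is by hypothesis not contractible, the inclusion $\{\sc < c\} \hookrightarrow \MGo$ is not a homotopy equivalence. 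Standard deformation theory for the gradient flow of $\sc$ then forces a critical value in the interval $(c,\sup_{\MGo}\sc]$; the corresponding metric is Einstein, and compactness of Palais--Smale sequences above level $c$ is automatic because such sequences cannot degenerate and hence remain in a compact region of $\MGo$.

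The hard part will be the curvature-estimate step: making the upper bounds on $\sc$ sharp and uniform enough to produce an \emph{honest} continuous retraction onto $\DGH$ across all of its simplices simultaneously. One must handle degenerations in which several $\kf_j$ collapse at comparable but unequal rates (interior points of $\Delta_\varphi$), reconcile the asymptotic behaviour along sub-flags and super-flags so that the estimates match continuously on common faces, and verify that the retraction does not accidentally raise $\sc$ past the chosen threshold $c$. Without the fine Lie-theoretic control supplied by \cite{Bo} --- in particular, matching leading-order Casimir contributions across boundary strata --- the homotopy equivalence $\{\sc < c\} \simeq \DGH$ cannot be established and the entire min-max argument breaks down.
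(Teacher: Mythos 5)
Your proposal conflates the nerve $\XGH$ with the simplicial complex $\DGH$, and this is exactly where the argument breaks down. First, a factual error: $\sc$ is \emph{not} bounded above on $\MGo$ in the situation at hand. If $\DGH$ is non-contractible and non-empty, there exist non-toral intermediate subalgebras, and along any canonical direction $\gamma_{\vk}(t)$ associated to such a $\kf$ the scalar curvature diverges to $+\infty$ (Lemma \ref{lem:scalpos}); the paper's variational argument therefore works with high-energy \emph{superlevel} sets $\{\sc \geq \sc_+\}$, not sublevel sets near a finite supremum. More seriously, the topology of those superlevel sets at infinity is governed by the nerve $\XGH$ (via the butterfly/disk constructions), \emph{not} by $\DGH$. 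In general $\XGH$ and $\DGH$ are genuinely different: Example \ref{exa:Graev} exhibits spaces with $\XGH$ non-contractible but $\DGH$ contractible, and the paper states explicitly that whether Theorem \ref{theoB} follows from Theorem \ref{theoA} is open. So there is no hope of ``building a continuous retraction from a neighbourhood of infinity onto $\DGH$'' in $\MGo$ itself; what you describe would at best prove Theorem \ref{theoA}.

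The missing idea is the restriction trick that is the entire content of Theorem \ref{thm:trick}. Take $A$ a maximal torus of a compact complement of $H$ in $N_G(H)$, and restrict $\sc$ to the subspace $(\MGo)^A$ of $\Ad(AH)$-equivariant metrics. This subspace is totally geodesic and invariant under the volume-normalized Ricci flow (Lemma \ref{lemnv2}), so the entire analytic machinery of Theorem \ref{theoA} (the scalar curvature estimates, Palais--Smale via \cite{BWZ}, the mountain-pass argument) applies verbatim on $(\MGo)^A$. On this smaller space, the relevant flag complex of $\Ad(AH)$-invariant non-toral intermediate subalgebras \emph{is} finite and \emph{is} identified with $\Deltaf_{G/AH}$ (Proposition \ref{propdelta}), which is homotopy equivalent to $\Delta_{G/AH}=\DGH$ (Lemma \ref{lem:rsimhomsim}). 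Non-contractibility of $\DGH$ then yields a critical point of $\sc\vert_{(\MGo)^A}$, and since $(\MGo)^A$ is totally geodesic, this is a critical point of $\sc$ on all of $\MGo$, hence Einstein. Without this reduction to $(\MGo)^A$, $\DGH$ simply does not control the topology at infinity, and the min-max argument as you set it up cannot close.
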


The simplicial complex $\DGH$ is defined as follows:
Let $A$ be a  maximal torus of a compact complement of $H$ in $N_G(H)$. Then 
the simplicial complex $\Delta_{G/AH}$ is homeomorphic to the (finite) flag
complex of (connected) intermediate subgroups $K$  with $AH<K<G$.
Notice that $\Delta_{G/AH}$ is independent of the choice of $A$, see Remark \ref{rem:T},
and that $A=\{e\}$ is allowed; this is the case when $\dim N_G(H)=\dim H$. We set 
$\DGH:=\Delta_{G/AH}$: see Lemma \ref{lem:DGH} and Definition \ref{def:DGH}.

Note that if  $\dim N_G(H)=\dim H$ then $\XGH=\DGH$.
In general however,  the nerve $\XGH$ and the simplicial complex $\Delta_{G/H}$ will be different,
for instance if there exist infinitely many non-toral subalgebras 
$\kf$ with $\h <\kf <\g$.  
Moreover, in Examples \ref{exa:Graev} and \ref{rem:familiesofsolutions}, we describe families of compact homogeneous spaces  with non-contractible nerve $\XGH$ but contractible simplicial complex $\DGH$. Whether Theorem \ref{theoB} follows
from Theorem \ref{theoA} is still open.

We mention that Theorem \ref{theoB} is particularly useful for classification results: 
see \cite{B-K}; infinitely many different homogeneous spaces can be handled at once. One of the easiest examples
is the family of Aloff-Wallach spaces $G/H=\SU(3)/S^1_{k,l}$ where $S^1_{k,l}$ is embedded 
into a maximal torus $T^2$ of $\SU(3)$ with slope $(k,l,-(k+l))$. For all $(k,l) \in \ZZ^2$, the complex $\Delta_{\SU(3)/T^2}$ consists of
three singletons. Thus by Theorem \ref{theoB}, the Aloff-Wallach spaces admit 
homogeneous Einstein metrics: see \cite{Wa}.
In Section \ref{sec:examples} we give further examples of homogeneous spaces with non-contractible $\DGH$,
and in Section \ref{sec:simcom} we give a formula for the homotopy type of $\DGH$ when $G/H$ is a product of homogeneous spaces.

Theorem \ref{theoA} and Theorem \ref{theoB} detect when $G$-invariant Einstein metrics must exist for
global reasons. That is, the non-contractibility of $\XGH$ implies that the high energy 
superlevel set $\{\sc \geq \sc_+\}$ of the scalar curvature (function)
$ \sc :\MGo \to \RR$, 
for a large positive constant $\sc_+$, is non-contractible as well. 
Then a generalized mountain pass lemma implies
the existence of a critical point, using that Palais-Smale condition (C) holds by \cite{BWZ}: 
see Section \ref{sec:proofs}. We also would like to mention that we believe that 
high energy superlevel sets and the nerve $\XGH$ are homotopy equivalent.

In Example \ref{ex:SO(n+k)/SO(n)} we show that $\ti H_{q(k)}(\XGH)\neq 0$ for
$G/H=\SO(n+k)/\SO(n)$, $k\geq 2$, $n\geq 3$ and $q(k)=\tfrac 12 k(k+1)-2$.
Since we use variational methods,
the existence of an Einstein metric $g \in \MGo$ whose augmented coindex 
(of $\sc$) is bounded  below by $q(k)+1$ (cf.~Definition \ref{def:coindex}) follows from Lemma \ref{lemcoindex}. Let us mention here the work of Lauret and Will and
Lauret and Lauret
on the stability of  (standard) homogeneous Einstein metrics in \cite{L.J.21},  \cite{L-W21}, \cite{L.E.L.J23}, and that isolated maxima can never  be obtained
by such global variational methods unless $\XGH = \emptyset$: see also  \cite{L-W22} and \cite{SSW}.

The property of having a noncontractible nerve or simplex is sufficient but not necessary. There exist many examples of compact
homogeneous spaces $G/H$ admitting $G$-invariant Einstein metrics, with a contractible nerve 
and a contractible simplex. For instance, suppose that $H<K<G$, that $\dim \MGo=1$ and that $K$ is the only (connected) 
non-toral intermediate subgroup: see \cite{WZ2}, \cite{DK}, \cite[Thm A.1]{He} and Section \ref{sec:eq}.
Then $\XGH$ is a singleton, thus contractible. 
For such spaces, the scalar curvature behaves like a polynomial
of degree three; it may have two, one, or zero critical points (Einstein metrics): see \cite[9.72]{Bes}.  
For a survey of further  
methods constructing homogeneous Einstein metrics 
 we refer to \cite{Wang}.

We turn now to the classification of compact homogeneous Einstein manifolds and
important open problems. 
 In \cite{B-K}, we introduced for a simply-connected,
compact homogeneous space $M^n$, the term \emph{canonical presentation}
$M^n=G_{can}/H_{can}$. Here $G_{can}$ is semisimple and $H_{can}$ is a
compact subgroup such that the projections of the simple factors of $H_{can}$ to the
simple factors of $G_{can}$ are never onto. 
 Note that a fixed homogeneous space $M^n$ may have infinitely many different
canonical presentations, e.g. $M^5=S^2 \times S^3=(\SU(2)\times SU(2))/S^1_{k,l}$.

Note furthermore, that any $G$-invariant metric
on a simply-connected, compact homogeneous 
manifold $M^n=G/H$ is $G_{can }$-invariant for 
a canonical presentation $M^n=G_{can}/H_{can}$,  where $G_{can}$ is uniquely determined by $G$; in fact, $G_{can}$ is a subgroup of $G$.
 It follows that the
set of $G$-invariant metrics on $M^n$ can be viewed
 a subset of the set of $G_{can}$-invariant metrics on $M^n$. As a consequence,
for showing non-existence of homogeneous Einstein metrics on a fixed compact homogeneous
space $M^n$, it is sufficent to show that there exist no $G_{can}$-invariant
Einstein metrics on $M^n$ for all canonical presentations $M^n=G_{can}/H_{can}$.

\begin{problem}\label{problem-1}
For any fixed $n_0 \in \NN$,
show that the classification of compact, simply-connected homogeneous spaces
$M^n=G_{can}/H_{can}$ is a finite problem, provided
that $n \leq n_0$.
\end{problem}

What is meant is whether (or not)  $G/H=G_{can}/H_{can}$ admits at least one $G$-invariant Einstein metric or not. The classification of all $G$-invariant Einstein
metrics on a given space $G/H$ is in general 
out of reach, already on $S^3 \times S^3$: see \cite{BCHL} and Problem 8.

Even a partial solution to Problem \ref{problem-1} would be a major
breakthrough. 
Note that for a fixed dimension $n$ there exist
only finitely many semisimple (simply-connected) Lie groups $G$
which can appear in a canonical presentation $M^n=G/H$. Furthermore,
up to conjugation each such semisimple Lie group $G$ has only finitely many
semisimple (compact) subgroups $H_{ss}$: see \cite{BWZ}. Suppose now
that $\dim N_G(H_{ss})> \dim H_{ss}$ and as above,  let $A$ denote a maximal torus
of a compact complement of $H_{ss}$ in $N_G(H_{ss})$. 
Suppose that $a:=\dim A \geq 2$ and let $A_q$ be a compact subtorus of $A$
of dimension $a_q:=\dim A_q$ such that  $1 \leq a_q < a$ for $q:=(q_1,...,q_{a_q}) \in \Q^{a\cdot a_q}$. 
There are infinitely many such $A_q$. Define $H_q:=A_qH_{ss}$.

If the existence problem for $G$-invariant Einstein metrics on $G/H_q$
and $G/H_{\tilde q}$ were equivalent for any such $q,\tilde q$, Problem \ref{problem-1} could be answered in the affirmative.
Note however,  that  this is not  always true. One must
 first consider  \emph{generic} torus bundles and then non-generic ones,
 for which the space of homogeneous metrics is strictly larger; the latter then
 might admit a homogeneous Einstein metric even though generic ones do not: see
 Remark \ref{rem:familiesofsolutions}.
Finally note that for homogeneous space $G/H_{ss}$ for which Theorem \ref{theoB}
applies, the above is true! One obtains homogeneous Einstein metrics
on all such torus bundles over $G/H_{ss}$ provided that the simplicial
complex of $G/H_{ss}$ is non-contractible.

Even if Problem \ref{problem-1} turns out to be true, it does not at all answer the question
whether or not the existence of a $G$-homogeneous Einstein metric
on $G/H$, given as a canonical presentation, is a strong or a weak condition. The result \cite{B-K} covers only low dimensions, and in high dimensions the answer might be completely different.  To approach this problem let us denote by $\Clh$
the set of all homogeneous spaces $G/H$ where
we assume of course that $G,H$ are connected and that  $G/H$ is simply-connected
(and one may even consider only canonical presentations).

\begin{problem}\label{prob:0}
Identify classes $\Cl_E \subset \Clh$, described purely Lie theoretically, 
 for which the existence problem of $G$-invariant Einstein metrics
 can be ``solved''.
\end{problem}

 By ``solved'' we mean that one knows for such a class $\Cl_E$
 whether the existence of a $G$-invariant Einstein metric
 is a strong condition or not. It should be clear that interesting classes should not be 
 too small.
 Theorem \ref{theoA} and Theorem \ref{theoB} provide such  classes.
Further classes are isotropy irreducible homogeneous spaces \cite{WZ4},
homogeneous space for which the isotropy representations has only
two summands, see \cite{DK},\cite{He}, and the so called generalized Wallach spaces,
see \cite{N.Y.16}, for which the isotropy representation has three summands.

We describe now two more classes, $\Nl_<$ and $\Nl_>$. We would like to 
 mention that it is shown in Theorem B and Theorem C in \cite{Bo05} that when
certain (natural) curvature assumptions hold for all metrics $g \in \MGo$, 
guaranteeing non-existence of $G$-invariant Einstein metrics on $G/H$,
then $G/H$ necessarily has a very special subgroup structure. 
This observation guides the definitions of 
the classes $\Nl_<$ and $\Nl_>$.

The class $\Nl_<$  is defined as follows (cf. Example \ref{exa:<}):
We take $G$ simple, such that there exists an intermediate subgroup $K$ such that 
(i) $\dim G/K>1$ and $\g \ominus \kf$ is $\Ad(H)$-irreducible, (ii)
$K/H=K_1/H_1 \times \cdots \times K_r/H_r$ is a product of isotropy irreducible spaces 
with $\dim K_i/H_i>1$ for all $i=1,...,r$, and (iii) $G/H \neq \Spin(8)/G_2$.  It should
be possible to classify the elements  $G/H \in \Nl_<$. 
If for $G/H \in \Nl_<$ we also assume 
 $\dim \MGo=1$, then the existence problem of $G$-invariant Einstein metrics
 can be reduced to an  algebraic invariant, described in \cite{WZ2}:
it is a discriminant of a quadratic equation: see Section \ref{sec:eq}. 
In this case  actual classification results could be achieved: see e.g. \cite{DK},\cite{He}.
For the general case, when $\dim \MGo>1$, see equation (3.8) in \cite{Bo05}.

The next class we define is  $\Nl_>$  (cf. Example \ref{exa:>}):
Again, we take $G$ simple, such that there exists an intermediated subgroup $K$ such that 
(i) $K/H$ is isotropy irreducible with $\dim G/K>1$, 
(ii) all $G$-invariant metrics on $G/H$ are submersion metrics with respect to 
$K/H \to G/H \to G/K$, (iii) $\dim N_G(K)=\dim K$,  
and (iv) for any maximal subgroup $L$ of $G$ with $H<L$ we have $K \leq L$.
We would like to highlight 
that $\dim \MGo$ is not uniformly bounded for  $G/H \in \Nl_>$.
For the class  $\Nl_>$  we outline a starting point to this approach in  Section \ref{sec:eq}.

In order to find further classes $ \Cl_E$ 
solving the following problems might be helpful.

\begin{problem}\label{prob:1} 
Classify compact homogeneous Einstein manifolds $M^n=G/H$,
for $G/H$ a canonical presentation,
 in low dimensions $n\geq 13$.  
\end{problem}

In previous work  \cite{B-K} we solved this problem up to dimension $n=12$.
It is clear that Theorem \ref{theoA} and Theorem \ref{theoB} will be useful, but not sufficient. It will be necessary to use the existence of homogeneous Einstein metrics 
which are not guaranteed for global reasons,
 as well as non-existence results: see \cite{Bo05}. 
Let us mention  here  that in \cite{B-K} it is not shown that all 
simply-connected, homogeneous spaces
$G/H$ with $\dim G/H\leq 11$ admit a $G$-invariant Einstein metric: only 
canonical presentations are covered.

\begin{problem}\label{prob:1.5} 
Classify compact homogeneous  Einstein manifolds $G/H$ with $\rk G=\rk H$.
\end{problem}

We note that the family of compact homogeneous spaces with $\rk G=\rk H$ are precisely those for which the Euler characteristic $\chi(G/H)$ is positive.   
For all such homogeneous spaces, Rau\ss e \cite{Rau} was able to compute the 
simplical complex $\DGH$:
see Section \ref{sec:examples}:
if $G$ is one of  the simple classical groups ($H<G$ with $\rk G=\rk H$),
his result is particularly nice: $\DGH$ is non-contractible exactly 
when the simple factors of $H$ all have the same type
(see Proposition \ref{prop:DGHcontr}). While there are many examples in this family for which $\DGH$ is
contractible, remarkably, we don't know a single such  space $G/H$ not admitting a $G$-invariant Einstein metric. 
 By contrast, if $G$ is an exceptional Lie group with $H < G$ and $\rk G=\rk H$, 
non-existence results are known: see \cite{B-K}.

\begin{problem}\label{prob:2}
Classify those compact homogeneous spaces $G/H$ with $\dim N_G(H)=\dim H$, for which the simplicial complex $\Delta_{G/H}$ is non-contractible.
\end{problem}

In each dimension, there exist only finitely many
spaces $G/H$ with $\dim N_G(H)=\dim H$,
see Remark \ref{rem:finiteproblem}.  When $\rk G=\rk H$,  Rau\ss e \cite{Rau} was able to use 
inductive methods to show non-contractibility of $\DGH$ in many cases.

\begin{problem}\label{prob:4}
Describe compact homogeneous spaces $G/H$ such that the cuplength of $\XGH$
is arbitrarily large. 
\end{problem}

A description of the set of spaces for which $\XGH$ has many non-vanishing homology groups would also 
be quite interesting.

\begin{problem}\label{prob:3}
Describe topological invariants of $G/H$, e.g. certain (new) characteristic classes, which can be related
to $\XGH$, either as implications of topological properties of $\XGH$
 or consequences for $\XGH$ of  
 topological invariants of $G/H$.
\end{problem}

It might be most natural to start with the equal rank case. For instance, when $\rk G=\rk H$, does the non-contractibility of $\XGH$ imply certain topological properties of $G/H$?

\begin{problem} Prove the Finiteness Conjecture of \cite{BWZ}:
 If $G/H$ is a compact homogeneous space whose
isotropy representation consists of pairwise inequivalent irreducible summands,  then 
the algebraic Einstein equations have only ﬁnitely many real solutions.
\end{problem}

Note that all homogeneous spaces $G/H$ of equal rank  satisfy this assumption.

\begin{problem} \label{prob:5}
Provide an ``algebraic'' proof  that the scalar curvature $\sc :\MGo\to \RR$  
satisfies Palais-Smale
condition (C) in the superlevel $\{ \sc \geq \epsilon\}$, for some $\epsilon >0$. 
\end{problem}

The proof given in \cite{BWZ}
is based on the deep theory on spaces with     
Ricci curvature bounded from below, developed by
Cheeger and Colding \cite{CC1}, \cite{CC2}.

\begin{problem}\label{prob:6}
Show ``algebraically'' that a (locally) homogeneous Ricci flat space is flat.
\end{problem}

Originally this goes back to \cite{A-K} in the globally homogeneous (non-compact) case;
 there is an easy proof  using the splitting theorem. The locally homogeneous case, due to Spiro \cite{Spi}, 
uses the globally homogeneous case: see also \cite{Bo15}. Of course the corresponding statement
for (non-compact) cohomogeneity one manifolds does not hold.

A Riemannian manifold $(M^n,g)$ is called a cohomogeneity one manifold
if it admits an isometric action by a compact Lie group $G$
whose principal orbits are hypersurfaces in $M^n$.

\begin{problem}\label{prob:7}
Does there exist a compact cohomogeneity one manifold $(M^n,g)$ such that the high energy levels
of the scalar curvature functional are disconnected? 
\end{problem}

It is a pleasure to thank Jorge Lauret, Wolfgang Ziller and Linus Kramer for helpful conversations and suggestions.

\tableofcontents


\section{On the proof of Theorem \ref{theoA} and Theorem \ref{theoB}}\label{sec:proofs}

A Riemannian manifold $(M^n,g)$ is called Einstein if the Ricci curvature satisfies 
$$
 \ric(g) (X,Y) = \lambda \cdot  g(X,Y)
$$ 
for some  $\lambda \in \RR$, and for all $p \in M^n$ and all $X,Y \in T_pM$.
 In what follows, we  assume that $M^n$ is compact. Let $\sc(g)$ denote the
scalar curvature of $g$. Then Einstein metrics are the critical
points of the Hilbert action (the total scalar curvature functional):
$$
  T(g)= \int_{M^n} \sc(g)\,  \mu_g
$$
on the space $\mathcal{M}_1$ of Riemannian metrics of volume one \cite{Ber},~\cite{H.D}.

A Riemannian manifold $(M^n,g)$ is defined to be $G$-homogeneous if a Lie group $G$
acts transitively and isometrically on $(M^n,g)$; that is, for any $p$ and $q \in M^n$, there exists some
$\varphi \in G$, an isometry  of $(M^n,g)$, with $\varphi(p) = q$.  We write $H_p = \{\varphi \in G \mid
\varphi(p) = p\}$ for the isotropy subgroup corresponding to $p$.  Via the map
$\varphi \mapsto \varphi(p)$ we identify $G/H$ with $M^n$, where $H:=H_p$.
  
We restrict the Hilbert action
to the finite-dimensional space of $G$-invariant metrics 
of volume one, 
denoted $\MGo$, then $T(g)=\sc(g)$, since the scalar curvature of a homogeneous metric is constant.  
Critical points of $T|_{\MGo}$ are precisely the $G$-invariant
Einstein metrics of volume one \cite[p.121]{Bes}. 

We fix a base point $Q$ in $\MGo$, induced by a biinvariant metric on $G$ (also denoted by $Q$), 
and denote by $\m$ the $Q$-orthogonal complement to $\h$ in $\g$. Then $G$-invariant metrics on $G/H$ are in one-to-one correspondence
with $\Ad(H)$-invariant scalar products on $\m=T_{eH}G/H$. Using $Q$ again, we consider these scalar products
as $\Ad(H)$-equivariant, positive-definite endomorphisms $P$ of $\m$. That is, for all $X,Y\in \m$, we have
$g(X,Y)=Q(P_g \cdot X,Y)$: see Section \ref{sec:ginvm} for more details.

The tangent space of $\MGo$, $T_{\Id} \MGo$ ($\Id$ corresponds to the basepoint $Q$), consists of traceless $\Ad(H)$-equivariant endomorphisms of $\m$.
Endow $\MGo$ with the natural $L^2$-metric. With the help of the unit sphere $\Sph$  in $T_{\Id} \MGo$, for $v \in \Sph$, we can  parametrize a unit speed geodesic $\gamma_v(t)$ emanating from the point $\Id$  in the complete manifold $(\MGo,L^2)$.

We will show that high energy levels of the function $\sc:\MGo\to \RR$ carry topology provided
that the nerve $\XGH$ is not contractible.
Using a generalized
mountain pass lemma, we know there exists a Palais-Smale-sequence with uniform, positive lower energy bounds.
By \cite{BWZ}, the existence of a critical point  can then be shown.

 We use the Graev homeomorphism ${\rm Gr}:\Sph \to \Sphb$ 
to identify the unit sphere $\Sph$  in $T_{\Id} \MGo$ with $\Sphb$ in $\SymmH$, the set of all positive semi-definite endomorphisms with nontrivial kernel and trace 1: see Section \ref{sec:grmap}.
 It is one of the great observations of Graev
that we can work in $\Sphb$, where the definitions of several important sets are much simpler: 
see also Section \ref{sec:kfdisc} and Section \ref{sec:defWT}. We will use both models $\Sph$ and $\Sphb$,  indicating the domain via the notation, e.g.  $\XGH \subset \Sph$ and $\XXGH \subset \Sphb$.

We turn to the first major step in the proof of Theorem \ref{theoA}.
 In Sections \ref{sec:homotopy1} and \ref{sec:homotopy2} 
 we define a certain compact subset $\XXsre \subset \Sphb$ and (via the Graev map) $\Xsre$ of $\Sph$
 and we show that $\Xsre$ and $\XGH$ are homotopy equivalent: see Theorem \ref{thm:homotopy2} and Theorem \ref{thm:homotopy3}. 
Note that  there is a natural
realization $\XSGH$ of the nerve $\XGH$ as a subset $\XXGH$ of $\XXsre$ and that
 Graev introduced the very important {\em butterflies}: see Definition \ref{def:butterfly}.  
These sets are associated to flags $\varphi=(\kf_1 <  \cdots < \kf_r)$ of intermediate subalgebras and they
have important intersection properties: see Lemma \ref{lem:butphipsi}.  

We turn to the second major step  in the proof of Theorem \ref{theoA}.
In Section \ref{secscalab}  (Theorem \ref{theo:scalestu}) we show that
 on the complement of $\Xsre$ in $\Sph$  the scalar curvature is uniformly bounded above.   
That is, on this subset (of all initial directions $v\in \Sph \bs \Xsre$), 
for all large $t\geq \bar t$, the energy $\sc(\gamma_v(t))$ is uniformly bounded above. That means that 
in this non-compact region  of $\MGo$ 
there are no points ($G$-invariant metrics) of high energy (scalar curvature). The corresponding 
curvature estimates are carried out in Section \ref{secscalab} and the key result, 
 Lemma \ref{lem:torvcan}, uses the  \L ojasiewicz inequality stated in Proposition \ref{propinequ}.
More precisely, such uniform curvature estimates
can only be shown for initial directions $v$ outside an open neighborhood of $\Xsre$; one does 
need that $\bar t$ is independent of $v$. Here, because $\Xsre$
is compact and semi-algebraic, we use that
this open neighborhood can be chosen to be homotopy equivalent to $\Xsre$.
                                
With the homotopy equivalence and the scalar curvature estimates 
established, proving the existence of a critical point is now routine, 
see Theorem \ref{theomain},
provided that the scalar curvature functional satisfies the Palais-Smale condition (C).
 By Theorem A in \cite{BWZ}  this is the case. 
 We mention that in order 
  to apply variational methods we also prove positive lower scalar curvature 
estimates for a cycle in $\MGo$ which is the geodesic cone over $\XSGH$ with basepoint $Q$: 
see Section \ref{sec:scalestbe}.

The proof of Theorem \ref{theoB}
goes back to \cite{Bo}: This is carried out in Section \ref{sec:tricks}, where	 we use  that a certain subset of
$\MGo$ is invariant under the volume normalized Ricci flow.

\section{Combinatorial framework and important sets}\label{combinatorics}
We start by describing the space of $G$-invariant metrics on $G/H$. We define a homeomorphism between the unit tangent sphere in $T_{\Id} \MGo$ and a subspace of the space of symmetric, $\Ad(H)$-invariant positive semi-definite endomorphisms of $\m$. There we identify several useful structures, including convex disks, butterflies of flags, and the nerve $\XXGH$.

\subsection{{\it The space of $G$-invariant metrics}}\label{sec:ginvm}

In this section we will introduce the space  $\MGo$ of $G$-invariant metrics of volume one on 
a compact homogeneous space $G/H$. We will show that $\MGo$, endowed with the $L^2$-metric,
is a non-compact symmetric space.

\medskip

Let $G/H$ be a connected, almost effective, $n$-dimensional
homogeneous space, where $G$ and $H$ are compact, {\it connected} Lie groups.
We refer to Section \ref{sec:disc} for the most general case.
By \cite[Chapter 0, Theorem 5.1]{B.G}, we
may assume $G\subset \SO(6N)$ for some $N \in \NN$.
Choosing  such a biinvariant background metric $Q$ has the advantage that
a Cartan subalgebra of the $Q$-orthogonal complement $\m_0$ of $\h$ in $\n(\h)$
is compact: see Lemma \ref{lemAH} and Lemma \ref{lemnor}.

The negative of the Killing
form on $\SO(6N)$ induces a biinvariant metric $Q$ on $G$.
The metric $Q$ induces a normal homogeneous metric on $G/H$, again
denoted by $Q$, which we may assume to have volume one after
rescaling. Let us fix $Q$ once and for all.

Let $\g :=T_eG$ and $\h:= T_e H$  denote the Lie algebras of $G$ and $H$, respectively, and let
$$
   \g=\h \oplus \m
$$
be the ${\rm Ad}(H)$-invariant decomposition of
$\g$ with $Q(\h,\m)=0$. Note that the biinvariant metric $Q$ gives rise to
an $\Ad(G)$-invariant
scalar product on $\g$, again denoted by $Q$. Thus, the restriction of
$Q$ to $\m$, still denoted by $Q$, is $\Ad(H)$-invariant, and consequently for all
\mbox{$h\in H$} we have
${\rm \Ad}(h)\vert_{\m}  \in \On(\m,Q)$. 

As is well-known (see \cite[7.23]{Bes}),
the space $\MG$ of $G$-invariant metrics on $G/H$ can be
identified with the set of $\Ad(H)$-invariant scalar products on $\m$.
Furthermore, for every $g \in \MG$ we have 
$$
g(\,\cdot \, ,\, \cdot \,)=Q(P_g \, \cdot \, ,\, \cdot\,)\,,
$$
where $P_g$ 
is an ${\rm Ad}(H)$-equivariant, $Q$-self-adjoint, positive definite
endomorphism of $\m$. Notice that $P_Q =\Id:=\Id_{\m}$.

 In Section \ref{sec:var} it will be convenient to endow $\MG$ with a complete metric;
we introduce the  $L^2$-metric $\sca$ on $\MG$, given by
\begn
  \langle V,W \rangle_{P_g} =
  \tr\, \big( P_g^{-1}\cdot V \cdot \,P_g^{-1} \cdot W \big)
	\label{ltosca}
\enn 
for $P_g\in \MG$ and for $Q$-self-adjoint tangent vectors
$V,W \in T_{P_g}\MG$: see \cite[3.1]{H.J}. Notice for any $P_g$, 
$T_{P_g}\MG$ is exactly
the space of $\Ad(H)$-equivariant, $Q$-self-adjoint endomorphisms of $\m$,
which we denote by $\SymmH$.

\begin{lemma}\label{lem:MGL2}
Let $G/H$ be a compact homogeneous space. Then
$(\MG,L^2)$ is a non-compact symmetric space.
\end{lemma}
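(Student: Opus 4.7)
The plan is to identify $(\MG,L^2)$ with a quotient $L/K$, where $L$ is the centralizer of $\Ad(H)|_{\m}$ in $\GL(\m)$ and $K$ is its intersection with $\On(\m,Q)$, and then show that this quotient carries a natural symmetric space structure of non-compact type, with the invariant metric pulling back to the $L^2$-metric.

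First I would recall the ``model'' case, namely the space $\mathcal{P}(\m)$ of all $Q$-self-adjoint, positive-definite endomorphisms of $\m$ with the metric $\langle V,W\rangle_P = \tr(P^{-1}VP^{-1}W)$. This is the classical Riemannian symmetric space $\GL(\m)/\On(\m,Q)$ of non-compact type, on which $\GL(\m)$ acts transitively via $P \mapsto \phi\, P\, \phi^{T}$ (with $\phi^T$ the $Q$-adjoint), the isotropy at $\Id$ being $\On(\m,Q)$, and whose geodesic through $\Id$ with initial velocity $V\in \SymmH[G]$ is $\exp(tV)$. A clean reference is \cite{H.J} or any standard text on symmetric spaces.

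Next I would restrict this picture to $\Ad(H)$-equivariant objects. Let
\[
 L := Z_{\GL(\m)}(\Ad(H)|_{\m})\,, \qquad K := L \cap \On(\m,Q).
\]
Since $\Ad(H)|_{\m}$ acts by $Q$-orthogonal transformations, both $L$ and $K$ are real reductive (respectively compact) Lie groups; using Schur's lemma and the decomposition of $\m$ into $\Ad(H)$-isotypic components one sees that $L$ is a product of real general linear groups over $\RR$, $\CC$ or $\HH$. The $\GL(\m)$-action on $\mathcal{P}(\m)$ restricts to an $L$-action on $\MG$; transitivity follows because any $P_g\in\MG$ commutes with $\Ad(H)$ and hence can be written as $P_g = \phi\phi^T$ with $\phi\in L$ (the ``square-root trick'' is $\Ad(H)$-equivariant). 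The stabilizer of $\Id$ is exactly $K$, so $\MG \cong L/K$ as smooth $L$-homogeneous spaces. The involution $\sigma(\phi) = (\phi^{T})^{-1}$ of $L$ has fixed-point set $K$, realising $L/K$ as a Riemannian symmetric space of non-compact type.

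The final step is to check that the $L^2$-metric (\ref{ltosca}) coincides, up to a positive scalar on each irreducible factor, with the $L$-invariant symmetric space metric on $L/K$ induced by $-\tr(X\,\sigma_*Y)$ on the $(-1)$-eigenspace of $\sigma_*$. Both metrics are $L$-invariant and agree at $\Id$ with the trace form on $\SymmH[G]=T_{\Id}\MG$, hence they agree everywhere. In particular $(\MG,L^2)$ is complete and totally geodesic in $(\mathcal{P}(\m),L^2)$, with geodesics $t\mapsto \exp(tV)$ for $V\in\SymmH[G]$. Non-compactness is clear because the curve $t\mapsto e^t\Id$ already escapes any compact set; alternatively it follows from the non-compact type of the symmetric structure just identified.

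The only genuinely delicate point is the last one: matching the two invariant metrics across the different isotypic factors of $\m$ (and the different real divisions algebras appearing in $L$). This is a routine calculation once the decomposition of $\m$ into $\Ad(H)$-isotypes is fixed, but it is the place where one must be careful about the normalisations used in \cite{H.J}.
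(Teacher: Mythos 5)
Your proposal is correct but takes a genuinely different route from the paper. The paper's proof is shorter and avoids your ``delicate point'' entirely: it observes that $\MG$ is the fixed-point set of the isometric action of $H$ on $P(n)=\{P>0\}$ given by $P\mapsto \Ad(h)|_{\m}\, P\, \Ad(h)^{t}|_{\m}$, hence totally geodesic; and a connected totally geodesic submanifold of a Riemannian symmetric space is itself a symmetric space (the ambient geodesic symmetries restrict to it). That argument never mentions the commutant $L$, never produces a Cartan involution, and never needs to match two a priori different invariant metrics — the $L^2$-metric on $\MG$ is simply the restriction of the $L^2$-metric on $P(n)$. Your construction $\MG\cong L/K$ with $L=Z_{\GL(\m)}(\Ad(H)|_{\m})$, $K=L\cap\On(\m,Q)$ and $\sigma(\phi)=(\phi^{T})^{-1}$ gives more structural information (the explicit product decomposition of $\MG$ by isotypic factor and division algebra), at the cost of the metric-matching step you correctly identify as the technical part. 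Both are valid; the paper just trades that information for brevity.

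One terminological caveat in your sketch: $\MG$ is in general \emph{not} a symmetric space of non-compact type in the strict sense. Each factor $\GL(m,\mathbb{D})/K_{\mathbb{D}}$ of $L/K$ (with $\mathbb{D}\in\{\RR,\CC,\HH\}$) splits off a one-dimensional Euclidean ``determinant'' direction, so $\MG$ is a product of Euclidean and non-compact-type factors. In the extreme case where $\Ad(H)|_{\m}$ decomposes into pairwise inequivalent irreducibles, $\MG\cong(\RR_{>0})^{\ell}$ is flat — pure Euclidean type. The lemma (and the paper's phrasing ``non-compact symmetric space with nonpositive sectional curvatures'') is what is actually true; ``of non-compact type'' overclaims. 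This does not break your argument — the involution and the identification $L/K$ are still correct — but the phrase should be replaced.
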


\begin{proof}
The space $P(n)$ of symmetric, positive definite, real $(n\times n)$-matrices
endowed with the Riemannian metric 
$\langle V,W \rangle _P=\tr \big( P^{-1}\cdot V\cdot P^{-1} \cdot W  \big)$, for 
$P\in P(n)$ and $V,W \in T_P P(n)$,
is a symmetric space with nonpositive sectional curvatures: see e.g. \cite{E.J}: 
 The group ${\rm GL}(n)$ acts transitively and isometrically
on $P(n)$ by $(A,P) \mapsto A \cdot P \cdot A^t$,
$A \in {\rm GL}(n)$, $P \in P(n)$. By choosing a $Q$-orthonormal basis of $\m$,
we view $\MG$ as the set of symmetric, positive definite,
 ${\rm Ad}(H)$-equivariant $(n\times n)$-matrices, and we know it is
 the fixed point set of the isometric action of $H$ on $P(n)$ given by
\beg
  (h,P)\mapsto {\rm Ad}(h)\vert_{\m} \cdot P \cdot
               {\rm Ad}(h)^t\vert_{\m} \,.
\en
In particular, $\MG$ is a totally geodesic subspace of
$P(n)$. As such, $\MG$ itself is a symmetric space with
nonpositive sectional curvatures.
\end{proof}

Since $P_1(n):=\{P \in P(n)\mid \det P=1\}$ is a totally geodesic subspace of $(P(n),L^2)$,
from the above lemma we obtain

\begin{corollary}\label{cor:MGosymm}
Let $G/H$ be a compact homogeneous space with $\dim \MG \geq 2$. Let
$$
   \MGo :=\{ P_g \in \MG \mid \det P_g =1\}
$$
denote the space of $G$-invariant metrics on $G/H$ of volume one.
Then $(\MGo,L^2)$ is a non-compact symmetric space.
\end{corollary}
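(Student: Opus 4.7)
The plan is to deduce the corollary directly from Lemma \ref{lem:MGL2} by verifying that $\MGo$ is a totally geodesic submanifold of $(\MG, L^2)$, since a totally geodesic submanifold of a symmetric space is itself a symmetric space. The symmetric structure on $\MGo$ then comes from restricting the geodesic symmetries of $\MG$.

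First, I would verify the assertion made just before the corollary, namely that $P_1(n) = \{P \in P(n) \mid \det P = 1\}$ is totally geodesic in $(P(n), L^2)$. The tangent space $T_P P_1(n)$ consists of $Q$-self-adjoint endomorphisms $V$ with $\tr(P^{-1} V) = 0$. The explicit formula for geodesics in $P(n)$ — if $A \in \mathrm{GL}(n)$ satisfies $A A^t = P$, then the geodesic from $P$ with initial velocity $V$ is $\gamma(t) = A \exp(t A^{-1} V A^{-t}) A^t$ — yields $\det \gamma(t) = \det(P)\, \exp(t\, \tr(P^{-1} V))$, which is identically $1$ when $\det P = 1$ and $\tr(P^{-1} V) = 0$. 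Hence $P_1(n)$ is totally geodesic.

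Next, since $\MGo = \MG \cap P_1(n)$ and $\MG$ is totally geodesic in $P(n)$ by Lemma \ref{lem:MGL2}, any $V \in T_P \MGo$ is simultaneously $\Ad(H)$-equivariant (so the geodesic stays in $\MG$) and traceless with respect to $P^{-1}$ (so the geodesic stays in $P_1(n)$). Therefore $\gamma(t) \in \MGo$ for all $t$, proving that $\MGo$ is totally geodesic in $\MG$. As such, the geodesic symmetry of $\MG$ at any $P \in \MGo$ preserves $\MGo$ and restricts to its geodesic symmetry, making $(\MGo, L^2)$ a symmetric space of non-positive curvature. Non-compactness is then immediate: since $\dim \MG \geq 2$ one has $\dim \MGo \geq 1$, and any non-zero $V \in T_{\Id} \MGo$ generates a complete geodesic line $t \mapsto \exp(tV)$ of infinite length by non-positive curvature, so $\MGo$ cannot be bounded.

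No significant obstacle is expected; the only computation required beyond Lemma \ref{lem:MGL2} is the formula for $\det \gamma(t)$, and the remainder of the argument is a standard fact that the intersection of two totally geodesic submanifolds of a symmetric space, when it arises as a transverse intersection of Lie-theoretically compatible subspaces as here, is itself totally geodesic.
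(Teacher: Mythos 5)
Your proof is correct and follows essentially the same route as the paper: the paper simply notes that $P_1(n)$ is totally geodesic in $(P(n),L^2)$ and cites Lemma \ref{lem:MGL2}, the implicit point being that $\MGo = \MG \cap P_1(n)$ is an intersection of totally geodesic submanifolds of a symmetric space and is therefore itself a non-positively curved symmetric space. Your explicit verification that $\det\gamma(t) = \det(P)\exp(t\,\tr(P^{-1}V))$ cleanly fills in the detail the paper leaves to the reader, and your non-compactness argument via unbounded geodesics $t\mapsto\exp(tV)$ is sound (though the general fact that a closed, positive-dimensional totally geodesic submanifold of the Hadamard manifold $P(n)$ is non-compact would suffice without invoking curvature explicitly).
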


The tangent space of $\MGo$ at
the point $\Id$, corresponding to $Q$, is
$$
   T_{\Id} \MGo := \{  v \in \SymmH\mid \tr v=0\}\,.
$$
On this vector space the $L^2$-metric induces a norm given by 
$$
  \Vert v\Vert^2= \tr (v^2)\,.
$$

\begin{lemma}\label{lem:expdiff}
Let $G/H$ be a compact homogeneous space with $\dim \MG \geq 2$. Then 
for any 
$$
   v \in \Si := \{ v\in T_{\Id} \MGo \mid \Vert v \Vert =1\}\,,
$$
the curve $\gamma_v(t)=\exp(t\cdot v)$, $t \in \RR$, is  a unit speed geodesic in $(\MGo,L^2)$.
\end{lemma}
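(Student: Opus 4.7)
The plan is to exhibit $\gamma_v$ as a geodesic by pulling back known facts about the ambient symmetric space $P(n)$ of positive definite symmetric matrices, and then using that $\MGo$ sits totally geodesically inside it.

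First I would verify that $\gamma_v(t) = \exp(tv)$ actually stays in $\MGo$ for all $t \in \RR$. Since $v$ is $Q$-self-adjoint, $\exp(tv)$ is symmetric and positive definite, hence lies in $P(n)$. Since $v \in \SymmH$ commutes with $\Ad(h)|_\m$ for every $h \in H$, so does $\exp(tv)$, giving the $\Ad(H)$-equivariance needed for membership in $\MG$. Finally, $\det \exp(tv) = \exp(t \tr v) = 1$ because $\tr v = 0$, so $\gamma_v(t) \in \MGo$.

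Next I would invoke the standard fact for the $\GL(n)$-symmetric space $(P(n),L^2)$: the geodesics through $\Id$ are precisely $t \mapsto \exp(tX)$ for $X$ symmetric, and their $L^2$-speed at $\Id$ equals $\sqrt{\tr(X^2)}$. (This is a short computation: the Levi-Civita connection of the metric $\langle V,W\rangle_P = \tr(P^{-1}VP^{-1}W)$ satisfies $\nabla_V V|_P = V P^{-1} V$, and along $\gamma(t)=\exp(tX)$ one has $\dot\gamma = X\gamma = \gamma X$, so $\ddot\gamma = X^2\gamma = \dot\gamma\, \gamma^{-1}\,\dot\gamma$, i.e.\ $\nabla_{\dot\gamma}\dot\gamma = 0$.) Applied to our symmetric, traceless $v$ with $\Vert v\Vert^2 = \tr(v^2) = 1$, this shows $\gamma_v$ is a unit speed geodesic in $(P(n),L^2)$.

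Finally I would promote this to $\MGo$ using the totally geodesic embedding established in Lemma~\ref{lem:MGL2} and Corollary~\ref{cor:MGosymm}: since $\MG$ is totally geodesic in $P(n)$ as the fixed point set of the isometric $H$-action, and $\MGo$ is totally geodesic in $\MG$ (the level set $\det = 1$ of a submersion that is an isometric copy of the horocycle foliation in the flat factor of $\MG$, or equivalently because the determinant-one condition cuts out a symmetric subspace), any $P(n)$-geodesic starting at $\Id$ whose initial velocity lies in $T_\Id \MGo$ stays in $\MGo$ and is a geodesic there. No step looks like a serious obstacle; the only mildly delicate point is confirming total geodesicity of $\MGo \subset \MG$, which follows from the orthogonal splitting $T_\Id \MG = T_\Id \MGo \oplus \RR\!\cdot\!\Id$ together with the fact that $t \mapsto e^{t}\Id$ is a geodesic in $(\MG, L^2)$ — so $\MGo$ is the fixed set of the isometry sending $P \mapsto (\det P)^{-1/n}P$ composed appropriately, or more directly, the kernel of a parallel one-form $V \mapsto \tr(P^{-1}V)$.
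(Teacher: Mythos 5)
Your proof is correct and takes essentially the same route as the paper, whose entire proof is a citation to Helgason for the standard fact that $t\mapsto \exp(tX)$ is a geodesic of the affine-invariant metric on $P(n)$, combined with the total geodesicity of $\MGo$ already established in Lemma~\ref{lem:MGL2} and Corollary~\ref{cor:MGosymm}; your argument simply fills in the details of that citation. (One minor slip: the covariant derivative is $\nabla_V V = \partial_V V - VP^{-1}V$, so your displayed formula for the connection is off by a sign, but the geodesic equation $\ddot\gamma = \dot\gamma\,\gamma^{-1}\dot\gamma$ that you actually use is the correct one and the conclusion stands.)
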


\begin{proof}
See \cite[p.~226]{H.S}. 
\end{proof}

\begin{remark}\label{rem:isotirr}
In case $\dim \, \MG=1$ the space
$\MGo$ is $\{Q\}$, the single point, and $Q$ is Einstein, by Schur's Lemma  \cite[7.44]{Bes}. This happens if and only if $G/H$
is isotropy irre\-du\-cible.
\end{remark}

\begin{remark}\label{rem:specialQwhy}
The special choice of the biinvariant metric $Q$ is convenient but not necessary.
The key point is that in this special case, the $Q$-orthogonal complement
of $\h$ in the Lie algebra $\n(\h)$ of the normalizer of $H$ in $G$ is 
the Lie algebra of a compact subgroup of $G$: see Lemma \ref{lemnor}. 
This is not true for a general $Q$. 
\end{remark}

\begin{remark}\label{rem:L2}
For a compact  manifold $M^n$, the $L^2$-metric on the space of Riemannian metrics 
$\mathcal{M}$ is defined
by
$$
  \langle h,k\rangle_g := \int_{M^n} (h,k)_g \,\mu_g\,.
$$
Here $k,h$ are symmetric bilinear forms on $TM^n$, $g$ is a Riemannian metric on $M^n$
and at any point $p \in M^n$ we
have $(h,k)_g=\sum_{i,j=1}^n h(e_i,e_j)\cdot k(e_i,e_j)$, where $(e_1,...,e_n)$ is 
an orthonormal basis of $(T_pM^n,g_p)$.

 If $g$ is a $G$-invariant metric of volume one on a compact
homogeneous space $M^n=G/H$ and $h,k$ are $G$-invariant as well, then $\langle h,k\rangle_g = (h,k)_g$,
where we now view $h,k$ as $\Ad(H)$-invariant, symmetric bilinear forms on $T_{eH}G/H = \m$.
Notice that all these homogeneous metrics have the same volume element $\bar \mu$. As a consequence,
elements of the tangent space to the space $\MGo$ of $G$-invariant metrics of volume one in $g$ are given by
$h$ with $\tr_g h=\sum_{i=1}^n h(e_i,e_i)=0$.
Now for some $H_g$ which is $g$-self-adjoint with
$\tr H_g=0$, we may write $h(X,Y)=g(H_g \cdot X,Y)$ for $X,Y \in \m$.  By writing $g(X,Y)=Q(P_g\cdot X,Y)$ we see that we can identify $h$ with $P_gH_g$.
Since $H_g$ is $g$-self-adjoint and traceless there exists some $v \in T_{\Id}\MGo$ with
$H_g = \sqrt{P_g^{-1}}\cdot v \cdot \sqrt{P_g}$, where $ \sqrt{P_g}\in \MGo$ denotes the square root of $P_g$.
This shows $P_gH_g=\sqrt{P_g}\cdot v \cdot \sqrt{P_g}\,,$ as it should.

The $L^2$-gradient of the total scalar curvature functional  restricted to the space of Riemannian
metrics with a fixed volume element $\bar \mu$ is $-\ric_0(g)=-(\ric(g)-\tfrac{1}{n}\cdot \sc(g)\cdot g)$
(see \cite[4.22]{Bes}).
Denoting by $\Ric(g)$ the Ricci endomorphism, defined by $\ric(g)(X,Y)=g(\Ric(g)\cdot X,Y)$, we see that
$\ric_0(g)$ corresponds to $\Ric_0(g)$, the traceless part of $\Ric(g)$. Moreover, the corresponding
tangent vector in $T_{P_g}\MGo$ is $P_g\Ric_0(g)$. As a consequence of \eqref{ltosca} we obtain
\begn\label{eqn:L2grad}
  \Vert (\nabla_{L^2}\sc)_g \Vert^2_{g}= \tr (\Ric_0(g))^2\,.
\enn
Since the scalar curvature of a homogeneous metric is constant, 
the $L^2$-gradient flow of $\sc:\MGo \to \RR$ 
and the volume-normalized Ricci flow on $\MGo$ agree (up to a factor of $2$).
\end{remark}


\subsection{{\it The Graev-map}}\label{sec:grmap}

Let $G/H$ be a compact homogeneous space with $\dim \MGo \geq 1$. The 
unit sphere $\Si$ in $T_{\Id} \MGo$ parametrizes the set of
unit speed geodesics $\gamma_v(t)=\exp(t\cdot v)$ in $(\MGo,L^2)$ emanating from $\Id$: see
Lemma \ref{lem:expdiff}.
A first key observation of M. Graev is  that we can avoid 
many of the technicalities in
\cite{Bo}  concerning
semi-algebraic subsets of $\Si$ by introducing the following {\it Graev map}.

\begin{lemma}[Graev map]\label{lem:Graevmap}
Let 
\beg
    \Sphb &:=&
		 \{  A \in \SymmH \mid A \geq 0,\,\, \ker(A) \neq \{0\},\,\, \tr A=1 \}\,,
\en
where $\{A \geq 0\}$ denotes the set the positive semi-definite endomorphisms in $\SymmH$. 
Then
$$
 {\rm Gr}: \Si \to \Sphb\,\,;\,\,\,
 v \mapsto \tfrac{1}{n}\left(  \Id_n -\tfrac{1}{\lambda(v)}\cdot v\right)
$$
is a homeomorphism,
where $\lambda(v)$ denotes the smallest eigenvalue of $v$.
\end{lemma}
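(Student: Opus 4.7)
The plan is to write down an explicit candidate inverse and verify that both maps are continuous. First, I would check that the forward map is well-defined. Any $v\in\Si$ is a nonzero traceless symmetric endomorphism, so its smallest eigenvalue $\lambda(v)$ is strictly negative; thus $\tfrac{1}{\lambda(v)}$ is a continuous function of $v$ on $\Si$. Writing $A(v):=\tfrac{1}{n}\bigl(\Id_n-\tfrac{1}{\lambda(v)}v\bigr)$, the eigenvalues of $A(v)$ are $\tfrac{1}{n}\bigl(1-\tfrac{\mu}{\lambda(v)}\bigr)$ as $\mu$ ranges over the spectrum of $v$; since $\lambda(v)<0$, these are all $\geq 0$, with equality exactly when $\mu=\lambda(v)$. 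Hence $A(v)\geq 0$ with $\ker A(v)\neq\{0\}$. The trace is $\tfrac{1}{n}(n-\tfrac{1}{\lambda(v)}\tr v)=1$ because $\tr v=0$, and $\Ad(H)$-equivariance is inherited from $v$. So $A(v)\in\Sphb$ and the forward map is continuous.

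Next I would construct the inverse explicitly. The defining relation $A=\tfrac{1}{n}(\Id-\tfrac{1}{\lambda(v)}v)$ forces $v=\lambda(v)\bigl(\Id-nA\bigr)$, so I would look for $v$ of the form $v=c(\Id-nA)$ with $c<0$. Setting $c(A):=-\bigl(\tr(\Id-nA)^2\bigr)^{-1/2}$ and $v(A):=c(A)(\Id-nA)$ gives an $\Ad(H)$-equivariant, $Q$-self-adjoint endomorphism with $\tr v(A)=c(A)\,(n-n\tr A)=0$ and $\tr v(A)^2=1$. The key point is to check that $\lambda(v(A))=c(A)$: since $A\geq0$ with $\ker A\neq\{0\}$, the largest eigenvalue of $\Id-nA$ is $1$ (attained on $\ker A$), and multiplying by the negative scalar $c(A)$ turns this into the smallest eigenvalue of $v(A)$, which is precisely $c(A)$. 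Plugging back yields $A(v(A))=\tfrac{1}{n}(\Id-\tfrac{1}{c(A)}\cdot c(A)(\Id-nA))=A$, and continuity of $A\mapsto v(A)$ is clear because $\tr(\Id-nA)^2>0$ on $\Sphb$ (as $\Id\neq nA$, the kernel of $A$ preventing equality).

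Finally I would verify that this candidate is a two-sided inverse. Starting from $v\in\Si$, one computes $\Id-nA(v)=\tfrac{1}{\lambda(v)}v$, so $\tr(\Id-nA(v))^2=\tfrac{1}{\lambda(v)^2}\tr v^2=\tfrac{1}{\lambda(v)^2}$, whence $c(A(v))=-|\lambda(v)|=\lambda(v)$ (using $\lambda(v)<0$) and therefore $v(A(v))=\lambda(v)\cdot\tfrac{1}{\lambda(v)}v=v$. Combined with $A(v(A))=A$ from the previous paragraph and the continuity of both directions, this shows $\operatorname{Gr}$ is a homeomorphism.

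The only subtle step I anticipate is the sign/scaling argument identifying $c(A)$ with $\lambda(v(A))$, since this is where the interplay between the normalization $\tr v^2=1$ and the extremal-eigenvalue condition $\lambda(v)=c$ has to be pinned down correctly; everything else is a direct computation in linear algebra carried out in the $\Ad(H)$-equivariant setting.
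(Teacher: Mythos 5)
Your proof is correct and follows essentially the same route as the paper: the paper simply exhibits the inverse $\operatorname{Gr}^{-1}(A)=\tfrac{A-\frac{1}{n}\Id_n}{\sqrt{\Vert A\Vert^2-\frac{1}{n}}}$ and notes it is a homeomorphism, and your candidate $v(A)=c(A)(\Id-nA)$ with $c(A)=-\bigl(\tr(\Id-nA)^2\bigr)^{-1/2}$ is algebraically the same formula, since $\tr(\Id-nA)^2=n^2\bigl(\Vert A\Vert^2-\tfrac1n\bigr)$. You merely spell out the verification (well-definedness, the identification $\lambda(v(A))=c(A)$, the two-sided inverse property, and continuity) that the paper leaves implicit.
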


\begin{proof}
This  follows from the fact that the inverse function
$$
   {\rm Gr}^{-1}(A) =\tfrac{A-\frac{1}{n}\cdot\Id_n}{\sqrt{\Vert
A\Vert^2-\frac{1}{n}}}
$$
is a homeomorphism. 
\end{proof}

The map ${\rm Gr}^{-1}:\Sphb \to \Si$ has the following nice property:
it maps convex Euclidean sets in $\Sphb \subset \{\tr = 1\}$ first to convex Euclidean sets in $\{\tr = 0\}$ 
and then, by radial projection, onto spherical convex sets in $\Si$.

\begin{remark}\label{rem:Sphbmodel2}
By extending a positive semi-definite endormorphism $A$ on $\m$ to
a positive semi-definite endormorphism $\hat A$ on $\g$ with $\h \subset \ker \hat A$, we can view 
 $\Sphb$ as a subset of $\SymgH$, the set of $\Ad(H)$-equivariant, $Q$-self-adjoint endomorphisms of $\g$.
That is, we have
\beg
    \Sphb
		 &=&
			  \{ A \in \SymgH \mid A \geq 0,\,\, \ker(A) \supsetneq \h,\,\, \tr A=1 \}\,.
\en
This set is compact and semi-algebraic. See Section \ref{sec:semialgebraic} for a discussion of semi-algebraic sets.
\end{remark}

\begin{remark}\label{rem:ad-inv}
For any $A \in \SymmH$ 
 and its extension $\hat A:\g \to \g$ in $\SymgH$, we note that $\Ad(H)$-equi\-va\-riance of $A$ is equivalent to $\Ad(H)$-equivariance of $\hat A$. Furthermore, $\Ad(H)$-equivariance of $A$ is also equivalent to $[A, \ad(\h)|_{\m}]=0$ and analogously, on all of $\g$, we know $[\hat A,\ad(\h)]=0$: see Lemma \ref{lem:ad-invar}.
\end{remark}

\subsection{{\it The $\kf$-disks  $\D(\kf)$}}\label{sec:kfdisc}

In this section we associate to each intermediate subalgebra $\kf$ of $\g$ with
$\h < \kf <\g$ a non-empty convex set $\D(\kf)\subset \Sphb$. This  goes back to Graev \cite{Gr}.
Let us mention that the corresponding sets considered in \cite{Bo}
are subsets of $\D(\kf)$ but more difficult to deal with since they are only star-shaped.

\begin{definition}\label{defin-K}
We denote by $\Sub$ the {\em set of all intermediate subalgebras} $\kf$ such that $\h < \kf < \g$.
\end{definition}
For two subalgebras $\kf_1,\kf_2$ in $\Sub$ we write $\kf_1 \leq \kf_2$ if 
$\kf_1 \subseteq \kf_2$, and $\kf_1 <  \kf_2$ if $\kf_1 \subsetneq \kf_2$.

\begin{definition}[$\kf$-disk]\label{def:kfdisc} 
For any $\kf$ in $\Sub$, we define the {\em $\kf$-disk} in $\SymgH$,
$$
   \D(\kf):=\{A \in \SymgH \mid A\geq 0,\,\, \kf \subset \ker(A),\,\,\tr
A=1,\,\,[A,\ad(\kf)]=0\}
$$
and 
$$
\Ak:=\tfrac{1}{\dim \g-\dim \kf}\cdot (\Id_{\g} -\Id_{\kf})\,.
$$
We set
$$
   \D(\g):= \emptyset\,.
$$
\end{definition}

It is obvious that $\D(\kf)$ is a convex subset in a Euclidean space
and that $\Ak \in \D(\kf)$.
As a consequence $\D(\kf) = \overline{\D(\kf)}$ is a compact, convex set.
Topologically, $\D(\kf)$ is a compact disk.
Notice moreover, that if $\{\Ak\} \neq \D(\kf)$, then
$$
\Ak \in {\rm int}(\D(\kf)) = \{A \in \D(\kf) \mid \kf = \ker(A)\}.
$$
As convex sets of Euclidean spaces, $\kf$-disks have well defined dimensions and 
boundaries.

\begin{lemma}\label{lem:disksemialg}
For any  subalgebra $\kf$ in $\Sub$, the  disk $\D(\kf)$ is a
semi-algebraic set.
\end{lemma}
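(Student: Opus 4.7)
The plan is to fix once and for all a $Q$-orthonormal basis of $\g$ and identify $\operatorname{End}(\g)$ with the space of $N\times N$ real matrices, where $N=\dim \g$. An element $A\in \operatorname{End}(\g)$ is then a point in $\RR^{N^2}$ with entries $a_{ij}$, and semi-algebraicity of $\D(\kf)$ will follow by exhibiting it as the intersection of finitely many sets defined by polynomial equations and inequalities in these coordinates.

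First I would verify that the ambient space $\SymgH$ is itself a linear (hence semi-algebraic) subspace of $\operatorname{End}(\g)$. Being $Q$-self-adjoint is the linear condition $A=A^t$, i.e.\ $a_{ij}=a_{ji}$. By Remark \ref{rem:ad-inv}, $\Ad(H)$-equivariance of $A$ is equivalent to $[A,\ad(\h)]=0$; picking a basis $Y_1,\dots,Y_h$ of $\h$, this becomes the finite linear system $A\ad(Y_\alpha)-\ad(Y_\alpha)A=0$ for $\alpha=1,\dots,h$. Hence $\SymgH$ is cut out by linear equations in the $a_{ij}$.

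Next I would analyze the four defining conditions of $\D(\kf)$ in turn. Fixing a basis $X_1,\dots,X_s$ of $\kf$:
\begin{itemize}
\item[(i)] $\tr A=1$ is the single linear equation $\sum_i a_{ii}=1$.
\item[(ii)] $\kf\subset \ker(A)$ is the system of $sN$ linear equations $AX_j=0$, $j=1,\dots,s$.
\item[(iii)] $[A,\ad(\kf)]=0$ is the system of linear equations $A\,\ad(X_j)-\ad(X_j)A=0$ for $j=1,\dots,s$.
\item[(iv)] $A\geq 0$: this is the one nontrivial step. A symmetric matrix is positive semi-definite if and only if all of its principal minors are $\geq 0$; each principal minor is a polynomial in the entries $a_{ij}$, and there are only finitely many of them. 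Hence $\{A\in \SymgH \mid A\geq 0\}$ is defined by finitely many polynomial inequalities.
\end{itemize}
Assembling (i)--(iv) together with the linear equations defining $\SymgH$, the set $\D(\kf)$ is cut out by finitely many polynomial equations and finitely many polynomial inequalities in the coordinates $a_{ij}\in \RR^{N^2}$, and is therefore semi-algebraic by definition.

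The only conceptual obstacle is condition (iv); one has to invoke (and it is worth stating explicitly in the proof) the polynomial characterization of positive semi-definiteness via principal minors, since positivity is a priori given by the non-polynomial condition $\langle Av,v\rangle\geq 0$ for all $v$. Every other condition in the definition of $\D(\kf)$ is in fact linear, so the whole argument reduces to this single well-known fact together with a bookkeeping of bases.
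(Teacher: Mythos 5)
Your proof is correct and follows essentially the same route as the paper: reduce each defining condition of $\D(\kf)$ to finitely many polynomial equations and inequalities in matrix coordinates. The only difference is that you make explicit (via nonnegativity of all principal minors) why $\{A\geq 0\}$ is semi-algebraic, a point the paper simply asserts; this is a harmless and correct elaboration.
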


\begin{proof}
The equation $\tr A=1$ is algebraic in $A$.  Moreover, the set $\{A\geq 0\}$
of $Q$-self-adjoint, nonnegative operators in $\SymgH$
is a semi-algebraic set, as well. Finally, 
 the equations $A(X)=0$ and  $[A,\ad(X)]=0$  are algebraic for all  $X \in \kf$.
This shows the claim. 
\end{proof}

\begin{lemma}\label{lem:disk-inclusion}
Given any intermediate subalgebras  $\kf_1 < \kf_2$ in $\Sub$, we have $\D(\kf_2) \subsetneq \D(\kf_1)$.
\end{lemma}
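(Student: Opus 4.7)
The plan is to verify the inclusion $\D(\kf_2) \subseteq \D(\kf_1)$ directly from the defining conditions, and then exhibit an explicit witness in $\D(\kf_1) \setminus \D(\kf_2)$ to show that the inclusion is strict.

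First I would unpack the defining conditions. Suppose $A \in \D(\kf_2)$. Then $A \geq 0$ and $\tr A = 1$ hold automatically for $A$ to lie in $\D(\kf_1)$. Since $\kf_1 \subset \kf_2$, the kernel condition $\kf_2 \subset \ker(A)$ immediately implies $\kf_1 \subset \ker(A)$. Likewise, the commutation condition $[A,\ad(\kf_2)] = 0$ restricts to $[A,\ad(\kf_1)] = 0$ because $\ad(\kf_1) \subset \ad(\kf_2)$. Hence $A \in \D(\kf_1)$, giving $\D(\kf_2) \subseteq \D(\kf_1)$.

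For strict containment, the natural candidate is $A^{\bldss{\kf_1}} = \tfrac{1}{\dim \g - \dim \kf_1}(\Id_{\g} - \Id_{\kf_1})$. By definition, $A^{\bldss{\kf_1}} \in \D(\kf_1)$, so I only need to check that $A^{\bldss{\kf_1}} \notin \D(\kf_2)$. This is immediate: by construction, $\ker(A^{\bldss{\kf_1}}) = \kf_1$, and since $\kf_1 < \kf_2$ is a proper inclusion, we cannot have $\kf_2 \subset \ker(A^{\bldss{\kf_1}}) = \kf_1$. In particular, choosing any $X \in \kf_2 \setminus \kf_1$, one has $A^{\bldss{\kf_1}}(X) \neq 0$, which violates the kernel requirement for $\D(\kf_2)$.

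The only mildly subtle point is verifying that $A^{\bldss{\kf_1}}$ genuinely lies in $\SymgH$ and satisfies $[A^{\bldss{\kf_1}},\ad(\kf_1)] = 0$; these are not obstacles but worth a line each. Ad-equivariance holds since $\h < \kf_1$, so $\Ad(H)$ preserves both $\kf_1$ and its $Q$-orthogonal complement, on each of which $A^{\bldss{\kf_1}}$ acts as a scalar. The commutation condition holds because $\kf_1$ is a subalgebra (so $\ad(\kf_1)$ preserves $\kf_1$), and because $Q$ is $\Ad(G)$-invariant so $\ad(\kf_1)$ is $Q$-skew and preserves the orthogonal complement $\kf_1^{\perp}$ as well; hence $\ad(\kf_1)$ commutes with the scalar operator $A^{\bldss{\kf_1}}$ on each invariant summand. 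No real obstacle arises — the lemma is essentially a bookkeeping statement about how enlarging $\kf$ tightens all three constraints in the definition of $\D(\kf)$.
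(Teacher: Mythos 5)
Your proposal is correct and follows essentially the same route as the paper: verify the inclusion condition by condition, then use $A^{\bldss{\kf_1}}$ (whose kernel is exactly $\kf_1$) as the witness that the containment is strict. The extra lines checking that $A^{\bldss{\kf_1}} \in \D(\kf_1)$ are harmless; the paper takes this for granted since it is noted right after Definition \ref{def:kfdisc}.
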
 

\begin{proof} Let $A\in \D(\kf_2) =\{A \in \SymgH \mid A\geq 0,\,\, \kf_2 \subset \ker(A),\,\,\tr
A=1,\,\,[A,\ad(\kf_2)]=0\}$. We see $\kf_1  < \kf_2 \subset \ker(A)$, and that
$[A,\ad(\kf_2)]=0$ implies $[A,\ad(\kf_1)]=0$. 
Of course we have $A \geq 0$ and $\tr(A)=1$. Thus $A\in \D(\kf_1)$.  

To see the containment is strict, we note that $A^{\blds\kf_1}$, for example, is an element of 
$\D(\kf_1)$. Since $\ker(A^{\blds\kf_1})=\kf_1  < \kf_2$, we see $A^{\blds\kf_1} \not\in \D(\kf_2)$.  
\end{proof}

\begin{definition}\label{def:generatedsubalg}
Let $\kf_1,\kf_2$ be two subalgebras in $\Sub$. We denote by 
$$
\kf^* := \langle \kf_1,\kf_2\rangle
$$
the smallest  subalgebra of $\g$ containing $\kf_1$ and $\kf_2$. 
\end{definition}

Notice that $\kf^*$ is well-defined and that $\kf^*=\g$ is possible.

\begin{lemma}\label{lem:disk-intersection}
Let $\kf_1,\kf_2$ be two subalgebras in $\Sub$, and $\kf^* = \langle
\kf_1,\kf_2\rangle$. Then
$$ 
     \D(\kf_1) \cap \D(\kf_2) = \D(\kf^*)\,.
$$ 
\end{lemma}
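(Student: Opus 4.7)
The plan is to prove both inclusions, with the forward inclusion $\D(\kf^*)\subseteq \D(\kf_1)\cap \D(\kf_2)$ being essentially immediate from Lemma \ref{lem:disk-inclusion} (since $\kf_1\leq \kf^*$ and $\kf_2\leq \kf^*$ in $\Sub$, unless $\kf^*=\g$, in which case $\D(\kf^*)=\emptyset$ trivially sits inside any set), and the reverse inclusion requiring a short Lie-theoretic argument.

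For the reverse inclusion, fix $A\in \D(\kf_1)\cap \D(\kf_2)$. The conditions $A\geq 0$ and $\tr A=1$ are already in hand, so I only need to verify the two conditions involving $\kf^*$: namely $\kf^*\subset \ker(A)$ and $[A,\ad(\kf^*)]=0$. I would handle the commutation condition first. The set
\[
\z(A):=\{Z\in \g\mid [A,\ad(Z)]=0\}
\]
is a Lie subalgebra of $\g$: it is clearly a linear subspace, and for $X,Y\in \z(A)$, the Jacobi identity for commutators of endomorphisms gives
\[
[A,\ad([X,Y])]=[A,[\ad(X),\ad(Y)]]=[[A,\ad(X)],\ad(Y)]+[\ad(X),[A,\ad(Y)]]=0.
\]
By hypothesis $\kf_1,\kf_2\subset \z(A)$, so $\z(A)$ contains the subalgebra $\kf^*=\langle \kf_1,\kf_2\rangle$ generated by $\kf_1\cup \kf_2$. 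This yields $[A,\ad(\kf^*)]=0$.

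For the kernel condition, I would show $\ker(A)\supset \kf^*$ by exploiting the commutation just established. Indeed, $\ker(A)$ is $\ad(\kf_i)$-invariant for $i=1,2$: if $X\in \kf_i$ and $Y\in \ker(A)$, then $A([X,Y])=A(\ad(X)Y)=\ad(X)A(Y)=0$. In particular, starting from $\kf_1\cup \kf_2\subset \ker(A)$ and iteratively taking brackets with elements of $\kf_1\cup\kf_2$ keeps us inside $\ker(A)$; the resulting subspace is exactly $\kf^*$. Combining the two steps gives $A\in \D(\kf^*)$, completing the proof.

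No step looks like a genuine obstacle; the only mild subtlety is being careful that $\kf^*$ may equal $\g$, in which case $\D(\kf^*)=\emptyset$ by convention and one must verify that $\D(\kf_1)\cap \D(\kf_2)$ is also empty. But if some $A$ were in the intersection then the argument above would force $\g=\kf^*\subset \ker(A)$, contradicting $\tr A=1$, so this degenerate case is consistent.
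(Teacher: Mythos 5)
Your proof is correct and follows essentially the same route as the paper: the forward inclusion via Lemma \ref{lem:disk-inclusion}, the commutation condition via the Jacobi identity (your packaging of it as ``$\z(A)$ is a subalgebra containing $\kf_1,\kf_2$'' is just a tidier phrasing of the paper's iterated-bracket computation), and the kernel condition by $\ad$-invariance of $\ker(A)$ plus induction. Your explicit handling of the degenerate case $\kf^*=\g$ via $\tr A=1$ is a harmless extra remark; nothing further is needed.
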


\begin{proof} 
If $\kf^* \subsetneq \g$, we know by Lemma \ref{lem:disk-inclusion} that 
$\D(\kf^*) \subseteq   \D(\kf_1) \cap \D(\kf_2)$.   
If $\kf^* = \langle \kf_1,\kf_2\rangle=\g$, this is trivial since by definition $\D(\g)=\emptyset$.

To show that $\D(\kf_1) \cap \D(\kf_2) \subseteq \D(\kf^*)$, let $A \in \D(\kf_1)  \cap   \D(\kf_2)$. 
By hypothesis,
$$
  [A,\ad(\kf_1)]= [A,\ad(\kf_2)]=0\,.
$$
Let $X_1 \in \kf_1$ and $X_2 \in \kf_2$.
By the Jacobi identity we have
\beg
    [A,\ad([X_1,X_2])]
       &=&
         \big[A,[\ad(X_1),\ad(X_2)]\big] \\
         &=&
         -\big[\ad(X_2),[A,\ad(X_1)]\big]
         -\big[\ad(X_1),[\ad(X_2),A]\big]\\
         &=& 0\,.
\en
Since a finite number of such iterated Lie brackets of elements
in $\kf_1$ and $\kf_2$ generate $\kf^*$,
it follows that $[A, \ad(\kf^*)]=0$.

By hypothesis, we know $\kf_1,\kf_2 \in \ker(A)$.
Then, for $X_1 \in \kf_1$, $X_2\in \kf_2$ as above, we have
$$
A([X_1,X_2])= A(\ad(X_1))(X_2)=\ad(X_1)(A(X_2))=0\,.
$$
An easy induction shows that
$\kf^* \subset \ker (A)$.
Thus, $\D(\kf_1) \cap \D(\kf_2) \subset \D(\kf^*)$, which proves our equality.
\end{proof}

\begin{definition}\label{def:toral-non-toral}{\em (Toral and non-toral subalgebras)}
For an intermediate subalgebra $\kf \in \Sub$, if  
$$\m_{\blds\kf}:=\m \cap \kf$$  
is an abelian subalgebra of $\g$ we say $\kf$ is {\em toral}. Otherwise, we say $\kf$ is {\em non-toral}. 
We let $\Sub_t$ denote the set of all toral subalgebras in $\Sub$ and let $\Sub_s$ denote the non-toral subalgebras in $\Sub$. Let $\Sub_{s,t}$ denote the subset of non-toral subalgebras $\kf$ for which there is some toral subalgebra 
$\tf \in \Sub_t$ with $\tf < \kf$. 
\end{definition}

If $\kf$ is toral then $\kf$ is an abelian extension of $\h$, but $\kf$ itself need not be abelian. 
If $\kf$ is a non-toral subalgebra, its semisimple part $\kf_s$ is strictly bigger than the one of $\h$.

Note that toral or non-toral subalgebras are not necessarily compact.
(A subalgebra $\kf$ of $\g$ is said to be compact if it is
the Lie algebra of a compact subgroup $K$ of $G$.)

A subalgebra $\kf$ of $\g$ of dimension $k$ can be considered as an element in
the Grassmannian $Gr_k(\RR^{\dim \bldsg \g })$ of $k$-planes in $\RR^{\dim \bldsg \g}$. 
Thus, we can speak about convergence
of sequences of subalgebras of fixed dimension.

\begin{lemma}\label{lem:Dkfi}
Let $(\kf_i)_{i\in \N}$ be a sequence of non-toral (resp.~toral) subalgebras in $\Sub$
of fixed dimension $k$  and suppose that 
$\lim_{i\to \infty}\kf_i=\kf_\infty \in Gr_k(\RR^{\dim \bldsg \g})$.  Then \\
$(i)$ $\kf_\infty$ is a non-toral (resp.~toral) subalgebra.\\
$(ii)$  ${\displaystyle \lim_{i\to\infty}\D(\kf_i) \subset \D(\kf_\infty)}$.
\end{lemma}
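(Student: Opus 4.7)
The plan is to split the proof into three pieces: part (ii) and the toral case of (i) follow from direct continuity and closedness arguments, while the non-toral case of (i) requires exploiting the compactness of $G$.

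For (ii), given any $A_i \in \D(\kf_i)$ with $A_i \to A_\infty$ in $\SymgH$, I would verify that each defining condition of $\D(\kf_\infty)$ passes to the limit. The conditions $A_\infty \geq 0$ and $\tr A_\infty = 1$ are immediate by closedness and continuity. To show $\kf_\infty \subset \ker A_\infty$, pick any $X \in \kf_\infty$; since $\kf_i \to \kf_\infty$ in $Gr_k$, one may choose $X_i \in \kf_i$ with $X_i \to X$, and passing to the limit in $A_i(X_i) = 0$ yields $A_\infty(X) = 0$. The commutation $[A_\infty, \ad(\kf_\infty)] = 0$ follows by the same approximation applied to $[A_i, \ad(X_i)] = 0$.

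For the toral case of (i), the condition $[X,Y] = 0$ for all $X, Y \in \m_\kf$ is closed in the Grassmannian. Given $X, Y \in \m_{\kf_\infty}$, I would choose $X_i, Y_i \in \m_{\kf_i}$ with $X_i \to X$, $Y_i \to Y$ (using that $\m$ is fixed and $\kf_i \to \kf_\infty$), and conclude $[X, Y] = \lim [X_i, Y_i] = 0$, so $\m_{\kf_\infty}$ is abelian.

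The non-toral case of (i) is the main difficulty: ``non-abelian'' is an open condition in the Grassmannian, so a priori the brackets on $\m_{\kf_i}$ could shrink to zero in the limit. I would exploit the compactness of $G$. Each $\kf_i$ is compact reductive, so $\kf_i = \z(\kf_i) \oplus [\kf_i,\kf_i]$, and as noted in the paper, non-torality forces the semisimple part $[\kf_i,\kf_i]$ to strictly contain the semisimple part of $\h$. Since only finitely many isomorphism types of compact semisimple Lie algebras occur in each dimension, and each such type embeds in the compact $\g$ in only finitely many $G$-conjugacy classes (the embedding being determined up to conjugacy by a discrete representation type), after passing to a subsequence one may assume all $[\kf_i,\kf_i]$ lie in a single closed $G$-orbit in the appropriate Grassmannian. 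Hence $L_\infty := \lim [\kf_i,\kf_i]$ is a compact semisimple Lie subalgebra of $\g$ of the same dimension and type, and $L_\infty \subset \kf_\infty$, so $\kf_\infty$ cannot be abelian. The remaining and most delicate step, which I expect to be the main obstacle, is to prevent $L_\infty$ from collapsing entirely into $\h$: this should follow from tracking the $\h$- and $\m$-components of $L_i$, using that $L_i$ is an ideal of $\kf_i$ and that the non-toral hypothesis provides a uniform lower bound on the $\m_{\kf_i}$-component of $L_i$, ultimately yielding $[\m_{\kf_\infty}, \m_{\kf_\infty}] \neq 0$.
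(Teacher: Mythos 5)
Your part (ii) and the toral half of (i) are correct and coincide with the paper's own reasoning: both are pure continuity arguments, approximating $X\in\kf_\infty$ (resp.\ $X,Y\in\m\cap\kf_\infty$) by elements $X_i\in\kf_i$ (resp.\ of $\m\cap\kf_i$) and passing to the limit in the defining conditions of $\D(\kf_\infty)$, resp.\ in $[X_i,Y_i]=0$. For the non-toral half of (i), your central idea is also the paper's: after passing to a subsequence the semisimple parts $L_i:=[\kf_i,\kf_i]$ lie in a single $\Ad(G)$-conjugacy class (finiteness of semisimple subalgebras between $\h_s$ and $\g$ up to conjugation, which the paper takes from Propositions 4.1 and 4.2 of \cite{BWZ}), the orbit is compact, hence $L_\infty=\lim L_i$ is a semisimple subalgebra of the same dimension contained in $\kf_\infty$.

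The gap is the final step, which you leave open and whose proposed completion would not work as stated. First, ``$\kf_\infty$ cannot be abelian'' is weaker than what is needed: a toral subalgebra need not be abelian (only $\m\cap\kf$ must be), so non-abelian does not give non-toral. Second, the route you sketch, a uniform lower bound on the $\m$-component of $L_i$, is not supplied by the non-toral hypothesis (which is purely qualitative), and it is also unnecessary; likewise the scenario you fear, $L_\infty$ collapsing into $\h$, is impossible for trivial reasons (a semisimple subalgebra of $\h$ lies in $\h_s$, whose dimension is smaller than $\dim L_\infty$), yet excluding it still would not yield non-torality. The step closes by a short structural argument: since $\h_s\subsetneq L_i$ for every $i$ (as you note) and $\h_s$ is a fixed subspace, we get $\h_s\subset L_\infty$ and $\dim L_\infty=\dim L_i>\dim\h_s$. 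If $\kf_\infty$ were toral, then $\kf_\infty=\h\oplus\af$ with $\af=\m\cap\kf_\infty\subset\m_0$ abelian (see the remark at the beginning of Section \ref{sec:candir}, based on Lemma \ref{lemkmm}), so that $[\kf_\infty,\kf_\infty]=[\h,\h]=\h_s$; but $L_\infty$ is semisimple, hence $L_\infty=[L_\infty,L_\infty]\subset[\kf_\infty,\kf_\infty]=\h_s$, contradicting $\dim L_\infty>\dim\h_s$. Thus $\kf_\infty$ is non-toral, with no quantitative tracking of components required; with this inserted, your proof becomes essentially the paper's.
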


\begin{proof}
 By the continuity of the Lie bracket, it is clear that the limit space $\kf_\infty$ is a subalgebra
of $\g$ of dimension $k$.

For each $i\in \NN$, the semisimple part $\h_{s}$ of $\h$ is a proper subalgebra of $(\kf_i)_{s}$, the semisimple part of $\kf_i$,  
since each $\kf_i$ is a non-toral extension of $\h$. This follows, since up to conjugation, there exist only finitely many
semisimple subalgebras $\kf_s$ of $\g$ with $\h_s <\kf_s < \g$, by Proposition 4.1 and 4.2, \cite{BWZ}.  
As a result, a sequence $(\kf_i)_{i \in \N}$ of  non-toral Lie algebras $\kf_i$ with $\h < \kf_i < \g$ of fixed dimension cannot converge to a toral subalgebra, proving the first claim.

To see the second claim, let $(A_i)_{i \in \N}$ be a sequence in $\Sphb$ with 
$A_i \in \D(\kf_i)$ for all $i \in \NN$.  Suppose that $\lim_{i\to \infty}A_i=A_\infty \in \Sphb$. We show that $A_\infty \in \D(\kf_\infty)$.
It is clear that $A_\infty \geq 0$ and that $\tr A_\infty=1$. Suppose now that $\kf_\infty \not\subset \ker(A_\infty)$, so
 there exists some $X \in \kf_\infty$ such that $A_\infty(X) \neq 0$. It follows that we also
have $A_i(X) \neq 0$ for all $i \geq i_0$. Since $\kf_i$ converges to $\kf_\infty$ as $i\to \infty$,
there exist a sequence $(X_i)_{i\in \NN}$ with $X_i \in \kf_i$ 
and $\lim_{i \to \infty }X_i=X$.
Consequently $A_i(X_i)\neq 0$ for sufficiently large $i$. But this is a contradiction.

It remains to show that $[A_\infty,\ad(\kf_\infty)]=0$. If this is not true, then for some $X \in \kf_\infty$, 
we have $[A_\infty,\ad(X)]\neq 0$. As above, we can construct a sequence with $[A_i,\ad(X_i)]\neq 0$ for sufficiently large $i$,
again obtaining a contradiction. This shows $A_\infty \in \D(\kf_\infty)$.
\end{proof}

\begin{remark}\label{rem:disk-dim} We note while the dimension $\dim(\kf_i) =\dim(\kf_\infty)$ is constant, it is possible that in the corresponding sequence of disks, the disk $ \D(\kf_\infty)$ of the limit may have a larger dimension.
\end{remark}

\begin{lemma}\label{lem:sequsubalgsub}
Any sequence $(\kf_i)_{i \in \N}$ in $\Sub$ has a convergent subsequence.
\end{lemma}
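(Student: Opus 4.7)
The plan is to reduce to a fixed-dimension subsequence and then use compactness of the Grassmannian, in parallel with part $(i)$ of Lemma \ref{lem:Dkfi}.

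First I would note that every $\kf \in \Sub$ satisfies $\dim \h < \dim \kf < \dim \g$, so there are only finitely many possible values of $\dim \kf_i$. By the pigeonhole principle some integer $k$ with $\dim \h < k < \dim \g$ is attained by $\dim \kf_i$ for infinitely many indices $i$, and after passing to a subsequence we may assume $\dim \kf_i = k$ for all $i \in \N$. This places the entire sequence inside the Grassmannian $Gr_k(\RR^{\dim \bldsg \g})$.

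Next, since $Gr_k(\RR^{\dim \bldsg \g})$ is compact, a further subsequence converges to some $k$-plane $\kf_\infty \subset \g$. It remains to verify $\kf_\infty \in \Sub$, that is, $\kf_\infty$ is a subalgebra with $\h < \kf_\infty < \g$. The dimension constraint $\dim \h < k = \dim \kf_\infty < \dim \g$ already gives strict containments at the level of dimensions, so the only real content is (a) that $\h \subset \kf_\infty$ and (b) that $\kf_\infty$ is closed under the Lie bracket. For (a), pick any $X \in \h$; since $\h \subset \kf_i$ for every $i$, the constant sequence $X$ lies in $\kf_i$ for all $i$, hence in the Grassmannian limit $\kf_\infty$. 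For (b), I would choose bases $\{e^i_1,\dots,e^i_k\}$ of $\kf_i$ converging (by definition of Grassmannian convergence) to a basis $\{e^\infty_1,\dots,e^\infty_k\}$ of $\kf_\infty$. For each pair $a,b$ the element $[e^i_a,e^i_b]$ lies in $\kf_i$, and by continuity of the Lie bracket it converges to $[e^\infty_a,e^\infty_b]$. Since a convergent sequence of vectors lying in a convergent sequence of subspaces has its limit in the limit subspace, we conclude $[e^\infty_a,e^\infty_b] \in \kf_\infty$, so $\kf_\infty$ is a subalgebra.

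Combining these observations yields $\kf_\infty \in \Sub$, completing the proof. I do not foresee a real obstacle here: the only subtlety is distinguishing between the naive set-theoretic limit and the Grassmannian limit when verifying that the Lie bracket closes, but this is handled by the basis argument above, which is the same mechanism implicit in the proof of Lemma \ref{lem:Dkfi}(i).
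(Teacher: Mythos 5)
Your proposal is correct and follows essentially the same route as the paper: finitely many possible dimensions, pass to a subsequence of fixed dimension, then invoke compactness of the Grassmannian $Gr_k(\RR^{\dim \bldsg \g})$. Your extra verification that the limit contains $\h$ and is closed under the bracket is the content the paper already records in Lemma \ref{lem:Dkfi}(i), so it is harmless but not needed for the statement as given.
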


\begin{proof}
We first note that there are only finitely many possible dimensions of intermediate subalgebras.
Thus, passing to a subsequence, we may assume that $k=\dim \kf_i$ for all $i\in \N$.
Since the Grassmannian $Gr_k(\RR^{\dim \bldsg\g})$ is compact, the claim follows.
\end{proof}

\subsection{{\it Flags of subalgebras}}\label{sec:flags}

In this section we define a novel   
partial ordering, due to Graev,  on the set of flags of intermediate subalgebras \cite{Gr}. With the help of this ordering the combinatorial 
properties of butterflies described below follow easily.

\medskip

Recall that we write $\kf_1 < \kf_2$ if $\kf_1 \subsetneq \kf_2$.

\begin{definition}[Flags of subalgebras]\label{def:flags}  
A {\em flag} is  an increasing sequence 
$$
  \varphi = (\kf_1 < \dots < \kf_r)
$$
of subalgebras $\kf_1,...,\kf_r$ where we always require $\h < \kf_i < \kf_{i+1} \leq \g$ for $i=1,...,r-1$. 
We say $\varphi$ has {\em length} given by $\ell(\varphi) = r \geq 1$. 
A subalgebra $\kf$ is a flag $(\kf)$ of length one. A flag $\varphi$ has a {\em maximum}   
$\max(\varphi) = \kf_r$, and {\em height}  $h(\varphi) = \dim(\max(\varphi))$.
We will say a flag $\varphi$ is {\em proper} if $h(\varphi) < \dim \g$, otherwise $\varphi$ is {\em improper}. 
 \end{definition}

 If every subalgebra in the flag $\varphi$ occurs in the flag $\psi$,  we write $\varphi \subset \psi$.

 \begin{definition}[Sets of flags]\label{def:Union Non-toral flags} 
We call $ \varphi=(\kf_1 < \cdots <\kf_r)$ a {\em non-toral flag} if $\kf_1$ is non-toral.
We call $\varphi=(\tf_1 < \cdots <\tf_r)$ a {\em toral flag} if $\tf_r$ is toral.
We write $\Fl$ for the {\em set of all flags}, by $\Fln$ the set of all non-toral flags, 
and by $\Flt$ the set of all toral flags. 
We write $\Flts$ for the set of all flags of type
$\varphi=(\tf_1 < \cdots < \tf_r< \kf)$ with $\tf_i$ toral for each $i=1,\dots,r$, $1\leq r$,
and $\kf \leq \g$ non-toral. 
\end{definition}

Notice that if $\kf_1$ is non-toral, so is $\kf_i$ for all $1 \leq i \leq r$, and that if $\tf_r$ is
toral so is $\tf_i$ for all $1 \leq i \leq r$.  Furthermore, while a subalgebra is either toral or non-toral, a flag can be neither toral nor non-toral.

\begin{definition}[Partial ordering of flags]\label{def:flag-order} 
Let $\varphi, \psi \in \Fl$. Then we write $\varphi \leq \psi$ if either $\varphi = \psi$ 
or there exists a sequence of flags $\varphi = \varphi_1, \dots, \varphi_k=\psi$ for some $k \geq 2$ such that  the lengths of adjacent flags differ by $\pm 1$, and for every $1\leq j < k$, we have either
\begin{enumerate}
\item[ {\rm (i)}] $\varphi_{j} \subsetneq \varphi_{j+1}$ and $\max (\varphi_j) < \max (\varphi_{j+1})$ or                            
\item[ {\rm (ii)}] $\varphi_{j+1} \subsetneq \varphi_{j}$ and $\max (\varphi_{j+1}) = \max (\varphi_j)$.
\end{enumerate}
If we have two flags with $\varphi \leq \psi$ and with $\varphi \neq \psi$, we write $\varphi < \psi$.
\end{definition}

Notice that we obtain $\varphi_{j+1}$ from $\varphi_j$ by adding a new maximum subalgebra in case (i), 
or by removing a non-maximal subalgebra from $\varphi_j$ in case (ii).

\begin{example} Let $\kf_1,...,\kf_6$ be subalgebras with $\kf_1< \cdots <\kf_6$.
Let $\varphi = (\kf_1 < \kf_2 < \kf_3 < \kf_4)$ and $\psi = (\kf_2 < \kf_5 < \kf_6)$.
Then  $\varphi \leq \psi$ since
\begin{align*}
\varphi = (\kf_1 < \kf_2 < \kf_3 < \kf_4) = \varphi_1 \supset \varphi_2=(\kf_2 < \kf_3 < \kf_4) \supset \varphi_3=(\kf_2 < \kf_4) \\
\subset \varphi_4=(\kf_2 < \kf_4 < \kf_5) 
\supset \varphi_5=(\kf_2  < \kf_5) \subset \varphi_6=(\kf_2 < \kf_5 < \kf_6) = \psi\,.
\end{align*}
\end{example}

\begin{lemma}\label{lem:flag-transitivity} Let $\varphi,\psi,\zeta \in \Fl$ with  $\varphi \leq \psi$ and  $\psi \leq \zeta$. 
Then  $\varphi \leq \zeta$.
\end{lemma}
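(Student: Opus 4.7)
The plan is to prove transitivity by simple concatenation of the witnessing sequences from the definition of $\leq$. First I would dispatch the trivial cases: if $\varphi=\psi$ or $\psi=\zeta$, then there is nothing to show. So assume both inequalities are strict and let
\[
  \varphi=\varphi_1,\varphi_2,\dots,\varphi_k=\psi \quad\text{and}\quad \psi=\psi_1,\psi_2,\dots,\psi_m=\zeta
\]
be sequences of flags witnessing $\varphi\leq\psi$ and $\psi\leq\zeta$, respectively, as in Definition \ref{def:flag-order}.

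The main step is to form the concatenation
\[
  \varphi=\varphi_1,\dots,\varphi_{k-1},\varphi_k=\psi_1,\psi_2,\dots,\psi_m=\zeta
\]
and check that this sequence satisfies conditions (i) and (ii) of Definition \ref{def:flag-order} at every consecutive pair. For any consecutive pair coming entirely from the first sequence or entirely from the second sequence, the required property holds by hypothesis. At the unique junction, the pair $\varphi_{k-1},\varphi_k=\psi_1$ equals $\varphi_{k-1},\varphi_k$, which is already a consecutive pair in the first sequence, and the pair $\varphi_k=\psi_1,\psi_2$ equals $\psi_1,\psi_2$, a consecutive pair in the second sequence. So all length-difference and $\max$-comparison conditions remain intact.

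I expect no substantive obstacle here: the ordering was defined precisely so that concatenability is automatic. The only thing to be mildly careful about is that the definition requires the witnessing sequence to have length $\geq 2$ when $\varphi\neq\psi$, so one should verify the concatenated sequence genuinely has distinct endpoints $\varphi\neq\zeta$ (which is given) and hence a legitimate length $k+m-1\geq 2$. Once this is observed, the sequence exhibits $\varphi\leq\zeta$ and the lemma follows.
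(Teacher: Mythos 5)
Your proof is correct and matches the paper's argument: both establish transitivity by concatenating the two witnessing sequences and observing that conditions (i) and (ii) are checked only on consecutive pairs, each of which lies entirely in one of the two original sequences. One minor imprecision: you write that $\varphi\neq\zeta$ "is given," but it is not part of the hypothesis; however, this is harmless, since if $\varphi=\zeta$ then $\varphi\leq\zeta$ holds trivially by the definition.
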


\begin{proof} Given $\varphi \leq \psi$, there exists a sequence of flags, $\varphi = \varphi_1, \dots, \varphi_k=\psi$ satisfying the criteria in Definition \ref{def:flag-order}. And since $\psi\leq \zeta$, there exists a sequence of flags, $\psi = \psi_1, \dots, \psi_l=\zeta$ as in Definition \ref{def:flag-order}. We construct sequence of flags $\varphi_1, \dots, \varphi_k=\psi_1, \varphi_{k+1}=\psi_2, \dots, \varphi_{k+l-1}=\psi_l$. This is a sequence that starts at $\varphi$ and ends at $\zeta$, satisfying criteria $(i)$ and $(ii)$ at each increment in Definition \ref{def:flag-order}. With this sequence, we conclude $\varphi \leq \zeta$.
\end{proof}

\begin{lemma}\label{lem:max}
Let $\varphi, \psi \in \Fl$ with  $\varphi \leq \psi$. Then $\max(\varphi)\leq \max(\psi)$.
Moreover, if there
exists a sequence $\varphi = \varphi_1, \dots, \varphi_k=\psi$ 
as in Definition \ref{def:flag-order} with $k \geq 2$, then $\varphi \neq \psi$. 
\end{lemma}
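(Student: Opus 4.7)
The plan is to track two quantities along any chain $\varphi = \varphi_1, \dots, \varphi_k = \psi$ witnessing $\varphi \leq \psi$: the maximum subalgebra $\max(\varphi_j)$ and the length $\ell(\varphi_j)$. Both claims will follow from how each elementary step (i) or (ii) affects these quantities.

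First I would record the elementary observation that at each step from $\varphi_j$ to $\varphi_{j+1}$, in case (i) one adds exactly one subalgebra (namely the new maximum, strictly larger than the old one), so $\max(\varphi_j) < \max(\varphi_{j+1})$ and $\ell(\varphi_{j+1}) = \ell(\varphi_j) + 1$; whereas in case (ii) one removes a single non-maximal subalgebra, so $\max(\varphi_j) = \max(\varphi_{j+1})$ and $\ell(\varphi_{j+1}) = \ell(\varphi_j) - 1$. In both cases $\max(\varphi_j) \leq \max(\varphi_{j+1})$.

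For the first claim, iterating the above monotonicity of $\max$ along the chain yields $\max(\varphi) = \max(\varphi_1) \leq \max(\varphi_k) = \max(\psi)$. The case $\varphi = \psi$ is immediate.

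For the second (``moreover'') claim, suppose for contradiction that $k \geq 2$ but $\varphi = \psi$. Then in particular $\max(\varphi_1) = \max(\varphi_k)$. Since $\max$ strictly increases at every type (i) step and is preserved at every type (ii) step, every one of the $k - 1 \geq 1$ steps must be of type (ii). But each type (ii) step decreases $\ell$ by one, so
\[
\ell(\psi) = \ell(\varphi_k) = \ell(\varphi_1) - (k-1) < \ell(\varphi),
\]
contradicting $\varphi = \psi$. The main (mild) subtlety to keep in mind when writing this up cleanly is that the definition only requires lengths of adjacent flags to differ by $\pm 1$, so one must verify from conditions (i) and (ii) themselves that these sign choices are forced as indicated; this is immediate once one notes that in (i) one has $\varphi_j \subsetneq \varphi_{j+1}$ with $\max$ strictly enlarging (so at least one new element is added, and the $\pm 1$ length condition pins it to exactly one), and symmetrically for (ii).
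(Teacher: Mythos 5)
Your proof is correct and takes essentially the same approach as the paper: both track the monotone behavior of $\max$ along the chain and then observe that if $\max$ never strictly increases, every step must be of type (ii), which forces the length to strictly decrease. The paper phrases it as a direct two-case argument rather than a contradiction, and leaves the ``length decreases by exactly one in case (ii)'' point implicit, but the substance is identical.
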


\begin{proof} If $\varphi=\psi$ then $\max(\varphi)=\max(\psi)$. Thus we may assume $\varphi \neq \psi$
and that there exists a sequence $\varphi = \varphi_1, \dots, \varphi_k=\psi$, $k \geq 2$. 
By Definition  \ref{def:flag-order} 
we have $\max (\varphi_j) \leq \max (\varphi_{j+1})$ for all $1 \leq j \leq k-1$. This shows the first claim.

To show the second claim
suppose that for some $j<k$, 
$\max (\varphi_j) < \max (\varphi_{j+1})$. Then $\max(\varphi) < \max(\psi)$ thus $\varphi \neq \psi$.
We are left with the case $\max (\varphi_j) = \max (\varphi_{j+1})$ for all $j=1,...,k-1$.
But then we have that the length $\ell(\varphi_{j+1}) = \ell(\varphi_j) -1$ for all $j=1,...,k-1$.  
Hence $\ell(\varphi) > \ell(\psi)$, which tells us $\varphi \neq \psi$. 
\end{proof}

Here we have a non-inductive description of this partial ordering:

\begin{lemma} \label{lem:ordering}
Let $\varphi,\psi \in \Fl$. Then
we have
$$\varphi \leq \psi \Leftrightarrow \begin{cases} &\max (\varphi) \leq \max(\psi) \textrm{ and } \\
&\{\kf \not\in\varphi ~\ande ~ \kf \in\psi\} \Rightarrow \max(\varphi) < \kf.\end{cases}$$
\end{lemma}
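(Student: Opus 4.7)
The plan is to establish the two implications independently. For the forward direction, Lemma \ref{lem:max} immediately yields $\max(\varphi) \leq \max(\psi)$. For the second condition, I would fix $\kf \in \psi$ with $\kf \notin \varphi$ and pick a witnessing sequence $\varphi = \varphi_1, \dots, \varphi_k = \psi$ from Definition \ref{def:flag-order}. Let $j$ be the smallest index with $\kf \in \varphi_j$; since a type (ii) transition only deletes subalgebras, the step $\varphi_{j-1} \to \varphi_j$ must be of type (i), so the newly inserted element $\kf$ equals $\max(\varphi_j)$ and satisfies $\max(\varphi_{j-1}) < \kf$. Because the maximum is non-decreasing along the sequence (strictly increasing in case (i), preserved in case (ii)), I conclude $\max(\varphi) \leq \max(\varphi_{j-1}) < \kf$, as required.

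For the converse, I would construct an explicit witnessing sequence. The hypothesis yields a clean decomposition of $\psi$: let $\Psi_+$ be the set of $\kf \in \psi$ with $\max(\varphi) < \kf$, and set $\Psi_- := \psi \setminus \Psi_+$. Since elements of a flag are totally ordered and $\max(\varphi) \leq \max(\psi)$, every $\kf \in \psi$ is comparable to $\max(\varphi)$, and combining this with the implication on the right-hand side forces $\Psi_- \subseteq \varphi$. Enumerating $\Psi_+$ in increasing order, I would transform $\varphi$ into $\psi$ in three stages: (1) append the smallest element of $\Psi_+$ as a new maximum (a type (i) move, valid since it strictly exceeds $\max(\varphi)$); (2) remove the elements of $\varphi \setminus \psi$ one at a time (each lies $\leq \max(\varphi)$, so strictly below the current maximum, making each a type (ii) move); (3) successively append the remaining elements of $\Psi_+$ as new maxima in increasing order (type (i) moves). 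The terminal flag is $\Psi_- \cup \Psi_+ = \psi$, so $\varphi \leq \psi$ by definition.

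The bookkeeping is routine, but I would treat the degenerate cases explicitly. If $\Psi_+ = \emptyset$, then $\max(\varphi) \leq \max(\psi)$ forces $\max(\varphi) = \max(\psi)$ and hence $\psi \subseteq \varphi$, so only stage (2) is invoked; if $\varphi \subseteq \psi$, stage (2) is vacuous. The only genuinely non-trivial observation, and hence the main potential obstacle, is noticing at the outset that the right-hand side conditions make every $\kf \in \psi$ comparable to $\max(\varphi)$, which is what legitimises the split $\psi = \Psi_- \sqcup \Psi_+$ and enables the three-stage construction to go through without complication.
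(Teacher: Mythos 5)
Your proof is correct and follows essentially the same route as the paper: the forward direction uses Lemma \ref{lem:max} together with the observation that a subalgebra can only enter a witnessing sequence as a newly added maximum, and your converse builds the same explicit sequence as the paper, merely interleaving the moves differently (the paper appends all of $\Psi_+=\{\kf\in\psi \mid \max(\varphi)<\kf\}$ first and only then deletes $\varphi\setminus\psi$, while you append one, delete, then append the rest). One small caveat: your claim that every $\kf\in\psi$ is comparable to $\max(\varphi)$ does not follow from the total orderedness of $\psi$ and $\max(\varphi)\leq\max(\psi)$ alone (containment of subalgebras is only a partial order), but it is also not needed, since $\Psi_-\subseteq\varphi$ is exactly the contrapositive of the right-hand condition.
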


\begin{proof} 
If $\varphi = \psi$, the statement is trivially true. If $\varphi < \psi$,   then by Lemma \ref{lem:max}, 
$\max (\varphi) \leq \max(\psi)$.
Suppose there exists some $\kf \in \psi$, $\kf \not\in \varphi$ and $\max(\varphi) \not< \kf$. 
By hypothesis we have a sequence $\varphi = \varphi_1, \dots, \varphi_k = \psi$.
Since $\kf \not\in\varphi= \varphi_1$ and $\kf \in \psi =\varphi_k$, for some $1 \leq j_0 \leq k-1$, $\kf \not\in\varphi_{j_0}$ and 
$\kf \in\varphi_{j_0 +1}$. By Definition \ref{def:flag-order}, we can only add a subalgebra to a flag if we add a new maximum, that is, only if $\max(\varphi_{j_0}) < \kf$. But then by Lemma \ref{lem:max} we would have
$\max(\varphi) \leq \max(\varphi_{j_0}) < \kf$, yielding a contradiction.

Conversely, suppose we have flags $\varphi, \psi \in \Fl$ with $\max(\varphi) \leq \max(\psi)$ and furthermore, for all $\kf \in \psi$ with $\kf \not\in \varphi$ we have 
$\max(\varphi) < \kf$. 
We show $\varphi \leq \psi$. 

If $\psi \subsetneq \varphi$ and  $\max(\varphi) =\max(\psi)$, then $\varphi \leq \psi$. 
If $\psi \subsetneq \varphi$ is not true then
for some $\kf_{i_0} \in \psi=(\kf_1 < \cdots < \kf_r)$ we have $\kf_{i_0}\not\in \varphi$.
By hypothesis, $\max(\varphi) < \kf_{i_0}$. Let $i_0 \in \{1,..., r\}$  be the smallest
index for which $\kf_{i_0} \in \psi$ but $\kf_{i_0}\not\in \varphi$. 
We build our sequence $\varphi = \varphi_1, \dots, \varphi_k = \psi$ as follows:
$\varphi_1 = \varphi$,  
$\varphi_2 = (\varphi_{1} < \kf_{i_0})$, 
$\varphi_3 = (\varphi_{2} < \kf_{i_0 +1})$, $\dots$, 
$\varphi_{k'}= (\varphi < \kf_{i_0}< \cdots < \kf_r)$. Notice that $(\kf_1< \dots < \kf_{i_0 - 1}) \subset \varphi$ by definition of $i_0$. In particular, $\psi$ is a subset of $\varphi_{k'}$. 
There may be finitely many subalgebras in $\varphi$ which are not in $\psi$. We  continue our sequence, 
removing these one at a time. 
Since $\max(\varphi)< \kf_{i_0}$,
$\varphi_{k'+1}$ is obtained by removing the smallest subalgebra in $\varphi_{k'}$, not in $\psi$,  $\varphi_{k'+2}$ is obtained by removing the smallest subalgebra in $\varphi_{k'+1}$, not in $\psi$, etc.  Thus $\varphi < \psi$.
\end{proof}

\begin{corollary}\label{cor:ordering}
Let $\varphi,\tilde \varphi \in \Fl$ with $\varphi \leq \ti \varphi$ and heights
$h(\varphi)=h(\tilde \varphi)$. Then $\tilde \varphi \subset  \varphi$.
\end{corollary}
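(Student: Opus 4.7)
The plan is to combine Lemma \ref{lem:max} with the non-inductive characterization of the partial ordering given in Lemma \ref{lem:ordering}. The assumption $h(\varphi)=h(\tilde\varphi)$ will immediately pin down the top subalgebras to be equal, and then Lemma \ref{lem:ordering} will force every element of $\tilde\varphi$ to already lie in $\varphi$.

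First I would apply Lemma \ref{lem:max} to $\varphi \leq \tilde\varphi$ to conclude $\max(\varphi) \leq \max(\tilde\varphi)$. Since both flags have the same height, the subalgebras $\max(\varphi)$ and $\max(\tilde\varphi)$ have the same dimension, so the inclusion $\max(\varphi) \subseteq \max(\tilde\varphi)$ must be an equality, giving $\max(\varphi) = \max(\tilde\varphi)$.

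Next I would argue by contradiction: suppose some $\kf \in \tilde\varphi$ satisfies $\kf \notin \varphi$. By Lemma \ref{lem:ordering} applied to $\varphi \leq \tilde\varphi$, the existence of such a $\kf$ forces $\max(\varphi) < \kf$. On the other hand, since $\kf$ appears in the flag $\tilde\varphi$, we have $\kf \leq \max(\tilde\varphi) = \max(\varphi)$, contradicting the strict inclusion. Hence every subalgebra in $\tilde\varphi$ lies in $\varphi$, i.e.\ $\tilde\varphi \subset \varphi$.

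There is no real obstacle here; the only thing to watch is that the previous lemmas have already been proved in full generality, so no case analysis on the constructing sequence $\varphi_1,\dots,\varphi_k$ is needed — the non-inductive reformulation in Lemma \ref{lem:ordering} does all the combinatorial work, and the equal-heights hypothesis simply rules out the ``add a new maximum'' moves from Definition \ref{def:flag-order}(i).
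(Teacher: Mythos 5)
Your proof is correct and follows essentially the same route as the paper: both argue by contradiction from Lemma \ref{lem:ordering}, which forces $\max(\varphi) < \kf$ for any $\kf \in \tilde\varphi \setminus \varphi$. The only cosmetic difference is that you first pin down $\max(\varphi)=\max(\tilde\varphi)$ via Lemma \ref{lem:max}, whereas the paper reaches the contradiction directly from $\max(\varphi) < \tilde\kf \leq \max(\tilde\varphi)$, which would force $h(\varphi)<h(\tilde\varphi)$.
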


\begin{proof} Suppose that there is some subalgebra $\tilde \kf \in \ti \varphi$, $\tilde \kf \not\in\varphi$. 
Since $\varphi \leq \ti \varphi$ by hypothesis, we have $\max(\varphi) < \tilde \kf$ by Lemma \ref{lem:ordering}. 
This means $\max(\varphi) < \tilde \kf \leq \max(\ti \varphi)$ and consequently 
$h(\varphi)<h(\tilde \varphi)$, a contradiction. Thus $\tilde \varphi \subset  \varphi$.
\end{proof}

In order to describe the combinatorial properties of butterflies (see section \ref{sec:butterflies}) the following product of flags, introduced in \cite{Gr},
is very helpful.

\begin{definition}[Product of flags] \label{def:productflags} 
The {\em product} of two flags $ \varphi , \tilde{\varphi} \in \Fl$ is defined as follows:
\begin{enumerate}
\item[{\rm (1)}] 
Suppose there exists $\kf \in \varphi=(\kf_1< \cdots < \kf_r)$ with 
$\kf  > \max(\tilde{\varphi})$ and let $\kf_{i_0}$ be the smallest such subalgebra.
Then we set
$$ 
 \varphi \, \tilde{\varphi} := \big( 
  (\varphi \cap \tilde{\varphi})  < \kf_{i_0} < \kf_{i_0 +1} < \cdots < \kf_r  \big)\subset \varphi\,.
$$
\item[{\rm (2)}] 
Suppose there exists $\tilde\kf \in \tilde\varphi=(\ti \kf_1< \cdots < \ti \kf_{\ti r})$ 
with $\tilde\kf    > \max({\varphi})$ and let $\tilde\kf_{\tilde i_0}$
be the smallest such subalgebra. Then we set
$$ 
 \varphi \, \tilde{\varphi} := \big( (\varphi \cap \tilde{\varphi})  
< \tilde\kf_{\tilde i_0} <\ kf_{i_0 +1} < \cdots < \tilde\kf_{\tilde{r}}  \big) \subset \tilde\varphi\,. 
$$
\item[{\rm (3)}] If there exists no $\kf \in \varphi$ with $\kf  > \max(\tilde{\varphi})$ 
and no  $\tilde\kf \in \tilde\varphi$
with $\tilde\kf  > \max({\varphi})$, then for $\kf^* = \langle \max(\varphi) , \max(\tilde{\varphi})\rangle$
we set
 $$
 \varphi \, \tilde{\varphi} :=  
((\varphi \cap \tilde{\varphi})  
 \leq \kf^*)\,.
$$
\end{enumerate}
\end{definition}

\begin{lemma}\label{lem:flag-product}  
Let $\varphi \,, \tilde{\varphi} \in \Fl$. Then their product $\varphi \, \tilde{\varphi} \in \Fl$, 
with $\varphi \leq \varphi \, \tilde{\varphi}$ and $\tilde{\varphi} \leq  \varphi \, \tilde{\varphi}$. 

\end{lemma}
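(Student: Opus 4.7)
The plan is to check each of the three cases in Definition \ref{def:productflags} separately, verifying first that the sequence $\varphi\,\tilde{\varphi}$ is indeed a flag and then checking the two ordering relations via the non-inductive criterion of Lemma \ref{lem:ordering}: $\alpha \leq \beta$ holds precisely when $\max(\alpha) \leq \max(\beta)$ and every subalgebra appearing in $\beta$ but not in $\alpha$ strictly contains $\max(\alpha)$.

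In case (1), the product is by construction a subsequence of $\varphi$, so it is automatically strictly increasing once I check that the appended initial segment $\varphi \cap \tilde{\varphi}$ sits below $\kf_{i_0}$; but every $\kf \in \varphi \cap \tilde{\varphi}$ satisfies $\kf \leq \max(\tilde{\varphi}) < \kf_{i_0}$ by the choice of $\kf_{i_0}$, so $\varphi\,\tilde{\varphi}$ is a flag with $\max(\varphi\,\tilde{\varphi}) = \kf_r = \max(\varphi)$. The two conditions of Lemma \ref{lem:ordering} for $\varphi \leq \varphi\,\tilde{\varphi}$ then hold vacuously, since $\varphi\,\tilde{\varphi} \subset \varphi$ and the maxima agree. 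For $\tilde{\varphi} \leq \varphi\,\tilde{\varphi}$, the choice of $\kf_{i_0}$ gives $\max(\varphi\,\tilde{\varphi}) = \kf_r \geq \kf_{i_0} > \max(\tilde{\varphi})$, and any subalgebra of $\varphi\,\tilde{\varphi}$ not already in $\tilde{\varphi}$ must lie in $\{\kf_{i_0},\ldots,\kf_r\}$, each of which strictly contains $\max(\tilde{\varphi})$. Case (2) follows by swapping the roles of $\varphi$ and $\tilde{\varphi}$.

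For case (3), the hypothesis excludes $\max(\varphi) > \max(\tilde{\varphi})$ and $\max(\tilde{\varphi}) > \max(\varphi)$, so the two maxima either coincide or are incomparable; in either situation $\max(\varphi) \leq \kf^*$ and $\max(\tilde{\varphi}) \leq \kf^*$ by construction of $\kf^*$ as the subalgebra generated by both. The chain $\varphi \cap \tilde{\varphi}$ inherits its ordering from $\varphi$, and each of its elements is bounded above by $\max(\varphi) \leq \kf^*$, so appending $\kf^*$ (or leaving it off if it already equals $\max(\varphi \cap \tilde{\varphi})$) yields a well-defined flag. To verify $\varphi \leq \varphi\,\tilde{\varphi}$ via Lemma \ref{lem:ordering}, note that $\max(\varphi) \leq \kf^* = \max(\varphi\,\tilde{\varphi})$ is immediate, and the only candidate for a subalgebra in $\varphi\,\tilde{\varphi}$ not in $\varphi$ is $\kf^*$ itself (since $\varphi \cap \tilde{\varphi} \subset \varphi$); if this occurs, then $\kf^* \notin \varphi$ forces $\kf^* \neq \max(\varphi)$, which combined with $\kf^* \geq \max(\varphi)$ yields $\kf^* > \max(\varphi)$. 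The inequality $\tilde{\varphi} \leq \varphi\,\tilde{\varphi}$ follows symmetrically.

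The main obstacle is the bookkeeping in case (3), where one must carefully handle the degenerate possibilities $\kf^* = \max(\varphi) = \max(\tilde{\varphi})$ (which places $\kf^*$ already in $\varphi \cap \tilde{\varphi}$ and collapses the final ``$\leq$'' of the product), $\varphi \cap \tilde{\varphi} = \emptyset$ (so $\varphi\,\tilde{\varphi} = (\kf^*)$), and $\kf^* = \g$; once these are dispatched, the other two cases are straightforward index bookkeeping controlled directly by Lemma \ref{lem:ordering}.
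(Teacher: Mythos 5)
Your proposal is correct and takes essentially the same route as the paper: both argue case-by-case through Definition \ref{def:productflags}, and both verify the two ordering relations $\varphi \leq \varphi\,\tilde\varphi$ and $\tilde\varphi \leq \varphi\,\tilde\varphi$ by appealing to the non-inductive criterion of Lemma \ref{lem:ordering}. The only cosmetic difference is that you spell out in a bit more detail why $\varphi \cap \tilde\varphi$ inherits a flag structure and why the tail $\{\kf_{i_0},\ldots,\kf_r\}$ dominates $\max(\tilde\varphi)$, whereas the paper dispatches these as immediate.
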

\begin{proof}  Cases (1) and (2) are clearly flags, since a subflag of a flag is a flag. In case (3), since 
$\max(\varphi \cap \tilde{\varphi}) \leq \langle \max(\varphi) , \max(\tilde{\varphi})\rangle$, we (possibly) extend the subflag $(\varphi \cap \tilde{\varphi})$ by adding a new maximum subalgebra of $\g$, and this is a flag.  

In case (1) in Definition
\ref{def:productflags}  we have $\max(\varphi)=\max(\varphi\ti \varphi)$ and $\varphi\ti \varphi \subset \varphi$.
Thus $\varphi \leq \varphi\ti \varphi$. We also have $\max(\ti \varphi)< \kf_{i_0}\leq \max(\varphi\ti \varphi)$
and if $\kf \in \varphi\ti\varphi$ with $\kf \not\in \ti \varphi$ then $\kf > \max(\ti\varphi)$. Thus by
Lemma \ref{lem:ordering} we have $\ti \varphi \leq \varphi \ti \varphi$. The case (2) follows precisely the same way.

 When $\varphi\ti\varphi$ is defined by case (3), we have $\max(\varphi),\max(\ti \varphi) \leq \max(\varphi\ti \varphi)$. If there exists a $\kf \in \varphi\ti\varphi$
with $\kf \not \in \varphi$, we know $\max(\varphi) < \kf =\langle \max(\varphi),\max(\ti \varphi)\rangle$.  
By Lemma  \ref{lem:ordering} we deduce $\varphi \leq \varphi \ti \varphi$.
If $\max(\varphi)=\langle \max(\varphi),\max(\ti \varphi)\rangle=\kf$ then
$\max(\varphi\ti \varphi)=\max(\varphi)$ and $\varphi\ti \varphi \subset \varphi$.
Thus $\varphi \leq \varphi\ti \varphi$. Analogously one can show  $\ti \varphi \leq \varphi\ti \varphi$ in this case.
\end{proof}

\begin{example} It may help to see the ordering, and the product, of explicit examples of flags. Let $G=SU(4)$ and $H = SU(2)$ ($SU(2) \subset SU(3) \subset SU(4)$). Consider the flags $\varphi = (\un(2)< \su(3))$ and $\tilde{\varphi} =(\su(3)<\un(3))$. Then $\varphi < \tilde{\varphi} $: $\varphi_1 = (\un(2)< \su(3)) =\varphi$, $\varphi_2 = (\su(3))$, 
$\varphi_3 = (\su(3)<\un(3))=\tilde{\varphi}$.
The flags $\psi = (\un(2)< \sy(2))$ and $\tilde{\psi} = (\su(2)\oplus \su(2))$  cannot be ordered.
We get the product flags 
$\varphi\tilde{\varphi} = (\su(3)<\un(3))$ and $\psi\tilde{\psi} = (\sy(2))$.
\end{example}

\begin{corollary}\label{cor:phipsi}
 Let
$\varphi=(\tf_1 < \cdots < \tf_r< \kf)$ and 
$\tilde \varphi= (\tilde \tf_1 < \cdots < \tilde \tf_{\tilde r}< \tilde \kf)$ be two flags in $\Flts$, so that each  $\tf_i$ and  $\tilde \tf_j$ is a toral subalgebra, and $\kf,\tilde \kf$ are non-toral. 
Let $\kf^*:=\langle \kf,\tilde \kf \rangle \leq\g$. Then
either $\varphi \tilde \varphi = (\kf^*)$ or
$\varphi \tilde \varphi =(\varphi_t< \kf^*)$ with $\varphi_t:=(\tf_{i_1} < \cdots < \tf_{i_k})$, $1 \leq k \leq r$.
\end{corollary}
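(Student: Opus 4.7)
The plan is to apply Definition \ref{def:productflags} directly, after recording one structural observation: non-torality is inherited upward under inclusion. Concretely, if $\kf_1 < \kf_2$ are intermediate subalgebras in $\Sub$ and $\kf_1$ is non-toral, then $\kf_2$ must be non-toral as well, because otherwise $\m \cap \kf_1 \subset \m \cap \kf_2$ would be abelian and $\kf_1$ would be toral. Consequently, no toral $\tf_i$ in $\varphi$ can satisfy $\tf_i > \tilde \kf$, and no toral $\tilde \tf_j$ in $\tilde\varphi$ can satisfy $\tilde\tf_j > \kf$. This means that in clauses (1) and (2) of Definition \ref{def:productflags} the only possible candidates for the ``added'' subalgebra are $\kf$ and $\tilde\kf$ respectively.

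With this in hand, I would split into the three cases of Definition \ref{def:productflags}. In case (1) we must have $\kf > \tilde\kf$, hence $\kf^* = \langle \kf,\tilde\kf\rangle = \kf$, and the product equals $\varphi\,\tilde\varphi = \big((\varphi \cap \tilde\varphi) < \kf\big)$. Since $\kf$ is non-toral while every $\tilde\tf_j$ is toral and $\kf \neq \tilde\kf$, we have $\kf \notin \tilde\varphi$; symmetrically $\tilde\kf \notin \varphi$. Thus $\varphi \cap \tilde\varphi$ consists purely of toral subalgebras shared by both flags, forming a (possibly empty) subflag $\varphi_t = (\tf_{i_1} < \cdots < \tf_{i_k})$ of $\varphi$. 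The product is either $(\kf^*)$ when $\varphi_t$ is empty or $(\varphi_t < \kf^*)$ otherwise. Case (2) is symmetric, with $\kf^* = \tilde\kf$.

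In case (3), $\varphi\,\tilde\varphi = \big((\varphi \cap \tilde\varphi) \leq \kf^*\big)$ with $\kf^* = \langle \kf,\tilde\kf\rangle$. If $\kf \neq \tilde\kf$, then $\varphi \cap \tilde\varphi$ again consists only of common toral subalgebras, which lie strictly below the non-toral $\kf^*$, so $\varphi\,\tilde\varphi = (\kf^*)$ or $(\varphi_t < \kf^*)$. If instead $\kf = \tilde\kf$, then $\kf^* = \kf = \tilde\kf$ is itself in $\varphi \cap \tilde\varphi$ and is its maximum, so $\varphi\,\tilde\varphi = (\varphi \cap \tilde\varphi)$, which is $(\kf^*)$ or $(\varphi_t < \kf^*)$ depending on whether the shared toral part is empty. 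In every branch the product has the claimed form. The main ``obstacle'' is really only the upward-inheritance observation for non-torality; once it is noted, the corollary is a short unpacking of the three cases of the product definition.
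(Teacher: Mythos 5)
Your proof is correct and follows essentially the same route as the paper. The paper's own proof is a one-liner: it observes that a non-toral subalgebra cannot be contained in a toral one, and from this reads off directly from Definition \ref{def:productflags} that $\varphi\tilde\varphi = ((\varphi\cap\tilde\varphi)\leq\kf^*)$ regardless of which clause applies. You reach the same conclusion, but do so by explicitly enumerating clauses (1), (2), (3) and checking each; the key observation you isolate---that non-torality is inherited upward under inclusion, so no toral $\tf_i$ or $\tilde\tf_j$ can dominate $\tilde\kf$ or $\kf$---is the contrapositive of the paper's remark and plays exactly the same role. Your version is more verbose but arguably more transparent, since it makes visible why clauses (1) and (2) collapse to the same formula as clause (3) (namely, $\kf^*$ equals $\kf$ or $\tilde\kf$ in those sub-cases and is not already in $\varphi\cap\tilde\varphi$).
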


\begin{proof}
We set $\kf^*:=\langle \max(\varphi),\max(\tilde \varphi)\rangle =\langle \kf,\tilde \kf\rangle \leq \g$. 
Since non-toral subalgebras cannot be contained in toral subalgebras,
we deduce from Definition \ref{def:productflags} that
$\varphi \tilde \varphi = ((\varphi \cap \ti \varphi) \leq  \kf^\ast)$. This shows the claim.
\end{proof}

\begin{definition}\label{def:flag-conv}
A sequence of flags $(\varphi_i)_{i \in \NN} \subset \Fl$
  converges to a flag $\varphi_\infty \in\Fl$,  
 $$\lim_{i\to\infty}\varphi_i = \varphi_\infty\,,$$  
 if for all  $i$, the sequence of flag lengths is constant, $\ell(\varphi_i) = \ell(\varphi_\infty)$, and   for 
$$
 {\displaystyle \varphi_i = (\kf^i_1 < \kf^i_2 < \dots < \kf^i_{\ell} )}\,
$$
  for each $j=1,\dots,\ell$, the sequence  of subalgebras converges, 
${\displaystyle \lim_{i \to\infty} \kf^i_j = \kf^{\infty}_j}$: see Lemma \ref{lem:Dkfi}. 
\end{definition}

\begin{lemma}\label{lem:flag-conv} 
Any sequence of flags $(\varphi_i)_{i \in \NN}$  in $\Fl$ subconverges to a limit flag:
${\displaystyle \lim_{j \to \infty} \varphi_{i_j} = \varphi_\infty }$.
Furthermore, if the flags in the sequence are toral, (resp.~non-toral),  
the limit flag is also toral (resp.~non-toral). 
\end{lemma}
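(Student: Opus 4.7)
The plan is a standard compactness argument combined with Lemma \ref{lem:sequsubalgsub} and Lemma \ref{lem:Dkfi}: reduce to a subsequence with constant flag length and constant dimension in each slot, then extract convergent subsequences slot by slot, and check that the strict inclusions survive the limit.

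First, any flag $\kf_1 < \cdots < \kf_r$ is a strictly increasing chain with $\h \subsetneq \kf_1$ and $\kf_r \leq \g$, so its length is bounded, $r \leq \dim \g - \dim \h$. Hence by pigeonhole the lengths $\ell(\varphi_i)$ stabilize along a subsequence to some fixed $\ell$. Writing $\varphi_i = (\kf^i_1 < \cdots < \kf^i_\ell)$, a further application of pigeonhole to the $\ell$-tuple of dimensions $(\dim \kf^i_1,\dots,\dim \kf^i_\ell)$, which lies in the finite set $\{\dim\h+1,\dots,\dim\g\}^\ell$, allows me to assume that these dimensions are constants $d_1 < \cdots < d_\ell$. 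Then I apply Lemma \ref{lem:sequsubalgsub} iteratively (or diagonally) to pass to a subsequence along which $\kf^i_j \to \kf^\infty_j$ in $Gr_{d_j}(\g)$ for every $j=1,\dots,\ell$.

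It remains to verify that $\varphi_\infty := (\kf^\infty_1 < \cdots < \kf^\infty_\ell)$ is a flag and that the toral/non-toral property is preserved. By continuity of the Lie bracket each $\kf^\infty_j$ is a subalgebra of $\g$ of dimension $d_j$, and since $\h \subset \kf^i_j$ for all $i$ is a closed condition, we get $\h \subsetneq \kf^\infty_j$. For the chain, any $X \in \kf^\infty_j$ is a limit $X = \lim X_i$ with $X_i \in \kf^i_j \subset \kf^i_{j+1}$; since $\kf^i_{j+1} \to \kf^\infty_{j+1}$ in the Grassmannian, $X \in \kf^\infty_{j+1}$, and strictness follows from $d_j < d_{j+1}$. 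The toral (respectively non-toral) case is then immediate from Lemma \ref{lem:Dkfi}(i) applied to $(\kf^i_j)$ for every $j$ (respectively for $j=1$, since $\kf^\infty_1$ non-toral forces $\kf^\infty_j$ non-toral for all $j$). The only subtle step is the initial reduction to constant dimensions $d_j$: without it, strict inclusions could collapse in the Grassmannian limit, and this is precisely the obstacle that the preliminary dimension bookkeeping is designed to rule out.
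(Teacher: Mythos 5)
Your proof is correct and follows essentially the same approach as the paper, which is also a compactness-plus-pigeonhole argument relying on Lemma \ref{lem:sequsubalgsub} and Lemma \ref{lem:Dkfi}. The paper's version is terser (it does not explicitly spell out the reduction to constant dimensions $d_1<\cdots<d_\ell$ or the verification that the Grassmannian limits still satisfy strict inclusions), but that bookkeeping is implicitly contained in the proof of Lemma \ref{lem:sequsubalgsub}, which itself passes to a subsequence of constant dimension; your more explicit write-up simply unpacks what those cited lemmas are doing.
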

\begin{proof} Let  $(\varphi_i)$  in $\Fl$ be a sequence of flags.  
Since the set of lengths of flags  in $\Fl$ is finite, our 
sequence of flags has a subsequence of flags with constant length $\ell$. 
Passing to this subsequence, we have $m=1,\dots,\ell$ sequences $(\kf^i_m)_{i=1}^\infty$ of subalgebras. 
By Lemma \ref{lem:Dkfi} and Lemma \ref{lem:sequsubalgsub} the claim follows.
\end{proof}
\subsection{{\it Butterflies}}\label{sec:butterflies}

With the help of $\kf$-disks and flags of intermediated subalgebras Graev was able to 
describe certain important convex subsets of $\Sphb$ which he called butterflies. They inherit 
a very nice intersection property from the $\kf$-disks which can be most easily described
by the product of flags introduced in Section \ref{sec:flags}.

Given sets $X$ and $Y$ in $\Sphb$, we write ${\rm conv}\{X,Y\}$ for the convex hull of $X$ and $Y$, and we write the join of $X$ and $Y$ as   
$X*Y = \{tx +(1-t)y \mid x \in X, y\in Y, t \in [0,1]\}$. (When $X$ and $Y$ are convex, $X*Y = {\rm conv}\{X,Y\}$.)

\medskip

\begin{definition}[Butterfly]\label{def:butterfly} 
The {\em butterfly}, $\Br[\varphi]$, associated to a flag $\varphi =(\kf_1< \cdots< \kf_r) \in \Fl$ of length
$\ell(\varphi)=r\geq 1$ is defined as follows: 

\noindent {\em Type 1.} When $\ell(\varphi)=1$,  so that $\varphi =(\kf)$, a subalgebra with 
$\kf \in \Sub$ or $\kf=\g$,  
we set $\Br[(\kf)]:=\D(\kf)$, the $\kf$-disk. 
Recall, for $\kf=\g$, this gives  $\Br[(\g)] = \D(\g) = \emp$.

\noindent {\em Type 2.} When $\ell(\varphi) > 1$ and $\max(\varphi)=\g$, then $\Br[\varphi]$ is the $(r-2)$-dimensional simplex 
$$
  \Br[\varphi] :=  {\rm conv}\{A^{\blds k_1}, \dots, A^{\blds k_{r-1}}
  \}\,.
$$ 

\noindent {\em Type 3.} When $\ell(\varphi) > 1$ and $\max(\varphi)<\g$, then $ \Br[\varphi]$ is 
$$
 \Br[\varphi] := {\rm conv}\{A^{\blds k_1}, \dots, A^{\blds k_{r-1}}\} \ast \Br[(\kf_r)] 
= {\rm conv}\{A^{\blds k_1}, \dots, A^{\blds k_{r-1}}\} \ast \D(\kf_r)\,.
$$
\end{definition}
We want to highlight that all butterflies $\Br[\varphi]$ are convex sets, since
$\D(\kf_r)$ is convex and the cone over a convex set is convex.

\begin{lemma}\label{lem:butterfly-kernels} 
Let $\varphi = (\kf_1< \dots < \kf_r) \in \Fl$ be a flag. 
For any operator  $A \in \Br[\varphi]$, 
either $A\in\D(\kf_r)$ 
or $\ker(A) \in \{\kf_1, \dots, \kf_{r-1}\}$.
\end{lemma}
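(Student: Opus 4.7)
The plan is to expand any $A\in\Br[\varphi]$ as a convex combination of the extreme vertices and read off the kernel directly. By Definition \ref{def:butterfly}, every $A\in\Br[\varphi]$ may be written as
\[
A \;=\; \sum_{i=1}^{r-1} t_i\, A^{\kf_i} \;+\; t_r\, B\,, \qquad t_i\geq 0,\ \sum_{i=1}^{r} t_i = 1,\ B\in\D(\kf_r),
\]
with the convention that $t_r=0$ when $\max(\varphi)=\g$ (Type 2, in which the $B$-term is absent) and that the entire sum collapses to a single $B\in\D(\kf_r)$ when $\ell(\varphi)=1$ (Type 1). In the Type 1 case $A\in\D(\kf_r)$ and we are done; similarly, if $t_i=0$ for every $i<r$, then $t_r=1$ and $A=B\in\D(\kf_r)$, again placing $A$ in the first alternative of the lemma.

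Otherwise, set
\[
i^\ast \;:=\; \min\{\,i\in\{1,\dots,r-1\} : t_i>0\,\}\,,
\]
and I claim $\ker(A)=\kf_{i^\ast}$. The inclusion $\kf_{i^\ast}\subseteq \ker(A)$ is immediate from the nesting $\kf_{i^\ast}<\cdots<\kf_r$: for every $j\geq i^\ast$ one has $\kf_{i^\ast}\subseteq \kf_j\subseteq \ker(A^{\kf_j})$ by Definition \ref{def:kfdisc}, while $\kf_{i^\ast}\subseteq \kf_r\subseteq \ker(B)$ since $B\in\D(\kf_r)$; and the coefficients $t_i$ with $i<i^\ast$ vanish by the choice of $i^\ast$.

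For the reverse inclusion I would exploit positivity. Each $A^{\kf_i}$ is $Q$-self-adjoint and positive semi-definite, acting as $0$ on $\kf_i$ and as $\tfrac{1}{\dim\g-\dim\kf_i}\cdot \Id$ on the $Q$-orthogonal complement $\g\ominus \kf_i$; and $B\in\D(\kf_r)$ is positive semi-definite as well. Hence $A\geq 0$, so $\ker(A)=\{X\in\g : Q(A(X),X)=0\}$. For any nonzero $X\in\g\ominus \kf_{i^\ast}$,
\[
Q(A^{\kf_{i^\ast}}(X),X) \;=\; \tfrac{1}{\dim\g-\dim\kf_{i^\ast}}\cdot Q(X,X) \;>\; 0\,,
\]
while every other summand of $Q(A(X),X)$ is nonnegative. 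Since $t_{i^\ast}>0$ this forces $Q(A(X),X)>0$, so $X\notin\ker(A)$, giving $\ker(A)\subseteq \kf_{i^\ast}$. The only real bookkeeping is the uniform handling of Definition \ref{def:butterfly}'s three types; once that is in place, the substance of the proof is the one-line positivity argument above.
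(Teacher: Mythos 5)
Your proof is correct and takes essentially the same approach as the paper's: write $A$ as a convex combination of the vertices $A^{\kf_i}$ and a point of $\D(\kf_r)$, locate the smallest index with nonvanishing coefficient, and use positive semi-definiteness of each summand to pin down $\ker(A)$. The only differences are cosmetic — you use a flat parametrization $A=\sum t_i A^{\kf_i}+t_r B$ with $\sum t_i=1$ rather than the paper's nested $(1-\kappa)\lambda_i,\kappa$ form (equivalent via $t_i=(1-\kappa)\lambda_i$, $t_r=\kappa$), and you spell out the Type~1/2/3 bookkeeping slightly more explicitly.
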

\begin{proof} We have $A \in \Br[\varphi]=  {\rm conv}\{A^{\blds k_1}, \dots, A^{\blds k_{r-1}}\}*\D(\kf_r)$ that is
 \beg
 A = (1-\kappa) \cdot \sum_{i=1}^{r-1} \lambda_i A^{\blds k_i} + \kappa \cdot A'                           
\en
for some $0 \leq \kappa \leq 1$ and $0 \leq \lambda_1,...,\lambda_{r-1} \leq 1\,,\,\, \sum_{i=1}^{r-1}\lambda_i=1$
and $A' \in \D(\kf_r)$.
Recall that $\ker(A^{\blds k_1})=\kf_1 \subsetneq \cdots \subsetneq \ker (A^{\blds k_{r-1}})=\kf_{r-1}
 \subsetneq \ker (A')$ for all $A' \in \D(\kf_r)$
and that $\kf_r \subset \ker(A')$. Recall also that $A^{\blds k_1},\dots, A^{\blds k_{r-1}}$ and $A'$ are non-negative endomorphisms.
Now if $\kf_r \subsetneq \ker(A)$ we must have $\kappa=1$, thus $A\in\D(\kf_r)$. 
We next suppose that $\kappa <1$.
If $\lambda_1 \neq 0$, then $\ker(A) =\kf_1$, since $A^{\blds k_1}$ is positive on $\g \ominus \kf_1$. 
 Let $i$ be the smallest index such that $\lambda_i \neq 0$. Then  $\ker(A) =\kf_i < \kf_{i+1}$ since $A^{\blds k_i}$  is positive on $\g \ominus \kf_{i}$. 
\end{proof}

\begin{lemma}\label{lem:butterfly-join} The butterfly $\Br[\varphi]$ of a flag $\varphi = (\kf_1<\dots <\kf_r)$ with length $\ell(\varphi)\geq 2$  is a {\em join}. For each $i =2,\dots,r$ we have 

$$\Br[\varphi] =  {\rm conv}\{A^{\blds k_1}, \dots, A^{\blds k_{i-1}}\}*\Br[(\kf_i <\dots < \kf_r)]. $$
\end{lemma}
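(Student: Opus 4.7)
The plan is to reduce the identity to two elementary facts about convex sets in the ambient Euclidean space $\SymgH$, and then verify the claim case by case using Definition \ref{def:butterfly}.

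First I would isolate two purely convex-geometric facts. \emph{(i) Splitting of a finite convex hull.} For points $P_1,\dots,P_m$ in a real vector space and any $1\leq k<m$,
$$
{\rm conv}\{P_1,\dots,P_m\} = {\rm conv}\{P_1,\dots,P_k\}\,*\,{\rm conv}\{P_{k+1},\dots,P_m\}.
$$
This follows by writing a convex combination $\sum\lambda_jP_j$, setting $s:=\sum_{j\leq k}\lambda_j$, and handling $s\in(0,1)$ and the boundary cases $s\in\{0,1\}$ separately. \emph{(ii) Associativity of the join on convex sets.} For convex $X,Y,Z$, one has $(X*Y)*Z = X*(Y*Z)$, since both sides equal ${\rm conv}(X\cup Y\cup Z)$.

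Then I would split into the two nonempty cases of Definition \ref{def:butterfly}. When $\max(\varphi)<\g$ (Type 3), $\Br[\varphi]={\rm conv}\{A^{\blds k_1},\dots,A^{\blds k_{r-1}}\}*\D(\kf_r)$. For $i=r$ the claimed identity is immediate from $\Br[(\kf_r)]=\D(\kf_r)$. For $2\leq i<r$, applying (i) with $k=i-1$ to split the convex hull and then (ii) to regroup yields
$$
\Br[\varphi] = {\rm conv}\{A^{\blds k_1},\dots,A^{\blds k_{i-1}}\} * \bigl( {\rm conv}\{A^{\blds k_i},\dots,A^{\blds k_{r-1}}\}*\D(\kf_r) \bigr),
$$
and the parenthesized set is by definition $\Br[(\kf_i<\cdots<\kf_r)]$. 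When $\max(\varphi)=\g$ (Type 2), $\Br[\varphi]={\rm conv}\{A^{\blds k_1},\dots,A^{\blds k_{r-1}}\}$; for $2\leq i<r$ the sub-flag $(\kf_i<\cdots<\kf_r)$ is again of Type 2, so $\Br[(\kf_i<\cdots<\kf_r)]={\rm conv}\{A^{\blds k_i},\dots,A^{\blds k_{r-1}}\}$ and the identity reduces directly to (i). The endpoint $i=r$ is degenerate, with $\Br[(\g)]=\emptyset$, so it requires the convention $X*\emptyset=X$.

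I do not expect a substantive obstacle: everything reduces to elementary convex geometry inside $\SymgH$, and the convexity of every butterfly (noted right after Definition \ref{def:butterfly}) is exactly what makes (ii) applicable. The only delicate point is the empty-join convention needed at the Type 2 endpoint $i=r$, which is purely cosmetic and can equivalently be phrased by restricting the range of $i$ to those indices for which $\Br[(\kf_i<\cdots<\kf_r)]\neq\emptyset$.
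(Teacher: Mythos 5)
Your proof is correct but takes a cleaner, more conceptual route than the paper's. You reduce the lemma to two abstract facts about convex sets in $\SymgH$: that a finite convex hull ${\rm conv}\{P_1,\dots,P_m\}$ splits as the join of the sub-hulls on any two complementary index sets, and that the join of convex sets is associative (both groupings equal ${\rm conv}$ of the union). The paper instead proves only the inclusion ${\rm conv}\{A^{\blds\kf_1},\dots,A^{\blds\kf_{i-1}}\}\ast\Br[(\kf_i<\dots<\kf_r)]\subset\Br[\varphi]$ (declaring the reverse ``analogous'') via a direct coefficient calculation: given a nested representation of $A$ with parameters $\mu,\kappa$, it manufactures new coefficients $\tilde\lambda_j$ and $\tilde\kappa=\kappa\mu$ to recast $A$ in the single-butterfly form. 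The underlying content is the same --- your two facts are exactly what that computation verifies --- but your version establishes both inclusions at once, isolates the reusable convex-geometry observation, and makes explicit the one degenerate case (the endpoint $i=r$ when $\kf_r=\g$, where one must invoke the convention $X\ast\emptyset=X$), which the paper's single formula silently glosses over: there the displayed constraint $\sum_{j=i}^{r-1}\lambda_j=1$ is an empty sum and $A'\in\D(\g)=\emptyset$ does not exist.
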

\begin{proof} 
We show ${\rm conv}\{A^{\blds k_1}, \dots, A^{\blds k_{i-1}}\}*\Br[(\kf_i <\dots < \kf_r)] \subset \Br[\varphi]$. 
The reverse  inclusion is obtained analogously. 
Any operator $A \in  {\rm conv}\{A^{\blds k_1}, \dots, A^{\blds k_{i-1}}\}*\Br[(\kf_i <\dots < \kf_r)]$ can be written as a sum,
$$ 
 A = (1-\mu)\cdot \sum_{j=1}^{i-1}\lambda_j A^{\blds k_j} 
+ \mu \cdot \Big((1-\kappa) \cdot \sum_{j=i}^{r-1} \lambda_j A^{\blds k_j} + \kappa \cdot A'  \Big)\,,
$$
where $A' \in \D(\kf_r)$, $0 \leq \kappa, \mu \leq 1$, and $0 \leq \lambda_j \leq 1$ with 
$\sum_{j=1}^{i-1} \lambda_j  = \sum_{j=i}^{r-1} \lambda_j =1.$ 
If $\mu=\kappa=1$, $A=A' \in \D(\kf_r) \subset \Br[\varphi]$. Otherwise, 
we set 
$$
\ti\lambda_j = \begin{cases} 
\frac{1-\mu}{(1-\kappa\mu)}\cdot \lambda_j, & 1\leq j \leq i-1 \\ 
\frac{\mu (1-\kappa)}{(1-\kappa\mu)} \cdot \lambda_j, & i \leq j \leq r-1.\end{cases}
$$
Notice that $\sum_{j=1}^{r-1}\tilde \lambda_j=1$ and $\tilde \kappa :=\kappa\mu \in [0,1]$.
Then 
$$
{\displaystyle A= (1-\ti \kappa)\cdot \sum_{j=1}^{r-1} \ti\lambda_j A^{\blds k_j} + \tilde \kappa \cdot A'}\,,
$$
an element of $\Br[\varphi]$. 
\end{proof}

\begin{corollary}\label{cor:ker}
Let $\varphi=(\kf_1 < \psi)\in \Fl$ be a flag with $\psi= (\kf_2 < \cdots < \kf_r)$. Let $A \in \Br[\varphi]$.
Then $\ker (A)=\kf_1$ if and only if $A \not\in \Br[\psi]$.
\end{corollary}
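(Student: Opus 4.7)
My plan is to leverage the join decomposition from Lemma~\ref{lem:butterfly-join}. With $\psi = (\kf_2 < \cdots < \kf_r)$, it gives
\[
\Br[\varphi] = \{A^{\blds \kf_1}\} \ast \Br[\psi],
\]
so every $A \in \Br[\varphi]$ admits a representation $A = (1-\mu)\cdot A^{\blds \kf_1} + \mu \cdot B$ with $\mu \in [0,1]$ and $B \in \Br[\psi]$. The second ingredient is Lemma~\ref{lem:butterfly-kernels} applied to $\psi$, which tells me that any $B \in \Br[\psi]$ satisfies $\kf_2 \subset \ker B$: either $B \in \D(\kf_r)$, forcing $\kf_r \subset \ker B$, or $\ker B = \kf_j$ for some $j \in \{2,\dots,r-1\}$.

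For the forward implication I would argue by contrapositive. If $A \in \Br[\psi]$, then Lemma~\ref{lem:butterfly-kernels} forces $\ker A$ to contain $\kf_2$, in particular to be strictly larger than $\kf_1$, so $\ker A \neq \kf_1$.

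For the reverse implication, assume $A \notin \Br[\psi]$. In the decomposition above this forces $\mu < 1$, since $\mu = 1$ would give $A = B \in \Br[\psi]$. If $\mu = 0$, then $A = A^{\blds \kf_1}$ and trivially $\ker A = \kf_1$. If $\mu \in (0,1)$, both coefficients are strictly positive, and I would invoke the elementary fact that for positive-semidefinite $Q$-self-adjoint operators $T_1, T_2$ and positive scalars $a,b$, one has $\ker(aT_1 + bT_2) = \ker T_1 \cap \ker T_2$: the inclusion $\supseteq$ is clear, and for $\subseteq$ one pairs with $X$ to get $a\cdot Q(T_1 X, X) + b\cdot Q(T_2 X,X) = 0$, which, both summands being nonnegative, forces each to vanish and hence $T_i X = 0$. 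Applied here this gives $\ker A = \ker A^{\blds \kf_1} \cap \ker B = \kf_1 \cap \ker B$, and since $\kf_1 \subset \kf_2 \subset \ker B$ by the second ingredient above, $\ker A = \kf_1$ as required.

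I do not expect a genuine obstacle: once the join description from Lemma~\ref{lem:butterfly-join} and the kernel bookkeeping from Lemma~\ref{lem:butterfly-kernels} are in hand, the statement is a tidy combinatorial consequence. The only mild care needed is to treat the endpoints $\mu \in \{0,1\}$ separately from the generic case $\mu \in (0,1)$, where the kernel-intersection identity for nonnegative operators does the work.
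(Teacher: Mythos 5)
Your proof is correct and follows the same route as the paper: both hinge on the join decomposition $\Br[\varphi] = \{A^{\blds\kf_1}\} \ast \Br[\psi]$ from Lemma~\ref{lem:butterfly-join} and then read off the kernel. The paper's own argument is extremely terse (one line citing $\kf_1 \subsetneq \ker(A')$); you have simply supplied the explicit details — the case split on $\mu$ and the kernel-intersection identity for nonnegative self-adjoint operators — that the paper leaves implicit.
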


\begin{proof}
By Lemma \ref{lem:butterfly-join}, we know $A=(1-\kappa)\cdot A^{\blds k_1}+ \kappa \cdot A'$
with $A' \in \Br[\psi]$. Since $\kf_1 \subsetneq \ker (A')$, the claim follows.
\end{proof}

\begin{definition}\label{def:simplex} For every proper flag $\varphi =(\kf_1<\cdots <\kf_r)$, 
 we define the {\em flag simplex}
$$
   \Delta_{\varphi}:=\Delta_{(\varphi<\bldss\g)}:= \Br[(\varphi<\g)]={\rm conv}\{A^{\blds{\kf}_1},...,A^{\blds{\kf}_r}\}\,.
$$
\end{definition}
Notice that we have $\Delta_{\varphi} \subset \Br[\varphi]$.

\begin{lemma} \label{lem:subset}
Let $\varphi, \ti\varphi \in \Fl$ be flags with $\varphi < \ti\varphi$. 
Then $\Br[\ti\varphi] \subset \Br[\varphi]$. If $\ti \varphi$ is a proper flag, then $\Br[\ti\varphi] \subsetneq \Br[\varphi]$.
\end{lemma}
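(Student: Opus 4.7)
The plan is to reduce to the two elementary moves in Definition \ref{def:flag-order}. Since $\varphi<\ti\varphi$, there is a sequence $\varphi=\varphi_1,\dots,\varphi_k=\ti\varphi$ with $k\geq 2$ in which each step either (i) adjoins a new subalgebra larger than the current maximum, or (ii) deletes a non-maximal subalgebra. I would prove the first assertion by verifying $\Br[\varphi_{j+1}]\subset\Br[\varphi_j]$ at each step and composing. In case (ii) the new butterfly is obtained by dropping a vertex $A^{\kf_i}$ from the convex-hull factor in Definition \ref{def:butterfly}, so containment is immediate. In case (i), writing $\kf_r:=\max(\varphi_j)$ and $\kf_{new}$ for the newly adjoined subalgebra, I would apply Lemma \ref{lem:butterfly-join} to factor out the common prefix $\text{conv}\{A^{\kf_1},\dots,A^{\kf_{r-1}}\}$ and reduce to $\{A^{\kf_r}\}\ast\D(\kf_{new})\subset\D(\kf_r)$, which follows from $\D(\kf_{new})\subset\D(\kf_r)$ (Lemma \ref{lem:disk-inclusion}) and convexity of $\D(\kf_r)$.

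For the strict inclusion when $\ti\varphi$ is proper, Lemma \ref{lem:max} forces $\max(\varphi)\leq\max(\ti\varphi)<\g$, so both flags are proper. Rather than induct along the chain, I would split directly on whether the two flags have the same top subalgebra. If $\max(\varphi)=\max(\ti\varphi)$, then Corollary \ref{cor:ordering} gives $\ti\varphi\subsetneq\varphi$, so one can pick a non-maximal $\kf_j\in\varphi\setminus\ti\varphi$; the vertex $A^{\kf_j}\in\Br[\varphi]$ has kernel $\kf_j$, which lies neither in $\{\ti\kf_1,\dots,\ti\kf_{\ti r-1}\}$ (since $\kf_j\notin\ti\varphi$) nor contains $\max(\ti\varphi)=\max(\varphi)>\kf_j$, so Lemma \ref{lem:butterfly-kernels} yields $A^{\kf_j}\notin\Br[\ti\varphi]$.

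When $\max(\varphi)<\max(\ti\varphi)<\g$ an explicit separating element is needed. I would set $V:=\max(\ti\varphi)\ominus\max(\varphi)$, which is $\Ad(\max(\varphi))$-invariant because $\max(\varphi)$ is a subalgebra of the subalgebra $\max(\ti\varphi)$, and take the scaled projection $A:=(\dim V)^{-1}\Pi_V\in\SymgH$, where $\Pi_V$ denotes the $Q$-orthogonal projection of $\g$ onto $V$. Then $A\geq 0$, $\tr A=1$, $\max(\varphi)\subset\ker A$, and $[A,\ad(\max(\varphi))]=0$, so $A\in\D(\max(\varphi))\subset\Br[\varphi]$. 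On the other hand $\ker A=\max(\varphi)\oplus(\g\ominus\max(\ti\varphi))$ is not contained in $\max(\ti\varphi)$, hence is neither some $\ti\kf_i$ with $i<\ti r$ nor a superset of $\max(\ti\varphi)$, so Lemma \ref{lem:butterfly-kernels} forces $A\notin\Br[\ti\varphi]$.

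The main obstacle is producing $A$ in this last case, because ordinary vertex arguments break down when the two flags share no common top. The projection $A$ above is tailored precisely so that its kernel avoids both possibilities allowed by Lemma \ref{lem:butterfly-kernels} for members of $\Br[\ti\varphi]$; once it is in hand, the remaining checks reduce to routine comparisons of kernels and $\Ad$-invariant subspaces.
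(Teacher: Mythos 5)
Your proof of the containment $\Br[\ti\varphi] \subset \Br[\varphi]$ follows the paper's route (reduce to the two elementary moves and verify containment step by step). For the strictness when $\ti\varphi$ is proper, however, you take a genuinely different and more explicit path. The paper tracks strictness through the chain: it notes that a case-(ii) step is always a strict inclusion, and a case-(i) step is strict except when $\kf_r = \g$ and $\D(\kf_{r-1})$ is a singleton, then uses $\max(\ti\varphi) < \g$ together with Lemma \ref{lem:max} to rule out $\kf_r = \g$ at any intermediate stage. You instead bypass the chain entirely and produce an explicit separating element: when the flags share a top, a vertex $A^{\kf_j}$ of $\Br[\varphi]$ whose kernel is excluded by Lemma \ref{lem:butterfly-kernels}; when $\max(\varphi) < \max(\ti\varphi) < \g$, the scaled projection onto $V = \max(\ti\varphi)\ominus\max(\varphi)$, whose kernel $\max(\varphi)\oplus(\g\ominus\max(\ti\varphi))$ is simultaneously not contained in $\max(\ti\varphi)$ and does not contain $\max(\ti\varphi)$, so neither alternative in Lemma \ref{lem:butterfly-kernels} can hold. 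Both approaches are correct; yours has the advantage of being more self-contained (it does not require proving strictness of individual moves) and of exhibiting a concrete witness, while the paper's approach falls out naturally from the same induction used for the containment. One small check you rely on implicitly and should spell out: your $A = (\dim V)^{-1}\Pi_V$ does satisfy $[A,\ad(\max(\varphi))] = 0$ because $\ad(X)$, being $Q$-skew-adjoint, preserves each of the three orthogonal pieces $\max(\varphi)$, $V$, $\g\ominus\max(\ti\varphi)$, on each of which $A$ is a scalar. With that observation recorded, the argument is complete.
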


\begin{proof} Let $\varphi < \ti\varphi$. 
By definition, we have a sequence of flags 
$\varphi = \varphi_1, \varphi_2,\dots,\varphi_k=\ti\varphi$ where, at each step, either ($i$) $\varphi_j \subsetneq \varphi_{j+1}$ 
and $\max(\varphi_j) < \max(\varphi_{j+1})$, or ($ii$) $\varphi_{j+1} \subsetneq \varphi_j$ and $\max(\varphi_j) = \max(\varphi_{j+1})$. 
We show $\Br[\varphi_{j+1}] \subset \Br[\varphi_j]$ for each $j=1,\dots,k-1$.

Case ($i$): Suppose $\varphi_j \subsetneq \varphi_{j+1}$ and $\kf_{r-1}=\max(\varphi_j) < \max(\varphi_{j+1}) =\kf_r \leq  \g$. 
We know that  
$$
 {\rm conv}\{A^{\blds k_1},\dots, A^{\blds k_{r-1}}\} \subset {\rm conv}\{A^{\blds k_1}, \dots, A^{\blds k_{r-2}}\}*\D(\kf_{r-1})\,.
$$ 
Here we have strict inequality unless $\D(\kf_{r-1})=\{A^{\blds k_{r-1}}\}$.
Furthermore, $\D(\kf_r) \subsetneq \D(\kf_{r-1})$ by Lemma \ref{lem:disk-inclusion}. As a result, $\D(\kf_{r-1})*\D(\kf_r)= \D(\kf_{r-1})$,
using that $\D(\kf_{r-1})$ is convex.  If $\kf_r=\g$ then
$\D(\kf_r)=\emptyset$ and we still set $\D \ast \emptyset = \D$ for any set $\D$.
This yields
\begin{align*}
\Br[\varphi_{j+1}] &=  
\Big({\rm conv}\{A^{\blds k_1},\dots, A^{\blds k_{r-1}}\}\Big) \ast \D(\kf_r) \subset 
\Big({\rm conv}\{A^{\blds k_1}, \dots, A^{\blds k_{r-2}}\}*\D(\kf_{r-1})\Big) \ast \D(\kf_r) \\ 
&\subset  
 {\rm conv}\{A^{\blds k_1}, \dots, A^{\blds k_{r-2}}\}*\D(\kf_{r-1}) = \Br[\varphi_j].
\end{align*}
The inequality is strict unless $\D(\kf_{r-1})=\{A^{\blds k_{r-1}}\}$ and $\kf_r=\g$.

Case ($ii$): Suppose $\varphi_{j+1} \subsetneq \varphi_j$ and $\max(\varphi_j) = \max(\varphi_{j+1})$. If $\ell(\varphi_{j+1}) =1$, so that $\varphi_{j+1} = (\kf)$, then $\varphi_j = (\kf_1< \kf)$, $\Br[\varphi_{j+1}] = \D(\kf)$, 
while $\Br[\varphi_j] = \{A^{\blds k_1}\}*\D(\kf)$. Clearly $\Br[\varphi_{j+1}] = \D(\kf) \subsetneq \Br[\varphi_j]$. 
Recall that we allow $\kf=\g$.

Now suppose  $\ell(\varphi_{j+1})=r >1$. That is, $\varphi_{j+1} = (\kf_1<\kf_2< \cdots <\kf_r=\kf)$. 
Then $$
 \varphi_j=(\kf_1< \cdots < \kf_{i_0}< \ti\kf < \kf_{i_0+1} < \dots <\kf_r)
$$  
and we deduce
$$
\Br[\varphi_{j+1}]= {\rm conv}\{A^{\blds k_1}, \dots, A^{\blds k_r}\}*\D(\kf) \subsetneq 
\Br[\varphi_j] =   {\rm conv}\{A^{\blds k_1}, \dots, A^{\blds k_r},A^{\bldstik k}\}*\D(\kf)\,,
$$
since the convex hull of more points is a larger set. This shows  $\Br[\ti\varphi] \subset \Br[\varphi]$.
\end{proof}

\begin{lemma}\label{lem:butphipsi}
 $\Br[\varphi] \cap \Br[\ti\varphi] = \Br[\varphi \ti\varphi]$.
\end{lemma}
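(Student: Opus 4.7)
The inclusion $\Br[\varphi\ti\varphi] \subseteq \Br[\varphi] \cap \Br[\ti\varphi]$ is immediate: Lemma \ref{lem:flag-product} yields $\varphi \leq \varphi\ti\varphi$ and $\ti\varphi \leq \varphi\ti\varphi$, and Lemma \ref{lem:subset} then gives both $\Br[\varphi\ti\varphi] \subseteq \Br[\varphi]$ and $\Br[\varphi\ti\varphi] \subseteq \Br[\ti\varphi]$.

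For the reverse inclusion I would proceed by induction on $\ell(\varphi) + \ell(\ti\varphi)$. When both lengths equal one, $\varphi = (\kf)$ and $\ti\varphi = (\ti\kf)$, and Lemma \ref{lem:disk-intersection} identifies $\D(\kf) \cap \D(\ti\kf)$ with $\D(\kf^\ast)$ for $\kf^\ast = \langle \kf, \ti\kf\rangle$; a direct inspection of the three cases of Definition \ref{def:productflags} then shows $\D(\kf^\ast) \subseteq \Br[\varphi\ti\varphi]$ in each case (the case $\kf^\ast = \g$ being vacuous since $\D(\g) = \emptyset$). For the inductive step, take $A \in \Br[\varphi] \cap \Br[\ti\varphi]$ and apply Lemma \ref{lem:butterfly-kernels} on each side, splitting according to whether $A \in \D(\kf_r)$ and whether $A \in \D(\ti\kf_{\ti r})$. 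The case $A \in \D(\kf_r) \cap \D(\ti\kf_{\ti r})$ reduces to the base case argument. The mixed case, say $A \in \D(\kf_r)$ while $\ker A = \ti\kf_{\ti j_A}$ with $\ti j_A < \ti r$, forces $\kf_r \subseteq \ti\kf_{\ti j_A}$; case analysis rules out product cases (1) and (3) of Definition \ref{def:productflags}, and in case (2) Lemma \ref{lem:butterfly-join} places $A$ in a sub-butterfly $\Br[(\ti\kf_{\ti j_A} < \cdots < \ti\kf_{\ti r})]$ that is contained in $\Br[\varphi\ti\varphi]$ by Lemma \ref{lem:subset}.

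The main obstacle is the remaining case, where $\ker A = \kf_{i_A} = \ti\kf_{\ti j_A} =: \lf_A \in \varphi \cap \ti\varphi$. Here the key observation is that the coefficient $\mu$ in the decomposition $A = \mu A^{\bldss{\lf_A}} + (1-\mu)\,B$ with $B \in \Br[(\kf_{i_A+1} < \cdots < \kf_r)]$ is uniquely determined by $A$ and $\lf_A$: on the $Q$-orthogonal complement $\lf_A^\perp$ in $\g$, the operator $A^{\bldss{\lf_A}}$ acts as $\tfrac{1}{\dim\g - \dim\lf_A}\,\Id$ while every admissible $B$ satisfies $\ker B \supsetneq \lf_A$, so
$$
\mu = (\dim\g - \dim\lf_A)\cdot \lambda_{\min}\bigl(A|_{\lf_A^\perp}\bigr)\,.
$$
The identical formula applies on the $\ti\varphi$-side, hence $\ti\mu = \mu$ and the corresponding $\ti B$ agrees with $B$; consequently the common operator $B$ lies in $\Br[(\kf_{i_A+1} < \cdots < \kf_r)] \cap \Br[(\ti\kf_{\ti j_A+1} < \cdots < \ti\kf_{\ti r})]$. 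Since the sum of the flag lengths has dropped by at least $2$, the induction hypothesis applies, and a routine check of Definition \ref{def:productflags} in each of its three cases verifies that the product of these two shorter flags equals $(\varphi\ti\varphi)_{>\lf_A}$ exactly (the two products fall into the same case, and the only difference is the removal of the elements of $\varphi \cap \ti\varphi$ that are $\leq \lf_A$). Combining Lemma \ref{lem:butterfly-join} with one further application of Lemma \ref{lem:subset} then yields $A \in \{A^{\bldss{\lf_A}}\} \ast \Br[(\varphi\ti\varphi)_{>\lf_A}] \subseteq \Br[\varphi\ti\varphi]$, completing the proof.
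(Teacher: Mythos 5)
Your proof is correct and follows the same overall strategy as the paper's: the trivial inclusion via Lemmas \ref{lem:flag-product} and \ref{lem:subset}, then an induction on flag lengths driven by a case analysis on where $\ker A$ sits (Lemma \ref{lem:disk-intersection} and Lemma \ref{lem:butterfly-kernels} in the top-disk cases), with the crux being that the weight of $A^{\bldss{\lf_A}}$ in the convex representation is determined by $A$ alone. The genuine differences are organizational: the paper inducts on $\ell(\ti\varphi)$ (with an inner induction on $\ell(\varphi)$), peels off the smallest member $\ti\kf_1$ of $\ti\varphi$ via Corollary \ref{cor:ker}, and proves $\kappa=\ti\kappa$ by a positivity contradiction on $\ti\kf_1^\perp$; you instead induct on $\ell(\varphi)+\ell(\ti\varphi)$, peel both flags symmetrically at the common kernel $\lf_A$, and get uniqueness from the closed formula $\mu=(\dim\g-\dim\lf_A)\cdot\lambda_{\min}(A|_{\lf_A^\perp})$ --- an equivalent but arguably cleaner argument, and your symmetric setup makes the identification $(\varphi\ti\varphi)_{>\lf_A}=\psi\ti\psi$ of the product of the two tails quite transparent. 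Two routine points that the paper makes explicit and you elide should be added: (i) placing $A$ in the sub-butterfly $\Br[(\ti\kf_{\ti j_A}<\cdots<\ti\kf_{\ti r})]$ in the mixed case, and likewise writing $A=\mu A^{\bldss{\lf_A}}+(1-\mu)B$ with $B$ in the tail butterfly, uses that the cone coefficients on all $A^{\blds\kf_i}$ with $\kf_i\subsetneq\ker A$ must vanish; this is exactly the kernel argument of Lemma \ref{lem:butterfly-kernels} and Corollary \ref{cor:ker}, and is not given by Lemma \ref{lem:butterfly-join} alone; (ii) the degenerate case $\mu=1$, i.e. $A=A^{\bldss{\lf_A}}$ (the analogue of the paper's $\kappa=0$ case), should be disposed of separately, where the conclusion is immediate because $\lf_A\in\varphi\cap\ti\varphi\subset\varphi\ti\varphi$.
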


\begin{proof}  
By Lemma \ref{lem:subset}, since $\varphi, \ti\varphi \leq \varphi \ti\varphi$, we know
$\Br[\varphi \ti\varphi] \subset \Br[\varphi] \cap \Br[\ti\varphi]$.

To prove the lemma, we show that $\Br[\varphi] \cap \Br[\ti\varphi] \subset \Br[\varphi \ti\varphi]$, by induction on the 
length $\ell(\ti\varphi)=\ti r$.
 Suppose the length $\ell(\ti \varphi)=1$, so that $\ti \varphi = (\ti \kf)$. We prove this claim by
another induction on the length  $\ell(\varphi)=r$. Suppose that $\ell(\varphi)=1$, that is $\varphi=(\kf)$.
In this case $\varphi \ti \varphi =(\kf^\ast)$ with $\kf^\ast=\langle \kf,\ti \kf\rangle$. Since by Lemma \ref{lem:disk-intersection}
we have
$$
  \Br[\varphi]\cap \Br[\ti \varphi]=\D(\kf)\cap \D(\ti \kf)= \D(\kf^\ast)=\Br[\varphi \ti \varphi]\,,
$$
the above claim follows. 

Suppose $\ell(\varphi)=r \geq 2$, $\varphi=(\kf_1< \psi)$ with  $\psi:= (\kf_2 < \cdots < \kf_r)$  
and let $A \in \Br[\varphi]\cap \Br[\ti \varphi]$. By Corollary \ref{cor:ker}
we have $\ker (A)=\kf_1$ or $A \in \Br[\psi]$. In the latter case we obtain
by induction hypothesis $A \in \Br[\psi \ti \varphi]$
since $\ell(\psi)< \ell(\varphi)$. Since $\max(\psi\ti \varphi)=\max(\varphi\ti \varphi)$
and $\psi\ti \varphi \subset \varphi\ti \varphi$ we have $\Br[\psi \ti \varphi] \subset \Br[\varphi \ti \varphi]$.
In the first case we deduce from $\ti \kf \subset \ker(A)$ that $\ti \kf \leq \kf_1$. Thus $\varphi \ti \varphi=\varphi$
and the above claim follows for $\ell(\ti \varphi)=1$.

Suppose now $\ell(\ti\varphi)  =\ti r \geq 2$ and let
$\ti \varphi = (\ti\kf_1 < \ti\psi)$ where $\ti\psi = (\ti\kf_2 < \dots < \ti\kf_{\ti r})$.
Let $A \in \Br[\varphi] \cap \Br[\ti\varphi]$.  If $A \in \Br[\ti\psi]$, then since $\ell(\ti\psi)=\ti r -1$, by our induction hypothesis, we are done: see above. Thus, we are left with the case
$A = (1-\ti\kappa)A^{\ti{\blds k_1}} + \ti\kappa \ti A'$, with 
$\ti A' \in \Br[\ti\psi]$, and $0 \leq \ti\kappa < 1$. It follows $\ker(A)=\ti\kf_1$. 
Since $A \in \Br[\varphi]$ as well, and by symmetry we may assume  $\ell(\varphi) \geq 2$, we know by Lemma \ref{lem:butterfly-kernels} 
that either $A \in \D(\kf_r)$, 
in which case we get $ \kf_r \leq \ti\kf_1$, 
or $\ker(A) = \ti\kf_1 \in \{\kf_1,\dots,\kf_{r-1}\}$. 

If $\kf_r \leq \ti\kf_1$, then $\varphi\ti\varphi = \ti\varphi$, and 
hence $A \in \Br[\varphi\ti\varphi]$. If, instead, $\ker(A) = \ti\kf_1 \in \{\kf_1,\dots,\kf_{r-1}\}$, then $\ti\kf_1 = \kf_i$ for some 
$1\leq i \leq r-1$. We can write $A = (1-\kappa)A^{\blds k_i} + \kappa A'$, with 
$0 \leq \kappa <1$ and $A' \in \Br[\psi_i]$, 
where $\varphi = (\kf_1 < \cdots < \kf_{i} < \psi_i)$ for $\psi_i = (\kf_{i+1} < \dots < \kf_r)$. 

In our two representations of $A$, we show now $\it\kappa = \ti \kappa$. 
We get
$$
  (\ti\kappa-\kappa) \cdot A^{\bldss{\ti\kf_1}} = \ti\kappa \ti A' -\kappa A'\,.
$$
Now if $\kappa \neq \ti \kappa$ the left hand side, restricted to $\ti \kf_1^\perp$ 
is a non-vanishing multiple of the identity. Thus either the restriction of $\kappa A'$ or $\ti\kappa \ti A'$ to  $\ti \kf_1^\perp$ 
would be a positive operator using $\kappa,\ti \kappa \geq 0$ and $A',\ti A' \geq 0$. But this is a contradiction
to $\ti \kf_1 \subsetneq \ker(A')$ and $\ti \kf_1 \subsetneq \ker(\ti A')$.

We conclude $\ti\kappa = \kappa$.  If $\kappa = 0$ then $A= A^{\bldstik\kf_1}$, 
an element of $\Br[\varphi\ti\varphi]$. If $\kappa > 0$ then $\ti A'=A'$, for $\ti A' \in \Br[\ti\psi]$ and $A' \in \Br[\psi_i]$. Since $\ell(\ti\psi) < \ell(\ti\varphi)$ and $\ell(\psi_i) < \ell(\varphi)$, we can apply induction: $\ti A' = A' \in \Br[\psi_i]\cap\Br[\ti\psi] = \Br[\psi_i\ti\psi]$. 
To complete the proof we still need to show that $\varphi\ti\varphi = (\ti\kf_1 < \psi_i\ti\psi)$. 
Since $\kf_{i-1}< \kf_i =\ti \kf_1$ we have $\varphi \ti \varphi=\varphi' \ti \varphi$
with $\varphi'=(\ti \kf_1 < \psi_i)$. Since $\ti \varphi=(\ti \kf_1 < \ti \psi)$ also this claim follows.
\end{proof}


\subsection{{\it Definitions of important sets}} \label{sec:defWT}

Recall,  $\Sub$ is the set of all intermediate subalgebras $\kf$ such that $\h < \kf < \g$, 
 $\Sub_t$ is the set of all toral subalgebras in $\Sub$, 
 while $\Sub_s$ is the set of all non-toral subalgebras in $\Sub$,  and $\Sub_{s,t}$ is the set of all non-toral subalgebras containing a toral subalgebra. Recall, also, that 
 $\Flts$ denotes the set of all flags $\varphi=(\tf_1 < \cdots < \tf_r < \kf)$; that is, flags in which $\tf_i$ is toral, while $\kf \leq \g$ is non-toral (Definition \ref{def:Union Non-toral flags}).
  
  Motivated by (asymptotic) scalar curvature estimates along the curves
$\gamma_v(t) \in \MGo$,
defined in section \ref{sec:ginvm}, we now define a subset of $ \Sph$ on which we can bound the scalar curvature from above. 
However, as Graev noticed, it is much more convenient to define the
corresponding set $\WW \subset \Sphb$.

\begin{definition}\label{defin-W}
Let $G/H$ be a compact homogeneous space. Then 
\begin{enumerate}
\item  the union of all disks $\D(\kf)$ over all intermediate subalgebras is 
$$
  \WW := \bigcup_{{\blds\kf} \in \Sub} \D(\kf) \subset \Sphb \,,
$$
\item the union of all disks $\D(\kf)$ over all non-toral intermediate subalgebras is 
$$
 \WWs := \bigcup_{{\blds\kf} \in \Sub_s}\D(\kf) \subset \WW\,,
$$
\item the union of disks $\D(\kf)$ over all non-toral subalgebras $\kf$  such that there exists
a toral subalgebra $\tf < \kf$ (hence $\D(\kf) \subset \partial \D(\tf) \subset \D(\tf)$) is
$$
  \WWst := \bigcup_{{\blds\kf} \in \Sub_{s,t}} 
 \D(\kf)\subset \WWs\,,
	$$
	
\item  the union of all flag simplices $\Delta_{\varphi}$  over all  toral flags  $\varphi \in \Flt$ is
$$
 \T:=\bigcup_{\varphi \in \Flt} \Delta_{\varphi} \,,
$$

\item the set of all butterflies of flags $\varphi = (\varphi_t < \kf) \in \Flts$ is 
$$
  \Bt= \bigcup_{\varphi\in \Flts} \Br[\varphi] \,,
$$
\item the {\em nerve} of $G/H$ is the union of all flag simplices over all non-toral flags
$$
  \XXGH:= \bigcup_{\varphi \in \Fln}  \Delta_{\varphi}\,. 
$$
\end{enumerate}
\end{definition} 

Notice that
${\displaystyle
  \WW = \{ A \in \SymgH \mid A \in \D(\kf) \textrm{ for some }\kf \in\Sub \}}$, 
and $\WWs$ is the union of all proper butterflies $\Br[\varphi]$, $\varphi\in \Fln$. We see 
$\WW_{s,t} \subset \WWs \subset \WW$, $\T \subset \WW$, and $\XXGH \subset  \WWs$.  
Since for any subalgebra $\kf$ properly containing $\h$, we
have $\D(\kf)\subset \Sphb$, clearly $\WW \subset \Sphb$.

\medskip

We can identify our subsets of $\Ad(H)$-invariant subspaces of $\g$
with corresponding sets of {\em projections}, $P \in \SymgH$ with $P^2=P$.  
Clearly the set of projections in $\SymgH$ is an algebraic (thus semi-algebraic) subset. 
We will use this to prove the proposition below. 

\begin{definition}\label{def:proj} The set of projections whose kernel is a
subalgebra $\kf \in\Sub$  is
\beg
  \Ps:
	&= &
	 \{ P \in \SymgH \mid P=P^2 \,,\,\, \h \subsetneq \ker(P) \subsetneq \g \textrm{ and }\\
	&&
   \quad\quad\quad \quad\quad
		P([(\Id_{\g}-P)(X),(\Id_{\g}-P)(Y)])=0\,\,\, \textrm{ for all }     X,Y \in \g \}\,.
\en
 Writing $\Id_{\m}:=\Id_{\g}-\Id_{\h}\in \SymgH$, we also define
$$
 \Psn:= \{ P \in \Ps \mid [(\Id_{\m}-P)(X),(\Id_{\m}-P)(Y) ]\neq 0  \textrm{ for some } X,Y \in \g\}\,,
$$
and
$$
  \Pst:= \{ P \in \Ps \mid  \,\, [(\Id_{\m}-P)(X),(\Id_{\m}-P)(Y) ]= 0  \textrm{ for all } X,Y \in \g\}\,.
$$
\end{definition}

Recall, for details on the theory of semi-algebraic sets, in particular the theorem of
Tarski-Seidenberg, we refer to Section \ref{sec:semialgebraic}.

\begin{lemma}\label{lem:Ps}
The sets $\Ps$, $\Psn$ and $\Pst$ are compact and semi-algebraic. 
\end{lemma}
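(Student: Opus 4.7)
The plan is to verify semi-algebraicity by writing the defining conditions as polynomial equalities and inequalities in the matrix entries of $P$, and to verify compactness by checking boundedness together with closedness; the only subtle point will be the closedness of $\Psn$.

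For semi-algebraicity, first note that the ambient space $\SymgH$ is a linear subspace of $\operatorname{End}(\g)$, defined by $Q$-self-adjointness and $\Ad(H)$-equivariance; since $H$ is connected the latter reduces to the linear conditions $[P,\ad(X)]=0$ for $X\in\h$ (Remark \ref{rem:ad-inv}). The remaining conditions defining $\Ps$ are: $P^2=P$; $P(\h)=0$, i.e.~$\h\subset\ker(P)$; the subalgebra condition $P([(\Id_\g-P)(X),(\Id_\g-P)(Y)])=0$ for all $X,Y\in\g$; and---since $P$ is then a projection---the strict containments $\h\subsetneq\ker(P)\subsetneq\g$, which are equivalent to $1\leq\tr(P)\leq\dim\m-1$. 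After fixing bases, all of these become polynomial equalities or inequalities in the entries of $P$. The set $\Pst$ is the zero locus within $\Ps$ of the polynomial map $P\mapsto B_P$, where $B_P(X,Y):=[(\Id_\m-P)(X),(\Id_\m-P)(Y)]$, while $\Psn=\Ps\setminus\Pst$ is the semi-algebraic complement.

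For compactness, note that every $P\in\Ps$ satisfies $\|P\|^2=\tr(P^2)=\tr(P)\leq\dim\g$, so $\Ps$ is bounded. Closedness of $\Ps$ is routine: if $P_n\to P$ with $P_n\in\Ps$, all of the polynomial and linear defining equations pass to the limit by continuity, and since $\tr(P_n)$ is an integer lying in $[1,\dim\m-1]$ so is $\tr(P)$, forcing $\h\subsetneq\ker(P)\subsetneq\g$. Consequently $\Ps$ is compact, and $\Pst$---cut out of $\Ps$ by the closed system of equations $B_P\equiv 0$---is compact as well.

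The only obstacle is the closedness of $\Psn$, since non-torality is an a priori open condition on $P$. Given $P_n\in\Psn$ with $P_n\to P\in\Ps$, the plan is to pass to a subsequence along which $k:=\dim\ker(P_n)$ is constant; then $P_n$ and $P$ are orthogonal projections of the same rank $\dim\g-k$ (again by integrality of $\tr$), and under the homeomorphism between rank-$(\dim\g-k)$ orthogonal projections and $k$-planes in $\g$ given by $P\mapsto\ker(P)$, one concludes $\kf_n:=\ker(P_n)\to\ker(P)=:\kf_\infty$ in $Gr_k(\g)$. Each $\kf_n$ is a non-toral subalgebra by hypothesis, so Lemma \ref{lem:Dkfi}(i) forces $\kf_\infty$ to be non-toral as well. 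Hence $P\in\Psn$, which shows that $\Psn$ is closed in $\Ps$ and therefore compact.
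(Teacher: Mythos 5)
Your proof is correct and follows essentially the same route as the paper's: encode the defining conditions as polynomial (in)equalities to get semi-algebraicity, use boundedness plus closedness for compactness of $\Ps$ and $\Pst$, and invoke Lemma \ref{lem:Dkfi}(i) — that a limit of non-toral subalgebras of fixed dimension is non-toral — to establish the only non-obvious point, closedness of $\Psn$. Your encoding of the strict containments $\h\subsetneq\ker(P)\subsetneq\g$ via the trace bound $1\le\tr(P)\le\dim\m-1$ (using that orthogonal projections have integer trace) and the explicit pass through the Grassmannian are slightly more detailed than the paper's terser phrasing, but the substance is the same.
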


\begin{proof}
The condition $\h \subsetneq \ker(P)$ is equivalent to
$$
 P|_{\h}=0, \textrm{ and  for some }  Y \in \m\bs\{0\}, P(Y) = 0\,. 
$$
Hence the set $\Ps$ is semi-algebraic. 
It is compact, since a sequence of subalgebras of $\g$
of fixed dimension subconverges to a subalgebra of $\g$: see Lemma \ref{lem:sequsubalgsub}.

Notice that an element $P \in \Psn$ is a projection with $\ker(P) = \kf \in \Sub_s$, non-toral. By Lemma \ref{lem:Dkfi}, the limit if a convergent sequence of non-toral subalgebras is a non-toral subalgebra. 
This shows that $\Psn$ is compact.
Clearly, $\Psn$ is semi-algebraic.

Notice that an element $P \in \Pst$ is a projection with $\ker(P) = \tf \in \Sub_t$ (toral). 
It is again clear that $\Pst$ is a compact, semi-algebraic subset of $\SymgH$.
\end{proof}

\begin{proposition}\label{prop: WW} Let $G/H$ be a compact homogeneous space. 
Each of the sets  $\WW$, $ \WWs$, $ \WWst$, $\T$, and $\XXGH$ in Definition \ref{defin-W} is compact and semi-algebraic.
\end{proposition}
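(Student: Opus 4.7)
The plan is to realize each of the five sets as the image of a compact semi-algebraic set under a polynomial map. Semi-algebraicity then follows from the Tarski--Seidenberg theorem (Section~\ref{sec:semialgebraic}), which says that images of semi-algebraic sets under polynomial maps are semi-algebraic, while compactness follows because continuous images of compact sets are compact. The key ingredient is Lemma~\ref{lem:Ps}, which provides the compact semi-algebraic parameter spaces $\Ps$, $\Psn$, $\Pst$ of projections. Throughout we use that for a projection $P$ with $\ker P = \kf$, the operator $\Id_{\g}-P$ is the $Q$-orthogonal projection \emph{onto} $\kf$, so that $\kf \subset \ker A$ is equivalent to $A \circ (\Id_{\g}-P)=0$, and $[A,\ad(\kf)]=0$ is equivalent to $[A,\ad((\Id_{\g}-P)X)]=0$ for all $X\in\g$. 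Similarly, the canonical element $\Ak$ is $P/\tr P$.

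For $\WW$ I would introduce the auxiliary set
\[
   \tilde\WW := \bigl\{(P,A)\in \Ps\times\Sphb \;\bigm|\; A\circ(\Id_{\g}-P)=0,\;\; [A,\ad((\Id_{\g}-P)X)]=0 \text{ for all } X\in\g\bigr\},
\]
which is a closed semi-algebraic subset of the compact semi-algebraic product $\Ps\times\Sphb$. Projection onto the $A$-factor surjects onto $\WW$: one direction is immediate from the translation above, and the other uses that every $\kf\in\Sub$ is $\ker P$ for the unique $Q$-orthogonal projection $P$ onto $\kf^\perp$. The sets $\WWs$ and $\WWst$ are handled identically, replacing $\Ps$ by $\Psn$ in the former case, and in the latter case by the compact semi-algebraic set
\[
   \Psnm := \bigl\{P\in\Psn \;\bigm|\; \exists\, P_t\in\Pst \text{ with } P\circ(\Id_{\g}-P_t)=0\bigr\},
\]
itself obtained as a projection of the compact semi-algebraic set $\{(P,P_t)\in\Psn\times\Pst \mid P\circ(\Id_{\g}-P_t)=0\}$.

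For $\T$ and $\XXGH$ the length of any flag is bounded by $r_{\max}:=\dim\g-\dim\h$, so only finitely many flag lengths occur. For each $r\in\{1,\dots,r_{\max}\}$, on the compact semi-algebraic parameter space
\[
   \tilde\T_r := \bigl\{(P_1,\dots,P_r,\lambda)\in\Pst^r\times\Delta^{r-1} \;\bigm|\; P_{i+1}\circ(\Id_{\g}-P_i)=0 \text{ for } 1\leq i\leq r-1\bigr\},
\]
where $\Delta^{r-1}$ is the standard simplex and the nesting condition encodes $\ker P_i\subset \ker P_{i+1}$, the polynomial map
\[
   (P_1,\dots,P_r,\lambda)\longmapsto \sum_{i=1}^{r}\lambda_i\,\tfrac{P_i}{\tr P_i}
\]
surjects onto the union of toral flag simplices of length $\leq r$; non-strict nestings simply reproduce simplices already covered at a smaller~$r$. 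Taking the finite union over $r$ yields $\T$; replacing $\Pst$ by $\Psn$ yields $\XXGH$.

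The main technical point to watch is the flag case: one must check that allowing the nested inclusions $\ker P_i\subset \ker P_{i+1}$ to be non-strict does not enlarge the image beyond~$\T$ (respectively~$\XXGH$), which it does not, since a collapse of two kernels merely gives a flag simplex already realized in a lower-$r$ stratum. Beyond this, the argument is bookkeeping: every condition in sight (trace, non-negativity, kernel containment, $\Ad(H)$-equivariance, nestedness) is a polynomial equation or inequality in the entries of the operators, so all parameter spaces in question are manifestly semi-algebraic, all parameter spaces in question are manifestly closed inside compact ambient sets, and all maps in question are manifestly polynomial --- which is precisely the setting in which Tarski--Seidenberg applies.
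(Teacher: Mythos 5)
Your argument is correct, and it reaches the same destination by a recognizably related but cleaner route. The semi-algebraicity half is essentially the paper's: the paper also expresses membership in each of $\WW$, $\WWs$, $\WWst$, $\T$, $\XXGH$ by a first-order formula quantifying over the projection sets $\Ps$, $\Psn$, $\Pst$ of Lemma \ref{lem:Ps} and invokes Tarski--Seidenberg. Where you genuinely differ is compactness: the paper proves it separately for each set by sequential arguments (a sequence $A_i\in\D(\kf_i)$, subconvergence of the subalgebras resp.\ flags via Lemma \ref{lem:Dkfi} and Lemma \ref{lem:sequsubalgsub}, and persistence of toral/non-toral in the limit), whereas you obtain compactness in one stroke for all five sets, as the continuous image of a compact incidence variety inside $\Ps\times\Sphb$ (resp.\ $\Psn\times\Sphb$, $\Pst^r\times\Delta^{r-1}$, etc.). The Lie-theoretic content that a limit of non-toral subalgebras cannot become toral is still used, but you inherit it through the compactness of $\Psn$ and $\Pst$ already established in Lemma \ref{lem:Ps}, which is legitimate since that lemma precedes the proposition. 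What your packaging buys is uniformity (the five cases become instances of one projection argument) and the elimination of the case-by-case subsequence bookkeeping; what the paper's version buys is that the sequential lemmas are stated anyway because they are reused later (e.g.\ in Sections \ref{sec:homotopy1} and \ref{sec:homotopy2}), so little is saved globally. You also correctly isolate and dispose of the one point that needs care, namely that allowing non-strict nestings $\ker P_i\subset\ker P_{i+1}$ does not enlarge the image beyond $\T$ resp.\ $\XXGH$.

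Two cosmetic repairs: the map $(P_1,\dots,P_r,\lambda)\mapsto\sum_i\lambda_i P_i/\tr P_i$ is not polynomial but rational; since $\tr P_i\geq 1$ on $\Pst$ and $\Psn$ it is continuous and semi-algebraic (or pass to its graph and project), so both compactness and Tarski--Seidenberg still apply. Also, your auxiliary set of projections whose kernel contains a toral subalgebra should not be denoted $\Psnm$, which the paper reserves in Definition \ref{def:Psnm} for projections onto minimal subalgebras.
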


\begin{proof}
(1) Let $(A_i)_{i \in \NN}$ be a sequence in $\WW$. We may assume that
$\lim_{i \to \infty}A_i=A_\infty \in \Sphb$. We will show
that $A_\infty \in \WW$ and thus that $\WW$ is compact. Suppose that $A_i \in \D(\kf_i)$
for all $i \in \NN$, $\kf_i$ subalgebra with $\h < \kf_i<\g$. Then, by passing to a subsequence,
we may assume that  $\kf_i \to \kf_\infty$ as $i \to \infty$, with $\kf_\infty \in\Sub$ a subalgebra (see Lemma \ref{lem:Dkfi}). Since by hypothesis we know $[A_i,\ad(\kf_i)]=0$ for all $i \in \NN$, it follows
$[A_\infty,\ad(\kf_\infty)]=0$. Thus $\WW$ is compact.

We have
\beg
  \WW&=&
  \big\{ A \in \SymgH \mid A \geq 0,\,\, \tr A=1 \textrm{ and }
   \exists P \in \Ps \textrm{ with } \\
     &&
      \,\,\, \ker (P) \subset \ker(A) \textrm{ and }
      [A,\ad((\Id_{\g}-P)(X))]=0 \,\,\,\forall\,\,X \in \g \big\}\,.
\en
This shows that $\WW$ is a semi-algebraic set.

\medskip
\noindent (2) The proof that $\WWs$ is compact and semi-algebraic follows precisely as above. To see  compactness, we use that
a sequence $(\kf_i)_{i \in \NN}$ of non-toral subalgebras subconverges to a non-toral subalgebra $\kf_\infty$, by Lemma \ref{lem:Dkfi} and Lemma \ref{lem:sequsubalgsub}.
 To see that $\WWs$ is semi-algebraic, we replace $\Ps$ by $\Psn$.

\medskip

\noindent (3) The proof of compactness of $\WWst$ is analogous to the compactness of $\WWs$. 
Any sequence of non-toral subalgebras $(\kf_i)_{i\in \NN}$ subconverges to
a non-toral subalgebra $\kf_\infty$. If $\tf_i$ are subalgebras of $\kf_i$ for all $i \in \NN$,
then we may assume that $\tf_i$ subconverges to a toral subalgebra $\tf_\infty \subset \kf_\infty$.

To see that $\WWst$ is a semi-algebraic set, we observe that 
\beg
  \WWst
	 &=&
  \big\{ A \in \SymgH \mid A \geq 0,\,\, \tr A=1 \textrm{ and }
   \exists P_1\in \Pst,\,\,  \exists P_2 \in \Psn \textrm{ with }\\
     &&
      \,\,\, \ker(P_1) \subset \ker (P_2) \subset \ker(A) \textrm{ and }
      [A,\ad((\Id_{\g}-P_2)(X))]=0 \,\,\,\forall\,\,X \in \g \big\}\,.
\en
Thus $\WWst$ is semi-algebraic.

\medskip

\noindent (4)  Since a sequence of toral subalgebras subconverges to a toral subalgebra, 
$\T$ is compact.

We have
\beg
 \T &=&
    \big\{ A \in \WW \mid  A=\sum_{i=1}^r \lambda_i \cdot P_i
		 \textrm{ for some } P_1,...,P_r \in \Pst \\
  &&
	  \quad \quad \quad\quad\quad
		\textrm{s.t. }
	  \ker(P_1) \subsetneq \cdots \subsetneq \ker(P_r)
		 \textrm{ and } \lambda_i \geq 0
		 \big\}\,.
\en
 More precisely we mean that
$\exists P_1,...,P_r \in \Pst$ and $\exists \lambda_1,...,\lambda_r\geq 0$ with
 $\ker(P_1) \subsetneq \cdots \subsetneq \ker(P_r)$ such that  $A=\sum_{i=1}^r \lambda_i \cdot P_i$.
This shows that $\T$ is a semi-algebraic set.

\medskip

\noindent (5) To prove that the nerve $\XXGH$ is a compact, semi-algebraic set, we first observe that  
$\XXGH \subset \WWs$.  
Let $(A_i)_{i\in \NN}$ be a sequence in $\XXGH$
and for each $A_i$, let $\varphi_i=(\kf_1^i<\cdots < \kf_{r_i}^i < \g)$ be a corresponding flag with $A_i \in \Br[\varphi_i]$.  
Passing to a subsequence we may assume that $r_i\equiv r$ and that as $i \to \infty$, 
$\kf_j^i \to \kf_j^\infty$ for all $1\leq j\leq r$. Since $\kf_1^i$ is non-toral for all $i\in\NN$,
so is $\kf_1^\infty$. This shows the compactness of $\XXGH$.

We have
\beg
 \XXGH &=&
\big\{ A \in \WWs \mid  A=\sum_{i=1}^r \lambda_i \cdot P_i
		 \textrm{ for some } P_1,...,P_r \in \Psn \\
  &&
	  \quad \quad \quad\quad\quad
		\textrm{s.t. }
	  \ker(P_1) \subsetneq \cdots \subsetneq \ker(P_r)
		 \textrm{ and } \lambda_i \geq 0
		 \big\}\,
\en
because for each $P \in \Psn$ we have $\Ak=\tfrac{P}{\tr P}$ with $\kf=\ker(P)$. 
By definition of $\Psn$ the subalgebra $\kf$ is non-toral.
This shows that $\XXGH$ is semialgebraic.
\end{proof}

\begin{remark} In Proposition \ref{prop:theta} we prove that the set $\Bt$ is also compact and semi-algebraic.
In Lemma \ref{lem:XGHXXGH}, 
we show  that $\XXGH$ is homeomorphic to the {\em nerve}, $\XGH$, a compact semi-algebraic set, defined purely Lie theoretically, whose non-contractibility implies the existence of a $G$-invariant Einstein metric on $G/H$. 
\end{remark}

\section{Homotopies}\label{homotopies}
\subsection{{\it The first homotopy}}\label{sec:homotopy1}

In this section we show in Theorem \ref{thm:homotopy2} that
$\WWs$ is a strong deformation retract of a compact, semi-algebraic set $\B_{\epsilon_0} \subset \Sphb$.
Here we follow \cite{Gr}.

The geometric significance of $\XXsre$ comes from the fact that for directions 
$v \in \Sph$ outside of an open neighborhood of the inverse image of $\B_{\epsilon_0}$ under ${\rm Gr}$,
we obtain uniform upper bounds for the scalar curvature  
along the geodesic $\gamma_v(t)$, for all $t \geq \bar t(\delta)$ (see Theorem  \ref{theo:scalestu}).

\medskip

In Definition \ref{defin-W} we defined the set $ \Bt= \bigcup_{\varphi\in \Flts} \Br[\varphi]$. We can view this as the set of  all butterflies joining (a subset of) $\T$ and $\WWst$: see Remark \ref{rem:tsubandnot}.
More precisely, for every $\Br[\varphi] \subset \Bt$ we have
$$
     \Br[\varphi]= \rm {conv}\{A^{\bldst{\tf}_1},...,A^{\bldst{\tf}_r}\} \ast D(\kf)=\Delta_{\varphi_t}\ast \D(\kf)
$$
where $\varphi=(\varphi_t<\kf)\in\Flts$, $\varphi_t=(\tf_1<\cdots <\tf_r)$ is a toral flag, 
 $\kf$ is non-toral with $\tf_r < \kf$, and $\Delta_{\varphi_t} = \rm {conv}\{A^{\bldst{\tf}_1},...,A^{\bldst{\tf}_r}\}$ is as in Definition \ref{def:simplex}. 
For every $A \in  \Br[\varphi]$, there exist $A_t \in \Delta_{\varphi_t} $,
 $A_s \in \D(\kf)$, and $\kappa \in [0,1]$ such that
\begn \label{eqn:joindescr}
  A= (1-\kappa)\cdot A_t + \kappa\cdot A_s\,.
\enn
Conversely, for any such $A$, assuming $\tf_r < \kf$,  we have $A\in  \Br[\varphi]$.

\begin{remark}\label{rem:tsubandnot}
It can happen that some toral subalgebras $\tf$ of $\g$ are contained in a non-toral subalgebra $\kf$,
and others are not. An easy example is $G=\SO(n+2)\times S^1 \times S^1$ and
$H=\SO(n)\cdot \Delta(S^1)$, where $\SO(n)$ is embedded as a block and 
$\Delta(S^1)$ is diagonally embedded into $\SO(2)\times S^1 \times \{e\}$. In this case
$\m = \m_0 \oplus \p$, where $\dim \m_0=2$ is abelian and $\p$ is $\Ad(H)$-irreducible.
The toral subalgebra $\n(\h)=\h \oplus \m_0$ is maximal in $\g$, thus certainly not contained in any proper non-toral subalgebra, whereas $\tf:= \so (n) \oplus \so(2) \oplus \RR$ is toral and is contained in 
$\kf=\so (n+2)\oplus \RR < \g$.
\end{remark}

By the following lemma, the description of $A$ in \eqref{eqn:joindescr}
is essentially unique.

\begin{lemma}\label{lem:esudescrA} 
Let $\varphi  \in\Flts$, let $\Br[\varphi]\subset \Bt$ and let $A, \tilde A \in \Br[\varphi]$. Suppose that
$$
 A= (1-\kappa)\cdot A_t + \kappa\cdot A_s
\textrm{ and }
\tilde A= (1-\tilde \kappa)\cdot \tilde A_t + \tilde \kappa\cdot \tilde A_s
$$
for some $0\leq \kappa,\tilde \kappa \leq 1$, $A_t, \tilde A_t \in \rm {conv}(A^{\bldst{\tf}_1},...,A^{\bldst{\tf}_r})$ 
and $A_s,\tilde A_s \in \D(\kf)$.
Then $A=\tilde A$ implies $\kappa = \tilde \kappa$, and moreover, 
\begin{enumerate}
\item[{\rm (1)}]  If $\kappa=1$, then $A_s=\tilde A_s$; 
\item[{\rm (2)}] if $\kappa=0$ then $A_t=\tilde A_t$; 
\item[{\rm (3)}] if $\kappa \in (0,1)$, then
$A_t=\tilde A_t$ and $A_s =\tilde A_s$. 
\end{enumerate}
\end{lemma}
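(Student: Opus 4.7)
The plan is to diagonalize the toral part $A_t$ with respect to the orthogonal flag filtration and then read off $\kappa$ from a trace identity. First I will use the chain $\tf_1<\cdots<\tf_r<\kf$ to decompose
\[
\g=\tf_1\oplus(\tf_2\ominus\tf_1)\oplus\cdots\oplus(\tf_r\ominus\tf_{r-1})\oplus(\kf\ominus\tf_r)\oplus(\g\ominus\kf).
\]
Since $A^{\bldst{\tf}_i}=\tfrac{1}{\dim\g-\dim\tf_i}(\Id_\g-\Id_{\tf_i})$ is a scalar multiple of the orthogonal projection onto $\tf_i^\perp$, and $\tf_i\subset\tf_j$ exactly when $i\le j$, a direct check shows that $A^{\bldst{\tf}_i}$ vanishes on $\tf_{j+1}\ominus\tf_j$ when $i\ge j+1$ and equals $\tfrac{1}{\dim\g-\dim\tf_i}\cdot\Id$ when $i\le j$; the same scalar formula holds on $\kf\ominus\tf_r$ and on $\g\ominus\kf$. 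Writing $A_t=\sum_i\lambda_i A^{\bldst{\tf}_i}$ and setting $\mu_j:=\sum_{i\le j}\tfrac{\lambda_i}{\dim\g-\dim\tf_i}$, it follows that $A_t$ acts as the scalar $\mu_j$ on $\tf_{j+1}\ominus\tf_j$ for $0\le j\le r-1$ and as the scalar $\mu_r$ on both $\kf\ominus\tf_r$ and $\g\ominus\kf$. Analogous formulas with $\tilde\mu_j$ apply to $\tilde A_t$.

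Next I will use that $A_s,\tilde A_s\in\D(\kf)$ satisfy $\kf\subset\ker A_s\cap\ker\tilde A_s$, so $A_s$ and $\tilde A_s$ both vanish on every summand of the filtration except $\g\ominus\kf$. Equating $A=\tilde A$ summand by summand on the pieces inside $\kf$ gives $(1-\kappa)\mu_j=(1-\tilde\kappa)\tilde\mu_j$ for $j=0,\dots,r$, while on $\g\ominus\kf$ one obtains
\[
(1-\kappa)\mu_r\,\Id+\kappa\,A_s|_{\g\ominus\kf}=(1-\tilde\kappa)\tilde\mu_r\,\Id+\tilde\kappa\,\tilde A_s|_{\g\ominus\kf}.
\]
The scalar parts cancel by the $j=r$ identity, leaving $\kappa\,A_s|_{\g\ominus\kf}=\tilde\kappa\,\tilde A_s|_{\g\ominus\kf}$. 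Taking traces and using $\tr A_s=\tr\tilde A_s=1$ together with $A_s|_\kf=\tilde A_s|_\kf=0$, the two sides equal $\kappa$ and $\tilde\kappa$ respectively, so $\kappa=\tilde\kappa$.

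The three uniqueness claims then follow immediately. If $\kappa=1$ the decompositions reduce to $A=A_s$ and $\tilde A=\tilde A_s$, giving $A_s=\tilde A_s$; if $\kappa=0$ they reduce to $A=A_t$ and $\tilde A=\tilde A_t$, giving $A_t=\tilde A_t$; and if $\kappa\in(0,1)$ both $\kappa$ and $1-\kappa$ are positive, so the identity $\kappa\,A_s=\tilde\kappa\,\tilde A_s$ (which already holds on $\kf$) gives $A_s=\tilde A_s$, while $\mu_j=\tilde\mu_j$ for all $j$ forces $A_t=\tilde A_t$ on the whole flag filtration. The only real obstacle is the initial linear-algebra bookkeeping of the eigenvalues of $A_t$ along the flag; once that is in place, everything else is a single trace computation.
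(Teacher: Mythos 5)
Your proof is correct and takes a genuinely different, arguably cleaner, route than the paper's. The paper proceeds by a three-way case split on $\kappa$: for $\kappa\in\{0,1\}$ it deduces $\tilde\kappa=\kappa$ by a contradiction argument on kernels (if $\kappa=1$ and $\tilde\kappa<1$ then $\ker\tilde A\subset\tf_r$ contradicts $\tf_r<\kf$, and similarly for $\kappa=0$ comparing $A|_{\tf_r^\perp}$ with a nontrivial $A_s$-contribution); for $\kappa\in(0,1)$ it restricts to $\tf_r$, invokes the linear independence of $\{A^{\bldst\tf_1},\dots,A^{\bldst\tf_r}\}$ to get $(1-\kappa)\lambda_i=(1-\tilde\kappa)\tilde\lambda_i$, and sums over $i$ using $\sum\lambda_i=\sum\tilde\lambda_i=1$. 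You instead decompose $\g$ along the whole flag filtration, read off the scalar eigenvalues $\mu_j$ of $A_t$ on each graded piece, and then obtain $\kappa=\tilde\kappa$ \emph{uniformly} (with no case split) from the trace normalization $\tr A_s=\tr\tilde A_s=1$ applied to the piece $\g\ominus\kf$ after cancelling the scalar parts via the $j=r$ identity. Your normalization uses $\tr A_s=1$ where the paper uses $\sum\lambda_i=1$; both are available because of the conventions built into $\Sphb$ and the definition of $A^{\bldst\tf_i}$. The only points you should make explicit are: (i) $\kf\ominus\tf_r\neq\{0\}$ (this uses $\tf_r<\kf$ strictly, guaranteed by $\varphi\in\Flts$) so that the $j=r$ identity is actually forced; (ii) $A_s$ and $\tilde A_s$ preserve the orthogonal decomposition $\g=\kf\oplus(\g\ominus\kf)$ because they are $Q$-self-adjoint with $\kf\subset\ker$, so $\tr(A_s|_{\g\ominus\kf})=\tr A_s=1$; and (iii) the passage from $\mu_j=\tilde\mu_j$ for all $j$ to $A_t=\tilde A_t$ uses that the map $(\lambda_1,\dots,\lambda_r)\mapsto(\mu_1,\dots,\mu_r)$ is triangular with nonzero diagonal (equivalently, taking successive differences $\mu_j-\mu_{j-1}=\lambda_j/(\dim\g-\dim\tf_j)$ recovers the $\lambda_j$). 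With these spelled out, the argument is complete.
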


\begin{proof}
Suppose that $\kappa=1$. Then $A=A_s$ and thus $\kf \subset \ker(A)$. If $\tilde \kappa<1$ then
$(1-\tilde \kappa)\tilde A_t\neq 0$, thus $\ker ((1-\tilde \kappa) \tilde A_t) \subset \tf_r$
and $\ker (\tilde A)=\ker (A) \subset \tf_r$. This contradicts  $\tf_r < \kf$ and
 shows $\kappa=\tilde \kappa=1$ and (1).

Suppose that $\kappa=0$. Then $A=A_t$ and $A\vert_{\tf_r^\perp}$ is a multiple of the identity.
If $\tilde \kappa>0$ then $(1-\tilde \kappa)\tilde A_t\vert_{\tf_r^\perp}$ is  a multiple of the identity
but $(1-\tilde \kappa)\tilde A_s\vert_{\tf_r^\perp}$ is not, contradicting $A=\tilde A$. This shows 
$\kappa=\tilde \kappa=0$ and (2).

We now assume that $0<\kappa,\tilde \kappa <1$. Since $A=\tilde A$, it follows that
$A\vert_{\tf_r}=\tilde A\vert_{\tf_r}$, thus $(1-\kappa) A_t\vert_{\tf_r}=(1-\tilde \kappa) \tilde A_t\vert_{\tf_r}$.
We know $A_t =\sum_{i=1}^r \lambda_i \cdot A^{\bldst{\tf}_i}$ and
$\tilde A_t =\sum_{i=1}^r \tilde \lambda_i \cdot A^{\bldst{\tf}_i}$ with 
$0 \leq \lambda_i,\tilde \lambda_i \leq 1$ for all $1\leq i\leq r$
and $\sum_{i=1}^r\lambda_i=\sum_{i=1}^r \tilde \lambda_i=1$. By the linear independence of $\{A^{\bldst{\tf}_1},...,A^{\bldst{\tf}_r}\}$, 
we conclude $(1-\kappa) \lambda_i =(1-\tilde \kappa) \tilde \lambda_i$ for each $i=1,...,r$. Summation over $i$ yields
$\kappa =\tilde \kappa$.
As a consequence, we deduce from $A=\tilde A$ that
$$
 (1-\kappa) \cdot (A_t -\tilde A_t)= -\kappa\cdot (A_s-\tilde A_s)\,.
$$
The map on the left hand side, restricted to $\tf_r^\perp$, is a multiple of the identity. Thus $A_s=\tilde A_s$
since $\kappa>0$. Since $\kappa<1$  we deduce $A_t=\tilde A_t$, which proves (3).
\end{proof}

The following corollary says that the description of $A$ in \eqref{eqn:joindescr}
does not depend on the butterfly $\Br(\varphi)$ with $A \in \Br(\varphi)$.

\begin{corollary}\label{cor:AdescBunique}
Let $\Br[\varphi],\Br[\tilde \varphi] \subset \Bt$ and let $A \in \Br[\varphi]\cap \Br[\tilde \varphi]$.
Then either $A \in \T$, or $A \in \WWst$, or there exists unique $A_t \in \T$, $A_s \in \WWst$
and $\kappa \in (0,1)$ such that $A=(1-\kappa)\cdot A_t + \kappa \cdot A_s$.
\end{corollary}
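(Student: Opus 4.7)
The plan is to reduce everything to a single butterfly via Lemma~\ref{lem:butphipsi}, which gives $A \in \Br[\varphi] \cap \Br[\tilde\varphi] = \Br[\varphi\tilde\varphi]$. Since $\varphi,\tilde\varphi \in \Flts$, Corollary~\ref{cor:phipsi} forces the product flag to have one of two specific forms: $\varphi\tilde\varphi = (\kf^*)$ or $\varphi\tilde\varphi = (\varphi'_t < \kf^*)$, where $\kf^* = \langle\kf,\tilde\kf\rangle$ and $\varphi'_t$ is a toral subflag of $\varphi \cap \tilde\varphi$.

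In the first case, $\Br[\varphi\tilde\varphi] = \D(\kf^*)$, so the hypothesis $A \in \Br[\varphi\tilde\varphi]$ forces $\kf^* < \g$. Since $\kf \subset \kf^*$ is non-toral, $\kf^*$ itself is non-toral, and since $\tf_r = \max(\varphi_t) \subset \kf^*$ is toral, $\kf^* \in \Sub_{s,t}$. Hence $A \in \D(\kf^*) \subset \WWst$. In the second case, $\Br[\varphi\tilde\varphi] = \Delta_{\varphi'_t} \ast \D(\kf^*)$ (the join degenerates to $\Delta_{\varphi'_t}$ when $\kf^* = \g$), so there exist $A_t \in \Delta_{\varphi'_t} \subset \T$, $A_s \in \D(\kf^*) \subset \WWst$ (since $\kf^* \in \Sub_{s,t}$, containing the toral $\max(\varphi'_t)$), and $\kappa \in [0,1]$ with $A = (1-\kappa)A_t + \kappa A_s$. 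At $\kappa = 0$ we obtain $A \in \T$, at $\kappa = 1$ we obtain $A \in \WWst$, and for $\kappa \in (0,1)$ Lemma~\ref{lem:esudescrA} applied to the butterfly $\Br[\varphi\tilde\varphi]$ gives uniqueness of $(\kappa, A_t, A_s)$ within that butterfly.

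To upgrade this to global uniqueness in $\T \times \WWst \times (0,1)$, suppose $A = (1-\kappa')A'_t + \kappa' A'_s$ is any other decomposition with $A'_t \in \Delta_\psi \subset \T$, $A'_s \in \D(\kf') \subset \WWst$, and $\kappa' \in (0,1)$. The idea is to place this alternative decomposition into a common butterfly with the original one: extract from $\psi$ the subflag $\psi'$ of toral subalgebras contained in $\kf'$, verify that $A'_t \in \Delta_{\psi'}$, so that $(\psi' < \kf') \in \Flts$ is a legitimate flag and $A \in \Br[(\psi' < \kf')]$. Then Lemma~\ref{lem:butphipsi} applied to $\Br[\varphi\tilde\varphi]$ and $\Br[(\psi' < \kf')]$ produces a common butterfly containing $A$, and Lemma~\ref{lem:esudescrA} applied there forces the two decompositions to coincide.

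The principal obstacle is the verification that $A'_t \in \Delta_{\psi'}$, i.e.~that only those toral subalgebras of $\psi$ which sit inside $\kf'$ can contribute nontrivially to the convex combination defining $A'_t$. This follows from analyzing the positive semi-definite structure: on any $X \in \max(\psi) \setminus \kf'$, the value $A_s'(X)$ need not be controlled, but the requirement that $(1-\kappa')A'_t + \kappa' A'_s$ lie in $\Br[\varphi\tilde\varphi]$ pins down the eigenstructure of $A$ on $\kf^{*\perp}$ and $\max(\varphi'_t)^\perp$ by the butterfly form, and a comparison of the multi-scalar structure of $A'_t$ on the layers $\tf_{i+1} \cap \tf_i^\perp$ with that forced by $\Br[\varphi\tilde\varphi]$ rules out toral contributions from outside $\kf'$. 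With this step in place, the preceding chain of reductions delivers the full claim.
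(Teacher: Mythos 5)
Your reduction via Lemma~\ref{lem:butphipsi} and Corollary~\ref{cor:phipsi} matches the paper, and the case analysis on the shape of $\varphi\tilde\varphi$ (pure non-toral disk $(\kf^*)$, flag $(\varphi'_t<\kf^*)$ with possibly $\kf^*=\g$) is set up correctly. The existence of a decomposition and the boundary cases $\kappa\in\{0,1\}$ are handled as in the paper.

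The gap is in your treatment of uniqueness, where you depart from the paper and the departure is not completed. The corollary, as the paper uses it (to make the retraction in Theorem~\ref{thm:homotopy1} well defined and to prove Proposition~\ref{prop:Teps}), needs only that the butterfly decompositions of $A$ arising from the two given butterflies $\Br[\varphi]$ and $\Br[\tilde\varphi]$ coincide. The paper's proof achieves exactly this: it extracts the $\Br[\varphi\tilde\varphi]$-decomposition $(A_t,A_s,\kappa)$ with $A_t\in\Delta_{\varphi_t}$, $A_s\in\D(\kf^*)$; observes that since $\Delta_{\varphi_t}\subset{\rm conv}\{A^{\bldst{\tf}_1},\dots,A^{\bldst{\tf}_r}\}$ and $\D(\kf^*)\subset\D(\kf)$ this triple is \emph{also} a valid decomposition inside $\Br[\varphi]$; and then invokes Lemma~\ref{lem:esudescrA}(3) \emph{in $\Br[\varphi]$} to force it to coincide with the $\Br[\varphi]$-decomposition (and symmetrically for $\tilde\varphi$). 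You instead set out to prove uniqueness over the entire product $\T\times\WWst\times(0,1)$, which requires showing that an \emph{arbitrary} decomposition $A=(1-\kappa')A'_t+\kappa'A'_s$ is automatically a butterfly decomposition, i.e.~that the toral flag underlying $A'_t$ is compatible with $\kf'$. You flag this as ``the principal obstacle'' and then resolve it only with a verbal gesture (``a comparison of the multi-scalar structure of $A'_t$ on the layers $\tf_{i+1}\cap\tf_i^\perp$ \dots rules out toral contributions from outside $\kf'$''). That is not an argument: nothing is actually computed, no positivity inequality is exhibited, and it is not at all obvious a priori that $A'_t$ cannot have a nontrivial component along a toral $A^{\bldss\tf}$ with $\tf\not\subset\kf'$ while the $A'_s$-part compensates. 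Even your final reduction step (``Lemma~\ref{lem:esudescrA} applied there forces the two decompositions to coincide'') elides the same comparison that the paper spells out -- that the decomposition in the common (smaller) butterfly is also a valid decomposition in each parent butterfly -- without which Lemma~\ref{lem:esudescrA} alone gives uniqueness only \emph{within} a single butterfly, not \emph{across} two.

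In short: your first two paragraphs are right and follow the paper; your third paragraph attempts to prove a stronger statement than the paper (or its applications) require, and the key step of that stronger argument is missing. Replace the third paragraph with the paper's narrower comparison: show the $\Br[\varphi\tilde\varphi]$-triple lies in $\Br[\varphi]$ via the inclusions $\Delta_{(\varphi\tilde\varphi)_t}\subset\Delta_{\varphi_t}$ and $\D(\kf^*)\subset\D(\kf)$, and then apply Lemma~\ref{lem:esudescrA}(3) inside $\Br[\varphi]$, and symmetrically inside $\Br[\tilde\varphi]$.
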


\begin{proof}
Since $A \in \Br[\varphi]\cap \Br[\tilde \varphi]$ we conclude that
$\emptyset \neq \Br[\varphi]\cap \Br[\tilde \varphi]$. Let
$\varphi=(\tf_1 < \cdots < \tf_r< \kf)$ and 
$\tilde \varphi= (\tilde \tf_1 < \cdots < \tilde \tf_{\tilde r}< \tilde \kf)$.
By Lemma \ref{lem:butphipsi} we know
that $A \in \Br[\varphi \tilde \varphi]$. Thus $\varphi\tilde \varphi \neq (\g)$.
By  Corollary \ref{cor:phipsi} we conclude therefore that
$\varphi \tilde \varphi =(\varphi_t<\g)$ with $\varphi_t:=(\tf_{i_1} < \cdots < \tf_{i_k})$, $1 \leq k \leq r$
and $1 \leq i_1 < \cdots < i_k \leq r$,
or that $ \varphi \tilde \varphi=(\kf^\ast)$ (recall, $\kf^\ast=\langle \kf,\tilde \kf \rangle$ and here $\kf^\ast<\g$), or that
$\varphi\tilde \varphi=(\varphi_t < \kf^\ast)$ with $\varphi_t$, $\kf^\ast$ as above.

We first consider the case that $\varphi\tilde \varphi=(\varphi_t < \kf^\ast)$. By Lemma \ref{lem:esudescrA}
we obtain an (essentially) unique $A_t \in \Delta_{\varphi_t}$,
$A_s \in \D(\kf^\ast)$ and $\kappa \in [0,1]$ with $A=(1-\kappa)\cdot A_t + \kappa\cdot A_s$. 
If $\kappa=0$ or $\kappa=1$ the claim follows. 
Now suppose $\kappa\in (0,1)$.
Since $\Br[\varphi\tilde \varphi] \subset \Br[\varphi]$, again by Lemma \ref{lem:esudescrA}
 we obtain an (essentially) unique description $A =(1-\kappa')\cdot A_t' + \kappa'\cdot A_s'$,
for some $A_t' \in {\rm conv}(A^{\bldst{\tf}_1},...,  A^{\bldst{\tf}_r})$,
$A_s' \in \D(\kf)$ and $\kappa' \in [0,1]$. Since $\D(\kf^\ast)\subset \D(\kf)$ we deduce that
$A_s \in \D(\kf)$ and $A_t \in
{\rm conv}(A^{\bldst{\tf}_1},...,  A^{\bldst{\tf}_r})$. This yields two descriptions of $A$ in $\Br[\varphi]$.
By Lemma \ref{lem:esudescrA}, (3), we obtain uniqueness. 

In the second case, if $ \varphi \tilde \varphi=(\kf^\ast)$, we have $\Br[\varphi\tilde \varphi]=\D(\kf^\ast)\subset \WWst$, because $(\kf^\ast)$ is non-toral  (since $\kf$ is non-toral). Finally, in the first case we have  $\Br[\varphi\tilde \varphi]=\Delta_{\varphi_t}\subset \T$.
This shows the claim.
\end{proof}

The above corollary  immediately implies

\begin{corollary}\label{cor:Adescunique}
Let $A \in \Bt$. Then either $A \in \T$, $A \in \WWst$ or there exist unique 
$A_t \in \T$, $A_s \in \WWst$
and $\kappa \in (0,1)$ such that $A=(1-\kappa)\cdot A_t + \kappa\cdot A_s$.
\end{corollary}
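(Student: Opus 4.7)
The plan is to derive Corollary \ref{cor:Adescunique} as a one-line specialization of Corollary \ref{cor:AdescBunique}. Given $A \in \Bt$, the definition $\Bt = \bigcup_{\varphi \in \Flts} \Br[\varphi]$ supplies some flag $\varphi \in \Flts$ with $A \in \Br[\varphi]$. Applying Corollary \ref{cor:AdescBunique} with the choice $\tilde\varphi := \varphi$, so that $A \in \Br[\varphi] \cap \Br[\tilde\varphi]$ trivially, I would immediately read off the trichotomy: either $A \in \T$, or $A \in \WWst$, or there exist $A_t \in \T$, $A_s \in \WWst$ and $\kappa \in (0,1)$ with $A = (1-\kappa) A_t + \kappa A_s$, and these data are unique within $\Br[\varphi]$.

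The only point that needs a second sentence is uniqueness across butterflies: the decomposition should depend on $A$ alone, not on the flag $\varphi$ whose butterfly houses $A$. Suppose $A$ lies also in some other butterfly $\Br[\tilde\varphi]$, giving a priori a second decomposition. Since then $A \in \Br[\varphi] \cap \Br[\tilde\varphi]$, I would invoke Corollary \ref{cor:AdescBunique} directly to conclude that $(A_t, A_s, \kappa)$ is uniquely determined in the intersection, so the two decompositions must coincide. This is the essence of the claim.

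No step here should present any obstacle — the work has already been done in establishing Corollary \ref{cor:AdescBunique}, which in turn rests on the intersection identity $\Br[\varphi]\cap\Br[\tilde\varphi]=\Br[\varphi\tilde\varphi]$ from Lemma \ref{lem:butphipsi}, the structural description of $\varphi\tilde\varphi$ from Corollary \ref{cor:phipsi}, and the rigidity of the join decomposition furnished by Lemma \ref{lem:esudescrA}. Once Corollary \ref{cor:AdescBunique} is available, the present corollary is strictly a weakening of its hypothesis (intersection of two butterflies replaced by membership in a single butterfly), so no further estimates or casework are needed.
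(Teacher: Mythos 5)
Your proposal is correct and matches the paper's approach exactly: the paper presents Corollary \ref{cor:Adescunique} as an immediate consequence of Corollary \ref{cor:AdescBunique} with no further proof, which is precisely your specialization $\tilde\varphi := \varphi$. Your second paragraph (re-invoking Corollary \ref{cor:AdescBunique} with a genuinely different $\tilde\varphi$ to confirm the decomposition is flag-independent) is a reasonable extra sentence of caution, but it is already encoded in the ``unique'' of Corollary \ref{cor:AdescBunique}'s conclusion and adds no new step.
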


Recall that a choice of $A_t$ restricts the possible maps $A_s$. In particular there may exist
$A_t \in \T$ such that $A_t$ is not contained in any $\Br[\varphi]$, $\varphi=(\varphi_t< \kf) \in \Flts$:
see Remark \ref{rem:tsubandnot}.

\begin{definition}\label{def:Bth}
 Let $G/H$ be a compact homogeneous space. Then we set
$$
 \Bth :=\{ (A_t,A_s) \in \T \times \WWst \mid  (1) \textrm{ and } (2) \textrm{ are satisfied}\}\,,
 $$
where the conditions (1) and (2) are defined as follows:
We require for  $(A_t,A_s)$ that there exists a projection $P=P(A_t,A_s) \in \Psn$ satisfying:\\
(1) $\ker(P) \subset \ker (A_s)$ and 
      $[A_s,\ad((\Id_{\g}-P)(X))]=0 \,\,\,\textrm{ for all } X \in \g$; \\
 (2) $\tr(P)\cdot A_tP= \tr(P)\cdot PA_t=\tr(A_tP)\cdot P$.
\end{definition}

 Notice that 
condition (1) is the requirement that $A_s \in \D(\kf)$ 
for $\kf:= \ker(P)$ the non-toral subalgebra related to $P$. 
Condition (2) ensures that $A_t$ and $P$ commute, thus they can be diagonalized simultaneously, and $\ker(A_t) \subset \ker(P)$ (that is, $\tf_r < \kf$). Consequently,
on $\kf^\perp=\m_{\blds\kf}$, 
the map $A_t$ must be a multiple of the identity. 

\begin{lemma}\label{lem:Bth-cpt} The set  $\Bth$ is  compact and semi-algebraic.
\end{lemma}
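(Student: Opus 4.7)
The plan is to handle compactness and semi-algebraicity separately, both of which follow once one recognizes $\Bth$ as a set defined by an existential quantifier over a compact, semi-algebraic family of projections.

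First I would observe that by Proposition \ref{prop: WW}, both $\T$ and $\WWst$ are compact and semi-algebraic, hence so is the product $\T \times \WWst$. The defining conditions (1) and (2) in Definition \ref{def:Bth} are each polynomial (in the entries of $A_t$, $A_s$ and $P$): condition (1) encodes $\ker(P)\subset\ker(A_s)$ by the algebraic equation $A_s P = 0$ and the bracket vanishing is polynomial in the entries, while condition (2) is plainly a system of polynomial identities in $A_t$ and $P$. Together with the already-established semi-algebraic description of $\Psn$ from Lemma \ref{lem:Ps}, this shows that the subset
\[
 \widetilde{\Bth}:=\{(A_t,A_s,P)\in \T\times \WWst \times \Psn \mid (1)\text{ and }(2)\text{ hold}\}
\]
is semi-algebraic. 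Since $\Bth$ is exactly the image of $\widetilde{\Bth}$ under the projection $(A_t,A_s,P)\mapsto (A_t,A_s)$, the Tarski--Seidenberg theorem (cf.\ Section \ref{sec:semialgebraic}) gives that $\Bth$ is semi-algebraic.

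For compactness I would argue by sequential compactness. Let $(A_t^i,A_s^i)_{i\in \NN}\subset \Bth$ be a sequence; by compactness of $\T\times \WWst$, after passing to a subsequence we may assume $(A_t^i,A_s^i)\to (A_t^\infty,A_s^\infty)\in \T\times \WWst$. For each $i$, choose a witness $P_i\in \Psn$ satisfying (1) and (2). Since $\Psn$ is compact by Lemma \ref{lem:Ps}, a further subsequence yields $P_i\to P_\infty \in \Psn$. Conditions (1) and (2) are closed (each is a conjunction of polynomial equations), so they pass to the limit:
\[
\ker(P_\infty)\subset \ker(A_s^\infty),\qquad [A_s^\infty,\ad((\Id_\g-P_\infty)(X))]=0\ \text{for all }X\in \g,
\]
and the identity $\tr(P_\infty)\cdot A_t^\infty P_\infty=\tr(P_\infty)\cdot P_\infty A_t^\infty = \tr(A_t^\infty P_\infty)\cdot P_\infty$ holds as well. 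Hence $(A_t^\infty,A_s^\infty)\in \Bth$, and compactness is proved.

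The only genuinely delicate point is the need to know that the limit witness $P_\infty$ remains in $\Psn$, i.e.\ that $\ker(P_\infty)$ is still a \emph{non-toral} intermediate subalgebra and not merely a toral one. This, however, is precisely the content of the compactness of $\Psn$ already established in Lemma \ref{lem:Ps}, which in turn rests on Lemma \ref{lem:Dkfi}(i) (limits of non-toral subalgebras of fixed dimension remain non-toral). Everything else is bookkeeping with closed polynomial conditions.
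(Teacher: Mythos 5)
Your proposal is essentially correct and follows the same structure as the paper's proof: semi-algebraicity via Tarski--Seidenberg applied to the auxiliary set in $\T\times\WWst\times\Psn$, and compactness via sequential compactness of $\T$, $\WWst$, $\Psn$ together with the closedness of conditions (1) and (2). The one inaccuracy is the polynomial encoding of $\ker(P)\subset\ker(A_s)$: since $\ker(P)=\operatorname{im}(\Id_{\g}-P)$ for the projection $P$, the correct equation is $A_s(\Id_{\g}-P)=0$ (equivalently $A_s=A_sP$), not $A_sP=0$; the latter would say $\operatorname{im}(P)\subset\ker(A_s)$, which is the opposite containment. With that correction, your observation that the containment passes to the limit because it is a polynomial identity is actually a slightly cleaner route than the paper's, which instead invokes the fact that the eigenvalues of the $P_i$ lie in $\{0,1\}$ so $\dim\ker(P_\infty)$ cannot jump; both arguments are sound, but the polynomial-equation viewpoint makes the step automatic. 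Your identification of the genuine subtlety — that $P_\infty$ must remain in $\Psn$, i.e.\ $\ker(P_\infty)$ must still be non-toral — and its resolution via Lemma \ref{lem:Ps} and Lemma \ref{lem:Dkfi}(i) matches the paper's reasoning.
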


\begin{proof}
Since the sets $\T$, $\WWst$ and $\Psn$ are semi-algebraic, $\Bth$ is semi-algebraic: see Lemma \ref{lem:Ps}.

 To see that $\Bth$ is compact, we consider a sequence $((A_t^i,A_s^i))_{i \in \NN}$ 
 in $\Bth$, and the corresponding sequence $(P_i)_{i \in \NN} = (P(A_t^i,A_s^i))$.  Passing to a subsequence, by compactness of $\T$, $\WWst$ and $\Psn$, we know as $i \to \infty$, $(A_t^i,A_s^i) \to (A_t^\infty, A_s^\infty)\in \T \times \WWs$ and $P_i \to P_\infty$. 
 Since condition (2) is algebraic,  (2) will hold for $(A_t^\infty,P_\infty)$. 
 Similarly,  the second part of condition (1) holds:  $[A_s^\infty,\ad((\Id_{\g}-P_\infty)(X))]=0 \,\,\,\textrm{ for all } X \in \g$. Furthermore, since the eigenvalues of the maps $P_i$ are in $\{0,1\}$, the kernel of $P_\infty$ cannot jump in dimension, an we conclude $\ker(P_\infty) \subset \ker (A_s^\infty)$. Thus condition (1) holds as well.  This shows $P_\infty=P(A_t^\infty,A_s^\infty)$, thus
  $\Bth$ is indeed compact.
\end{proof}

\begin{proposition}\label{prop:theta}
Let
${\displaystyle 
 \theta: [0,1] \times \Bth \to \Sphb}$ be given by 
 $$
 (\kappa,A_t,A_s)\mapsto (1-\kappa)\cdot A_t + \kappa\cdot A_s\,.
$$
Then $\theta([0,1]\times \Bth)=\Bt$. Thus the set $\Bt$ is compact and semi-algebraic.
\end{proposition}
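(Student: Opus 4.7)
The plan is to establish the set equality $\theta([0,1]\times\Bth)=\Bt$ by showing both inclusions; compactness and semi-algebraicity of $\Bt$ then fall out as a corollary. Indeed, by Lemma~\ref{lem:Bth-cpt} the set $\Bth$ is compact and semi-algebraic, so $[0,1]\times\Bth$ inherits both properties. The map $\theta$ is affine in its three arguments, hence continuous and polynomial, so the image $\theta([0,1]\times\Bth)$ is compact, and by the Tarski--Seidenberg theorem (recalled in Section~\ref{sec:semialgebraic}) it is semi-algebraic.

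For the inclusion $\Bt\subset\theta([0,1]\times\Bth)$, take $A\in\Br[\varphi]$ for some $\varphi=(\tf_1<\cdots<\tf_r<\kf)\in\Flts$. The join description in Definition~\ref{def:butterfly} gives $A=(1-\kappa)A_t+\kappa A_s$ with $A_t\in\Delta_{\varphi_t}\subset\T$, $A_s\in\D(\kf)\subset\WWst$, and $\kappa\in[0,1]$. Let $P$ be the $Q$-orthogonal projection in $\SymgH$ with $\ker P=\kf$; since $\kf$ is non-toral, $P\in\Psn$. Condition~(1) of Definition~\ref{def:Bth} is immediate from $A_s\in\D(\kf)$, because $\Id_{\g}-P$ is the projection onto $\kf$. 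For condition~(2), each $\tf_i\subset\kf$ forces $\kf^\perp\subset\tf_i^\perp$, so every $A^{\tf_i}$ acts as a scalar multiple of the identity on $\kf^\perp$; hence the convex combination $A_t$ does as well, and together with the $\Ad(K)$-invariance of the splitting $\g=\kf\oplus\kf^\perp$ this yields $A_tP=PA_t=cP$ for $c=\tr(A_tP)/\tr(P)$.

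For the reverse inclusion, fix $(A_t,A_s,\kappa)\in[0,1]\times\Bth$ with associated projection $P\in\Psn$, set $\kf:=\ker P\in\Sub_s$ and $A:=(1-\kappa)A_t+\kappa A_s$. By~(1), $A_s\in\D(\kf)\subset\WWst$, and by~(2), $A_t$ commutes with $P$ and restricts to a scalar on $\kf^\perp$. The crucial step is to exhibit a toral flag $\tilde\varphi_t=(\tilde\tf_1<\cdots<\tilde\tf_{\tilde r})$ with $\tilde\tf_{\tilde r}<\kf$ and $A_t\in\Delta_{\tilde\varphi_t}$: starting from any toral flag witnessing $A_t\in\T$, the $\tilde\tf_i$ can be read off the nested eigenvalue stratification of $A_t|_\kf$, and the scalar-on-$\kf^\perp$ property of~(2) ensures that $\kf^\perp$ sits inside the top eigenspace of $A_t$, forcing $\tilde\tf_{\tilde r}\subset\kf$. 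Once $\tilde\varphi_t$ is in hand, $(\tilde\varphi_t<\kf)\in\Flts$ and $A\in\Delta_{\tilde\varphi_t}\ast\D(\kf)=\Br[(\tilde\varphi_t<\kf)]\subset\Bt$. This construction of $\tilde\varphi_t$, and in particular securing the strict inclusion $\tilde\tf_{\tilde r}<\kf$, is the main obstacle: conditions~(1) and~(2) in Definition~\ref{def:Bth} appear tailored precisely to force this compatibility, and making the extraction rigorous requires careful bookkeeping of the eigenvalue stratification of $A_t$ together with the subalgebra property of each $\tilde\tf_i$.
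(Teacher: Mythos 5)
Your overall architecture matches the paper's: once the set equality is in hand, compactness and semi-algebraicity of $\Bt$ follow from Lemma \ref{lem:Bth-cpt} together with Tarski--Seidenberg, and your proof of the inclusion $\Bt\subset\theta([0,1]\times \Bth)$ --- decompose $A\in\Br[\varphi]$ via the join structure, take $P$ with $\ker(P)=\kf$, and check conditions (1) and (2) using that each $A^{\bldst\tf_i}$ is a multiple of the identity on $\kf^\perp$ --- is correct and is essentially the argument the paper gives (the paper invokes Corollary \ref{cor:Adescunique} for the decomposition, but the content is the same).

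The gap is in the reverse inclusion $\theta([0,1]\times\Bth)\subset\Bt$, which is where the substance of the proposition lies, and which you do not complete. What has to be shown is that for $(A_t,A_s)\in\Bth$ with associated $P\in\Psn$ and $\kf=\ker(P)$, the element $A_t$ lies in the simplex of a toral flag whose maximum is a \emph{proper} subalgebra of $\kf$, so that the flag capped by $\kf$ lies in $\Flts$ and $\theta(\kappa,A_t,A_s)\in\Br[(\varphi_t<\kf)]\subset\Bt$. You assert that condition (2) ``ensures that $\kf^\perp$ sits inside the top eigenspace of $A_t$'', but (2) only yields $A_tP=PA_t=\tfrac{\tr(A_tP)}{\tr(P)}\,P$, i.e.\ that $A_t$ restricts to a multiple of the identity on $\kf^\perp$; this places $\kf^\perp$ inside \emph{some} eigenspace of $A_t$, and identifying it with the top one --- equivalently, deducing $\tf_r<\kf$ for the maximal member of a representation $A_t=\sum_{i=1}^r\lambda_i A^{\bldst\tf_i}$ with all $\lambda_i>0$ --- is precisely the step that carries the proof. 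You explicitly label it ``the main obstacle'' requiring ``careful bookkeeping'' and do not carry it out, so the reverse inclusion is not established. For comparison, the paper does not extract a new flag from an eigenvalue stratification of $A_t|_{\kf}$ at all: it works with the given representation of $A_t\in\T$ with strictly positive coefficients, combines the scalar conclusion from (2) with the kernel compatibility $\ker(A_t)\subset\ker(P)$ recorded after Definition \ref{def:Bth}, and concludes directly that $\tf_r$ is a proper subalgebra of $\kf$, which is exactly the claim your write-up presupposes rather than proves.
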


\begin{proof}	
Corollary \ref{cor:Adescunique}	
implies $\Bt \subset \theta([0,1]\times \Bth)$, since we will have $A_t$ and $A_s$ and we can use   the projection $P$ onto $\kf^{\perp}$, which will yield $\ker(A_t) \subset \ker(P)$.  Then (1) is satisfied and it is clear that (2) holds true as well. 

To see the reverse containment, 
let $(A_t,A_s)\in \Bth$ and $P \in \Psn$ be given as in Definition \ref{def:Bth}. 									
Let $A_t =\sum_{i=1}^r \lambda_i \cdot A^{\bldst{\tf}_i}$ for a toral flag $(\tf_1<\cdots < \tf_r)$ with
$0< \lambda_i\leq 1$ for all $i=1,\dots,r$ and let
$\kf = \ker(P)$ for a non-toral subalgebra $\kf$. 
By condition (2),
we know that $A_t\vert_{\kf^\perp}$ is a positive multiple of the identity. Thus $\tf_r$ 
is a subalgebra of $\kf$, proper since $\tf_r$ is toral and $\kf$ is not.  
By condition (1), we know $A_s \in \D(\kf)$. 
Thus the image $\theta([0,1]\times \Bth) \subset \Bt$,  proving our equality.
\end{proof}

Next, we define an $\epsilon$-tube of $\T \cap \Bt$ in $\Bt$.

\begin{proposition}\label{prop:Teps}												
For $0<\epsilon \leq 1$ let
$$
 \T(\epsilon):=\theta([0,\eps)\times \Bth)=
 \big\{ A \in \Bt \mid A=(1-\kappa) \cdot A_t+\kappa\cdot A_s\,,\,\, 0 \leq \kappa <\epsilon \big\}\,.
$$
Then $\T(\epsilon)$ is a semi-algebraic,  open set in $\Bt$.
\end{proposition}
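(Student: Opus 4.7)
The plan is to establish semi-algebraicity and openness separately. The first is an immediate Tarski--Seidenberg argument, while the second rests on expressing the complement of $\T(\epsilon)$ in $\Bt$ as a compact subset, which requires the essential uniqueness of the decomposition $A = (1-\kappa)A_t + \kappa A_s$ provided by Corollary \ref{cor:Adescunique}.

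For semi-algebraicity, the set $[0,\epsilon) \subset \RR$ is semi-algebraic, $\Bth$ is semi-algebraic by Lemma \ref{lem:Bth-cpt}, and hence so is the product $[0,\epsilon) \times \Bth$. The map $\theta$ is polynomial in $(\kappa,A_t,A_s)$. By the Tarski--Seidenberg theorem (see Section \ref{sec:semialgebraic}), the image $\T(\epsilon) = \theta([0,\epsilon) \times \Bth)$ is semi-algebraic.

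For openness in $\Bt$, I will show
\[
  \Bt \setminus \T(\epsilon) \;=\; \theta([\epsilon, 1] \times \Bth).
\]
Granted this, the right-hand side is the continuous image of the compact set $[\epsilon, 1] \times \Bth$, hence compact, hence closed in the Hausdorff space $\Sphb$ and therefore closed in $\Bt$; thus $\T(\epsilon)$ is open in $\Bt$. The inclusion ``$\subseteq$'' is automatic from $\Bt = \theta([0,1] \times \Bth)$. The main obstacle, and the reverse inclusion, amounts to the disjointness $\T(\epsilon) \cap \theta([\epsilon, 1] \times \Bth) = \emptyset$. This is exactly the uniqueness content of Corollary \ref{cor:Adescunique}: every $A \in \Bt$ lies in exactly one of the three mutually exclusive cases $A \in \T$ (forcing $\kappa = 0$), $A \in \WWst$ (forcing $\kappa = 1$), or $A$ admits a unique decomposition with $\kappa \in (0,1)$. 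The interior case is the direct content of the corollary; for the boundary cases one additionally needs $\T \cap \WWst = \emptyset$ in $\Bt$, which follows since any $A \in \T$ has kernel equal to a toral subalgebra, whereas any $A \in \WWst$ has kernel containing a non-toral subalgebra, and a toral subalgebra cannot contain a non-toral one.
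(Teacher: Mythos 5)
Your proof is correct and follows essentially the same route as the paper: Tarski--Seidenberg for semi-algebraicity, then openness via disjointness of $\T(\epsilon)$ and $\theta([\epsilon,1]\times\Bth)$ together with compactness of the latter. The only cosmetic difference is that the paper cites Lemma~\ref{lem:esudescrA} for the disjointness while you invoke Corollary~\ref{cor:Adescunique} and spell out $\T \cap \WWst = \emptyset$, which is arguably the more precise reference since uniqueness of $\kappa$ is needed across butterflies, not just within one.
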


\begin{proof}
Since $\theta$ is a polynomial function, and since we know that $[0,\eps)\times \Bth$ is semi-algebraic,    by the theorem
of Tarski-Seidenberg, we conclude that $\T(\epsilon)$ is also semi-algebraic.  
Notice, furthermore, that
 the sets $\T(\epsilon)$ and $\theta([\eps,1]\times \Bth)$ are disjoint, by Lemma \ref{lem:esudescrA},
while their union is $\T(\epsilon) \cup \theta([\eps,1]\times \Bth)=\Bt$, by Proposition \ref{prop:theta}.
Since $\theta([\eps,1]\times \Bth)$ is compact, the claim follows.
\end{proof}

\medskip

The following observation will be important in Section \ref{secscalab},  where we define 
the subset  $\TSi:={\rm Gr}^{-1}(\T)$ of $\Sph$.   
In Theorem \ref{thm:Tbound}, we prove (not using any results of this section) that
there exists an open neighborhood $U_{\TSi}$ of $\TSi$ in $\Si$ and $t_1=t_1(U_{\TSi})>0$ such that for 
all $v \in U_{\TSi}$ and all $t \geq t_1$, we have
$\sc(\gamma_v(t)) \leq \tfrac{1}{2}\cdot b_{G/H}$.

\begin{corollary}\label{cor:choseeps0}
Let  $U_{\TSi}$ be the open neighborhood of $\TSi:={\rm Gr}^{-1}(\T)$ in $\Si$
 defined in Theorem \ref{thm:Tbound}. 
Let $\U_T:={\rm Gr}(U_{\TSi}) \subset \Sphb$, a neighborhood of $\T$. Then
there exists some $\epsilon_0=\epsilon_0(U_{\TSi})>0$ 
such that the image $\theta(\{\kappa=\epsilon_0\}\times \Bth)\subset \U_T$.
\end{corollary}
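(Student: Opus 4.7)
The plan is to apply a tube-lemma style compactness argument to the continuous polynomial map $\theta:[0,1]\times\Bth\to\Sphb$ defined in Proposition \ref{prop:theta}, using the compactness of $\Bth$ established in Lemma \ref{lem:Bth-cpt} and the openness of $\U_T$ in $\Sphb$.

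First I would verify the basepoint inclusion $\theta(\{0\}\times\Bth)\subseteq\U_T$. By construction, for any $(A_t,A_s)\in\Bth\subset\T\times\WWst$, we have $\theta(0,A_t,A_s)=A_t\in\T$, so $\theta(\{0\}\times\Bth)\subseteq\T$. Since ${\rm Gr}:\Si\to\Sphb$ is a homeomorphism by Lemma \ref{lem:Graevmap} and $\TSi={\rm Gr}^{-1}(\T)\subset U_{\TSi}$ with $U_{\TSi}$ open in $\Si$, the image $\U_T={\rm Gr}(U_{\TSi})$ is an open neighborhood of $\T$ in $\Sphb$. In particular $\T\subseteq\U_T$, hence $\theta(\{0\}\times\Bth)\subseteq\U_T$.

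Next I would consider the preimage $W:=\theta^{-1}(\U_T)\subseteq[0,1]\times\Bth$. Since $\theta$ is continuous and $\U_T$ is open, $W$ is open in $[0,1]\times\Bth$, and by the previous step it contains the compact slice $\{0\}\times\Bth$. Applying the tube lemma (the compact slice has an open tubular neighborhood in the product): because $\Bth$ is compact, there exists $\epsilon_0>0$ such that $[0,\epsilon_0]\times\Bth\subseteq W$. One obtains this concretely by covering $\{0\}\times\Bth$ with product-form neighborhoods $[0,\eps_{A_t,A_s})\times V_{A_t,A_s}\subseteq W$, extracting a finite subcover via compactness of $\Bth$, and letting $\epsilon_0$ be the minimum of the finitely many $\eps_{A_t,A_s}$'s. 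In particular $\{\epsilon_0\}\times\Bth\subseteq W$, which is exactly the desired inclusion $\theta(\{\kappa=\epsilon_0\}\times\Bth)\subseteq\U_T$.

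The argument has no serious obstacle; everything reduces to continuity of $\theta$, the compactness of $\Bth$ from Lemma \ref{lem:Bth-cpt}, and the fact that $\U_T$ is a genuine open neighborhood of $\T$ (guaranteed by the fact that ${\rm Gr}$ is a homeomorphism and $U_{\TSi}$ is open in $\Si$). The only point that deserves care is that $\epsilon_0$ depends on $U_{\TSi}$ (equivalently on $\U_T$): a smaller neighborhood $U_{\TSi}$ forces a smaller $\epsilon_0$, exactly as indicated by the notation $\epsilon_0=\epsilon_0(U_{\TSi})$.
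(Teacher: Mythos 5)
Your proof is correct and is essentially the same compactness argument as the paper's: the paper compresses it into the observation that $\bigcap_{0<\eps\leq 1}\T(\eps)\subset \T\subset \U_T$, which is just the tube-lemma reasoning you spell out using continuity of $\theta$, compactness of $\Bth$, and openness of $\U_T={\rm Gr}(U_{\TSi})$. Your version is merely more explicit than the paper's one-line proof.
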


\begin{proof}
Since $\T={\rm Gr}(\TSi)$ and ${\displaystyle \bigcap_{0<\eps \leq 1} \T(\eps) \subset \T}$, the claim follows.
\end{proof}

We call $\theta(\{\kappa=\epsilon_0\}\times \Bth) \subset \Bt$ the bottom  of $\Bt \backslash \T(\epsilon_0)$ 
considering $\Bt$ a cone over $\WWst$ with cone tip $\T \cap \Bt$.

We come to our first main result in this section.

\begin{theorem}\label{thm:homotopy1}
For every $\epsilon \in (0,1]$, the set $\WWst$ is a strong deformation retract of $\Bt\bs \T(\epsilon)$.
\end{theorem}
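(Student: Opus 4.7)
The plan is to slide the mixing parameter $\kappa$ in each representation $A = (1-\kappa)A_t + \kappa A_s$ monotonically up to $1$, thereby projecting $\Bt \bs \T(\epsilon)$ onto the $\WWst$-stratum while leaving $\WWst$ itself fixed. By Proposition \ref{prop:theta} and Proposition \ref{prop:Teps}, the parametrization $\theta$ restricts to a continuous surjection $\theta \colon [\epsilon,1] \times \Bth \to \Bt \bs \T(\epsilon)$. Since $\T(\epsilon)$ is open in the compact set $\Bt$, the target is compact Hausdorff, so $\theta$, and likewise $\mathrm{id}_{[0,1]} \times \theta$, is a closed surjection, hence a quotient map. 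I would then define the continuous lift
$$
\tilde H \colon [0,1] \times [\epsilon,1] \times \Bth \longrightarrow [\epsilon,1] \times \Bth, \qquad (s,\kappa,A_t,A_s) \longmapsto \bigl(\kappa + s(1-\kappa),\, A_t,\, A_s\bigr),
$$
whose image lies in $[\epsilon,1] \times \Bth$ because $\kappa + s(1-\kappa) \geq \kappa \geq \epsilon$, and the candidate homotopy would be the descent of $\theta \circ \tilde H$ through $\mathrm{id}_{[0,1]} \times \theta$ to a map $H \colon [0,1] \times (\Bt \bs \T(\epsilon)) \to \Bt \bs \T(\epsilon)$.

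The main obstacle will be the well-definedness of this descent on the $\WWst$-stratum, where the decomposition fails to be unique in the $A_t$-coordinate. I would resolve it using Corollary \ref{cor:Adescunique} together with the observation that $\T \cap \WWst = \emptyset$ (any element of $\T$ has toral kernel, whereas any element of $\WWst$ contains a non-toral subalgebra in its kernel). If $\theta(\kappa_1,A_t^1,A_s^1) = \theta(\kappa_2,A_t^2,A_s^2)$, then either $\kappa_1 = \kappa_2 \in [\epsilon,1)$ with $A_t^1 = A_t^2$ and $A_s^1 = A_s^2$, or $\kappa_1 = \kappa_2 = 1$ with $A_s^1 = A_s^2$ while the $A_t^i$ may differ. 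In the first case $\tilde H_s$ agrees on both tuples; in the second the shifted parameter equals $1$ for all $s$, so $\theta \circ \tilde H_s$ returns $A_s^i$ independently of the choice of $A_t^i$. Continuity of $H$ then follows from the quotient map property.

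Finally, the three defining properties of a strong deformation retract are immediate consequences of the construction: $H_0 = \mathrm{id}$ because $\tilde H_0$ is the identity; $H_1(A) = A_s \in \WWst$ because the shifted parameter reaches $1$; and for $A \in \WWst$ every preimage under $\theta$ already has $\kappa = 1$, so $\kappa$ is not moved by $\tilde H_s$ and $H_s(A) = A$ for all $s \in [0,1]$.
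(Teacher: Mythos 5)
Your proposal is correct and is essentially the paper's own proof: the shifted parameter $\kappa+s(1-\kappa)$ is exactly the paper's $1-\kappa_s$ with $\kappa_s=(1-s)(1-\kappa)$, so you are using the identical homotopy that pushes the join parameter linearly to $1$ while fixing $\WWst$. The only difference is that you justify well-definedness and continuity explicitly via the closed/quotient-map property of $\theta$ on the compact set $[\epsilon,1]\times\Bth$ together with the fiber analysis from Corollary \ref{cor:Adescunique}, a detail the paper leaves implicit when it cites that corollary and the continuity of $\theta$.
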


\begin{proof}
We first note that
$$
 \Bt \bs \T(\epsilon)=
 \big\{ A \in \Bt \mid A=(1-\kappa)A_t+\kappa A_s\,,\,\,\epsilon \leq \kappa \leq 1\big\}\,.
$$
For $s, \kappa \in [0,1]$ we set $\kappa_s:=(1-s)(1-\kappa)$ and define $H:[0,1]\times \Bt\bs \T(\epsilon) \to \Bt\bs \T(\epsilon)$ by
$$
	    H(s,\kappa,A_t,A_s) =  \kappa_s\cdot A_t +(1-\kappa_s)\cdot A_s =\theta(1-\kappa_s, A_t,A_s)\,.
$$
We see that $H$ is well-defined and continuous, 
by Corollary \ref{cor:Adescunique} and the continuity of $\theta$. 
We verify that the image of $H(s,\kappa,A_t,A_s)$ lies in $\Bt$. 
Let $0 \leq  s \leq 1$.  We see that $0 \leq \kappa_s \leq 1-\epsilon$, 
hence $\epsilon \leq 1-\kappa_s \leq 1$; therefore, 
$H(s,\kappa,A_t,A_s) =\theta(1-\kappa_s, A_t,A_s) \in \Bt \bs  \T(\epsilon)$.  
Notice that when $\kappa = 1$ (i.e., when $A \in \WWst$), $\kappa_s=0$ for all values $s\in[0,1]$, so that 
$H(s,1,A_t,A_s)=\theta(1, A_t,A_s) = A_s \in \WWst$.  For a general $\kappa$, when $s=0$, $H(0,\kappa,A_t, A_s)=\theta(\kappa, A_t, A_s)$, and when $s=1$,  
$H(1,\kappa,A_t, A_s)=\theta(1, A_t, A_s)=A_s \in \WWst$. 
 Thus the map $H$ is our strong deformation retraction. 
\end{proof}
\medskip

In general there does not exist a strong deformation retraction from $\Bt$ to $\WWst$; it is necessary to remove $\T(\epsilon)$. An easy
example is $G/H=(\SU(2)\times \SU(2))/S^1_{k,l}$, where $S^1_{k,l}$ is embedded diagonally into
a maximal torus of $G$ with slope $(k,l)$ for generic $k,l \in \NN$. 
Let $\tf$ be the unique maximal torus of $\g$ with $\h < \tf$ and let
$\kf_1=\su (2) \oplus \RR$ and $\kf_2=\RR \oplus \su (2)$ be the non-toral subalgebras of $\g$ with 
$\h< \kf_1,\kf_2 < \g$. Let $\varphi_1=(\tf< \kf_1)$ and $\varphi_2=(\tf <\kf_2)$. 
Then $\Bt=\Br[\varphi_1]\cup \Br[\varphi_2]$, the union of two cones over $\At$, is contractible,
whereas $\WWst=\{A^{\blds{\kf}_1},A^{\blds{\kf}_2}\}$, which is not.

\begin{theorem}\label{thm:homotopy2}
We set $\B := \B_t \cup \WWs$, 
$\epsilon_0 > 0$ as in Corollary \ref{cor:choseeps0}, and
$$
   \XXsre:=\B \bs \T(\epsilon_0)\,.
$$
There exists a strong deformation retraction from
$\XXsre$  to $\WWs$. Moreover, $ \XXsre$ is compact and
semi-algebraic.
\end{theorem}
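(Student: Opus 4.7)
The plan is to extend the strong deformation retraction $H$ produced in Theorem \ref{thm:homotopy1} (which retracts $\Bt\bs \T(\epsilon_0)$ onto $\WWst$) to all of $\XXsre$ by declaring it to be the identity on $\WWs$. Concretely, I would define
\[
H'\colon [0,1]\times \XXsre \to \XXsre,\qquad
H'(s,A)=\begin{cases} H(s,\kappa,A_t,A_s), & A\in \Bt\bs\T(\epsilon_0),\\ A, & A\in \WWs,\end{cases}
\]
where in the first case $A=(1-\kappa)A_t+\kappa A_s$ is the essentially unique decomposition provided by Corollary \ref{cor:Adescunique}.

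The central task is to verify that $H'$ is well-defined, i.e.\ that the two prescriptions coincide on the overlap $(\Bt\bs\T(\epsilon_0))\cap \WWs$. I would show that this intersection equals $\WWst$, where both prescriptions act as the identity (since $H(s,1,A_t,A_s)=A_s$ independently of $A_t$). Suppose $A\in (\Bt\bs\T(\epsilon_0))\cap \WWs$, so $A\in \D(\kf')$ for some non-toral $\kf'$ and $A=(1-\kappa)A_t+\kappa A_s$ with $\kappa\geq \epsilon_0>0$. If $\kappa<1$, the inclusion $\kf'\subset \ker(A)$ together with the positivity of $A_t$ and $A_s$ forces $\kf'\subset \ker(A_t)$; writing $A_t=\sum_i \lambda_i A^{\bldst \tf_i}$ for a toral flag $(\tf_1<\cdots<\tf_r)$, a direct computation identifies $\ker(A_t)$ with the smallest $\tf_{i_0}$ for which $\lambda_{i_0}>0$, which is toral. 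Since any subalgebra of a toral subalgebra is toral (the abelian condition on $\m_{\blds \kf'}\subset \m_{\bldst \tf_{i_0}}$ is inherited), this contradicts non-torality of $\kf'$. Hence $\kappa=1$ and $A=A_s\in \WWst$.

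Continuity of $H'$ then follows from the pasting lemma: $\Bt$ is compact and $\T(\epsilon_0)\subset \Bt$ is open (Propositions \ref{prop:theta} and \ref{prop:Teps}), so $\Bt\bs\T(\epsilon_0)$ is closed in $\Sphb$; $\WWs$ is closed in $\Sphb$ by Proposition \ref{prop: WW}; both pieces are therefore closed in $\XXsre$, and the two maps agree on the closed overlap $\WWst$. The strong deformation axioms are inherited from $H$: $H'(0,\cdot)=\Id$, $H'(1,\XXsre)\subset \WWst\cup\WWs=\WWs$, and $H'(s,A)=A$ for $A\in \WWs$.

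For the last assertion, the same rigidity argument shows $\WWs\cap \T(\epsilon_0)=\emptyset$ (otherwise a non-toral $\kf'$ would sit inside some $\ker(A_t)$, which is toral), so $\XXsre=(\Bt\bs \T(\epsilon_0))\cup \WWs$ is a finite union of compact sets and hence compact. Semi-algebraicity follows from Propositions \ref{prop: WW}, \ref{prop:theta}, and \ref{prop:Teps}, since the class of semi-algebraic sets is closed under finite unions, intersections, and complements. The main obstacle I expect is the well-definedness check above: it relies on the non-trivial rigidity fact that a non-toral intermediate subalgebra cannot sit inside the kernel of any element of $\T$, which in turn depends essentially on the structure of the decomposition in Corollary \ref{cor:Adescunique}.
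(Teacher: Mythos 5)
Your proposal is correct and mirrors the paper's proof: extend $H$ by the identity on $\WWs$, verify agreement on the overlap, and paste. The one place you go beyond the paper is that the paper simply asserts $(\Bt\bs\T(\epsilon_0))\cap\WWs=\WWst$ (as ``$\X_1\cap\WWs=\WWst$''), whereas you actually prove it via the rigidity argument (a non-toral $\kf'\subset\ker(A)$ would be forced into the toral kernel of $A_t$ when $\kappa<1$); and your pasting uses the two closed sets $\Bt\bs\T(\epsilon_0)$ and $\WWs$ rather than the paper's $\X_1$ and $\overline{\X_2}$, which is a cleaner application of the pasting lemma.
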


\begin{proof}
We claim that one can extend the strong deformation retraction  $H$
 from $\B_t\bs \T(\epsilon_0)$ to $\WWst$, 
 defined in Theorem \ref{thm:homotopy1},
to a strong deformation retraction $\ti H$ from 
$\XXsre=(\B_t \bs \T(\epsilon_0)) \cup (\WWs \bs \WWst)$
to $\WWs$.   
Let $\X_1:=\Bt \bs \T(\eps_0)$.  Let $\X_2:=\WWs \bs \WWst$. 
Notice $\XXsre = \X_1 \cup \X_2$ is a disjoint union, and $[0,1]\times\X_1$ 
is the domain of our original map $H$.  
The new retraction map $\tilde H$ 
is defined as $\ti H |_{[0,1]\times\X_1} := H$ and $\ti H |_{[0,1]\times\X_2} = \Id|_{\X_2}$. 
We note that $\X_1 \cap \WWs = \WWst$, and therefore our map $\ti H$ on
$[0,1]\times \X_1 \cap [0,1]\times \overline{\X}_2 $ is the identity.
 Thus $\ti H: [0,1] \times \XXsre \to \XXsre$ is indeed a strong deformation retraction. 
This shows the first claim.

By Proposition \ref{prop:Teps} we know that $\T(\eps_0)$ is open in $\B_t$, and 
therefore $\X_1$ is compact.
By Proposition \ref{prop: WW}, compactness of  $\WWs$ ensures that $\overline{\X}_2$ is a compact subset of $\WWs \subset \XXsre$. 
Since both $\X_1$ and $\overline{\X}_2$ are compact,  we know $\XXsre$ is compact. And since 
both $\X_1$ and  $\WWs$ are semi-algebraic, by Proposition \ref{prop: WW},  so is $\XXsre=\X_1 \cup \WWs$.
\end{proof}

\begin{corollary}\label{cor:Udefret}
There exists an open neighborhood $\UU$ of $ \B_{\epsilon_0}$ in $\Sphb$, 
such that $\B_{\epsilon_0}$ is a strong deformation retract of $\UU$. 
\end{corollary}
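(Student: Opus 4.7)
The plan is to deduce the corollary from general neighborhood retract properties of compact semi-algebraic sets, applied to the pair $(\Sphb, \B_{\epsilon_0})$. By Theorem \ref{thm:homotopy2} we know that $\B_{\epsilon_0}$ is a compact semi-algebraic subset of $\SymgH$, and by Remark \ref{rem:Sphbmodel2} the ambient set $\Sphb$ is itself compact and semi-algebraic. The key general fact I would invoke is that every compact semi-algebraic set admits a finite semi-algebraic triangulation (see, e.g., Bochnak--Coste--Roy, \emph{Real Algebraic Geometry}, Thm.~9.2.1), and that this triangulation can be chosen to be compatible with any finite collection of semi-algebraic subsets.

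First I would apply the triangulation theorem to the pair $\B_{\epsilon_0} \subset \Sphb$: there exists a finite simplicial complex $K$ and a semi-algebraic homeomorphism $\Phi : |K| \to \Sphb$ such that $\B_{\epsilon_0}$ corresponds to a subcomplex $L$ of $K$. Next, I would form the open regular neighborhood $N(L) \subset |K|$ (for instance, the open star of $L$ in the second barycentric subdivision of $K$). Standard PL topology provides a strong deformation retraction $h : [0,1] \times N(L) \to N(L)$ onto $|L|$. Transporting via $\Phi$, the set $\UU := \Phi(N(L))$ is an open neighborhood of $\B_{\epsilon_0}$ in $\Sphb$ and $\Phi \circ h \circ (\Id \times \Phi^{-1})$ is the desired strong deformation retraction from $\UU$ onto $\B_{\epsilon_0}$.

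The only conceptual subtlety is ensuring the triangulation is compatible with $\B_{\epsilon_0}$ as a subset of $\Sphb$, not merely as a subset of the ambient Euclidean space $\SymgH$; this is precisely what the relative version of the triangulation theorem guarantees. Once such a triangulation is fixed, the existence of the regular neighborhood retraction is entirely classical and presents no difficulty. Alternatively, one may bypass the triangulation approach and appeal directly to {\L}ojasiewicz's theorem that every closed semi-algebraic subset of a semi-algebraic set admits an open semi-algebraic neighborhood which strongly deformation retracts onto it, yielding the conclusion in one stroke.
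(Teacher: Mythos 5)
Your proposal is correct, and both routes you describe get the job done. However, your ``alternative, one-stroke'' remark at the end is in fact what the paper itself does: the paper simply cites Theorem~1 of Delfs--Knebusch~\cite{DK1} (see also \cite[III, Theorem~1.1]{DK2}), which states that a closed semi-algebraic subset of a semi-algebraic space admits an open semi-algebraic neighborhood of which it is a strong deformation retract. (You credit this to {\L}ojasiewicz, which is not quite the standard attribution in this semi-algebraic form; {\L}ojasiewicz's contribution is the underlying triangulability of semi-analytic sets, while the retraction statement in the semi-algebraic category is due to Delfs--Knebusch.) Your main route via semi-algebraic triangulation of the pair $(\Sphb, \B_{\epsilon_0})$ plus the PL regular-neighborhood theorem is a perfectly sound, more self-contained alternative; the only point worth making explicit is that the triangulation theorem produces a triangulation in which $\B_{\epsilon_0}$ is a union of images of \emph{open} simplices, and one then uses compactness (closedness) of $\B_{\epsilon_0}$ to conclude that the corresponding open simplices together with their faces form an actual subcomplex $L$, so the regular-neighborhood machinery applies. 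What the triangulation approach buys is transparency about why the retraction exists; what the Delfs--Knebusch citation buys is brevity. Both are valid, and your write-up contains no gap.
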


\begin{proof}
This follows from Theorem 1 of \cite{DK1}, see also \cite[III, Theorem 1.1]{DK2}.
\end{proof}

\medskip

We now give a characterization of the set $\WW \bs \B$: see Theorem
\ref{thm:homotopy2}.
To this end let $\kf \in \Sub$ be a subalgebra of $\g$  and
suppose that $ \{\Ak\} \subsetneq \D(\kf)$, so that $\D(\kf)$ has positive dimension.
 Recall that the boundary
of a convex set is the complement of its interior; 
furthermore, every convex set has  a dimension.
When $\kf$ is not a maximal subalgebra, 
we define the {\em algebraic part} $\D(\kf)_+$ of $\D(\kf)$
by
$$
  \D(\kf)_+:= \bigcup_{\ti \kf >\kf}  {\rm conv}(\Ak,\D(\ti \kf))\,.
$$
When $\kf$ is maximal we set $\D(\kf)_+=\{\Ak\}$.

We let $\D(\kf)_-:=\D(\kf)\bs \D(\kf)_+$ denote
the {\em non-algebraic part} of $\D(\kf)$. We define the {\em algebraic boundary} of $\D(\kf)$
$$
  \partial_+ \D(\kf):= \D(\kf)_+\cap \partial \D(\kf)\,,
$$
 and the {\em non-algebraic boundary}
$$
  \partial_- \D(\kf):= \partial \D(\kf) \bs \partial_+ \D(\kf)\,.
$$
Notice that
$$
   \partial_- \D(\kf) = \D(\kf)_- \cap  \partial \D(\kf)\,.
$$

\begin{lemma}\label{lem:Bdescr}
Let $A \in \WW$.  Then $A \in \B$ if and only if $A \in \WWs$ or there exists 
a flag $\varphi=(\varphi_t<\kf)\in\Flts$, 
where $\varphi_t=(\tf_1 < \cdots < \tf_r)$ is a toral flag with $\tf_r$ maximal in $\kf \in \Sub_s$,
 a non-toral subalgebra, such that
$$
     A \in {\rm conv}(\Delta_{\varphi_t},A')
$$
with $A' \in \D(\kf)$.
\end{lemma}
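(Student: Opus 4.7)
The plan is to deduce both directions directly from the definitions of $\B = \Bt \cup \WWs$ and $\Br[\varphi]$, combined with the monotonicity of butterflies under the partial order on flags (Lemma \ref{lem:subset}).

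For the "if" direction there is nothing to do beyond tracing definitions. If $A \in \WWs$, then $A \in \B$ by the definition $\B := \Bt \cup \WWs$. If instead $A \in {\rm conv}(\Delta_{\varphi_t},A')$ with $A' \in \D(\kf)$ for some $\varphi = (\varphi_t < \kf) \in \Flts$, then $A \in \Delta_{\varphi_t} \ast \D(\kf) = \Br[\varphi] \subset \Bt \subset \B$.

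The "only if" direction is the real content. Assume $A \in \B \setminus \WWs$. Then $A \in \Bt$, so $A \in \Br[\psi]$ for some flag $\psi = (\tf_1 < \cdots < \tf_r < \kf) \in \Flts$ with $\tf_i$ toral and $\kf \in \Sub_s$. The key step is to enlarge the toral portion of $\psi$ to a maximal one in $\kf$ while keeping $A$ inside the resulting butterfly. If $\tf_r$ is not already maximal as a toral subalgebra of $\kf$, choose any toral $\tilde{\tf}$ with $\tf_r < \tilde{\tf} < \kf$ and form the extended flag $\tilde{\psi} := (\tf_1 < \cdots < \tf_r < \tilde{\tf} < \kf) \in \Flts$. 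Since $\psi$ is obtained from $\tilde{\psi}$ by removing the non-maximum element $\tilde{\tf}$ while preserving $\max(\tilde{\psi}) = \max(\psi) = \kf$, Definition \ref{def:flag-order}(ii) gives $\tilde{\psi} \leq \psi$, and Lemma \ref{lem:subset} yields $\Br[\psi] \subset \Br[\tilde{\psi}]$, so $A \in \Br[\tilde{\psi}]$. Iterating this procedure strictly enlarges the dimension of the toral summand at each step, and must terminate (since $\dim(\m \cap \kf)$ is finite) at a flag $\varphi = (\varphi_t < \kf) \in \Flts$ whose top toral subalgebra is maximal in $\kf$.

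With such a $\varphi$ we have $A \in \Br[\varphi] = \Delta_{\varphi_t} \ast \D(\kf)$, hence $A = (1-\kappa) A_t + \kappa A'$ for some $\kappa \in [0,1]$, $A_t \in \Delta_{\varphi_t}$, and $A' \in \D(\kf)$, which is precisely the statement that $A \in {\rm conv}(\Delta_{\varphi_t},A')$. The only point requiring care is the direction of the flag ordering when inserting $\tilde{\tf}$ below the maximum; once one checks, as in Lemma \ref{lem:subset}, that this gives $\tilde{\psi} \leq \psi$ (and not the reverse), the remainder of the argument is bookkeeping and the iteration halts by finite dimensionality.
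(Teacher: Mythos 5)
Your ``if'' direction and the flag--extension mechanism (inserting a toral $\tilde\tf$ and invoking Definition \ref{def:flag-order}(ii) together with Lemma \ref{lem:subset}) are correct, but the ``only if'' direction proves a weaker statement than the lemma. Since your iteration keeps $\kf$ fixed and only ever enlarges the toral part of the flag, at termination you know only that no \emph{toral} subalgebra lies strictly between the top element $\tf_r$ of $\varphi_t$ and $\kf$; you have silently replaced ``$\tf_r$ maximal in $\kf$'' by ``maximal as a toral subalgebra of $\kf$''. The lemma asserts maximality of $\tf_r$ as a subalgebra of $\kf$, and this is exactly the form in which the statement is matched in Corollary \ref{cor:WbsBdescr}, where maximality is obtained by taking $\kf$ of minimal dimension, ruling out intermediate subalgebras of any kind. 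The two notions genuinely differ: for $\h=\so(n)<\g=\so(n+4)$, $n\ge 3$, take $\tf=\so(n)\oplus\so(2)$ (the rotation in the coordinates $n+1,n+2$) and $\kf=\so(n+3)$. No toral subalgebra lies strictly between $\tf$ and $\kf$ (an abelian subalgebra of the relevant $\so(3)$ containing this $\so(2)$ is the $\so(2)$ itself), yet $\tf$ is not maximal in $\kf$, because $\tf<\so(n+2)<\kf$. For $A\in\Br[(\tf<\kf)]$ carrying positive weight on a point of $\D(\kf)$ with kernel exactly $\kf$, your procedure stops at the flag $(\tf<\kf)$ and the claimed maximality fails, so the proof as written does not deliver the stated conclusion.

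The missing idea is the opposite move, which is the actual content of the paper's proof: if $\tilde\kf$ is a subalgebra with $\tf_r<\tilde\kf<\kf$, then $\D(\kf)\subsetneq\D(\tilde\kf)$ by Lemma \ref{lem:disk-inclusion}, so the component $A'\in\D(\kf)$ is also an element of $\D(\tilde\kf)$ and one may replace $\kf$ by $\tilde\kf$ while preserving $A\in{\rm conv}(\Delta_{\varphi_t},A')$. Combining this with your toral insertions gives a clean argument: as long as $\tf_r$ is not maximal in $\kf$, choose a subalgebra $\lf$ of maximal dimension with $\tf_r<\lf<\kf$; if $\lf$ is toral, append it to the toral flag (then $\lf$ is maximal in $\kf$ and you are done), and if $\lf$ is non-toral, replace $\kf$ by $\lf$. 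Since $\dim\kf$ strictly decreases whenever it is replaced, the process terminates with $\tf_r$ genuinely maximal in $\kf$. A smaller point: your opening step asserts $A\in\Br[\psi]$ with $\kf\in\Sub_s$, but flags in $\Flts$ are allowed to have maximum $\g$, in which case $\Br[\psi]\subset\T$ and there is no $A'\in\D(\g)=\emptyset$; this boundary case needs a separate word before the iteration can start.
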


\begin{proof}
Let $A \in \B$. If $A \in \WWs$ then the claim follows, thus we may assume $A \in \B \bs \WWs$.
By definition of $\B$ we conclude $A \in \Bt$.
That is,  
there exists 
a toral flag $(\tf_1 < \cdots < \tf_r)$,
a non-toral subalgebra $\kf $ with  $\tf_r<\kf$, $A' \in \D(\kf)$
and $\lambda_1,...,\lambda_r,\lambda \in [0,1]$
with $\sum_{i=1}^r \lambda_i=1$ such that
$$
    A = \lambda \cdot \sum_{i=1}^r \lambda_i \cdot A^{\bldst\tf_i}  +
(1-\lambda)\cdot A'\,.
$$
Suppose now that there exists a non-toral subalgebra $\tilde \kf$ with
$\tf_r < \ti \kf <\kf$.
Then by Lemma \ref{lem:disk-inclusion}, we know $\D(\kf)\subset \D(\tilde \kf)$, so that we can replace $\kf$
 by $\ti \kf$, hence we may assume $\tf_r$ is maximal in $\kf$. 
Since $\sum_{i=1}^r \lambda \cdot \lambda_i
+(1-\lambda)=1$, the claim follows.  
The proof of the other direction follows from the definition of $\B$ and $\Bt$.
\end{proof}

\begin{corollary}\label{cor:WbsBdescr}
Let $A \in \WW \bs \B$. Then either $A \in \T$ or 
there exists a toral flag $(\tf_1< \cdots <\tf_r)$ and $A_- \in \D(\tf_r)_-$
such that
\begn
  A \in {\rm conv}\{A^{\bldst\tf_1},...,A^{\bldst\tf_{r-1}},A_-\}\bs \T\,.\label{case2}
\enn
\end{corollary}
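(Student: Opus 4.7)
My plan is an iterative decomposition argument driven by the star-shapedness of each $\kf$-disk $\D(\kf)$ around its center $\Ak$, together with the hypothesis $A \notin \B$, which will force the decomposition to remain within toral subalgebras. Since $A \in \WW \bs \B$ and $\WWs \subset \B$, every $\kf \in \Sub$ with $A \in \D(\kf)$ must be toral, so I fix some $\tf_1 \in \Sub_t$ with $A \in \D(\tf_1)$. If $A \in \T$ there is nothing more to prove, so I assume $A \notin \T$ throughout. I then inductively construct a strictly increasing toral flag $\tf_1 < \tf_2 < \cdots$ together with elements $B_{i-1} \in \D(\tf_i)$ (with $B_0 := A$) and a running convex representation
$$
 A \;=\; \sum_{j=1}^{i-1} \alpha_j\, A^{\bldst\tf_j} + \beta_{i-1}\, B_{i-1}\,.
$$
At step $i$, since $\D(\tf_i) = \D(\tf_i)_- \sqcup \D(\tf_i)_+$ is a disjoint union, either $B_{i-1} \in \D(\tf_i)_-$, in which case the construction terminates with $r := i$ and $A_- := B_{i-1}$, immediately yielding $A \in {\rm conv}\{A^{\bldst\tf_1},\ldots,A^{\bldst\tf_{r-1}},A_-\}$ with $A_- \in \D(\tf_r)_-$; or $B_{i-1} \in \D(\tf_i)_+$, in which case the definition of the algebraic part provides some $\tilde\kf > \tf_i$ and a decomposition $B_{i-1} = (1-\mu_i)\, A^{\bldst\tf_i} + \mu_i\, B_i$ with $B_i \in \D(\tilde\kf)$; I then set $\tf_{i+1} := \tilde\kf$ and continue.

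The main obstacle --- and the step where the hypothesis $A \notin \B$ is crucially invoked --- is showing that $\tf_{i+1}$ must be toral. If instead $\tilde\kf$ were non-toral, then $B_i \in \WWs$, and substituting the new decomposition of $B_{i-1}$ into the running representation of $A$ would express $A$ as an element of ${\rm conv}\{A^{\bldst\tf_1},\ldots,A^{\bldst\tf_i}\} \ast \D(\tilde\kf) = \Br[\psi]$ for the flag $\psi := (\tf_1 < \cdots < \tf_i < \tilde\kf) \in \Flts$, thus placing $A \in \Bt \subset \B$ and contradicting the hypothesis. Hence $\tf_{i+1}$ is toral and the iteration continues with $B_i$ in the toral disk $\D(\tf_{i+1})$.

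Termination is ensured because $\dim \tf_i$ strictly increases and is bounded above by $\dim \g - 1$ (since $\D(\g) = \emp$ forces each admissible $\tilde\kf$ to lie in $\Sub$). The iteration thus halts either with some $B_{r-1} \in \D(\tf_r)_-$ (the success case) or at a maximal intermediate subalgebra $\tf_r$ with $B_{r-1} \in \D(\tf_r)_+$. In the latter case the definition gives $\D(\tf_r)_+ = \{A^{\bldst\tf_r}\}$, forcing $B_{r-1} = A^{\bldst\tf_r}$; substituting would then express $A$ as a convex combination of $A^{\bldst\tf_1},\ldots,A^{\bldst\tf_r}$, so $A \in \Delta_{(\tf_1 < \cdots < \tf_r)} \subset \T$, contradicting $A \notin \T$. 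Therefore the iteration necessarily halts with $B_{r-1} \in \D(\tf_r)_-$, and setting $A_- := B_{r-1}$ produces the required toral flag $(\tf_1 < \cdots < \tf_r)$ and the inclusion $A \in {\rm conv}\{A^{\bldst\tf_1},\ldots,A^{\bldst\tf_{r-1}},A_-\} \bs \T$.
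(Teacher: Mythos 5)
Your proof is correct and follows essentially the same inductive scheme as the paper's: peel off a multiple of $A^{\bldst\tf_i}$ one toral subalgebra at a time, use $A\notin\B$ to preclude ever landing on a non-toral disk, and terminate by bounded dimension. Two cosmetic differences: you work directly with the dichotomy $\D(\tf_i)=\D(\tf_i)_-\sqcup\D(\tf_i)_+$, where the paper first radially projects onto $\partial\D(\tf_i)$ and then distinguishes $\partial_\pm\D(\tf_i)$ (both give $A_-\in\D(\tf_r)_-$); and you derive the contradiction directly from $A\in\Br[\psi]\subset\Bt$, rather than via Lemma \ref{lem:Bdescr}, which is what lets you skip the paper's choice of a minimal-dimensional $\tilde\kf$ and the resulting maximality claim.

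One small omission worth closing: at each intermediate step you should observe that $\mu_i>0$, i.e.\ $B_{i-1}\neq A^{\bldst\tf_i}$. Otherwise $\beta_i=\beta_{i-1}\mu_i=0$, the running representation collapses to $A\in{\rm conv}\{A^{\bldst\tf_1},\ldots,A^{\bldst\tf_i}\}\subset\T$, and the final membership $A\in{\rm conv}\{A^{\bldst\tf_1},\ldots,A^{\bldst\tf_{r-1}},A_-\}\bs\T$ would fail (the coefficient of $A_-$ would be zero). The fix is exactly the argument you already use in the maximal case; the paper inserts it verbatim at every step with the sentence ``if $A_r=A^{\bldst\tf_r}$ then $A\in\T$ and we are done.'' With that observation added, $\beta_{i-1}>0$ is maintained throughout and the argument is complete.
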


\begin{proof}
Let $A\in\WW \bs \B$. Then, since $\WWs \subset \B$, we know $A \not\in\WWs$, thus there exists a toral
subalgebra $\tf_1$
with $A \in \D(\tf_1)$.
If $A= A^{\bldst\tf_1}$ then $A \in \T$ and we're done. Otherwise, we may assume $A \neq A^{\bldst\tf_1}$.
Now if the radial projection of the convex set $\D(\tf_1)$ to its boundary (with center
$A^{\bldst\tf_1}$) maps $A$
to $A_2\in \partial \D(\tf_1)$ such that
$A_2 \in \partial_-\D(\tf_1)$, then the claim holds for $r=1$ and $A_- = A_2$.

By our induction hypothesis we may assume that for some $r\geq 2$, either the claim is proved, or there
exists a toral flag $(\tf_1 < \cdots < \tf_{r-1})$, such that
$$
   A \in {\rm conv}\{A^{\bldst\tf_1},...,A^{\bldst\tf_{r-1}},A_r\}
$$
with $A_r \in \partial_+\D(\tf_{r-1})$.
It follows that there
exists a subalgebra $\kf$ with $\tf_{r-1} < \kf$ and $A_r \in \D(\kf)$.
Let us assume that
$\kf$ is a subalgebra of minimal dimension with that property. If this $\kf$ is
non-toral then  $\tf_{r-1}$ is
maximal in $\kf$  and by Lemma \ref{lem:Bdescr} we
obtain a contradiction.
Thus $\tf_r:=\kf$ must be toral. Now if $A_r=
A^{\bldst\tf_r}$ then $A \in \T$ and we are done. Suppose instead that $A_r\neq
A^{\bldst\tf_r}$ and let $A_{r+1} \in \partial \D(\tf_r)$ denote
the radial projection of $A_r$ onto $\partial \D(\tf_r)$. If $A_{r+1} \in \partial_- \D(\tf_r)$ the
claim follows.
Otherwise, we may assume that $A_{r+1}\in \partial_+ \D(\tf_r)$, which
yields  $A \in {\rm conv}\{A^{\bldst\tf_1},...,A^{\bldst\tf_{r}},A_{r+1}\}$.
Since the maximal length of toral flags is bounded, this process ends in finitely many steps, 
and we have proved our corollary.
\end{proof}

The next result shows that, for a toral subalgebra $\tf_r$, whenever there exist elements in the non-algebraic part $\D_-(\tf_r)$ of $\D(\tf_r)$,
 certain structure constants $[ijk]$ cannot vanish (see Section \ref{sec:scal}).    
 We briefly recall that $\g = \h \oplus \m$, where $\m$ is an $\Ad(H)$-invariant complement to $\h$ in $\g$ As such, $\m$ decomposes into $\Ad(H)$-irreducible summands. In the lemma below, the decomposition is chosen such that 
$$\m = \m_{I_1^{\bldst\tf_r}} \oplus \m_{I_2} \oplus (\m_{I_3} \oplus \dots \oplus \m_{I_{\ell'}})\,,$$ 
where $\tf_r=\h \oplus \m_{I_1^{\bldst\tf_r}}$, $\ker(A_r) = \tf_r \oplus \m_{I_2}$, and the remaining subspaces 
$\m_{I_j}$ are grouped into the eigenspaces for the positive eigenvalues of $A_r$. 
Notice that these summands may not be $\Ad(H)$-irreducible.
The structure constants are given by 
$$
{\displaystyle [ijk] = \sum_{\alpha, \beta, \gamma} Q([e_\alpha,e_\beta],e_\gamma)^2}\,,
$$
 where the sum runs over $Q$-orthonormal basis vectors 
$e_\alpha \in \m_i$, $e_\beta \in \m_j$ and $e_\gamma \in \m_k$, as in \cite{WZ2}. 

\begin{lemma}\label{lem:strconst222}
Let  
$(\tf_1< \cdots <\tf_r)$ be
a toral flag, $A_- \in \D_-(\tf_r)$ and
\beg
  A \in {\rm conv}\{A^{\bldst\tf_1},...,A^{\bldst\tf_{r-1}}, A_-\}\bs \T\,.
\en
Let $f$ be a good decomposition for $A$, which is $\tf_r$-adapted.
Let $A_r$ denote the radial projection of $A_-$ to $\partial_- \D(\tf_r)$. 
Let $I_1:=I_1^{\bldst\tf_r}$, $I_2 \subset \{1,...,\ell\}\bs I_1$ be given
by $\tf_r \oplus \m_{I_2} =\ker (A_r)$ and let
    $I_3,...,I_{\ell'}$ correspond to the positive eigenspaces of $A_r$.
     Then either there exist $i_0,j_0 \in I_2$ and 
		$k_0 \in \{I_1 \cup I_2\}^\co$ (the complement),
        or  there exist $i_0 \in I_2$,
     $j_0\in I_p$ and $k_0 \in I_q$ with $3 \leq p,q \leq \ell'$, $p \neq q$,
(provided that $\ell'>3$),
     with
     $$
       [i_0j_0k_0]_f> 0 \,.
     $$
\end{lemma}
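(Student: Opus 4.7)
The plan is to argue by contradiction. Assume that every structure constant listed in the two alternatives of the lemma vanishes; I will then produce a subalgebra $\ti\kf$ with $\tf_r \subsetneq \ti\kf \subsetneq \g$ such that $A_r \in \D(\ti\kf)$. This places $A_r$ in ${\rm conv}(A^{\bldst\tf_r}, \D(\ti\kf)) \subset \D(\tf_r)_+$, contradicting $A_r \in \partial_-\D(\tf_r) \subset \D(\tf_r)_-$. The natural candidate is $\ti\kf := \h \oplus \m_{I_1} \oplus \m_{I_2} = \ker A_r$: the inclusion $\tf_r \subsetneq \ti\kf$ holds because $A_r \in \partial\D(\tf_r)$ forces $I_2 \neq \emptyset$, and $\ti\kf \subsetneq \g$ holds because $\tr A_r = 1$ forces some $\m_{I_p}$, $p \geq 3$, to be non-zero.

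First I will verify that $\ti\kf$ is a subalgebra. The inclusion $[\h, \ti\kf] \subset \ti\kf$ holds by $\Ad(H)$-invariance, and $[\m_{I_1}, \m_{I_1}] = 0$ since $\tf_r$ is toral. The inclusion $[\m_{I_1}, \m_{I_2}] \subset \ti\kf$ follows from $A_r \in \D(\tf_r)$: for $Y \in \m_{I_1} \subset \tf_r$ and $X \in \m_{I_2} \subset \ker A_r$, the identity $[A_r, \ad(Y)] = 0$ yields $A_r[Y, X] = 0$, so $[Y, X] \in \ker A_r$. Finally, hypothesis (a) translates to $Q([\m_{I_2}, \m_{I_2}], \m_k) = 0$ for every $k \notin I_1 \cup I_2$, which combined with $[\m, \m] \subset \h \oplus \m$ forces $[\m_{I_2}, \m_{I_2}] \subset \ti\kf$.

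Next, to show $A_r \in \D(\ti\kf)$, the conditions $A_r \geq 0$, $\tr A_r = 1$, and $\ti\kf \subset \ker A_r$ are immediate, so only $[A_r, \ad(X)] = 0$ for $X \in \m_{I_2}$ remains (the analogous identity on $\tf_r$ is built into $A_r \in \D(\tf_r)$). This reduces to showing that $\ad(X)$ preserves every eigenspace of $A_r$. Preservation of $\ker A_r = \ti\kf$ is immediate from the subalgebra property proved above, so the task reduces to showing $[X, \m_{I_p}] \subset \m_{I_p}$ for each $p \in \{3, \ldots, \ell'\}$, i.e., to ruling out any component of $[X, \m_{I_p}]$ in $\h$, $\m_{I_1}$, $\m_{I_2}$, or $\m_{I_q}$ with $q \neq p$ in $\{3,\ldots,\ell'\}$.

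The main subtlety is that only two of these four possible ``wrong'' components are controlled directly by the hypotheses: the $\m_{I_q}$-component by (b), which is vacuous when $\ell' = 3$; and the $\m_{I_2}$-component by (a) together with the full permutation symmetry of the structure constants, $[I_2\, I_p\, I_2]_f = [I_2\, I_2\, I_p]_f$. For the remaining $\h$- and $\m_{I_1}$-components, which are not detected by the structure constants, I would invoke $\ad$-invariance of $Q$. For $Z \in \m_{I_p}$ and $h \in \h$, the identity $Q([X, Z], h) = -Q(X, [h, Z])$ vanishes because $\m_{I_p}$ is $\Ad(H)$-invariant and $X \perp \m_{I_p}$. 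For $Y \in \m_{I_1}$, analogously, $Q([X, Z], Y) = -Q(X, [Y, Z])$ vanishes because $[A_r, \ad(Y)] = 0$ together with $\mu_p > 0$ force $[Y, Z] \in \m_{I_p}$, to which $X$ is again $Q$-orthogonal. This yields $[A_r, \ad(X)] = 0$ and completes the contradiction.
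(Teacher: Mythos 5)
Your proof is correct and takes essentially the same route as the paper's: assume the listed structure constants all vanish, show that $\ti\kf = \ker A_r = \h\oplus\m_{I_1}\oplus\m_{I_2}$ is a subalgebra, and deduce $A_r \in \D(\ti\kf) \subset \D(\tf_r)_+$, contradicting $A_r \in \partial_-\D(\tf_r)$. The only differences are cosmetic — the paper packages the $\ad$-invariance verification via Corollary \ref{cor:submkf} and treats $\ell'=3$ as a separate (trivial) case, whereas you unpack the eigenspace-preservation argument by hand and handle $\ell'=3$ uniformly.
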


\begin{proof} Given $A_-$, we obtain $I_1$ and $I_2$. 
If  there exist $i_0,j_0 \in I_2$ and $k_0 \in \{I_1 \cup
I_2\}^\co$  with $[i_0j_0k_0]> 0$, we are done. Consequently we may assume that for all $i_0,j_0\in I_2$ and all
$k_0 \in \{I_1 \cup I_2\}^\co$,  we have $[i_0j_0k_0]= 0$,
in short, 
$[I_2I_2I_p]=0$ for all $p \geq 3$.
Then we claim that $\kf:=\tf_r \oplus \m_{I_2}$ is a subalgebra.
To see this, recall that since $\tf_r$ is a subalgebra, $[I_1I_1I_p]=0$ for any $p\geq 2$. 
Moreover, by Corollary \ref{cor:submkf}
$[I_1I_2I_p]=0$ for all $p \geq 3$. Thus $[I_1I_1I_p]=[I_1I_2I_p]=[I_2I_2I_p]=0$ for all $p\geq3$,
which shows that $\kf:=\h \oplus \m_{I_1} \oplus \m_{I_2}$ is a subalgebra. 

If $\ell'=3$ then $A_r =A^{\blds\kf}$, thus $A_r \in \D(\kf)$, contradicting $A_- \in \D_-(\tf_r)$.
Thus we may assume that $\ell'>3$. As above, if there exist $i_0 \in I_2$,
     $j_0\in I_p$ and $k_0 \in I_q$, $p\neq q$,  with $[i_0j_0k_0]> 0$, we are done. 
Suppose now
that $[I_2I_pI_q]=0$ for all $p,q \geq 3$, $p\neq q$. Again, by
 Corollary \ref{cor:submkf}
we know $[I_1I_pI_q]=0$ for all $2\leq p\neq q$. 
Thus $[\ad(\kf),A_r]=0$ and hence $A_r \in \D(\kf)$,  again contradicting $A_- \in \D_-(\tf_r)$.
This proves our lemma.
\end{proof}

\subsection{{\it The second homotopy}}\label{sec:homotopy2}

In this section we show that for compact homogeneous spaces $G/H$ for which
$\WWs\neq \emptyset$, there exists a strong deformation retraction
from $\WWs$ to the nerve $\XXGH$ of $G/H$: see Theorem \ref{thm:homotopy3}.
We again follow \cite{Gr}.  Recall that $\Sub_s$ denotes the set of all non-toral subalgebras $\kf$ with $\h < \kf < \g$, that $\Fln$ denotes the set of all flags $\varphi$ of non-toral subalgebras, and that the height of a flag $h(\varphi)$ is the dimension of $\max(\varphi)$.

\medskip

\begin{definition}\label{def:nerveseq}
Suppose that $\Sub_s \neq \emptyset$ and let $d\geq 1$ be defined by
$$
 \{i_1< \cdots <i_d\}:=\{ \dim \kf \mid \kf \in \Sub_s \}\,.
$$
Moreover, for $1 \leq j \leq d$ let
$$
  \XXs^j
	:= 
	\bigcup_{\varphi \in \Fln, ~h(\varphi) \geq  i_j} \Br[\varphi]
$$	
and 
$$
  \XXs^{d+1}:= \bigcup_{\varphi \in \Fln, ~h(\varphi) =\dim \bldsg\g} \Br[\varphi] = \qquad \bigcup_{\varphi \in \Fln} \Delta_{\varphi}\,.
$$
\end{definition}

We clearly have
$$
  \WWs = \XXs^1 \supset \XXs^2 \supset \cdots \supset \XXs^{d+1}=\XXGH\,. 
$$

\begin{lemma}\label{lem:XXsj}
For $j=1,\dots,d$ the set $\XXs^j$ is compact and semi-algebraic.
\end{lemma}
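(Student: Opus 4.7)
The plan is to prove compactness and semi-algebraicity of $\XXs^j$ separately, mirroring the two-pronged arguments used for $\WWs$ and $\XXGH$ in Proposition \ref{prop: WW}, while carrying along the additional height constraint $h(\varphi) \geq i_j$.

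For compactness, I would take a sequence $(A_n)_{n\in\NN} \subset \XXs^j$ and choose for each $n$ a non-toral flag $\varphi_n = (\kf_1^n < \cdots < \kf_{r_n}^n)$ with $h(\varphi_n) \geq i_j$ and $A_n \in \Br[\varphi_n]$. Since flag lengths are bounded by $\dim \g$, I pass to a subsequence with constant length $r$, and also (as needed) with $\max(\varphi_n)$ either always equal to $\g$ or always strictly below $\g$. By Lemma \ref{lem:sequsubalgsub} I pass once more so that $\kf_i^n \to \kf_i^\infty$ in the Grassmannian for each $i$. Since Grassmannian convergence preserves dimension, $h(\varphi_\infty) \geq i_j$ and the inclusions $\kf_i^\infty < \kf_{i+1}^\infty$ remain strict. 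Lemma \ref{lem:Dkfi}(i) ensures that non-torality is inherited by each limit subalgebra (with $\g$ trivially stable), so $\varphi_\infty$ is a non-toral flag of height at least $i_j$. Writing $A_n = (1-\kappa_n)\sum_{i=1}^{r-1} \mu_i^n A^{\bldsg\kf_i^n} + \kappa_n A_n'$ with $A_n' \in \D(\kf_r^n)$ in the case $\max(\varphi_n) < \g$ (the simplex-only case being easier), a further subsequence gives convergence of $\kappa_n, \mu_i^n, A_n'$, and Lemma \ref{lem:Dkfi}(ii) yields $A_\infty' \in \D(\kf_r^\infty)$, hence $A_\infty \in \Br[\varphi_\infty] \subset \XXs^j$.

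For semi-algebraicity, I would describe $\XXs^j$ explicitly, following the template used for $\XXGH$ in Proposition \ref{prop: WW}: a point $A$ lies in $\XXs^j$ iff there exist $1 \leq r \leq \dim \g$ and projections $P_1, \ldots, P_r \in \Psn$ with $\ker P_1 \subsetneq \cdots \subsetneq \ker P_r$, nonnegative scalars $\lambda_1, \ldots, \lambda_r$, and either (i) $\kappa \in [0,1]$ and $A' \in \D(\ker P_r)$ with $A = (1-\kappa)\sum_i \lambda_i P_i + \kappa A'$ and $\dim \ker P_r \geq i_j$, or (ii) $A = \sum_i \lambda_i P_i$ covering the case $\max(\varphi) = \g$. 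Each ingredient is semi-algebraic: $\Psn$ by Lemma \ref{lem:Ps}, the disk condition $A' \in \D(\ker P_r)$ by Lemma \ref{lem:disksemialg}, the strict inclusion of kernels by rank conditions, and the height bound $\dim \ker P_r \geq i_j$ by the vanishing of appropriate minors of $P_r$. Taking the finite union over $r$ and applying Tarski--Seidenberg (Section \ref{sec:semialgebraic}) to eliminate the existential quantifiers on $P_i, \lambda_i, \kappa, A'$ yields semi-algebraicity of $\XXs^j$. The main piece of bookkeeping will be the case split between $\max(\varphi) = \g$ (pure flag simplex) and $\max(\varphi) < \g$ (join with a nontrivial disk); both the compactness subsequence argument and the semi-algebraic description must handle these two cases separately, but no new technology beyond Lemmas \ref{lem:Dkfi}, \ref{lem:flag-conv} and the methods of Proposition \ref{prop: WW} is required.
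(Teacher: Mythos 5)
Your proposal mirrors the paper's proof of Lemma~\ref{lem:XXsj} both in structure and in its ingredients: compactness is shown by passing to a subsequence with constant flag length, converging the flags via Lemmas~\ref{lem:flag-conv} and~\ref{lem:sequsubalgsub} and the disks via Lemma~\ref{lem:Dkfi}, and semi-algebraicity is shown by an explicit first-order description using $\Psn$, disk membership from Lemma~\ref{lem:disksemialg}, rank conditions for the kernel inclusions, and Tarski--Seidenberg. In one respect you are actually more careful than the paper. The paper's displayed formula for $\XXs^j$ requires $\kf_r := \ker(P_r)$ to be a proper non-toral subalgebra with $\dim\kf_r \geq i_j$; taken literally this misses the flag simplices $\Br[(\kf_1<\cdots<\kf_{r-1}<\g)]$ in which $\dim\kf_{r-1}<i_j$ and $\kf_{r-1}$ is contained in no larger proper non-toral subalgebra of dimension $\geq i_j$ (for instance $\kf_{r-1}$ maximal in $\g$), even though every such improper flag has $h(\varphi)=\dim\g\geq i_j$ and its butterfly therefore lies in $\XXs^j$ by Definition~\ref{def:nerveseq}. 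Your explicit case~(ii), with no dimension constraint, covers exactly this, and your splitting of the subsequence according to whether $\max(\varphi_n)=\g$ plugs the same hole in the compactness argument, where the paper passes to a constant value $h(\varphi_m)=i_{\tilde j}$ that only makes sense for proper flags. The only cosmetic adjustment to make in your write-up is to replace $\lambda_i P_i$ with $\lambda_i \tfrac{P_i}{\tr P_i}$, since the vertices of the flag simplex are $A^{\blds\kf_i}=\tfrac{P_i}{\tr P_i}$, not the unnormalized projections.
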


\begin{proof}
We have
\beg
 \XXs^j&=&
\big\{ A \in \WWs \mid  A=(1-\kappa) \sum_{i=1}^{r-1} \lambda_i \cdot \tfrac{P_i}{\tr P_i}+ \kappa A'
		 \textrm{ for some } P_1,...,P_r \in \Psn \\
  &&
	  \quad \quad \quad\quad\quad
		\textrm{s.t. }
	  \ker(P_1) \subsetneq \cdots \subsetneq \ker(P_r)\,,
		 \kappa, \lambda_i \geq 0\,, \sum_{i=1}^{r-1} \lambda_i =1, r \geq 1\\
	 &&
	  \quad \quad \quad\quad\quad	
		\textrm{ and } A' \in \D(\kf_r) \textrm{ with } \kf_r=\ker(P_r)
		\textrm{ and } \dim (\kf_r) \geq i_j
		 \big\}\,.
\en
This shows that $\XXs^j$ is semi-algebraic.

To see compactness of $\XXs^j$, 
let $(A_m)_{m \in \NN}$
be a sequence in $ \XXs^{j}$ converging to $A_\infty \in \WWs$ as $m \to \infty$. 
For each $m\in\NN$, we have $A_m \in \Br[\varphi_m]$ for $\varphi_m \in \Fln$ with $h(\varphi_m)\geq i_j$.
By passing to a subsequence we may assume that $h(\varphi_m)=i_{\tilde j}$ for all $m \in \NN$, $\tilde j\geq j$
and that $r_m=r$ for all $m\in \NN$.
By Lemma \ref{lem:flag-conv} we may assume that $\varphi_m$
converges to $\varphi_\infty \in \Fln$ as $m \to \infty$, with $h(\varphi_\infty)=i_{\tilde j}$.
Moreover, each $A_m$ has a representation  
$$
 A_m=(1-\kappa_m) \sum_{i=1}^{r-1} (\lambda_i)_m \cdot A^{\blds\kf_i^m}
+ \kappa_m A'_m \,,
$$
$A'_m \in \D(\kf_r^m)$. 
By passing to a subsequence 
we may assume that all the data there, i.e., the scalars $\kappa_m$ and $(\lambda_i)_m$, converge as well.
We set $\kf_\infty:=\lim_{m \to\infty} \kf_r^m $. By Lemma 
\ref{lem:Dkfi} we know $\lim_{m\to\infty} \D(\kf_r^m) \subset \D(\kf_\infty)$, 
and hence $A_\infty \in \Br[\varphi_\infty]$, proving $\XXs^j$ is compact.
\end{proof}

For every $A \in \WWs$  there exists $\varphi \in \Fln$ with
$A \in \Br[\varphi]$. (Note, $\Br[\g] = \emptyset$ tells us we will not have $\varphi = (\g)$.) 
If $\ti \varphi \in \Fln$ is another non-toral flag with $A \in \Br[\ti \varphi]$,
then by Lemma \ref{lem:butphipsi} we have $A \in \Br[\varphi \ti \varphi]$ and at least one of $\varphi, \ti\varphi < \varphi\ti \varphi$ since $\varphi \neq \tilde \varphi$. 
We note that larger flags correspond to the smaller butterflies.

\begin{lemma}\label{lem:smallest-butterfly} For every $A \in \WWs$  there exists a largest flag $\varphi_A \in \Fln$ with $A \in \Br[\varphi_A]$. More precisely, there is a flag $\varphi_A \in \Fln$ with the smallest butterfly containing $A$:
$$\Br[\varphi_A] := \bigcap_{\varphi \in \Fln,~A \in \Br[\varphi]} \Br[\varphi]\,.$$
\end{lemma}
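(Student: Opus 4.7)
The plan is to introduce $S_A := \{\varphi \in \Fln \mid A \in \Br[\varphi]\}$ and prove that it admits a unique maximum with respect to the partial order $\leq$ of Definition \ref{def:flag-order}; this maximum will serve as $\varphi_A$. Two preliminary observations get the argument off the ground. First, $S_A$ is non-empty: since $A \in \WWs$, by Definition \ref{defin-W} we have $A \in \D(\kf) = \Br[(\kf)]$ for some non-toral $\kf$, so $(\kf) \in S_A$. Second, $S_A$ is closed under the flag product of Definition \ref{def:productflags}. Indeed, if $\varphi, \ti\varphi \in S_A$, Lemma \ref{lem:butphipsi} gives $A \in \Br[\varphi] \cap \Br[\ti\varphi] = \Br[\varphi \ti\varphi]$. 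A quick inspection of Definition \ref{def:productflags} shows that the minimum subalgebra of $\varphi \ti\varphi$ remains non-toral (the minima of both factors are non-toral, and the new element of the product, when present, either lies in one of the flags or is the non-toral generated subalgebra $\kf^\ast$), so $\varphi \ti\varphi \in \Fln$; moreover $\varphi \ti\varphi \neq (\g)$, since $\Br[(\g)] = \emptyset$ would contradict $A \in \Br[\varphi \ti\varphi]$. By Lemma \ref{lem:flag-product} we then have $\varphi, \ti\varphi \leq \varphi \ti\varphi$ in $S_A$.

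The next step is to establish the ascending chain condition in $(\Fln, \leq)$. Given a strictly increasing chain $\varphi_1 < \varphi_2 < \cdots$, Lemma \ref{lem:max} says that the heights $h(\varphi_i)$ are weakly increasing, and they are bounded above by $\dim \g$, so they stabilize at some value $h$ for $i \geq i_0$. Once the heights are constant, Corollary \ref{cor:ordering} forces $\varphi_{i+1} \subsetneq \varphi_i$ (strictly, because $\varphi_i \neq \varphi_{i+1}$), and hence $\ell(\varphi_{i+1}) < \ell(\varphi_i)$. Since lengths are positive integers, the chain must terminate.

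Combining the first two steps: by ACC, every element of $S_A$ lies below some maximal element. If $\varphi^\ast$ and $\ti\varphi^\ast$ were two maximal elements of $S_A$, then $\varphi^\ast \ti\varphi^\ast \in S_A$ would dominate both, forcing $\varphi^\ast = \varphi^\ast \ti\varphi^\ast = \ti\varphi^\ast$ by maximality. Hence $S_A$ has a unique maximum $\varphi_A$, and $\varphi \leq \varphi_A$ for every $\varphi \in S_A$. For the identification of butterflies, fix $\varphi \in S_A$. Applying Lemma \ref{lem:subset} to $\varphi \leq \varphi_A$ gives $\Br[\varphi_A] \subseteq \Br[\varphi]$, so $\Br[\varphi_A] \subseteq \bigcap_{\varphi \in S_A} \Br[\varphi]$; the reverse inclusion is immediate from $\varphi_A \in S_A$.

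The main obstacle I anticipate is the ACC argument, which requires recognizing the two-phase structure of ascending chains (heights increase first, then lengths strictly decrease at fixed height). Once this structure is spotted, the result reduces cleanly to Corollary \ref{cor:ordering}, and the remainder of the proof is formal, exploiting the join-semilattice behavior of the flag product under intersection of butterflies that was already encoded in Lemma \ref{lem:butphipsi} and Lemma \ref{lem:flag-product}.
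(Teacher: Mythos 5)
Your proof is correct, and it takes a genuinely different route from the paper's. The paper argues concretely: it first normalizes a given flag $\varphi$ with $A \in \Br[\varphi]$ by deleting the subalgebras whose coefficients vanish in the representation $A = (1-\kappa)\sum\lambda_i A^{\blds\kf_i} + \kappa A'$, and then shows that any strictly larger flag $\varphi_*$ in $S_A$ must satisfy $\max(\varphi_*) > \max(\ti\varphi)$, by comparing two representations of $A$ and the multiplicity of the smallest eigenvalue of $A\vert_{\kf_{i_0-1}^\perp}$; termination then follows because the maxima strictly increase and dimensions are bounded. You replace this representation-theoretic step entirely by an order-theoretic one: $S_A$ is closed under the flag product (so it is a join-semilattice by Lemmas \ref{lem:butphipsi} and \ref{lem:flag-product}), and $(\Fln,\leq)$ satisfies the ascending chain condition via the lexicographic measure (height increases weakly and is bounded; at fixed height Corollary \ref{cor:ordering} forces strict reverse inclusion, hence strictly decreasing length). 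Your approach is cleaner and makes the uniqueness of the maximum — and hence the intersection formula for the butterflies — completely explicit, whereas the paper leaves that step largely implicit; the paper's computation, on the other hand, additionally identifies $\varphi_A$ concretely in terms of the nonvanishing coefficients of $A$, which is the form in which the lemma gets used later (e.g.\ in Lemma \ref{lem:phisubset} and Proposition \ref{prop:kA}). Two small points worth making explicit in your write-up: the passage from "$S_A$ admits maximal elements" to "every element lies below one" uses transitivity of $\leq$, i.e.\ Lemma \ref{lem:flag-transitivity} (alternatively, given the unique maximal $\varphi_A$ and any $\varphi\in S_A$, note $\varphi\leq\varphi\varphi_A=\varphi_A$ directly); and the non-torality of the product flag rests on the observation, recorded after Definition \ref{def:Union Non-toral flags}, that any subalgebra containing a non-toral one is non-toral.
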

This allows us to give the following definition:

\begin{definition}\label{def:heightA}
For each $A \in \WWs$ let $\varphi_A \in \Fln$ denote the largest flag with
$A \in \Br[\varphi_A]$. We set 
$$
  h(A):= h(\varphi_A)\,.
$$
\end{definition}

\begin{proof} Let $A \in \WWs$ and let $\varphi\in\Fln$,  $\varphi = (\kf_1 < \dots < \kf_r)$, $r \geq 1$,
be a flag with $A \in \Br[\varphi]$.
Since $A \in \Br[\varphi]$, we can write 
$$A = (1-\kappa)\sum_{i=1}^{r-1} \lambda_i \cdot A^{\blds\kf_i} + \kappa A' \,,$$ 
where $0 \leq \kappa \leq 1$, $0 \leq \lambda_i \leq 1$, $\sum_{i=1}^{r-1} \lambda_i=1$, and  $A' \in \D(\kf_r)$. 
If $\kappa = 1$ (so that $A=A'$), by removing all subalgebras in $\varphi$ except $\kf_r$, we construct a new flag, $\ti\varphi\in\Fln$ such that $A \in \Br[\ti\varphi]$ and $\ti\varphi \geq  \varphi$. 
 If $\kappa < 1$, then for each $i$ for which the coefficient $\lambda_i=0$, by removing the corresponding $\kf_i$ from our flag $\varphi$, we construct a new flag $\ti\varphi\in\Fln$ such that $A \in \Br[\ti\varphi]$ and $\ti\varphi \geq  \varphi$.

Suppose now that  $\tilde \varphi = (\kf_1 < \dots < \kf_r)$, $r \geq 1$, with $\tilde \varphi$ as above.
If $\ti\varphi$ is not the largest flag with $A \in\Br[\ti\varphi]$, then there exists  $\varphi_* \in \Fln$ with $A \in \Br[\varphi_*]$ and  $\varphi_* > \ti\varphi$. 

We will show now that $\max(\varphi_*)>\max(\tilde \varphi)$. Notice first that this is clear if $\kappa=1$.
Thus we may assume that $\kappa<1$ and that $\max(\varphi_*)=\max(\tilde \varphi)$
since by Lemma \ref{lem:ordering} we have $\max(\varphi_*)\geq \max(\tilde \varphi)$.
Using that $\ti\varphi<\varphi_*$ we deduce from Corollary \ref{cor:ordering}
that $ \varphi_*  \subsetneq \ti \varphi$. As a consequence, there exists
$i_0 \in \{1,...,r-1\}$ such that $\kf_{i_0}$ does not appear in $\ti \varphi$. Suppose that $i_0$ is
minimal with that property. Then, $A \in \Br[\tilde \varphi] \cap \Br[\varphi_*]$
and  $ \varphi_*  \subsetneq \ti \varphi $ yields 
two representations of $A$:
\beg A &= (1-\kappa)\sum_{i=1}^{r-1} \lambda_i \cdot A^{\blds\kf_i} + \kappa A' \\
~&= (1-\kappa^*)\sum_{i=1}^{i_0-1} \lambda_i^* \cdot A^{\blds\kf_i} + \kappa^* A'' \,,
\en
where $A' \in \D(\kf_r)$,
$A'' \in \Br[\psi]$ and $\psi \subset (\kf_{i_0+1}< \cdots <\kf_r)$. 
This would imply that the dimension of the eigenspace corresponding to the smallest
eigenvalue of $A\vert_{\kf_{i_0-1}^\perp}$ 
has dimension $\dim \kf_{i_0}-\dim \kf_{i_0-1}$ on the one hand, 
since $\kappa<1$ and all the $\lambda_i>0$,
but dimension $\dim \min(\psi) -\dim \kf_{i_0-1}$ on the other hand, a contradiction. 
This shows the above claim.

Since the flag $\varphi_*$ has a strictly greater maximum,  the above 
process will end in finitely many steps. 
Thus we obtain a largest flag whose butterfly contains $A$.
\end{proof} 

\begin{lemma}\label{lem:phisubset}
Let $A \in  \WWs$ and let $\varphi \in \Fln$ with $A \in \Br[\varphi]$ and $h(\varphi)\geq h(A)$.
Then $h(\varphi)=h(A)$, $\varphi_A \subset \varphi$ and $\max(\varphi_A)=\max(\varphi)$. 
\end{lemma}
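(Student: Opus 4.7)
The plan is essentially a one-step deduction from the partial-order machinery for flags already built up. The key observation is how to read Lemma \ref{lem:smallest-butterfly}: $\varphi_A$ is the largest flag (i.e.\ the greatest element in the order $\leq$) among those with $A\in\Br[\varphi_A]$, so the hypothesis $A\in\Br[\varphi]$ gives $\varphi\leq\varphi_A$ directly.

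Once $\varphi\leq\varphi_A$ is in hand, Lemma \ref{lem:max} yields $\max(\varphi)\leq\max(\varphi_A)$, and hence
\[
 h(\varphi)\ \leq\ h(\varphi_A)\ =\ h(A).
\]
Combining this with the hypothesis $h(\varphi)\geq h(A)$ sandwiches $h(\varphi)=h(\varphi_A)=h(A)$, which is the first conclusion. Corollary \ref{cor:ordering} applied to $\varphi\leq\varphi_A$ with equal heights then gives exactly $\varphi_A\subset\varphi$, which is the second conclusion. Since $\max(\varphi_A)$ is then a subalgebra occurring in $\varphi$, we have $\max(\varphi_A)\leq\max(\varphi)$, and together with $\max(\varphi)\leq\max(\varphi_A)$ this forces $\max(\varphi)=\max(\varphi_A)$, the third conclusion.

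There is no real obstacle in this argument; it is pure bookkeeping with the partial order on flags. The only point worth a sentence of care is the reading of ``largest'' in Lemma \ref{lem:smallest-butterfly} as a genuine greatest element with respect to $\leq$, which is exactly how its proof is structured (each elementary move strictly increases $\max(\varphi)$, so the process terminates at a unique maximum). If one preferred not to invoke maximality in this strong sense, the same inequality $\varphi\leq\varphi_A$ could instead be obtained from Lemma \ref{lem:butphipsi}, which places $A\in\Br[\varphi]\cap\Br[\varphi_A]=\Br[\varphi\,\varphi_A]$, combined with Lemma \ref{lem:flag-product} ($\varphi\leq\varphi\,\varphi_A$) and the smallest-butterfly characterization of $\Br[\varphi_A]$, which forces $\varphi\,\varphi_A=\varphi_A$.
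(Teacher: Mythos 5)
Your proof is correct and follows essentially the same route as the paper: obtain $\varphi \leq \varphi_A$ from the maximality of $\varphi_A$, deduce $\max(\varphi)\leq\max(\varphi_A)$ (the paper cites Lemma \ref{lem:ordering} where you cite the equivalent Lemma \ref{lem:max}), sandwich the heights, and then apply Corollary \ref{cor:ordering} to conclude $\varphi_A\subset\varphi$. The parenthetical concern you raise about reading "largest" as a genuine greatest element is well taken, and your suggested alternative (via $\Br[\varphi]\cap\Br[\varphi_A]=\Br[\varphi\,\varphi_A]$ and $\varphi,\varphi_A\leq\varphi\,\varphi_A$) is exactly the right way to make it airtight — in fact Lemma \ref{lem:butphipsi} together with Lemma \ref{lem:flag-product} shows the poset of flags containing $A$ in their butterfly is upward directed, so any maximal element is automatically a greatest element.
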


\begin{proof}
Since $\varphi \leq \varphi_A$ by our choice of $\varphi_A$, we know by Lemma \ref{lem:ordering} that 
$h(\varphi) \leq h(\varphi_A)=h(A)$.
Thus $h(\varphi)=h(A)$.  By Corollary \ref{cor:ordering} we use  $\varphi \leq \varphi_A$ again to conclude that $\varphi_A \subset \varphi$. 
This proves our claim.
\end{proof}

This yields the following consequence:

\begin{proposition}\label{prop:kA}
Let $1\leq j \leq d$ and $A \in  \XXs^{j}\bs \XXs^{j+1}$.
Then $h(A)=i_j$. Moreover, for any $\varphi\in \Fln$ with $h(\varphi)\geq i_j$ and $A \in \Br[\varphi]$, 
we have $\varphi_A \subset \varphi$ and $\max(\varphi)=\max(\varphi_A)$.
\end{proposition}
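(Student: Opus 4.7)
The plan is to squeeze $h(A)=h(\varphi_A)$ between $i_j$ and $i_{j+1}$ using the definitions of $\XXs^j$ and $\XXs^{j+1}$, and then to obtain the second assertion as a one-line consequence of Lemma \ref{lem:phisubset}.

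First, since $A \in \XXs^j$, there is some $\psi \in \Fln$ with $A \in \Br[\psi]$ and $h(\psi) \geq i_j$. By Lemma \ref{lem:smallest-butterfly}, $\psi \leq \varphi_A$, and Lemma \ref{lem:max} then gives $\max(\psi) \leq \max(\varphi_A)$, hence
$$
h(A) \;=\; h(\varphi_A) \;\geq\; h(\psi) \;\geq\; i_j.
$$
For the reverse inequality, since $\varphi_A \in \Fln$ and $A \in \Br[\varphi_A]$, the assumption $A \notin \XXs^{j+1}$ forces $h(\varphi_A) < i_{j+1}$ when $j < d$ (by the definition of $\XXs^{j+1}$ as a union over flags of height $\geq i_{j+1}$), and $\max(\varphi_A) \neq \g$ when $j = d$ (by the alternative definition of $\XXs^{d+1}$ via flags with $\max = \g$). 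In either case, $\max(\varphi_A)$ is a proper non-toral subalgebra, so its dimension lies in $\{i_1,\dots,i_d\}$. Combined with the lower bound $h(\varphi_A) \geq i_j$ and the upper constraint, the only admissible value is $h(\varphi_A) = i_j$, giving $h(A) = i_j$.

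For the second assertion, any $\varphi \in \Fln$ with $A \in \Br[\varphi]$ and $h(\varphi) \geq i_j = h(A)$ satisfies exactly the hypotheses of Lemma \ref{lem:phisubset}, and invoking that lemma delivers $h(\varphi) = h(A)$, $\varphi_A \subset \varphi$, and $\max(\varphi) = \max(\varphi_A)$ in one stroke.

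I do not expect a serious obstacle. The only delicate point is the endpoint case $j = d$, where $i_{d+1}$ is not among the dimensions of non-toral subalgebras at all: one must instead exclude the possibility $\max(\varphi_A) = \g$ directly from the definition of $\XXs^{d+1}$. Once this bookkeeping on the allowed values of $\dim(\max(\varphi_A))$ is in place, both conclusions are immediate from Lemmas \ref{lem:smallest-butterfly}, \ref{lem:max}, and \ref{lem:phisubset}.
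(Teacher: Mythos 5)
Your proof is correct and follows essentially the same route as the paper: squeeze $h(A)=h(\varphi_A)$ between $i_j$ (from $A\in\XXs^j$ and $\varphi\leq\varphi_A$) and the exclusion coming from $A\notin\XXs^{j+1}$, then conclude via Lemma \ref{lem:phisubset}. Your explicit treatment of the endpoint case $j=d$ (ruling out $\max(\varphi_A)=\g$ rather than invoking a nonexistent $i_{d+1}$) is a slightly more careful rendering of the bound the paper states as ``$h(A)<i_{j+1}$'', but it is the same argument.
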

                                                                                            
\begin{proof}	
Since $A \in  \XXs^{j}$ we know for some flag $\varphi \in \Fln$ with $h(\varphi)\geq i_j$, $A \in \Br[\varphi]$, and therefore, $h(A)\geq h(\varphi)\geq i_j$. But since  $A \not\in \XXs^{j+1}$, we know $h(A)< i_{j+1}$ . Thus $h(\varphi)=h(A)=i_j$, and the claim follows, by Lemma \ref{lem:phisubset}.
\end{proof}

This allows us to associate to each $A \in  \XXs^{j}\bs \XXs^{j+1}$ a non-toral subalgebra 
$$
  \kf_A=\max(\varphi_A)\,.
$$
 For any flag $\varphi=(\kf_1 < \cdots < \kf_r)$ in $\Fln$
with $h(\varphi)\geq i_j$
and $A \in \Br[\varphi] \cap  \XXs^{j}\bs \XXs^{j+1}$,  
we must have $\max(\varphi)=\kf_r=\kf_A$ by Proposition \ref{prop:kA}.

\begin{corollary}\label{cor:kA}
Let $1\leq j \leq d$ and $A \in  \XXs^{j}\bs \XXs^{j+1}$. Then
for any $\varphi \in \Fln$ with $h(\varphi) \geq i_j$ and $A \in \Br[\varphi]$ we have $A^{\blds\kf_A} \in \Br[\varphi]$.
\end{corollary}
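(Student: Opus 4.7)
The plan is that this corollary is an essentially immediate consequence of Proposition \ref{prop:kA} together with the definition of the butterfly $\Br[\varphi]$ as a join involving the disk $\D(\max(\varphi))$. There is no real obstacle; the work has already been done in establishing $\varphi_A$ and identifying $\max(\varphi) = \kf_A$ for any qualifying flag.

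First, I would apply Proposition \ref{prop:kA} to the hypothesis $A \in \Br[\varphi] \cap (\XXs^j \bs \XXs^{j+1})$ with $h(\varphi) \geq i_j$. This yields both $\varphi_A \subset \varphi$ and, crucially, $\max(\varphi) = \max(\varphi_A) = \kf_A$. Write $\varphi = (\kf_1 < \cdots < \kf_r)$, so $\kf_r = \kf_A$.

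Next, I would split into two cases according to the length $r$. If $r = 1$, then $\varphi = (\kf_A)$ and by Definition \ref{def:butterfly} (Type 1) we have $\Br[\varphi] = \D(\kf_A)$; since $A^{\blds \kf_A} \in \D(\kf_A)$ by Definition \ref{def:kfdisc}, the conclusion is immediate. If $r \geq 2$, then since $\kf_A \in \Sub_s$ (recall $\varphi_A \in \Fln$ consists of non-toral \emph{intermediate} subalgebras, so $\max(\varphi) = \kf_A < \g$), Definition \ref{def:butterfly} (Type 3) gives
$$
\Br[\varphi] = \mathrm{conv}\{A^{\blds\kf_1},\ldots,A^{\blds\kf_{r-1}}\} \ast \D(\kf_A).
$$
In particular $\D(\kf_A) \subset \Br[\varphi]$ (taking join parameter $1$ on the disk side), and again $A^{\blds\kf_A} \in \D(\kf_A)$ by Definition \ref{def:kfdisc}, so $A^{\blds\kf_A} \in \Br[\varphi]$ as claimed.

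The only verification worth flagging is that $\kf_A \neq \g$, but this is forced by $\kf_A = \max(\varphi_A)$ with $\varphi_A \in \Fln$, since all subalgebras appearing in a flag of $\Fln$ are proper intermediate subalgebras of $\g$. Thus both cases reduce to the observation that the canonical center $A^{\blds\kf_A}$ always lies in $\D(\kf_A)$, and the join structure of the butterfly automatically contains this disk whenever $\max(\varphi) = \kf_A$.
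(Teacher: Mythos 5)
Your argument is correct and is exactly the route the paper takes: Corollary \ref{cor:kA} is treated as an immediate consequence of Proposition \ref{prop:kA} (which forces $\max(\varphi)=\kf_A$) together with the join structure of $\Br[\varphi]$, since $\D(\kf_A)\subset\Br[\varphi]$ and $A^{\blds\kf_A}\in\D(\kf_A)$ by Definition \ref{def:kfdisc}. One small slip in your final remark: flags in $\Fln$ \emph{are} allowed to have maximum $\g$ (improper flags are used, e.g., in the definition of $\XXs^{d+1}$), so "$\kf_A\neq\g$" does not follow merely from $\varphi_A\in\Fln$; it follows instead because $\dim\kf_A=h(A)=i_j<\dim\g$ by Proposition \ref{prop:kA} (equivalently, if $\max(\varphi)=\g$ then $\Br[\varphi]\subset\XXGH\subset\XXs^{j+1}$, contradicting $A\notin\XXs^{j+1}$).
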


Recall now that for $\varphi=(\kf_1 < \cdots < \kf_{r_\varphi} <\kf_A)$ with $r_\varphi\geq 1$, and for $A \in \Br[\varphi]$, we have
\begin{eqnarray}
  A=(1-\kappa)\cdot \sum_{i=1}^{r_\varphi} \lambda_i \cdot A^{\blds\kf_i}+\kappa \cdot A_{r_\varphi+1}\label{eqn:Arep} \,,
\end{eqnarray}
for some $A_{r_\varphi+1} \in \D(\kf_A)$ and $0\leq \kappa,\lambda_1,...,\lambda_{r_\varphi}\leq 1$,
$\sum_{i=1}^{r_\varphi} \lambda_i=1$. 
(By an argument analogous to the one in the proof of Lemma \ref{lem:esudescrA}, the representation of $A$ is essentially unique.) Thus, by Corollary \ref{cor:kA} the following projection map is well-defined.

\begin{definition}[Projection]\label{def:projection}
Let $1\leq j \leq d$. Then, we call the map
$$
  P_j:\XXs^{j}\bs \XXs^{j+1} \to \XXGH\,,
$$
defined below the {\em $j^{th}$ projection map}.

For $A \in   \Br[\varphi]\subset \XXs^{j}\bs \XXs^{j+1}$  
with $\varphi=(\kf_A)$, we set $P_j(A):= A^{\blds\k_A}$.

For $A \in   \Br[\varphi]\subset \XXs^{j}\bs \XXs^{j+1}$  
with $\varphi=(\kf_1 < \cdots < \kf_{r_\varphi} <\kf_A)\in \Fln$ and  $r_\varphi\geq 1$, we define $P_j$ by
$$
  A= (1-\kappa)\cdot \sum_{i=1}^{r_\varphi} \lambda_i \cdot A^{\blds\kf_i}+\kappa \cdot A_{r_\varphi+1}
\quad	\mapsto    \quad
	(1-\kappa)\cdot \sum_{i=1}^{r_\varphi} \lambda_i\cdot A^{\blds\kf_i} + \kappa \cdot A^{\blds\kf_A}\,,
$$
where $A_{r_\varphi+1} \in \D(\kf_A)$ and $0\leq \kappa,\lambda_1,\dots,\lambda_{r_\varphi}\leq 1$,
$\sum_{i=1}^{r_\varphi} \lambda_i=1$.  
\end{definition}
Since $A \not\in \XXs^{j+1}$ we know 
that we cannot express $A_{r_{\varphi +1}}$ as a convex combination of maps $A^{\blds\kf}$. By replacing $A_{r_{\varphi +1}}$ (the component of $A$ in $\D(\kf_A)$) with $A^{\blds \kf_A}$, the image $P_j(A)$ is in $\XXGH$.  
Notice that if there exists a $\varphi \in \Fln$ with 
 $A \in \Br[\varphi]\subset \XXs^j \bs \XXs^{j+1}$
 then $A^{\blds\kf_A}$ is not the only element in the disk $\D(\kf_A)$.

Recall, that a map $f:(X_1,d_1)\to (X_2,d_2)$ between two metric spaces is continuous
in a point $x_1 \in X_1$ if for all sequences $(a_i)_{i \in \N}$ in $X_1$ with $\lim_{i\to \infty} a_i=x_1$
there exists a subsequence $(a_{i_k})_{k \in \N}$ of  $(a_i)_{i \in \N}$
 with $\lim_{k\to \infty}f(a_{i_k})=f(x_1)$.

\begin{proposition}\label{prop:Pcont}
Let $1\leq j \leq d$. Then,
$P_j:\XXs^{j}\bs \XXs^{j+1} \to \XXGH$ is continuous.
\end{proposition}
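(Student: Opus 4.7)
The plan is to take an arbitrary sequence $(A_m)_{m \in \N}$ in $\XXs^j \setminus \XXs^{j+1}$ converging to some $A_\infty$ in that set, and produce a subsequence along which $P_j(A_{m_k}) \to P_j(A_\infty)$, which by the sequential characterization of continuity recalled just before the statement is what we need to verify.

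For each $m$, by Proposition \ref{prop:kA} the largest flag $\varphi_{A_m}$ has height $i_j$; write $\varphi_{A_m} = (\kf_1^m < \cdots < \kf_{r_m}^m < \kf_{A_m})$, allowing $r_m = 0$. Since lengths of flags in $\Fln$ are uniformly bounded, after passing to a subsequence I may assume $r_m \equiv r$. Applying Lemma \ref{lem:sequsubalgsub} dimension by dimension, I may pass to a further subsequence so that each $\kf_i^m \to \kf_i^\infty$ and $\kf_{A_m} \to \kf^\ast$ in the Grassmannian; by Lemma \ref{lem:Dkfi}(i) each limit is non-toral, the strict dimension inequalities persist, and nested containment is a closed condition, so $\varphi_\infty := (\kf_1^\infty < \cdots < \kf_r^\infty < \kf^\ast)$ is a valid flag in $\Fln$ of height $i_j$. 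Writing
$$A_m = (1-\kappa_m)\sum_{i=1}^{r}(\lambda_i)_m \, A^{\blds\kf_i^m} + \kappa_m A^m_{r+1}, \qquad A^m_{r+1} \in \D(\kf_{A_m}),$$
I pass to a further subsequence so that $\kappa_m \to \kappa_\infty$ and $(\lambda_i)_m \to (\lambda_i)_\infty$. Since the orthogonal projection onto a $k$-plane depends continuously on the $k$-plane in the Grassmannian, the maps $A^{\blds\kf_i^m} \to A^{\blds\kf_i^\infty}$ and $A^{\blds\kf_{A_m}} \to A^{\blds\kf^\ast}$; the argument in Lemma \ref{lem:Dkfi}(ii) then shows $A^m_{r+1} \to A^\infty_{r+1} \in \D(\kf^\ast)$, and $A_\infty \in \Br[\varphi_\infty]$.

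Now the identification of the limit: since $A_\infty \in \Br[\varphi_\infty]$ with $h(\varphi_\infty) = i_j = h(A_\infty)$, Lemma \ref{lem:phisubset} gives $\varphi_{A_\infty} \subset \varphi_\infty$ and $\kf_{A_\infty} = \max(\varphi_\infty) = \kf^\ast$. By the essential uniqueness of the butterfly decomposition (invoked just before Definition \ref{def:projection}, proved by the same argument as Lemma \ref{lem:esudescrA}), the limit decomposition of $A_\infty$ with respect to $\varphi_\infty$ must agree with its decomposition with respect to the subflag $\varphi_{A_\infty}$: the scalars $\kappa_\infty$ and the component $A^\infty_{r+1} \in \D(\kf_{A_\infty})$ coincide, and crucially $(\lambda_i)_\infty = 0$ for those indices $i$ for which $\kf_i^\infty$ appears in $\varphi_\infty$ but not in $\varphi_{A_\infty}$.

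With these identifications, a direct computation finishes the proof:
$$\lim_{m \to \infty} P_j(A_m) = (1-\kappa_\infty)\sum_{i=1}^{r}(\lambda_i)_\infty A^{\blds\kf_i^\infty} + \kappa_\infty A^{\blds\kf_{A_\infty}} = P_j(A_\infty),$$
where the first equality uses the convergences assembled above and the second uses the vanishing of the extra $(\lambda_i)_\infty$. The main obstacle is the third paragraph: one must control the relationship between the limit flag $\varphi_\infty$ arising from the dynamics of the sequence and the intrinsically defined largest flag $\varphi_{A_\infty}$, and use that the $\varphi_\infty$-decomposition of $A_\infty$, although possibly non-minimal, is forced by essential uniqueness to be compatible with the canonical $\varphi_{A_\infty}$-decomposition used in Definition \ref{def:projection}.
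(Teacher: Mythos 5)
Your argument is correct and follows essentially the same route as the paper: extract subconvergent flags and representation data for a sequence $A_m \to A_\infty$ in $\XXs^j\bs\XXs^{j+1}$, identify the maximum of the limit flag with $\kf_{A_\infty}$, and pass to the limit in the representation. The only differences are cosmetic: you get $\max(\varphi_\infty)=\kf_{A_\infty}$ by applying Lemma \ref{lem:phisubset} (equivalently the second clause of Proposition \ref{prop:kA}) to the limit flag, whereas the paper compares the limit flag with a fixed flag for $A_\infty$ via Lemma \ref{lem:butphipsi} and the dimension constraint coming from $A_\infty \notin \XXs^{j+1}$, and you spell out the essential-uniqueness comparison of the $\varphi_\infty$- and $\varphi_{A_\infty}$-decompositions that the paper leaves implicit in its appeal to Corollary \ref{cor:kA}.
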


\begin{proof} Fix $j \in\{1,\dots,d\}$. 
Let $A_\infty \in \XXs^{j}\bs \XXs^{j+1}$ and $\varphi_\infty \in \Fln$ with 
$A_\infty \in \Br[\varphi_\infty]$ and $h(\varphi_\infty)=i_j$: see Proposition \ref{prop:kA}.

 Let $(A_m)_{m \in \NN}$
be a sequence in $ \XXs^{j}\bs \XXs^{j+1}$ converging to $A_\infty$ as $m \to \infty$. 
For each $m\in\NN$, we have $A_m \in \Br[\varphi_m]$ for $\varphi_m \in \Fln$ with $h(\varphi_m)=i_j$: 
see Proposition \ref{prop:kA}. 
By Lemma \ref{lem:flag-conv} we may assume that $\varphi_m$
converges to $\tilde \varphi_\infty \in \Fln$ with $h(\tilde \varphi_\infty)=i_j$.
Moreover, each $A_m$ has a representation as in \eqref{eqn:Arep}. By passing to a subsequence 
we may assume that all the data there, i.e., the scalars $\kappa_m$ and $(\lambda_i)_m$, converge as well:
see proof of Lemma \ref{lem:XXsj}.
Note: since $A_\infty \in \XXs^{j}\bs \XXs^{j+1}$,  in the representation of $A_\infty$ as in Equation \ref{eqn:Arep}, the scalar $\kappa_\infty > 0$; hence we may assume for each $A_m$ the scalar $\kappa_m>0$.

Let $\kf=\max(\varphi_\infty)$ and $\tilde \kf=\max(\tilde \varphi_\infty)$.
Since $A_\infty \in (\Br[\varphi_\infty] \cap  \Br[\tilde \varphi_\infty])$ 
by Lemma \ref{lem:butphipsi} it follows that $A_\infty \in \Br[\varphi_\infty\tilde \varphi_\infty]$.
By Definition \ref{def:productflags}, this implies   
$\max(\varphi_\infty\tilde \varphi_\infty)=\langle\kf,\tilde \kf\rangle$, and since $A_\infty \not\in \XXs^{j+1}$, 
we conclude $\dim \langle \kf,\tilde \kf\rangle =\dim \kf$, thus
$\kf=\tilde \kf$. By Definition \ref{def:flag-conv}, we know  $\lim_{m \to \infty}\max(\varphi_m)=\kf$. 
By Corollary \ref{cor:kA}, this proves the claim.
\end{proof}

We are now in position to prove for each $j=1\dots,d$ that $\XXs^j$ and $\XXs^{j+1}$ are homotopy equivalent.
Recall that $\XXs^{j+1}$ is a compact subset of $\XXs^j$.

\begin{definition}[Pre-Homotopy]\label{def:Hjdef}
Let $1\leq j \leq d$ and let $\sigma:\XXs^j\to [0,1]$ be a continuous map with $\sigma^{-1}(0)=\XXs^{j+1}$.
Then we call the map
$$
  H_j(\sigma):[0,1] \times \XXs^{j} \to \XXs^j\,\,;\,\,\,(\kappa,A)\mapsto (1-\sigma(A)\cdot \kappa)\cdot A 
	+ \sigma(A)\cdot \kappa \cdot P_j(A)
$$
the {\em $j$-th pre-homotopy} associated to $\sigma$, where for $A \in \XXs^{j+1}$ we set $\sigma(A)\cdot P_j(A):=0$.
\end{definition}

Notice, that since $\sigma(A)=0$ for $A \in \XXs^{j+1}$ we have 
\begn
 H_j(\sigma)\vert_{\{t\}\times \XXs^{j+1}}
={\Id}\vert_{\{t\}\times \XXs^{j+1}} \label{eqn:Hjprop}
\enn 
for all $t\in [0,1]$.
(By Proposition \ref{prop:Pcont},  the map $P_j$ is defined and continuous on $\XXs^j\bs \XXs^{j+1}$.)

\begin{proposition}\label{prop:Hjcont}
Let $1\leq j \leq d$ and let $\sigma:\XXs^j\to [0,1]$ be a continuous map with $\sigma^{-1}(0)=\XXs^{j+1}$.
Then, the map $H_j(\sigma)$ is continuous.
\end{proposition}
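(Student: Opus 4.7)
The plan is to verify continuity separately on the open region $[0,1]\times(\XXs^j \setminus \XXs^{j+1})$ and at the ``boundary'' $[0,1] \times \XXs^{j+1}$, using the fact that the weight $\sigma$ was chosen precisely to kill the potential discontinuity of $P_j$ along $\XXs^{j+1}$.

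On the open set $[0,1]\times(\XXs^j \setminus \XXs^{j+1})$, the map $P_j$ is continuous by Proposition \ref{prop:Pcont}, while $\sigma$ is continuous by hypothesis. Hence $H_j(\sigma)$, being a polynomial combination of these continuous maps together with scalar multiplication and addition in $\SymgH$, is continuous there.

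The main work will be checking continuity at a point $(\kappa_\infty, A_\infty)$ with $A_\infty \in \XXs^{j+1}$, where we have $\sigma(A_\infty)=0$ and thus, by the convention in Definition \ref{def:Hjdef}, $H_j(\sigma)(\kappa_\infty,A_\infty) = A_\infty$. Given any sequence $(\kappa_m,A_m) \to (\kappa_\infty,A_\infty)$, I will pass to subsequences and distinguish two cases. If eventually $A_m \in \XXs^{j+1}$, then \eqref{eqn:Hjprop} gives $H_j(\sigma)(\kappa_m,A_m) = A_m \to A_\infty$. If instead $A_m \in \XXs^{j}\setminus \XXs^{j+1}$ for all $m$, I rewrite
\[
 H_j(\sigma)(\kappa_m, A_m) \;=\; A_m \;+\; \sigma(A_m)\,\kappa_m\,\bigl(P_j(A_m) - A_m\bigr).
\]
Here is where the main obstacle appears: $P_j(A_m)$ is not a priori known to converge, since $P_j$ is only defined and continuous on $\XXs^j \setminus \XXs^{j+1}$. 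The point, however, is that $P_j$ takes values in $\XXGH$, which is compact by Proposition \ref{prop: WW}, and $\XXs^j$ itself is compact by Lemma \ref{lem:XXsj}. Hence the norms $\Vert P_j(A_m) - A_m\Vert$ are uniformly bounded. Since $\sigma$ is continuous with $\sigma(A_\infty)=0$, we have $\sigma(A_m) \to 0$, and $\kappa_m \in [0,1]$. Consequently $\sigma(A_m)\,\kappa_m\,(P_j(A_m) - A_m) \to 0$, and so $H_j(\sigma)(\kappa_m,A_m) \to A_\infty$ as required.

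In summary, the continuity of $H_j(\sigma)$ away from $\XXs^{j+1}$ is routine, and the only subtlety is the boundary estimate: the possible wild behavior of $P_j$ near $\XXs^{j+1}$ is harmless because it is always multiplied by $\sigma$, which vanishes on $\XXs^{j+1}$, while the difference $P_j - \Id$ remains bounded by compactness of $\XXGH$ and $\XXs^j$.
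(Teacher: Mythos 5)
Your proof is correct and follows essentially the same route as the paper: the key step in both is that $P_j$ is bounded because it takes values in the compact set $\XXGH$, so that multiplying by $\sigma$, which is continuous and vanishes on $\XXs^{j+1}$, forces the problematic term to tend to zero along any sequence approaching $\XXs^{j+1}$. The paper phrases this as continuity of $\sigma\cdot P_j$ while you work directly with $H_j(\sigma)$ and a case split, but the content is the same.
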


\begin{proof}
It is enough to show that $\sigma \cdot P_j:\XXs^j\to \SymgH$ is continuous. 
By Proposition \ref{prop:Pcont}, the map $\sigma\cdot P_j$  restricted to 
$\XXs^j \bs \XXs^{j+1}$ is continuous.
Consider a sequence $(A_i)_{i \in \NN}$ in $\XXs^j \bs \XXs^{j+1}$ converging to $A_\infty \in \XXs^{j+1}$. 
Since $\XXGH$ is compact, $P_j$ is a bounded map. Thus the sequence $(P_j(A_i))_{i \in \N}$ is bounded. 
By hypothesis, $\sigma:\XXs^j\to [0,1]$ is continuous and  
$\sigma( \XXs^{j+1})=0$. Thus  $\lim_{i\to\infty} \sigma(A_i)\cdot P_j(A_i) =0$ and
the claim follows.
\end{proof}

Although we would like to extend the projection map $P_j$ to $\XXs^{j+1}$, this does not seem to be possible in general.

\begin{definition}\label{def:Pj}
Let $1\leq j \leq d$.
For $A \in \XXs^j$ let 
$$
  \PP_A^j :=\{ A^{\max(\varphi)} \mid  \varphi \in \Fln \textrm{ with } i_{j} \leq h(\varphi)< \dim \g
	  \textrm{ and }A\in \Br[\varphi]\} \subset \XXGH\,.  
$$	  

\end{definition}

Notice that by Proposition \ref{prop:kA},
for $A \in \XXs^j\bs \XXs^{j+1}$ we have
$\PP_A^j=\{A^{\blds\kf_A}\}$.

\begin{lemma}\label{lem:PAcomp}
Let $1\leq j \leq d$. Then for $A \in \XXs^j$ the set $\PP_A^j$ is compact.  
\end{lemma}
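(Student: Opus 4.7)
The plan is to establish that $\PP_A^j$ is closed and bounded in the finite-dimensional space $\SymgH$. Boundedness is immediate: each element $A^{\blds \kf}$ satisfies $\tr A^{\blds \kf}=1$ and $A^{\blds \kf}\geq 0$, so $\PP_A^j$ sits inside the bounded set $\Sphb$.

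To prove closedness, I would take a sequence $(A^{\max(\varphi_m)})_{m\in \N}$ in $\PP_A^j$ converging to some $A_\infty \in \SymgH$, where each $\varphi_m \in \Fln$ satisfies $i_j \leq h(\varphi_m) < \dim \g$ and $A \in \Br[\varphi_m]$. By Lemma \ref{lem:flag-conv}, after passing to a subsequence, the lengths of the $\varphi_m$ are constant and $\varphi_m \to \varphi_\infty$ for some $\varphi_\infty \in \Fln$ (the non-toral property is preserved by that lemma). Writing $\varphi_m=(\kf_1^m<\cdots<\kf_r^m)$, each $\kf_i^m \to \kf_i^\infty$ in the Grassmannian, with $\dim \kf_i^\infty=\dim \kf_i^m$; in particular $h(\varphi_\infty)=h(\varphi_m)$, so $i_j\leq h(\varphi_\infty)<\dim\g$. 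Continuity of $\kf\mapsto A^{\blds\kf}=\tfrac{1}{\dim\g-\dim\kf}(\Id_\g-\Id_\kf)$ on the Grassmannian of fixed-dimensional subspaces gives $A^{\max(\varphi_m)}\to A^{\max(\varphi_\infty)}$, hence $A_\infty=A^{\max(\varphi_\infty)}$.

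The key remaining step is to verify $A\in \Br[\varphi_\infty]$, so that $\varphi_\infty$ is an admissible witness in the definition of $\PP_A^j$. Using the butterfly description, for each $m$ we have
$$A = (1-\kappa_m)\sum_{i=1}^{r-1}\lambda_i^m\cdot A^{\blds\kf_i^m} + \kappa_m\cdot A_m'\,,$$
with $A_m' \in \D(\kf_r^m)$, coefficients in $[0,1]$, and $\sum_i \lambda_i^m=1$. By compactness of $[0,1]$ and of $\Sphb$, I pass to a further subsequence so that $\kappa_m\to\kappa_\infty$, $\lambda_i^m\to\lambda_i^\infty$, and $A_m'\to A_\infty'$. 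By Lemma \ref{lem:Dkfi}(ii), $A_\infty'\in \D(\kf_r^\infty)=\D(\max(\varphi_\infty))$. Combining with continuity of $\kf\mapsto A^{\blds\kf}$ yields
$$A=(1-\kappa_\infty)\sum_{i=1}^{r-1}\lambda_i^\infty\cdot A^{\blds\kf_i^\infty}+\kappa_\infty\cdot A_\infty' \in \Br[\varphi_\infty]\,,$$
and in the degenerate case $r=1$ the same argument with $\Br[\varphi_m]=\D(\kf_1^m)$ applies directly. Thus $\varphi_\infty$ witnesses $A_\infty=A^{\max(\varphi_\infty)}\in \PP_A^j$.

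The only real subtlety is that along a sequence of subalgebras, the $\kf$-disk can jump up in dimension in the limit (Remark \ref{rem:disk-dim}), so one only has an inclusion, not equality, of limiting disks. However, since the butterfly condition is preserved by exactly this inclusion via Lemma \ref{lem:Dkfi}(ii), nothing is lost and the limit sits in $\Br[\varphi_\infty]$ as required. This is the only potential obstacle, and it dissolves immediately upon invoking Lemma \ref{lem:Dkfi}.
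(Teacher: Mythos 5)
Your proof is correct and follows essentially the same approach as the paper's: pass to a convergent subsequence of witnessing flags via Lemma \ref{lem:flag-conv}, use Lemma \ref{lem:Dkfi} to control the limiting $\kf$-disk, and conclude $A\in\Br[\varphi_\infty]$. The paper states the last implication more tersely ("hence $A\in\Br[\varphi_\infty]$"); you spell out the convex-combination limit explicitly, which is a legitimate expansion of the same argument rather than a different route.
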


\begin{proof} Fix $j \in\{1,\dots,d\}$, and let $A \in \XXs^j$. 
Let $(A^{\blds\kf_i})_{i \in \NN}$ be a sequence in $\PP_A^j$. Thus there exists a sequence
of flags $(\varphi_i)_{i \in \NN}$ in $\Fln$ such that for each $i\in \NN$, $A\in \Br[\varphi_i]$, 
$\max(\varphi_i)=\kf_i$, and $i_{j}\leq h(\varphi_i)<\dim \g$. By Lemma \ref{lem:flag-conv}, passing to a subsequence, we may assume that $\varphi_i$ converges to $\varphi_\infty \in \Fln$ with
$i_{j}\leq h(\varphi_\infty)< \dim \g$ and $\kf_\infty := \max(\varphi_\infty)$. 
 By Lemma \ref{lem:flag-conv},  $\lim_{i \to\infty} \kf_i = \kf_\infty$, and thus by Lemma 
\ref{lem:Dkfi} we know $\lim_{i\to\infty} \D(\kf_i) \subset \D(\kf_\infty)$, and hence $A \in \Br[\varphi_\infty]$.
\end{proof}

\begin{lemma}\label{lem:Pjcomp}
Let $1\leq j \leq d$. The sets
\beg
    \PP^{j} &:=&
		 \{(A,P) \mid A \in \XXs^{j} \textrm{ and }P\in \PP_A^j\} \subset \XXs^{j}\times \XXGH \,,\\
		 \PP^{j}_\ast &:=&
		 \{(A,P) \mid A \in \XXs^{j+1} \textrm{ and }P\in \PP_A^j\} \subset \XXs^{j+1}\times \XXGH
\en
are compact.
\end{lemma}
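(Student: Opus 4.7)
The plan is to show that both $\PP^j$ and $\PP^j_\ast$ are closed subsets of the compact ambient products $\XXs^j\times \XXGH$ and $\XXs^{j+1}\times \XXGH$. Compactness of the two factors follows from Lemma \ref{lem:XXsj} (with $\XXs^{d+1}=\XXGH$, compact by Proposition \ref{prop: WW}), so closedness will imply compactness. To show $\PP^j$ is closed, I will take an arbitrary sequence $((A_i,P_i))_{i\in\N}\subset \PP^j$ converging in $\XXs^j\times \XXGH$ to a pair $(A_\infty,P_\infty)$, and verify that $P_\infty\in \PP^j_{A_\infty}$.

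For each $i$, fix a flag $\varphi_i\in \Fln$ with $A_i\in \Br[\varphi_i]$, $i_j\leq h(\varphi_i)<\dim \g$, and $P_i=A^{\max(\varphi_i)}$. There are only finitely many possible lengths $\ell(\varphi_i)$ and finitely many possible dimension vectors of the subalgebras in $\varphi_i$, so by passing to a subsequence I may assume both are constant in $i$. Writing $\varphi_i=(\kf_1^i<\cdots<\kf_r^i)$, Lemma \ref{lem:sequsubalgsub} and Lemma \ref{lem:Dkfi} (combined with Lemma \ref{lem:flag-conv}) let me pass to a further subsequence so that $\kf_m^i\to \kf_m^\infty$ for each $m$, with $\varphi_\infty:=(\kf_1^\infty<\cdots<\kf_r^\infty)\in \Fln$. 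Then $h(\varphi_\infty)=\dim\kf_r^\infty=h(\varphi_i)\in[i_j,\dim\g)$, and since $A^{\blds\kf}=\tfrac{1}{\dim\g-\dim\kf}(\Id_\g-\Id_\kf)$ depends continuously on $\kf$ in the Grassmannian, $A^{\blds\kf_r^i}\to A^{\blds\kf_r^\infty}$. Thus $P_\infty=A^{\max(\varphi_\infty)}$ is of the required form.

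It remains to confirm $A_\infty\in \Br[\varphi_\infty]$. Writing $A_i=(1-\kappa_i)\sum_{m=1}^{r-1}(\lambda_m)_i A^{\blds\kf_m^i}+\kappa_i A'_i$ with $A'_i\in\D(\kf_r^i)$ and the scalar constraints of Definition \ref{def:butterfly}, I pass to a further subsequence so that $\kappa_i\to \kappa_\infty$ and $(\lambda_m)_i\to(\lambda_m)_\infty$. The first summand converges to $(1-\kappa_\infty)\sum_m(\lambda_m)_\infty A^{\blds\kf_m^\infty}$, so the product $\kappa_i A'_i$ converges as well. If $\kappa_\infty>0$ then $A'_i$ converges to some $A'_\infty$, and Lemma \ref{lem:Dkfi}(ii) gives $A'_\infty\in\D(\kf_r^\infty)$; if $\kappa_\infty=0$ the limit $A_\infty$ already lies in the convex hull $\mathrm{conv}\{A^{\blds\kf_1^\infty},\ldots,A^{\blds\kf_{r-1}^\infty}\}\subset \Br[\varphi_\infty]$. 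Either way $A_\infty\in \Br[\varphi_\infty]$, so $(A_\infty,P_\infty)\in \PP^j$, proving $\PP^j$ is closed and hence compact.

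For $\PP^j_\ast$, the same argument applies verbatim, and one only needs the additional observation that $A_i\in \XXs^{j+1}$ forces $A_\infty\in\XXs^{j+1}$ by compactness of $\XXs^{j+1}$. I do not anticipate a serious obstacle: the proof is essentially an application of the subalgebra-convergence results already established in Lemmas \ref{lem:Dkfi}, \ref{lem:sequsubalgsub}, and \ref{lem:flag-conv}, together with the standard trick of iteratively extracting subsequences so that all finitely many pieces of ``flag data'' stabilize. The only mild subtlety is correctly handling the degenerate case $\kappa_\infty=0$ in which the non-algebraic disk component need not converge, but this is harmless because the remaining terms already determine the limit.
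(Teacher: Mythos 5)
Your argument is correct and follows essentially the same route as the paper: extract subsequences so that the flags $\varphi_i$ converge (via Lemma \ref{lem:flag-conv}), and then check $A_\infty\in\Br[\varphi_\infty]$ using the butterfly decomposition and Lemma \ref{lem:Dkfi}. The paper is terser — it simply cites the analogous step in Lemma \ref{lem:PAcomp} for the containment $A_\infty\in\Br[\varphi_\infty]$ — whereas you spell out the convex-combination bookkeeping, including the mild degeneracy at $\kappa_\infty=0$, but the substance is identical.
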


\begin{proof} We note that $\PP^{j}_\ast \subset \PP^j$. We will prove compactness of $\PP^j$. The proof of compactness of $\PP^{j}_\ast$ follows similarly. 
Let $((A_i,P_i))_{i \in \NN}$ be a sequence in $\PP^{j}$. Since $\XXs^{j}$ and $\XXGH$ are each 
compact, we may assume (by passing to a subsequence) that $\lim_{i\to \infty}A_i =A_\infty \in \XXs^{j}$, 
and $\lim_{i \to \infty}P_i=A^{\blds\kf_\infty}$.

Now let $(\varphi_i)_{i\in \NN}$ be a sequence in $\Fln$ such that for each $i \in \NN$, we have $A_i \in \Br[\varphi_i]$, $i_{j}\leq h(\varphi_i)< \dim \g$, and $P_i=A^{\max(\varphi_i)}$. By Lemma \ref{lem:flag-conv}, we may assume (passing to a subsequence) that 
$\lim_{i\to \infty}\varphi_i =\varphi_\infty \in \Fln$
with $i_{j}\leq h(\varphi_\infty)<\dim\g$ and $\kf_\infty=\max(\varphi_\infty)$. 
As in Lemma \ref{lem:PAcomp} it follows that
$A_\infty \in \Br[\varphi_\infty]$, proving our claim.
\end{proof}

For each $1\leq j \leq d$, we define the distance $d_{max}$ on the product space $\XXs^{j}\times \XXGH$ using the Euclidean distances $d$ on each component. 
Given points $(A,P), (\tilde A,\tilde P) \in \XXs^{j}\times \XXGH$, let
$$
  d_{max}((A,P),(\tilde A,\tilde P)):=\max(d(A,\ti A),d(P,\tilde P))\,.
$$

\begin{definition}\label{def:sigmaj}
Let $1\leq j \leq d$. Then we set
$$
  \sigma_j:\XXs^j\bs \XXs^{j+1}\to [0,\infty)\,\,;\,\,\,
	  A \mapsto d_{max}((A,A^{\bldss\kf_A}),\PP^{j}_\ast)\,.
$$
\end{definition}

Notice that for $A \in \XXs^j\bs \XXs^{j+1}$ we have
$ 0<d(A,\XXs^{j+1}) \leq d_{max}((A,A^{\bldss\kf_A}), \PP^{j}_\ast)$.

\begin{lemma}\label{lem:sigmaj}
Let $1\leq j \leq d$. Then we can extend $\sigma_j$ to a continuous map
$\sigma_j:\XXs^j\to [0,\infty)$ with $\sigma_j^{-1}(0)=\XXs^{j+1}$.
\end{lemma}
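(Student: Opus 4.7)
The plan is to extend $\sigma_j$ by declaring $\sigma_j(A):=0$ for all $A\in\XXs^{j+1}$, and then verify that this extension is continuous and vanishes exactly on $\XXs^{j+1}$.

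First I would establish continuity of $\sigma_j$ on the open piece $\XXs^{j}\bs\XXs^{j+1}$. Here $\sigma_j(A)$ is the distance from the point $(A,A^{\blds{\kf}_A})\in\XXs^{j}\times\XXGH$ to the fixed compact set $\PP^{j}_\ast$ (Lemma \ref{lem:Pjcomp}), so it suffices to show that the map $A\mapsto A^{\blds{\kf}_A}$ is continuous on $\XXs^{j}\bs\XXs^{j+1}$. This is proved by the same subconvergence argument used in Proposition \ref{prop:Pcont}: given $A_m\to A_\infty$ in $\XXs^{j}\bs\XXs^{j+1}$, pass to a subsequence where the maximal flags $\varphi_{A_m}\in\Fln$ with $h(\varphi_{A_m})=i_j$ (Proposition \ref{prop:kA}) converge in the sense of Lemma \ref{lem:flag-conv} to some $\tilde\varphi_\infty$ with $h(\tilde\varphi_\infty)=i_j$, while $A_\infty\in\Br[\tilde\varphi_\infty]\cap\Br[\varphi_{A_\infty}]$. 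By Lemma \ref{lem:butphipsi} and Definition \ref{def:productflags}, $A_\infty\in\Br[\tilde\varphi_\infty\,\varphi_{A_\infty}]$, and since $A_\infty\notin\XXs^{j+1}$, one must have $\langle\max(\tilde\varphi_\infty),\max(\varphi_{A_\infty})\rangle=\max(\varphi_{A_\infty})$, forcing $\max(\tilde\varphi_\infty)=\kf_{A_\infty}$. Hence $\kf_{A_m}=\max(\varphi_{A_m})\to\kf_{A_\infty}$ in the Grassmannian, so $A^{\blds{\kf}_{A_m}}\to A^{\blds{\kf}_{A_\infty}}$.

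Next I would prove continuity at a point $A_\infty\in\XXs^{j+1}$. If $A_i\in\XXs^{j+1}$ converges to $A_\infty$, there is nothing to prove. So consider $A_i\in\XXs^{j}\bs\XXs^{j+1}$ with $A_i\to A_\infty$, and let $\varphi_{A_i}$ be the largest flag with $A_i\in\Br[\varphi_{A_i}]$, so that $h(\varphi_{A_i})=i_j$. By Lemma \ref{lem:flag-conv} we may assume, after passing to a subsequence, that $\varphi_{A_i}\to\varphi_\infty\in\Fln$ with $h(\varphi_\infty)=i_j<\dim\g$, and as in the proof of Lemma \ref{lem:XXsj} we obtain $A_\infty\in\Br[\varphi_\infty]$. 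By Definition \ref{def:Pj} this gives $A^{\max(\varphi_\infty)}\in\PP^{j}_{A_\infty}$, hence $(A_\infty,A^{\max(\varphi_\infty)})\in\PP^{j}_\ast$. Since $\kf_{A_i}=\max(\varphi_{A_i})\to\max(\varphi_\infty)$, we conclude
$$
0\le \sigma_j(A_i)\le d_{max}\bigl((A_i,A^{\blds{\kf}_{A_i}}),(A_\infty,A^{\max(\varphi_\infty)})\bigr)\longrightarrow 0,
$$
so $\sigma_j(A_i)\to 0=\sigma_j(A_\infty)$, proving continuity at the boundary.

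Finally, the equality $\sigma_j^{-1}(0)=\XXs^{j+1}$ is immediate from the definitions: the inclusion $\supset$ is by construction, while if $A\in\XXs^{j}\bs\XXs^{j+1}$ satisfied $\sigma_j(A)=0$, compactness of $\PP^{j}_\ast$ would yield $(A',P')\in\PP^{j}_\ast$ with $(A,A^{\blds{\kf}_A})=(A',P')$, forcing $A=A'\in\XXs^{j+1}$, a contradiction. The main obstacle is the boundary continuity: one must exhibit a competitor pair in $\PP^{j}_\ast$ approximating $(A_i,A^{\blds{\kf}_{A_i}})$, and the only way to produce it is to exploit that the height $i_j$ is preserved in the Grassmannian limit of the flags $\varphi_{A_i}$, so that the limit flag $\varphi_\infty$ retains a non-trivial maximum subalgebra still strictly contained in $\g$.
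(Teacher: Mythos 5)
Your proof is correct and follows essentially the same route as the paper: extend by zero on $\XXs^{j+1}$ and establish boundary continuity by subconverging the pairs $(A_i,A^{\blds{\kf}_{A_i}})$ to a limit pair lying in $\PP^{j}_\ast$, which forces $\sigma_j(A_i)\to 0$. The only cosmetic difference is that the paper simply invokes the compactness of $\PP^{j}$ (Lemma \ref{lem:Pjcomp}) to produce the limit pair, whereas you rerun the flag-subconvergence argument directly (and spell out the open-part continuity and positivity, which the paper handles via Proposition \ref{prop:Pcont} and the remark preceding Definition \ref{def:sigmaj}).
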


\begin{proof}
Recall that $\XXs^{j+1}$ is a compact subset of $\XXs^{j}$. We set 
$\sigma_j(A)=0$ for $A \in \XXs^{j+1}$.
Let $(A_i)_{i \in \NN}$ be a sequence in $\XXs^j\bs \XXs^{j+1}$ with
$\lim_{i\to \infty}A_i=A_\infty \in \XXs^{j+1}$. We need to show that $\lim_{i \to \infty}\sigma_j(A_i)=0$.
Since by Lemma \ref{lem:Pjcomp} the set $\PP^{j}$ is compact, by passing to a subsequence
we may assume that
$(A_i,P_i)_{i\in \NN}$ converges to $(\tilde A_\infty,P_\infty) \in \PP^j$, where $P_i =A^{\blds\kf_{A_i}}$ for all $i \in \NN$.
Since we know that $A_\infty=\tilde A_\infty \in \XXs^{j+1}$ we deduce
$(A_\infty,P_\infty) \in \PP^{j}_\ast$. This shows the claim.
\end{proof}

We now arrive at the main result of this section.

\begin{theorem}\label{thm:homotopy3}
Let $G/H$ be a compact homogeneous space. Then $\WWs$ and the nerve $\XXGH$ of $G/H$
are homotopy equivalent.
\end{theorem}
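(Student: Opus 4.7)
The plan is to prove by induction on $j \in \{1,\dots,d\}$ that $\XXs^{j+1}$ is a strong deformation retract of $\XXs^{j}$; since $\XXs^{1}=\WWs$ and $\XXs^{d+1}=\XXGH$, composing the $d$ retractions yields the desired homotopy equivalence. All ingredients are in place: each $\XXs^{j}$ is compact and semi-algebraic (Lemma \ref{lem:XXsj}), the projection $P_j:\XXs^{j}\setminus \XXs^{j+1}\to \XXGH$ of Definition \ref{def:projection} is continuous (Proposition \ref{prop:Pcont}), the pre-homotopy $H_j(\sigma)$ of Definition \ref{def:Hjdef} is continuous (Proposition \ref{prop:Hjcont}), and the slowdown function $\sigma_j$ of Definition \ref{def:sigmaj} and Lemma \ref{lem:sigmaj} is continuous with $\sigma_j^{-1}(0)=\XXs^{j+1}$.

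First I would rescale $\sigma_j$ so that it takes values in $[0,1]$ (using compactness of $\XXs^{j}$) and consider the pre-homotopy $H_j(\sigma_j)$, which starts at the identity and fixes $\XXs^{j+1}$ pointwise by \eqref{eqn:Hjprop}. For $A\in \XXs^{j}\setminus \XXs^{j+1}$ with decomposition $A=(1-\kappa)\sum_i \lambda_i A^{\blds\kf_i}+\kappa\, A_{r_\varphi+1}$, the pre-homotopy at time $1$ preserves the simplex part and replaces the disk-component $A_{r_\varphi+1}\in \D(\kf_A)$ by the strict convex combination $(1-\sigma_j(A))A_{r_\varphi+1}+\sigma_j(A) A^{\blds\kf_A}$, which lies strictly closer to $A^{\blds\kf_A}$. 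Setting $f_j:=H_j(\sigma_j)(1,\cdot)$, iteration contracts the disk-component geometrically toward $A^{\blds\kf_A}$; since $\sigma_j(f_j^n(A))\to 0$, the sequence $f_j^n(A)$ converges to a limit lying in $\sigma_j^{-1}(0)=\XXs^{j+1}$. Arranging these countably many iterations into disjoint time slots (e.g.\ performing the $n$-th iteration over $[1-2^{-n+1},1-2^{-n}]$) and concatenating produces a map $F_j:[0,1]\times \XXs^{j}\to \XXs^{j}$ with $F_j(0,\cdot)=\Id$, $F_j(t,\cdot)|_{\XXs^{j+1}}=\Id$ for every $t$, and $F_j(1,\XXs^{j})\subset \XXs^{j+1}$.

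The main obstacle will be continuity of $F_j$ at boundary points $(1,A_\infty)$ with $A_\infty\in \XXs^{j+1}$. The projection $P_j$ does not in general extend continuously to $\XXs^{j+1}$, since the set $\PP^{j}_{A}$ of Definition \ref{def:Pj} is genuinely multi-valued there. The role of $\sigma_j$ is precisely to damp this discontinuity: by Lemma \ref{lem:Pjcomp} the set $\PP^{j}_{\ast}$ is compact, and since $\sigma_j$ is the distance of $(A,A^{\blds\kf_A})$ to $\PP^{j}_{\ast}$, one obtains that $\sigma_j(A)\cdot \|P_j(A)-A\|\to 0$ uniformly along any sequence $A_n\to A_\infty\in \XXs^{j+1}$. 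Together with compactness of $\XXs^{j}$ this uniform control propagates through all the iterations and forces continuity of $F_j$ at the boundary. Composing $F_d\circ F_{d-1}\circ \cdots\circ F_1$ with the obvious time rescalings then produces the strong deformation retraction of $\WWs$ onto $\XXGH$, completing the proof.
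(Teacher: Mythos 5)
Your infinite-iteration scheme has a genuine gap at exactly the point you flag: continuity of $F_j$ at $\XXs^{j+1}$. The map $f_j=H_j(\sigma_j)(1,\cdot)$ moves $A$ along the segment $[A,P_j(A)]$ by the fraction $\sigma_j(A)$, so the iterates $f_j^n(A)$ stay on this segment and converge to the first point of $[A,P_j(A)]$ in $\XXs^{j+1}$ (or near it). Such a ``first-hit'' map is discontinuous in general. Your uniform-control claim that $\sigma_j(A_n)\cdot\|P_j(A_n)-A_n\|\to0$ controls only the first step; the total distance traveled by the iteration is bounded above by $\|A_n-P_j(A_n)\|$, which does \emph{not} go to zero as $A_n\to A_\infty\in\XXs^{j+1}$. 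So the limit $F_j(1,A_n)$ can remain bounded away from $A_\infty$, and nothing in the proposal rules this out. (Also, ``contracts geometrically'' is not accurate: the contraction factor $1-\sigma_j(f_j^k(A))$ tends to $1$ as the iterates approach $\XXs^{j+1}$.)

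The paper sidesteps the continuity problem entirely. Rather than build a single homotopy pushing $\XXs^j$ onto $\XXs^{j+1}$, it builds, for each $\delta>0$, the homotopy $H_j^\delta:=H_j(\sigma_j^\delta)$ with $\sigma_j^\delta=\min(1,\delta^{-1}\sigma_j)$, proves the approximate retraction estimate $d(H_j^\delta(1,A),\XXs^{j+1})<\delta$ for all $A$, and then invokes Lemma \ref{lem:XY} --- the statement, built on the Delfs--Knebusch theory that compact semi-algebraic sets are semi-algebraic neighborhood retracts (Corollary \ref{cor:Udefret}) --- to upgrade the family of approximate retractions to a genuine strong deformation retraction. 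This is a real ingredient your argument omits. Moreover, the estimate $d(H_j^\delta(1,A),\XXs^{j+1})<\delta$ is where the geometric content lies: one uses the flag and butterfly machinery (Lemma \ref{lem:butphipsi}, Corollary \ref{cor:ordering}, Lemma \ref{lem:flag-product}) to exhibit, when $\sigma_j(A)<\delta$, a pair $(B,P)\in\PP^j_\ast$ with the entire segment $[B,P]$ inside $\XXs^{j+1}$ and within $\delta$ of the segment $[A,A^{\blds\kf_A}]$ along which $H_j^\delta(1,A)$ travels. Your proposal does not engage with that argument, which is the actual heart of the theorem.
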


\begin{proof}
It is enough to show that for each $1 \leq j \leq d$,
$\XXs^{j}$ and $\XXs^{j+1}$ are homotopy equivalent.

Let $1 \leq j \leq d$.
Since $\XXs^{j}$ and $\XXs^{j+1}$ are compact and semi-algebraic 
with $\XXs^{j+1}\subset \XXs^{j}$, the claim will follow from Lemma \ref{lem:XY},
once we show that there exists a $\delta_0 >0$ such that 
for all $\delta \in (0,\delta_0)$ 
there is a continuous map
$$
  H_j^\delta:[0,1]\times \XXs^j \to \XXs^j
$$
with the following properties: 
\begin{align} H_j^\delta(0,A)=A \quad \textrm{  for all }& A \in \XXs^j \,,\notag \\
H_j^\delta(t,A)=A \quad \textrm{  for all }& A \in \XXs^{j+1} \textrm{ and all }  t \in[0,1] \,,\notag \\
  d(H_j^\delta(1,A),\XXs^{j+1}) < \delta  \textrm{ for all }& A \in \XXs^{j}\,.\label{eqn:Hjest}
\end{align}
Using the continuous map $\sigma_j:\XXs^j \to [0,\infty)$ in Definition \ref{def:sigmaj} and
Lemma \ref{lem:sigmaj}, 
we set 
$$
  \sigma^\delta_j:\XXs^{j}\to [0,1]\,\,;\,\,\,A \mapsto \min(1,\tfrac{1}{\delta}\cdot \sigma_j(A))
$$
and
$$
  H_j^\delta:=H_j(\sigma_j^\delta):[0,1]\times \XXs^j \to \XXs^j\,,
$$
as in Definition \ref{def:Hjdef}. 
Notice that since $\sigma_j$ is continuous so is $\sigma^\delta_j$ and
since $\sigma_j^{-1}(0)=\XXs^{j+1}$ we have $(\sigma_j^\delta)^{-1}(0)=\XXs^{j+1}$. Thus
by Proposition \ref{prop:Hjcont}, $H_j^\delta$ is continuous. Moreover, by \eqref{eqn:Hjprop}
we know the first two of the required properties of the maps $H_j^\delta$ are fulfilled. To see that the third property also holds, 
let $A  \in \XXs^j\bs \XXs^{j+1}$. If $\sigma^\delta_j(A)=1$ then $(H_j^\delta)(1,A)=A^{\blds\kf_A}\in \XXGH \subset \XXs^{j+1}$.
So now suppose that $\sigma^\delta_j(A)<1$.
Then by definition of $\sigma_j$, we have $d_{max}((A,A^{\blds\kf_A}),\PP^{j}_\ast)< \delta$. Thus there exists some
$(B,P) \in \PP^{j}_\ast$ with $d_{max}((A,A^{\blds\kf_A}),(B,P))<\delta$. Recall that $B \in \XXs^{j+1}$. By the definition
of $d_{max}$ we conclude that $d(A,B)<\delta$ and $d(A^{\blds\kf_A},P)<\delta$.

By our definition
of $H_j^\delta$ the point $(H_j^\delta)(1,A)$ lies on the line segment $[A,A^{\blds\kf_A}] \subset \XXs^{j}$. 
Furthermore, since $(B,P)\in \PP^{j}_\ast$, we know there exists a flag $\varphi \in \Fln$
such that $i_j\leq h(\varphi)<\dim \g$, $B \in \Br[\varphi]$, and $A^{\max(\varphi)}=P$. By choosing $\delta>0$ sufficiently small,
it follows from $d(A^{\blds\kf_A},P)<\delta$ and $\dim \kf_A=i_j$ that $\dim \max(\varphi)=i_j$ as well.

Since $B \in \XXs^{j+1}$ there exists a flag $\tilde \varphi$ such that $i_{j+1} \leq h(\tilde \varphi)$
and $B \in \Br[\tilde \varphi]$. By Lemma \ref{lem:butphipsi} we have
$B \in \Br[\varphi]\cap\Br[\ti\varphi]= \Br[\varphi \tilde \varphi]$, and by Lemma \ref{lem:flag-product}, $\varphi,\tilde \varphi \leq \varphi \tilde \varphi$.
In particular $\max(\varphi) < \max(\varphi\tilde \varphi)$. Moreover, since $\max(\tilde \varphi) < \max(\varphi)$
is not possible, by Definition \ref{def:productflags} there exist
$\varphi' \subset \varphi $ and $\varphi'' \subset \tilde \varphi$ with $\min(\varphi'')>\max(\varphi)$, such that
$\varphi\tilde \varphi =(\varphi' < \varphi'' < \max(\varphi \tilde \varphi))$. Notice that $\varphi'$ or $\varphi''$
might be the empty set. But since $\max(\varphi')\leq \max(\varphi)$ and $\max(\varphi)< \min(\varphi'')$
we have
$$
 \varphi\tilde \varphi \subset \varphi_\ast:=(\varphi' \leq  \max(\varphi) < \varphi'' < \max(\varphi \tilde \varphi))\,.
$$
By the definition of butterflies we know 
$B \in  \Br[\varphi\tilde \varphi] \subset \Br[\varphi_\ast] \subset \XXs^{j+1}$,
since  $\varphi\tilde \varphi \subset \varphi_\ast$ and $\max(\varphi\tilde \varphi)=\max(\varphi_\ast)$, 
and $P=A^{\max(\varphi)}\in \Br[\varphi_\ast]$.
Thus, we deduce that for the line segment
$[P,B] \subset \Br[\varphi_\ast] \subset \XXs^{j+1}$, since butterflies are convex.

Now we use the elementary fact that the maximal distance between the two line segments $[A,A^{\blds\kf_A}]$
and $[B,P]$ (in some Euclidean space) 
is one of $d(A,B)$ or  $d(A^{\blds\kf_A},P)$. Since both $d(A,B),d(A^{\blds\kf_A},P)<\delta$, 
we conclude $d(H_j^\delta(1,A),\XXs^{j+1}) < \delta$. This shows the claim.
\end{proof}

\begin{remark}
We mention that the simpler function $\sigma_j(A):=d(A,\XXs^{j+1})$ does not suffice.
\end{remark}

\section{Scalar curvature functional} 

\subsection{{\it The scalar curvature}}\label{sec:scal}


In this section we will provide a well known formula for the scalar curvature
functional along the curve $\gamma_v(t)=\exp(t\cdot v) \in \MGo$,
$v \in T_{\Id} \MGo$, for compact homogeneous spaces $G/H$ which are not isotropy irreducible: see
Remark \ref{rem:isotirr}.  

Recall that $\m$ is the ${\rm Ad}(H)$-invariant, $Q$-orthogonal complement to $\h$ in
$\g$. The set of $G$-invariant metrics on $G/H$ is $\MG$, and $\MGo$ is the space of $G$-invariant metrics of volume one. Furthermore, for every $g \in \MGo$, we identify $P_g \in \MGo$, an $\Ad(H)$-invariant scalar product, satisfying 
$g(\cdot,\cdot) = Q(P_g\cdot,\cdot)$.  Thus, $T_{\Id} \MGo = \{  v \in \SymmH\mid \tr v=0\}\,$. 
Here our setting is $\MGo$ and the unit tangent sphere $\Si$ in $T_{\Id} \MGo$, not in $\Sphb$, the image under the Graev map (see Remark \ref{rem:Sphbmodel2}).
  
\medskip

\begin{lemma}\label{lem:good-decomp}
For any $v \in T_{\Id} \MG$ there exists a $Q$-orthogonal decomposition
$  f=\oplus_{i=1}^\ell \m_i$ 
of $\m$ into irreducible, $\Ad(H)$-invariant summands
$\m_i$ and 
 $v_i \in \RR$ for $i=1,...,\ell$,  such that
\beg
   v = v_1 \cdot \Id_{{\m}_1}   + \cdots +
             v_{\ell} \cdot \Id_ {{\m}_\ell}.
\en
Any such decomposition $f$ will be called a {\em good decomposition} with respect to $v$. 
\end{lemma}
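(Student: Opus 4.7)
The plan is to use the spectral theorem together with complete reducibility of representations of the compact group $H$. Since $v \in T_{\Id}\MG \subset \SymmH$ is $Q$-self-adjoint, the spectral theorem gives a $Q$-orthogonal decomposition
\[
  \m = E_{\lambda_1} \oplus \cdots \oplus E_{\lambda_k}
\]
into the eigenspaces $E_{\lambda_j} = \ker(v - \lambda_j \cdot \Id_{\m})$ corresponding to the distinct real eigenvalues $\lambda_1, \ldots, \lambda_k$ of $v$.

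Next I would verify that each $E_{\lambda_j}$ is $\Ad(H)$-invariant. Since $v$ is $\Ad(H)$-equivariant, for any $X \in E_{\lambda_j}$ and $h \in H$ we have $v(\Ad(h)X) = \Ad(h)(vX) = \lambda_j \cdot \Ad(h)X$, so $\Ad(h)X \in E_{\lambda_j}$. Because $H$ is compact and $Q|_\m$ is $\Ad(H)$-invariant, the restriction of the $\Ad(H)$-representation to each $E_{\lambda_j}$ is completely reducible, and we may therefore decompose
\[
  E_{\lambda_j} = \m_{j,1} \oplus \cdots \oplus \m_{j,\ell_j}
\]
$Q$-orthogonally into $\Ad(H)$-irreducible summands.

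Concatenating these decompositions across $j = 1, \ldots, k$ produces a $Q$-orthogonal decomposition $\m = \m_1 \oplus \cdots \oplus \m_\ell$ into $\Ad(H)$-irreducibles (with $\ell = \ell_1 + \cdots + \ell_k$). Each summand $\m_i$ lies entirely inside some eigenspace $E_{\lambda_{j(i)}}$, so $v|_{\m_i} = \lambda_{j(i)} \cdot \Id_{\m_i}$. Setting $v_i := \lambda_{j(i)}$ yields the claimed formula $v = v_1 \cdot \Id_{\m_1} + \cdots + v_\ell \cdot \Id_{\m_\ell}$.

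This proof is essentially routine linear algebra plus Peter--Weyl-type complete reducibility; there is no real obstacle. The only subtlety worth noting is that the decomposition $f$ is not canonical when $v$ has a repeated eigenvalue whose eigenspace contains $\Ad(H)$-equivalent irreducible summands, since then there is a continuous family of choices of $\Ad(H)$-irreducible decompositions of that eigenspace. This ambiguity is harmless for the statement, which only asserts existence of some good decomposition.
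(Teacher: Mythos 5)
Your proof is correct and follows the same route as the paper: decompose $\m$ into the $Q$-orthogonal eigenspaces of the $Q$-self-adjoint map $v$, observe that $\Ad(H)$-equivariance of $v$ makes each eigenspace $\Ad(H)$-invariant, and then refine each eigenspace into $\Ad(H)$-irreducible summands by complete reducibility. The paper states this in two sentences; you have simply written out the standard verifications.
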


\begin{proof}
The eigenspaces of $v$ are $\Ad(H)$-invariant and pairwise $Q$-orthogonal.
Decomposing an eigenspace further into $Q$-orthogonal $\Ad(H)$-irreducible
summands shows the claim.
\end{proof}

 By  Lemma \ref{lem:expdiff}, for each $P_g \in \MGo$ 
there exists $v \in \Si \subset T_{\Id} \MGo$ and $t_0\geq 0$ such that $P_g=\gamma_v(t_0)$. 
By the above lemma,
\begn
  \gamma_v (t)=e^{tv_1}\cdot \Id_{{\m}_1}  + \cdots +
             e^{tv_{\ell}}\cdot  \Id_{\m_\ell}\label{gammav}\,.
\enn 
 For each $1 \leq i \leq \ell$, we set
$$
    d_i := \dim \m_i
$$
and we have
\begn
   \Vert v \Vert^2=\tr (v^2)=\sum_{i=1}^{\ell} d_i 
   \cdot \, v_i^2\,
	\quad \textrm{ and }\quad
	0=\tr v= \sum_{i=1}^{\ell} d_i 
   \cdot \, v_i\,.
	\label{l2metric}
\enn
Let $v \in T_{\Id}\MG$ and let $f$ be a good decomposition  with respect to
$v$. We denote by 
$$
  \hat v_1<\cdots <\hat v_{\ell_v}
	$$ 
the distinct eigenvalues of $v$ ordered by size,
$1 \leq \ell_v\leq  \ell$. For each eigenvalue  $\hat v_m$, $1 \leq m \leq \ell_v$, we define the index set (which depends on our choice of $f$) 	
\beg
 I_m^v := I_m^v(f) :=\{i \in\{1,...,\ell\} \mid v_i=\hat v_m\}\,.
\en 
Let
$$
 \lambda(v)=\hat v_1 \quad \textrm{ and }\quad \Lambda(v)=\hat v_{\ell_v}
$$ 
denote the smallest and the largest eigenvalues of $v$, respectively.
The proof of the following Lemma follows immediately from \eqref{l2metric}.

\begin{lemma}\label{lem:cGH}
Let $G/H$ be a compact homogeneous space. 
Any $v\in \Si \subset T_{\Id} \MGo$ must have at least $2$ distinct eigenvalues, and
there exists a constant $c_{G/H}<0$ such that the following holds: 
\beg
   \lambda(v) \leq  c_{G/H}
	\quad \textrm{and} \quad
   -c_{G/H} \leq \Lambda(v)\,.
\en
\end{lemma}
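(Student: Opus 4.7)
The plan is to exploit the two defining constraints of $v \in \Si$ simultaneously: tracelessness $\sum_{i=1}^\ell d_i v_i = 0$ and unit norm $\sum_{i=1}^\ell d_i v_i^2 = 1$ from \eqref{l2metric}. The combination forces a quantitative balance between the positive and negative parts of the spectrum.

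First I would dispose of the case of a single distinct eigenvalue. If $v$ has only one eigenvalue $c$, then tracelessness gives $c\cdot \dim\m = 0$, hence $c = 0$; but then $\|v\|=0$, contradicting $v\in\Si$. The same trace condition, applied to a nonzero traceless $v$, forces at least one strictly positive and one strictly negative eigenvalue, so $\lambda(v) < 0 < \Lambda(v)$.

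For the uniform bound, I would split the sum by sign, setting
$$ P := \sum_{v_i > 0} d_i v_i, \qquad N := \sum_{v_i < 0} d_i (-v_i), $$
so that the trace condition reads $P = N$. Using the pointwise estimates $v_i^2 \leq \Lambda(v)\cdot v_i$ when $v_i > 0$ and $v_i^2 \leq |\lambda(v)|\cdot|v_i|$ when $v_i<0$, the unit-norm condition becomes
$$ 1 \;=\; \sum_{i=1}^\ell d_i v_i^2 \;\leq\; \Lambda(v)\cdot P + |\lambda(v)|\cdot N \;=\; \big(\Lambda(v) + |\lambda(v)|\big)\cdot P. $$
Since each $d_i\geq 1$, the unit-norm condition also gives $v_i^2 \leq 1/d_i \leq 1$, so $\Lambda(v),|\lambda(v)|\leq 1$ and hence $\Lambda(v)+|\lambda(v)| \leq 2$. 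Combining this with $P\leq \Lambda(v)\cdot \dim\m$ (and analogously $N\leq |\lambda(v)|\cdot \dim\m$, using $P=N$) yields
$$ \Lambda(v)\cdot 2 \;\geq\; \frac{1}{\dim\m} \qquad \text{and} \qquad |\lambda(v)|\cdot 2 \;\geq\; \frac{1}{\dim\m}. $$

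Setting $c_{G/H} := -\tfrac{1}{2\dim\m} = -\tfrac{1}{2n} < 0$ then delivers $\lambda(v) \leq c_{G/H}$ and $\Lambda(v)\geq -c_{G/H}$ uniformly in $v\in\Si$. I do not anticipate any real obstacle here; the whole argument is a short exercise in splitting sums, and the only subtlety is noticing that the trivial bound $|v_i|\leq 1$ (which follows from $d_i v_i^2 \leq 1$) is exactly what is needed to convert the quadratic inequality $\Lambda(v)(\Lambda(v)+|\lambda(v)|)\geq 1/n$ into a linear lower bound.
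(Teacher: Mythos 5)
Your proof is correct and is a careful fleshing out of exactly the argument the paper has in mind: the paper simply states that the lemma "follows immediately from \eqref{l2metric}", i.e.\ from the two constraints $\sum d_i v_i=0$ and $\sum d_i v_i^2=1$, which are precisely the ingredients you exploit. Your explicit constant $c_{G/H}=-\tfrac{1}{2n}$ is a perfectly valid choice and your sign-splitting bookkeeping is sound.
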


Let $f=\oplus_{i=1}^\ell \m_i$ be a decomposition of $\m$ into $Q$-orthogonal, $\Ad(H)$-irreducible summands. 
In what follows, we will simply say that $f$ {\em is a decomposition of $\m$}. 
Given such a decomposition $f$ of $\m$,
for any non-empty subset $I$ of $\{1,...,\ell\}$ we set
\beg
  \m _I:=\oplus_{i\in I}\m_i \quad \textrm{and} \quad
   d_I:= \dim \m_I\,.
\en
For $v \in T_{\Id}\MGo$ and a good decomposition $f$ with respect to $v$, 
the spaces $\m_{I_m^v}$ are the eigenspaces of $v$, thus we have
\beg
   \gamma_v (t)=e^{t\hat v_1}\cdot \Id_{{\m}_{I^v_1}}
                    + \cdots +
                e^{t\hat v_{\ell_v}} \cdot \Id_{{\m}_{I^v_{\ell_v}}}\,.
\en

We are ready to describe the formula for the scalar curvature of homogeneous metrics \cite{WZ2}.
Let $f$ be a fixed decomposition of $\m$ and let $\{e_1,\dots, e_n\}$ be a $Q$-orthonormal basis of $\m$ adapted to $f$.
Let $I,J,K$ be non-empty subsets of $\{1,...,\ell\}$. Following
\cite{WZ2}, we define
\beg
  [IJK] =[IJK]_f:=\sum_{\alpha,\beta,\gamma} Q([e_\alpha,e_\beta],e_\gamma)^2\,,
\en
where we sum over all indices $\alpha, \beta ,\gamma \in
\{1,...,n\}$ with $e_\alpha \in \m_I$, $e_\beta \in
\m_J$ and $e_\gamma \in \m_K$. Since $Q$ is $\Ad(G)$-invariant, we know the adjoint maps 
$$
  \ad(X):\g \to \g\,\,;\,\,\,
Y \mapsto [X,Y]
$$ 
are $Q$-skew-adjoint (by differentiation).  
That is, for all $X,Y,Z \in \g$ we have
\begn  \label{eqn:adXscew}
    Q([X,Y],Z) &=& Q(X,[Y,Z])\,.
\enn
It follows that $[IJK]$ is symmetric
in all three entries and is independent of the $Q$-orthonormal
bases chosen for $\m_I$, $\m_J$ and $\m_K$. In  case
$I=\{i\}$, $J=\{j\}$ and $K=\{k\}$ we write $[ijk]$. 

\begin{remark}
We have $[ijk]\geq 0$, with $[ijk]=0$ if and only if
$Q([\m_i,\m_j],\m_k)=0$. Furthermore, for any $[ijk]$, on the space of all ordered
decompositions of $\m$, the function $f\mapsto [ijk]_f$ is 
continuous.
The topology on the space of decompositions is defined as follows:
A sequence $(f_i)_{i \in \NN}$ with
 $f_i:=\m_1^i \oplus \dots \oplus \m_\ell^i $, $i \in \NN$, converges to
 $f_\infty:=\m_1^\infty \oplus  \dots \oplus \m_\ell^\infty$  (where both are $Q$-orthogonal  decompositions into irreducible summands) if, for all $j=1, \dots, \ell$, as $i \to \infty$, the subspaces $\m_j^i $ converge to $ \m_j^\infty$   in the Grassmannian. 
\end{remark}

Let $B$ denote the negative of the Killing form on $\g$. For
all $X,Y \in \g$ we have 
$$
 B(X,Y)=-\tr _{\g} (\ad (X)\circ \ad (Y))\,.
$$
Since both $Q$ and the Killing form are $\Ad(G)$-invariant, by \eqref{eqn:adXscew} we have
$$
 B\vert_{\m_i}= b_i\cdot Q\vert_{\m_i}
$$
for $1 \leq i \leq \ell$ with $b_i \geq 0$.  
Notice $B(X,X)=0$ if and only if $X\in \z(\g)$, the center of $\g$.

\begin{lemma}[\cite{WZ2}]
Let $v\in T_{\Id}\MG$ and let $f$ be a good decomposition with respect to $v$.
Then the scalar curvature of $\gamma_v(t)$ is given by
\begn
 \quad \quad
 \sc (\gamma_v (t))
   &=&
	\tfrac{1}{2} \sum_{i=1}^\ell 
 d_i b_i \cdot e^{t(-v_i)}   
      -\tfrac{1}{4} \sum_{i,j,k=1}^\ell
      [ i j k] \cdot e^{t(v_i-v_j-v_k)}\label{scal} \\
  &=& 
	\tfrac{1}{2} \sum_{i=1}^{\ell_v}\Big( \sum_{j\in I^v_i} d_j b_j
       \Big) \cdot e^{t(-\hat v_i)}\label{scalI}
			-\tfrac{1}{4} \sum_{i,j,k=1}^{\ell_v}
      [ I^v_i I^v_j I^v_k] \cdot
      e^{t(\hat v_i-\hat v_j-\hat v_k)} \,.
\enn
\end{lemma}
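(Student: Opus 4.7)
The plan is to reduce the statement to a direct substitution into the standard Wang--Ziller scalar curvature formula for diagonal homogeneous metrics. I proceed in three short steps.

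First, since $v = \sum_{i=1}^\ell v_i\,\Id_{\m_i}$ is block diagonal with respect to the $Q$-orthogonal decomposition $f = \oplus_{i=1}^\ell \m_i$, the matrix exponential commutes blockwise and produces
\[
\gamma_v(t) \;=\; \exp(tv) \;=\; \sum_{i=1}^\ell e^{t v_i}\,\Id_{\m_i},
\]
which is just \eqref{gammav}. Interpreting this as the endomorphism $P_{g_t}$ and using $g_t(X,Y) = Q(P_{g_t} X, Y)$, the corresponding inner product satisfies $g_t|_{\m_i} = e^{t v_i}\, Q|_{\m_i}$ and $g_t(\m_i,\m_j)=0$ for $i\neq j$. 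In Wang--Ziller notation this is a diagonal homogeneous metric with scaling parameters $x_i(t) := e^{t v_i}$.

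Second, I would quote the scalar curvature formula from \cite{WZ2},
\[
\sc(g_t) \;=\; \tfrac12 \sum_{i=1}^\ell \frac{d_i b_i}{x_i} \;-\; \tfrac14 \sum_{i,j,k=1}^\ell [ijk]_f\,\frac{x_k}{x_i x_j},
\]
which is valid for any $Q$-orthogonal, $\Ad(H)$-invariant decomposition of $\m$ into irreducible summands; the first term records the contribution of the biinvariant piece through $B|_{\m_i} = b_i\,Q|_{\m_i}$, while the second arises from the Koszul formula applied to a $Q$-orthonormal frame adapted to $f$, using the total symmetry of $[ijk]$ in its three indices. Substituting $x_i = e^{t v_i}$ gives $1/x_i = e^{-t v_i}$ and $x_k/(x_i x_j) = e^{t(v_k - v_i - v_j)}$, and the symmetry $[ijk]=[kji]$ lets me relabel $i \leftrightarrow k$ in the triple sum, converting the exponent to $v_i - v_j - v_k$. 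This is precisely \eqref{scal}.

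Third, to obtain \eqref{scalI}, I would regroup the sums by the distinct eigenvalues $\hat v_1 < \cdots < \hat v_{\ell_v}$ of $v$. Since $v_i$ depends only on the class $I_m^v$ containing $i$, summing $d_i b_i$ over $i\in I_m^v$ collapses the first sum of \eqref{scal} into the first sum of \eqref{scalI}. For the cubic term, the bracket definition $[I_i^v I_j^v I_k^v]_f = \sum_{p\in I_i^v,\, q\in I_j^v,\, r\in I_k^v} [pqr]_f$ aggregates exactly those triples sharing the exponent $\hat v_i - \hat v_j - \hat v_k$, producing the second sum of \eqref{scalI}. The only non-routine input is the Wang--Ziller formula itself, which I am simply citing; the remainder is bookkeeping of exponents and the passage from irreducible-summand indices to eigenvalue classes.
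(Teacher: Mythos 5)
Your proposal is correct and is exactly the argument one would carry out: the paper itself states this lemma with only a citation to \cite{WZ2}, so there is no internal proof to compare against, and your verification — identify $\gamma_v(t)$ as the diagonal metric with $x_i = e^{t v_i}$, substitute into the Wang--Ziller scalar curvature formula $\sc(g) = \tfrac12\sum_i d_i b_i/x_i - \tfrac14\sum_{i,j,k}[ijk]\,x_i/(x_j x_k)$, relabel indices using the full symmetry of $[ijk]$, and regroup over the eigenvalue classes $I_m^v$ — is the standard and complete derivation.
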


\begin{remark}
Notice that both the numbers $b_i$ and $[ijk]$ may depend on the choice of good decomposition $f$ with respect to $v$.
However, for ease of notation we will suppress this whenever it is clear that our decomposition $f$
is chosen. By contrast, the numbers $ \sum_{j\in I^v_i} d_j b_j$ and
$[ I^v_i I^v_j I^v_k]$ in (\ref{scalI}) are independent
of the choice of a good decomposition $f$. 
\end{remark}

\subsection{{\it Canonical directions}}\label{sec:candir}


We assign to each subalgebra $\kf\in \Sub$ a {\em canonical direction} $\vk$  in the unit sphere $\Si$ of $T_{\Id}\MGo$
and we investigate the scalar curvature functional along the curve 
$\gamma_{\vk}(t)$ emanating from $Q$, provided that $G/H$ is not isotropy irreducible.
If $\kf$ is a compact subalgebra and $K$ is the connected subgroup of $G$ with Lie algebra $\kf$, 
then the canonical direction $\vk$ comes from the canonical variation of $Q$ with respect to the
fibration $K/H \to G/H \to G/K$  \cite[9.67]{Bes}.

\medskip
In Sections \ref{sec:Ltbasics} and \ref{sec:LieII} the reader can find some well known facts about the geometry of homogeneous spaces.  Some specific results are referenced in what follows. 

As in Definition \ref{def:toral-non-toral}, we write $\m_{\blds\kf}:= \kf \cap \m$ for a subalgebra $\kf \in \Sub$, let 
 and we let $\m_{\blds\kf}^{\perp}$ denote the
$Q$-orthogonal complement of $\m_{\blds\kf}$ in $\m$.

\begin{definition}[Canonical directions] \label{def:candir}
For each subalgebra $\kf \in \Sub$, the {\em canonical direction}
$\vk\in \Si$ associated to $\kf$ is defined by
\beg
   \vk:=
	         \vk_1\cdot \Id_{\m_{\blds\kf}} +  
	         \vk_2\cdot \Id_{\m_{\blds\kf}^\perp}\,,
\en
where $(\dim \m_{\blds\kf}) \cdot \vk_1+(\dim \m_{\blds\kf}^\perp) \cdot \vk_2=0$,
$\Vert \vk \Vert=1$
and $\vk_1< 0 < \vk_2$.
\end{definition}

Given a decomposition $f=\oplus_{i=1}^\ell \m_i$ of $\m$ and given a subalgebra $\kf \in \Sub$, 
we say $\kf$ is  {\it $f$-adapted} if there exists
 $I_1:=I_1^{\blds\kf}\subset \{1,...,\ell\}$ with $\kf=\h \oplus \m_{I_1}$. 
 Notice that for every subalgebra $\kf$ there
exists a decomposition $f$ of $\m$ such that $\kf$ is $f$-adapted.
 Let $I_2=\{1,...,\ell\} \backslash I_1$.
Then, 
since $[I_1 I_1 I_2]=0$ (because $[\m_{I_1}, \m_{I_1}] \subset \kf \perp \m_{I_2}$),
by Lemma \ref{lem:1p5} we conclude that
\beg
  \sum_{j\in I_1} d_j b_j
     &=&[I_1 I_1 I_1]+
    [I_1 I_2 I_2]
     + \sum_{j\in I_1}2d_jc_j\\
		 \sum_{j\in I_2} d_j   b_j
   &=&[I_2 I_2 I_2]
    +2[I_1 I_2I_2] + \sum_{j\in I_2}2d_jc_j\,.
\en
(Here the constants $c_i$ are as in Section \ref{sec:LieII}.) 
Now let
\beg
  a_1
	  &:=& 
		 \tfrac{1}{2} \big(\sum_{j\in I_1} d_j  b_j
        -\tfrac{1}{2} [ I_1 I_1 I_1]
        - [ I_1 I_2 I_2] \big)
				=
				 \sum_{j\in I_1} d_j  c_j
        +\tfrac{1}{4} [ I_1 I_1 I_1] \geq 0\\
				 a_2
	  &:=& 
		 \tfrac{1}{2} \big(\sum_{j\in I_2} d_j  b_j
        -\tfrac{1}{2} [ I_2 I_2 I_2] \big)
		=
				 \sum_{j\in I_2} d_j  c_j
        +\tfrac{1}{4} [ I_2 I_2 I_2]
        +[I_1 I_2I_2] \geq 0\,.
\en
Then, by (\ref{scalI}) we obtain
\begn  \label{eqn:scalcank}
  \sc (\gamma_v (t)) 
	= 
	\sum_{m=1}^{2} a_m \cdot e^{t(-\vk_m)}
    -\tfrac{1}{4}[I_1 I_2I_2] \cdot e^{t(\vk_1 -2\vk_2)}\,.
\enn
Since $\vk_1-2\vk_2 < -\vk_2$ and $a_2-\tfrac{1}{4}[I_1 I_2I_2]\geq 0$ we obtain 
\begn  \label{scanvar}
  \sc (\gamma_v (t))
	&\geq &
  a_1 \cdot e^{t(-\vk_1)} \,.
\enn

Notice that for a toral subalgebra $\tf$ we have $\tf =\h \oplus \af$
with $\af \subset \m_0$, where $\m_0$ is the isotypical summand of
$\m$ defined by $[\h,\m_0]=0$. (See  Definition \ref{def:m0} and  Lemma \ref{lemkmm}.)
As a consequence, for any $\Ad(H)$-invariant subspace $\ti \m$ of $\m \ominus \m_0$
 we have $0 \neq [\h, \tilde \m]\subset \ti \m$, thus by \eqref{eqn:adXscew} we conclude
$[\ti \m,\ti \m]\vert_{\h}\neq 0$. This implies
that in order for an $\Ad(H)$-invariant subspace in $\m$ 
to be a subalgebra of $\g$, it must lie in $\m_0$.  

Now we can state a first, well known result towards the asymptotic behavior of
the scalar curvature functional.

\begin{lemma}\label{lem:scalpos}
For any subalgebra $\kf \in \Sub$, 
\beg
  \lim_{t \to +\infty}\sc (\gamma_{\vk}(t))=
    \Big\{ \begin{array}{c}+\infty \\[-0.1cm]  0 \end{array} 
          \quad \Longleftrightarrow \quad
  \Big\{  \begin{array}{c} \kf\,\,\textrm{non-toral} \\[-0.1cm] \kf\,\,\textrm{toral}\,.  \end{array} 
\en
If in addition $G/H$ is not a torus, then
$\sc (\gamma_{\vk}(t))>0$ for all $t\geq 0$.
\end{lemma}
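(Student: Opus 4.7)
The plan is to read both statements directly off formula~\eqref{eqn:scalcank}, which writes $\sc(\gamma_{v^{\blds\kf}}(t))$ as a combination of three exponentials in $t$ with rates $-v^{\blds\kf}_1 > 0$, $-v^{\blds\kf}_2 < 0$ and $v^{\blds\kf}_1 - 2v^{\blds\kf}_2 < -v^{\blds\kf}_2$. The first rate is the only one that can produce divergence as $t \to +\infty$, so the limit statement reduces to deciding when its coefficient $a_1 = \sum_{j\in I_1} d_j c_j + \tfrac{1}{4}[I_1 I_1 I_1]$ vanishes. If $\kf$ is toral, then $\m\cap\kf \subset \m_0$ is abelian and $\Ad(H)$-fixed, so $[\m\cap\kf,\m\cap\kf]=0$ gives $[I_1 I_1 I_1]=0$ and $[\h,\m\cap\kf]=0$ gives $c_j=0$ for every $j\in I_1$; hence $a_1=0$ and the two surviving exponentials decay to $0$. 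If $\kf$ is non-toral, then $[\m\cap\kf,\m\cap\kf]\neq 0$: its $\m$-component necessarily lies in $\m\cap\kf$, so a non-zero $\m$-component gives $[I_1 I_1 I_1]>0$, whereas a non-zero $\h$-component forces $[\h,\m\cap\kf]\neq 0$ via $\Ad(G)$-invariance of $Q$ (from $Q([X,Y],H)=-Q(Y,[X,H])$), yielding some $c_j>0$. In either case $a_1>0$ and $\sc(\gamma_{v^{\blds\kf}}(t))\to +\infty$.

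For the strict positivity on $[0,\infty)$ in the non-toral case, the bound~\eqref{scanvar} gives $\sc(\gamma_{v^{\blds\kf}}(t)) \geq a_1 e^{-t v^{\blds\kf}_1} \geq a_1 > 0$ for all $t \geq 0$, and we are done. In the toral case this bound degenerates to $\sc\geq 0$, so I would rework the formula: with $a_1=0$, $[I_1 I_1 I_1]=0$, $[I_1 I_1 I_2]=0$ and $c_j=0$ for $j\in I_1$, formula~\eqref{eqn:scalcank} collapses to $\sc(\gamma_{v^{\blds\kf}}(t)) = a_2 e^{-t v^{\blds\kf}_2} - \tfrac{1}{4}[I_1 I_2 I_2] e^{t(v^{\blds\kf}_1 - 2 v^{\blds\kf}_2)}$. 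Multiplying through by $e^{t v^{\blds\kf}_2}$ produces $a_2 - \tfrac{1}{4}[I_1 I_2 I_2] e^{t(v^{\blds\kf}_1 - v^{\blds\kf}_2)}$, whose $t$-derivative equals $\tfrac{1}{4}[I_1 I_2 I_2](v^{\blds\kf}_2 - v^{\blds\kf}_1) e^{t(v^{\blds\kf}_1 - v^{\blds\kf}_2)} \geq 0$. Hence it is monotone non-decreasing, and $\sc(\gamma_{v^{\blds\kf}}(t)) \geq e^{-t v^{\blds\kf}_2}\sc(Q)$ for every $t \geq 0$.

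The remaining and truly structural step is $\sc(Q)>0$ when $G/H$ is not a torus, which I regard as the main obstacle. I would reduce this to a Lie-algebraic dichotomy: Lemma~\ref{lem:1p5} rearranges $\sc(Q)$ as $\tfrac{1}{4}\sum_{i,j,k}[ijk] + \sum_i d_i c_i$, a sum of non-negative terms. If all of them vanish then $[\m,\m]\subset\h$ and $[\h,\m]=0$; but $\Ad(G)$-invariance of $Q$ then gives $Q([\m,\m],\h) = Q(\m,[\m,\h]) = 0$, so $[\m,\m]\subset \h\cap\h^{\perp}=\{0\}$, i.e.\ $\m$ is an abelian ideal of $\g$ and $\h$ is an ideal as well. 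Almost-effectiveness of the $G$-action on $G/H$ then forces the connected subgroup $H$ to be trivial, and hence $G/H=G$ is a connected compact abelian Lie group---a torus. The contrapositive is the desired $\sc(Q)>0$, completing the plan.
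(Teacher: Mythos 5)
Your proposal is correct and, for the limit statement and the non-toral positivity, essentially reproduces the paper's argument (the paper cites Lemma~\ref{lem:toral} for $a_1 > 0 \Leftrightarrow \kf$ non-toral, which you re-derive directly; the positivity then follows from~\eqref{scanvar} in both cases). The interesting divergence is in the toral case. The paper estimates the coefficients termwise: using $e^{t(v^{\blds\kf}_1 - 2v^{\blds\kf}_2)} \leq e^{-t v^{\blds\kf}_2}$ it bounds $\sc(\gamma_{v^{\blds\kf}}(t))$ below by $e^{-tv^{\blds\kf}_2}\big(\sum_{j\in I_2}d_jc_j + \tfrac14[I_2I_2I_2] + \tfrac34[I_1I_2I_2]\big)$ and then shows this parenthetical vanishes only when $\m$ is abelian and commutes with $\h$. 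You instead notice that $e^{tv^{\blds\kf}_2}\sc(\gamma_{v^{\blds\kf}}(t))$ is monotone non-decreasing in $t$, which immediately yields the cleaner inequality $\sc(\gamma_{v^{\blds\kf}}(t)) \geq e^{-tv^{\blds\kf}_2}\sc(Q)$, reducing everything to the single assertion $\sc(Q)>0$. Both routes ultimately invoke the same Lie-algebraic dichotomy (via Lemma~\ref{lem:1p5}: $\sc(Q) = \sum_i d_ic_i + \tfrac14\sum_{i,j,k}[ijk]$ vanishes only in the abelian-$\m$, centralizing-$\h$ case), but your monotonicity reduction buys a conceptually tidier intermediate step, while the paper's version is more in keeping with the later, more delicate curvature estimates that also proceed term by term. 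One stylistic note: the paper concludes the torus case by quoting the fact that a flat compact homogeneous space is a torus, whereas you give a self-contained argument that $\h$ and $\m$ become commuting ideals and almost effectiveness kills $\h$; both are fine, and yours has the merit of not requiring an external reference.
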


\begin{proof} If $\kf$ is a non-toral subalgebra, then 
 by Lemma \ref{lem:toral} we have $a_1>0$. By \eqref{scanvar}, since $-\vk_1 >0$,
we deduce
$\sc (\gamma_{\vk}(t))>0$ for all $t\geq 0$ and
 ${\displaystyle \lim_{t \to +\infty}\sc (\gamma_{\vk}(t))=\infty}$.

If $\kf$ is a toral subalgebra then $a_1=0$  by Lemma \ref{lem:toral}.
Thus,
${\displaystyle \lim_{t \to +\infty}\sc (\gamma_{\vk}(t))=0}$ by \eqref{eqn:scalcank}.
Moreover,  in this case for all $t \geq 0$, we have
\beg
  \sc (\gamma_v (t)) 
	&= &
	a_2 \cdot e^{t(-\vk_2)}
    -\tfrac{1}{4}[I_1 I_2I_2] \cdot e^{t(\vk_1-2\vk_2)} \\
	& \geq &	
   e^{t(-\vk_2)} \cdot \big(  \sum_{j\in I_2} d_j c_j
        +\tfrac{1}{4} [ I_2 I_2 I_2]
        +\tfrac{3}{4}[I_1 I_2I_2]  \big)\,.
\en
The term on the right-hand side is zero if and only if 
$\m$ is an abelian subalgebra of
$\g$ commuting with $\h$. (See Lemma \ref{lem:toral}.) Since we assume $G/H$ is almost effective, we deduce $\m$ is abelian if and only if $\h=\{0\}$ and $\g$ is abelian. As a consequence $G/H$ admits a flat metric. Since  the only compact homogeneous space admitting a flat metric is the torus \cite[7.61]{Bes},
the claim follows.
\end{proof}


\subsection{{\it Scalar curvature estimates from below}}\label{sec:scalestbe}
Recall that 
$\MGo$ is the space of $G$-invariant metrics on $G/H$ of volume one. Furthermore, for every $g \in \MGo$, 
we identify $P_g \in \MGo$, an $\Ad(H)$-invariant scalar product, satisfying 
$g(\cdot,\cdot) = Q(P_g\cdot,\cdot)$, and $T_{\Id} \MGo := \{  v \in \SymmH\mid \tr v=0\}$.

In this section we investigate flags of non-toral
subal\-gebras and directions $\tilde v \in T_{\Id}\MGo\bs\{0\}$ which are convex combinations of
the corresponding canonical directions. Finally, we
provide scalar curvature estimates along $\gamma_v(t)$, 
$v=\frac{\tilde v}{\Vert \ti v\Vert}$.

\medskip

In the following we will identify 
\beg
  v =v_1\cdot \Id_{{\m}_1}  + \cdots +
  v_{\ell} \cdot \Id_ {{\m}_\ell} \in T_{\Id} \MG
\en
with $(v_1,...,v_\ell) \in \RR ^\ell$. Recall that
$v \in T_{\Id} \MGo$ if and only if  $\sum_{i=1}^\ell
d_i v_i=0$. 

As in Section \ref{sec:kfdisc}, we write $\kf_1
< \kf_2$ to indicate $\kf_1\subsetneq \kf_2$. 
As in Section \ref{sec:butterflies}, we write ${\rm conv}(X,Y)$ to denote the convex hull of $X$
and $Y$ in $T_{\Id}\MGo$. 

\begin{lemma} \label{lem:linin}
Let $\kf_1, ...,\kf_p$ be  subalge\-bras, $\kf_i \in \Sub$, with
$\kf_1< ... <\kf_p$. Then, the set of canonical directions
$(\vk_1, \dots ,\vk_p)$ form 
a linearly independent set. Furthermore, for any $\ti v \in {\rm conv}\{v^{\blds\kf_1}, \dots, v^{\blds\kf_p}\}$, 
$\ti v = \sum_{i=1}^p \lambda_i v^{\blds\kf_i}$, which we view as an element in $
(\tilde v_1,...,\tilde v_{p+1}) \in \RR^{p+1}$, 
we have  $\tilde v_1 \leq \cdots \leq \tilde v_{p+1}$, with strict inequality $\ti v_{i_0} < \ti v_{i_0+1}$ whenever $\lambda_{i_0}>0$.
\end{lemma}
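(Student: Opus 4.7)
The plan is to work with the decomposition of $\m$ that refines the flag, namely
$$\m = W_1 \oplus W_2 \oplus \cdots \oplus W_{p+1}$$
where $W_1 := \m_{\blds\kf_1}$, $W_j := \m_{\blds\kf_j} \cap (\m_{\blds\kf_{j-1}})^{\perp}$ for $2\leq j \leq p$, and $W_{p+1} := (\m_{\blds\kf_p})^{\perp}$ (all complements taken with respect to $Q$). Since each $\kf_i$ is $\Ad(H)$-invariant and $\kf_1<\cdots<\kf_p$ in $\Sub$, these $W_j$ are nonzero, pairwise $Q$-orthogonal, $\Ad(H)$-invariant subspaces of $\m$. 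Each $v^{\blds\kf_i}$ is a scalar multiple of $\Id$ on every $W_j$: explicitly, $W_j \subset \m_{\blds\kf_i}$ when $j \leq i$ and $W_j \subset \m_{\blds\kf_i}^{\perp}$ when $j \geq i+1$, so by Definition \ref{def:candir}, $v^{\blds\kf_i}|_{W_j}$ equals $v^{\blds\kf_i}_1$ for $j \leq i$ and $v^{\blds\kf_i}_2$ for $j \geq i+1$. This identifies any linear combination of the $v^{\blds\kf_i}$ with an element $(\tilde v_1,\ldots,\tilde v_{p+1}) \in \RR^{p+1}$, namely its eigenvalue-tuple along $W_1,\ldots,W_{p+1}$.

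For the monotonicity statement, given $\tilde v = \sum_{i=1}^p \lambda_i v^{\blds\kf_i}$, the $j$-th coordinate is
$$\tilde v_j = \sum_{i\geq j}\lambda_i\, v^{\blds\kf_i}_1 + \sum_{i<j}\lambda_i\, v^{\blds\kf_i}_2,$$
and a direct subtraction yields the telescoping identity
$$\tilde v_{j+1}-\tilde v_j = \lambda_j\bigl(v^{\blds\kf_j}_2 - v^{\blds\kf_j}_1\bigr), \qquad j=1,\ldots,p.$$
Since $v^{\blds\kf_j}_1<0<v^{\blds\kf_j}_2$ by Definition \ref{def:candir}, the factor $v^{\blds\kf_j}_2-v^{\blds\kf_j}_1$ is strictly positive, so with $\lambda_j\geq 0$ the differences $\tilde v_{j+1}-\tilde v_j$ are non-negative and strictly positive precisely when $\lambda_j>0$. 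This yields $\tilde v_1\leq \cdots\leq \tilde v_{p+1}$ with the asserted strict inequalities.

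For the linear independence claim, suppose $\sum_{i=1}^p \mu_i v^{\blds\kf_i}=0$ with $\mu_i \in \RR$. The same telescoping identity, now applied with coefficients $\mu_i$ in place of $\lambda_i$, gives $0 = \tilde v_{j+1}-\tilde v_j = \mu_j(v^{\blds\kf_j}_2 - v^{\blds\kf_j}_1)$ for each $j=1,\ldots,p$. Since $v^{\blds\kf_j}_2-v^{\blds\kf_j}_1>0$, we deduce $\mu_j=0$ for all $j$, establishing linear independence.

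There is no real obstacle here once the correct refinement $W_1,\ldots,W_{p+1}$ of $\m$ is introduced: everything reduces to reading off scalar eigenvalues on a filtration. The only point requiring a touch of care is the bookkeeping on which $W_j$ lies inside $\m_{\blds\kf_i}$ versus $\m_{\blds\kf_i}^{\perp}$, which is governed entirely by the strict nesting $\kf_1<\cdots<\kf_p$ and the definition of the canonical direction as a two-level operator with the sign convention $v^{\blds\kf_i}_1 < 0 < v^{\blds\kf_i}_2$.
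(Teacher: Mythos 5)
Your proof is correct and follows essentially the same route as the paper: refine $\m$ along the flag into blocks, note that each $v^{\blds\kf_i}$ takes only two values relative to this refinement, and read off the eigenvalue tuple in $\RR^{p+1}$, from which the monotonicity statement is the same computation as in the paper. The only cosmetic difference is that you deduce linear independence from the telescoping identity $\tilde v_{j+1}-\tilde v_j=\mu_j\,(v^{\blds\kf_j}_2-v^{\blds\kf_j}_1)$, whereas the paper argues by a short induction, observing that any nontrivial combination of $v^{\blds\kf_1},\dots,v^{\blds\kf_{i-1}}$ restricted to $\kf_{i-1}^\perp$ has a single eigenvalue while $v^{\blds\kf_i}$ has two there.
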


\begin{proof} Let $f$ be a decomposition of $\m$, such that the
subalgebras $\kf_1, \dots,\kf_p$ are \mbox{$f$-adapted}. In
order to make notation as simple as possible let us assume that
$\kf_i
=\h \oplus \bigoplus_{m=1}^{j_i}\m_m$ with $1\leq j_1 <...< j_p <
\ell$. Hence \beg
  v^{\blds\kf_i} =(\underbrace{v^{\blds\kf_i}_1,\dots, v^{\blds\kf_i}_1}_{j_i},
            v^{\blds\kf_i}_2, \dots, v^{\blds\kf_i}_2)\in \RR ^\ell
\en
and for each $v^{\blds\kf_i}$ we have $v^{\blds\kf_i}_1<0$ and $v^{\blds\kf_i}_2>0$, $1 \leq i \leq p$.
Suppose that $(v^{\blds\kf_1},\dots , v^{\blds\kf_{i-1}})$ are linearly independent for some $2 \leq i \leq p-1$.
Then the restriction of any non-trivial linear combination of these vectors to $\kf_{i-1}^\perp$
has only one eigenvalue. But the restriction of $v^{\blds\kf_i}$ to $\kf_{i-1}^\perp$ has two different 
eigenvalues. This shows the first claim.

Let 
${\dsp \tilde v  = \sum_{i=1}^p \lambda _i  \cdot v^{\blds\kf_i}}$ be an element of the convex hull,
${\rm conv}\{v^{\blds\kf_1}, \dots, v^{\blds\kf_p}\}$, so that 
 $0 \leq \lambda_1,\dots, \lambda_p \leq 1$ and $\sum_{i=1}^p \lambda_i=1$.
By the special shape of $v^{\blds\kf_i}$ we can view each $v^{\blds\kf_i}$
as an element in $\RR ^{p+1}$ (rather than in $\RR ^\ell$); that is,
\beg
  v^{\blds\kf_i} =(\underbrace{v^{\blds\kf_i}_1, \dots , v^{\blds\kf_i}_1}_i,
                 v^{\blds\kf_i}_2, \dots,  v^{\blds\kf_i}_2)\,.
\en
Hence
\begn
 \tilde v 
  & =&
	 \sum_{i=1}^p \lambda _i  \cdot v^{\blds\kf_i}=\nonumber\\
   &&
	\!\!\!\!\!\!\!\left( \begin{array}{ccccccc}
         \lambda _1 v^{\blds\kf_1}_1&+&\lambda _2 v^{\blds\kf_2}_1&+& \dots&+&
                   \lambda _p v^{\blds\kf_p}_1 \\
         \lambda _1 v^{\blds\kf_1}_2 &+&\lambda _2 v^{\blds\kf_2}_1&+& \dots &+&
         \lambda _p v^{\blds\kf_p}_1 \\
                  \vdots & & \vdots
          & & \vdots & & \vdots \\
          \lambda _1 v^{\blds\kf_1}_2&+&\lambda _2 v^{\blds\kf_2}_2&+& \dots &+&
                                           \lambda _p v^{\blds\kf_p}_2
         \end{array} \right) 
				=:
				 \left( \begin{array}{c}
        \tilde v_1 \\ \tilde  v_2 \\ \vdots \\ \tilde v_{p+1}
         \end{array}  \right)\,.\hspace*{-1cm} 
				\label{mostshrink}
\enn
Since $\lambda_i\geq 0$ and $v^{\blds\kf_i}_1< v^{\blds\kf_i}_2$ for all $1 \leq i \leq p$
this implies $\tilde v_1 \leq \cdots \leq \tilde v_{p+1}$.  When $\lambda_{i_0} > 0$, we will have a strict inequality: $\ti v_{i_0} < \ti v_{i_0+1}$.
\end{proof}

 Recall in Lemma \ref{lem:cGH} we defined  the constant $c_{G/H}<0$. 
We define the positive constant $n_{G/H}$ in Corollary \ref{cor:uniformntest}.

\begin{proposition} \label{prop:posflat}
Let $\kf_1, \dots, \kf_p$ be  subalgebras with
$\kf_1<\dots<\kf_p$ and let
\beg
 \Delta_{\kf_1,\dots, \kf_p}=
     {\rm conv}\{v^{\blds\kf_1}, \dots, v^{\blds\kf_p}\}\,.
\en 
If $\kf_1$ is a non-toral subalgebra, then for all $\tilde v \in
\Delta_{\kf_1,\dots, \kf_p}$ and all $t\geq 0$ we have
$$
\sc (\gamma_v(t)) \geq e^{-t c_{G/H}}\cdot n_{G/H}\,,
$$ 
where $v:=\frac{\tilde v}{\Vert \tilde v\Vert} \in \Si$.
\end{proposition}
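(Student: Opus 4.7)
The strategy is to exploit the nested subalgebra structure along the flag $\kf_1<\cdots<\kf_p$ to control the negative terms in the scalar curvature formula \eqref{scal}, extracting a dominant positive $e^{-t\lambda(v)}$ contribution from the non-torality of $\kf_1$ via Lemma \ref{lem:toral}. First I would fix a $Q$-orthogonal $\Ad(H)$-irreducible decomposition $f=\oplus_{i=1}^{\ell}\m_i$ that is $f$-adapted to \emph{every} $\kf_r$ simultaneously, so that $\kf_r=\h\oplus\bigoplus_{m=1}^{j_r}\m_m$; this partitions $\{1,\dots,\ell\}$ into blocks $B_r:=\{j_{r-1}+1,\dots,j_r\}$, $r=1,\dots,p+1$ (with $j_0=0$, $j_{p+1}=\ell$). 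By Lemma \ref{lem:linin}, on each $B_r$ the direction $v$ takes a constant value $v_{(r)}$ with $v_{(1)}\leq \cdots \leq v_{(p+1)}$; in particular $\m_{\kf_1}=\m_{B_1}$ lies in the smallest eigenspace of $v$, so $v_{(1)}=\lambda(v)\leq c_{G/H}$.

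The structural key is that each $\kf_r$ being a subalgebra forces the vanishing $[B_aB_bB_c]=0$ unless $\max\{a,b,c\}$ is attained at least twice. Enumerating ordered triples in \eqref{scal} by their block indices, every exponent $v_i-v_j-v_k$ coming from a non-vanishing $[ijk]$ is either of the \emph{good} form $-v_{(s)}$ (matching a positive exponent) or, in the single remaining case when $j,k$ lie in $B_M$ and $i\in B_{r'}$ with $r'<M$, equal to $v_{(r')}-2v_{(M)}\leq -v_{(M)}\leq -v_{(1)}$. Using the expansion $d_jb_j=2d_jc_j+\sum_{k,l}[jkl]$ from Lemma \ref{lem:1p5} and collecting terms by exponential, a combinatorial cancellation (the $[B_rB_bB_b]$-contributions with $b>r$ on the positive side exactly absorb the mixed Cases (2a)+(2b) of the negative sum) yields
\beg
 \sc(\gamma_v(t))
  &=&
  \sum_{r=1}^{p+1}\alpha_r\,e^{-tv_{(r)}}
  -\tfrac{1}{4}\sum_{r'<M}[B_{r'}B_MB_M]\,e^{t(v_{(r')}-2v_{(M)})}\,,
\en
where $\alpha_r:=\sum_{i\in B_r}d_ic_i+\tfrac{1}{4}[B_rB_rB_r]+\sum_{b<r}[B_bB_rB_r]\geq 0$.

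Bounding the remaining negative sum via $e^{t(v_{(r')}-2v_{(M)})}\leq e^{-tv_{(M)}}$ for $t\geq 0$ together with the elementary estimate $\sum_{r'<M}[B_{r'}B_MB_M]\leq \alpha_M$ absorbs it into $\tfrac{1}{4}\sum_{M\geq 2}\alpha_M\,e^{-tv_{(M)}}$, leaving $\sc(\gamma_v(t))\geq \alpha_1\,e^{-tv_{(1)}}$. Now $\alpha_1$ is precisely the quantity $a_1^{\blds\kf_1}$ of \eqref{eqn:scalcank} attached to the canonical variation of $Q$ along $K_1/H\to G/H\to G/K_1$, which is strictly positive by non-torality of $\kf_1$ (Lemma \ref{lem:toral}). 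Combining with $e^{-tv_{(1)}}\geq e^{-tc_{G/H}}$ for $t\geq 0$, and taking $n_{G/H}$ to be a uniform positive lower bound on $a_1^{\blds\kf}$ as $\kf$ ranges over non-toral intermediate subalgebras (cf.\ Corollary \ref{cor:uniformntest}), completes the proof. The main obstacle is verifying the combinatorial cancellation that cleans up the coefficients of $e^{-tv_{(r)}}$ into the closed form $\alpha_r$; once this is in hand, the estimate reduces to elementary dominance of exponents.
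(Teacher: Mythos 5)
Your proposal is correct and follows essentially the same route as the paper's: fix a decomposition adapted to the whole flag, use Lemma \ref{lem:1p5} and the block-vanishing of $[I_iI_jI_k]$ to regroup \eqref{scalI} into $\sum_m a_m e^{-t\hat v_m}$ minus a single tail $\tfrac14\sum_{s<m}[I_sI_mI_m]e^{t(\hat v_s-2\hat v_m)}$, bound the tail exponent by $-\hat v_m$, drop the nonnegative $m\geq 2$ terms, and invoke non-torality of $\kf_1$ plus Corollary \ref{cor:uniformntest} and $\lambda(v)\leq c_{G/H}$. The only cosmetic divergence is that you bound the tail coefficient wholesale by $\alpha_M$ rather than, as the paper does, fold it into the $e^{-t\hat v_m}$-coefficients to get the factor $\tfrac34$; both give the identical final estimate $\sc(\gamma_v(t))\geq a_{\kf_1}e^{-t\hat v_1}$.
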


\begin{proof} 
Linear independence of  $\{v^{\blds\kf_1}, \dots, v^{\blds\kf_p}\}$ 
implies $\tilde v\neq 0$ (Lemma \ref{lem:linin}), and we also know  $\tilde v_1 \leq \cdots \leq \tilde v_{p+1}$ for the components of $\ti v$.
 Thus we can define
$v:=\frac{\tilde v}{\Vert \tilde v\Vert}$. Denoting by $\hat v_1,...,\hat v_{p+1}$
the corresponding eigenvalues of $v$, we still have 
\begn
   \hat v_1 \leq \cdots  \leq \hat v_{p+1}\,. \label{eqn:esthatv}
\enn 
Let $j_1,...,j_p$ be defined as in Lemma \ref{lem:linin} and let
$I_1:=\{1,...,j_1\},...,$ $I_{p+1}:=\{j_p+1,...,\ell\}$. Since
$\kf_i,\kf_j,\kf_k$ are subalgebras contained in each other, we
have $[I_iI_jI_k]=0$, whenever all three indices $i,j,k$ are
pairwise distinct, and $[I_iI_iI_k]=0$ for $k>i$.

With the help of Lemma \ref{lem:1p5} we conclude
\beg
  \sum_{j\in I_m} d_j b_j
   &=&[I_m I_m I_m]+
    \sum_{s=m+1}^{p+1}[I_m I_s I_s]
    +2 \sum_{s=1}^{m-1}[I_s I_mI_m] + \sum_{j\in I_m}2d_jc_j
\en
for $m\in \{1,...,p+1\}$. Now let
\beg
  a_m
	  &:=& 
		 \tfrac{1}{2} \big(\sum_{j\in I_m} d_j b_j
        -\tfrac{1}{2} [ I_m I_m I_m]
        -\sum_{s=m+1}^{p+1} [ I_m I_s I_s] \big)\\
		&=&
				 \sum_{j\in I_m} d_j c_j
        +\tfrac{1}{4} [ I_m I_m I_m]
        + \sum_{s=1}^{m-1}[I_s I_mI_m] \geq 0\,.
\en
By (\ref{scalI}) we obtain
\beg
  \sc (\gamma_v (t)) = \sum_{m=1}^{p+1} a_m \cdot e^{t(-\hat v_m)}
    -\tfrac{1}{4}\sum_{m=1}^{p+1}\sum_{s=1}^{m-1}[I_s I_mI_m] \cdot
   e^{t(\hat v_s-2\hat v_m)}\,.
\en
Now \eqref{eqn:esthatv}
yields $\hat v_s-2\hat v_m\leq -\hat v_m$ for $s\leq m-1$. Thus
\beg
  \sc (\gamma_v (t))\geq
  \sum_{m=1}^{p+1}e^{t(-\hat v_m)} \cdot
	\big(  \sum_{j\in I_m}d_jc_j+
      \tfrac{1}{4}[I_m I_m I_m]+\tfrac{3}{4}\sum_{s=1}^{m-1}[I_s I_mI_m]
     \big)\,.
\en
 Since by assumption $\kf_1$ is not toral, we obtain the claim
by Lemma \ref{lem:toral} and Corollary \ref{cor:uniformntest}, using
that for any $v \in \Si$ we have $\lambda(v) \leq -c_{G/H}$ by Lemma \ref{lem:cGH}.
\end{proof}

\subsection{{\it Scalar curvature estimates from above}}\label{secscalab}

In this section we prove in Theorem \ref{theo:scalestu} that for each $\delta>0$
 there exists $\bar t(\delta)>0$ such that for every direction $v \in \Si \bs \Usre (\delta)$ (where 
 $\Usre (\delta)$ is the $\delta$-neighborhood of the set of extended non-toral directions $\Xsre$ in $\Si$) and for all 
 $t\geq \bar t(\delta)>0$,
the scalar curvature of $\gamma_v(t)$ is uniformly bounded from above. In particular we show
that in this non-compact region, there do not exist $G$-invariant metrics with large scalar curvature.

The scalar curvature estimates provided here are quite involved. In some sense what makes this situation difficult is when the most shrinking direction $v$ corresponds to a toral subalgebra (or more generally to a flag of toral subalgebras). 
The complication is that we need scalar curvature estimates to apply to a sequence of directions converging  to such a direction, for all large times $t \geq \bar t(\delta)$. Notice that
 the most shrinking direction need not correspond to a toral subalgebra for those approximating directions.
The key result here is purely Lie-theoretic, Proposition \ref{propinequ}, where we show how a priori independent structure constants relate to each other.

Recall, 
 $\Si$ is the unit sphere in $T_{\Id} \MGo$, 
and our homeomorphism ${\rm Gr}: 
\Si \to \Sphb$, the Graev map. In Section \ref{sec:defWT}, Definition \ref{defin-W}, we defined  $\WW$ (resp. $\WWs$), the union of all disks $\D(\kf)$ (resp. non-toral disks), and $\T$ (resp. $\XXGH$),  the union of flag simplices $\Delta_{\varphi}$  over all  toral flags (resp. non-toral flags).
There we saw that $\XXGH \subset \WWs \subset \WW$.
 Here we set $\WSi:={\rm Gr}^{-1}(\WW)$, $\WSi_s:={\rm Gr}^{-1}(\WWs)$, $\TSi:={\rm Gr}^{-1}(\T)$, $\XSGH :={\rm Gr}^{-1}(\XXGH)$, and 
 $\BSi :={\rm Gr}^{-1}(\B)$. 
In Section \ref{sec:homotopy1} we defined $\T(\epsilon_0)$  and $\XXsre=\B \bs \T(\epsilon_0)$ (see Theorem \ref{thm:homotopy2}), and we observed that $\WWs \subset \XXsre$. We set 
$ \TSi(\epsilon_0):= {\rm Gr}^{-1}(\T(\epsilon_0))$, and we set 
$\Xsre={\rm Gr}^{-1}(\XXsre)=\BSi \bs \TSi(\epsilon_0)$. Thus we have
\begin{equation}\label{eqn:containments}
  \XSGH \subset \WSi_s \subset \Xsre \subset \WSi \subset \Sph \,.
\end{equation}
Recall that in Theorem \ref{thm:Tbound} we show, not using any of the results of Section  \ref{sec:homotopy1},
that there is an open neighborhood $U_{\TSi}$ of $\TSi$ in $\Si$ with nice properties: see Theorem \ref{thm:Tbound}. 
Using $U_{\TSi}$,  we then show in Corollary \ref{cor:choseeps0} that there exists $\epsilon_0 >0$ 
 such that the bottom of $\Xsre$ is contained in the open neighborhood $U_{\TSi}$: see the remark
following Corollary \ref{cor:choseeps0}.

Given some $\delta >0$, let $\Usre (\delta)$ denote the open $\delta$-neighborhood of $\Xsre$  in $\Si$.
Recall that the
nonpositive number $-b_{G/H}$ is the trace of the Killing form.
The main result of this section is the following theorem:

\begin{theorem} \label{theo:scalestu}
For every $\delta>0$ there exists a corresponding $\bar t(\delta)>0$ such that for all
$v\in  \Si \bs \Usre (\delta)$ and for all $t \geq \bar t(\delta)$, we have
$$
  \sc (\gamma_v(t))\leq \tfrac{1}{2}b_{G/H}\,.
	$$
\end{theorem}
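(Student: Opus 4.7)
The strategy is to split $\Si \setminus \Usre(\delta)$ into the open set $U_{\TSi}$ from Theorem \ref{thm:Tbound} (where the bound is immediate) and the compact remainder $K(\delta) := \Si \setminus (\Usre(\delta) \cup U_{\TSi})$. By Corollary \ref{cor:choseeps0}, the ``bottom'' $\theta(\{\epsilon_0\}\times\Bth)$ of $\Bt \setminus \T(\epsilon_0)$ lies in $U_{\TSi}$, so points of $K(\delta)$ are uniformly bounded away both from the algebraic set $\Xsre$ and from the toral tips of the butterflies. On $U_{\TSi}$ itself Theorem \ref{thm:Tbound} supplies $t_1 > 0$ with $\sc(\gamma_v(t)) \leq \tfrac12 b_{G/H}$ for all $v \in U_{\TSi}$ and $t \geq t_1$, so the task is to produce a single $\bar t(\delta)$ for which the estimate holds on $K(\delta)$.

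On $K(\delta)$ I would argue by contradiction and compactness. Suppose there exist $v_i \in K(\delta)$ and $t_i \to \infty$ with $\sc(\gamma_{v_i}(t_i)) > \tfrac12 b_{G/H}$; pass to $v_i \to v_\infty \in K(\delta)$. Fix a good decomposition $f$ for $v_\infty$ with smallest-eigenvalue index set $I_1$ and put $\kf_\infty := \h \oplus \m_{I_1}$. Four cases arise: (a) $\kf_\infty$ is a non-toral subalgebra whose higher eigenspaces of $v_\infty$ are $\ad(\kf_\infty)$-invariant, so ${\rm Gr}(v_\infty) \in \WSi_s \subset \Xsre$ and $v_\infty \in \Usre(\delta)$, contradicting $v_\infty \in K(\delta)$; (b) $\kf_\infty$ is toral with ${\rm Gr}(v_\infty) \in \T$, so $v_\infty \in \TSi \subset U_{\TSi}$, again a contradiction; (c) $\kf_\infty$ is toral but ${\rm Gr}(v_\infty)$ lies in the non-algebraic piece of Corollary \ref{cor:WbsBdescr}, in which case Lemma \ref{lem:strconst222} supplies a triple $(i_0,j_0,k_0)$ with $[i_0 j_0 k_0]_f > 0$ of the index type making a negatively-signed cubic term in \eqref{scalI} dominate the positive quadratic sum as $t \to \infty$; (d) $\kf_\infty$ is not a subalgebra of $\g$ at all, in which case some $[I_1 I_1 I_p] > 0$ yields the same kind of dominant negative exponential. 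Both (c) and (d) force $\sc(\gamma_{v_\infty}(t)) \to -\infty$, contradicting $\sc(\gamma_{v_i}(t_i)) > \tfrac12 b_{G/H}$ once uniformity in $i$ is in place.

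The main obstacle is exactly this uniformity. The exponential rates $\hat v_i - \hat v_j - \hat v_k$ in \eqref{scalI} vary continuously in $v$, but a good decomposition adapted to $v_i$ need not be close to $f$, and the critical structure constants could in principle decay to zero along $v_i \to v_\infty$ at any rate, spoiling the large-$t$ comparison. This is the content of Lemma \ref{lem:torvcan}, whose proof uses the \L ojasiewicz inequality of Proposition \ref{propinequ}: it yields a polynomial lower bound on the obstructing structure constants in terms of the distance from $v$ to the semi-algebraic set $\Xsre$. Since this distance is at least $\delta$ throughout the compact set $K(\delta)$, the bound is uniform and one obtains a single $\bar t(\delta)$ beyond which $\sc(\gamma_v(t)) \leq \tfrac12 b_{G/H}$ for every $v \in K(\delta)$; combined with the $U_{\TSi}$ estimate, this yields the theorem.
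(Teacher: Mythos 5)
Your high-level strategy — contradiction, compactness of the complement of $\Usre(\delta)$, then a case analysis with asymptotic curvature estimates — is the paper's strategy, but your case list (a)--(d) is not exhaustive, and this is a genuine gap. You organize the cases by $\kf_\infty := \h \oplus \m_{I_1}$, the subalgebra spanned by $\h$ and the smallest eigenspace of the limit direction $v_\infty$. Cases (a), (b), (c) all presuppose $v_\infty \in \WSi$ (you place ${\rm Gr}(v_\infty)$ in a disk $\D(\kf)$, in $\T$, or in the non-algebraic locus described by Corollary \ref{cor:WbsBdescr}), while case (d) assumes $\kf_\infty$ is not a subalgebra. What is missing is the scenario where $\kf_\infty$ \emph{is} a subalgebra of $\g$ but $v_\infty \notin \WSi$: this happens whenever $[{\rm Gr}(v_\infty),\ad(\kf_\infty)] \neq 0$, and likewise for every intermediate subalgebra $\h < \kf \leq \kf_\infty$. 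In that scenario the productive structure constant is some $[I_1 I_p I_q] > 0$ with $2 \leq q < p$, not an $[I_1 I_1 I_p]$; the exponent $\hat v_p - \hat v_q > 0$ still dominates, but you have not identified it. The paper covers exactly this missing case, together with your case (d), uniformly in Theorem \ref{theo:negdir}, which you never invoke. Without Theorem \ref{theo:negdir} (or an inline substitute handling $v_\infty \notin \WSi$), your contradiction argument does not close.

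A smaller point: your gloss of Proposition \ref{propinequ} is not what it says. It is a \L ojasiewicz-type comparison between ``internal'' and ``mixed'' structure constants for bases near the locus of almost-abelian orthonormal bases, which produces the estimate \eqref{estadhinv} inside the proof of Lemma \ref{lem:torvcan}; it does not give a polynomial lower bound on structure constants in terms of the distance from $v$ to $\Xsre$. The $\delta$-separation from $\Xsre$ enters the argument only via the compactness/subsequence framework, which is where the uniform $\bar t(\delta)$ comes from. Otherwise your building blocks (Theorem \ref{thm:Tbound}, Corollary \ref{cor:choseeps0}, Lemma \ref{lem:strconst222}, Lemma \ref{lem:torvcan}) are the right ones, and your case (c) essentially reproduces the content of Theorem \ref{theo:scalest}.
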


\begin{proof}
We prove the theorem by contradiction.
Suppose not: then for some $\delta_0>0$, 
for each $\alpha \in \NN$ there exists a direction $v_\alpha \in  \Si \bs \Usre (\delta_0)$
and some $t_\alpha \geq \alpha$ with 
$$
 \sc(\gamma_{v_\alpha}(t_\alpha))>  \tfrac{1}{2}b_{G/H} \,.
$$ 
For each $\alpha \in \NN$, let $f_\alpha$ be a good decomposition with respect to $v_\alpha$. 
Since $\Si  \bs \Usre (\delta_0)$ is compact, by passing to a subsequence,
we may assume that  $v_\alpha \to v \in \Si  \bs \Usre (\delta_0)$ as  $\alpha \to \infty$,
and that simultaneously, for some good decomposition $f$ with respect to $v$, our good decompositions $f_\alpha \to f$ as $\alpha \to \infty$, since a sequence of $Q$-orthonormal bases of $\m$ subconverges to a $Q$-orthonormal basis.

If $v \not \in \WSi$, then  by 
the scalar curvature estimates provided in Theorem \ref{theo:negdir}, we obtain a contradiction.
Suppose, instead, that $v\in \WSi$. By the definition of $\Usre (\delta)$,
this implies that either $v \in U_{\TSi}$,  
the open neighborhood of $\TSi$ given in
Theorem \ref{thm:Tbound}, or $v \in \WSi \bs \BSi$. In each of these cases, the curvature estimates
provided in Theorem \ref{thm:Tbound} and  Theorem \ref{theo:scalest}  yield our desired contradiction, 
proving the claim.
\end{proof}

We obtain
the following corollary.

\begin{corollary}\label{cor:nnontoral}
Let $G/H$ be a compact homogeneous space. Suppose that there are no non-toral subalgebras.
Then $\sc:\MGo \to \RR$ attains its global maximum.
\end{corollary}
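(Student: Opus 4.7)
The plan is to combine the asymptotic upper bound of Theorem \ref{theo:scalestu} with the Palais-Smale condition (C) from Theorem A in \cite{BWZ}.

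First I would unpack the hypothesis. The assumption $\Sub_s=\emptyset$ forces $\WWs=\emptyset$. Since every flag in $\Flts$ must have a non-toral top, also $\Flts=\emptyset$ and $\Bt=\emptyset$, so $\B=\Bt\cup\WWs=\emptyset$. Consequently $\Xsre=\emptyset$ and, for every $\delta>0$, the open $\delta$-neighborhood $\Usre(\delta)$ of $\Xsre$ in $\Si$ is empty as well. Applying Theorem \ref{theo:scalestu} for any such $\delta$, there exists $\bar t>0$ with
\[
\sc(\gamma_v(t))\leq \tfrac{1}{2}\, b_{G/H} \qquad \text{for all } v\in\Si \text{ and all } t\geq \bar t.
\]

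Next I would show that $S:=\sup_{\blds{\MGo}}\sc$ is finite. By Corollary \ref{cor:MGosymm}, $(\MGo,L^2)$ is a complete (non-compact symmetric) space, so by Hopf-Rinow the closed ball $C:=\overline{B_{\bar t}(Q)}\subset \MGo$ is compact and $\sc$ attains a maximum $M$ on $C$. Every point outside $C$ lies on some $\gamma_v(t)$ with $v\in\Si$ and $t\geq \bar t$, and hence has scalar curvature at most $\tfrac{1}{2}b_{G/H}$. This yields $S\leq \max\bigl(M,\tfrac{1}{2}b_{G/H}\bigr)<\infty$.

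Finally I would produce a genuine maximizer via the Palais-Smale machinery. If $G/H$ is a torus then $\sc\equiv 0$ and the claim is trivial, so I may assume otherwise; then $Q$ is a normal homogeneous, non-flat metric, and the formula (\ref{scal}) evaluated at $t=0$ (together with the well-known non-negativity of the Ricci curvature of normal homogeneous metrics) gives $\sc(Q)>0$, whence $S\geq \sc(Q)>0$. Ekeland's variational principle, applied to $-\sc$ on the complete Riemannian manifold $(\MGo,L^2)$, yields a sequence $(p_n)\subset \MGo$ with $\sc(p_n)\to S$ and $\Vert (\nabla_{L^2}\sc)_{p_n}\Vert_{p_n}\to 0$. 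Eventually $p_n$ lies in the superlevel set $\{\sc\geq S/2\}$, where Palais-Smale condition (C) holds by Theorem A in \cite{BWZ}; extracting a convergent subsequence $p_n\to p_\infty$, continuity gives $\sc(p_\infty)=S$, and the global maximum is attained at $p_\infty$.

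The subtle point is the case $M<\tfrac{1}{2}b_{G/H}$: there the supremum need not be attained on the compact ball $C$, and could a priori be approached only by metrics escaping to infinity in $(\MGo,L^2)$. This is precisely where Palais-Smale condition (C) is indispensable, since it is what ensures that any almost-critical sequence of positive energy subconverges to a genuine maximizer.
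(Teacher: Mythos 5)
Your proof is correct and follows the same structure as the paper's: deduce $\B=\emptyset$ hence $\Xsre=\emptyset$, invoke Theorem \ref{theo:scalestu} to bound $\sc$ above, treat the torus case separately, and in the non-torus case use $\sc(Q)>0$ together with Palais-Smale (C) from \cite{BWZ} to extract a maximizer. The one place you are slightly more careful than the paper is the passage from a maximizing sequence to a Palais-Smale sequence: the paper simply asserts ``by choosing a maximizing sequence we obtain a Palais-Smale sequence,'' whereas you correctly invoke Ekeland's variational principle on $-\sc$ over the complete space $(\MGo,L^2)$ to produce a sequence with vanishing gradient; that extra step is genuinely needed, since an arbitrary maximizing sequence need not have small gradient.
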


\begin{proof}
Since there are no non-toral subalgebras, $\B = \emptyset$ (also any subset of $\B$ is empty). Thus we have $\Xsre=\emptyset$, and hence, 
by Theorem \ref{theo:scalestu}, $\sc:\MGo\to \RR$ is bounded above.
Now if $G/H$ is not a torus, then $\sc(Q)>0$. By choosing a maximizing sequence $(g_i)_{i \in \NN}$ we obtain
a Palais-Smale-sequence with a uniform positive lower bound for $\sc(g_i)$. By \cite{BWZ} this
sequence will subconverge to a global maximum of $\sc$. If $G/H$ is a torus, then every $G$-invariant metric is flat
and the claim follows as well.
\end{proof}

In fact, $\sc:\MGo \to \RR$ is bounded  above if and only if there exists no non-toral
subalgebra: see Lemma \ref{lem:scalpos}.

\begin{lemma} \label{lem:sigmaall} \cite{WZ2}
Let $G/H$ be a compact homogeneous space. If $\WSi=\Si$, then the scalar
curvature functional $\sc :\MGo \to \RR$ is bounded from below by zero.
If in addition $G/H$ has finite fundamental group or if $G/H$ is a torus, then
$\sc :\MGo \to \RR$ attains its global minimum.
\end{lemma}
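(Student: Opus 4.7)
The statement has two parts: the lower bound $\sc\geq 0$ on $\MGo$ under the hypothesis $\WSi=\Si$, and the attainment of the infimum under the additional restriction on $G/H$. I would handle them separately.

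\emph{Lower bound.} My plan is a pointwise analysis of the scalar curvature formula \eqref{scalI}. Every $g\in\MGo$ can be written $g=\gamma_v(t_0)$ for some $v\in\Si$ and $t_0\geq 0$ (Lemma~\ref{lem:expdiff}). Since $\WSi=\Si$, we have $v={\rm Gr}^{-1}(A)$ for some $A\in\WW$, so $A\in\D(\kf)$ for an intermediate subalgebra $\kf\in\Sub$. The defining conditions $\kf\subset\ker(A)$ and $[A,\ad(\kf)]=0$ translate, after choosing a good decomposition $f$ adapted to both $v$ and $\kf$, into two structural facts about the eigenvalue decomposition $v=\sum_{i=1}^{\ell_v}\hat v_i\cdot\Id_{\m_{I_i^v}}$: first, $\m_\kf$ is contained in the smallest eigenspace of $v$, hence $\h\oplus\m_\kf$ is a subalgebra of $\g$; second, the eigenspaces on $\kf^\perp$ are $\ad(\kf)$-invariant. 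I would then apply the scalar curvature formula \eqref{scalI} and argue, in the spirit of Proposition~\ref{prop:posflat} and the toral computation in the proof of Lemma~\ref{lem:scalpos}, that the vanishing structure constants forced by $\kf$ being a subalgebra (combined with Lemma~\ref{lem:1p5} expressing $\sum_{j\in I}d_jb_j$ as a sum of non-negative quantities), dominate the negative ``$-\tfrac14[ijk]$'' contributions. This yields $\sc(\gamma_v(t_0))\geq 0$.

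\emph{Attainment.} In the torus case, $G/H$ being a torus forces $\g$ to be abelian modulo the ineffective kernel, so every $G$-invariant metric is flat and $\sc\equiv 0$; every $g\in\MGo$ realizes the minimum. In the finite fundamental group case, let $\alpha:=\inf_{\MGo}\sc\geq 0$. I would first argue $\alpha>0$: if $\alpha=0$, any critical point at level $0$ would be Ricci-flat, hence flat by the Alekseevskii--Kimelfeld theorem \cite{A-K}, forcing $G/H$ to be a torus and contradicting finite $\pi_1$ (assuming $\dim G/H>0$). Now take a minimizing sequence $(g_i)$ with $\sc(g_i)\to\alpha$. Running a short-time negative gradient flow of $\sc$ starting at each $g_i$ and using Ekeland's variational principle (standard in this setting), one produces a Palais--Smale sequence $(\tilde g_i)$ at level $\alpha$. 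Since $\alpha>0$ and $\sc(\tilde g_i)\to\alpha>\alpha/2$ eventually, the Palais--Smale condition (C) on the super-level set $\{\sc\geq\alpha/2\}$, established in \cite{BWZ}, yields a subsequence converging to a critical point $g_\ast\in\MGo$ with $\sc(g_\ast)=\alpha$.

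\emph{Main obstacle.} The hardest step is the pointwise lower bound in part one: Proposition~\ref{prop:posflat} controls convex combinations of canonical directions $v^{\blds\kf_i}$ coming from a non-toral flag, but here $A\in\D(\kf)$ is in general not a vertex $A^{\blds\kf}$ of the disk, and $\kf$ may be toral. Extending the estimate to arbitrary disk elements requires systematically exploiting the commutation $[A,\ad(\kf)]=0$ to refine the decomposition of eigenspaces of $v$ and to pair each negative structure constant $[I_i^vI_j^vI_k^v]$ with a dominating positive contribution in \eqref{scalI}. Once that combinatorial bookkeeping is in place, the rest of the proof is routine variational calculus applied to the framework of \cite{BWZ}.
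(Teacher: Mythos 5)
The paper does not actually prove this lemma: it is imported verbatim from Wang--Ziller \cite{WZ2}, so there is no in-paper argument to compare against; judged on its own terms, your proposal has a genuine gap in each half.

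For the lower bound, your plan uses the hypothesis $\WSi=\Si$ only to place the single direction $v$ of the given metric into one disk, and then hopes to deduce $\sc(\gamma_v(t_0))\geq 0$ from the two conditions $\kf\subset\ker({\rm Gr}(v))$ and $[{\rm Gr}(v),\ad(\kf)]=0$ alone. That implication is false, so the ``combinatorial bookkeeping'' you flag as the main obstacle cannot exist. Concretely, take $G=\SU(2)\times\SU(2)$, $H=\{e\}$, and the toral line $\kf=\RR X\subset\su(2)\oplus 0$; choose $v$ with eigenvalue $-1$ on $\RR X$, eigenvalue $0.4$ on $X^{\perp}\cap(\su(2)\oplus 0)$, and eigenvalues $-0.9,-0.9,2$ on $0\oplus\su(2)$ (traceless, smallest eigenvalue attained exactly on $\RR X$). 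Then ${\rm Gr}(v)\in\D(\kf)$, since its kernel is $\kf$, it is scalar on $X^{\perp}\cap(\su(2)\oplus 0)$ and it preserves $0\oplus\su(2)$; hence $v\in\WSi$. Nevertheless, the triple inside the second $\su(2)$ factor contributes $-\tfrac14[ijk]\,e^{t(2+0.9+0.9)}$ to \eqref{scal}, while every positive term is $O(e^{t})$, so $\sc(\gamma_v(t))\to-\infty$. The disk conditions control nothing about structure constants supported away from $\kf$, and those are exactly the ones that can dominate negatively. (This does not contradict the lemma, because here $\WSi\neq\Si$; it shows that any correct proof must exploit the hypothesis for \emph{all} directions -- i.e.\ the strong algebraic constraints $\WSi=\Si$ places on $\g$ relative to $\h$, as in \cite{WZ2} -- and not merely the membership of the one direction at hand, which is all your argument uses.)

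For the attainment part, your argument that $\alpha:=\inf_{\MGo}\sc>0$ only shows there is no Ricci-flat \emph{critical point}; it does not exclude the genuinely problematic scenario $\alpha=0$ with the infimum unattained, approached at infinity along toral-type degenerations. This is precisely where the difficulty sits: Palais--Smale condition (C) from \cite{BWZ} is available only on superlevel sets $\{\sc\geq\epsilon\}$ with $\epsilon>0$, so the Ekeland/PS machinery you invoke cannot even be started until a positive lower bound (or attainment at a positive level) has been established by other means -- which again must come from the global hypothesis $\WSi=\Si$ together with finite fundamental group. As written, both halves of the proposal therefore rest on steps that are missing or would fail.
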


Now we present our first scalar curvature estimates, essentially
due to \cite{WZ2}.
Let $\WSi(\delta)$ denote the open $\delta$-neighborhood of $\WSi$ in
$\Si$ and let $\D(\kf)^\Si:={\rm Gr}^{-1}(\D(\kf))\subset \WSi$ for $\kf \in \Sub$.

\begin{theorem}\label{theo:negdir}
Let $G/H$ be a compact homogeneous space. For each $\delta
>0$ there exists $t_0(\delta)>0$ such that for all $t \geq
t_0(\delta)$ and all $v \in \Si\bs \WSi(\delta)$, we have 
$$\sc (\gamma_v(t))\leq 0 \quad {\rm and }\quad \lim_{t\to +\infty}\sc(\gamma_v(t))=-\infty\,.$$
\end{theorem}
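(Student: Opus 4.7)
The plan is to estimate the scalar curvature via the exponential-sum formula \eqref{scalI} and to show that, for $v \in \Si \setminus \WSi(\delta)$, some negative term in \eqref{scalI} eventually dominates the positive terms at a rate that is uniform on the compact set $\Si \setminus \WSi(\delta)$. First I observe that the positive contribution to $\sc(\gamma_v(t))$ is bounded above by $\tfrac{1}{2} b_{G/H} \cdot e^{-t\lambda(v)}$, where $\lambda(v) = \hat v_1 < 0$ by Lemma \ref{lem:cGH}. A negative term $-\tfrac{1}{4}[I_i^v I_j^v I_k^v] \cdot e^{t(\hat v_i - \hat v_j - \hat v_k)}$ overpowers this bound for large $t$ precisely when $[I_i^v I_j^v I_k^v] > 0$ and $\hat v_i - \hat v_j - \hat v_k > -\lambda(v)$. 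The crux is therefore to produce such a ``witness'' triple uniformly whenever $v$ is bounded away from $\WSi$.

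I would argue by contradiction. Suppose the claim fails: there exist $\delta_0 > 0$, a constant $M$, a sequence $v_n \in \Si \setminus \WSi(\delta_0)$, and times $t_n \to \infty$ with $\sc(\gamma_{v_n}(t_n)) \geq -M$. By compactness of $\Si \setminus \WSi(\delta_0)$ and of the space of $Q$-orthonormal bases of $\m$, pass to subsequences so that $v_n \to v_\infty \in \Si \setminus \WSi(\delta_0)$, good decompositions $f_n$ for $v_n$ converge to a decomposition $f_\infty$ which is good for $v_\infty$, and the sorted eigenvalues $\hat v_i^{(n)}$ converge to $\hat v_i^{(\infty)}$ (possibly with coalescence at the limit, so that the partition $\{I_i^{v_\infty}\}$ is coarser than $\{I_i^{v_n}\}$). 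Since $v_\infty \notin \WSi$, the map $A_\infty = {\rm Gr}(v_\infty)$ lies in no $\kf$-disk, meaning no intermediate subalgebra $\kf$ simultaneously satisfies $\kf \subset \ker(A_\infty) = \h \oplus \m_{I_1^{v_\infty}}$ and $[A_\infty, \ad(\kf)] = 0$. From this I would extract a witness triple of eigenvalue indices $(a,b,c)$ of $v_\infty$ with $[I_a^{v_\infty} I_b^{v_\infty} I_c^{v_\infty}]_{f_\infty} > 0$ and exponent-surplus $\hat v_a^{(\infty)} - \hat v_b^{(\infty)} - \hat v_c^{(\infty)} + \hat v_1^{(\infty)} > 0$: either $\h \oplus \m_{I_1^{v_\infty}}$ fails to be a subalgebra (yielding $[I_m I_1 I_1] > 0$ for some $m \geq 2$, with exponent $\hat v_m^{(\infty)} - 2\hat v_1^{(\infty)}$), or it is a subalgebra whose $\ad$-action fails to preserve the eigenspace decomposition of $A_\infty$ (yielding $[I_{m'} I_1 I_m] > 0$ for some $m' > m \geq 2$, with exponent $\hat v_{m'}^{(\infty)} - \hat v_1^{(\infty)} - \hat v_m^{(\infty)}$).

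The last step transfers the witness back along the sequence. Using the continuity of $[ijk]_{f_n}$ on the space of decompositions, together with the fact that the $f_n$-index sets refine into the $f_\infty$-index sets, I obtain witnesses $(a_n, b_n, c_n)$ at each $v_n$ with $[I_{a_n}^{v_n} I_{b_n}^{v_n} I_{c_n}^{v_n}] \geq c > 0$ and exponent-surplus $\geq c' > 0$, uniformly in $n$. Substituting into \eqref{scalI} gives
\[
\sc(\gamma_{v_n}(t_n)) \leq \tfrac{1}{2} b_{G/H}\, e^{-t_n \lambda(v_n)} - \tfrac{c}{4}\, e^{t_n(-\lambda(v_n) + c')} \longrightarrow -\infty,
\]
contradicting the assumption, and also producing the explicit $t_0(\delta)$ after which $\sc(\gamma_v(t)) \leq 0$. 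The pointwise statement $\lim_{t\to+\infty}\sc(\gamma_v(t)) = -\infty$ follows at once from the same witness-triple estimate applied to a single $v$. The main obstacle is precisely the eigenvalue-coalescence phenomenon: the partition of $\m$ into eigenspaces of $v$ is only upper-semicontinuous in $v$, so a witness bracket identified at the limit $v_\infty$ need not correspond to a single $[ijk]_{f_n}$ of the approximants. This is where the \L ojasiewicz-type inequality of Proposition \ref{propinequ} enters, quantifying the persistence of nonvanishing structure constants under small perturbations of $v$ and $f$ and furnishing the uniform lower bound $c > 0$ needed above.
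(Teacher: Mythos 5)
Your overall strategy is the paper's: bound the positive part of \eqref{scal} by $\tfrac12 b_{G/H}e^{-t\lambda(v)}$, and show that away from $\WSi$ there is a ``witness'' bracket $[i_0j_0k_0]_f>0$ whose exponent exceeds $-\lambda(v)$ by a uniform margin, which is exactly how the paper argues; your extraction of the witness at a limit direction (either $\h\oplus\m_{I_1}$ is not a subalgebra, or it is one but fails to commute with the limit endomorphism, cf.\ Corollary \ref{cor:submkf}) is the same dichotomy the paper uses. The only structural difference is the order of quantifiers: the paper first proves, by one compactness contradiction, the uniform claim \eqref{eqn:uniijk} that every $v\in\Si\bs\WSi(\delta)$ itself admits a triple with $[i_0j_0k_0]_f\geq\eps(\delta)$ and surplus $\geq\eps(\delta)$ (the negated statement is quantified over \emph{all} good decompositions and \emph{all} triples of the approximating $v_\alpha$, so the contradiction happens entirely at the limit and no witness ever has to be transported back), and only then substitutes into \eqref{scal}. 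Your version, contradicting the curvature bound directly and transferring a witness from $v_\infty$ to the $v_n$, also works, but the transfer needs no heavy tool: choose the witness as a triple of \emph{fixed irreducible summand indices} $(i,j,k)\in\{1,\dots,\ell\}^3$ of the limit decomposition $f_\infty$ (not as eigenvalue groupings); then $[ijk]_{f_n}\to[ijk]_{f_\infty}>0$ and $v_{n,i}-v_{n,j}-v_{n,k}+\lambda(v_n)\to w_i-w_j-w_k+\lambda(w)>0$ by plain continuity, so the coalescence of eigenvalues is not an obstacle at all.

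The one genuine error is your final appeal to Proposition \ref{propinequ}. That \L ojasiewicz-type inequality does not ``quantify the persistence of nonvanishing structure constants under perturbation''; it compares the sum $\sum_{j,k\in I}[1jk]^2$ of intra-block brackets with the mixed sum $\sum_{j\in I,k\in I^{\co}}[1jk]^2$ for bases close to one spanning an abelian subalgebra, and it is needed only in Lemma \ref{lem:torvcan} (hence in Theorems \ref{thm:Tbound} and \ref{theo:scalest}), where the most-shrinking directions degenerate toward a toral flag and positive terms with nearly vanishing exponents must be absorbed by mixed negative terms. In the situation of Theorem \ref{theo:negdir} no such comparison is available or required, and invoking Proposition \ref{propinequ} there would not produce the lower bound $c>0$ you want. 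Replace that sentence by the continuity argument at fixed summand indices (or adopt the paper's quantifier arrangement, which removes the transfer step altogether), and your proof is complete.
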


\begin{proof}  We claim that for all $\delta >0$ there exists $\eps
(\delta)>0$ such that for all $v \in \Si \bs \WSi(\delta)$, there
exists a good decomposition $f$ of $v$ with the following
property: There exists $(i_0,j_0,k_0)\in \{1,\dots,\ell \}^3$ with
\begn
  [i_0j_0k_0]_f \geq \eps(\delta)
	\quad \textrm{ and }\quad
	 v_{i_0}-v_{j_0}-v_{k_0}+\lambda(v)\geq \eps(\delta)\,,\label{eqn:uniijk}
\enn
where $\lambda(v)$ is the smallest eigenvalue of $v$.
This is seen as follows: Suppose no such
$\eps(\delta)>0$ exists. Then for each $\alpha \in \NN$ there
exists $v_\alpha \in \Si \bs \WSi(\delta)$ such that for all
good decompositions $f_\alpha$ of  $v_\alpha$ the
following holds: For all  $(i,j,k)\in \{1,\dots,\ell \}^3$,
we have either 
\beg 
[ijk]_{f_\alpha} < \tfrac{1}{\alpha}
   \quad \textrm{ or }\quad
    v_{\alpha,i} -v_{\alpha,j} - v_{\alpha,k}+\lambda(v_\alpha)
		 < \tfrac{1}{\alpha}\,.
\en
Passing to a subsequence, 
 we may assume that $v_\alpha \to w \in \Si \bs \WSi(\delta)$ 
and $f_\alpha \to f$ as $\alpha \to \infty$,
where $f$ is a good decomposition of $w$.
We obtain for all  $(i,j,k)\in \{1,\dots,\ell \}^3$
either 
\begn
 [ijk]_{f} =0
\quad \textrm{ or }\quad
 w_i-w_j-w_k+\lambda(w)\leq 0\,.\label{eqn:ijklambdaleqz}
\enn
Now let $I_1:=I_1^{w}(f)$,\dots, $I_{\ell_w}:=
I_{\ell_w}^{w}(f)$. Then for $j,k \in I_1$ and $i\in \{1,\dots,\ell\}\bs
I_1$ the inequality in \eqref{eqn:ijklambdaleqz}
cannot hold because $w_j = w_k=\lambda(w)$ and $ w_i-\lambda(w)>0$ for such $i$.
It follows that $[I_1I_1I_p]=0$ for $p >1$ and hence 
$\kf:=\h \oplus \m_{I_1}$ is a subalgebra. By a similar
argument, for $j\in I_1$, so that $w_j=\lambda(w)$, and for $i \in I_p$, $k \in I_q$ with $2\leq q< p$, 
again the inequality in \eqref{eqn:ijklambdaleqz}
cannot hold since $w_p - w_q > 0$.
We deduce that $[I_1I_pI_q]=0$ for all $1 \leq p \neq q$, which implies $\ad(\kf)$-invariance for $w$ (see Corollary \ref{cor:submkf}). 
But then $w \in \D(\kf)^\Si \subset \WSi$, which is a contradiction.
This shows the above claim.

The proof of the theorem now follows from \eqref{scalI}, here:
$$ \sc (\gamma_v (t))=
	\tfrac{1}{2} \sum_{i=1}^\ell d_ib_i \cdot e^{t(-v_i)}
      -\tfrac{1}{4} \sum_{i,j,k=1}^\ell
      [ i j k] \cdot e^{t(v_i-v_j-v_k)}\,.$$
Notice that $b_i\leq b_{G/H}$, see \eqref{corbGH}, thus
the first summand  
 is bounded from above by $\tfrac{1}{2}b_{G/H}\cdot e^{t\vert \lambda(v)\vert}$.
Moreover, by \eqref{eqn:uniijk} there exists some 
$(i_0,j_0,k_0)\in \{1,\dots,\ell \}^3$ such that the corresponding summand in the
second term  
is bounded from above
by $-\tfrac{1}{4}\cdot \eps(\delta)\cdot e^{t(\vert \lambda(v)\vert + \eps(\delta))}$.
It follows that
$$
  \sc (\gamma_v(t))
	   \leq 
	   e^{t\vert \lambda(v)\vert}\cdot \big(
		   \tfrac{b_{G/H}}{2}-\tfrac{1}{4}\cdot \eps(\delta)\cdot e^{t\eps(\delta)}\big)\,.
$$
We choose $t_0(\delta)>0$ such that the second factor is negative for all $t\geq t_0(\delta)$.
Then since $\lambda(v) \leq c_{G/H}$ (see Lemma \ref{lem:cGH}), the claim follows.
\end{proof}

We come now to our second main scalar curvature estimate.

\begin{theorem}\label{thm:Tbound}
Let $G/H$ be a compact homogeneous space. Then there exists an
open neighborhood $U_{\TSi}$ of $\TSi$ in $\Si$ and some $t_1>0$ such that for 
all $v \in U_{\TSi}$ and all $t \geq t_1$ we have
$$
   \sc(\gamma_v(t)) \leq \tfrac{1}{2}\cdot b_{G/H}\,.
$$
\end{theorem}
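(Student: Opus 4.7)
The plan is to combine the scalar curvature formula \eqref{scalI} with the rigid structure of toral flags, and then use compactness of $\T$ together with the \L ojasiewicz-type inequality of Proposition \ref{propinequ} to upgrade a pointwise decay estimate on $\TSi$ into a uniform estimate on an open neighborhood.

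First I would handle pure toral directions. For $v \in \TSi$, write $v = \tilde v / \Vert \tilde v\Vert$ with $\tilde v = \sum_{i=1}^r \lambda_i v^{\bldst\tf_i}$ for a toral flag $\varphi = (\tf_1 < \cdots < \tf_r)$, and choose a good decomposition $f$ of $\m$ simultaneously adapted to $\varphi$, giving index sets $I_1,\dots,I_r,I_{r+1}$ with $\bigoplus_{m\leq k}\m_{I_m}$ equal to the $Q$-orthogonal complement of $\h$ in $\tf_k$ for each $k\leq r$. By Lemma \ref{lem:linin} the distinct eigenvalues satisfy $\hat v_1 \leq \cdots \leq \hat v_r < 0 < \hat v_{r+1}$. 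Since $\m_{\tf_r}$ is abelian and commutes with $\h$ (Lemma \ref{lemkmm}), the same arguments used in Lemma \ref{lem:scalpos} and Lemma \ref{lem:toral} show that every coefficient $\sum_{j\in I_m}d_jb_j$ with $m\leq r$ and every structure constant $[I_sI_mI_k]$ with all three indices in $\{1,\dots,r\}$ vanishes. Substituting these vanishings into \eqref{scalI}, the only surviving positive terms carry exponent $-\hat v_{r+1}<0$, and every surviving bracket term has at least one index equal to $r+1$ with exponent $\hat v_s-\hat v_m-\hat v_k\leq -\hat v_{r+1}<0$. Therefore $\sc(\gamma_v(t))\to 0$ as $t\to\infty$ for each fixed $v\in\TSi$; in particular $\sc(\gamma_v(t))\leq \tfrac{1}{2}b_{G/H}$ for $t$ sufficiently large, depending on $v$.

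To convert this into a uniform statement, I would use compactness of $\T$ (Proposition \ref{prop: WW}) together with the finiteness of intermediate subalgebras up to conjugation (Propositions 4.1--4.2 of \cite{BWZ}), covering $\TSi$ by finitely many pieces, each associated with a single toral flag $\varphi$ and equipped with a continuously varying good decomposition. For $v$ slightly off $\TSi$ the relevant coefficients and brackets that vanished above are no longer zero but only small, while the exponential factors $e^{-t\hat v_m}$ with $m\leq r$ and $\hat v_m < 0$ grow without bound in $t$.

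The main obstacle is exactly this competition: naive continuity is not strong enough to defeat an exponential factor $e^{-t\hat v_m}$ with $\hat v_m<0$. The resolution is the \L ojasiewicz inequality of Proposition \ref{propinequ}, which supplies a semi-algebraic polynomial lower bound tying the size of the vanishing bracket-and-coefficient data to the size of the eigenvalue deformation away from the pure toral model. Plugging this bound into the scalar curvature formula absorbs the exponential growth and yields a uniform upper bound $\sc(\gamma_v(t))\leq \tfrac{1}{2}b_{G/H}$ for all $v$ in a sufficiently small open neighborhood $U_{\TSi}$ of $\TSi$ and all $t\geq t_1$ independent of $v$, as required.
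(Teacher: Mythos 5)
The overall architecture you describe — compactness of $\T$, an intermediate estimate along sequences converging to $\TSi$, and the \L ojasiewicz inequality of Proposition \ref{propinequ} to defeat the exponential growth — is essentially the paper's, which runs by contradiction with a sequence $v_\alpha \to v \in \TSi$, picks the largest toral subalgebra $\tf = \tf_m$ in the associated flag, and feeds the resulting index split $I = I^{\bldss\tf}_1(f)$, $I^{\co}$ into Lemma~\ref{lem:torvcan}. However, your first paragraph contains a genuine error, and your last paragraph mischaracterizes what the \L ojasiewicz inequality is doing, so the proposal as written is not correct.

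The claim that ``every coefficient $\sum_{j\in I_m}d_jb_j$ with $m\leq r$ \dots vanishes'' is false. For $\m_j$ inside a toral $\tf$ (so $\m_j \subset \m_0$) we have $c_j=0$, but Lemma~\ref{lem:1p5} then gives $d_jb_j = \sum_{k,l}[jkl]$, which vanishes only if $\m_j \subset \z(\g)$. The Killing form sees the whole adjoint action of $\m_j$ on $\g$, not just brackets inside $\tf$, so for a typical toral $\tf$ these coefficients are strictly positive. What does vanish (Lemma~\ref{lem:toral}) is the corrected quantity $a_1=\sum_{j\in I_1}d_jc_j+\tfrac{1}{4}[I_1I_1I_1]$, i.e.\ the Wang--Ziller combination of $b_j$-terms and brackets. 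Because of this, the ``only positive terms carry exponent $-\hat v_{r+1}<0$'' conclusion does not follow: the $I$-part of \eqref{scalI} does contribute exponentially growing terms $e^{-t\hat v_m}$ with $\hat v_m<0$, and these must be cancelled against bracket terms, not dropped. Precisely organizing this cancellation is the content of Lemma~\ref{lem:torvcan}, which rewrites the $I$-part so that the surviving terms are bounded by the $I^{\co}$-part.

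Relatedly, Proposition~\ref{propinequ} does not bound ``the size of the vanishing bracket-and-coefficient data'' against ``the size of the eigenvalue deformation.'' It supplies the purely bracket estimate $\sum_{j,k\in I}[1jk]_{f_\alpha} \leq C\sum_{j\in I,\,k\in I^{\co}}[1jk]_{f_\alpha}$ for decompositions $f_\alpha$ near a decomposition adapted to $\tf$; this is exactly estimate \eqref{estadhinv} in the proof of Lemma~\ref{lem:torvcan}, and it is what lets the $(2-\tfrac12 e^{t\eps})$ factor eventually dominate. Without stating this intermediate inequality, the ``plug the \L ojasiewicz bound into the scalar curvature formula'' step in your last paragraph has no actual content; the hard analytic work of the theorem lives there, and the proposal currently only gestures at it. Finally, your covering-by-flags strategy would also need to handle the fact that the good decomposition associated with a flag is not canonical (and may jump), which the paper sidesteps by running a contradiction argument along a single convergent sequence with $f_\alpha \to f$.
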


\begin{proof}
Suppose that the above claim is not true. Then for all $\alpha \in \NN$ 
there exists a sequence $v_\alpha \in \Si$ and $t_\alpha \geq \alpha$
with 
\begn \label{eqn:scalbigbGH}
 \sc(\gamma_{v_\alpha}(t_\alpha))> \tfrac{1}{2}\cdot b_{G/H}
\enn
and $\lim_{\alpha \to \infty} d_{\Si}(v_\alpha,\TSi)=0$.
Since $\TSi={\rm Gr}^{-1}(\T)$ is compact (Proposition \ref{prop: WW}), 
we may assume that $\lim_{\alpha \to \infty}v_\alpha=v \in \TSi$.
Moreover, for each $\alpha \in \NN$, let $f_\alpha$ be a good decomposition of $v_\alpha$. By passing to a subsequence,
we may assume
that as $\alpha \to \infty$ the decompositions $f_\alpha \to f$, a good decomposition  of $v$. 

By definition of $\TSi$ there exists a toral flag $(\tf_1 < \cdots < \tf_m)$ such that
$v=\pi_{\Si}(v^E)$ with
\beg
  v^E
	&=&
	 \sum_{j=1}^{m}\lambda_j \cdot v^{\bldss\tf_j}
	\en 
and $0 \leq \lambda_j \leq 1$ for all $1 \leq j \leq m$ and  $\sum_{j=1}^m \lambda _j=1$.  
(And $\pi_{\Si}$ denotes the radial projection from $T_{\Id}\MGo$ to the unit sphere $\Si$.) 
We may assume that $\lambda_m>0$; otherwise drop $\tf:=\tf_m$.	
Since $\lambda_m>0$, we conclude that the eigenvalues of
$v\vert_{\m_{\bldst\tf}}$ are strictly less than the eigenvalues of
$v\vert_{\m_{\bldst\tf}^\perp}$ (see Lemma \ref{lem:linin}).  This is true for $v^E$ and carries over to $v$. 
Notice also that $v\vert_{\m_{\bldst\tf}^\perp}$ has only one positive eigenvalue, 
$\Lambda(v)$ (the maximal eigenvalue of $v$), which  is greater than $-c_{G/H}>0$, by Lemma \ref{lem:cGH}.
It follows that  the subalgebra $\tf$ is $f$-adapted for the above good decomposition $f$ of $v$.

Let $I:=I_1^{\bldst\tf}(f)$ and $I^{\co}:=\{1,...,\ell\}\bs I$. Then we have
$v^E_k<v^E_j$ 
for all $k\in I$ and all $j\in I^{\co}$ and consequently
$v_k<v_j$ for all  $k\in I$ and all $j\in I^{\co}$.
(We write $v_{\alpha,i} = \Lambda(v)-\ti w_{\alpha,i}$  for some constants  $\ti w_{\alpha,i}$.)  
Thus, we now are in position to apply Lemma \ref{lem:torvcan}. This lemma tells us 
there exists some  
$t_3>0$ such that  for all $t\geq t_3$, and for all but finitely many $\alpha \in \NN$ we have
\begin{align*}
  \sc (\gamma_{v_\alpha}(t))
	 & \leq 
\tfrac{1}{2} \sum_{i\in I^{\co}}
                    d_ib_i \cdot e^{t(- v_{\alpha,i})}
       -\tfrac{1}{4}\sum_{i,j,k\in I^{\co}}[ijk]_{f_\alpha}
              \cdot    e^{t(v_{\alpha,i} - v_{\alpha,j} - v_{\alpha,k})} \\
& \leq  \tfrac{1}{2} \sum_{i\in I^{\co}}
                    d_ib_i \cdot e^{t(- v_{\alpha,i})}
=   \tfrac{1}{2}\cdot \sum_{i \in I^{\co}} d_i b_i \cdot 
      e^{t  (- \Lambda(v) +\ti w_{\alpha,i} )}\,.
\end{align*} 
Since the eigenvalues of $v_\alpha$ converge
to the eigenvalues of $v$ as $\alpha \to \infty$, we know $\ti w_{\alpha,i} \to 0$ as $\alpha \to \infty$.
Using that $-\Lambda(v) \leq c_{G/H}$ we deduce
for all $\alpha$ sufficiently large and for all $t\geq t_3$, 
\beg
   \sc (\gamma_{v_\alpha}(t)) 
	&\leq &
	 \tfrac{1}{2} \cdot \sum_{i\in I^{\co}}
    d_i b_i \cdot e^{t(\frac{1}{2}c_{G/H})} \leq \tfrac{1}{2}b_{G/H}\,.
\en
This contradicts \eqref{eqn:scalbigbGH}, 
since for $\alpha $ large we have $t_\alpha \geq \alpha >t_3$. This shows the claim.
\end{proof}

We come to our third scalar curvature estimate. Notice, that 
in the next theorem, the constant $t_2$ is allowed to depend on $v$, $f$, the sequence
$(v_\alpha)_{\alpha\in \NN}$ and so on.

\begin{theorem} \label{theo:scalest}
Let $G/H$ be a compact homogeneous space.  
Let $v \in  \WSi \bs (\BSi \cup \TSi)$ and let $(v_\alpha)_{\alpha \in \NN}$ be a sequence in $\Si$
converging to $v$. For each $\alpha \in \NN$, let $f_\alpha$ be a good decomposition of
$v_\alpha$, with $(f_\alpha)$ converging to a good decomposition $f$ of $v$.
Then there exists some $t_2>0$, such that for all $t \geq t_2$, and for all but finitely many $\alpha$,
\beg
 \sc (\gamma_{v_\alpha}(t))
&\leq  & 0 \,.
\en
\end{theorem}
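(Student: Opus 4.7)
The strategy is to extract Lie-theoretic information about $v\in\WSi\setminus(\BSi\cup\TSi)$ from Corollary~\ref{cor:WbsBdescr} and Lemma~\ref{lem:strconst222}, and then to show that these ingredients force the scalar curvature along $\gamma_v(t)$ to become (and stay) non-positive for large $t$. Continuity under $v_\alpha\to v$ and $f_\alpha\to f$ then propagates the estimate to the sequence.

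By Corollary~\ref{cor:WbsBdescr}, the Graev image $A:={\rm Gr}(v)\in\WW\setminus(\B\cup\T)$ has an explicit presentation: there is a toral flag $(\tf_1<\cdots<\tf_r)$, $r\geq 1$, and an element $A_-\in\D(\tf_r)_-$ of the non-algebraic part such that $A=\sum_{i=1}^{r-1}\mu_i A^{\tf_i}+\mu A_-$ with $\mu>0$ (since $A\notin\T$). Writing $A_-=(1-s)A^{\tf_r}+s A_r$ for the radial projection $A_r\in\partial_-\D(\tf_r)$, I would refine the good decomposition $f$ so that it is adapted simultaneously to every $\tf_i$ and to each eigenspace of $A_r$. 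The resulting block decomposition of $\{1,\ldots,\ell\}$ separates ``toral'' blocks $B_j^{\rm tor}:=I_1^{\tf_j}\setminus I_1^{\tf_{j-1}}$ ($1\leq j\leq r$) from ``non-toral'' blocks $B_2^*$ and $B_p^*$ ($p\geq 3$), as in Lemma~\ref{lem:strconst222}. Applying that lemma yields a non-vanishing block-level structure constant $[i_0 j_0 k_0]_f>0$ of one of its two types, necessarily involving $B_2^*$ together with at most two of the $B_p^*$.

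The central estimate analyzes the Wang-Ziller scalar curvature formula $\sc(\gamma_v(t))=\tfrac12\sum d_ib_i e^{-tv_i}-\tfrac14\sum[ijk]_f e^{t(v_i-v_j-v_k)}$ by organizing terms by exponent. The identity $\sum_{j\in I}d_jb_j=[III]+[I I^c I^c]+2\sum d_jc_j$ applied with $I=B_j^{\rm tor}$, together with the facts that $\m_{B_j^{\rm tor}}\subset\m_0$ is abelian (hence $[III]=0$ and $c_j=0$) and that the joint $\ad(\tf_j)$-commutation of $A^{\tf_i}$ and $A_-$ (coming from $\tf_1<\cdots<\tf_r$ and $A_-\in\D(\tf_r)$) forces $[B_j^{\rm tor},B,B']=0$ for distinct eigenblocks $B\neq B'$, produces an exact cancellation of the dominant positive contributions by diagonal negative contributions $[B_j^{\rm tor},B,B]$ at each exponent $-v_{B_j^{\rm tor}}$. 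After these cancellations the largest surviving positive exponent is $-v_{B_2^*}$, and a direct eigenvalue computation using the explicit convex form of $A$ shows that the negative term carrying $[i_0 j_0 k_0]$ has exponent exceeding $-v_{B_2^*}$ by a strictly positive amount (namely $v_{B_p^*}-v_{B_2^*}>0$ in Case~A of Lemma~\ref{lem:strconst222}, or $v_{B_q^*}-v_{B_p^*}>0$ in Case~B). Consequently $\sc(\gamma_v(t))\to-\infty$ with a uniform exponential rate.

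Continuity of eigenvalues and structure constants under $v_\alpha\to v$ and $f_\alpha\to f$ then preserves both the exponent gap and the positivity of $[i_0 j_0 k_0]_{f_\alpha}$ for all $\alpha$ beyond a finite exclusion; choosing $t_2$ large enough yields $\sc(\gamma_{v_\alpha}(t))\leq 0$ for $t\geq t_2$. The main obstacle is the cancellation step at every toral level: while $\ad(\tf_1)$-invariance of $A$ is immediate from $A\in\D(\tf_1)$ and gives the cancellation at level $j=1$, the vanishing $[B_j^{\rm tor},B,B']=0$ for $B\neq B'$ at levels $j>1$ requires a careful verification that each $A^{\tf_i}$ and $A_-$ jointly preserve the block decomposition under $\ad(\tf_j)$. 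Degenerate subcases---such as $s=1$ (so $A_-=A_r$ lies on $\partial_-\D(\tf_r)$) or $\mu_i=0$ for some $i$, where blocks collapse---must be handled by passing to an appropriate sub-flag.
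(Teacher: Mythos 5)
Your reduction of the limit direction $v$ to the data of Corollary \ref{cor:WbsBdescr} and Lemma \ref{lem:strconst222}, and the exponent-gap computation $v_{k_0}-v_{j_0}-v_{i_0}+v_\ast>0$ coming from the eigenvalues of the radially projected operator, do match the skeleton of the paper's argument. The genuine gap is in the final step, where you claim that ``continuity of eigenvalues and structure constants under $v_\alpha\to v$ and $f_\alpha\to f$ preserves the estimate'' and then choose $t_2$. The theorem demands a single $t_2$ working for all but finitely many $\alpha$, and the dangerous terms are the positive contributions $\tfrac12 d_ib_i e^{-tv_{\alpha,i}}$ for $i$ in the toral index set $I=I_1^{\tf}(f)$: these carry the largest exponents. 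Via Lemma \ref{lem:1p5} (with $c_i=0$) their coefficients are sums of structure constants of the perturbed decompositions $f_\alpha$, and the cancellation you perform at the limit uses $[ijk]_f=0$ for $j,k\in I$ — an identity that fails for $f_\alpha$. For the sequence one must dominate $\sum_{j,k\in I}[ijk]_{f_\alpha}$ (which tends to $0$) by $\sum_{j\in I,\,k\in I^{\co}}[ijk]_{f_\alpha}$ (which may also tend to $0$), uniformly in $\alpha$; this ratio bound is exactly inequality \eqref{estadhinv}, and mere convergence $[ijk]_{f_\alpha}\to[ijk]_f$ gives no control over it. Without such control, the time at which the negative terms overtake the toral positive terms depends on $\alpha$ and may escape to infinity, so no uniform $t_2$ is obtained.

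The paper closes this gap with Lemma \ref{lem:torvcan}, whose proof rests on the \L ojasiewicz-type inequality of Proposition \ref{propinequ} for structure constants of $Q$-orthonormal bases near a basis spanning an abelian subalgebra of $\m_0$; the paper explicitly notes that standard estimation methods do not apply here. Once Lemma \ref{lem:torvcan} discards the toral block uniformly along the sequence, the remaining argument is the one you outline: all surviving positive terms are bounded by a multiple of $e^{-tv_\ast}$, Lemma \ref{lem:strconst222} provides $[i_0j_0k_0]_f>0$ whose exponent beats $-v_\ast$ by the positive gap \eqref{eqn:vijkast}, and continuity of $f\mapsto[i_0j_0k_0]_f$ (which is legitimate for this single nonvanishing constant) finishes the proof. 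You should also treat the boundary case $A=A_m$ separately, as the paper does directly from \eqref{scal}, rather than by ``passing to a sub-flag''; but the essential missing ingredient in your proposal is the uniform structure-constant inequality, not these bookkeeping issues.
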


\begin{proof}
We write $A={\rm Gr}(v)$,  equivalently, $v={\rm Gr}^{-1}(A)$. 
Then by Corollary \ref{cor:WbsBdescr} 
there exists some $m \geq 1$, 
a toral flag $(\tf_1< \cdots <\tf_m)$, and $A_- \in \D_-(\tf_m)$
such that
\begn
 A \in {\rm conv}(A^{\bldst\tf_1},\dots,A^{\bldst\tf_{m-1}},A_-)\bs \T\,.
\enn
Recall that ${\rm Gr}^{-1}$ first orthogonally projects  $A$ to $A-\tfrac{1}{n}\cdot \Id_{\m}$ in the set $\{\tr = 0\}$ and
then it projects to $\Si$ by radial projection ($\pi_\Si$). As a consequence,
convex sets in $\Sphb$ are mapped to convex sets in $\{\tr = 0\}$ and on to convex sets in $\Si$. 

Now let  $A_m$ denote the radial projection of $A_-$ to $\partial_- \D(\tf_m)$.
Thus $A_- \in {\rm conv}\{A^{\bldss\tf_m},A_m\}$. As a consequence we have
$$
  A \in {\rm conv}\{A^{\bldst\tf_1},\dots,A^{\bldst\tf_m},A_m\}\,.
$$
By Lemma \ref{lem:linin} ,$\{A^{\bldst\tf_1},\dots,A^{\bldst\tf_m}\}$ is a linearly independent set.
Since $\tf_m \subsetneq \ker(A_m)$ it follows that 
$\{A^{\bldst\tf_1},\dots,A^{\bldst\tf_m},A_m\}$ is linearly independent as well. We get 
\beg
  A=\lambda_{m+1}\cdot A_m+\sum_{j=1}^m \lambda_j \cdot A^{\bldst\tf_j}
\en
with $0 \leq \lambda _j \leq 1$ and $\sum_{j=1}^{m+1} \lambda_j=1$.
Notice that we have $\lambda_{m+1}>0$, since $A \not\in \T$. 
We may furthermore assume that $\lambda_m>0$; otherwise we could remove $\tf_m$ from the
above flag, provided $A\neq A_m$,  
a case we will consider below.

Since $\lambda_m>0$ and $\tf:=\tf_m \subsetneq \ker (A_m)$,
it follows that $\tf$ is $f$-adapted
for any good decomposition $f$ of $v={\rm Gr}^{-1}(A)$; 
the eigenvalues of $v\vert_{\m_{\bldst\tf}}$ are strictly smaller than
the eigenvalues of $v\vert_{\m_{\bldst\tf}^\perp}$ (see Lemma \ref{lem:linin}).
Thus $f$ is simultaneously a good decomposition for $v^{\bldss\tf}$ and for ${\rm Gr}^{-1}(A_m)$.

Now let $I:=I_1^{\bldss\tf}(f)$ and $I^{\co}:=\{1,\dots,\ell\}\bs I$. Then we obtain
$v_k<v_j$ for all $k\in I$ and all $j\in I^{\co}$.
Thus, we are in a position to apply Lemma \ref{lem:torvcan}. There exists some  
$t_2>0$  such that for all but finitely many $\alpha \in \NN$ we have
\beg
  \sc (\gamma_{v_\alpha}(t))
  \leq
	\tfrac{1}{2} \sum_{i\in I^{\co}}
                    d_ib_i \cdot e^{t(- v_{\alpha,i})}
       -\tfrac{1}{4}\sum_{i,j,k\in I^{\co}}[ijk]_{f_\alpha}
              \cdot    e^{t(v_{\alpha,i} - v_{\alpha,j} - v_{\alpha,k})}
\en
for all $t \geq t_2$.

Now let $I_2,\dots,I_{\ell'}$, $\ell' \geq 3$ be defined as in Lemma
\ref{lem:strconst222}. That is, we have $\ker (A_m)=\tf \oplus \m_{I_2}$
and that $I_3,\dots,I_{\ell'}$ correspond to the positive eigenvalues of $A_m$.
Applying Lemma \ref{lem:strconst222}, there exist either $i_0,j_0 \in I_2$
and $k_0 \in I_p$ for $p\geq 3$ with $[i_0j_0k_0]_f>0$ -- in the case $\ell'=3$
this holds -- or there exist $i_0\in I_2$ and $j_0 \in I_p$ and
$k_0 \in I_q$ with $3 \leq p< q$ such that $[i_0j_0k_0]_f>0$.
Let $a_2=0<a_3< \cdots <a_{\ell'}$ denote the eigenvalues of $A_m$.
Then clearly
$a_{k_0}-a_{j_0}-a_{i_0}+a_2 >0$.
Since, when restricted to $\tf^{\perp}$,  $\sum_{j=1}^m \lambda_j \cdot A^{\bldss\tf_j}$  is
a multiple of the identity, 
we conclude that
\begn
 v_{k_0}-v_{j_0}-v_{i_0}+v_\ast>0\,,\label{eqn:vijkast}
\enn
where $v_\ast$ 
denotes the smallest eigenvalue of $v\vert_{\m_{\bldss\tf}^\perp}$.

We write $v_{\alpha,i}=v_i+w_{\alpha,i}$ for all $i \in \{1,\dots,\ell\}$ and all $\alpha \in \NN$.
Since $\lim_{\alpha \to \infty}(v_\alpha)=v$ we deduce $\lim_{\alpha \to \infty}w_{\alpha,i}=0$
for  each $i \in \{1,\dots,\ell\}$.
This yields
\beg
 2 \sum_{i\in I^{\co}}
                    d_ib_i \cdot e^{t(- v_{\alpha,i})}
  =
	2\sum_{i\in I^{\co}} d_i b_i
    \cdot  e^{t(-v_i- w_{\alpha,i})}									
	\leq 	
	 2 b_{G/H} \cdot e^{-t v_\ast} \cdot \sum_{i\in I^{\co}}   e^{t(-w_{\alpha,i})}		\,.
\en
Moreover, we have (for constants $\ti w_{\alpha,ijk} = w_{\alpha,i} +w_{\alpha,j} +w_{\alpha,k}$, and $\ti w_\alpha = \ti w_{\alpha,i_0 j_0 k_0}$)
\beg
-\sum_{i,j,k\in I^{\co}}[ijk]_{f_\alpha}
              \cdot    e^{t(v_{\alpha,k} - v_{\alpha,j} - v_{\alpha,i})}
	&= &
	-\sum_{i,j,k\in I^{\co}}[ijk]_{f_\alpha}
              \cdot    e^{t(v_k-v_j-v_i+\tilde w_{\alpha,ijk})} \\
  &\leq &							
	-[i_0j_0k_0]_{f_\alpha}
	 \cdot    e^{t(v_{k_0}-v_{j_0}-v_{i_0}+\tilde w_\alpha)}\\
	&\leq &							
	-  e^{-t v_\ast} \cdot [i_0j_0k_0]_{f_\alpha}  
	 \cdot    e^{t(v_{k_0}-v_{j_0}-v_{i_0}+v_\ast+\tilde w_\alpha)}
\en
with $\lim_{\alpha \to \infty} \tilde w_\alpha=0$.
Putting everything together we arrive at
\beg
  4\cdot \sc (\gamma_{v_\alpha}(t))
   \leq 
	   e^{-t v_\ast}\cdot
		\Big(  2 b_{G/H} \cdot \sum_{i\in I^{\co}} 
    e^{t(-w_{\alpha,i})}		
		-[i_0j_0k_0]_{f_\alpha} 
	 \cdot    e^{t(v_{k_0}-v_{j_0}-v_{i_0}+v_\ast+\tilde w_\alpha)} \Big)
\en
for all $t \geq t_2$ and $\alpha$ sufficiently large.

Now, by \eqref{eqn:vijkast} we know $v_{k_0}-v_{j_0}-v_{i_0}+v_\ast>0$.
Since the structure constants $[ijk]_f$ depend continuously on $f$
and since $f_\alpha \to f$ for $\alpha \to \infty$,   
we deduce $\sc(\gamma_{v_\alpha}(t))< 0$ for all $t\geq t_2$ and $\alpha$
sufficiently large.

Finally, it remains to handle the case $A=A_m$. In this case we obtain a contradiction simply by
applying \eqref{scal}. As above, let $0=a_2<a_3< \cdots < a_{\ell'}$ denote the eigenvalues
of $A$.  Let $v={\rm Gr}^{-1}(A)$.
Then the first sum in  \eqref{scal} is bounded from above by
$\tfrac{1}{2}b_{G/H}\cdot e^{t\vert \lambda(v)\vert}$: see proof of Theorem
\ref{theo:negdir}. However, since the smallest eigenvalue of $v\vert_{\m_{\bldss\tf}^\perp}$ is now
indeed the smallest eigenvalue of $v$, that is $\lambda(v)=v_\ast$,
we conclude as above that the second term in  \eqref{scal} is bounded  above
by $-\tfrac{1}{4} e^{t\vert \lambda(v)\vert}\cdot [i_0j_0k_0]_f \cdot e^{t(v_{k_0}-v_{j_0}-v_{i_0}+v_\ast)}$.
By \eqref{eqn:vijkast} since $[i_0j_0k_0]_f>0$, we obtain a contradiction in this case, as above.
\end{proof}

The second and third scalar curvature estimates proved above
use the following important fourth scalar curvature estimate.

\begin{lemma} \label{lem:torvcan}
Let $v \in \Si$, let $f$ be a good
decomposition of $v$, and let $\tf$ be a toral subalgebra which is $f$-adapted. Let $I=I^{\bldss\tf}_1(f)$,
$I^{\co}:=\{1,\dots,\ell\}\bs I$ and suppose $(v_\alpha)_{\alpha \in \NN}$ is a
sequence in $\Si$ such that for each $\alpha$, $f_\alpha$ is a good
decomposition for $v_\alpha$. Suppose as $\alpha \to\infty$, $v_\alpha \to v$ and $f_\alpha \to f$. If for
all $k\in I$ and all $j\in I^{\co}$,
$
 v_k<v_j
$ 
holds true, then there exists some $t_3>0$,
such that for all $t\geq t_3$ and for all but finitely many $\alpha \in \NN$,
we have
\begn
  \sc (\gamma_{v_\alpha}(t))
  &\leq&
	\tfrac{1}{2} \sum_{i\in I^{\co}}
                    d_ib_i \cdot e^{t(- v_{\alpha,i})}
       -\tfrac{1}{4}\sum_{i,j,k\in I^{\co}}[ijk]_{f_\alpha}
              \cdot    e^{t(v_{\alpha,i} - v_{\alpha,j} - v_{\alpha,k})}\,.\label{scalbound}
\enn
\end{lemma}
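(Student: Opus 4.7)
The plan is to show the difference
\[
D_\alpha(t) := \sc(\gamma_{v_\alpha}(t)) - \Bigl[\tfrac{1}{2}\!\sum_{i \in I^{\co}} d_i b_{i,\alpha} e^{-tv_{\alpha,i}} - \tfrac{1}{4}\!\sum_{i,j,k \in I^{\co}} [ijk]_{f_\alpha} e^{t(v_{\alpha,i}-v_{\alpha,j}-v_{\alpha,k})}\Bigr]
\]
is nonpositive for all $t\geq t_3$ and cofinitely many $\alpha$. By \eqref{scal}, $D_\alpha(t)=\tfrac{1}{2}\sum_{i\in I}d_i b_{i,\alpha}e^{-tv_{\alpha,i}} - \tfrac{1}{4}\sum_{(i,j,k)\notin(I^{\co})^3}[ijk]_{f_\alpha}e^{t(v_{\alpha,i}-v_{\alpha,j}-v_{\alpha,k})}$. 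The key algebraic input is the identity $d_i b_{i,\alpha}=2 d_i c_{i,\alpha}+\sum_{j,k}[ijk]_{f_\alpha}$ (from decomposing the Killing form), which I split according to whether $(j,k)$ lies in $(I^{\co})^2$, in $(I\times I^{\co})\cup(I^{\co}\times I)$, or in $I\times I$. This organizes $D_\alpha(t)$ into \emph{idealized} contributions (triples $a\in I$, $b,c\in I^{\co}$, nonzero in the limit) and \emph{perturbation} contributions (Casimirs $c_{a,\alpha}$ and structure constants with at least two $I$-indices, all vanishing at $f_\alpha=f$ by toralness of $\tf$, which forces $\m_I\subset\m_0$ abelian).

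First I would treat the idealized contributions. For each triple $(a,b,c)$ with $a\in I$, $b,c\in I^{\co}$, the positive weight $[abc]_{f_\alpha}e^{-tv_{\alpha,a}}$ in $D_\alpha(t)$ is matched against the six orderings of $\{a,b,c\}$ in the mixed sum, totaling $-\tfrac{1}{2}[abc]_{f_\alpha}\bigl(e^{t(v_{\alpha,a}-v_{\alpha,b}-v_{\alpha,c})}+e^{t(v_{\alpha,b}-v_{\alpha,a}-v_{\alpha,c})}+e^{t(v_{\alpha,c}-v_{\alpha,a}-v_{\alpha,b})}\bigr)$. AM-GM applied to the last two exponentials yields $\geq 2e^{-tv_{\alpha,a}}$, so the net contribution of each such triple is bounded above by $-\tfrac{1}{2}[abc]_{f_\alpha}e^{t(v_{\alpha,a}-v_{\alpha,b}-v_{\alpha,c})}\leq 0$ (with the coincident case $b=c$ giving $-\tfrac{1}{4}[abb]_{f_\alpha}e^{t(v_{\alpha,a}-2v_{\alpha,b})}\leq 0$ by the same computation). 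In the limit $f_\alpha=f$, only these triples contribute to $D$, so $D(t)\leq 0$.

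Next I would handle the perturbation triples $\{a,a',b\}$ with $a,a'\in I$ and $b\in I^{\co}$: the structure constant $[aa'b]_{f_\alpha}$ vanishes at $f_\alpha=f$ and produces the small positive residue $[aa'b]_{f_\alpha}(e^{-tv_{\alpha,a}}+e^{-tv_{\alpha,a'}})$ in $D_\alpha(t)$, while the mixed sum supplies the dominant negative term $-\tfrac{1}{2}[aa'b]_{f_\alpha}e^{t(v_{\alpha,b}-v_{\alpha,a}-v_{\alpha,a'})}$. The hypothesis $v_k<v_j$ for $k\in I, j\in I^{\co}$ yields a uniform positive gap $\eta:=\min_{j\in I^{\co},k\in I}(v_j-v_k)$, which stays bounded below by $\eta/2$ for all $\alpha$ large since $v_\alpha\to v$; accordingly the above exponent exceeds both $-v_{\alpha,a}$ and $-v_{\alpha,a'}$ by at least $t\eta/2$. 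Choosing $t_3\geq 2\log 4/\eta$ then makes this single negative exponential dominate the combined positive contribution, giving nonpositive net contribution for each such triple.

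The hard part will be absorbing the remaining perturbation residues: the Casimir $\sum_{a\in I} d_a c_{a,\alpha}e^{-tv_{\alpha,a}}$ and the all-$I$ structure constants $[aa'a'']_{f_\alpha}$ (for distinct triples inside $I$ the pairwise AM-GM bound only gives $A+B+C\geq e^{-tv_a}+e^{-tv_{a'}}+e^{-tv_{a''}}$, off by a factor of two from what is needed, and for $a=a'=a''$ no helpful third index even exists). This is precisely where the \L ojasiewicz inequality of Proposition \ref{propinequ} will be used: it provides a polynomial upper bound of the Casimirs and the all-$I$ structure constants in terms of the two-$I$-index structure constants $[aa'b]_{f_\alpha}$ handled in the previous paragraph, all of these being real-analytic functions on the space of decompositions that vanish simultaneously at $f_\alpha=f$. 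Substituting this bound lets the hard perturbation residues inherit the gap-dominated negative exponentials of the previous paragraph, at the cost of enlarging $t_3$ by a constant factor depending on the \L ojasiewicz exponent. The resulting $t_3$ depends only on $v,f$ (through $\eta$ and the \L ojasiewicz exponent) and is uniform in $\alpha$ for $\alpha$ sufficiently large, delivering $D_\alpha(t)\leq 0$ for all $t\geq t_3$ and all but finitely many $\alpha$.
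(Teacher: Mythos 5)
Your overall strategy is essentially the paper's: expand $\sc(\gamma_{v_\alpha}(t))$ via \eqref{scal}, use the identity $d_ib_i=2d_ic_i+\sum_{j,k}[ijk]_{f_\alpha}$ to regroup by triples, dispose of the triples with exactly one index in $I$ by AM--GM, exploit the gap $v_k<v_j$ ($k\in I$, $j\in I^{\co}$) for the triples with two indices in $I$, and invoke Proposition \ref{propinequ} to absorb the all-$I$ residues into the large negative mixed exponentials; this is the same mechanism as the paper's estimate \eqref{estadhinv}, and that part of your plan is sound.

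The genuine gap is your treatment of the Casimir residues $\sum_{a\in I}d_a c_{a,\alpha}e^{-tv_{\alpha,a}}$. Proposition \ref{propinequ} is a statement only about the bracket structure constants $Q([\tilde e_1,\tilde e_j],\tilde e_k)^2$ of a converging sequence of orthonormal bases; it says nothing about the Casimir constants $c_{a,\alpha}$, which measure the $\ad(\h)$-action on $(\m_a)_\alpha$, so it cannot ``provide a polynomial upper bound of the Casimirs in terms of the two-$I$-index structure constants'' as you assert. The correct, and much simpler, observation (the one the paper makes) is that these residues vanish identically along the sequence: since $\tf$ is toral, $\m_{\bldss\tf}\leq\m_0$ splits into one-dimensional $\Ad(H)$-modules, and because $f_\alpha\to f$ the corresponding modules $(\m_a)_\alpha$, $a\in I$, are again one-dimensional, hence $\Ad(H)$-trivial ($H$ is connected), hence contained in $\m_0$, so $c_{a,\alpha}=0$ for all $a\in I$; the same one-dimensionality is what legitimizes applying Proposition \ref{propinequ} inside the compact subalgebra $\m_0$. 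A second, smaller inaccuracy: Proposition \ref{propinequ} gives a \emph{linear} comparison $\sum_{j,k\in I}[ajk]_{f_\alpha}\leq C\cdot\sum_{j\in I,\,k\in I^{\co}}[ajk]_{f_\alpha}$ with a constant uniform on a neighborhood of $Z_{\vert I\vert}$; your remark that a nontrivial \L ojasiewicz exponent could be compensated ``by enlarging $t_3$ by a constant factor'' is not correct reasoning (with an exponent $N>1$ the required $t$ would blow up as the mixed constants tend to zero, destroying uniformity in $\alpha$), but it is harmless here precisely because the proposition's bound is linear.
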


\begin{proof}
For each $1 \leq i \leq \ell$, by Lemma \ref{lem:1p5} we have
${\dsp d_ib_i=2d_ic_i+\sum_{j,k\in I \cup I^{\co}} [ijk]_{f_\alpha}}$. Recall that $b_i$ and $c_i$ may
depend on $\alpha$. Since $\tf$ is a toral subalgebra, we have $\m_{\bldss\tf} \leq \m_0$.
Thus, $\m_{\bldss\tf}$ is a direct sum of $1$-dimensional $\Ad(H)$-irreducible summands $\m_1,...,\m_r$
and 
$$
  f=\m_1 \oplus \cdots \oplus \m_r \oplus \m_{r+1} \oplus \cdots \oplus \m_\ell\,.
$$
Since $\lim_{\alpha \to \infty} f_\alpha =f$ we conclude that
$$
  f_\alpha=(\m_1)_\alpha \oplus \cdots \oplus (\m_r)_\alpha \oplus (\m_{r+1})_\alpha \oplus \cdots 
	\oplus (\m_\ell)_\alpha
$$
with $\dim (\m_i)_\alpha=1$ for $i=1,...,r$. Thus $ (\m_i)_\alpha \subset \m_0$ which implies $c_i=0$
for $i \in I$. We conclude for $i \in I$ 
\beg
   d_ib_i=\sum_{j,k \in I } [ijk]_{f_\alpha}+2
            \sum_{j\in I,k \in I^{\co}} [ijk]_{f_\alpha}+
            \sum_{j,k \in I^{\co}} [ijk]_{f_\alpha}\,.
\en
With the help of (\ref{scal}) we obtain
\beg
\lefteqn{\sc (\gamma_{v_\alpha}(t))=}&&\\
 &=& 
    \tfrac{1}{2}\sum_{i\in I\cup I^{\co}} d_i b_i \cdot  e^{t(- v_{\alpha,i})}
     -\tfrac{1}{4}\sum_{i,j,k \in I \cup I^{\co}} [ijk]_{f_\alpha}
          \cdot e^{t(v_{\alpha,i} - v_{\alpha,j} - v_{\alpha,k})}\\
 &=&
  \tfrac{1}{2}\sum_{i\in I} e^{t(- v_{\alpha,i})} \cdot \Big\{
        \sum_{j,k\in I} [ijk]_{f_\alpha}
         \cdot (1- \tfrac{1}{2} e^{t(v_{\alpha,j}-v_{\alpha,k})} ) \\
  &&   
	+\sum_{j\in I,k\in I^{\co}} [ijk]_{f_\alpha}
        \cdot (2-\tfrac{1}{2}e^{t(v_{\alpha,k} - v_{\alpha,j})} - e^{t(v_{\alpha,j} - v_{\alpha,k})}  )\\
  && 
		+\,\,\,\sum_{j,k\in I^{\co}} [ijk]_{f_\alpha} \cdot 
         (1-\tfrac{1}{2} e^{t(v_{\alpha,j} - v_{\alpha,k})}
           -\tfrac{1}{2} e^{t(v_{\alpha,k} - v_{\alpha,j})})\\
  &&
		-\tfrac{1}{2}\sum_{j,k \in I^{\co}} [ijk]_{f_\alpha} \cdot 
          e^{t(2v_{\alpha,i} - v_{\alpha,j} - v_{\alpha,k})}\Big\} \\
 && 
   +\tfrac{1}{2}\sum_{i\in I^{\co}} d_i b_i  \cdot e^{t(- v_{\alpha,i})}
     -\tfrac{1}{4}\sum_{i,j,k \in I^{\co}} [ijk]_{f_\alpha} \cdot 
          e^{t(v_{\alpha,i}-v_{\alpha,j}-v_{\alpha,k})}\,.
\en
To prove the claim, we need to show that contributions from the first four summands are nonpositive.  
The fourth summand is clearly nonpositive. 
That the third summand is nonpositive follows from $1 \leq \frac12(e^{x}+e^{-x})$. 
We will show that 
$$
 \sum_{j,k\in I} [ijk]_{f_\alpha}
         \cdot (1- \tfrac{1}{2} e^{t(v_{\alpha,j} - v_{\alpha,k})})
	+\sum_{j\in I,k\in I^{\co}} [ijk]_{f_\alpha}
        \cdot (2-\tfrac{1}{2}e^{t(v_{\alpha,k} - v_{\alpha,j})}) \leq 0\,. 			
$$
By assumption there exists $\eps>0$ such that $v_{\alpha,k} - v_{\alpha,j}>\eps$
for all $\alpha$ sufficiently large and for all $j\in I,k\in I^{\co}$. Suppose now that there
exists a positive constant $C$ with
\begn
 \sum_{j,k\in I} [ijk]_{f_\alpha}  \leq C \cdot \!\!\!\!
  \sum_{j\in I,k\in I^{\co}} [ijk]_{f_\alpha}  \label{estadhinv}
\enn
for all $i\in I$ and all $\alpha \in \NN$. Then
\beg
   \lefteqn{\hspace{-4cm}\sum_{j,k\in I} [ijk]_{f_\alpha}
         \cdot  (1- \tfrac{1}{2} e^{t(v_{\alpha,j} - v_{\alpha,k})})
          \,\,+\!\!\sum_{j\in I,k\in I^{\co}} [ijk]_{f_\alpha}
          \cdot (2-\tfrac{1}{2}e^{t(v_{\alpha,k} - v_{\alpha,j})})}&&\\
   &&\hspace{-4cm}\leq  \sum_{j,k\in I} [ijk]_{f_\alpha}
         \,+\!\!\sum_{j\in I,k\in I^{\co}} [ijk]_{f_\alpha}
         \cdot  (2-\tfrac{1}{2}e^{t\eps})\\
    &&\hspace{-4cm}\leq \sum_{j\in I,k\in I^{\co}} [ijk]_{f_\alpha}
          \cdot (C+2-\tfrac{1}{2}e^{t\eps})
\en
and the desired estimate follows.

It remains to prove the estimate (\ref{estadhinv}). Recall $\m_{\bldss\tf} \leq \m_0$.
Since $\m_0$ is a compact subalgebra of $\g$ by Lemma \ref{lemnor}
and since by definition $\m_0$ consists of all the trivial 
summands of $\m$, (\ref{estadhinv})
can be viewed an estimate for a sequence of ${\rm Ad}(H)$-invariant
decompositions $(f_\alpha)_{\alpha \in \NN}$ of $\m_0$. 
Let $m_0=\dim(\m_0)$. 
By the definition of $[ijk]_{f_\alpha}$, the inequality (\ref{estadhinv}) is precisely
an estimate on a sequence $(\{e_1^\alpha,\dots,e_{m_0}^\alpha \})_{\alpha\in \NN}$ of $Q$-orthonormal bases
of $\m_0$ converging to a $Q$-orthonormal basis
$\{e_1,\dots,e_{m_0}\}$ of $\m_0$. As a consequence, we are in position to
apply Proposition \ref{propinequ} and the claim follows.
\end{proof}

\subsection{{\it Variational methods}}\label{sec:var}


In this section we apply variational methods to the scalar
curvature functional
$$
 \sc :\MGo \to \RR
$$
of compact homogeneous spaces $G/H$ admitting a non-contractible nerve $\XGH$, 
in order to prove the existence of a
critical point, an Einstein metric. 
More precisely, we show that there exists a
critical point $g$ such that the augmented coindex $m^*(g)$ of  $g$ is bounded from below by $q+1$, where
$q$ denotes the homological dimension of $\XGH$. Finally, we indicate how
Lyusternik-Schnirelmann theory can be applied.

Recall, that in Definition \ref{defin-W} we defined the nerve $\XXGH$ 
and just above Equation \eqref{eqn:containments} we set 
$$
 {\rm Gr}^{-1}(\XXGH)=:\XSGH \subset S \subset T_{\Id} \MGo
$$ 
where ${\rm Gr}$ denotes the Graev map,
a homeomorphism, see Lemma \ref{lem:Graevmap}, and $S$ denotes
the unit sphere in $T_{\Id} \MGo$: see Lemma \ref{lem:expdiff}.
 Then in Definition \ref{def:XGH} the nerve $\XGH$ is defined, 
as after Theorem \ref{theoA},
and in Lemma \ref{lem:XGHXXGH}  we show all of these versions of the nerve are homeomorphic. Notice also that in the proof of Theorem \ref{theomain},
we will consider elements in $\MGo$ as $G$-invariant metrics $g$ on $G/H$ and
not as endomorphisms $P_g$ anymore: see Section \ref{sec:ginvm}.

For proving existence, a key property of the scalar curvature functional, restricted 
to
$$
  \{\sc \geq \epsilon\}:=\{g \in \MGo \mid \sc(g) \geq \epsilon \}\,,
$$
$\epsilon >0$, is that it satisfies the Palais-Smale Condition (C) by Theorem A in \cite{BWZ}:
Any sequence $(g_i)_{i \in \N}$ in $\MGo$ with $\lim_{i \to \infty}\sc(g_i)=c \in (0, \infty)$ 
and $\lim_{i\to \infty} \Vert (\nabla_{L^2}\sc)_{g_i}\Vert_{g_i}=0$ has a convergent subsequence (converging to
a critical point).
Recall that by Equation (\ref{eqn:L2grad}) 
$ \Vert (\nabla_{L^2}\sc)_g \Vert^2_{g} = \tr (\Ric_0(g))^2$, $\Ric_0(g)$ denoting the
traceless Ricci endomorphism of $g$.

\medskip

The following theorem is our main existence result on compact
homogeneous Einstein manifolds. Recall that if the
 nerve $\XGH$ is empty, it is non-contractible.

\begin{theorem} \label{theomain}
Let $G/H$ be a compact homogeneous space. If the nerve
$\XGH$ of $G/H$ is not contractible, then $G/H$ admits a
$G$-invariant Einstein metric.
\end{theorem}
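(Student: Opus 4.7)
The plan is to apply a generalized mountain-pass argument to the scalar curvature functional $\sc : \MGo \to \RR$, which by Theorem~A of \cite{BWZ} satisfies the Palais--Smale condition (C) on every superlevel set $\{\sc \geq \eps\}$ with $\eps > 0$. Since $(\MGo, L^2)$ is a nonpositively curved symmetric space (Corollary \ref{cor:MGosymm}), the exponential map at $Q$ yields a diffeomorphism $(0, \infty) \times \Si \to \MGo \setminus \{Q\}$, $(t, v) \mapsto \gamma_v(t)$, so $\MGo$ is a topological cone over $\Si$. The Graev homeomorphism (Lemma \ref{lem:Graevmap}) together with Theorems \ref{thm:homotopy2} and \ref{thm:homotopy3} yields homotopy equivalences $\XGH \simeq \XSGH \simeq \WSi_s \simeq \Xsre$, so by hypothesis $\Xsre \subset \Si$ is non-contractible.

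First I would construct the geodesic cone $C : [0, T] \times \XSGH \to \MGo$, $C(t, v) := \gamma_v(t)$, collapsing $\{0\} \times \XSGH$ to $Q$. Since $\XSGH$ is a finite union of flag simplices $\Delta_\varphi$ with $\varphi \in \Fln$, Proposition \ref{prop:posflat} supplies the uniform lower bound $\sc(\gamma_v(t)) \geq n_{G/H}\, e^{-t c_{G/H}}$ on the cone (note $c_{G/H} < 0$). Consequently, for any preassigned threshold $c_+ > 0$, I can choose $T = T(c_+)$ large enough that $C(T, \XSGH) \subset \{\sc \geq c_+\}$.

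Next I would show that for $c_+$ sufficiently large the superlevel set $\{\sc \geq c_+\} \subset \MGo$ is non-contractible, with the cycle $C(T, \XSGH)$ representing a nontrivial class. Fix $\delta > 0$ small enough that the open $\delta$-tube $\Usre(\delta) \subset \Si$ around $\Xsre$ deformation-retracts onto $\Xsre$; this is available by semi-algebraicity and Corollary \ref{cor:Udefret}, transferred through the Graev map. Choose $c_+ > \tfrac12 b_{G/H}$ and larger than $\max\{\sc(\gamma_v(t)) : v \in \Si,\ 0 \leq t \leq \bar t(\delta)\}$, finite by compactness. By Theorem \ref{theo:scalestu}, $\sc(\gamma_v(t)) \leq \tfrac12 b_{G/H} < c_+$ whenever $v \notin \Usre(\delta)$ and $t \geq \bar t(\delta)$, so $\{\sc \geq c_+\}$ is contained in the solid cylinder $\{\gamma_v(t) : v \in \Usre(\delta),\ t > \bar t(\delta)\}$. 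Projecting this cylinder onto its first factor, any null-homotopy of $C(T, \cdot)$ inside $\{\sc \geq c_+\}$ would descend to a null-homotopy of the inclusion $\XSGH \hookrightarrow \Usre(\delta) \simeq \Xsre$, contradicting non-contractibility of $\Xsre$.

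Finally I would invoke a minimax argument. Let $\FF$ be the family of continuous maps $\phi : \XSGH \to \MGo$ freely homotopic to $v \mapsto C(T, v)$, and set
\begin{equation*}
c^\ast := \sup_{\phi \in \FF}\,\min_{v \in \XSGH} \sc(\phi(v)).
\end{equation*}
Then $c_+ \leq c^\ast < \infty$, the finiteness following from the fact that $\sc$ is not bounded above only along the tube directions, which by the previous step cannot exceed some topological ceiling. If $c^\ast$ were a regular value, the $L^2$-gradient flow of $\sc$ (which by Remark \ref{rem:L2} agrees up to scale with the volume-normalized Ricci flow) together with Palais--Smale (C) would produce a deformation strictly raising $\min \sc \circ \phi$ for near-optimal $\phi \in \FF$, contradicting the definition of $c^\ast$. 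Hence $c^\ast$ is a critical value and the corresponding critical point is a $G$-invariant Einstein metric. The main obstacle is the argument in the third paragraph: the retraction $\Usre(\delta) \to \Xsre$ takes place in the sphere $\Si$, while the superlevel set lives in $\MGo$, and intermediate directions $v'$ along the retraction may have $\sc(\gamma_{v'}(t))$ dipping below $c_+$. This is remedied by coupling the $v$-retraction with a simultaneous slow increase of $t$, exploiting the semi-algebraic geometry of $\Xsre$ and the uniform curvature bounds on non-toral simplices from Proposition \ref{prop:posflat} to keep the deformation inside $\{\sc \geq c_+\}$.
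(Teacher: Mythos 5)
Your proposal follows the paper's overall architecture — Graev map, the chain of homotopy equivalences $\XGH \simeq \XSGH \simeq \WSi_s \simeq \Xsre$, uniform lower scalar curvature bounds on the geodesic cone over $\XSGH$ via Proposition~\ref{prop:posflat}, the upper bound from Theorem~\ref{theo:scalestu} containing the superlevel set in a truncated cone, and a linking argument through the retraction. However, the minimax quantity in your final paragraph is set up incorrectly and the argument collapses there.

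You define $\FF$ as the family of maps $\phi : \XSGH \to \MGo$ freely homotopic to $v \mapsto C(T,v) = \gamma_v(T)$ and set $c^\ast := \sup_{\phi\in\FF}\min_{v\in\XSGH}\sc(\phi(v))$, then assert $c^\ast < \infty$. But $c^\ast = +\infty$: for every $S \geq T$ the map $\phi_S(v) := \gamma_v(S)$ lies in $\FF$ (the straight-line homotopy $s \mapsto \phi_{(1-s)T + sS}$ connects it to $C(T,\cdot)$), and Proposition~\ref{prop:posflat} gives $\min_{v\in\XSGH}\sc(\phi_S(v)) \geq n_{G/H}\,e^{-Sc_{G/H}} \to +\infty$. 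The directions in $\XSGH$ are precisely the non-toral cone directions along which $\sc$ blows up, so there is no ``topological ceiling'' restraining the image of $\phi$. The variational family must be taken one dimension higher: you need \emph{filling} sets. In the paper's formulation (Lemma~\ref{lemcoindex}, invoking Theorem~\ref{thm:gn}), one takes a homological family of compact sets $A$ with $B := C(T,\XSGH) \subset A \subset \MGo$ such that a fixed nontrivial class $\alpha \in H_{q+1}(\MGo, B)$ lies in the image of $H_{q+1}(A,B)$, and sets $c := \sup_A \inf_{g\in A}\sc(g)$. The cone $A_B = \{\gamma_v(t) : v\in\XSGH,\ 0\leq t\leq T\}$ gives the lower bound $c \geq \eps := n_{G/H} > 0$, and the \emph{upper} bound $c \leq \sc_+$ follows because no filling $A$ of $B$ can lie entirely in $\{\sc\geq\sc_+\}\subset C(\USi,\bar t(\delta))$ — otherwise $B$ would be null-homotopic there, contradicting the retraction $\rho:\USi\to\XSGH$. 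Alternatively, the paper's proof of Theorem~\ref{theomain} bypasses an explicit minimax altogether: it flows the cone $A_B$ by the truncated gradient flow $\Phi_s$, observes that for every $s$ some point of $\Phi_s(A_B)$ has $\sc < \sc_+$ (by the same non-contractibility argument), and extracts a Palais–Smale sequence in $\{\eps \leq \sc \leq \sc_+\}$ directly.

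Two smaller points. First, $\XSGH$ is in general \emph{not} a finite union of flag simplices when $\dim N_G(H)>\dim H$ (there can be continuous families of non-toral intermediate subalgebras); compactness and semi-algebraicity hold, but finiteness does not, and the estimate of Proposition~\ref{prop:posflat} is what makes the lower bound uniform, not finiteness. Second, your claim that $\Usre(\delta)$ deformation-retracts onto $\Xsre$ is not what Corollary~\ref{cor:Udefret} gives: the retraction is of the ambient neighborhood $\USi$, and $\Usre(\delta)$ is merely chosen inside it so that the upper curvature estimate applies outside $\USi$. Finally, the ``obstacle'' you raise in the last paragraph — intermediate directions along the retraction losing high scalar curvature — is not actually a problem: the hypothetical null-homotopy is first projected radially to a fixed $t$-level in $\Si$ (this step has nothing to do with curvature), and only then is composed with $\rho$; no coupled $t$-increase is needed.
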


\begin{proof}  We may assume that $G/H$ has finite fundamental group.
If $\XGH=\emp$, then the claim
follows from Corollary \ref{cor:nnontoral}. 
Now suppose $\XGH \neq \emp$. 
By Lemma \ref{lem:sigmaall} we may assume that the compact set
$\Xsre$ is a proper subset of  $\Si$.
 By Corollary \ref{cor:Udefret},
 $\Xsre$ is a strong deformation retract of the open neighborhood $\USi:={\rm Gr}^{-1}(\UU)\subset \Sph$
of $\Xsre$ in $\Sph$. Recall also that  
we have the following chain of inclusions (see Equation (\ref{eqn:containments})):
\beg
 \XSGH\subset \WSis\subset \Xsre\subset
\USi \subset \Sph\,.
\en
There exist strong 
deformation retractions from $\USi$ to $\Xsre$, see above, and from 
$\Xsre$ to  $\WSis$, by Theorem \ref{thm:homotopy2},
and from $ \WSis$ to  $\XSGH$ by Theorem \ref{thm:homotopy3}.
In Section \ref{sec:nerve}, we show
that $\XGH$ is homeomorphic to $\XSGH={\rm Gr}^{-1}(\XXGH)\subset \Sph$ (see Lemma \ref{lem:XGHXXGH}).

Let $\delta>0$ be sufficiently small that that the open $\delta$-neighborhood $\Usre(\delta)$
of $\Xsre$ in $\Si$ is contained in $\USi$. We define the open truncated cone
\beg
  C(\USi,\bar t(\delta)):=\{ \gamma_v(t) \mid v \in
   \USi,\,\,t> \bar t(\delta)\}\subset \MGo
\en 
over $\USi$ with $\bar t(\delta)>0$, as in Theorem
\ref{theo:scalestu}. Recall that $\exp:T_{\Id}\MGo 
\to \MGo\,;\,\,v \mapsto \gamma_v(1)$
is an diffeomorphism: see Lemma \ref{lem:expdiff}.
Since $ \Usre(\delta) \subset  \USi$
we have $\Si \bs \USi \subset \Si \bs  \Usre(\delta) $,
thus by  Theorem \ref{theo:scalestu},
for all $v \in \Si \bs \USi$ we obtain
$$
 \sc (\gamma_v(t))\leq \tfrac{1}{2}b_{G/H}
$$ 
for all $t\geq \bar t(\delta)$. Since the closed geodesic $\bar t(\delta)$-ball around $Q$ 
 in $\MGo$ is compact, there exists a positive constant 
$\sc_+>2\cdot \sc (Q)>0$ such that 
\begin{eqnarray}
\{\sc\geq \sc _+\} \subset
C(\USi,\bar t(\delta))\,.\label{eqn:scsc+C}
\end{eqnarray}
By Proposition \ref{prop:posflat} there exists some 
$T> \bar t(\delta)$ such that for all $t\geq T$ and all $v\in \XSGH$,
we have $\sc (\gamma_v(t))> \sc _+$. We define the cycle
\begn
  B:=\{\gamma_v(T) \mid v \in  \XSGH\} \subset \{\sc >\sc_+\} 
  \label{Bscalp}
\enn
and the contractible cone
\beg
 A_B:=\{\gamma_v(t) \mid v \in  \XSGH,\,\,0\leq t \leq T \}
\en
over $B$. 
 By Proposition \ref{prop:posflat}, we have $\sc (g)>0$ for
all $g \in A_B$. Since $A_B$ is compact, there exists some $\eps >0$ with
\begn \label{eqn:Aeps}
 A_B \subset \{\sc \geq \eps \}\,.
\enn
We claim that $B$ is not contractible in $ C(\USi,\bar t(\delta))$.

If $B$ were contractible in $ C(\USi,\bar t(\delta))=\USi\times (\bar t(\delta),\infty)$,
then by applying the radial projection from  $\USi\times (\bar t(\delta),\infty)$ to
$\USi \times \{2\bar t(\delta)\}$ it would follow that 
$\XSGH$ is contractible in $\USi$. 
Let us denote this contraction by $r:[0,1] \times\XSGH\to \USi$, $r|_{\{0\}\times \XSGH}=\Id_{\XSGH }$.
Notice that by the above there
exists a continuous map $\rho:\USi \to \XSGH$ with $\rho\vert_{ \XSGH}=\Id_{\XSGH }$. 
Then  
$$
  \rho \circ r:[0,1] \times\XSGH\to \XSGH
$$ 
is a contraction of $\XSGH$, contradicting our assumption. Thus $B$ is not contractible. 

As in Section \ref{sec:ginvm}, we write $\Vert \cdot \Vert_g$
to denote the norm on $T_g\MGo$ induced by the $L^2$-metric.  
Let $Z=\{g\in \MGo \mid (\nabla _{L^2} \sc)_g=0\}$
be the set of critical points of $\sc:\MGo \to \RR$. By \cite{BWZ} 
we know $Z$ is compact. 
Thus there exists a smooth positive function $\Psi$ on $\MGo$ with 
$\Psi(g) \equiv 1$ close to $Z$, and with
$$
\Psi (g)= \tfrac{1}{\Vert (\nabla _{L^2} \sc )_g  \Vert_g}
$$
outside a compact neighborhood $K$ of $Z$, and $\Psi(g)\leq
\frac{1}{\Vert (\nabla _{L^2} \sc )_g\Vert_g}$ for all $g \in \MGo$. 
We obtain
a smooth vector field 
$$
 X(g)= \Psi (g) \cdot (\nabla _{L^2} \sc )_g
$$
on $\MGo$ with $\Vert X(g) \Vert_g \leq 1$ for all $g \in \MGo$.

Let $g(s)$ be an integral curve of $X$. Since $(\MGo,L^2)$ is complete and
$\Vert X \Vert \leq 1$, $g(s)$ is defined for all $s\in\RR$.
We have for $s\geq 0$
\beg
  \tfrac{d}{ds}\, \sc (g(s))= \Psi (g(s)) \cdot \Vert
  ( \nabla _{L^2} \sc )_{g(s)}\Vert^2_{g(s)}\geq 0\,.
\en 
Let $(\Phi_s)_{s \in \RR}$ denote the flow of $X$. For all $s\geq 0$,  
$$
 \Phi_s(B)\subset \{\sc \geq \sc_+\} \subset C(\USi,\bar t(\delta)) \,,
$$ 
thus for every \mbox{$s\geq 0$} there exists some $g_s \in A_B$ with $\sc (\Phi_s(g_s))<\sc _+$. 
Otherwise there would exist $s_0>0$
such that $\Phi_{s_0}(A_B)\subset \{\sc\geq \sc _+\}\subset
C(\USi,\bar t(\delta))$, and we would obtain the following explicit contraction of
$B$ in $C(\USi,\bar t(\delta))$: First, we deform $B$ to $\Phi_{s_0}(B)$ by 
$\Phi\vert_{s \in [0,s_0]}$,
next we contract $\Phi_{s_0}(B)$  to the point $\Phi_{s_0}(Q)$, 
using the radial
contraction from $B$ to $Q$  
in the cone $A_B$. 

Now consider the sequence $(g_j)_{j \in\NN}$ in $A_B$. 
After passing to a subsequence, we may assume that $(g_{j})_{j\in \NN}$ converges to $g_\infty \in A_B$. 
Using that for all $j,k \in \NN$ with $k>j$ we have $\sc(\Phi_j(g_k))<\sc_+$,
it follows that $\sc (\Phi_{j}(g_\infty))\leq \sc _+$ for all $j \in \NN$.
As a consequence, for all $j \in \NN$, we have
\beg
     \int_0^{j} \Psi(\Phi_s (g_\infty) )\cdot \Vert (\nabla _{L^2}
          \sc)_{\Phi_s(g_\infty)}\Vert^2_{\Phi_s(g_\infty) }\, ds
      \leq  \sc _+ -\sc (\Phi_0(g_\infty))\,.
\en
Since the integrand is non-negative, there exists a sequence $(s_j)_{j \in \NN}$
with $s_j \to \infty$ as $j \to \infty$, such that as $j \to \infty$,
\begn \label{eqn:PS}
  \Psi(g_j' )\cdot \Vert (\nabla _{L^2}
          \sc)_{g_j'}\Vert^2_{g_j'} \to 0
\enn
with $g_j':=\Phi_{s_j} (g_\infty)$.
We show that in fact, $\Vert (\nabla _{L^2} \sc)_{g_j'}\Vert^2_{g_j'} \to 0$.  
For a contradiction, suppose there exists a subsequence of $(s_j)_{j \in \NN}$ with
$\Vert (\nabla _{L^2} \sc)_{g_j'}\Vert^2_{g_j'} \geq \varepsilon_1 > 0$
for all $j\in \NN$. Recall that
by construction, the smooth positive function $\Psi$ restricted to the compact neighborhood $K$ of $Z$ 
has a positive lower bound $\varepsilon_2 >0$. Suppose now that for infinitely many $j$ we have
$g_j'\in K$. Then
${\dsp  \Psi(g_j' )\cdot \Vert (\nabla _{L^2}
          \sc)_{g_j'}\Vert^2_{g_j'} \geq \varepsilon_1\varepsilon_2 >0}$, a contradiction.
If, alternatively, for infinitely many $j$ we have
$ g_j' \not\in K$,
by construction the left-hand side of  \eqref{eqn:PS} equals 
${\dsp \Vert (\nabla _{L^2}\sc)_{g_j'}\Vert_{g_j'} \geq \sqrt{\varepsilon_1} >0}$, 
and we again obtain a contradiction.
In either case, 
this yields a Palais-Smale-sequence of $\sc :\MGo \to \RR$ in
$\{ \sc \geq \epsilon\}$, see \eqref{eqn:L2grad} and \eqref{eqn:Aeps}, hence by Theorem A in
\cite{BWZ} we obtain the existence of a $G$-invariant Einstein metric on $G/H$.
\end{proof}

\medskip

For the compact homogeneous space $G/H=\SU(4)/\Un(2)$,
where ${\rm U(2)}\subset \SO(4) \cap{\rm Sp}(2)$, the next
picture shows the graph of the scalar curvature functional $\sc
:\MGo \to \RR$.
\begin{figure}[ht]
\begin{center}
\includegraphics[width=2.5in,height=2in]{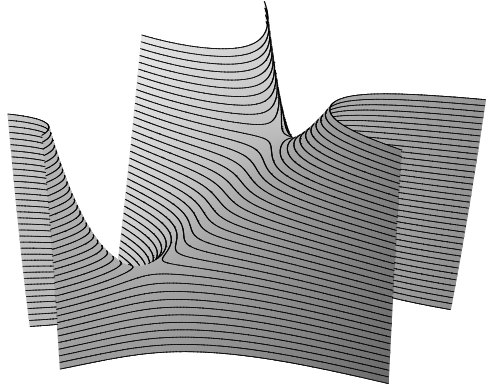}
\end{center}
\caption{Scalar curvature functional on $\SU(4)/\Un(2)$.}
\end{figure}
In this case, the set $B$ consists of three points $g_1,g_2,g_3$,
such that $(g_1,\sc(g_1)),\,(g_2,\sc(g_2))$ and $(g_3,\sc(g_3))$
are located on the three connected components of the highest visible energy
level. The base point $(Q,\sc(Q))$ is a point on a lower
energy level (say below the lower saddle point), and the cone $A_B$ 
corresponds to a union
of three curves joining $(Q,\sc(Q))$ with $(g_i,\sc(g_i))$, $i=1,2,3$.
When we apply the gradient flow $\Phi$ to the cone
$A_B$, here the points $p_s\in \Phi_s(A)$, $s\geq 0$, having the lowest
energy among all $p \in \Phi_s(A)$,  converge, as $s\to +\infty$,
to the lower saddle point.

\begin{definition}\label{def:coindex}
The {\em coindex} of a critical point
$g\in \MGo$ is $m(g)=\dim(W_{>0})$, the dimension of the maximal subspace $W_{>0}$ of $T_g\MGo$ on which the Hessian of $\sc:\MGo\to\RR$ is positive definite. The {\em augmented coindex} of $g$ is 
$m^*(g)=\dim(W_{\geq 0})$, the dimension of the maximal subspace $W_{\geq 0}$ on which the 
Hessian is positive semi-definite.
\end{definition}

Recall that given a topological space $X$ and a field ${\mathbb F}$, and for $q \in \NN_0$, 
the reduced homology group $\tilde H_q(X,{\mathbb F})$ 
of $X$ with coefficients in ${\mathbb F}$ equals the homology group
$H_q(X,{\mathbb F})$ for $q \geq 1$ and satisfies $H_0(X,{\mathbb F})=\tilde H_0(X,{\mathbb F}) \oplus 
{\mathbb F}$. This implies that $X$ cannot be contractible
if $\tilde H_*(X,{\mathbb F}) \neq 0$.

In what follows we apply a version of \cite[Corollary (13)]{G.N}, which we
restate here in our context for convenience. Notice that we state a version for only a finite-dimensional space and with no
group action involved, and also that instead of the given functional $f$ we consider $-f$,
so that we have replaced infimum by supremum and so on.

\begin{theorem}\cite[see Corollary (13)]{G.N}\label{thm:gn}
 Let $M$ be a $C^2$-smooth Riemannian manifold
and $f:M\to \RR$ be $C^2$-smooth. Let $B$ be a compact subset of $M$ and 
suppose that there exists
a homological family $\Fl(\alpha)$ with boundary $B$ of dimension $q+1$. Set
$$ 
 c:=\sup_{A\in \Fl(\alpha)} \big\{\,\inf\{f(p)\mid p \in A\}  \,\big\} \in \RR\,.
$$ 
Suppose now that $\min \{f(b) \mid b \in B\}>c$ and that $f$ satisfies
the Palais-Smale condition (C) at the level $\{f =c\}$.
Then there exists a critical point $p$ of $f$ with $f(p)=c$ and $m^*(p) \geq  q+1$.
\end{theorem}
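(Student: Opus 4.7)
The plan is a standard minimax/linking argument, refined to track Morse-theoretic information at the critical level $c$. The crucial structural input is the separation condition $\min_B f > c$: it guarantees that any deformation supported in a small neighborhood of the level set $\{f=c\}$ leaves $B$ pointwise fixed, so $\Fl(\alpha)$ is invariant under such deformations (by the defining stability of a homological family under isotopies relative to its boundary). The proof then splits into two stages: first producing a critical point at the minimax level $c$, and then forcing the augmented coindex at such a point to be at least $q+1$.

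For existence, I would argue by contradiction, assuming $c$ is a regular value. Combining Palais--Smale (C) at level $c$ with the quantitative deformation lemma yields $\epsilon\in(0,\tfrac12(\min_B f-c))$ and a continuous deformation $\eta:M\to M$, isotopic to the identity, such that $\eta$ is the identity on $\{f\geq c+2\epsilon\}$ --- in particular on $B$ --- and $\eta(\{f\geq c-\epsilon\})\subset\{f\geq c+\epsilon\}$. By definition of $c$, choose $A\in\Fl(\alpha)$ with $\inf_A f>c-\epsilon$; then $\eta(A)\in\Fl(\alpha)$ (the deformation fixes $B$) and $\inf_{\eta(A)}f\geq c+\epsilon>c$, contradicting the minimax definition of $c$. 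Hence $c$ is a critical value.

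For the coindex bound, suppose every critical point $p$ at level $c$ satisfies $m^*(p)\leq q$. By Palais--Smale the critical set $K_c\subset\{f=c\}$ is compact, and a Marino--Prodi type perturbation reduces to the case $K_c$ finite with $f$ Morse near each point. At each such $p$ the non-negative eigenspace of the Hessian has dimension $\leq q$, so the local ``non-ascent'' directions form a smooth cell of dimension $\leq q$, while the strict ascent directions span a subspace of dimension $\geq \dim M-q$. A general-position / transversality argument then permits any $(q+1)$-dimensional cycle $A\in\Fl(\alpha)$ with $\inf_A f$ sufficiently close to $c$ to be perturbed, inside an arbitrarily small neighborhood of $K_c$, so as to miss the union of these low-dimensional descending cells; composing with the gradient-like flow pushes the perturbed cycle entirely into $\{f\geq c+\epsilon\}$ while keeping it in $\Fl(\alpha)$, again contradicting the definition of $c$.

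The main obstacle is this last transversality step: the descending sets at a (possibly degenerate) critical point are only generalized cells, the members of $\Fl(\alpha)$ are merely continuous $(q+1)$-dimensional objects, and the dimension count $\dim A=q+1$ versus $m^*(p)\leq q$ is sharp. This is typically handled, as in Ghoussoub's framework, either by an index-pair / local CW-model construction around each critical point, or by Marino--Prodi approximation of $f$ by Morse functions with the same minimax value followed by classical Morse theory. In both routes, the hypothesis $\min_B f>c$ is used essentially to confine all modifications to a small neighborhood of $\{f=c\}$, preserving the homological constraint that defines $\Fl(\alpha)$.
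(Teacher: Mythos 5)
The paper does not prove this theorem: it is explicitly restated from Ghoussoub \cite[Corollary (13)]{G.N}, specialized to a finite-dimensional $M$ with no group action and with $f$ in place of $-f$, and is then applied as a black box in Lemma~\ref{lemcoindex}. There is therefore no internal argument to compare your attempt against, and I can only assess whether your sketch is a plausible reconstruction of the cited result.

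Your two-stage plan --- the quantitative deformation lemma together with the separation $\min_B f>c$ for existence of a critical point at level $c$, then a Morse-theoretic / Marino--Prodi reduction for the coindex bound --- has the right shape, and you correctly identify the second stage as the one carrying the difficulty. The transversality mechanism you propose there, however, does not go through as written. Members $A$ of $\Fl(\alpha)$ are merely compact subsets of $M$, not $(q+1)$-dimensional manifolds, so there is no literal sense in which one perturbs $A$ transverse to unstable cells; and even on the chain level the dimension count is not favorable: a $(q+1)$-cycle can generically meet both the descending cells (dimension $\leq q$) and, worse, the stable manifolds of the ascent flow (codimension $\leq q$), and fragments landing on a stable manifold flow \emph{into} the critical point rather than past it, so the gradient flow does not simply carry the perturbed cycle above level $c$. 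The mechanism that actually closes the argument is the homological one you only gesture at in your final sentence: after Marino--Prodi perturbation making the critical set at level $c$ finite and Morse, if every such nondegenerate point had coindex $\leq q$ then the attaching-cell long exact sequence gives $H_{q+1}(\{f\geq c-\epsilon\},\{f\geq c+\epsilon\})=0$, so $\alpha$ lifts to $\{f\geq c+\epsilon\}$ and is carried by some $A\in\Fl(\alpha)$ with $\inf_A f\geq c+\epsilon$, contradicting the definition of $c$; the Marino--Prodi estimate that perturbed coindices lie between $m(p)$ and $m^*(p)$ then transports the conclusion back to the original $f$. Your outline names these ingredients but does not carry out the homological lifting, which is precisely the step that produces the contradiction.
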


A family $\Fl(\alpha)$ of compact subsets $A$ of $M$ is said to be a homological family of dimension $q+1$ with
boundary $B$ if for some non-trivial class $\alpha \in H_{q+1}(M,B)$
we have that
$$
 \Fl(\alpha)=\{A \subset M \mid \alpha \textrm{ is in the image of } i_*:H_{q+1}(A,B)\to H_{q+1}(M,B)\}
$$
where $i_*$ is the homomorphism induce by the inclusion $i:A\to M$. Notice that $B \subset A$ for all $A\in\Fl(\alpha)$.

\begin{lemma} \label{lemcoindex}
Let $G/H$ be a compact homogeneous space. Suppose that for
a field ${\mathbb F}$
and for $q\in \NN_0$ we have $\ti H_q(\XGH,{\mathbb F})\neq 0$.
Then $G/H$ carries an Einstein metric $g\in \MGo$ whose
augmented coindex $m^*(g)$ is greater than or equal to $q+1$.
\end{lemma}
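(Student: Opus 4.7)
The plan is to apply the Ghoussoub--Nirenberg generalized mountain pass lemma (Theorem \ref{thm:gn}) with the same cycle $B$, cone $A_B$, truncated cone neighborhood $C(\USi,\bar t(\delta))$, and positive constants $\sc_+ < \min_B \sc$ constructed in the proof of Theorem \ref{theomain}. The new ingredient is that the non-trivial class in $\tilde H_q(\XGH,\mathbb F)$ will produce a non-trivial minimax class of relative dimension $q+1$, yielding an Einstein metric whose augmented coindex is at least $q+1$.

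First, I would produce a non-zero class $\alpha \in H_{q+1}(\MGo, B; \mathbb{F})$. Since $(\MGo,L^2)$ is a totally geodesic submanifold of the Hadamard space $(P_1(n),L^2)$, it is itself a Hadamard manifold and hence contractible. The long exact sequence of the pair $(\MGo,B)$ then gives $H_{q+1}(\MGo,B) \cong \tilde H_q(B)$. Since $B = \{\gamma_v(T)\mid v\in\XSGH\}$ is homeomorphic to $\XSGH$, which is homeomorphic to $\XGH$ by Lemma \ref{lem:XGHXXGH}, the hypothesis $\tilde H_q(\XGH,\mathbb F)\neq 0$ produces the required non-zero $\alpha$. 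The cone $A_B$ itself lies in $\Fl(\alpha)$: being contractible it satisfies $H_{q+1}(A_B,B)\cong\tilde H_q(B)$, and the inclusion $A_B\hookrightarrow\MGo$ induces the identity on $\tilde H_q(B)$ under the two isomorphisms.

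Second, I would establish the minimax bound $c := \sup_{A\in\Fl(\alpha)}\inf_A \sc \leq \sc_+ < \min_B \sc$. The key point is that the inclusion $\XSGH\hookrightarrow\USi$ is a homotopy equivalence, as a composition of the strong deformation retractions in Corollary \ref{cor:Udefret}, Theorem \ref{thm:homotopy2} and Theorem \ref{thm:homotopy3}. Since the diffeomorphism $\USi\times(\bar t(\delta),\infty)\cong C(\USi,\bar t(\delta))$ identifies $B$ with $\XSGH\times\{T\}$, the inclusion $B\hookrightarrow C(\USi,\bar t(\delta))$ is also a homotopy equivalence, so $H_*(C(\USi,\bar t(\delta)),B)=0$. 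Consequently, if some $A\in\Fl(\alpha)$ were contained in $C(\USi,\bar t(\delta))$, the inclusion $A\hookrightarrow\MGo$ would factor through $C(\USi,\bar t(\delta))$ and force $\alpha=0$, contradicting our choice. Thus every $A\in\Fl(\alpha)$ contains a point $p\notin C(\USi,\bar t(\delta))$; by \eqref{eqn:scsc+C}, $\sc(p)<\sc_+$, so $\inf_A\sc<\sc_+$ and therefore $c\leq \sc_+$. The strict inequality $\sc_+<\min_B\sc$ holds by the choice of $T$ together with compactness of $B$.

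Finally, I would invoke Theorem \ref{thm:gn}. Proposition \ref{prop:posflat} gives $\inf_{A_B}\sc \geq \varepsilon>0$, so $c\geq \varepsilon>0$, and Theorem A of \cite{BWZ} supplies the Palais--Smale condition (C) at the positive level $c$. Theorem \ref{thm:gn} then yields a critical point $g\in\MGo$ of $\sc$ with $\sc(g)=c$ and augmented coindex $m^*(g)\geq q+1$, producing the required Einstein metric. The main obstacle is the relative-homology computation in step two: one must carefully assemble the homotopy equivalences of Sections \ref{sec:homotopy1} and \ref{sec:homotopy2} to identify $B\hookrightarrow C(\USi,\bar t(\delta))$ as a homotopy equivalence, so that $\alpha$ genuinely obstructs the contraction of any $A\in\Fl(\alpha)$ into the high-energy truncated cone; once this is in hand the remainder is a formal application of the generalized mountain pass lemma.
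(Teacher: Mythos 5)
Your proposal is correct and follows essentially the same route as the paper: the same class $\alpha\in H_{q+1}(\MGo,B)$ obtained from contractibility of $\MGo$, the same homological family with $A_B\in\Fl(\alpha)$, the bound $c\leq \sc_+$ via the fact that $B$ is a (strong) deformation retract of $C(\USi,\bar t(\delta))$ so that $H_{q+1}(C(\USi,\bar t(\delta)),B)=0$, and the same final application of Theorem \ref{thm:gn} together with the Palais--Smale condition from \cite{BWZ}.
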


\begin{proof} We will invoke Theorem \ref{thm:gn}.
Let $B$ be as in the proof of Theorem \ref{theomain}
and let $\alpha'$ be a non-trivial class in $\ti H_q(B)\cong \ti H_q(\XGH)$. Since $\MGo$ is contractible,
 the long exact homotopy sequence for the pair $(B,\MGo)$ yields an
isomorphism $j_*: H_{q+1}(\MGo,B)\to  \ti H_q(B)$ \cite[2.16]{Hatch}. 
Consequently, we have $\alpha:=(j_*)^{-1}(\alpha')$, a non-trivial class
in $H_{q+1}(\MGo,B)$. For compact sets $A$ with
$B\subset A\subset \MGo$,
let $i:A\to \MGo$ denote the inclusion map and let 
$$
 i_*:H_{q+1}(A,B)\to H_{q+1}(\MGo,B)
 $$ 
 denote the induced relative homology map. As above we define the homological family
\beg
  \FF (\alpha)=\{A \subset \MGo \mid A \textrm{ compact},\,\,
                 B\subset A \textrm{ and } \alpha \in {\rm Im}(i_*)  \}
\en 
with boundary $B$ of dimension $q+1$. 
We have $\FF (\alpha)\neq \emp$,
since, for $A_B$ as defined in the proof of Theorem \ref{theomain},  $A_B\in {\mathscr F}(\alpha)$. 
Both $\MGo$ and $A_B$ are contractible, thus  $\alpha' \in \ti H_{q}(B)$ can be
viewed as a class in $ H_{q+1}(A_B,B)$, 
and $i_*$  maps this class onto $\alpha$  \cite[2.16]{Hatch}. 
Let
\beg
  c:=\sup_{A\in \FF(\alpha)}\,
  \big\{\inf \,\,\{\,\sc (g)\mid g \in A\}\,\big\}\,.
\en 
From the proof of Theorem \ref{theomain} we deduce $c>0$.
Moreover we claim $c\leq \sc_+$. To see this, suppose not:  
there exists a set $A\in
\FF(\alpha)$ with $A\subset \{\sc \geq \sc_+\} \subset
C(\USi,\bar t(\delta))$ such that $\alpha=i_*(\hat \alpha)$ for some 
$\hat\alpha \in  H_{q+1}(A,B)$. Let $i^1:A\to C(\USi,\bar t(\delta))$
and $i^2:C(\USi,\bar t(\delta))\to \MGo$ denote inclusion maps. Then
we have $i_*=i^2_* \circ i^1_*$, thus $\alpha =i_*^2(i_*^1(\hat \alpha))$. But since we showed in the 
proof of Theorem \ref{theomain} that $B$ is a strong deformation retract of
$C(\USi,\bar t(\delta))$, we conclude 
$ H_{q+1}(C(\USi,\bar t(\delta)),B)=0$ by \cite[p.118]{Hatch}.  
As a consequence,  $i_*^1(\hat \alpha)=0$,
contradicting $\alpha \neq 0$. 

By \eqref{Bscalp} and the compactness of $B$ we deduce $\min \{\sc(g) \mid g \in B\}>\sc_+ \geq c$.
Moreover, by Theorem A in \cite{BWZ} the Palais-Smale condition (C) holds for $\sc:\MGo \to \RR$ 
at the level $\{\sc=c\}$, since $c>0$. Now, by Theorem \ref{thm:gn}, the claim follows.
\end{proof}

\medskip

We turn to the question of when the existence of more than one critical point can
be guaranteed.
Since the corresponding Lyusternik-Schnirelmann-theory is
well known, we give a brief presentation of this part, essentially following
\cite[pp.225-226]{BTZ}.

\begin{lemma}\label{lemlst}
Let $G/H$ be a compact homogeneous space. Then the scalar curvature
functional has at least as many  critical points as the cup length of $\XGH$.
\end{lemma}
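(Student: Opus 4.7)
The plan is to adapt the classical Lyusternik--Schnirelmann minimax argument in the form presented in \cite{BTZ}, using the homological families $\FF(\alpha)$ from the proof of Lemma~\ref{lemcoindex} together with cup products. Let $q=\mathrm{cup}(\XGH)$ be the cup length, so there exist cohomology classes $\alpha_1,\dots,\alpha_q \in \ti H^{\ast}(\XGH;\mathbb{F})$ of positive degrees whose cup product $\alpha_1 \smile \cdots \smile \alpha_q \neq 0$. We may assume $\XGH\neq\emptyset$ and, as in Theorem \ref{theomain}, exhibit a cycle $B\subset \MGo$ homeomorphic to $\XGH$ contained in $\{\sc\geq\sc_+\}$, together with the contractible cone $A_B$ joining $B$ to the base point $Q$. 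Since $\MGo$ is contractible, the connecting map $j_\ast\colon H_{\ast+1}(\MGo,B)\to \ti H_{\ast}(B)\cong \ti H_{\ast}(\XGH)$ is an isomorphism, which converts the cup product structure on $\ti H^{\ast}(\XGH)$ into a cap/cup structure on the relative (co)homology of $(\MGo,B)$.

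Using duality, the classes $\alpha_1,\dots,\alpha_q$ determine non-trivial classes $\beta_1,\dots,\beta_q$ in $H^{\ast}(\MGo,B)$ with $\beta_1\smile\cdots\smile\beta_q\neq 0$. For each $1 \leq k \leq q$ I would define the homological family
\[
  \FF_k := \{\, A\subset \MGo \text{ compact},\ B\subset A,\ (\beta_1\smile\cdots\smile\beta_k)|_{(A,B)}\neq 0\,\}
\]
and the minimax value
\[
  c_k := \sup_{A\in\FF_k}\,\inf\{\,\sc(g) \mid g\in A\,\}.
\]
Exactly as in the proof of Lemma \ref{lemcoindex}, one verifies $\FF_k\neq\emptyset$ (since $A_B\in\FF_k$), that $\FF_{k+1}\subset\FF_k$ (because a non-vanishing longer cup product forces the shorter ones to be non-vanishing), and that $c_k\leq \sc_+$ (otherwise a representative sits inside $C(\USi,\bar t(\delta))$, which strongly deformation retracts onto $B$, killing any relative class of positive degree). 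Using Proposition \ref{prop:posflat} and the cone $A_B$ one also gets $c_k\geq \epsilon >0$, and by nesting $c_1\geq c_2 \geq \cdots \geq c_q>0$.

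The main step is then to show that each $c_k$ is a critical value, and that if $c_k=c_{k+1}$ for some $k$ then the critical set
\[
  K_{c_k} := \{\, g \in \MGo \mid \sc(g)=c_k,\ (\nabla_{L^2}\sc)_g=0\,\}
\]
is infinite. Both assertions follow from the standard deformation lemma applied to the gradient-like vector field $X$ constructed in the proof of Theorem~\ref{theomain}: for any neighborhood $U$ of $K_{c_k}$ there is $\eta>0$ and a flow time $s_0$ such that $\Phi_{s_0}$ pushes $\{\sc\geq c_k-\eta\}\setminus U$ into $\{\sc\geq c_k+\eta\}$. The Palais--Smale condition (C) on $\{\sc\geq\epsilon\}$ from \cite{BWZ} is what makes this deformation work at any positive level. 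If $c_k$ were not critical one takes $U=\emptyset$ and obtains an element of $\FF_k$ sitting above level $c_k+\eta$, contradicting the definition of $c_k$. If $c_k=c_{k+1}$, a neighborhood $U$ of a hypothetical finite $K_{c_k}$ admits, by continuity of cup products on small neighborhoods, a retraction killing at least one cohomology factor, so that any $A\in\FF_{k+1}$ can be replaced by $\Phi_{s_0}(A\setminus U)\cup(\text{retract})\in\FF_k$ sitting above $c_k+\eta$, again contradicting $c_k=c_{k+1}$.

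The hard part, and the one I would take most care with, is the cup-product deformation step in the coincidence case $c_k=c_{k+1}$: one needs to know that near a finite critical set the relative cup product factor corresponding to $\beta_{k+1}$ can be ``peeled off'' by a retraction defined on a neighborhood, which is precisely the content of the LS cup-length lemma (see \cite[pp.~225--226]{BTZ}). Granting this, we obtain $q$ distinct positive critical values $c_1>c_2>\cdots>c_q$, hence $q$ distinct $G$-invariant Einstein metrics; in the degenerate coincidence case the critical set is infinite and the conclusion holds a fortiori.
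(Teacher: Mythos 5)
Your overall plan---a Lyusternik--Schnirelmann minimax built on the deformation $\Phi$, the Palais--Smale condition (C) from \cite{BWZ}, and the cycle $B$ and cone $A_B$ of Theorem \ref{theomain}---is the same genre of argument as the paper's, which runs the scheme of \cite{BTZ} with $f=-\sc$, $Y=\MGo$, $Z=\{\sc\geq\sc_++1\}$ and subordinated homology classes. However, there is a genuine gap at exactly the point where the cup length is supposed to enter. The relative classes $\beta_1,\dots,\beta_q\in H^{*}(\MGo,B)$ obtained from $\alpha_1,\dots,\alpha_q$ are individually nonzero, but their cup product vanishes as soon as $q\geq 2$: since $\MGo$ is contractible and $(\MGo,B)$ is a good pair, $H^{*}(\MGo,B)\cong \ti H^{*}(\MGo/B)$ as rings, and $\MGo/B$ is homotopy equivalent to the suspension $\Sigma B$, in which all products of positive-degree reduced classes are zero. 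The isomorphism $H_{*+1}(\MGo,B)\cong\ti H_{*}(B)$ transports the additive structure of $\XGH$ but kills the ring structure. Consequently $\beta_1\smile\cdots\smile\beta_k=0$ for every $k\geq 2$, its restriction to any pair $(A,B)$ is zero, and your families $\FF_k$ are empty for $k\geq 2$; in particular $A_B\notin\FF_k$, since the same suspension argument applies verbatim to the contractible cone $A_B$. The minimax values $c_k$, their nesting, and the coincidence/``peeling off'' step therefore have nothing to act on, and the construction yields at most one critical point.

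The way to feed the cup length into the variational scheme is to keep the multiplicative structure where it is actually nontrivial, namely on $B\cong\XGH$, respectively on the superlevel set $Z$ (which contains $B$ and retracts onto it through $C(\USi,\bar t(\delta))$), and to encode it through cap products acting on homology classes---the subordination relation $h_1=\omega\cap h_2$ of \cite{BTZ}---rather than through cup products of relative cohomology classes of the contractible ambient space. This is what the paper's proof does: a nonzero class of $\ti H_*(\XGH)$ is regarded as a class in $H_*(Z)$, hence as a relative class in $H_*(Y,Z)$, and a nonvanishing cup product of length $q$ on $\XGH$ produces a chain of subordinated nonzero classes of the required length, to which the deformation/minimax machinery of \cite{BTZ} applies level by level. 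A secondary point: your closing assertion of strictly decreasing values $c_1>\cdots>c_q$ is more than such arguments give; one only gets $c_1\geq\cdots\geq c_q$, with equality forcing an infinite critical set (as you note), and strictness under a finiteness hypothesis is the content of Corollary \ref{corlst}, not of this lemma.
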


\begin{proof} We repeat the proof described in \cite{BTZ},
with one minor modification. Consider
$$
 f:(\MGo, L^2) \to \RR\,\,;\,g \mapsto -\sc (g)
$$
(to adjust notation) and the continuous deformation $\Phi
:\MGo\times\RR \to \MGo$ with $\Phi$ as in the proof of Theorem
\ref{theomain}. Then we have $f(\Phi_s(q))\leq f(q)$ for all $q\in
\MGo$ and $s\geq 0$. By \cite{BWZ} we obtain conclusion ($\star$)
in \cite{BTZ} (the Palais-Smale condition (C))  for $\kappa<0$. We
define the subsets $Y=\MGo$ and $Z=\{f \leq -\sc _+-1\}$ of
$\MGo$. Both $Y$ and $Z$ are invariant under the deformation
$\Phi$.

Let $h\in \ti H_*(\XGH)$ be non-zero. We may view
$h$ as a non-trivial class in $H_*(Z)$. Therefore, $h$ corresponds
to a non-trivial relative class in $H_*(Y,Z)$, again denoted by
$h$. (Here, we consider relative cohomology classes, since $Y$
is contractible.)
Conclusion (1.2) on page 255 in \cite{BTZ} follows, since
there exists a cycle $z$ in $h$, such that $\vert z \vert \subset
\{f<0\}$ (cf.~proof of Theorem \ref{theomain}). Now let
$h_1,h_2\in H_*(Y,Z)$ be non-zero, and suppose that there exists a
cohomology class $\omega \in H^{\geq 1}(Y,Z)$ such that $\omega
\cap h_2 = h_1$; we say that $h_1$ is subordinate to $h_2$.
 The claim follows now as in \cite{BTZ}.
\end{proof}

\medskip

Note that in general the above lemma does not imply that these distinct
critical points are non-isometric.

\begin{corollary}\label{corlst}
Let $G/H$ be a compact homogeneous space. Suppose that there  exist only
finitely many (different but possibly isometric) Einstein metrics in
$\MGo$. Then $G/H$ admits at least 
as many non-isometric Einstein metrics as the cup length of
$\XGH$.
\end{corollary}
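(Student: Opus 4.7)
The strategy is to upgrade the critical-point count of Lemma~\ref{lemlst} to a count of \emph{distinct critical values} under the finiteness hypothesis, after which the claim follows because the scalar curvature is an isometry invariant.

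First, the finiteness hypothesis forces the critical set $Z$ of $\sc:\MGo\to\RR$ to be a finite discrete subset of $\MGo$. Consequently, for every $c\in\RR$ the critical level set $Z\cap\sc^{-1}(c)$ is finite, and a finite discrete set has cup length zero since its positive-degree cohomology vanishes.

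Second, I would revisit the Lyusternik--Schnirelmann argument from the proof of Lemma~\ref{lemlst}, following \cite{BTZ}. Writing $k$ for the cup length of $\XGH$, this argument furnishes $k$ non-zero subordinate classes $h_1\prec h_2\prec\cdots\prec h_k$ in a suitable relative homology group of $\MGo$, each of which determines a min-max critical value $c_i$ of $\sc$ via a homological family as in Theorem~\ref{thm:gn}. The classical cup-length refinement of LS-theory asserts that whenever two consecutive min-max values coincide, $c_i=c_{i+1}$, the critical level set at that value has cup length at least one, and in particular cannot be a finite discrete set. Combining with the first step, no two of the $c_i$ can coincide.

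Therefore the $k$ Einstein metrics produced by Lemma~\ref{lemlst} realize $k$ pairwise distinct values of the scalar curvature. Since the scalar curvature of a homogeneous Einstein metric $(G/H,g)$ is an isometry invariant (it is the constant $\sc(g)$), these $k$ metrics are pairwise non-isometric, establishing the corollary. The only non-elementary ingredient, which I expect to be the main obstacle, is the validity of the cup-length refinement in the non-compact setting $(\MGo,L^2)$; as in Lemma~\ref{lemlst}, this reduces to the Palais--Smale condition~(C) for $\sc|_{\{\sc\geq\varepsilon\}}$ established in \cite{BWZ}.
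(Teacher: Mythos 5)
Your proposal is correct and takes essentially the same route as the paper: the paper likewise invokes the inequality $\kappa(h_2)\geq \kappa(h_1)$ of \cite[1.3]{BTZ} for subordinate classes and rules out equality by restricting the subordinating class $\omega$ of degree $\geq 1$ to a neighborhood of the finite critical level set, chosen as a finite disjoint union of disks whose positive-degree cohomology vanishes --- exactly your ``finite critical set has cup length zero'' step, with Palais--Smale (C) from \cite{BWZ} justifying the LS machinery on the non-compact $(\MGo,L^2)$. The concluding passage from pairwise distinct critical values of $\sc$ to pairwise non-isometric Einstein metrics is the same.
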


\begin{proof}
Suppose we have $h_1, h_2 \in H_*(Y)$ and there exists a $k$-form $\omega  \in H^*(Y)$ of degree $d\geq 1$ such that $h_1$ is subordinate to $h_2$. 
 In \cite[1.3]{BTZ} they prove an an inequality for the energy on the two critical points you get from $h_1$ and $h_2$:
 $$\kappa(h_2) \geq \kappa(h_1).$$
Here, we set $X=Y=\MGo$, our energy function is $\sc$, and $\kappa(h)$ denotes the infimum  over all cycles $z\in h$ of the maximum of $\sc$ on $|z|$.
Since our hypothesis is that the set $K$ of critical points of $\sc$ 
is finite, the set $K_\kappa$ of critical points with $\sc=\kappa$ is also finite.  Thus we may assume that an open neighborhood $U$ of $K_\kappa$ is a finite union of open (disjoint) disks in $\MGo$.  Moreover,  in  \cite[1.3]{BTZ}   they prove that  if $\kappa(h_2) = \kappa(h_1)$, the restriction of $\omega$ to $U$ is nonzero. 
However, for a $k$-form $\omega  \in H^*(Y)$ of degree $d\geq 1$,
 the restriction of $\omega$ to a disk must vanish. Therefore, $\kappa(h_2) = \kappa(h_1)$ is impossible; we must have strict inequality.
\end{proof}

The hypothesis of the corollary is that the set of critical points of the scalar curvature functional is finite. We note that for homogeneous spaces $G/H$ with ${\rm rk}\,
G={\rm rk}\, H$, it is conjectured  that this is true: see \cite[p.683]{BWZ}.

\begin{remark}
There exist (many) homogeneous spaces $G/H$ such that infinitely many isometric
$G$-invariant metrics $g \in \MGo$ are solutions to the Einstein equation:
see Remark \ref{rem:familiesofsolutions}.
This happens whenever $\dim N_G(H)\geq \dim H$ and an Einstein metric $g_E\in \MGo$ is not
fixed by the action of $N_G(H)$ on $\MGo$: see \eqref{eqn:NGH}. 
Notice however, that at this moment
there is no compact homogeneous space $G/H$ known, admitting infinitely many non-isometric Einstein metrics $g \in \MGo$ with $\sc(g)>0$.
 
There exist compact homogeneous spaces $G/H$ admitting many non-isometric,
$G$-invariant Einstein metrics: for instance $\SO(n)$, $n\geq
12$, admits at least $n$ non-isometric Einstein metrics \cite[p.62]{DZ}. All but the biinvariant metric
$Q$ come in high-dimensional families. Showing that two homogeneous metrics $g_1,g_2 \in \MGo$ 
are non-isometric is in general quite difficult if $\sc(g_1)=\sc(g_2)$. 
For left-invariant metrics on simple, compact Lie groups see  \cite[Thm.3]{DZ}.
\end{remark}


\section{The nerve of $G/H$ and its invariants}
Here we will first define the nerve $\XGH$ and give several examples. We next define the simplicial complex $\DGH$. When  $\dim N_G(H)=\dim H$, the nerve $\XGH$ and the simplicial complex $\DGH$ agree.  We describe some topological properties of $\DGH$ and some examples. 
\subsection{{\it The nerve of $G/H$}}\label{sec:nerve} 
In this section we define the nerve $\XGH$ of a compact homogeneous space. It can be viewed as 
the space of $G$-invariant fibrations of $G/H$ with non-toral leaves. 
Recall that $\Psn$ denotes the set of projections $P$ such that $\ker(P)$ 
is a non-toral subalgebra $\kf$ of $\g$.

\begin{definition}[Nerve of $G/H$]\label{def:XGH}
Let
\begin{align*}
 \XGH =
    \big\{ B \in \SymgH &\mid
  B=\sum_{i=1}^r \lambda_i \cdot P_i   \textrm{ for some }  P_1,\dots,P_r \in \Psn\\
	&
	\quad\textrm{ s.t. } \ker(P_1) \subsetneq \cdots \subsetneq \ker(P_r)
  \textrm{ and }  \sum_{i=1}^r \lambda_i=1,\,\, \lambda_i\geq 0 \big\}\,.
\end{align*}
\end{definition}

The condition $\ker(P_i)\subsetneq \ker(P_{i+1})$ means that for the corresponding subalgebras $\kf_i,\kf_{i+1}$ 
we have $\kf_i < \kf_{i+1}$. Moreover, in the definition of $\XGH$ we may assume that $\lambda_1,\dots,\lambda_r>0$.

\begin{lemma}\label{lem:XGHXXGH}
The nerve $\XGH$ and $\XSGH$ are homeomorphic.
\end{lemma}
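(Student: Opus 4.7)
The plan is to reduce the assertion to showing $\XGH$ is homeomorphic to $\XXGH$, because the Graev map of Lemma~\ref{lem:Graevmap} is a homeomorphism and $\XSGH = \mathrm{Gr}^{-1}(\XXGH)$ by definition. The key Lie-theoretic observation is that each $P \in \Psn$ with $\kf := \ker P$ is the $Q$-orthogonal projection of $\g$ onto $\kf^\perp$, and hence satisfies $P = (\dim \g - \dim \kf)\cdot A^{\kf}$, where $A^{\kf}$ is the generator of Definition~\ref{def:kfdisc}. Consequently, for every $B = \sum_{i=1}^r \lambda_i P_i \in \XGH$ associated to a non-toral flag $\varphi = (\kf_1 < \cdots < \kf_r) \in \Fln$ with $\kf_i = \ker P_i$, the trace $\tr(B) = \sum_i \lambda_i (\dim \g - \dim \kf_i)$ is strictly positive, and I would define
$$
   \Phi(B) \;:=\; \frac{B}{\tr(B)} \;=\; \sum_{i=1}^r \mu_i \cdot A^{\kf_i}\,,
   \qquad
   \mu_i \;:=\; \frac{\lambda_i (\dim \g - \dim \kf_i)}{\tr(B)}\,.
$$
Since $\mu_i \geq 0$ and $\sum_i \mu_i = 1$, one has $\Phi(B) \in \Delta_\varphi \subset \XXGH$, and $\Phi : \XGH \to \XXGH$ is manifestly continuous.

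For the inverse I would show that every $A \in \XXGH$ admits a unique representation $A = \sum_{i=1}^r \mu_i A^{\kf_i}$ with $\mu_i > 0$ and a non-toral flag $(\kf_1 < \cdots < \kf_r)$. Since $A^{\kf_i}$ vanishes on $\kf_i$ and equals $(\dim \g - \dim \kf_i)^{-1}\Id$ on $\kf_i^\perp$, a direct spectral computation gives $\kf_1 = \ker A$ and, inductively, $\kf_{j+1} = \kf_j \oplus (\text{$j$-th smallest positive eigenspace of }A)$; the coefficients $\mu_j$ are then recovered from the consecutive positive eigenvalues of $A$ via the factors $(\dim \g - \dim \kf_j)$. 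Hence $c(A) := \sum_i \mu_i / (\dim \g - \dim \kf_i) > 0$ is a function of $A$ alone, and $\Psi(A) := A/c(A) \in \XGH$ is a set-theoretic inverse of $\Phi$.

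The hard part will be the continuity of $\Psi$: as $A_n \to A$ in $\XXGH$ the flag length and the subalgebras $\kf_i(A_n)$ can jump, so the pointwise formula for $c(A)$ is not manifestly continuous. I plan to sidestep this by establishing that $\XGH$ is compact and then invoking the standard fact that a continuous bijection between compact Hausdorff spaces is a homeomorphism. Compactness of $\XGH$ will follow by imitating the proof of Proposition~\ref{prop: WW}(5) for $\XXGH$: $\XGH$ is bounded, since $\Vert B \Vert_{\mathrm{op}} \leq \sum_i \lambda_i \Vert P_i \Vert_{\mathrm{op}} = 1$, and closedness is obtained by taking a convergent sequence $B_n \in \XGH$, fixing $r_n \equiv r$ on a subsequence, extracting limits $P_i^n \to P_i^\infty \in \Psn$ and $\lambda_i^n \to \lambda_i^\infty$ via Lemma~\ref{lem:Ps}, and using Lemma~\ref{lem:Dkfi} together with Lemma~\ref{lem:sequsubalgsub} to ensure that the limit kernels are non-toral subalgebras of $\g$; should the inclusion chain $\ker P_1^\infty \subseteq \cdots \subseteq \ker P_r^\infty$ exhibit equalities in the limit, I would merge the corresponding coefficients to present $B$ as an element of $\XGH$ with a (possibly) shorter flag.
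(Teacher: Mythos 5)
Your proof takes essentially the same route as the paper's: after reducing to $\XGH\to\XXGH$ via the Graev map, the paper also uses exactly the map $B\mapsto B/\tr B$, proves it is a continuous bijection (injectivity by peeling off the smallest kernel inductively, as you implicitly do via the spectral description of the inverse), and concludes. You go further in spelling out the inverse and the compactness of $\XGH$, which the paper leaves implicit but which is the correct justification for continuity of the inverse; your boundedness/closedness argument for $\XGH$ is sound, and in fact the ``merging'' clause is never needed since ranks of projections are continuous integer-valued, so the strict chain of kernel dimensions persists in any limit.
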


\begin{proof}
Since $\XSGH$ and $\XXGH$ are homeomorphic, it suffices to show that
$$
  h:\XGH\to \XXGH\,,\,\,B\mapsto \tfrac{B}{\tr B}
$$ 
is a homeomorphism. Clearly, $h$ is continuous and surjective. We show $h$ is also injective. Suppose now that 
$$ 
 h(B)=\sum_{i=1}^r \lambda_i \cdot P_i =\sum_{j=1}^{\tilde r} \tilde \lambda_j \cdot  \tilde P_j=h(\tilde B)
$$
for $P_1,\dots,P_r,\tilde P_1,\dots,\tilde P_{\tilde r} \in \Psn$ and $0< \lambda_1,\dots,\lambda_r, \tilde \lambda_1,\dots, \tilde \lambda_{\tilde r}$ with 
$$
  \sum_{i=1}^r \lambda_i \cdot \tr(P_i)=\sum_{j=1}^{\tilde r} \tilde \lambda_j \cdot  \tr(\tilde P_j)=1\,.
$$
We show $B = \ti B$. Since $\ker(P_1) \subsetneq \ker(P_2)$ and so on (in the expressions for $h(B)$ and $h(\ti B)$), and since both $\lambda_1,\tilde \lambda_1>0$, we see $\ker(P_1)=\ker(\tilde P_1)$. 
Thus $P_1=\tilde P_1$ and $\lambda_1=\tilde \lambda_1$. Now by induction, $B=\ti B$ and the claim follows.
\end{proof}

In our construction of the nerve $\XGH$, there may be some redundancies;  that is, we may consider more intermediate
subalgebras than we need. 

\begin{definition}[\cite{Gr}]
Let $\kf$ be a subalgebra of $\g$ with $\h <\kf <\g$. Then we call $\kf$
{\em almost semisimple} if $\kf=[\kf,\kf]+\h$.
\end{definition}

Notice that this is equivalent to  $\kf=\kf_s + \z(\h)$, where $\kf_s$ is the semisimple component of $\kf$, 
and $\z(\h)$  is the center of $\h$. As a consequence, the  projection from $\z(\kf)$ 
onto $\z(\h)$ is surjective. Therefore, if $\h$ is semisimple, then an almost semisimple subalgebra $\kf$ must
be semisimple as well. By contrast, toral subalgebras $\tf=\h \oplus \af$, $\af \subset \m$ abelian,
are never almost semisimple.

For each $\kf \in \Sub$, let  $[\kf]$ denote the connected component of $\kf$ in the subset of all $\dim(\kf)$-dimensional intermediate subalgebras of $\g$. 
These were  investigated in \cite{BWZ}.

\begin{lemma}\label{lem:assfinite}
Up to conjugation there exist only finitely many almost semisimple subalgebras $\kf$ with $\h < \kf <\g$.
Moreover
$$
  [\kf]=(N_G(H))_0\cdot \kf\,.
$$
\end{lemma}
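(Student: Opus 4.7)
The plan is to prove the two assertions of the lemma in order, the finiteness statement first (which is essentially a translation to the semisimple case), then the identification of the connected component $[\kf]$, whose reverse inclusion is the main technical point.

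\medskip
\noindent\textbf{Finiteness.} For any almost semisimple $\kf$ with $\h < \kf < \g$, writing $\kf_s := [\kf,\kf]$ and using $\kf = [\kf,\kf] + \h = \kf_s + \z(\h)$, we see $\kf_s$ is semisimple and contains $\h_s := [\h,\h]$. By Propositions 4.1 and 4.2 of \cite{BWZ}, up to $\Ad(G)$-conjugation there exist only finitely many semisimple subalgebras $\kf_s$ of $\g$ containing $\h_s$. Because $\z(\h)$ is determined by $\h$, the pair $(\kf_s, \z(\h))$ reconstructs $\kf$, so finiteness of conjugacy classes of almost semisimple $\kf$ follows at once from the finiteness on the semisimple level.

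\medskip
\noindent\textbf{The equality $[\kf] = (N_G(H))_0 \cdot \kf$, direction $\supseteq$.} Since $\Ad(n)\h = \h$ for each $n \in N_G(H)$, the group $N_G(H)$ acts by conjugation on the set of intermediate subalgebras of $\g$, preserving their dimension. In particular $(N_G(H))_0\cdot\kf$ is a connected subset of the Grassmannian $Gr_{\dim\kf}(\g)$ lying in the set of $\dim(\kf)$-dimensional intermediate subalgebras, and it contains $\kf$. Hence $(N_G(H))_0 \cdot \kf \subseteq [\kf]$.

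\medskip
\noindent\textbf{The equality $[\kf] = (N_G(H))_0 \cdot \kf$, direction $\subseteq$.} Let $\kf' \in [\kf]$, joined to $\kf$ by a continuous path $t\mapsto \kf_t$ of $d$-dimensional intermediate subalgebras, $d = \dim \kf$, with $\kf_0 = \kf$ and $\kf_1 = \kf'$. First, I would argue that each $\kf_t$ is itself almost semisimple and that its semisimple part $(\kf_t)_s$ stays $\Ad(G)$-conjugate to $\kf_s$: by upper semicontinuity the dimension $\dim[\kf_t,\kf_t]$ cannot jump down locally without the total dimension changing, while the finiteness of Step 1 implies that nearby conjugacy classes are isolated, so the class of $(\kf_t)_s$ is locally constant and hence constant along the connected interval. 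Next, I would use rigidity to lift this to a continuous family of conjugators: by the compact-group version of Richardson's rigidity (or directly by the implicit function theorem applied to the smooth $G$-action on the orbit $G\cdot\kf_s$), there exists a continuous path $g_t \in G$ with $g_0 = e$ and $(\kf_t)_s = \Ad(g_t)\kf_s$. Since both $\h_s$ and $\Ad(g_t)\h_s$ are contained in $(\kf_t)_s$ as isomorphic compact subalgebras, and since conjugate compact subalgebras of a compact Lie algebra are related by an inner automorphism, I would absorb this inner automorphism into $g_t$ to arrange $\Ad(g_t)\h_s = \h_s$. A parallel argument for the central summand, using that the center $\z(\h)$ is determined by $\h_s$ together with its centralizer in $(\kf_t)_s + (\text{centralizer of }\kf_s)$, upgrades this to $\Ad(g_t)\h = \h$, i.e.\ $g_t \in N_G(H)$. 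Since $g_0 = e$ and the path is continuous, $g_t \in (N_G(H))_0$, and $\kf' = \Ad(g_1)\kf \in (N_G(H))_0\cdot \kf$.

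\medskip
\noindent\textbf{Main obstacle.} The delicate step is the rigidity/lifting argument establishing the existence of the continuous path $g_t \in G$ and its refinement to lie in $N_G(H)$: one must simultaneously control the semisimple part and the embedding of $\h$ inside it as $t$ varies. This is where the finiteness from Step 1, together with Whitehead-type rigidity of semisimple Lie algebras, is essential, together with the classical fact that any two conjugate compact subalgebras of a compact Lie algebra are conjugate by an inner automorphism.
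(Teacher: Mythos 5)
Your overall strategy (reduce to the semisimple part, then identify $[\kf]$ with an orbit by a path-lifting argument) is reasonable, but the crucial step of your $\subseteq$ direction rests on a false statement. You invoke, as a "classical fact", that two conjugate compact subalgebras of a compact Lie algebra are conjugate by an inner automorphism, and you use it to absorb a correction into $g_t$ so that $\Ad(g_t)\h_s=\h_s$. This is not true in general: the two block factors $\su(2)\oplus 0$ and $0\oplus\su(2)$ of $\su(2)\oplus\su(2)\subset\su(4)$ are conjugate inside $\su(4)$ but not by any inner automorphism of $\su(2)\oplus\su(2)$. So the fix-up of the semisimple part does not go through as stated (at best a local rigidity statement for nearby semisimple subalgebras can be salvaged, but that needs its own argument). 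The "parallel argument for the central summand" is an even more serious gap: $\z(\h)$ is certainly not determined by $\h_s$ (take $\h_s=0$), and even after arranging $\Ad(g_t)\h_s=\h_s$ there is no conjugation mechanism that moves $\Ad(g_t)\z(\h)$ back onto $\z(\h)$ — distinct subtori of a torus are never conjugate, so here one must show the relevant subalgebras actually coincide, which requires a genuinely different (infinitesimal/slice type) argument. This missing openness statement for the $(N_G(H))_0$-orbit is exactly the content of Proposition 4.2 of \cite{BWZ}; the paper's own proof of the lemma consists of citing that proposition (together with the finiteness of semisimple subalgebras up to conjugation for the first claim), so what you are attempting is a from-scratch proof of precisely the cited result, and the sketch does not supply its key mechanism.

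Two further points. First, your claim that every $\kf_t$ along a path in $[\kf]$ remains almost semisimple with locally constant conjugacy class of semisimple part is not established by the semicontinuity remark: $[\kf]$ is by definition a component of the set of \emph{all} $\dim(\kf)$-dimensional intermediate subalgebras, so one must rule out that the path leaves the almost semisimple locus; moreover the dimension of $[\kf_t,\kf_t]$ is lower semicontinuous (it can drop only at a limit point, i.e. nearby derived algebras can be larger), so the inequality goes the opposite way from what you suggest. A correct argument here uses that the semisimple subalgebras of $\g$ of a fixed dimension form a finite union of compact, hence closed and mutually separated, $\Ad(G)$-orbits, together with a limit/dimension count inside $\kf_{t_0}=\z(\kf_{t_0})\oplus(\kf_{t_0})_s$; this can be done but is not in your text. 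Second, in the finiteness step, conjugacy of the semisimple parts does not by itself give conjugacy of the full algebras $\kf=\kf_s+\z(\h)$ unless the conjugating element can be taken to respect $\h$; since the paper's own one-line deduction is equally terse here, this is a minor point, but the phrase "follows at once" hides the same issue that Proposition 4.2 of \cite{BWZ} is needed to resolve.
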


\begin{proof}
By \cite{BWZ}, up to conjugation there exist only finitely many semisimple subalgebras $\kf_s$ of $\g$
with $\h < \kf_s$. Thus, up to conjugation there exist only finitely many almost semisimple subalgebras $\kf$ of $\g$.
The second claim follows from \cite[Proposition 4.2]{BWZ}.
\end{proof}

It follows that $[\kf]$ is a compact homogeneous space.

\begin{lemma}\label{lem:genass}
Let $\kf,\tilde \kf$ be almost semisimple. Then $\langle \kf,\tilde \kf\rangle$ is also almost semisimple.
\end{lemma}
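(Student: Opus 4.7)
The plan is to show directly that $\lf := \langle \kf, \tilde\kf\rangle$ satisfies $\lf = [\lf,\lf] + \h$, by producing a subalgebra $\s := [\lf,\lf] + \h$ that contains both $\kf$ and $\tilde\kf$ and then invoking the minimality of $\lf$ (Definition~\ref{def:generatedsubalg}).

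First I would verify that $\s = [\lf,\lf] + \h$ is a subalgebra of $\g$. Since $\h \subset \kf \subset \lf$, we have $[\h,\lf] \subset [\lf,\lf]$, so $[[\lf,\lf]+\h,\,[\lf,\lf]+\h]$ is contained in $[[\lf,\lf],[\lf,\lf]] + [[\lf,\lf],\h] + [\h,\h] \subset [\lf,\lf] + \h = \s$. Thus $\s$ is indeed a subalgebra of $\g$, and obviously $\h \subset \s$.

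Next I would observe that $\kf,\tilde\kf \subset \s$. Because $\kf$ is almost semisimple, $\kf = [\kf,\kf] + \h$; but $\kf \subset \lf$ forces $[\kf,\kf] \subset [\lf,\lf]$, so $\kf \subset [\lf,\lf] + \h = \s$. The same argument gives $\tilde\kf \subset \s$. Since $\lf$ is by definition the smallest subalgebra of $\g$ containing $\kf$ and $\tilde\kf$, we conclude $\lf \subset \s = [\lf,\lf] + \h$. The reverse inclusion is immediate from $[\lf,\lf] \subset \lf$ and $\h \subset \lf$, so $\lf = [\lf,\lf] + \h$, which is precisely the defining equation of almost semisimplicity.

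It remains to address the range condition $\h < \lf < \g$ required by the definition. The strict inclusion $\h < \lf$ is automatic since $\h < \kf \subset \lf$; the upper bound $\lf < \g$ is the only subtle point, since a priori $\langle \kf,\tilde\kf\rangle$ could equal $\g$. I expect this to be the main (and really only) obstacle, and the natural way to handle it is to restrict the statement to the case $\lf \in \Sub$, which is the setting in which the lemma will be used (cf.\ Lemma~\ref{lem:disk-intersection}, where $\kf^\ast = \g$ is treated separately); alternatively, one simply extends the terminology so that $\g = [\g,\g] + \h$ still counts as almost semisimple, an innocuous convention compatible with Lemma~\ref{lem:assfinite}.
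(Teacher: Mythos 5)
Your proof is correct, and it takes a genuinely different route from the paper's. The paper works with the alternative characterization $\kf = \kf_s + \z(\h)$ (semisimple part plus center of $\h$), computes iterated brackets to conclude $\langle\kf,\tilde\kf\rangle = \langle\kf_s,\tilde\kf_s\rangle + \z(\h)$, and then invokes the (unproved there) fact that the subalgebra generated by two semisimple subalgebras of a compact Lie algebra is again semisimple. You instead work directly with the defining identity $\kf = [\kf,\kf]+\h$ and observe that $\s := [\lf,\lf]+\h$ is itself a subalgebra of $\g$ (since $[\lf,\lf]$ is an ideal in $\lf$ and $\h \subset \lf$) that contains both $\kf$ and $\tilde\kf$; minimality of $\lf = \langle\kf,\tilde\kf\rangle$ then pins $\lf = \s$. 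This is more elementary and more self-contained: it avoids the semisimple/center splitting entirely, and in fact the same minimality device (applied to $[\lf,\lf]$ alone) supplies a proof of the semisimplicity claim the paper leaves implicit. The one caveat you raise about the degenerate case $\langle\kf,\tilde\kf\rangle = \g$ (since Definition~5.3 requires $\kf < \g$) is real but also unaddressed by the paper's proof; the paper's subsequent usage (e.g.\ allowing $\kf_r \in \Subass \cup \{\g\}$ in the flags $\Flnas$, and Lemma~\ref{lem:disk-intersection} treating $\kf^* = \g$ separately) shows the intended reading is exactly the convention you suggest.
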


\begin{proof}
Using that $\kf=\kf_s + \z(\h)$, $\tilde \kf=\tilde \kf_s + \z(\h)$ and $[\z(\h),\z(\h)]=0$
we have
\beg
 [\kf,\tilde \kf]
  &=&
	 [\kf_s,\tilde \kf_s]+[\kf_s,\z(\h)]+[\tilde \kf_s,\z(\h)]\\
	&\subset &
	 [\kf_s,\tilde \kf_s]+\kf + \tilde \kf \\
	&=&
	  [\kf_s,\tilde \kf_s]+\kf_s + \tilde \kf_s +\z(\h)\,. 
\en
Thus $\langle \kf,\tilde \kf\rangle=\langle \kf_s,\tilde \kf_s\rangle + \z(\h)$. Since 
$\langle \kf_s,\tilde \kf_s\rangle$ is semisimple, the claim follows.
\end{proof}

\begin{definition}\label{def:min-sub}
We call a non-toral subalgebra $\kf \in \Sub_s$ {\em minimal} if there exists no $\ti \kf \in \Sub_s$
with $\ti \kf < \kf$.  Let 
\begin{eqnarray*}
 \Submin &=&\{\kf \in \Sub_s \mid \kf \textrm{ is a minimal subalgebra}\}\,, \\
  \Submg &=& \{ \kf \in \Sub_s \mid \kf \textrm{ is generated by finitely many minimal subalgebras}\}\,,\\
  \Subass &=& \{ \kf \in \Sub_s \mid \kf  \textrm{ is almost semisimple}\}\,.
\end{eqnarray*}	
\end{definition}

Notice that for a minimal subalgebra $\kf$, $\h$ does not have to be maximal in $\kf$. An example is
$G/H=\SU(4)/\SU(2)$, where $\SU(2)$ is embedded into $\SU(4)$ as a lower $(2 \times 2)$-block.
Then $\su(3)$ is minimal, where $\SU(3)$ is embedded into $\SU(4)$ as a lower $(3 \times 3)$-block,
but $\h=\su(2) \subset \un(2) \subset \kf:=\su(3)$.

The next lemma is crucial for applications.

\begin{lemma}\label{lem:minareass}
Let $\kf\in \Sub_s$ be a minimal subalgebra. 
Then $\kf$ is almost semisimple.
\end{lemma}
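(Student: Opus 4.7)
The plan is to introduce the subalgebra $\kf' := [\kf,\kf] + \h$ and use the minimality of $\kf$ to force $\kf' = \kf$, which is exactly almost semisimplicity. First I would record that because $\g$ is compact (carrying the positive-definite $\Ad(G)$-invariant form $Q$), every subalgebra of $\g$ inherits a positive-definite $\ad$-invariant form and is therefore reductive. Writing $\kf = \z(\kf) \oplus \kf_s$ and $\h = \z(\h) \oplus \h_s$ with $\kf_s = [\kf,\kf]$ and $\h_s = [\h,\h]$ semisimple, one has $\h_s \subset \kf_s$. Moreover, $\kf$ is a subalgebra containing $\h$, so $[\h,\kf] \subset \kf$; since $H$ is connected this says $\kf$ is $\Ad(H)$-invariant, and consequently so is $\kf_s = [\kf,\kf]$.

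Next I would check that $\kf' = \kf_s + \h$ is a subalgebra:
\[ [\kf',\kf'] \subset [\kf_s,\kf_s] + [\kf_s,\h] + [\h,\h] \subset \kf_s + \kf_s + \h_s = \kf_s \subset \kf', \]
using $\Ad(H)$-invariance of $\kf_s$ together with $\h_s \subset \kf_s$. Since $\h_s \subset \kf_s$ we can rewrite $\kf' = \kf_s + \z(\h) = [\kf,\kf] + \h$, so $\kf'$ is almost semisimple by definition. Therefore, if $\kf' = \kf$ we are done.

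I would then rule out the other two possibilities. If $\kf' = \h$, then $\kf_s \subset \h$, and since $\kf_s$ is semisimple, $\kf_s = [\kf_s,\kf_s] \subset [\h,\h] = \h_s$; combined with $\h_s \subset \kf_s$ this gives $\kf_s = \h_s$, which contradicts the fact, recorded just after Definition \ref{def:toral-non-toral}, that non-torality of $\kf$ forces $\kf_s \supsetneq \h_s$. The only remaining possibility is $\h \subsetneq \kf' \subsetneq \kf$, so $\kf' \in \Sub$, and by the minimality of $\kf$ in $\Sub_s$ the intermediate subalgebra $\kf'$ must be toral.

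The decisive step, where the shape of toral subalgebras finally enters, is to rule out this last case. By Lemma \ref{lemkmm} a toral $\kf'$ has the form $\kf' = \h \oplus \af$ with $\af \subset \m_0$ abelian and $[\h,\af] = 0$, so
\[ [\kf',\kf'] = [\h,\h] + [\h,\af] + [\af,\af] = \h_s. \]
On the other hand, $\kf_s \subset \kf'$ yields $\kf_s = [\kf_s,\kf_s] \subset [\kf',\kf'] = \h_s$, giving once more $\kf_s = \h_s$ and contradicting non-torality of $\kf$. The only real obstacle in this argument is locating the right candidate $\kf' = [\kf,\kf] + \h$; once it is in hand, the subalgebra check and the twin collapses of $\kf_s$ into $\h_s$ mesh exactly with the minimality hypothesis to leave only the case $\kf' = \kf$.
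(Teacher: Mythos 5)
Your proof is correct and follows essentially the same route as the paper: the candidate subalgebra $[\kf,\kf]+\h=\kf_s+\z(\h)$ is exhibited as an intermediate subalgebra, and minimality together with $\h_s\subsetneq\kf_s$ (non-torality of $\kf$) forces it to coincide with $\kf$. The only cosmetic difference is that the paper observes directly that this intermediate algebra is non-toral, since its semisimple part contains $\kf_s\supsetneq\h_s$, and so contradicts minimality at once, whereas you first use minimality to conclude it would have to be toral and then derive the same contradiction from the structure $\tf=\h\oplus\af$ with $\af\subset\m_0$.
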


\begin{proof}
We write $\h=\h_s \oplus \z(\h)$ and $\kf=\kf_s \oplus \z(\kf)$.
Since $\kf$ is non-toral we have $\h_s < \kf_s$. Let $\af$ denote
the natural projection of $\z(\h)\subset \kf$ onto 
$\z(\kf)$ with respect to the decomposition $\kf=\kf_s \oplus \z(\kf)$.
If $\af=\z(\kf)$, then $\kf=\kf_s + \z(\h)$ and the claim follows.
If $\af < \z(\kf)$, then $\tilde \kf=\kf_s \oplus \af$ is a non-toral subalgebra with
$\h < \tilde \kf < \kf$. This is a contradiction, since $\kf$ is minimal.
\end{proof}
\begin{remark}\label{rem:intersections}
We note that the classes of minimal (or non-toral or almost semisimple) subalgebras are not  closed under intersections, generally (when $\dim H < \dim N(H)$).  Let $\g = \so(n+4)$, let $\h = \so(n) < \so(n) \oplus \so(4)$,  and consider $\kf_1 = \so(n)\oplus \so(3)$ where $\so(3) \subset \so(4)$ is the upper-left block, and $\kf_2 =  \so(n)\oplus \so(3)'$ where $\so(3)' \subset \so(4)$ is the lower-right block. While $\kf_1$ and $\kf_2$ are each non-toral subalgebras containing $\h$,  their intersection $\kf_1 \cap \kf_2 = \so(n) \oplus \so(2)$ is toral. 
\end{remark}

Recall, in Definition \ref{def:proj}, we defined the set $\Psn$ of projections $P \in \SymgH$ for which the kernel is  $\ker(P)\in \Sub_s$, a non-toral subalgebra with $\h < \kf < \g$. 
 In Lemma \ref{lem:Ps} we proved that 
$\Psn$ is compact and semi-algebraic.

\begin{definition}\label{def:Psnm}
We describe the following subsets of $\Psn$:
\begin{align*}
 \Psnm &=\{ P \in \Psn \mid  \kf=\ker(P) \in \Submin \}\,,\\
  \Psnmg&=\{ P \in \Psn \mid \kf=\ker (P) \in \Submg \}\,,\\
  \Psnass&=\{ P \in \Psn \mid \kf=\ker (P) \in \Subass \}\,.
\end{align*}
\end{definition}
Note that $\Psnm \subset \Psnmg \subset \Psnass$ and $\Psnmg$ is a connected component of $\Psnass$.  
\begin{lemma}\label{lem:Psnm}
The sets $ \Psnm$,  $\Psnmg$, and $\Psnass$ are compact and semi-algebraic. 
\end{lemma}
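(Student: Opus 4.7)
My plan is to handle $\Psnass$ first and then to deduce the statements for $\Psnm$ and $\Psnmg$ by recognizing them as unions of connected components of $\Psnass$.

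For semi-algebraicity of $\Psnass$, I would express the condition ``$\ker P$ is almost semisimple'' as a rank condition on a polynomial matrix. Writing $\kf = \ker P$ and using that $P$ is the $Q$-orthogonal projection onto $\kf^\perp$, we have $\kf = (\Id_\g - P)(\g)$, so $[\kf,\kf] + \h$ is the image of the polynomial (in the entries of $P$) linear map $\Phi_P: \g \otimes \g \oplus \h \to \g$ defined by $(X \otimes Y, W) \mapsto [(\Id_\g-P)X, (\Id_\g-P)Y] + W$. The condition $[\kf,\kf] + \h = \kf$ then becomes $\rk \Phi_P \geq \dim \g - \tr P$, which is semi-algebraic via non-vanishing of some minor of the appropriate size. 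Combined with Lemma \ref{lem:Ps}, this yields semi-algebraicity of $\Psnass$.

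For compactness, I would invoke Lemma \ref{lem:assfinite}: up to $N_G(H)$-conjugation there exist only finitely many almost semisimple subalgebras, say with representatives $\kf_1, \dots, \kf_N$, and each orbit $[\kf_j] = (N_G(H))_0 \cdot \kf_j$ is compact as continuous image of the compact group $(N_G(H))_0$. Since the assignment $\kf \mapsto P_\kf$ (the $Q$-orthogonal projection onto $\kf^\perp$) gives a continuous bijection $\Subass \to \Psnass$, the set $\Psnass$ decomposes as the finite union of compact pieces corresponding to the $[\kf_j]$'s.

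For $\Psnm$ and $\Psnmg$, I would observe that both properties (being minimal; being generated by finitely many minimal subalgebras) are invariant under conjugation by $N_G(H)$, since such conjugation preserves $\h$ and subalgebra inclusions. Hence $\Submin$ and $\Submg$ are unions of orbits $[\kf_j]$ from the finite list above, so $\Psnm, \Psnmg \subset \Psnass$ are finite unions of the compact pieces from the previous paragraph, giving compactness. For semi-algebraicity, by Lemma \ref{lem:assfinite} each orbit $[\kf_j]$ is the connected component of $\kf_j$ in the dimension-$\dim \kf_j$ stratum of intermediate subalgebras; since $\tr P$ is continuous and integer-valued on projections, the corresponding piece $\{P_\kf : \kf \in [\kf_j]\}$ is a connected component of $\Psnass$. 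Since connected components of a semi-algebraic set are semi-algebraic, $\Psnm$ and $\Psnmg$ inherit semi-algebraicity as finite unions of such components. The main obstacle will be the first step, writing down a clean semi-algebraic description of the almost semisimple condition; everything else then reduces to Lemma \ref{lem:assfinite} together with the standard fact that components of semi-algebraic sets are semi-algebraic.
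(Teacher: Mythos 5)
Your proof is correct, and its backbone --- Lemma \ref{lem:assfinite} for compactness plus a polynomial, first-order description for semi-algebraicity --- is the same as the paper's, but you diverge in two places worth recording. For $\Psnass$ the paper encodes the almost semisimple condition as the set equality $\ker(P)=\{[(\Id_{\g}-P)(X),(\Id_{\g}-P)(Y)]+Z \mid X,Y\in\g,\ Z\in\h\}$ and invokes Tarski--Seidenberg; your rank condition $\rk \Phi_P\geq \dim\g-\tr P$ (stratified over the finitely many integer values of $\tr P$) is an equivalent encoding via minors, and it matches the definition $\kf=[\kf,\kf]+\h$ (a linear span) more directly than the paper's set of single brackets. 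The larger difference concerns $\Psnm$: the paper obtains semi-algebraicity in one line from the first-order description $\Psnm=\{P\in\Psn\mid \not\exists\,\tilde P\in\Psn \text{ with } \ker(\tilde P)\subsetneq\ker(P)\}$, which needs no conjugation invariance or component analysis, whereas you pass through the orbit structure: minimality and being generated by minimal subalgebras are $N_G(H)$-invariant, so $\Psnm$ and $\Psnmg$ are finite unions of orbit pieces, each of which is a connected component of $\Psnass$. That argument works, but you should make explicit the small point it rests on: a compact semi-algebraic set has only finitely many connected components, each clopen and semi-algebraic, so "component of the fixed-dimension stratum of intermediate subalgebras" together with the integer-valued trace really does yield "component of $\Psnass$". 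Your route costs slightly more but gives a uniform treatment of compactness and semi-algebraicity for all three sets, and for $\Psnmg$ it is in fact more careful than the paper, which simply asserts that $\Psnmg$ is a connected component of $\Psnass$ (in general it is only a union of components, which is all that is needed).
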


\begin{proof}
To see that $  \Psnass$ is compact, we note that by Lemma \ref{lem:assfinite}, up to conjugation by $(N_G(H))_0$ there exist at most finitely many almost semisimple subalgebras $\kf \in\Sub$.  
Since there exist only finitely many semisimple subalgebras $\kf_s$ of $\g$ with $\h_s<\kf_s$,
we know that $ \Psnm$  is compact.  
Using that for an almost semisimple Lie algebra $\kf$ we
have $\kf=\kf_s \oplus \z(\h)=\kf_s + \h$,
we observe that 
$$
 \Psnass = \big\{ 
   P \in \Psn \mid\ker(P) = \{ [(\Id_{\g}-P)(X),(\Id_{\g}-P)(Y)]+Z \mid X,Y \in \g,\,,\,Z \in\h\} \big\}\,.
$$
This shows $\Psnass$ is semi-algebraic, and since
$$
  \Psnm:=\{P \in \Psn \mid \not \exists\tilde P\in \Psn  \textrm{ such that } \ker(\tilde P) \subsetneq \ker(P)\}\,
$$
we have that $\Psnm$ is also semi-algebraic.  
Since $\Psnmg$ is a connected component of the compact and semi-algebraic set $\Psnass$, we conclude
$\Psnmg$ is compact and semi-algebraic (see Section \ref{sec:semialgebraic}).
\end{proof}

This yields the following: 

\begin{definition}
We let  $\XGHass$ (resp.~$\XGHmin$) denote the {\em almost semisimple nerve} (resp.~  {\em minimal nerve}) of $G/H$, defined analogously to $\XGH$, except 
instead of $P\in \Psn$ we require $P \in \Psnass$ (resp.~$P \in \Psnmg$): 
\begin{align*} \XGHass :=
    \big\{ B \in \SymgH 
		&\mid
  B=\sum_{i=1}^r \lambda_i \cdot P_i   \textrm{ for some } P_1,\dots,P_r \in \Psnass\,,\\
	&
	\quad\textrm{ s.t. } \ker(P_1) \subsetneq \cdots \subsetneq \ker(P_r), \,\,
	\sum_{i=1}^r \lambda_i=1\,,\,\, \lambda_i\geq 0  \big\}\,, \\
  \XGHmin =
    \big\{ B \in \SymgH 
		&\mid
  B=\sum_{i=1}^r \lambda_i \cdot P_i   \textrm{ for some } P_1,\dots,P_r \in \Psnmg\,,\\  
	&
	\quad \textrm{ s.t. } \ker(P_1) \subsetneq \cdots \subsetneq \ker(P_r),\,\, 
	\sum_{i=1}^r \lambda_i=1\,,\,\,	\lambda_i\geq 0,   \big\}\,.
\end{align*}
\end{definition}

In our definition of ${\dsp \WWs= \bigcup_{{\blds\kf} \in \Sub_s}\D(\kf)}$ (Definition \ref{defin-W}), 
 it is enough to consider the union of discs $\D(\kf)$ where $\kf $ is minimal, 
 by Lemma \ref{lem:disk-inclusion}.

\begin{proposition}\label{prop:nerv}
Let $G/H$ be a compact homogeneous space. Then the nerve $\XGH$, the almost semisimple nerve $\XGHass$,
and the minimal nerve $\XGHmin$ are homotopy equivalent.
\end{proposition}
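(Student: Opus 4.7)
The plan is to realize each of the three nerves as the geometric realization of the order complex of a poset of subalgebras, and to produce the inclusions $\XGHmin\hookrightarrow \XGHass\hookrightarrow \XGH$ as strong deformation retracts via monotone idempotent interior retractions. From Definition \ref{def:XGH} one sees directly that $\XGH$ is the geometric realization of the order complex (flag complex) $\Delta(\Sub_s)$ of the poset $(\Sub_s,\le)$, with vertex $\kf\in\Sub_s$ represented by the projection $P_\kf$ and with each flag $(\kf_1<\cdots<\kf_r)$ spanning the simplex $\mathrm{conv}\{P_{\kf_1},\dots,P_{\kf_r}\}$; analogously $\XGHass=\Delta(\Subass)$ and $\XGHmin=\Delta(\Submg)$, and the natural poset inclusions $\Submg\subseteq \Subass\subseteq \Sub_s$ induce the simplicial inclusions in question.

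For the retraction $\Sub_s\to \Subass$, I would set $\Phi(\kf):=\kf_s+\h$, where $\kf_s=[\kf,\kf]$ is the compact semisimple part of $\kf$. One checks directly that $\Phi(\kf)\le \kf$ and that $\Phi$ is monotone (since $\kf\subseteq \kf'$ implies $[\kf,\kf]\subseteq [\kf',\kf']$). Using $\h_s\subseteq \kf_s$ and $[\kf_s,\z(\kf)]=0$, a direct computation gives $(\Phi(\kf))_s=\kf_s$; this identity simultaneously yields that $\Phi(\kf)$ is almost semisimple, that $\Phi^2=\Phi$, and, together with the equivalence ``$\kf\in\Sub_s\Leftrightarrow \kf_s\supsetneq \h_s$'' implicit in Definition \ref{def:toral-non-toral}, that $\Phi(\kf)$ remains non-toral. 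For the retraction $\Subass\to \Submg$, I would set
\[
\Psi(\ell):=\langle\,\kf\mid \kf\in\Submin,\ \kf\le \ell\,\rangle,
\]
the subalgebra of $\g$ generated by all minimal non-toral subalgebras contained in $\ell$. This set is non-empty (descend along a chain in $\Sub_s$, which terminates because dimensions are bounded below by $\dim\h+1$), and finite-dimensionality of $\g$ ensures that some finite subcollection already generates $\Psi(\ell)$; Lemmas \ref{lem:minareass} and \ref{lem:genass} then place $\Psi(\ell)\in\Submg$. Monotonicity, $\Psi(\ell)\le \ell$, and idempotence with fixed-point set exactly $\Submg$ are immediate from the definition.

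The final step is to invoke the standard principle from poset topology that any monotone idempotent $\Theta:P\to P$ with $\Theta(x)\le x$ for all $x$ induces a strong deformation retraction of $\Delta(P)$ onto $\Delta(\Theta(P))$. The map $P\times\{0<1\}\to P$ given by $(x,0)\mapsto \Theta(x)$, $(x,1)\mapsto x$ is monotone precisely because $\Theta\le \Id_P$ and $\Theta$ is monotone, so passing to order complexes yields a continuous homotopy $\Delta(P)\times[0,1]\to\Delta(P)$ between the simplicial map $\Theta_*$ (which factors through $\Delta(\Theta(P))$) and $\Id_{\Delta(P)}$; moreover $\Theta_*$ restricts to the identity on $\Delta(\Theta(P))$ by idempotence. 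Applying this principle first to $\Phi$ and then to $\Psi$ delivers the two homotopy equivalences, and hence $\XGH\simeq \XGHass\simeq \XGHmin$.

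The hard part, I expect, will be not this soft topological step but the Lie-theoretic verification that $\Phi$ preserves non-torality: a priori the non-abelianness of $\m\cap \kf$ might sit in the central part of $\kf$ and be destroyed when one passes to $\kf_s+\h$. This ultimately reduces to the less trivial direction ``$\kf_s=\h_s\Rightarrow \kf$ toral'' of the characterization of non-torality; a short computation using $\h_s\subseteq \kf_s$ and $\z(\h)\subseteq \z(\kf)$ together with the $Q$-orthogonal decomposition of $\kf$ identifies $\m\cap \kf$ with the abelian subspace $\z(\kf)\ominus \z(\h)$ of $\z(\kf)$.
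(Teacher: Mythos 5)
Your Lie-theoretic verifications are essentially correct and in fact mirror content the paper itself exploits: $\Phi(\kf):=\kf_s+\h$ and $\Psi(\lf):=\langle\kf\in\Submin:\kf\le\lf\rangle$ are indeed monotone idempotent contractions with images $\Subass$ and $\Submg$ (cf.\ Lemmas \ref{lem:minareass} and \ref{lem:genass}), and in the special case $\dim N_G(H)=\dim H$ the paper really does prove the corresponding statement for $\DGH$ by invoking Quillen's machinery (Lemma \ref{lem:rsimhomsim}, via \cite[1.5, 1.6]{Qui}). You have, however, misjudged where the difficulty lies: the gap is not in the Lie theory but in the very first step, the identification of $\XGH$ with the geometric realization $\vert\Delta(\Sub_s)\vert$ of the abstract order complex.

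That identification fails exactly when $\dim N_G(H)>\dim H$, which is the only situation in which the proposition is not covered by Lemma \ref{lem:rsimhomsim}. In that case $\Sub_s$ has continuum cardinality, and while the nerve $\XGH\subset\SymgH$ is set-theoretically a union of flag simplices $\mathrm{conv}\{P_{\kf_1},\dots,P_{\kf_r}\}$ with the expected intersections, its topology is the subspace topology inherited from Euclidean space, not the weak/CW topology of an infinite simplicial complex. For $G/H=\SO(n+2)/\SO(n)$ the elements of $\Sub_s$ form a circle $S^1\cdot\so(n+1)$ of pairwise incomparable subalgebras; the abstract order complex is then an uncountable discrete space, but the nerve $\XGH$ is compact and, as the paper notes, homeomorphic to $S^1$. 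Consequently the closure-operator deformation retraction you invoke, which is defined and continuous simplex-by-simplex in the weak topology, does not automatically yield a continuous map or homotopy on the semi-algebraic set $\XGH$. One would have to prove that $\Phi$ and $\Psi$ are continuous with respect to Grassmannian convergence of subalgebras, that they behave uniformly under collapse of flag length, and that the induced straight-line homotopies stay in $\XGH$ and are jointly continuous on this compact semi-algebraic set. That is precisely what the paper's proof of Proposition \ref{prop:nerv} establishes by re-running the filtration $\WWs=\XXs^1\supset\cdots\supset\XXGH$ from Section \ref{sec:homotopy2} with the analogous sets $\XXas^j$, using Lemma \ref{lem:Dkfi}, Lemma \ref{lem:flag-conv}, the butterfly intersection Lemma \ref{lem:butphipsi}, and the semi-algebraic deformation Lemma \ref{lem:XY}. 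Your closure operators supply the right algebraic input, but the ``soft topological step'' is exactly where the work is.
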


\begin{proof}
First we show that $\XGH$ and $\XGHass$ are homotopy equivalent
by applying the same methods as in Section \ref{sec:homotopy2}.

In Section \ref{sec:homotopy2}, we denoted by $\{i_1,\dots,i_d\}$
 the set of dimensions of subalgebras in $\Sub_s$.  
For $j=1,\dots,d$, we defined the compact, semi-algebraic sets
${\dsp \XXs^j= 	\bigcup \Br[\varphi]}$, taking the union over all flags $\varphi \in \Fln$, where $\Fln$ is the set of all flags $\varphi$ consisting of non-toral intermediate subalgebras, $\kf\in\Sub_s$, of flag height $h(\varphi) \geq  i_j$: $h(\varphi)=\dim(\max(\varphi))$.  

The set of dimensions of subalgebras in $\Subass$ is a subset 
$\{i_1',\dots,i_{\tilde d}'\}$ of $\{i_1,\dots,i_d\}$.   
For simplicity let us denote them by  $\{i_1,\dots,i_{\tilde d}\}$.
We can define the analogous sets 
$$
\XXas^j = \bigcup_{\varphi \in \Flnas, ~h(\varphi) \geq  i_j} \Br[\varphi]
$$
to be unions of butterflies 
of flags $\varphi \in \Flnas$, $j=1,\dots,\tilde d$ and
$$
  \XXas^{\tilde d+1}:= \bigcup_{\varphi \in \Flnas, ~h(\varphi) =\dim \bldsg\g} \Br[\varphi] = \qquad \bigcup_{\varphi \in \Flnas} \Delta_{\varphi}\,.
$$
Here we denote by  $\Flnas$ the set of all flags $\varphi=(\kf_1 < \cdots < \kf_r)$ with
$\kf_i \in \Subass$ for $1 \leq i \leq r-1$ and $\kf_r \in \Subass \cup \{\g\}$.

One key property of almost semisimple subalgebras is that a sequence
$(\kf_i)_{i \in \NN}$ of almost semisimple subalgebras of the same dimension will
subconverge to a limit subalgebra $\kf_\infty$ which is again almost semisimple: see Lemma \ref{lem:Dkfi}.
This follows from the identity $\kf_i=(\kf_i)_s +\h$. Moreover, this shows a second key property,
namely, that a sequence 
of flags in $\Flnas$ subconverges to a limit flag in $\Flnas$: see Lemma \ref{lem:sequsubalgsub}.
A third  key property of these flags is that
if $\varphi, \ti\varphi \in \Flnas$, then the product $\varphi\ti\varphi$ is also in $\Flnas$. 
It follows that the intersection of two butterflies $\Br[\varphi] \cap \Br[\tilde \varphi]
=\Br[\varphi\ti \varphi]$ and $\varphi\ti\varphi \in \Flnas$ if $\varphi, \ti\varphi \in \Flnas$.

In this way, as in Section \ref{sec:homotopy2}, we obtain nested sets 
$$
 \WWs = \XXas^1 \supset \XXas^2 \supset \cdots \supset \XXas^{\tilde d+1}=\XXGHass\,.
$$ 
It is straightforward to adapt the construction of the
homotopies between $\XXs^j$ and $\XXs^{j+1}$ in Section \ref{sec:homotopy2}
to obtain homotopies between $\XXas^{j}$ and $\XXas^{j+1}$, now for $j=1,\dots,\tilde d$.
We conclude that $\WWs$ is homotopy equivalent to $\XXGHass$. 
As in Lemma \ref{lem:XGHXXGH} one can now show that $\XXGHass$ and $\XGHass$ are 
homeomorphic. Thus $\XGH$ and $\XGHass$ are homotopy equivalent.

 The claim that $\XGH$ and $\XGHmin$ are homotopy equivalent follows precisely the same way.
Notice that the subalgebras generated by minimal subalgebras satisfy the first key property mentioned above.
This follows from Lemma \ref{lem:assfinite}, which guarantees that there exist only
finitely many components $[\kf]$ of almost semisimple subalgebras such that $\h < \kf <\g$, and that
$[\kf]=(N_G(H))_0 \cdot \kf$. This shows that if a sequence $(\kf_i)_{i \in \NN}$ of almost
semisimple subalgebras converges to $\kf_\infty$, then, for all large $i$,  $\kf_i$ is conjugate to $\kf_\infty$
 by an element in $N_G(H)$. This in turn shows that a sequence
of minimal subalgebras will subconverge to a minimal subalgebra, and that the same is true for 
a sequence of subalgebras generated by finitely many minimal ones. Furthermore, the second key property, 
mentioned above, now follows immediately. The third key property follows from the fact that if $\kf_1,\kf_2$ are two
subalgebras of $\g$, each generated by finitely many minimal subalgebras, then $\langle \kf_1,\kf_2\rangle$
is also a subalgebra generated by finitely many minimal ones.
\end{proof}

\subsection{{\it Examples of the nerve $\XGH$}} \label{sec:exanerve}

In this section we will provide several examples of compact homogeneous
spaces $G/H$, some with a  non-contractible nerve $\XGH$ and others with a contractible nerve $\XGH$. Recall that by
Proposition \ref{prop:nerv}, we know
$\XGH$, $\XGHass$ and $\XGHmin$ are homotopy equivalent.

\begin{example}\label{ex:SO(n+k)/SO(n)}
Let $n\geq 3$, $k\geq 2$ and
$G/H=\SO(n+k)/\SO(n)$ where $\SO(n)$ is embedded into
$\SO(n+k)$
as a lower right $(n \times n)$-block.
Then there exists a non-contractible cell $A \subset \XGH$
which is homeomorphic to the sphere $S^{\frac{1}{2}k(k+1)-2}$.
\end{example}

\begin{proof}
The isotropy representation of $G/H$ is given by $\m=\m_0 \oplus
\bigoplus_{i=1}^k \m_i$ where
$\m_0=\so(k)$ and $\m_1,...,\m_k$ are equivalent $\SO(n)$-modules,
isomorphic to the
standard representation of  $\SO(n)$ on $\RR^n$.
We consider all flags $\varphi=(\kf_1 < \cdots < \kf_{k-1})$ where
$\kf_i \cong \so(n+i)$, $1\leq i \leq k-1$.
Given such a flag $\varphi$, for each $i=1,\dots,k-1$, let $P_i \in
\Psn$ denote the projection corresponding to $\kf_i$. We consider the
cell $A$ to be the union of all simplices
$$
   \dsp{\rm conv}
\left\{B=\sum_{i=1}^{k-1} \lambda_i \cdot P_i
   \mid \sum_{i=1}^{k-1}  \lambda_i=1, \lambda_i \geq 0\right \} \subset \XGH\,,
$$
and we claim that $A$ is homeomorphic to $S^{\frac{1}{2}k(k+1)-2}$.

To prove this, we define a homeomorphism $h$ between the two sets.
Consider an orthonormal basis $(e_1,...,e_k)$ 
corresponding to
$(P_1,...,P_{k-1})$.  E.g., $\kf_1 \cong \so(n+1)$, with $\RR^{n+1}=\RR^n
\oplus \langle e_1\rangle_{\bldss{\RR}}$. 
We start with an element $B$ in $A$, with corresponding scalars $0\leq
\lambda_i \leq 1$, $i=1,...,k-1$,
with $\sum \lambda_i=1$. 
The basis $(e_1,\dots,e_k)$ will be the eigenbasis of the symmetric $(k
\times k)$-matrix $h(B)$
with trace zero, norm one and ascending eigenvalues.
To achieve this we will now define the eigenvalues
$\ti \lambda_1 \leq \dots \leq \ti \lambda_k$ of $h(B)$ with
$\sum_{i=1}^k \ti \lambda_i=0$ and $\sum_{i=1}^k \ti \lambda_i^2=1$.
We first map
$(\lambda_1,\dots,\lambda_{k-1})$ onto
  its trace-free projection $(\tilde \lambda_1',\dots,\tilde \lambda_k')$
   with
\begin{align*}
\tilde \lambda_j':= \sum_{i=1}^{j-1}  \lambda_j -\tfrac 1k \cdot
\sum_{i=1}^{k-1}\lambda_{k-i}\cdot i \,,
\end{align*}
$1 \leq j \leq k$. Notice that $\sum_{i=1}^k \tilde \lambda_i'=0$ and that
$\tilde \lambda_1' \leq \dots \leq \tilde \lambda_k'$. Finally, 
projecting to the unit sphere in  $\RR^k$, we obtain
$(\ti \lambda_1,\dots,\ti \lambda_k)$.

The inverse map $g$ is given as follows: Let $S$ be a symmetric,
traceless $(k\times k)$-matrix
of norm one with eigenvalues $(\ti \lambda_1,\dots,\ti \lambda_k)$, $\ti
\lambda_1 \leq \dots \leq \ti \lambda_k$
and corresponding eigenbasis $(e_1,\dots,e_k)$ of $\RR^k$. As above we
obtain the corresponding
projections $P_1,\dots,P_{k-1}$. The scalars $\lambda_i$, $1\leq i \leq
k-1$, are defined by
$$
   \lambda_i:=\tfrac{\ti \lambda_{i+1}-\ti \lambda_i}{\ti \lambda_k -\ti
\lambda_1}\,.
$$
It is easy to check that $0 \leq \lambda_1,\dots, \lambda_{k-1} \leq 1$ and
that $\sum_{i=1}^{k-1} \lambda_i=1$.

Now let $B$ be a point in $A$ with generic eigenvalues $0< \lambda_1 <
\dots< \lambda_{k-1}$,
$\sum \lambda_i =1$. Let $(B_m)_{m \in \NN}$ be a sequence in $\XGH$
with ${\dsp \lim_{m\to \infty} B_m=B}$.
We claim that for all large $m$, we have $B_m \in A$. Suppose this
is not the case.
By passing to a subsequence, we may assume that $B_m \not \in A$ for
all $m$.
Passing to a further subsequence, we may assume that each of the corresponding sequences of scalars
and projections defining  
 $B_m$ converges. It follows that the  eigenspace of the smallest eigenvalue of
$B$, given by some
$\so(n+1)$ since $0<\lambda_1$, will be approximated by a sum of
eigenspaces of the $B_m$,
and that sum of eigenspaces must be a subalgebra. Now since $\so(n)$ is maximal in $\so(n+1)$
it follows that
this smallest eigenspace is given by $\so(n+1)_m \in [\so(n+1)]=\SO(k)\cdot \so(n+1)$ converging
to $\so(n+1)$. Inductively the above claim follows.

Since $A$ is homeomorphic to $S^{\frac{1}{2}k(k+1)-2}\!$, it is non-contractible in $\XGH$ by Lemma \ref{lem:example}.
\end{proof}

\begin{lemma}\label{lem:example}
Let $X$ be a topological space and let $A\subset X$ be a compact $q$-dimensional 
topolo\-gical submanifold which is orientable over some
coefficient ring $R$.
If there exists a point $a_0\in A$ and
an open neighborhood $U_1$ of $a_0$ in $X$ with $U_1 \subset A$,
then $H_q(X) \neq 0$.
\end{lemma}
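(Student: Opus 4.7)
The plan is to exhibit a nonzero class in $H_q(X;R)$ by pushing forward the fundamental class of $A$. Since $A$ is a compact, orientable, $q$-dimensional topological manifold, the standard theorem (e.g.\ Hatcher, Theorem 3.26) gives $H_q(A;R) \cong R$, generated by a fundamental class $[A]$ whose image in the local homology $H_q(A, A\setminus\{a\};R)$ at any point $a\in A$ is an orientation generator, hence nonzero. Let $i\colon A \hookrightarrow X$ denote the inclusion; the goal is to show $i_*[A] \neq 0$ in $H_q(X;R)$.

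The key is the hypothesis that $U_1$ is open in $X$ and entirely contained in $A$. First, I would observe that since $a_0 \in U_1$ and $U_1$ is open, $a_0 \notin \overline{X\setminus U_1}$, so excision applies to the pair $(X, X\setminus\{a_0\})$ with the subset $X\setminus U_1$ removed, yielding
\[
   H_q(X, X\setminus\{a_0\};R) \;\cong\; H_q(U_1, U_1\setminus\{a_0\};R).
\]
The same excision argument inside $A$ (note $U_1$ is also open in $A$) gives
\[
   H_q(A, A\setminus\{a_0\};R) \;\cong\; H_q(U_1, U_1\setminus\{a_0\};R).
\]
Thus the inclusion $(A,A\setminus\{a_0\})\hookrightarrow (X,X\setminus\{a_0\})$ induces an isomorphism on the $q$-th relative homology, since both sides are computed by the same open piece $U_1$.

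Next, I would consider the commutative square
\[
\begin{array}{ccc}
H_q(A;R) & \longrightarrow & H_q(A,A\setminus\{a_0\};R) \\
\downarrow i_* & & \downarrow \cong \\
H_q(X;R) & \longrightarrow & H_q(X,X\setminus\{a_0\};R)
\end{array}
\]
arising from the inclusion of pairs. The top horizontal map sends $[A]$ to an orientation generator of $H_q(A,A\setminus\{a_0\};R)\cong R$, which is nonzero; commutativity and the fact that the right vertical arrow is an isomorphism then force $i_*[A]$ to have nonzero image in $H_q(X,X\setminus\{a_0\};R)$, so $i_*[A]\neq 0$ in $H_q(X;R)$.

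There is no real obstacle here beyond verifying the two excision hypotheses and invoking the existence of the fundamental class; the only mild point is to confirm $U_1$ is open in $A$ (immediate from $U_1\subset A$ and $U_1$ open in $X$) so that excision is legal inside $A$ as well as inside $X$. With that, the proof is complete.
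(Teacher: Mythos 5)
Your argument is correct, but it is a genuinely different proof from the one in the paper. You push the fundamental class $[A]$ into the local homology at $a_0$: by excising $X\setminus U_1$ (resp.\ $A\setminus U_1$) you identify both $H_q(X,X\setminus\{a_0\};R)$ and $H_q(A,A\setminus\{a_0\};R)$ with $H_q(U_1,U_1\setminus\{a_0\};R)$, so the inclusion-induced map on relative groups is an isomorphism, and since $[A]$ maps to an orientation generator of the local homology, $i_*[A]\neq 0$. The paper instead argues by contradiction at the chain level: assuming $c=i_*[A]$ bounds, it subdivides a bounding chain over the cover $\{U_1,\,X\setminus\{a_0\}\}$, uses $U_1\subset A$ to see that $c-\partial u_1$ is a cycle in $A':=A\setminus\{a_0\}$, and invokes $H_q(A')=0$ for the non-compact $q$-manifold $A'$ to conclude $[A]=0$ in $H_q(A)$, a contradiction. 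Your route is cleaner and more standard (no barycentric subdivision, no appeal to the vanishing of top homology of non-compact manifolds, and no implicit connectedness of $A$ beyond working with the component of $a_0$), at the price of invoking the characterization of the fundamental class via local homology (Hatcher 3.26) and two applications of excision; note that both proofs rely on the same mild point-set fact — that $a_0$ can be separated from $X\setminus U_1$ (automatic here, since in the paper's applications $X$ sits in a Euclidean space) — yours for the excision hypothesis, the paper's for the small-simplices/subdivision step.
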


\begin{proof}
Let $[A] \in H_q(A) \bs \{0\}$ denote the orientation class of $A$.
For $i:A \hookrightarrow X$, the embedding of $A$ into $X$,
it suffices to show  that $[c]:=i_*([A]) \neq 0$ in $H_q(X)$. For a contradiction, suppose
we have some $z \in S_{q+1}(X)$ 
such that $\partial z =c$. Let $U_2 := X \bs \{a_0\}$. 
Then there exist chains $u_i\in
S_{q+1}(U_i)$, 
with $\tilde z = u_1 + u_2$ and $\partial z=\partial \tilde z$
(we use a barycentric subdivision).
Thus from $c =\partial \tilde z$ we deduce
    $$
\partial u_2= c - \partial u_1\,.
$$
Since $u_1 \in S_{q+1}(U_1)$ and $U_1 \subset A$, we know 
$\partial u_1\in S_q(U_1) \subset
S_q(A)$.
Thus the chain on the right hand side  is in $S_q(A)$.
On the left, 
$\partial u_2 $ is a chain in $S_q(U_2)$. We let  $A':=U_2 \cap A= A \bs \{a_0\}$ and we know 
we have $\partial u_2 \in S_q( A')$, and consequently, 
 $c-\partial u_1 \in S_q(A')$. 
Now $[c]=[c-\partial u_1]\neq 0$ in $H_q(A)$ but we must have some $z' \in S_{q+1}(A')$
with $\partial z'=c-\partial u_1 $, because,
since $A'$ is a non-compact $q$-dimensional manifold, $H_q(A')=0$.
Thus $[c]=0$ in $H_q(A)$, a contradiction. Thus $H_q(X) \neq 0$.
\end{proof}

\begin{remark}
For  $G/H=\SO(n+2)/\SO(n)$ we have $\XGH=S^1=S^1\cdot \so(n+1)$.
Moreover, in this case
$\n(\h)$ is toral and a maximal subalgebra of $\g$ and
$\Delta_{G/H}=\emptyset$.

When $G/H=\SO(n+3)/\SO(n)$ we have $\XGHass= \{*\} \cup S^4$.
Here $\{*\}$ is given by
$\kf=\so(3)\oplus \so(n)$, the only other almost semisimple subalgebra.
Clearly, $\kf$ is maximal in $\g$
and a minimal almost semisimple subalgebra. Notice also that there exist
non-toral subalgebras of
type $\so(2)\oplus \so(n+1)$; however these are  not almost semisimple.
It follows that $\XGHass$, and thus $\XGH$,
are disconnected. The simplicial complex $\Delta_{G/H}$ 
consists of three singletons, given by $\so(3)\oplus \so(n)$, 
$\so(n+1)$ and $\so(n+2)$. 

When $G/H=\SO(n+k)/\SO(n)$, $k \geq 4$, the nerve $\XGH$ is connected with
$H_{q(k)}(\XGH)\neq 0$ for $q(k)=\tfrac 12 k(k+1)-2$.
By Lemma \ref{lemcoindex}, we obtain the existence of an Einstein metric
$g \in \MGo$
whose augmented coindex is at least $q(k)+1$. Notice that
$\dim \so(k)=\dim N_G(H) - \dim H=\tfrac 12 k(k-1)$ and that
$q(k)+1-\tfrac 12 k(k-1)=k-1$. Thus, we obtain (at least) $k-1$
dimensions which are orthogonal
to the $\SO(k)$-orbit of $g$ on which the Hessian of $\sc$ is
non-negative.
If one considers instead the simplicial complex $\Delta_{G/H}$,
then by Lemma \ref{lemcoindex} one obtains a critical point $g'$ (with
possibly larger isometry group)
whose augmented coindex is greater than or equal to $\tfrac 12 (k-2)$, for $k$ even,
and greater than or equal to $\tfrac 12 (k-1)$, for $k$ odd: see Theorems \ref{thm:RB} and \ref{thm:RC}. We note
that it is possible that $g=g'$ (up to isometry).
\end{remark}

The following example, due to Graev, demonstrates that the nerve $\XGH$ can be a finer detector than the simplicial complex $\DGH$.

\begin{example}\label{exa:Graev}
The space $G/H=\Un(N)/\Un(2)$ admits a non-contractible nerve $\XGH$ although its simplicial complex
$\DGH$ is contractible. For the (non-standard) embedding of $H$ in $G$ see below.
\end{example}

\begin{proof}
Let $p \geq 2$ and let $\CC^N:=S^p (\CC^3)$, the space of $p$-symmetric tensor
products of $\CC^3$, where $N= \binom{p+2}{2}$.
 For $e_3 \in \CC^3$, the third standard basis vector, let $v_0:= \otimes_p e_3 \in \CC^N$,
 and let $G_{v_0}$ denote the stabilizer of
$v_0$ under  the standard action of
$G:=\Un(N)$ on $\CC^N$,  
the unitary group.  
Notice that $G_{v_0}$ is isomorphic to $\Un(N-1)$.

The standard representation of $\Un(3)$ on $\CC^3$ induces an irreducible complex
representation $\rho:\Un(3) \to \Un(N)$ on $S^p(\CC^3)$ \cite[13.1]{F.H}. 
Let $K :=\rho(\Un(3))$ and notice $\kf \cong \un(3)$.
Let $\Un(2)$ denote the stabilizer in $\Un(3)$ of $e_3$ and let
$H:=\rho(\Un(2))$. Then  $H < G_{v_0}$ and $\h \cong \un(2)$. 

Although $\h$ is not maximal in $\kf$,  $\kf$ is a minimal non-toral subalgebra of $\g$.
Moreover, since $K$ is complex and acts irreducibly on $\CC^N$ \cite{F.H}, we know 
$\kf$ is a maximal subalgebra of $\g$ by \cite[Thm.1.3]{D.E.1}. 
Furthermore, since $\h < \g_{v_0} < \g$, it follows that
$\XGH$ is not connected.

Next, we show that $\Delta_{G/H}$ is contractible. To this end, 
we decompose $\CC^3=
\langle
e_3\rangle_{\bldss\CC} \oplus 
\langle e_1,e_2\rangle_{\bldss\CC}$. Then it is easy to check that
$$
  S^p(\CC^3)=S^p(\langle
e_3\rangle_{\bldss\CC} \oplus
\langle e_1,e_2\rangle_{\bldss\CC})= \bigoplus_{k=1}^{p+1} V_k
$$
where, for each $k=1,\dots,p+1$, 
$\dim_{\bldss\CC} V_k=k$, 
$V_k$ is an irreducible complex $\Un(2)$-representation (see \cite[p.76,(3.1)]{B-D}), and the $V_k$ are pairwise
inequivalent modules (having different dimensions).
In fact, $V_1$ is a trivial module,   
 $V_2$ is the standard representation of $\Un(2)$ on $\CC^2=S^1(\CC^2)$ 
 (which confirms that the representation $\rho$ is effective), 
and for $k \geq 3$, $V_k$ is isomorphic to $S^{k-1}(\CC^2)$.
Notice, for $k= 2,\dots,p+1$, that the realizations of the complex $H$-representations $V_k$,  which
 we consider here, are irreducible real representations, by Lemma \ref{lem:repcomplex}.

This shows that 
$$
  H < \Un(1)\times \Un(2) \times \cdots \times \Un(p+1) \subset \Un(1) \times \Un(N-1) \subset \Un(N)\,,
$$
with $\sum_{k=1}^{p+1}k=N$.  
Moreover, for $k= 2,\dots,p+1$, the projection $\rho_k(H)$  of $\rho(H)$ onto $\Un(k)$
is given by the standard representation of $\Un(2)$ on $V_k \cong S^{k-1}(\CC^2)$.

As a first consequence of this,
the connected component of the centralizer of $H$ in $G$ is a $(p+1)$-dimensional torus
$$
 T^{p+1}=\Un(1) \times {\rm Z}(\Un(2))\times \cdots \times  {\rm Z}(\Un(p+1))\,,
$$
where $ {\rm Z}(\Un(k))$ denotes the center of $\Un(k)$. To see this we note that 
 Lemma \ref{lem:repcomplex} guarantees that the realization of the irreducible complex representation of $\Un(2)$ on $V_k$ is still irreducible. By \cite[Thm.1.5]{D.E.1},
it follows that $\rho_k(\Un(2))$ is maximal in $\Un(k)$, $2 \leq k \leq p+1$. 
As a consequence, the
centralizer of $\rho_k(\Un(2))$ in $\Un(k)$ is precisely the center 
$ {\rm Z}(\Un(k))$ of $\Un(k)$. This shows the above claim.

Furthermore, we know that the center of ${\rm U(2)}$ acts on $V_k$ by
$\rho_k(e^{i\varphi}\cdot I_2)=e^{i\varphi (k-1)}\cdot I_k$. This shows that on the complement
of $\un(k_1)\oplus \un (k_2)$ in $\un(k_1+k_2)$, $1 \leq k_1 < k_2 \leq p+1$,
$\Ad(H)$ acts non-trivially. Here $\un(k_1), \un(k_2)$ denote the Lie algebras of the above
blocks $\Un(k_1), \Un(k_2)$ and $\Un(k_1+k_2)$ the corresponding $(k_1+k_2)$-block.

Since the center of $H$
is $1$-dimensional, we conclude that $\dim N_G(H)-\dim H=p$ and that
$A:=(N_G(H))_0/H$ is abelian. We will also view $A$ as a maximal torus in
a compact complement of $H$ in $N_G(H)$.

We can now show that $\Delta_{G/AH}$ is contractible. We will prove that
each minimal (non-toral) subalgebra $\kf$ of $\g$ with $\h \oplus \af < \kf$
satisfies $\kf \leq \un(1) \oplus \un(N-1)$.
With this, we can conclude that $\Delta_{G/AH}$ is a cone, thus contractible.

To show this we first examine the isotropy representation of $H$ on 
$\m = \un(N)\ominus (\un(1)\oplus \un(N-1))$, where $\m \cong \CC^{N-1}$.
Since $H \subset \Un(N-1)$, $\m$ is $\Ad(H)$-invariant. Moreover 
we know that $\m = \bigoplus_{k=2}^{p+1} \m_k$, with $\m_k \cong \CC^k$ and
that $\Ad(H)$ acts on $\m_k$ via $\rho_k(H)$ irreducibly, using the above embedding of $H$ in $G$.
Since $A <A^{p+1}$, each of the subspaces $\m_k$ is $\Ad(AH)$-invariant and $\Un(1) < A$
acts non-trivially on all of them. Since $\Un(1)$ acts on $\un(N-1)$ trivially, the summands
$\m_k$ are $\Ad(AH)$-isotypical summands of the isotropy representation of $G/AH$.

Suppose now that $\h \oplus \af < \kf$ but $\kf \not \leq \un(N-1)$.
Then by the above we have $\kf \cap \m \neq 0$, thus $\m_k \subset \kf$
for some $2 \leq k \leq p+1$. 
But then  $[\m_k,\m_k]=\un(k) \ominus \af_k \subset \kf$, where $\af_k$ 
denotes a maximal torus of $\un(k)$, and $\un(k)$ denotes
the Lie algebra of the  block $\Un(k)$, above.
By taking further Lie brackets, we conclude that $\su(k) \subset \kf$. Thus $\h \oplus \af \oplus \un(k)$
is a non-toral subalgebra of $\kf$, contradicting the minimality of $\kf$.
\end{proof}

\begin{remark} One can easily generalize 
 Example \ref{exa:Graev}, replacing $\CC^3$ by $\CC^m$ and $\Un(3)$ by $\Un(m)$, $m\geq 4$,
to construct further examples with the same property that  the simplicial complex
$\DGH$ is contractible while the nerve $\XGH$ is not.
\end{remark}

\begin{remark}\label{rem:familiesofsolutions}
We would like to mention that
in \cite{B-K} we already described  such an example:
$G/H=\SU(3)\times SU(3)/\Delta \SU(2)\Delta_{p,q} \Un(1)$,
for $p=q \in \NN$: see also Examples 5.13 and 5.14 in \cite{Bo05}.
Moreover, all these provide examples of $G$-homogeneous Einstein metrics
which are not fixed by the action of $N_G(H)$; that is, these Einstein metrics
appear in a one-dimensional family. This follows since these Einstein
metrics are not invariant under the right action of  $\Un(1)=\Un(1)\Un(1)/\Delta_{p,p}\Un(1)$: among those metrics there exist no invariant Einstein metric.
\end{remark}

We include a brief overview of Lie groups and their real (resp.~complex, symplectic) representation theory in Section \ref{sec:Ltbasics}. 

\begin{example}\label{exa:<}
Let $G/H$ be a compact homogeneous space with $G$ simple such that
there exists an intermediate subgroup $K$ such that (i) $\dim G/K>1$ 
and $\g \ominus \kf$ is $\Ad(H)$-irreducible, (ii) 
$K/H=K_1/H_1 \times \cdots \times K_r/H_r$ is a product of isotropy irreducible spaces 
with $\dim K_i/H_i >1$ for all $i=1,...,r$, and (iii) $G/H \neq {\rm Spin}(8)/G_2$.
Then there exist only finitely many non-toral subalgebras $\kf$, and 
$\XGH$ is contractible.
\end{example}

\begin{proof}
By assumption $K/H \cong K_1/H_1 \times \cdots \times K_r/H_r$, where each of the factors $K_i/H_i$ is isotropy irreducible, $i=1,...,r$.
It follows that $\kf=\kf_1 \oplus \cdots \oplus \kf_r \oplus \rf$ 
and $\h =\h_1 \oplus \cdots \oplus \h_r \oplus \rf$, meaning that $K/H$ may not be presented effectively.
Writing $\kf_i =\h_i \oplus \m_i$, $\m_i$  $\Ad(H)$-irreducible and isotypical, for $1 \leq i \leq r$, 
we deduce 
$$
 \m=\m_1 \oplus \cdots \oplus \m_r \oplus \hat \m\,.
$$
Note that the $\m_i$ are pairwise $Q$-orthogonal, $1 \leq i \leq r$, 
but that $\hat \m$ may not be $Q$-orthogonal to say $\m_1$.
This leaves us with at most finitely many
non-toral minimal subalgebras $\kf_1,...,\kf_r$ contained in $\kf$.
Also, since $\dim G/K>1$, the module $\hat \m$ cannot be $\Ad(H)$-trivial. 

Suppose now that $\hat \m$ is not equivalent to any of the $\m_i$, $i=1,...,r$.
Suppose furthermore, 
that there exists a minimal non-toral subalgebra $\ti \kf< \g$ which is not a subgroup of $\kf$.
We conclude that $\hat \m \subset \ti \kf$, since $\hat \m$ is an (irreducible) 
isotypical summand of $\m$.
Now, if $[\hat \m,\hat \m] \subset \h$
then $G$ is not simple by Lemma \ref{lem:splitt}, a contradiction.
Thus, we may assume that there exists a non-empty subset $I = \{1,...,\tilde r\}$, such that for
all $i\in I$ we have ${\rm pr}_{\bldss{\m_i}}([\hat \m,\hat \m])\neq 0$. If $\tilde r <r$
then set $\p_1:= \oplus_{i=1}^{\ti r} \m_i \oplus \hat \m$ and $\p_2:=\oplus_{j=\ti r+1}^r \m_j$.
Then, since $\hat \m$ is $\Ad(K_1 \times \cdots \times K_{\ti r})$-invariant, $[\p_1,\p_1]\subset \h \oplus
\p_1$, $[\p_2,\p_2]\subset \h \oplus \p_2$ and $Q(\p_1,\p_2)=0$, by Lemma \ref{lem:splitt}
$G$ would not be simple, a contradiction. Thus $\tilde r=r$. 
But this would imply $\ti \kf=\g$, also a contradiction.

This shows that in this case all the minimal non-toral subalgebras of $\g$ are subalgebras
of $\kf$. As a consequence, the minimal nerve $\XGHmin$ is contractible and so is $\XGH$.

Now suppose that $\hat \m$ is equivalent to $\m_1$.
Then  $\hat \m$ is not equivalent to $\m_2,...,\m_r$ since $H_1$
acts trivially on those modules but non-trivially on $\m_1$. Moreover,
$H_2,..,H_r$ act trivially on $\m_1$ thus on $\hat \m$, and consequently 
we have ${\rm pr}_{\bldss{\m_i}}[\hat \m,\hat \m]=0$ for all $i=2,...,r$. As above,
this shows that $r=1$, since otherwise $G$ would not be simple.

Finally we deduce that $\m =\m_1 \oplus \hat \m$. By \cite{DK}, \cite[Thm A.1]{He} we conclude that
there exists only one such space, namely
$G/H={\rm Spin}(8)/G_2$, proving the claim.
\end{proof}

The nerve $\XGH$ of ${\rm Spin}(8)/G_2$ consists of three singletons,
each one isomorphic to ${\rm Spin}(7)$
(under the triality isomomorphism). Note also that when $G$ is not simple then there exist
further compact homogeneous spaces with two equivalent summands, e.g. $G/H=(L \times L \times L)/\Delta_3 L$,
for any simple group $L$.

\begin{remark}
The assumptions in Example \ref{exa:<} are very restrictive, meaning that
$\dim \MGo$ has to be very small. One may try to generalize the above by relaxing
various conditions.
\end{remark}

For completeness we show the following result: see proof of Theorem 2.1 in \cite{WZ2}.

\begin{lemma}[\cite{WZ2}]\label{lem:splitt}
Let $G/H$ be an almost effective compact homogeneous space and suppose that $\m=\m_1 \oplus \m_2$, $\m_1,\m_2$
$\Ad(H)$-invariant with
$Q(\m_1,\m_2)=0$, $[\m_1,\m_1]\subset \h \oplus \m_1$ and $[\m_2,\m_2]\subset \h \oplus \m_2$. 
Then $G/H=G_1/H_1 \times G_2/H_2$.
\end{lemma}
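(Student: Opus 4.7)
The plan is to prove the splitting at the Lie algebra level first, then deduce the homogeneous space decomposition via connected subgroups.

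First I would show that $[\m_1,\m_2]=0$. The conditions imply that $\kf_i := \h\oplus \m_i$ is a subalgebra for $i=1,2$. To see that brackets between $\m_1$ and $\m_2$ vanish, take $X\in\m_1$, $Y\in\m_2$, and test $Q([X,Y],Z)$ for $Z$ in each of $\h$, $\m_1$, $\m_2$. Using $\ad(\cdot)$-skewness \eqref{eqn:adXscew}, $\Ad(H)$-invariance of $\m_1,\m_2$, the bracket conditions $[\m_i,\m_i]\subset \h\oplus \m_i$, and $Q(\m_1,\m_2)=0$, each inner product vanishes, so $[X,Y]=0$.

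Next I would construct two ideals of $\g$. Set
\[
  \g_i := \m_i + [\m_i,\m_i]\subset \h\oplus \m_i\,,\quad i=1,2\,.
\]
Using $[\m_1,\m_2]=0$ together with the Jacobi identity one verifies $[\g_1,\g_2]=0$. To see that $\g_i$ is an ideal, note that $[\h,\m_i]\subset \m_i$ by $\Ad(H)$-invariance, so $[\h,[\m_i,\m_i]]\subset [\m_i,\m_i]\subset \g_i$; combined with $[\m_i,\g_i]\subset \g_i$ (straightforward from $[\m_i,\m_i]\subset \h\oplus \m_i$) and $[\m_j,\g_i]=0$ for $j\neq i$, this yields $[\g,\g_i]\subset\g_i$. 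The same chain of identities gives $Q(\g_1,\g_2)=0$.

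Now I would show $\g=\g_1\oplus\g_2$. Let $\g_3:=(\g_1\oplus\g_2)^{\perp_Q}$. Since $Q$ is $\Ad(G)$-invariant and $\g_1\oplus\g_2$ is an ideal, $\g_3$ is also an ideal, and in fact $[\g_3,\g_1\oplus\g_2]\subset \g_3\cap(\g_1\oplus\g_2)=0$. Because $\m=\m_1\oplus\m_2\subset \g_1\oplus\g_2$, the ideal $\g_3$ lies in $\h$. Almost effectiveness forces $\g_3=0$. This yields a $Q$-orthogonal ideal decomposition $\g=\g_1\oplus\g_2$. Writing $Z\in\h$ as $Z=\pi_1(Z)+\pi_2(Z)$ with respect to this splitting and testing against $\m_j\subset \g_j$ shows $\pi_i(Z)\perp \m$, hence $\pi_i(Z)\in \h\cap \g_i=:\h_i$, so $\h=\h_1\oplus\h_2$ with $\h_i\subset \g_i$.

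Finally, I would pass from Lie algebras to groups. Let $\widetilde{G}=\widetilde{G}_1\times\widetilde{G}_2$ be the simply connected cover of $G$ (where $\widetilde{G}_i$ has Lie algebra $\g_i$), and let $\widetilde{H}_i<\widetilde{G}_i$ be the connected subgroups with Lie algebras $\h_i$. The projection $\widetilde{G}\to G$ descends to a diffeomorphism $\widetilde{G}/(\widetilde{H}_1\times \widetilde{H}_2) \to G/H$ after dividing by a discrete central subgroup contained in $\widetilde{H}_1\times\widetilde{H}_2$, giving $G/H = G_1/H_1\times G_2/H_2$ with $G_i$, $H_i$ the connected subgroups of $G$ corresponding to $\g_i$, $\h_i$. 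The main obstacle is Step~3, verifying the orthogonality and ideal properties cleanly enough to invoke almost effectiveness; once $\g_3=0$ is obtained, the remaining assertions are formal.
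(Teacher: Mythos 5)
Your proof is correct and follows essentially the same route as the paper: show $[\m_1,\m_2]=0$ by testing $Q([X,Y],Z)$ against $\h$, $\m_1$, $\m_2$ using \eqref{eqn:adXscew}, set $\g_i=\m_i+[\m_i,\m_i]$ (the paper's $\h_i\oplus\m_i$ with $\h_i={\rm pr}_{\bldss\h}[\m_i,\m_i]$), verify these are $Q$-orthogonal ideals, and invoke almost effectiveness to get $\g=\g_1\oplus\g_2$; your explicit $\g_3$-argument just spells out what the paper leaves implicit. The only caveat is your final group-level step, which goes beyond the paper (whose proof stops at the Lie algebra splitting): the kernel of $\widetilde G\to G$ need not lie in $\widetilde H_1\times\widetilde H_2$ (it can be a diagonal central subgroup), so identifying $G/H$ with $G_1/H_1\times G_2/H_2$ for the connected subgroups corresponding to $\g_i$, $\h_i$ requires a little more care or an adjusted presentation.
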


\begin{proof}
Let $\h_1 :={\rm pr}_{\bldss\h}[\m_1,\m_1]$ and set $\g_1:=\h_1 \oplus \m_1$.
Using the Jacobi indentity we see that $\h_1$ is a subalgebra of $\h$. Moreover,
$\g_1$ is a subalgebra of $\g$ because $[\h_1,\m_1]\subset \m_1$.
We define $\g_2$ analogously. We claim that  $[\m_1,\m_2]=0$. To see this note 
$$
 Q([\m_1,\m_2],\m_1)=Q(\m_2,[\m_1,\m_1])=Q(\m_2,\h\oplus \m_1)=0
$$ 
and 
$$
 Q([\m_1,\m_2],\h)=Q(\m_2,[\m_1,\h])=Q(\m_2,\m_1)=0\,.
$$
Using $Q(\m_1,\m_2)=0$, $[\m_1,\m_2]=0$ and the Jacobi identity we obtain
$$
Q([\m_1,\m_1]_{\bldss\h},[\m_2,\m_2]_{\bldss\h})=Q([\m_1,\m_1],[\m_2,\m_2])=
Q(\m_1,[\m_1,[\m_2,\m_2]])=0\,.
$$
This shows $Q(\h_1,\h_2)=0$. Thus $\g_1$ and $\g_2$ are $Q$-orthogonal ideals in $\g$.
By effectiveness we deduce $\g=\g_1 \oplus \g_2$. This shows the claim.
\end{proof}

\begin{example}\label{exa:>}
Let $G/H$ be a compact homogeneous space with $G$ simple such that
there exists a subgroup $K$, such that (i) $K/H$ is isotropy irreducible with $\dim K/H>1$ 
(ii) all $G$-invariant metrics on $G/H$ are submersion metrics with respect to 
$K/H \to G/H \to G/K$ (iii) $\dim N_G(K) = \dim K$ and (iv) for every maximal subgroup $L$ of $G$ 
with $H<L$ we have $K<L$. Then there exist only finitely many non-toral subalgebras 
and $\XGH$ is contractible.
\end{example}

\begin{proof}
Let 
$\m = \m_{\bldss{\kf}} \oplus \m_2 \oplus \cdots \oplus \m_{\ell}$ 
be a decomposition of $\m$
into $Q$-orthogonal $\Ad(H)$-irreducible summands. Then by (ii) the summands $\m_2,...,\m_\ell$
are  $\Ad(K)$-irreducible, see \cite[9.79]{Bes},  and by (iii) they are $\Ad(K)$-nontrivial. 

We now show this implies $\dim N_G(H)=\dim H$.
For a contradiction, suppose $\dim \m_0>0$. Then we must have $\m_0 \cap \kf = 0$; otherwise 
$\h < \h\oplus (\m_0 \cap \kf) \leq \kf$ either shows that $\h$ is not maximal in $\kf$, which is impossible,
or that  $\kf = \h \oplus  (\m_0 \cap \kf) $, which would imply $\dim K/H=1$, contradicting (i).
Since $\m_0 \cap \kf =0$, we have $Q(\m_0, \m_{\bldss{\kf}})=0$, because 
$\m_0$ (the sum of all trivial $\Ad(H)$-summands in $\m$) is an $\Ad(H)$-isotypical summand of $\m$. 
Then,by (ii), any $1$-dimensional subspace of $\m_0$ is $\Ad(K)$-invariant, contradicting (iii).

Thus by \cite{BWZ} we know there exist only finitely many intermediate subalgebras $\lf$
with $\h < \lf < \g$ (all non-toral), and by  Corollary \ref{cor:LKmax}, $\XGH$ is contractible.
\end{proof}

Examples of $G/H \in \Nl_>$ are plentiful. 

\begin{example}\label{exa:mixedtype}
Let $G:=\SO(2n)$ where $n=\sum_{i=1}^r n_i$ with $n_1,\dots,n_r \geq 2$ and $r \geq 2$, and let
$$
   H:= \Un(n_1) \times \SO(2n_2) \times \cdots  \times \SO(2n_r)\,.
$$
Then $G/H$ satisfies all the assumptions of the above lemma: the group $K$ is
given by  
$$
 K=\SO(2n_1)\times \cdots \times  \SO(2n_{r}) \,.
$$
\end{example}

\begin{remark}
We can replace condition (iv) in Example \ref{exa:>} by the condition that $\m_{\bldss{\kf}}$ is an $\Ad(H)$-isotypical summand of $\m$. To show this,
let $\lf$ be a maximal subalgebra of $\g$. We claim that $\kf \leq \lf$. 
If not, then we may assume that $\lf=\h \oplus \m_{\bldss{\lf}}$,
$\m_{\bldss{\lf}}= \m_2 \oplus \cdots \oplus \m_{r}$ as in Example \ref{exa:>}, $r \leq \ell$,
using that $\m_{\bldss{\kf}}$ is an isotypical summand. 
Then $\lf + \kf$ is a subalgebra of $\g$, since  $[\kf,\m_{\bldss{\lf}} ]\subset \m_{\bldss{\lf}}$ 
using that $\m_{\bldss{\lf}}$
is $\Ad(K)$-invariant by (ii). If $\lf + \kf < \g$ we obtain a contradiction since $\lf$ is
maximal. So we are left with the case $\kf + \lf =\g$, that is $r=\ell$. Since $\m_{\bldss{\kf}}$ is an isotypical
summand we deduce again by Lemma \ref{lem:splitt} that $G$ is not simple, another contradiction.
\end{remark}

\begin{example}
Let $G/H=\SO(9)/\SO(3)\Un(2)$,
where the embedding of $H$ into $G$ is given by
$$
  \SO(3)\Un(2)\subset
  \SO(3)\SO(4)\subset \SO(7)\subset \SO(2){\rm
SO}(7)\subset \SO(9)\,.
$$
Then the nerve $\XGH$ is contractible.
\end{example}

\begin{proof}
The isotropy representation of $G/H$ decomposes into
$$
   \m = \m_0 \oplus (\RR^3_1 \oplus \RR^3_2) \oplus (\CC^2_1\oplus \CC^2_2)
   \oplus \RR^2 \oplus \RR^{12}\,.
$$
Here $\m_0=\so(2)$, the centralizer of $\so(7)$, $\RR^3_1,\RR^3_2$ are
equivalent to the standard representation of
$\SO(3)$, of real type, $\CC^2_1, \CC^2_2$ are equivalent to the
standard representation of $\Un(2)$, of complex type,
  $\RR^2$ denotes the orthogonal complement of $\un(2)$ in $\so(4)$ and
  $\RR^{12}$ denotes the orthogonal complement of $\so(3)\oplus \so(4)$
in $\so(7)$, which is irreducible.

  The minimal non-toral subalgebras are
  $$
   \kf_\varphi:=\so(4)_\varphi \oplus \un(2) \,,
      \quad
     \kf_2:=\so(3) \oplus \so(4) \,,
  \quad
     \kf_3:=\so(3)\oplus \un(3)
         \quad \textrm{ and }\quad
         \kf_4:= \so(3)\oplus \tilde \un(3)\,,
$$
where $\varphi \in S^1=(N_G(H))_0=\SO(2)$. Notice $\kf_\varphi=\h
\oplus (\RR^3)_\varphi$,
$\kf_2 =\h \oplus \RR^2$, $\kf_3=\h \oplus \m_0 \oplus {\tilde \CC}_1^2$
and $\kf_4=\h \oplus \m_0 \oplus {\tilde \CC}_1^2$. 
 There are precisely two subalgebras isomorphic to $\un(3)$ between
$\un(2)\oplus \un(1)$ and $\so(6)$.

We first compute  the subalgebras generated by any two of our minimal non-toral
subalgebras.
For $\varphi \neq \ti \varphi$ we have $\langle \kf_\varphi,\kf_{\ti
\varphi}\rangle
=\so(5)\oplus \un(2)$, $\langle \kf_\varphi,\kf_2\rangle=\so(4)_\varphi
\oplus \so(4)$,
$\langle \kf_\varphi,\kf_3\rangle=\langle \kf_\varphi,\kf_4\rangle=\g$,
$\langle \kf_2,\kf_3\rangle=\langle \kf_2,\kf_4\rangle=\so(3)\oplus \so(6)$
and finally $\langle \kf_3,\kf_4\rangle=\so(3)\oplus \so(6)$.

We see that $\kf_3,\kf_4$ are only related to $\kf_2$ via the three flags
$(\kf_3< \so(3)\so(6))$, $(\kf_4< \so(3)\so(6))$ and $(\kf_2 <
\so(3)\so(6))$.
Clearly, this part of $\XGHmin$ is contractible, thus we may disregard
$\kf_3$ and $\kf_4$.

Next, for all $\varphi \neq \ti \varphi$ we consider
$$
\langle \kf_\varphi,\kf_{\ti \varphi},\kf_2\rangle
=\langle \so(5)\un(2),\so(3)\so(4)\rangle=\so(5)\so(4)
$$
which is maximal in $\g$. This shows that $\XGHmin$ is a cone, thus
contractible.
\end{proof}

By varying the numbers $9,3,2$ it is easy to describe many further examples of the above kind.

\subsection{{\it The simplicial complex of $G/H$}}\label{sec:tricks}

In this section we apply the methods proved in previous sections to subspaces of
$\MGo$ which are invariant under the volume normalized Ricci flow. The main result of
this section is Theorem \ref{thm:trick}, which is particularly useful for classification results: see
Remark \ref{rem:finiteproblem}. 

\medskip

Let $G/H$ be a compact homogeneous space. Recall that the Lie algebra of $N_G(H)$, the normalizer of $H$ in $G$,
is given by $\n(\h)=\h \oplus \m_0$: see Lemma \ref{lemnor}. If $\m_0=0$, then by \cite{BWZ} there exist only finitely many
subalgebras $\kf$ with $\h < \kf < \g$. As a consequence $\XGH$ is indeed a simplicial complex $\DGH$ with
finitely many faces.

\begin{lemma}\label{lem:DGH}
Let $G/H$ be a compact homogeneous space with $\dim N_G(H)=\dim H$. Then
$\XGH$ is a finite simplicial complex $\DGH$ which is homeomorphic to the flag complex of all subalgebras $\kf$
with $\h < \kf < \g$.
\end{lemma}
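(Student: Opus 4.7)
The plan is to reduce the statement to $\XXGH$ via Lemma \ref{lem:XGHXXGH}, and then to exhibit a simplicial structure on $\XXGH$ that matches the abstract flag complex of intermediate subalgebras combinatorially.

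First, I would observe that the hypothesis $\dim N_G(H)=\dim H$ forces $\m_0=0$, since $\n(\h)=\h\oplus\m_0$ by Lemma \ref{lemnor}. A toral subalgebra $\tf\in\Sub_t$ would satisfy $\tf=\h\oplus\af$ with $0\neq\af\subset\m_0$, which is impossible; hence $\Sub=\Sub_s$. By Propositions 4.1 and 4.2 of \cite{BWZ}, only finitely many intermediate subalgebras exist, so both the vertex set and the set of flags $\Fln$ are finite. In particular $\XXGH=\bigcup_{\varphi\in\Fln}\Delta_\varphi$ is a finite union of flag simplices.

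Next, I would show that each $\Delta_\varphi$ for $\varphi=(\kf_1<\cdots<\kf_r)\in\Fln$ is a genuine affine $(r-1)$-simplex. The vertices $A^{\blds\kf_i}=\tfrac{1}{\dim\g-\dim\kf_i}(\Id_\g-\Id_{\kf_i})$ are linearly independent in $\SymgH$: any relation $\sum_{i=1}^r a_i A^{\blds\kf_i}=0$, evaluated on a vector $X\in\kf_2\ominus\kf_1$ (nonempty since $\kf_1<\kf_2$), annihilates every $A^{\blds\kf_i}$ with $i\geq 2$ and forces $a_1=0$; iterating along the chain $\kf_2<\cdots<\kf_r$ gives $a_2=\cdots=a_r=0$. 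Affine independence follows since all vertices have trace one, so $\Delta_\varphi$ is a bona fide simplex.

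Then I would verify the simplicial gluing rule for distinct flags $\varphi,\tilde\varphi\in\Fln$. Using $\Delta_\varphi=\Br[(\varphi<\g)]$ and $\Delta_{\tilde\varphi}=\Br[(\tilde\varphi<\g)]$, Lemma \ref{lem:butphipsi} yields
$$
\Delta_\varphi\cap\Delta_{\tilde\varphi}=\Br[(\varphi<\g)(\tilde\varphi<\g)].
$$
Since both extended flags share the maximum $\g$, Cases (1) and (2) of Definition \ref{def:productflags} do not apply; Case (3) gives $\kf^*=\langle\g,\g\rangle=\g$ and the product reduces to $((\varphi\cap\tilde\varphi)<\g)$ when $\varphi\cap\tilde\varphi\neq\emptyset$, and to $(\g)$ otherwise. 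Consequently $\Delta_\varphi\cap\Delta_{\tilde\varphi}=\Delta_{\varphi\cap\tilde\varphi}$ (or is empty, since $\Br[(\g)]=\D(\g)=\emptyset$), which is precisely the common-face condition for a simplicial complex.

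Finally, the vertex assignment $[\kf]\mapsto A^{\blds\kf}$ extends linearly on each simplex and, by the gluing rule of the previous paragraph, descends to a continuous bijection from the geometric realization of the abstract flag complex of $\Sub$ onto $\XXGH$. Since the abstract flag complex is compact (finitely many simplices) and $\XXGH$ is Hausdorff, this bijection is a homeomorphism; composing with Lemma \ref{lem:XGHXXGH} identifies $\XGH$ with the flag complex. The main obstacle will be the bookkeeping in the third step: one must confirm carefully that the three-case Definition \ref{def:productflags} collapses to Case (3) for our extended flags and that the product of flags there coincides with the naive intersection $\varphi\cap\tilde\varphi$, which is what makes the glued simplices fit together as an honest simplicial complex rather than a more exotic cell complex.
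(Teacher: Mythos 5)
Your proposal is correct, but it reaches the key gluing identity by a different route than the paper. The paper works directly inside $\XGH$ with the unnormalized projections: for a flag $\varphi=(\kf_1<\cdots<\kf_r)$ it takes $\Delta^P_\varphi={\rm conv}\{P_1,\dots,P_r\}$ and proves $\Delta^P_\varphi\cap\Delta^P_{\tilde\varphi}=\Delta^P_{\varphi\cap\tilde\varphi}$ by the same uniqueness-of-representation argument used for injectivity in Lemma \ref{lem:XGHXXGH} (compare smallest kernels, peel off one projection at a time); finiteness comes from \cite{BWZ}, and the lemma follows at once. You instead transfer everything to $\XXGH$ via Lemma \ref{lem:XGHXXGH}, observe that $\dim N_G(H)=\dim H$ forces $\m_0=0$ (Lemma \ref{lemnor}) so that every intermediate subalgebra is non-toral and $\Sub$ is finite, check by hand that the vertices $A^{\blds\kf_i}$ of each $\Delta_\varphi$ are linearly independent, and then obtain the common-face property by applying Lemma \ref{lem:butphipsi} to the improper flags $(\varphi<\g)$, where the product of flags collapses to Case (3) of Definition \ref{def:productflags} with $\kf^*=\g$, giving $\Delta_\varphi\cap\Delta_{\tilde\varphi}=\Br[((\varphi\cap\tilde\varphi)<\g)]=\Delta_{\varphi\cap\tilde\varphi}$ (empty if the flags share no subalgebra, since $\Br[(\g)]=\emptyset$). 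This is a legitimate use of the butterfly machinery -- Section \ref{sec:butterflies} allows $\kf_r=\g$, so there is no hidden properness assumption -- and it buys you a more self-contained verification that the pieces really assemble into a geometric simplicial complex isomorphic to the abstract flag complex, something the paper leaves implicit; the paper's argument is shorter and exhibits the simplicial structure literally inside $\XGH$ (your version only transfers it through the non-affine homeomorphism of Lemma \ref{lem:XGHXXGH}, which is all the statement requires). Two small remarks: your phrase that affine independence ``follows since all vertices have trace one'' is unnecessary, as linear independence already implies affine independence; and when you invoke Case (3), note that $\varphi'\cap\tilde\varphi'$ always contains $\g$, so the product is never ill-defined even when $\varphi\cap\tilde\varphi=\emptyset$.
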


\begin{proof}
Since $\dim N_G(H)=\dim H$,
by \cite{BWZ} there exist only finitely many subalgebras $\kf$ with $\h< \kf <\g$.
Let $\varphi=(\kf_1< \cdots < \kf_r)$ be a flag of such subalgebras. Let $P_1,\dots,P_r \in \Psn$ denote the corresponding projections, 
i.e., $\kf_i = \ker(P_i)$ for each $i=1,\dots,r$. 
Then the simplex $\Delta_{\varphi}^P:={\rm conv}\{P_1,\dots,P_r\}$ is a realization of $\varphi$ (cf. Definition \ref{def:simplex}). Suppose 
$\tilde \varphi=(\tilde \kf_1< \cdots < \tilde \kf_{\tilde r})$ is another flag.
The claim follows from the equality  
$$
 \Delta_{\varphi}^P \cap \Delta_{\tilde \varphi}^P 
 =\Delta^P_{\varphi \cap \tilde \varphi}\,.
$$
It is clear that $ \Delta^P_{\varphi \cap \tilde \varphi} \subset \Delta_{\varphi}^P \cap \Delta_{\tilde \varphi}^P $.
The proof of the other inclusion is analogous to the proof of injectivity in Lemma \ref{lem:XGHXXGH}.
\end{proof}

When considering the full flag complex $\DGH$ there may be some redundancy. 
Instead of considering all flags $\varphi$ we may consider 
those flags $(\kf_1<\cdots <\kf_r)$ where, for each $1\leq i \leq r$, $\kf_i \in \Submg$ (Definition \ref{def:min-sub}) is  generated by
minimal non-toral subalgebras of $\g$.  
We denote by $\Deltaf_{G/H}$ the corresponding flag complex. Notice that $\Deltaf_{G/H}$ is a subcomplex of $\DGH$,
which,  by Lemma \ref{lem:rsimhomsim}, is homotopy equivalent to $\DGH$.

Next, we describe how to associate a simplicial complex to a
compact homogeneous space $G/H$ for which $\dim N_G(H)> \dim H$.
Recall that our choice of $Q$ guarantees the Lie algebra $\m_0$ is compact: see Lemma \ref{lemnor}.

\begin{lemma}\label{lemAH}
Let $G/H$ be a compact homogeneous space. Let $\af$ be a Cartan subalgebra
of $\m_0$ and let $A$ denote the corresponding maximal torus. Then
$AH$ is a compact subgroup of $N_G(H)$ with Lie algebra $\h\oplus \af$ and $\n(\h \oplus \af)=\h \oplus \af$.
\end{lemma}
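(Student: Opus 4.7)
The plan is to proceed in three steps: verify that $AH$ is a compact subgroup of $N_G(H)$, identify its Lie algebra as $\h\oplus\af$, and then prove the self-normalization property $\n(\h\oplus\af)=\h\oplus\af$.

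First I would use the fact (recorded just before the lemma and in Lemma~\ref{lemnor}) that the bi-invariant $Q$ makes $\m_0$ the Lie algebra of a compact subgroup $M_0$, and that $\m_0$ coincides with the trivial $\Ad(H)$-isotypical component of $\m$; in particular $[\h,\m_0]=0$. Since $H$ is connected, this infinitesimal vanishing exponentiates, so every element of $H$ commutes with every element of $M_0$, and in particular with every element of $A\subset M_0$. Therefore $AH=HA$ is a subgroup of $G$, and since it is the image of the compact set $A\times H$ under multiplication, it is compact. The inclusion $\af\subset \m_0\subset\n(\h)$ gives $A\subset N_G(H)$, hence $AH\subset N_G(H)$. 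Because $\af\cap\h\subset\m_0\cap\h=0$, the Lie algebra of $AH$ has dimension $\dim\h+\dim\af$ and is the vector space direct sum $\h\oplus\af$; this sum is in fact a Lie subalgebra thanks to $[\h,\af]=0$.

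For the self-normalization, let $X\in\n(\h\oplus\af)$ and decompose $X=X_\h+X_\m$ with $X_\h\in\h$ and $X_\m\in\m$. A straightforward computation using the $\Ad(G)$-invariance of $Q$ (as in the paragraph preceding Lemma~\ref{lem:MGL2}) gives $[\m,\h]\subset\m$, so $[X,\h]\subset\h\oplus\af$ forces $[X_\m,\h]\subset\m\cap(\h\oplus\af)=\af\subset\m_0$. Decomposing further as $X_\m=X_{\m_0}+X_{\m_s}$ along the $\Ad(H)$-isotypical splitting $\m=\m_0\oplus\m_s$, we have $[X_{\m_0},\h]=0$, and since $\ad(\h)$ preserves the non-trivial summand $\m_s$, we get $[X_{\m_s},\h]\subset\m_s\cap\m_0=0$, so $X_{\m_s}\in\m_0$ and thus $X_{\m_s}=0$. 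This places $X\in\h\oplus\m_0=\n(\h)$.

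Inside $\n(\h)$, the condition $[X,\af]\subset\h\oplus\af$ combined with $[\h,\af]=0$ and $[\m_0,\m_0]\subset\m_0$ reduces to $[X_{\m_0},\af]\subset\af$, i.e.\ $X_{\m_0}\in\n_{\m_0}(\af)$. Since $\m_0$ is compact and $\af$ is a Cartan subalgebra of $\m_0$, the classical fact that Cartan subalgebras of compact Lie algebras are self-normalizing yields $X_{\m_0}\in\af$, and hence $X\in\h\oplus\af$.

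The only non-routine step is the isotypical-projection argument in the second paragraph, which is needed because $\n(\h\oplus\af)$ a priori need not be contained in $\n(\h)$; this is where the very special structure of $\m_0$ as the trivial isotypical summand (so that $[\h,\m_0]=0$ rather than merely $[\h,\m_0]\subset\m_0$) is essential. Once that confinement is established, the argument collapses to the standard self-normalization of a Cartan subalgebra inside a compact Lie algebra.
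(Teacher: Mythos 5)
Your proof is correct, and the first half (commutativity of $A$ and $H$, compactness of $AH$, identification of its Lie algebra with $\h\oplus\af$) is the same as the paper's. For the self-normalization you take a more explicit route: the paper simply applies Lemma \ref{lemnor} to the pair $(G,AH)$, noting that the trivial $\Ad(AH)$-isotypical summand of the complement of $\h\oplus\af$ vanishes because $H$ acts without trivial summands on $\m\ominus\m_0$ and $A$ (being a maximal torus of $\m_0$) acts without trivial summands on $\m_0\ominus\af$. You instead prove by hand the containment $\n(\h\oplus\af)\subset\n(\h)=\h\oplus\m_0$, using the isotypical splitting $\m=\m_0\oplus\m_s$ and the fact that an element of $\m$ killed by $\ad(\h)$ lies in $\m_0$, and then finish with the self-normalization of the Cartan subalgebra $\af$ inside the compact Lie algebra $\m_0$. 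The two arguments rest on the same Lie-theoretic facts (your Cartan self-normalization is exactly the paper's ``$A$ acts non-trivially on $\m_0\ominus\af$''), so the content is equivalent; the paper's version is shorter because it reuses Lemma \ref{lemnor} with $AH$ in place of $H$, while yours is self-contained and makes explicit the step---$\n(\h\oplus\af)$ need not a priori lie in $\n(\h)$---that the paper's citation leaves implicit.
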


\begin{proof}
First of all, because $[\h,\m_0]=0$, it is clear that $\h \oplus \af$ is a subalgebra of $\g$ and
 that $A$ and $H$ commute. Since $A$ and $H$ are each compact, we know $AH$ is 
a compact subgroup of $N_G(H)$. Since $H$ acts non-trivially on $\m \ominus \m_0$, so does $AH$.
And since $A$ is a maximal torus $A$ acts non-trivially on $\m_0 \ominus \af$.
Thus $\n(\h \oplus \af)= \n \oplus \af$ by Lemma \ref{lemnor}.
\end{proof}

Since $AH \subset N_G(H)$ we have $\Ad(AH)(\h)\subset \h$ and
consequently, by $\Ad(G)$-invariance of $Q$, $\Ad(AH)(\m)\subset \m$.
We denote by 
$$
  (\MGo)^A:=\{ P_g \in \MGo \mid \Ad(AH)\vert_{\m}\cdot P_g = P_g \cdot \Ad(AH)\vert_{\m}\}
$$
the set of $\Ad(AH)$-equivariant  elements in $\MGo$. 
The metrics in $(\MGo)^A$ are submersion metrics with respect to $(AH)/H \to G/H \to G/(AH)$
(see \cite[9.80]{Bes}). Notice that since $A \subset N_G(H)$, this submersion is induced by the isometric
right action of $A$ on $G/H$ given by $(a,gH)\mapsto gaH$.

\begin{lemma} \label{lemnv2}
Let $G/H$ be a compact homogeneous space. Let $\af$ be a Cartan subalgebra
of $\m_0$ and let $A$ denote the corresponding maximal torus. Then $(\MGo)^A$
is a totally geodesic subspace of $\MGo$, invariant under the volume normalized Ricci flow.
\end{lemma}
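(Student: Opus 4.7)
The plan is to establish both claims by the same key observation: the right action of $A$ on $G/H$ induces an isometric action on $(\MGo, L^2)$ whose fixed point set is exactly $(\MGo)^A$. Since $A \subset N_G(H)$ by Lemma \ref{lemAH}, for each $a \in A$ the map $R_a: G/H \to G/H$, $gH \mapsto gaH$, is well-defined. Because $R_a$ commutes with the left $G$-action, pullback gives a map $R_a^\ast: \MGo \to \MGo$. A direct computation using $G$-invariance shows that if $g(\cdot,\cdot) = Q(P_g \cdot ,\cdot)$, then $P_{R_a^\ast g} = \Ad(a)|_\m \cdot P_g \cdot \Ad(a)|_\m^{-1}$, using that $\Ad(a)|_\m \in \On(\m,Q)$ by $\Ad(G)$-invariance of $Q$. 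Therefore $(\MGo)^A$ coincides with the fixed point set of the $A$-action on $\MGo$.

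For the first claim, this $A$-action is the restriction of the isometric action $P \mapsto \Ad(a) P \Ad(a)^{-1}$ on the ambient symmetric space $(P(n), L^2)$ used in the proof of Lemma \ref{lem:MGL2}; one checks directly with $\tr(P^{-1}VP^{-1}W)$ that it preserves the $L^2$-metric on $\MGo$. Since the fixed point set of an isometric action of a compact group on a Riemannian manifold is totally geodesic, $(\MGo)^A$ is a totally geodesic submanifold of $\MGo$.

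For the second claim, I would use naturality of the Ricci tensor. Each $R_a$ is a diffeomorphism of $G/H$ which is an isometry from $(G/H,g)$ to $(G/H,R_a^\ast g)$, so $\Ric(R_a^\ast g) = \Ad(a)|_\m \cdot \Ric(g) \cdot \Ad(a)|_\m^{-1}$ under the identification of Ricci endomorphisms. Hence for $g \in (\MGo)^A$, the endomorphism $\Ric(g)$ commutes with $\Ad(a)|_\m$ for every $a \in A$, and consequently so does the traceless part $\Ric_0(g)$; equivalently, the tangent vector $P_g \Ric_0(g) \in T_{P_g}\MGo$ lies in $T_{P_g}(\MGo)^A$. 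Since the volume-normalized Ricci flow agrees with the $L^2$-gradient flow of $\sc:\MGo\to\RR$ up to a constant factor (Remark \ref{rem:L2}), this shows the flow vector field is tangent to $(\MGo)^A$ along $(\MGo)^A$. Uniqueness of ODE solutions on $\MGo$ then yields invariance of $(\MGo)^A$ under the flow.

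The only point requiring care is the identification $P_{R_a^\ast g} = \Ad(a)|_\m \cdot P_g \cdot \Ad(a)|_\m^{-1}$, which hinges on $Q$-orthogonality of $\Ad(a)|_\m$ (so that $\Ad(a)|_\m^t = \Ad(a)|_\m^{-1}$); this is where the special choice of $Q$ as coming from a biinvariant metric on $G$ is used. Everything else is formal once this identification is in place.
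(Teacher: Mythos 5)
Your proof is correct. The argument for total geodesy is essentially the paper's: you identify $(\MGo)^A$ as the fixed-point set of the isometric action of $A\subset N_G(H)$ on $(\MGo,L^2)$ via $P_g\mapsto \Ad(a)|_\m\cdot P_g\cdot \Ad(a^{-1})|_\m$, and invoke that fixed-point sets of compact isometric actions are totally geodesic (the paper phrases this as "as in Lemma~\ref{lem:MGL2}"). Where you diverge is in the Ricci flow invariance. The paper argues at the level of the flow on the manifold $G/H$: if $g_0$ is fixed by $R_a$, then $R_a^*g(t)$ is another Ricci flow solution with the same initial data, so by uniqueness of Ricci flow solutions on compact manifolds $g(t)$ stays fixed by $R_a$. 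You instead stay entirely inside the finite-dimensional space $\MGo$: you show via naturality of $\Ric$ that the gradient vector field $-P_g\Ric_0(g)$ commutes with $\Ad(A)|_\m$ whenever $P_g$ does, hence is tangent to the (closed) submanifold $(\MGo)^A$, and conclude by finite-dimensional ODE uniqueness. Both are valid; your route avoids the PDE-level uniqueness theorem and replaces it with a tangency-plus-ODE argument, which is arguably more elementary given that the flow in question is already a finite-dimensional ODE, while the paper's route is the more standard "Ricci flow preserves isometries" principle. One small point worth making explicit in your argument: the step "tangency of the vector field to a submanifold plus ODE uniqueness implies invariance" requires that $(\MGo)^A$ is a genuine (embedded, closed) submanifold so that the restricted vector field has its own integral curves; this holds here because $(\MGo)^A$ is the fixed-point set of a compact group acting isometrically.
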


\begin{proof}
The group of $G$-equivariant diffeomorphisms of $G/H$ acts on $(\MGo,L^2)$ by pulling back $G$-invariant
metrics. 
By \cite[I, Corollary 4.3]{B.G}, we know $G$-equivariant
diffeomorphisms of $G/H$ are precisely the right trans\-lations $R_n:G/H\to
G/H\,\,;\,\,\,gH \mapsto gnH$, for some \mbox{$n\in N_G(H)$}. Since
$N_G(H)$ normalizes $H$, we can restrict the adjoint action of
$N_G(H)$ to $\m$. Thus, ${\rm Ad}(n)\vert_{\m} \in {\rm O}
(\m,Q\vert_{\m})$ for all $n\in N_G(H)$. This yields an isometric action
of $N_G(H)$ on  $(\MG,L^2)$ given by 
\begn
  P_g \mapsto (R_n)_*(P_g)={\rm Ad}(n)\vert_{\m} \cdot 
                       P_g\cdot {\rm Ad}(n^{-1})\vert_{\m}\,,\label{eqn:NGH}
\enn
where we identify $g$ with $P_g$, an ${\rm Ad}(H)$-equivariant endomorphism  of $\m$: see Section \ref{sec:ginvm}.
To see this, let $X,Y \in \m =T_{eH}G/H$ and let $g \in \MG$. Then
\beg
  ((R_n)_*g)(X,Y)_{eH}
	 &=&
	  g((dR_n)_{eH}\cdot X,(dR_n)_{eH}\cdot Y)_{nH} \\
		&=&
		 g((dL_{n^{-1}}\cdot dR_n)_{eH}\cdot X,(dL_{n^{-1}}\cdot dR_n)_{eH}\cdot Y)_{eH}\\
		&=&
		Q( (P_g \cdot \Ad(n^{-1})\vert_{\m}) \cdot X,\Ad(n^{-1})\vert_{\m}\cdot Y)\\
		&=&
		 Q((\Ad(n)\vert_{\m}\cdot P_g \cdot \Ad(n^{-1})\vert_{\m})\cdot X, Y)\,.
\en
For any subgroup $L\leq N_G(H)$, we let 
\beg
 \MGoL =\{g\in \MGo \mid  g= (R_l)_*(g) \textrm{ for all }  l\in L\}\,.
\en
Notice we have $Q\in \MGoL$. It follows as in Lemma \ref{lem:MGL2} that $\MGoL$ is a totally geodesic subspace
of $\MGo$.  

Recall here that the Ricci flow preserves isometries. That is if $g_0 \in \MGo$ is an initial metric
then the solution $(g(t))_{t \in [0,T)}$ of the volume normalized Ricci flow with $g(0)=g_0$
satisfies $g(t) \in \MGo$ for all $t \in [0,T)$.

By uniqueness of Ricci flow solutions on compact manifolds,
this implies that when we start at an initial metric $g_0 \in \MGoL$ which is fixed by 
the diffeomorphisms $R_l$  for all $l \in L$,
then the corresponding volume normalized Ricci flow solution will also be fixed by  $R_l$ for all $l \in L$.
Thus $\MGoL$ is invariant under the volume normalized Ricci flow. Choosing 
$L=A$ shows the claim.
\end{proof}

We turn to the main result of this section which also provides of proof for Theorem \ref{theoB}.

\begin{theorem}\label{thm:trick}
Let $G/H$ be a compact homogeneous space with finite fundamental group.
Let $\af$ be a Cartan subalgebra
of $\m_0$ and let $A$ denote the corresponding maximal torus.
If  $\Delta_{G/AH}$ is non-contractible, then
$G/H$ admits a $G$-invariant Einstein metric.
\end{theorem}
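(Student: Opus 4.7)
My plan is to apply the mountain-pass argument of Theorem \ref{theomain} inside the totally geodesic subspace $(\MGo)^A \subset (\MGo,L^2)$, with $\Delta_{G/AH}$ in the role of the nerve $\XGH$.

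By Lemma \ref{lemAH}, $AH$ is a compact subgroup of $G$ with $\n(\h\oplus\af)=\h\oplus\af$, so $\dim N_G(AH)=\dim AH$; by Lemma \ref{lem:DGH} the nerve of $G/AH$ is the finite flag complex $\Delta_{G/AH}$, whose simplices run over flags of subalgebras $\kf$ with $\h\oplus\af<\kf<\g$. By Lemma \ref{lemnv2}, $(\MGo)^A$ is a complete, totally geodesic subspace of $(\MGo,L^2)$ invariant under the volume-normalized Ricci flow. Because the right $A$-action in (\ref{eqn:NGH}) is isometric and preserves $\sc$, the $L^2$-gradient $(\nabla_{L^2}\sc)_g$ at any $g\in(\MGo)^A$ is $A$-invariant, hence tangent to $(\MGo)^A$; consequently every critical point of $\sc|_{(\MGo)^A}$ is a critical point of $\sc$ on $\MGo$ and thus an Einstein metric on $G/H$.

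Each intermediate subalgebra $\kf$ with $\h\oplus\af<\kf<\g$ is $\Ad(AH)$-invariant, so its canonical direction $v^\kf$ (Definition \ref{def:candir}) lies in $\Si^A := \Si \cap T_\Id(\MGo)^A$, and the flag simplices over such $\kf$ realize a copy of $\Delta_{G/AH}$ inside $\Si^A$. Repeating the proof of Theorem \ref{theomain}, the curvature estimates of Section \ref{secscalab} restrict to $\Si^A$: they give a uniform upper bound $\sc(\gamma_v(t))\leq \tfrac12 b_{G/H}$ for $v\in\Si^A$ away from a suitable neighborhood of the $A$-invariant analogue of $\Xsre$, while Proposition \ref{prop:posflat} supplies positive lower bounds on convex combinations of the $v^\kf$. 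Flowing a representative of a non-trivial class in $\tilde H_\ast(\Delta_{G/AH})$ out to large time produces a cycle $B\subset\{\sc\geq\sc_+\}\cap(\MGo)^A$ which, by non-contractibility of $\Delta_{G/AH}$, cannot be contracted in the corresponding truncated cone of high scalar curvature. The modified $L^2$-gradient vector field $X$ constructed in the proof of Theorem \ref{theomain} is $A$-equivariant, so its flow preserves $(\MGo)^A$, yielding a Palais-Smale sequence in $\{\sc\geq\epsilon\}\cap(\MGo)^A$; Theorem A of \cite{BWZ} then furnishes a subconvergent critical point.

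The delicate point is verifying that the semi-algebraic and homotopy-retraction apparatus of Sections \ref{combinatorics}--\ref{homotopies} restricts cleanly to the $A$-invariant setting: one must confirm that every toral subalgebra contributing to the analogues of $\B$, $\T$, $\WWs$ inside $\Si^A$ automatically contains $\h\oplus\af$. This holds because any $v\in T_\Id(\MGo)^A$ commutes with $\Ad(A)|_\m$, so its eigenspaces are $\Ad(A)$-invariant, forcing the toral directions relevant to Lemma \ref{lem:torvcan} to be $\af$-adapted. With this, the semi-algebraic machinery collapses onto the finite flag complex $\Delta_{G/AH}$, and the mountain-pass argument of Theorem \ref{theomain} carries through verbatim.
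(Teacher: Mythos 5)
Your high-level strategy is the same as the paper's: restrict the variational problem to the totally geodesic, Ricci-flow-invariant subspace $(\MGo)^A$ (Lemma \ref{lemnv2}), argue that critical points of the restriction are critical points of $\sc$ on $\MGo$ (your $A$-equivariant-gradient argument is a fine stylistic variant of the paper's flow-invariance argument), and run the mountain-pass argument of Theorem \ref{theomain} inside $\Si^A$. What you correctly flag as the ``delicate point'' is exactly where the paper has to work, but your proposed resolution does not close the gap.

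The nerve that the $A$-restricted machinery (the $A$-invariant versions of $\WWs$, $\Bt$, $\XXGH$, \L ojasiewicz estimates, etc.)\ naturally produces is the flag complex of {\em non-toral $\Ad(AH)$-invariant} subalgebras $\kf_A$ with $\h<\kf_A<\g$ --- not the flag complex of subalgebras containing $\h\oplus\af$. These are genuinely different collections. For instance, with $G=\SO(n+3)$, $H=\SO(n)$ and $\af$ a rotation in the plane of the last two coordinates, the subalgebra $\kf_A=\so(n+1)$ spanned by the first $n+1$ coordinates is $\Ad(AH)$-invariant, non-toral and strictly between $\h$ and $\g$, but does not contain $\af$. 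Also, your claim that the relevant toral subalgebras ``automatically contain $\h\oplus\af$'' is false: $\h\oplus\af'$ for any proper subtorus $\af'\subsetneq\af$ is $\Ad(AH)$-invariant and toral but does not contain $\af$, and being ``$\af$-adapted'' (eigenspaces $\Ad(A)$-invariant) is not the same thing as containing $\af$. What is missing is precisely the combinatorial reduction the paper supplies: Lemma \ref{lemfinite} establishes a bijection between minimal non-toral $\Ad(AH)$-invariant subalgebras $\kf_A$ and minimal subalgebras $\kf$ with $\h\oplus\af<\kf<\g$, via
$\kf_A\mapsto\h\oplus\af\oplus(\kf_A\cap(\af^\perp\cap\m_0))\oplus(\kf_A\cap\m_0^\perp)$;
Proposition \ref{propdelta} propagates this to the ``generated by minimal'' complexes to get $\DeltaT_{G/H}=\Deltaf_{G/AH}$; and Lemma \ref{lem:rsimhomsim} gives $\Deltaf_{G/AH}\approx\Delta_{G/AH}$. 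Without these three steps, the identification of the $A$-restricted nerve with $\Delta_{G/AH}$ is not justified, and the non-contractibility hypothesis cannot be brought to bear.
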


\begin{proof}
We consider the restriction of the scalar
curvature functional to the subspace $(\MGo)^A$ of $G$-invariant 
metrics which are invariant under the adjoint action of $AH$. 
All results proved for the $\sc:\MGo \to \RR$ can be applied to $\sc:(\MGo)^A \to \RR$,
since  the totally geodesic space  $(\MGo)^A$ of $\MGo$ 
is invariant under the normalized Ricci flow: see Lemma \ref{lemnv2}.

We now define the corresponding simplicial complex by $\DeltaT_{G/H}$.
We call a non-toral $\Ad(AH)$-invariant subalgebra $\kf_A$ with $\h < \kf_A< \g$
{\em minimal, non-toral} if any smaller $\Ad(AH)$-invariant subalgebra $\kf'_A$, with $\h<\kf'_A<\kf_A$,
must be toral. By Lemma \ref{lemfinite},
there exist only finitely many minimal, non-toral subalgebras $\kf_T$. 
Moreover, by Proposition \ref{propdelta},
the set of all $\Ad(AH)$-invariant, non-toral subalgebras, generated by minimal, non-toral subalgebras $\kf_A$, 
is also finite, and of course it is partially ordered by the inclusion relation.
The simplicial complex $\DeltaT_{G/H}$ attached to $G/H$ and $A$ is
the corresponding flag complex.
Notice that the definitions above still make sense if $\n(\h)=\h$, so that $A=\{e\}$. In this case, $\DeltaT_{G/H}:=\Deltaf_{G/H}$.

The claim follows provided that $\DeltaT_{G/H}$ is non-contractible.
To this end we note that
by Proposition \ref{propdelta}, 
$\DeltaT_{G/H}=\Deltaf_{G/AH}$. Moreover, by Lemma \ref{lem:rsimhomsim}, we have that 
 $\Deltaf_{G/AH}$ is homotopy equivalent to $\Delta_{G/AH}$.
Since by hypothesis, $\Delta_{G/AH}$ is non-contractible, so is $\DeltaT_{G/H}$.
This shows the above claim and the theorem is proved.
\end{proof}

To indicate that the above theorem is quite useful, let $G=\SO(2n)$ and let $A$ be a maximal Torus of $G$.
Let $A'$ be any compact subtorus of $A$. Then by the above theorem $G/A'$ admits an $G$-invariant
Einstein metric, since $\Delta_{\SO(2n)/A}$ is not contractible: see Theorem \ref{thm:RB}.
Notice that for a generic  $A'$ (with a fixed dimension), the isotropy representation does not depend on $A'$,
whereas for a non-generic $A'$ this is no longer true. 

\begin{remark}\label{rem:finiteproblem}
Let $M^n=G/H$ be a compact, simply connected homogeneous space. In dimension $n=7$ and each $n\geq 9$,
there exist infinitely many homotopy types of simply connected
compact homogeneous spaces: see \cite{Kl}. Nevertheless, computing the corresponding
simplicial complexes $\Delta_{G/AH}$, up to a fixed dimension of $G/H$, is a finite task.  
To see this, note that when $M^n=G/H$, the semisimple part $G_{s}$ of $G$ also acts transitively on $M^n$: see \cite{B-K}.
The dimension of a compact, connected, simply connected, semisimple
Lie group $G$ acting transitively and almost effectively on $M^n$ is bounded between $n$ and
$\frac{1}{2}n(n+1)$. Hence there exist only finitely many such
groups acting transitively on $M^n$. For each of these Lie groups there
exist (up to conjugation) at most finitely many connected, semisimple Lie
subgroups. Therefore, computing the corresponding
simplicial complexes $\Delta_{G/AH}$, up to a fixed dimension of $G/H$, is a finite task.
\end{remark}

\begin{remark}\label{rem:T}
 The simplicial complex $\Delta_{G/AH}$
does not depend on  $A$, since all maximal tori of 
$\m_0$ are conjugate.
\end{remark}

Let $\af$ be a Cartan subalgebra of $\m_0$ and let $A$ denote the corresponding maximal torus.
We have 
$$
 \m =   \af \oplus (\af^\perp \cap \m_{0})\oplus \m_0^\perp\,.
$$
The compact Lie  group $AH$  acts via the adjoint action on these three
summands of $\m$ in the following manner: $AH$ acts trivially on $\af$,
$H$ acts trivially on $\af^\perp \cap \m_0$ but $A$ does not,
and finally $H$ does not act trivially on $\m_0^\perp$ by definition of $\m_0$.
Recall also that $\m=\m_0$ is possible. In any case it follows $\n(\h \oplus \af)=\h \oplus \af$.

We note that $\af$ is the $\Ad(AH)$-trivial isotypical summand of $\m$;
that $\af^\perp \cap \m_{0}$ is the direct sum of $2$-dimensional (non-trivial) isotypical summands
of $\Ad(AH)$, the root spaces of $\af$ in $\m_0$; and that $\m_0^\perp$ is the direct sum of
the $\Ad(AH)$-isotypical summands of $\m$, on which $\Ad(H)$ acts non-trivially. Of course if $H=\{e\}$, then $\m_0^\perp = \{0\}$.

For each $\Ad(AH)$-invariant subalgebra $\kf_A$ with $\h < \kf_A$, we can decompose $\kf_A$ as
\beg
  \kf_A=\h \oplus \m_{\blds\kf_A}^1 \oplus \m_{\blds\kf_A}^2 \oplus \m_{\blds\kf_A}^3 \,,
\en
where $\m_{\blds\kf_A}^1 \subset \af$,
$\m_{\blds\kf_A}^2 \subset \af^\perp \cap \m_{0}$ and $\m_{\blds\kf_A}^3 \subset \m_0^\perp$.
Notice that in particular this applies to any subalgebra $\kf $ with $\h \oplus \af < \kf$.

\begin{lemma}\label{lemfinite}
Let $G/H$ be a compact homogeneous space with finite fundamental group. 
Then the minimal, non-toral, $\Ad(AH)$-invariant
subalgebras $\kf_A$ with $\h < \kf_A < \g$ are in one-to-one correspondence with
minimal subalgebras $\kf$ with $\h \oplus \af < \kf < \g$.
\end{lemma}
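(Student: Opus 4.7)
The plan is to construct an explicit bijection via the maps $\Phi(\kf_A):=\kf_A+\af$ and $\Psi(\kf):=\kf_s+\h$, where $\kf_s$ denotes the semisimple part of the reductive Lie algebra $\kf$. First, $\Ad(AH)$-invariance of $\kf_A$ gives $[\af,\kf_A]\subseteq \kf_A$, so $\kf_A+\af$ is a subalgebra of $\g$; since $\h\oplus\af$ is toral and $\kf_A$ is non-toral, $\kf_A\not\subseteq \h\oplus\af$, and so $\h\oplus\af\subsetneq \kf_A+\af\subsetneq \g$. An auxiliary observation I will use throughout is that every subalgebra $\kf$ with $\h\oplus\af\subsetneq \kf$ is itself non-toral: either $\kf\not\subseteq \h\oplus\m_0$ and a standard $Q$-invariance argument yields nonzero brackets inside $\kf\cap\m$, or $\kf\subseteq \h\oplus\m_0$ and then $\kf\cap\m_0$ properly contains the maximal torus $\af$ of the compact Lie algebra $\m_0$, hence is non-abelian.

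The key structural step will be an almost semisimple characterization of minimality, parallel to Lemma \ref{lem:minareass}: any minimal non-toral $\Ad(AH)$-invariant $\kf_A$ with $\h\subsetneq \kf_A$ satisfies $\kf_A=(\kf_A)_s+\h$, and any minimal $\kf$ with $\h\oplus\af\subsetneq \kf$ satisfies $\kf=\kf_s+\h+\af$. The proofs mirror that of Lemma \ref{lem:minareass}: if the identity fails, then $(\kf_A)_s+\h$ (respectively $\kf_s+\h+\af$) is a proper $\Ad(AH)$-invariant non-toral (respectively intermediate) subalgebra, contradicting minimality; non-torality of the candidate follows because $(\kf_A)_s\not\subseteq \h_s$, for otherwise $\kf_A\cap\m$ would lie in the abelian $\z(\kf_A)\cap\m$.

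Bijectivity then reduces to two identities. The relation $\Phi(\Psi(\kf))=\kf$ is immediate from the almost semisimple identity for $\kf$. For $\Psi(\Phi(\kf_A))=\kf_A$ the central observation is $(\kf_A+\af)_s=(\kf_A)_s$: since $\af$ centralizes $\h$ (so $[\af,\h]=0$) and acts on the semisimple $(\kf_A)_s$ by inner derivations, one computes
$$[\kf_A+\af,\kf_A+\af]=[\kf_A,\kf_A]+[\af,\kf_A]=(\kf_A)_s+[\af,(\kf_A)_s]\subseteq (\kf_A)_s,$$
so the semisimple part of the reductive algebra $\kf_A+\af$ is exactly $(\kf_A)_s$, whence $\Psi(\Phi(\kf_A))=(\kf_A)_s+\h=\kf_A$.

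The main obstacle is transferring minimality across $\Phi$. Given $\kf'$ with $\h\oplus\af\subsetneq \kf'\subseteq \kf_A+\af$, the subalgebra $\kf'$ is automatically $\Ad(AH)$-invariant and non-toral (by the auxiliary observation), so $\kf'_s+\h$ is a non-toral $\Ad(AH)$-invariant subalgebra with $\h\subsetneq \kf'_s+\h\subseteq (\kf_A+\af)_s+\h=\kf_A$; the minimality of $\kf_A$ then forces $\kf'_s+\h=\kf_A$, and hence $\kf'=\kf_A+\af$. The reverse direction is more direct: any proper non-toral $\Ad(AH)$-invariant $\tilde\kf_A$ with $\h\subsetneq \tilde\kf_A\subsetneq \kf_s+\h$ yields a subalgebra $\tilde\kf_A+\af$ strictly between $\h\oplus\af$ and $\kf$, contradicting minimality of $\kf$. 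The finite fundamental group hypothesis ensures that the resulting bijective families are finite, via Lemma \ref{lem:assfinite}.
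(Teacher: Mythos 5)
Your construction of the bijection—$\Phi(\kf_A)=\kf_A+\af$ and $\Psi(\kf)=\kf_s+\h$—is in substance the same map the paper uses: the paper phrases $\Phi$ through the decomposition $\kf_A=\h\oplus\m^1_{\blds\kf_A}\oplus\m^2_{\blds\kf_A}\oplus\m^3_{\blds\kf_A}$ and constructs $\Psi(\kf)$ by adjoining the $\af$-part generated by brackets, but once one knows (via the Lemma~\ref{lem:minareass}-type characterization you invoke) that minimal objects on both sides are almost semisimple, both descriptions produce $\kf_s+\h$. Your reformulation via semisimple parts is cleaner, and the verification of $\Psi\circ\Phi=\Id$ via $(\kf_A+\af)_s=(\kf_A)_s$ is essentially what the paper does implicitly.

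There is, however, a genuine gap. In your first paragraph you assert $\h\oplus\af\subsetneq\kf_A+\af\subsetneq\g$, but the right-hand inclusion being strict is not justified, and it can fail in general: for $G/H=(S^1\times S^3)/\{e\}$ (the example the paper gives right after the lemma) one has $\kf_A=\su(2)$, $\af$ a maximal torus of $\g$, and $\kf_A+\af=\g$. This is exactly the place where the finite fundamental group hypothesis enters, and you have instead mis-attributed that hypothesis to the finiteness of the families (which follows independently from $\n(\h\oplus\af)=\h\oplus\af$ and \cite{BWZ}). The needed argument is: finite $\pi_1(G/H)$ forces $\g=\g_s+\h$; if $\kf_A+\af=\g$ then, by $\Ad(AH)$-invariance of $\kf_A$ and $\af$ abelian, $\g_s=[\g,\g]=[\kf_A+\af,\kf_A+\af]\subset\kf_A$, and together with $\h\subset\kf_A$ this gives $\kf_A=\g$, a contradiction. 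Without this, $\Phi$ is not well-defined on its stated domain. A smaller imprecision: in your auxiliary observation the nonzero brackets of $\kf\cap\m_0^\perp$ land in $\h$ (via $[\ti\m,\ti\m]\vert_{\bldss\h}\neq 0$), not ``inside $\kf\cap\m$''; the conclusion that $\m_{\blds\kf}$ is non-abelian is still correct, but the phrasing should be fixed.
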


\begin{proof}  
Let $\kf_A=\h \oplus \m_{\blds\kf_A}^1 \oplus \m_{\blds\kf_A}^2 \oplus \m_{\blds\kf_A}^3<\g$
be a non-toral, $\Ad(AH)$-invariant subalgebra with $\h < \kf_A < \g$. By hypothesis, since $\kf_A$ is nontoral,
we cannot have $\m_{\blds\kf_A}^2 = \m_{\blds\kf_A}^3=\{0\}$.
Then we can associate to $\kf_A$ 
$$
\kf:=\h\oplus \af \oplus \m_{\blds\kf_A}^2 \oplus \m_{\blds\kf_A}^3\,,
$$ 
 an $\Ad(AH)$-invariant subalgebra of $\g$ with $\h \oplus \af < \kf$. Moreover,
since the fundamental group of $G/H$ is  finite, $\kf$ is a proper subalgebra of $\g$ 
(see the proof of Proposition \ref{propDeltacon}). 

Conversely,
let 
$$
 \kf=\h\oplus \af \oplus \m_{\blds\kf}^2 \oplus \m_{\blds\kf}^3
$$ 
be a subalgebra with $\h \oplus \af < \kf < \g$,
$\m_{\blds\kf}^2\subset \af^\perp \cap \m_0$ and $ \m_{\blds\kf}^3 \subset \m_0^\perp$.
Notice that we cannot have $\m_{\blds\kf}^2=\m_{\blds\kf}^3=\{0\}$. 
Let
$$
\m_{\blds\kf}^1:=[\m_{\blds\kf}^2, \m_{\blds\kf}^2]\vert_{\af} \oplus [\m_{\blds\kf}^3, \m_{\blds\kf}^3]\vert_{\af}\,.
$$
We claim that
$$
 \kf_A:=\h \oplus \m_{\blds\kf}^1 \oplus \m_{\blds\kf}^2 \oplus \m_{\blds\kf}^3
$$ 
is a non-toral, $\Ad(AH)$-invariant subalgebra. 
By construction, $\h \oplus  \m_{\blds\kf}^1 \subset \h \oplus \af$, and clearly  $\h \oplus  \m_{\blds\kf}^1$ 
is a subalgebra. 
Moreover $[\h \oplus  \m_{\blds\kf}^1,\m_{\blds\kf}^2] \subset \m_{\blds\kf}^2$
and $[\h \oplus  \m_{\blds\kf}^1,\m_{\blds\kf}^3] \subset \m_{\blds\kf}^3$
using the $\Ad(AH)$-invariance of 
 $\m_{\blds\kf}^2$ and  $\m_{\blds\kf}^3$.  Furthermore
$[\m_{\blds\kf}^2, \m_{\blds\kf}^2],\,[\m_{\blds\kf}^3, \m_{\blds\kf}^3] \subset \kf_A$ by definition of $\m_{\blds\kf}^1$.
Finally $[ \m_{\blds\kf}^2, \m_{\blds\kf}^3] \subset  \m_{\blds\kf}^3$ because 
$\kf_0:=\h \oplus  \af  \oplus \m_{\blds\kf}^2=\n(\h) \cap \kf$ 
is a subalgebra of $\kf$. This proves $\kf_A$ is indeed an $\Ad(AH)$-invariant subalgebra.
If $ \m_{\blds\kf}^3 \neq \{0\}$ then $\kf_A$ is non-toral. If $ \m_{\blds\kf}^3 = \{0\}$
then  $\m_{\blds\kf}^2 \neq \{0\}$ and
 $[ \m_{\blds\kf}^2, \m_{\blds\kf}^2   ]\vert_{\af}  \oplus \m_{\blds\kf}^2 \subset \m_0$ is
a non-toral subalgebra since $\m_{\blds\kf}^2$ is a direct sum of root spaces.
It follows that $\kf_A$ is a non-toral subalgebra, with $\h < \kf_A < \g$.

We now show that minimality is preserved in this correspondence. We assume we have some $\kf_A$, minimal. We claim that the corresponding $\kf$ is minimal, as well.
If not then there exists $\h \oplus \af < \tilde \kf < \kf$ and we must
have $\m_{\bldstik \kf}^2 \subsetneq \m_{\blds\kf}^2 $ or 
$\m_{\bldstik \kf}^3 \subsetneq \m_{\blds\kf}^3 $. It would follow that
$\tilde \kf_A < \kf_A$, a contradiction.

Alternatively, when we begin with $\kf$, minimal,
to see that the corresponding $\kf_A$ is minimal, suppose instead we have some $\ti\kf_A$, an $\Ad(AH)$-invariant non-toral subalgebra  with $\ti \kf_A < \kf_A$. Then 
$\ti\kf_A = \h \oplus  \m_{\bldstik\kf}^1  \oplus \m_{\bldstik\kf}^2 \oplus \m_{\bldstik\kf}^3$. 
Let $\ti \kf$ be the associated subalgebra with $\h \oplus \af < \ti \kf$. Since $\kf$ is minimal
we must have $\m_{\bldstik\kf}^2 = \m_{\blds\kf}^2$ and $\m_{\bldstik\kf}^3 = \m_{\blds\kf}^3$.
Thus it would follow that $\m_{\bldstik\kf}^1 \subsetneq  \m_{\blds\kf}^1$, a contradiction.
\end{proof}

It is necessary to assume that $G/H$ has finite fundamental group, as the
homogeneous space $G/H=(S^1\times S^3)/\{e\}$ shows: $K_A=S^3$ is minimal and non-toral, but
the corresponding group $K$ with $A=S^1 \times S^1< K$ is $K=G$.

\begin{proposition}\label{propdelta}
Let $G/H$ be a compact homogeneous space with finite fundamental group. Then
$\DeltaT_{G/H}=\Deltaf_{G/AH}$.
\end{proposition}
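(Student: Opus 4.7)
The strategy is to upgrade the bijection of Lemma \ref{lemfinite} (between minimal non-toral $\Ad(AH)$-invariant subalgebras $\kf_A$ with $\h<\kf_A<\g$ and minimal subalgebras $\kf$ with $\h\oplus\af<\kf<\g$) to a bijection between the full vertex sets of the two flag complexes, and then check that the extended bijection preserves strict inclusion. Concretely, for any non-toral $\Ad(AH)$-invariant subalgebra $\kf_A=\h\oplus\m^1_{\blds\kf_A}\oplus\m^2_{\blds\kf_A}\oplus\m^3_{\blds\kf_A}$ with $\h<\kf_A<\g$, define
\[
  \Phi(\kf_A):=\h\oplus\af\oplus\m^2_{\blds\kf_A}\oplus\m^3_{\blds\kf_A},
\]
and for any subalgebra $\kf=\h\oplus\af\oplus\m^2_{\blds\kf}\oplus\m^3_{\blds\kf}$ with $\h\oplus\af<\kf<\g$, define
\[
  \Psi(\kf):=\h\oplus\m^1_{\blds\kf}\oplus\m^2_{\blds\kf}\oplus\m^3_{\blds\kf},\qquad
  \m^1_{\blds\kf}:=[\m^2_{\blds\kf},\m^2_{\blds\kf}]|_{\af}\oplus[\m^3_{\blds\kf},\m^3_{\blds\kf}]|_{\af}.
\]
Exactly as in the proof of Lemma \ref{lemfinite}, one verifies that $\Phi(\kf_A)$ is a subalgebra with $\h\oplus\af<\Phi(\kf_A)<\g$ (the strict upper bound uses the finite fundamental group assumption, as in Proposition \ref{propDeltacon}), and that $\Psi(\kf)$ is a non-toral $\Ad(AH)$-invariant subalgebra with $\h<\Psi(\kf)<\g$.

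The first key step is that $\Phi$ and $\Psi$ intertwine generation by minimal elements. Suppose $\kf_A=\langle\kf_{A,1},\ldots,\kf_{A,r}\rangle$ where each $\kf_{A,i}$ is minimal non-toral $\Ad(AH)$-invariant, with corresponding minimal subalgebras $\kf_i:=\Phi(\kf_{A,i})$ containing $\h\oplus\af$. The $\m^2$- and $\m^3$-components of $\kf_A$ are generated under iterated Lie brackets from those of the $\kf_{A,i}$; since these components coincide for $\kf_{A,i}$ and $\kf_i$, the same components of $\langle\kf_1,\ldots,\kf_r\rangle$ are identical, while its $\af$-part is all of $\af$ by construction. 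Hence $\Phi(\kf_A)=\langle\kf_1,\ldots,\kf_r\rangle$. Conversely, if $\kf=\langle\kf_1,\ldots,\kf_r\rangle$, then $\Psi(\kf)$ has the same $\m^2$- and $\m^3$-components and its $\m^1$-component is determined by the brackets of those components; a parallel calculation shows $\Psi(\kf)=\langle\Psi(\kf_1),\ldots,\Psi(\kf_r)\rangle$.

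The second step is to verify that $\Phi$ and $\Psi$ are mutually inverse on the relevant vertex sets. For $\kf$ generated by minimal subalgebras above $\h\oplus\af$, clearly $\Phi(\Psi(\kf))=\kf$ since $\Phi$ restores $\af$. For $\kf_A$ generated by minimal $\Ad(AH)$-invariant subalgebras, minimality forces the $\m^1$-component of each minimal piece to equal $[\m^2,\m^2]|_{\af}\oplus[\m^3,\m^3]|_{\af}$, so under iterated brackets the $\m^1$-component of the generated subalgebra is exactly the bracket expression, giving $\Psi(\Phi(\kf_A))=\kf_A$. This is the step I expect to be the main obstacle: one must argue carefully that there is no "extra" $\af$-content creeping into a subalgebra generated by minimal ones beyond what is forced by Lie brackets of its non-toral pieces. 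For the minimal pieces themselves this is guaranteed by their minimality, and I will propagate this through the generation process by induction on the number of bracket operations.

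Once the bijection $\Phi$ is established between the vertex sets of $\DeltaT_{G/H}$ and $\Deltaf_{G/AH}$, it remains to check that $\Phi$ preserves strict inclusions in both directions, so that it induces an isomorphism of flag complexes. If $\kf_A<\kf_A'$ are vertices of $\DeltaT_{G/H}$, then $\m^2_{\blds\kf_A}\subseteq\m^2_{\blds\kf_A'}$ and $\m^3_{\blds\kf_A}\subseteq\m^3_{\blds\kf_A'}$ give $\Phi(\kf_A)\subseteq\Phi(\kf_A')$; strictness follows from $\Psi\circ\Phi=\operatorname{id}$ on this subclass, which would otherwise force $\kf_A=\kf_A'$. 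The same argument applied to $\Psi$ gives inclusion-preservation in the other direction. Hence $\Phi$ is a simplicial isomorphism $\DeltaT_{G/H}\cong\Deltaf_{G/AH}$, which is the asserted equality.
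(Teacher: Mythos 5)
Your proposal is correct and follows essentially the same route as the paper: both rest on the correspondence of Lemma \ref{lemfinite} and then upgrade it to subalgebras generated by minimal ones via the inductive bracket computation (the paper phrases this as $[\kf,\lf]\subset \kf_A+\lf_A+[\kf_A,\lf_A]$, yielding $\langle \kf,\lf\rangle=\langle\kf_A,\lf_A\rangle\oplus\af'$ with $\af'\subset\af$), with the finite fundamental group used exactly where you use it, to keep the images proper in $\g$. Your explicit check that $\Psi\circ\Phi=\operatorname{id}$ (no extra $\af$-content beyond the bracket expression) and the order-preservation argument are just a more detailed rendering of what the paper compresses into ``$\langle\kf,\lf\rangle$ corresponds to $\langle\kf_A,\lf_A\rangle$ via the above assignment.''
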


\begin {proof} By Lemma \ref{lemfinite} the non-toral, $\Ad(AH)$-invariant subalgebras $\kf_A$
and the subalgebras $\kf$ with $\h \oplus \af < \kf< \g$ are in one-to-one correspondence. 

Let $\kf,\lf$ be two minimal subalgebras with $\h \oplus \af < \kf,\lf < \g$.
Recall $\langle \kf,\lf \rangle$ denotes the subalgebra generated by $\kf$ and $\lf$. We claim that $\langle \kf,\lf \rangle_A = \langle \kf_A,\lf_A \rangle$. Let
$\kf=\kf_A \oplus \af_{\bldss\kf}$ and 
$ \lf=\lf_A \oplus \af_{\bldss\lf}$
with $\af_{\bldss\kf}, \af_{\bldss\lf}\subset \af$.
Since $\kf_A$ and $\lf_A$ are each $\Ad(AH)$-invariant, we obtain
$$
 [\kf,\lf]\subset \kf_A+\lf_A+[\kf_A,\lf_A]\subset \langle \kf_A,\lf_A \rangle \,.
$$ 
Proceeding inductively yields $\langle \kf ,\lf \rangle=\langle \kf_A, \lf_A \rangle
\oplus \af' $ for some $\af' \subset \af$. 
Therefore $\langle \kf ,\lf \rangle$
 corresponds to $\langle \kf_A, \lf_a \rangle$ via the above assignment.

We conclude that the flag complex $\Deltaf_{G/AH}$ can be viewed as a subcomplex of
$\DeltaT_{G/H}$. Since $G/H$ has finite fundamental group,
by Lemma \ref{lemfinite}, every  
$\Ad(AH)$-invariant subalgebra $\kf_A$, generated by minimal, non-toral $\Ad(AH)$-invariant 
subalgebras, is a proper subalgebra of $\g$,  
thus $\Deltaf_{G/AH}=\DeltaT_{G/H}$. 
\end{proof}

Finally, we turn to a simple criterion  which guarantees that the
simplicial complex $\DeltaT_{G/H}$ of a compact homogeneous space
is contractible.

\begin{proposition}\label{propDeltacon}
Let $G/H$ be a compact homogeneous space. If the fundamental group of $G/H$ is infinite,
 then either $\DeltaT_{G/H}$ is contractible or
$\DeltaT_{G/H}=\emp$ and $G/H$ is a torus.
\end{proposition}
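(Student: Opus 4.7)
The plan is to reduce contractibility of $\DeltaT_{G/H}$ to the existence of a cone point, which I would take to be $\lf := \g_{ss}+\h$. The starting point is the equivalence
\[
 \pi_1(G/H) \text{ is infinite} \iff \af_Z := \m_0\cap\z(\g) \neq 0,
\]
coming from the long exact homotopy sequence combined with the identification of $\pi_1(L)\otimes\RR$ with $\z(\l)$ for a compact connected Lie group $L$; the image of $\pi_1(H)\to\pi_1(G)$ is $\z(\h)\cap\z(\g)$, giving cokernel of rank $\dim\af_Z$. Assume $\af_Z\neq 0$.

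If $G/H$ is a torus, then $\m$ is abelian and $\Ad(H)$-trivial, so every subalgebra $\kf_A$ with $\h\subsetneq\kf_A\subsetneq\g$ has $\kf_A\cap\m$ abelian, hence $\kf_A$ is toral. No minimal non-toral $\Ad(AH)$-invariant subalgebra exists and $\DeltaT_{G/H}=\emptyset$, matching the second alternative.

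Assume now $G/H$ is not a torus. Since $\af_Z$ is a central ideal, $\g=\lf\oplus\af_Z$ is a $Q$-orthogonal direct sum of ideals, so $\lf$ is an $\Ad(G)$-invariant subalgebra properly contained in $\g$ (as $\af_Z\not\subset\lf$) and properly containing $\h$; the standard fact that a semisimple Lie algebra has no non-trivial abelian ideal shows that $\lf\cap\m\supset\g_{ss}\ominus\h_{ss}$ is non-abelian whenever $H_{ss}\subsetneq G_{ss}$, so $\lf$ is non-toral. I would then prove the key claim that every minimal non-toral $\Ad(AH)$-invariant subalgebra $\kf_A$ with $\h\subsetneq\kf_A\subsetneq\g$ is contained in $\lf$. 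Once this claim is established, $\lf$ is itself a vertex of $\DeltaT_{G/H}$, every vertex is contained in $\lf$, and the resulting cone structure yields contractibility.

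For the claim, suppose $\kf_A\not\subset\lf$. Then $\kf_A\cap\lf$ is a proper $\Ad(AH)$-invariant subalgebra of $\kf_A$ containing $\h$; the case $\kf_A\cap\lf=\h$ is immediately excluded because then $\kf_A/\h\hookrightarrow\af_Z$ would force $\kf_A$ toral, so minimality of $\kf_A$ makes $\kf_A\cap\lf$ toral. Choose an $\Ad(AH)$-invariant complement $V$ of $\kf_A\cap\lf\cap\m$ inside $\kf_A\cap\m$; since $V\cap\lf=0$, an $\Ad(AH)$-averaging argument using centrality of $\af_Z$ (and the fact that $\Ad(AH)$ preserves the ideal $\lf$) yields $\Ad(a)Y-Y\in V\cap\lf=0$ for every $Y\in V$ and $a\in AH$. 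Hence $V\subset\m\cap\g^{\Ad(AH)}=\af$, so $V$ is abelian, and its $\lf$-projection $V_\lf\subset\af\cap\lf$. With $W:=\kf_A\cap\lf\cap\m$ abelian, the non-torality of $\kf_A$ collapses to $[W,V_\lf]\neq 0$. The main obstacle will be to promote this structural picture to a contradiction: the proof requires exhibiting inside $\kf_A$ a strictly smaller $\Ad(AH)$-invariant non-toral subalgebra, and the delicate point is a case analysis on whether the linear map $\phi:p_{\af_Z}(\kf_A)\to\af\cap\lf$ parameterizing $V$ is injective, the tilted case forcing one to descend carefully to a subalgebra generated by $\h$, $W$ and a chosen subspace of $V$ lying inside $\kf_A$.
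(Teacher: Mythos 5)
Your overall strategy matches the paper's: identify the proper ideal $\lf=\g_{ss}+\h$ (this is the paper's $\g'$), show every minimal non-toral $\Ad(AH)$-invariant subalgebra $\kf_A$ with $\h<\kf_A<\g$ lies in $\lf$, and conclude that $\DeltaT_{G/H}$ is a cone. The equivalence of infinite $\pi_1(G/H)$ with $\af_Z\neq 0$, the torus case, and the properties of $\lf$ are all in order; the paper itself asserts the containment claim without supplying the argument, so your attempt to justify it is the substantive part.

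The ending you are missing does not, however, require the case analysis on $\phi$ that you anticipate: the structural picture you have already built closes itself off. Take $V$ to be the $Q$-orthogonal ($\Ad(AH)$-invariant) complement of $W$ in $\kf_A\cap\m$, so that $\kf_A=\h\oplus W\oplus V$ is $Q$-orthogonal. You have established: $W$ abelian (minimality applied to $\kf_A\cap\lf$), $V\subset\af$ abelian and pointwise $\Ad(AH)$-fixed, $[\h,V]=0$ (since $V\subset\m_0$), and $[W,V]=[W,V_\lf]\subset\kf_A\cap\lf=\h\oplus W$ (since $[\g,\g]\subset\lf$ is an ideal). Hence $[\kf_A,\kf_A]\subset\h\oplus W$. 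For the compact Lie algebra $\kf_A$ the decomposition $\kf_A=[\kf_A,\kf_A]\oplus\z(\kf_A)$ is $Q$-orthogonal, so $V\subset(\h\oplus W)^\perp\cap\kf_A\subset[\kf_A,\kf_A]^\perp\cap\kf_A=\z(\kf_A)$, forcing $[V,W]=0$. This contradicts $[W,V_\lf]\neq 0$, which you derived from non-torality of $\kf_A$. Separately, the exclusion of the degenerate case $\kf_A\cap\lf=\h$ needs a bit more than the linear embedding $\kf_A\cap\m\hookrightarrow\af_Z$: that embedding is not a Lie algebra map, and $\kf_A\cap\m$ could a priori be non-abelian with all brackets landing in $\h$. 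The correct argument here is that $[\kf_A,\kf_A]\subset\kf_A\cap\lf=\h$ forces $(\kf_A)_s\subset\h_s$, hence $(\kf_A)_s=\h_s$, which is equivalent to $\kf_A$ being toral.

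A shorter route to the containment, closer in spirit to what the paper probably has in mind, is to adapt Lemma~\ref{lem:minareass}: minimality of $\kf_A$ among $\Ad(AH)$-invariant non-toral subalgebras forces $\kf_A$ to be almost semisimple, $\kf_A=(\kf_A)_s+\z(\h)$ (replace $(\kf_A)_s\oplus\af'$ with $\af'$ the projection of $\z(\h)$ to $\z(\kf_A)$; this is $\Ad(AH)$-invariant because $A$ centralizes $H$, so if $\af'\subsetneq\z(\kf_A)$ one obtains a strictly smaller $\Ad(AH)$-invariant non-toral subalgebra). Then $(\kf_A)_s=[\kf_A,\kf_A]\subset\g_{ss}$ and $\z(\h)\subset\h$ give $\kf_A\subset\g_{ss}+\h=\lf$ at once.
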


\begin{proof} If $\g$ is abelian, then $\DeltaT_{G/H}=\emp$ and
$G/H=T^n$. 
If instead,  $\g$ is not abelian, let $A$ be a
maximal torus of the compact, connected Lie group $G$. 
Since by hypothesis $G/H$ has infinite fundamental group,
$\g=\af \oplus \g'$, where $\af \subset \z(\g)$, 
and $\g'$ is a non-toral 
subalgebra of $\g$ containing $\h$, which is invariant under
the adjoint action of $A$. We conclude $\DeltaT_{G/H}\neq \emp$.
Furthermore, all minimal, non-toral $\Ad(AH)$-invariant subalgebras $\kf_A$ 
with $\h < \kf_A<\g$ are contained in $\g'$. It follows that the subalgebra $\g''$,
generated by all such subalgebras $\kf_A$, is a proper, $\Ad(AH)$-invariant subalgebra of $\g$.
 This implies that $\DeltaT_{G/H}$ is a cone over $\g''$.
\end{proof}

\begin{corollary}\label{cor:gmax}
Let $G/H$ be a compact homogeneous space. If there exists
a subalgebra $\g'$ with $\h < \g' <\g$ containing all minimal, non-toral $\Ad(AH)$-invariant 
subalgebras of $\g$, then $\DeltaT_{G/H}$ is contractible.
\end{corollary}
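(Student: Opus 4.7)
The plan is to exhibit a maximum element in the poset of vertices of $\DeltaT_{G/H}$, which will immediately force the flag complex to be a cone and therefore contractible. If there are no minimal non-toral $\Ad(AH)$-invariant subalgebras then there is nothing to show (every face of $\DeltaT_{G/H}$ is by definition generated by such subalgebras, so under the convention of the paper we may assume the complex is nonempty for the statement to be of interest). Otherwise, let $\g''$ be the subalgebra of $\g$ generated by all minimal non-toral $\Ad(AH)$-invariant subalgebras $\kf_A \subset \g$ with $\h < \kf_A < \g$.

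The first step is to verify that $\g''$ is itself a valid vertex of $\DeltaT_{G/H}$. It is $\Ad(AH)$-invariant because the sum of $\Ad(AH)$-invariant subspaces is $\Ad(AH)$-invariant and Lie brackets preserve $\Ad(AH)$-invariance; it contains $\h$ strictly since it contains at least one minimal non-toral $\Ad(AH)$-invariant subalgebra $\kf_A$ with $\h < \kf_A$; it is non-toral for the same reason (any subalgebra containing a non-toral subalgebra is itself non-toral). The crucial point is that $\g'' < \g$: by hypothesis every generator $\kf_A$ of $\g''$ is contained in the proper subalgebra $\g'$, so $\g'' \subset \g' < \g$. Finally, $\g''$ is, tautologically, generated by minimal non-toral $\Ad(AH)$-invariant subalgebras, so by the defining condition of $\DeltaT_{G/H}$ (see Proposition \ref{propdelta} and the construction in the proof of Theorem \ref{thm:trick}), $\g''$ is a vertex.

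Next, I would observe that $\g''$ is the unique maximum of the poset of vertices: every vertex $\kf_A$ of $\DeltaT_{G/H}$ is, by definition, generated by minimal non-toral $\Ad(AH)$-invariant subalgebras, each of which lies in $\g''$ by construction, and hence $\kf_A \subset \g''$. Consequently, for every flag $(\kf_1 < \cdots < \kf_r)$ in $\DeltaT_{G/H}$, the extended flag $(\kf_1 < \cdots < \kf_r \leq \g'')$ is again a flag in $\DeltaT_{G/H}$ (shortened if $\kf_r = \g''$). This means that $\DeltaT_{G/H}$ is a simplicial cone with apex the vertex corresponding to $\g''$; equivalently, the straight-line homotopy in each simplex $\Delta_\varphi$ toward $\g''$ gives an explicit deformation retraction of $\DeltaT_{G/H}$ onto the single point $\{\g''\}$.

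There is really no serious obstacle here: the only subtle point is verifying that $\g''$ is a bona fide vertex of $\DeltaT_{G/H}$, which in turn hinges on $\g'' < \g$, and this is precisely what the hypothesis on $\g'$ is engineered to give. This argument is the natural abstraction of the proof of Proposition \ref{propDeltacon}, where the role of $\g'$ there was played by $\g' \oplus \h$ for an infinite fundamental group.
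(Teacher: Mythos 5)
Your proof is correct and takes essentially the same approach as the paper, which embeds the cone argument inside the proof of Proposition \ref{propDeltacon}: there, $\g''$ (the subalgebra generated by all minimal non-toral $\Ad(AH)$-invariant subalgebras) is shown to be a proper $\Ad(AH)$-invariant subalgebra, and the flag complex is then a cone over the vertex $\g''$. Your handling of the vacuous case is a bit informal (if there are no minimal non-toral $\Ad(AH)$-invariant subalgebras then $\DeltaT_{G/H}=\emptyset$, which the paper declares non-contractible, so the hypothesis should implicitly assume at least one such subalgebra exists), but the core argument — verifying $\g''$ is a legitimate vertex, that $\g''\subset\g'<\g$, and that every vertex lies below $\g''$ so the poset has a maximum — is exactly right.
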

\subsection{{\it Topological properties of the simplicial complex $\DGH$}} \label{sec:simcom}

In this section we highlight some 
properties of the simplicial complex, most
notably and usefully, we compute the homotopy type of the simplex $\DGH$ for $G/H$ a homogeneous product space.

\medskip

Let $G/H$ be a compact homogeneous space with $\dim N_G(H)=\dim H$.
Then  there exist only finitely many subalgebras $\kf$
with $\h < \kf < \g$, and all must be non-toral, by \cite{BWZ}.

\begin{definition}[Simplicial complex]\label{def:DGH}
Let $G/H$ be a compact homogeneous space with finite fundamental group.
Let $A$ be a  maximal torus of a compact complement of $H$ in $N_G(H)$. 
Then, the simplicial complex $\DGH$ of $G/H$ is the flag complex
 of (connected) intermediate subgroups $K$  with $AH<K<G$.
\end{definition}

Since $\dim N_G(AH)=\dim AH$ by Lemma \ref{lemAH}, there exist only finitely many
 subalgebras 
 $\kf$
with 
$$
 \h \oplus \af< \kf < \g
 $$ 
 and all such $\kf$ must be non-toral.
Notice furthermore,
that $\Delta_{G/AH}$ is independent of the choice of $A$, see Remark \ref{rem:T},
and that $A=\{e\}$ is allowed; this is the case when $\dim N_G(H)=\dim H$. 
For each $m \geq 1$,  the $(m-1)$-dimensional faces of 
$\Delta_{G/H}$ correspond to the flags 
of length $m$.

Let $\Delta_1$ and $\Delta_2$ be abstract (finite) simplicial
complexes with realizations $\vert \Delta_1 \vert \subset \RR^{p_1}$ and
$\vert \Delta_2 \vert \subset \RR^{p_2}$. The {\em join} of $\Delta_1$
and $\Delta_2$, denoted by $\Delta_1 \join \Delta_2$, is the
abstract simplicial complex which has the realization 
\beg
  \vert \Delta_1 \join \Delta_2 \vert =
   \big\{((1-\lambda)\cdot x_1,\lambda \cdot x_2,\lambda) \mid 0\leq \lambda \leq 1,\,\,
   x_i \in \vert \Delta_i \vert, \, i=1,2 \big\}
\en
in $\RR^{p_1+p_2+1}$: see \cite[2.3.18]{M.C}.  
Notice that $\dim \vert \Delta_1 \join \Delta_2 \vert =
\dim \vert \Delta_1 \vert +\dim \vert \Delta_2 \vert +1$.
If $\Delta_2 =S^0$, then $ \Delta_1 \join \Delta_2$ is
the suspension of $\Delta_1$. Moreover, we have $\emp \join \Delta_1=\Delta_1$,
$\Delta_1 \join \Delta_2 =\Delta_2 \join \Delta_1$ and
$(\Delta_1 \join \Delta_2) \join \Delta_3
=\Delta_1 \join (\Delta_2 \join \Delta_3)$.

\begin{theorem}[\cite{Bo}, \mbox{\cite[Corollary 2.20]{Rau}}] \label{theoprod}
Let $G_1/H_1$ and $G_2/H_2$ be two compact homogeneous spaces with $\dim N_{G_i}(H_i)=\dim H_i$
for $i=1,2$. Then
$$
\Delta_{(G_1 \times G_2)/(H_1 \times H_2)}   
 \approx 
  \Delta_{G_1/H_1} \join \Delta_{G_2/H_2}\join S^0\,.
$$
\end{theorem}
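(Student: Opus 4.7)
The plan is to combine a structural classification of intermediate subalgebras in the product with a classical order-complex identification. First I would observe that $N_{G_1 \times G_2}(H_1 \times H_2) = N_{G_1}(H_1) \times N_{G_2}(H_2)$, so the hypothesis gives $\dim N_G(H) = \dim H$ with $G = G_1 \times G_2$ and $H = H_1 \times H_2$. By Lemma \ref{lem:DGH}, $\Delta_{G/H}$ is then the flag complex of the finite poset $\mathcal{P}$ of intermediate subalgebras $\h < \kf < \g$, where $\g = \g_1 \oplus \g_2$ and $\h = \h_1 \oplus \h_2$; likewise each $\Delta_{G_i/H_i}$ is the flag complex of $\bar{\mathcal{P}}_i := \{\h_i < \kf_i < \g_i\}$.

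The key structural claim is that every $\kf \in \mathcal{P}$ has product form $\kf = \kf_1 \oplus \kf_2$ with $\h_i \leq \kf_i \leq \g_i$ and $(\kf_1, \kf_2) \neq (\h_1, \h_2), (\g_1, \g_2)$. Set $\kf_i := \kf \cap \g_i$ and $\ti\kf_i := \pi_i(\kf)$, where $\pi_i : \g \to \g_i$ is the projection; then $\kf_1 \oplus \kf_2 \subseteq \kf \subseteq \ti\kf_1 \oplus \ti\kf_2$. For any $X = X_1 + X_2 \in \kf$ and $Y \in \h_2 \subset \kf$, using $[\g_1, \g_2] = 0$ we obtain $[Y, X] = [Y, X_2] \in \kf \cap \g_2 = \kf_2$, so $[\h_2, \ti\kf_2] \subset \kf_2$. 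Choose an $\ad(\h_2)$-invariant complement $C$ of $\kf_2$ in $\ti\kf_2$, available by complete reducibility since $H_2$ is compact. Then $[\h_2, C] \subset \kf_2 \cap C = 0$, so $C$ centralizes $\h_2$ in $\g_2$; since by hypothesis the normalizer of $\h_2$ in $\g_2$ equals $\h_2$, we conclude $C \subset \h_2 \subset \kf_2$, forcing $C = 0$. Hence $\ti\kf_2 = \kf_2$; symmetrically $\ti\kf_1 = \kf_1$, and so $\kf = \kf_1 \oplus \kf_2$.

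This identifies $\mathcal{P}$ order-isomorphically with the proper part $\overline{\mathcal{P}_1 \times \mathcal{P}_2}$ of the product of the bounded posets $\mathcal{P}_i = \{\h_i \leq \kf_i \leq \g_i\}$ (each with minimum $\h_i$ and maximum $\g_i$). The theorem thus reduces to the purely combinatorial-topological identification
\[
  \Delta\bigl(\overline{\mathcal{P}_1 \times \mathcal{P}_2}\bigr) \;\approx\; \Delta(\bar{\mathcal{P}}_1) \join \Delta(\bar{\mathcal{P}}_2) \join S^0,
\]
which is a version of Walker's theorem on the order complex of the proper part of a product of bounded posets, with the two ``off-diagonal extremes'' $(\g_1, \h_2)$ and $(\h_1, \g_2)$ in $\overline{\mathcal{P}_1 \times \mathcal{P}_2}$ accounting for the $S^0$ factor.

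The main obstacle will be establishing this last identification in enough detail to produce the stated homeomorphism, rather than just a homotopy equivalence. I would follow Rau\ss e's simplicial decomposition in \cite{Rau}: partition $\overline{\mathcal{P}_1 \times \mathcal{P}_2}$ into the ``middle'' pairs $(\kf_1, \kf_2)$ with both $\kf_i \in \bar{\mathcal{P}}_i$ (supplying $\Delta(\bar{\mathcal{P}}_1) \join \Delta(\bar{\mathcal{P}}_2)$) and the ``boundary'' pairs with at least one coordinate in $\{\h_i, \g_i\}$ (which, together with the two extremes $(\g_1,\h_2)$ and $(\h_1,\g_2)$, form the two cones of the suspension), then verify that assembling the corresponding sub-order-complexes reconstructs the double suspension $\Delta(\bar{\mathcal{P}}_1) \join \Delta(\bar{\mathcal{P}}_2) \join S^0$. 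The case analysis is routine but must consistently track how flags mix boundary and interior entries.
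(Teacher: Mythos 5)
Your proposal is correct and the underlying strategy matches the paper's, but you make one step explicit that the paper leaves implicit, and you close the argument by citing a general combinatorial theorem where the paper instead grinds through explicit parametrizations. The paper's proof jumps directly to ``the flag complex $\Delta$ of all $\h_1\times\h_2 \leq \kf \leq \g_1\times\g_2$,'' applies \cite[(1.2)]{Qui} to write $|\Delta| \cong |C_{\h_1}(C_{\g_1}(\Delta_1))| \times |C_{\h_2}(C_{\g_2}(\Delta_2))|$, removes the two extreme vertices via \cite[Proposition 1.9]{Qui}, and then proves the resulting space is the double join by a fairly lengthy sequence of explicit deformation retractions and homeomorphisms parametrized by functions $f,g,\ti f,\ti g$. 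For Quillen's (1.2) to apply, one needs precisely your structural claim --- that under $\dim N_{G_i}(H_i)=\dim H_i$ every intermediate subalgebra $\kf$ splits as $\kf_1\oplus\kf_2$ --- which the paper invokes without proof; your verification of it (splitting off an $\ad(\h_2)$-invariant complement of $\kf_2$ in $\pi_2(\kf)$ and showing it centralizes $\h_2$, hence vanishes) is a worthwhile addition. On the other end, you replace the paper's hands-on suspension computation with a citation of the poset identity $\Delta(\overline{\mathcal{P}_1\times\mathcal{P}_2}) \approx \Delta(\bar{\mathcal P}_1)*\Delta(\bar{\mathcal P}_2)*S^0$ (Walker-type), which is exactly what the paper is re-deriving; if you are willing to cite that result, your route is shorter. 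One small caveat: you flag the homeomorphism-versus-homotopy-equivalence issue as an obstacle, but the paper's own proof in fact only establishes homotopy equivalence at the key step (``We deduce that $|\Delta_\ast|$ is homotopy equivalent to $X$''), so $\approx$ in the statement should be read as homotopy equivalence, and your concern dissolves.
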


\begin{proof}
We set $\Delta_1:=\Delta_{G_1/H_1}$ and $\Delta_2:=\Delta_{G_2/H_2}$.
If $\Delta_1=\Delta_2=\emptyset$ then 
$$
\Delta_{(G_1 \times G_2)/(H_1 \times H_2)}=\{(\g_1\times \h_2),(\h_1 \times \g_2)\}=S^0\,.
$$
Let $C(\tilde \Delta)$ denote the cone over a simplicial complex $\tilde \Delta$,
that is, the simplicial complex with realization 
$$
 \vert C(\tilde \Delta) \vert =\big\{  (t,t \cdot \tilde x)\mid 0\leq t \leq 1, \, x \in \vert \tilde \Delta \vert\big\}\,.
$$ 
Furthermore, let $0_{\tilde \Delta}:=(0,0)$ denote the vertex of $C(\tilde \Delta)$ which is not contained in
$\tilde \Delta$.

If $\Delta_1=\emptyset$ but $\Delta_2 \neq \emptyset$, then
$\Delta:=\Delta_{(G_1 \times G_2)/(H_1 \times H_2)}$ contains two copies of $\Delta_2$,
namely $\{\h_1\} \times \Delta_2$ and $\{\g_1\} \times \Delta_2$. This covers all vertices in $\Delta$ except
$0_1:=\{\h_1 \times \g_2\}$ and $0_2:=\{\g_1\times \h_2\}$. Now  all (maximal) flags in $\{\h_1\} \times \Delta_2$
can be extended by $0_1$. Thus, the union of all these flags creates a simplicial complex homeomorphic to 
$C_{0_1}(\Delta_2)$. Notice that no other vertex in $\Delta$ is contained in $0_1$ and that $0_1$
is not contained in any other vertex. Similarly, $0_2$ and $\{\g_1\} \times \Delta_2$ form
a simplicial complex homeomorphic to $C_{0_2}(\Delta_2)$. 

Now let
$(\kf_1 < \cdots < \kf_r)$ be a maximal flag in $\Delta_2$ which is realized 
by a (maximal) face $F^{r-1}_2$ of $\Delta_2$. 
Then it can be extended to a maximal flag in $\Delta$
in precisely $r$ ways, namely by
$$
   (\h_1 \times \kf_1 <    \cdots < \h_1 \times \kf_{s}< \g_1 \times \kf_s < \cdots < \g_1 \times \kf_r)
$$
for $s=1,\dots,r$. It is easy to check that the union of the corresponding simplices is homeomorphic
to $F^{r-1}_2\times [0,1]$. This shows the claim in the case when precisely one simplicial complex is empty.

We may now assume that $\Delta_1,\Delta_2 \neq \emptyset$ and  that
$\vert \Delta_1 \vert \subset \RR^{p_1}$ and $\vert \Delta_2 \vert \subset \RR^{p_2}$ 
are realizations of $\Delta_1$ and $\Delta_2$, respectively.
We consider the flag complex $\Delta$ defined by  all flags of subalgebras $\kf < \g_1\times \g_2$ 
with $\h_1 \times \h_2 \leq \kf \leq \g_1\times \g_2$. Notice that when we remove the minimal vertex
$\{\h_1 \times \h_2\}$ and the maximal vertex $\{\g_1 \times \g_2\}$ from $\Delta$ we obtain 
$\Delta_{(G_1 \times G_2)/(H_1 \times H_2)}$.
Now by \cite[(1.2)]{Qui} we know there is a homeomorphism between 

$$\vert \Delta \vert 
\quad\textrm{ and }\quad
  \vert C_{\h_1}(C_{\g_1}(\Delta_1)) \vert \times \vert C_{\h_2}(C_{\g_2}(\Delta_2))\vert \,,
	$$
where we abbreviate $\h_i:=0_{\h_i}$ and $\g_i:=0_{\g_i}$, $i=1,2$. More precisely we may consider
\beg
  \vert C_{\h_1}(C_{\g_1}(\Delta_1))\vert
	&=&
	  \big\{(s_1,s_1(t_1,t_1x_1)) \mid 1 \leq s_1,t_1\leq 1\,,\,\,x_1 \in \vert \Delta_1\vert \big\}\in \RR^{1+1+p_1}\,,\\
	 \vert C_{\h_2}(C_{\g_2}(\Delta_2)) \vert
	&=&
	  \big\{(s_2,s_2(t_2,t_2x_2)) \mid 1 \leq s_2,t_2\leq 1\,,\,\,x_2 \in \vert \Delta_2\vert \big\}\in \RR^{1+1+p_2}\,.	
\en
We have $\{\h_1\times \h_2\}=(0,0,0,0,0,0)$,  $\{\g_1 \times \h_2\} = (1,0,0,0,0,0)$,
$\{\h_1 \times \g_2\} = (0,0,0,1,0,0)$, and  $\{\g_1 \times \g_2\} = (1,0,0,1,0,0)$. 
Next, by \cite[Proposition 1.9]{Qui} we know  there is a canonical homeomorphism between 
\begn \label{eqn:canhom}
  \big\vert (C_{\h_1}(C_{\g_1}(\Delta_1))\times C_{\h_2}(C_{\g_2}(\Delta))  \big\vert
	\bs
  ({\h_1}\times {\h_2})
	\quad\textrm{ and }\quad
 \vert C_{\g_1}(\Delta_1) \join C_{\g_2}(\Delta_2)\vert \,.
\enn
It is simply obtained by considering the curve $\{s_1=1\} \cup \{s_2 =1\} \subset [0,1]\times [0,1]\subset \RR^n$, 
 projecting that curve to the diagonal line segment in $[0,1]\times [0,1]$ joining the points $(0,1)$ and $(1,0)$, 
 parametrized by $\lambda \in [0,1]$.
Notice that under this parametrization, 
$(t_1,t_2,\lambda)=(0,0,0)$ corresponds to $\{\g_1 \times \h_2\}$, while $(0,0,1)$ corresponds to $\{\h_1 \times \g_2\}$, and 
$(0,0,\tfrac{1}{2})$ corresponds to $\{\g_1 \times \g_2\}$.

We now define the simplicial complex 
$\Delta_\ast:= C_{\g_1}(\Delta_1) \join C_{\g_2}(\Delta_2)$, and we remove the vertex $\{\g_1 \times \g_2\}$ from $\Delta_\ast$. 
We have
\beg
\vert \Delta_\ast \vert
 &=&
 \big\{ G(t_1,t_2,\lambda,x_1,x_2) \mid
            0 \leq t_1,t_2,\lambda \leq 1\,,\,\,x_i \in \vert \Delta_i\vert\,,\,\,i=1,2\big\}\,,
\en
where
$$
  G:[0,1]^3\times \vert\Delta_1 \vert\times \vert\Delta_2\vert
	\to \RR^{1+p_1+1+p_2+1}\,\,;\,\,\,
	  (t_1,t_2,\lambda,x_1,x_2)\mapsto  \left(\begin{array}{c} 
		 (1-\lambda)\cdot (t_1,t_1x_1)\\ \lambda\cdot (t_2,t_2x_2) \\\lambda \end{array}\right)\,.
$$
Notice that $G$ is continuous, that $G\vert_{\{\lambda=0\} }$ is independent of $t_2$ and $x_2$,  and that  
$G\vert_{\{\lambda=1\}}$ is independent of $t_1$ and $x_1$.
The image of $G\vert_{ \{t_1=t_2=0\}}$ is a compact line segment in $\Delta_*$, and it
contains  $\{\g_1 \times \g_2\}$, when $\lambda=\frac12$.

Next, we show that
there exists a strong deformation retract from $\vert \Delta_\ast \vert \bs \{\g_1 \times \g_2\}$ onto 
$G(\{t_1=1\}) \cup G(\{t_2=1\})$. To this end notice that it suffices to show that in the parameter space
$W:=[0,1]^3$ there exists a strong deformation retract from $W\bs (0,0,\tfrac{1}{2})$ onto
the union of the four closed faces $F_{\{\lambda=0\}}\cup F_{\{\lambda=1\}} \cup F_{\{t_1=1\}}\cup F_{\{t_2=1\}}$
of $W$. This is  because $G(F_{\{\lambda=0\}}) \subset G(F_{\{t_2=1\}})$
and  $G(F_{\{\lambda=1\}}) \subset G(F_{\{t_1=1\}})$, by the  properties of $G$ above.
This  strong deformation retract is obtained by using a radial projection from $(0,0,\tfrac{1}{2})$
onto this union.  
Now we can extend the deformation retract to $[0,1]^3 \times \vert \Delta_1\vert \times \vert \Delta_2\vert$. 
We deduce that $\vert \Delta_\ast\vert $ is homotopy equivalent to $X:= X_1\cup X_2$, where
\beg
 X_1
  &:=&
	  \big\{((1-\lambda)(1,x_1),\lambda (t_2,t_2x_2),\lambda) \mid
       0\leq \lambda,t_2 \leq 1\,,\,\,x_i \in
       \vert \Delta_i \vert\,,\,\,i=1,2 \big\}\,,\\
 X_2
&:= &
   \big\{((1-\lambda)(t_1,t_1 x_1),\lambda (1,x_2),\lambda) \mid
       0\leq \lambda,t_1 \leq 1\,,\,\,x_i \in
       \vert \Delta_i \vert\,,\,\,i=1,2 \big\}\,.
\en
Now let $\alpha(s)=1$ for $0\leq s \leq 1$,
$\alpha(s)=2-s$ for $1 \leq s \leq 2$,
$\beta(s)=s $ for $0\leq s\leq 1$,
$\beta(s)=1$ for $1 \leq s \leq 2$ and
$$
f(\lambda,\tau) = (1-\lambda)\alpha (2\tau)
\quad \textrm{ and }\quad
g(\lambda,\tau) = \lambda \beta(2\tau)\,.
$$ 
This yields
\beg			
X
 &=&
 \big\{(f(\lambda,\tau) (1,x_1),g(\lambda,\tau)
  (1,x_2),\lambda) \mid 
       0\leq \lambda, \tau \leq 1\,,\,\, x_i \in
       \vert \Delta_i \vert\,,\,\,i=1,2\big\}\,.
\en
The functions $f$ and $g$ are continuous maps $[0,1]^2 \to [0,1]$, both positive on $(0,1)^2$. 
We note $f^{-1}(0)=\{\lambda = 1\}\cup \{\tau = 1\}$ and $f(0,0)=1$ while 
$g^{-1}(0)=\{\lambda=0\}\cup \{\tau=0\}$ and $g(1,1)=1$.
For fixed $(x_1,x_2)\in \vert \Delta_1\vert \times
\vert \Delta_2\vert$ and $(\lambda,\tau),(\lambda',\tau')\in (0,1)^2$ two different parameters
$(\lambda,\tau) \neq (\lambda',\tau')$ parametrize different points in $X$.
On $f^{-1}(0)\bs \{ (0,1),(1,0)\}$ for all $x_1 \in \vert \Delta_1 \vert$ we obtain the same
point in $X$ and correspondingly on
$g^{-1}(0)\bs \{ (0,1),(1,0)\}$ for all $x_2\in \vert \Delta_2 \vert$ we obtain the same
point in $X$. Finally, each of  $(\lambda,\tau)=(0,1)$ and $(\lambda,\tau)=(1,0)$
parametrizes only one point in $\RR^{1+p_1+1+p_2+1}$, namely $(0,0,0,0,0)$ and
$(0,0,0,0,1)$, respectively.

Consider the functions 
$$
 \ti f(\lambda, \tau)=1-\tfrac 12 (\lambda +\tau)- \tfrac 12 |\lambda - \tau | 
\quad \textrm{ and }  \quad
\ti g(\lambda,\tau)=\tfrac 12 (\lambda +\tau)- \tfrac 12 |\lambda - \tau |\,.
$$ 
The functions $\ti f$ and $\ti g$ share several properties with $f$ and $g$: 
they are continuous maps $[0,1]^2 \to [0,1]$, positive on the interior.  
They satisfy $\ti f^{-1}(0)=\{\lambda = 1\} \cup \{\tau = 1\}$ and $\ti f(0,0)=1$, 
while $\ti g^{-1}(0)=\{\lambda=0\} \cup \{\tau=0\}$ and $\ti g(1,1)=1$. 
Notice 
$$
\ti f(\lambda,\tau) = \begin{cases} 1-\lambda & \text{ if } \lambda \geq \tau, \\
      1-\tau & \text{ if } \lambda \leq \tau,
\end{cases}  
 \qquad \text{and} \qquad
\ti g(\lambda,\tau) = \begin{cases} \tau & \text{ if } \lambda \geq \tau, \\
      \lambda & \text{ if } \lambda \leq \tau.
\end{cases}
$$
To understand $\ti f,\ti g$ let us introduce new coordinates $\tilde \lambda = \tfrac 12 (\lambda +\tau)$
and $\tilde \tau =\tfrac 12 (\tau -\lambda)$. Then
$$
   \tilde f(\tilde \lambda,\tilde \tau)=1-2\vert \tilde \tau\vert-(\ti \lambda -\vert \tilde \tau\vert)
	\qquad \text{and} \qquad
	 \tilde g(\tilde \lambda,\tilde \tau)=\ti \lambda -\vert \tilde \tau\vert
$$
with $\tilde \tau \in [-\tfrac 12,\tfrac 12]$ and 
$\ti \lambda \in [\vert \ti \tau\vert,1-\vert \ti \tau \vert]$.
For fixed $\tilde \tau_0 \in (-\tfrac 12,\tfrac 12)$ we have
$$
   \tilde f(\tilde \lambda,\tilde \tau_0)=(1-2\vert \tilde \tau_0\vert)\cdot 
	\big(1-\tfrac{\ti \lambda -\vert \tilde \tau_0\vert}{1-2\vert \tilde \tau_0\vert} \big)
	\qquad \text{and} \qquad
	 \tilde g(\tilde \lambda,\tilde \tau_0)=
	(1-2\vert \tilde \tau_0\vert)\cdot
	\tfrac{\ti \lambda -\vert \tilde \tau_0\vert}{1-2\vert \tilde \tau_0\vert}\,.
$$
Notice this provides the scaling functions of a join for all $\tilde \tau_0$,
scaled by $1-2\vert \tilde \tau_0\vert$.

Let us consider the diagonal $X_{\{\lambda=\tau\} } \subset X$,
parametrized by $\{\lambda =\tau \}$ and set $f(\lambda):=f(\lambda,\lambda)$
and $g(\lambda):=g(\lambda,\lambda)$:
$$
X_{\{\lambda=\tau\} } 
:=
 \big\{(f(\lambda) (1,x_1),g(\lambda)
  (1,x_2),\lambda) \mid 
       0\leq \lambda  \leq 1\,,\,\, x_i \in
       \vert \Delta_i \vert\,,\,\,i=1,2\big\}\,,
$$
We claim that $X_{\{\lambda =\tau\}}$ is homeomorphic to 
\begin{eqnarray*}
\ti X_{\{\lambda =\tau\}}
 &=& 
\big\{(\tilde f(\lambda)(1,x_1), \tilde g(\lambda) (1,x_2),\lambda) 
         \mid 0\leq \lambda  \leq 1\,,\,\, x_i \in
       \vert \Delta_i \vert\,,\,\,i=1,2 \big\}\\
&=&
\left\{((1-\lambda)(1,x_1), \lambda (1,x_2),\lambda) 
     \mid 0\leq \lambda  \leq 1\,,\,\, x_i \in
       \vert \Delta_i \vert\,,\,\,i=1,2 \right\}\,,
\end{eqnarray*}
where $\ti f(\lambda):=\ti f(\lambda,\lambda)$
and $\ti g(\lambda):=\ti g(\lambda,\lambda)$.
To this end let $h_{\{\lambda =\tau\}}:X_{\{\lambda =\tau\}}\to \ti X_{\{\lambda =\tau\}}$ be defined by
mapping 
$(f(\lambda) (1,x_1),g(\lambda)
  (1,x_2),\lambda)$ onto
$(\tilde f(\lambda) (1,x_1),\tilde g(\lambda)
  (1,x_2),\lambda)$, for
	$0\leq \lambda  \leq 1\,,\,\, x_i \in
       \vert \Delta_i \vert\,,\,\,i=1,2$. Since the functions $f,g,\tilde f,\tilde g$ are non-negative
	and $f,\ti f$ and $g,\ti g$ do have the same zero locus, see above, we indeed obtain a map from
	$X_{\{\lambda =\tau\}}$ to $\ti X_{\{\lambda =\tau\}}$ this way. Moreover, it is clear that
	the map $h_{\{\lambda =\tau\}}$ is bijective. Continuity follows since the map $f(\lambda)\to \ti f(\lambda)$
	is continuous using that $f$ has a continuous inverse. The continuity of the inverse map follows the same way.
	Thus $X_{\{\lambda =\tau\}}$ is homeomorphic to $\vert \Delta_1 * \Delta_2 \vert$.
	
	To show that $X$ is homeomorphic to the suspension of $\vert \Delta_1 * \Delta_2 \vert$ we 
	show now that $X$ is homeomorphic to $\tilde X$ with
	\beg			
\ti X
 &=&
 \big\{(\ti f(\lambda,\tau) (1,x_1),\ti g(\lambda,\tau)
  (1,x_2),\lambda) \mid 
       0\leq \lambda, \tau \leq 1\,,\,\, x_i \in
       \vert \Delta_i \vert\,,\,\,i=1,2\big\}\,.
\en
As above let $h:X\to \ti X$ be defined by
mapping 
$(f(\lambda,\tau) (1,x_1),g(\lambda,\tau)
  (1,x_2),\lambda)$ onto
$(\tilde f(\lambda,\tau) (1,x_1),\tilde g(\lambda,\tau)
  (1,x_2),\lambda)$, for
	$0\leq \lambda,\tau  \leq 1\,,\,\, x_i \in
       \vert \Delta_i \vert\,,\,\,i=1,2$. 	
			Using the properties of $f,g,\tilde f,\tilde g$ we see that indeed $h$ is a bijective map
			from $X$ to $\tilde X$. Continuity of $h$ and its inverse follow also as above.
This shows the claim.
\end{proof}

\begin{remark}
 A realization of
$\Delta_\ast \bs \{\g_1 \times \g_2\}$ is the subcomplex of $\vert \Delta_\ast \vert$,
obtained by removing the (open) simplicial neighborhood of the vertex $\g_1 \times \g_2 \in \vert \Delta_\ast\vert$. Clearly,
$\vert \Delta_\ast \bs \{\g_1 \times \g_2\}\vert$ and $\vert \Delta_\ast \vert \bs \{\g_1 \times \g_2\}$
are homotopy equivalent.
Although it is possible to improve the above result to show that $\vert \Delta_\ast \bs \{\g_1 \times \g_2\}\vert$
is homeomorphic to $X_1 \cup X_2$, this requires precise bookkeeping of the vertices under the
canonical homeomorphism in \eqref{eqn:canhom}.
\end{remark}

Since $S^0 \join S^0=S^1$ and more generally, the join of $p\geq 2$ copies of $S^0$, is $S^0 \join \cdots \join S^0=S^{p-1}$, $p\geq 2$ copies of $S^0$,
we  immediately obtain

\begin{corollary} \label{corprod}
Let $G/H$ be a compact homogeneous space. If
$G=G_1 \times \cdots \times G_p$ and
$H=H_1 \times \cdots \times H_p$, $p \geq 2$, with $H_i \subsetneq G_i$ and $\dim N_{G_i}(H_i)=\dim H_i$ for each $i=1,\dots,p$,
then 
$$
 \Delta_{G/H} \approx 
 \Delta_{G_1/H_1} \join \cdots \join
 \Delta_{G_p/ H_p} \join S^{p-2}\,.
$$
\end{corollary}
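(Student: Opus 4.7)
The plan is to prove Corollary \ref{corprod} by induction on the number $p$ of factors, using Theorem \ref{theoprod} as the base case and the inductive step.

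For the base case $p=2$, the statement is exactly Theorem \ref{theoprod}, since $S^{p-2} = S^0$ in that case. So the real work is the inductive step. Assuming the result for $p-1$, write $G = \tilde G \times G_p$ and $H = \tilde H \times H_p$ where $\tilde G := G_1 \times \cdots \times G_{p-1}$ and $\tilde H := H_1 \times \cdots \times H_{p-1}$. To apply Theorem \ref{theoprod} to the pair $(\tilde G/\tilde H, G_p/H_p)$, I need to check that $\dim N_{\tilde G}(\tilde H) = \dim \tilde H$. This is immediate from the product structure: $N_{\tilde G}(\tilde H) = N_{G_1}(H_1) \times \cdots \times N_{G_{p-1}}(H_{p-1})$, and by hypothesis each factor has the same dimension as $H_i$.

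Applying Theorem \ref{theoprod} then gives
\begin{equation*}
\Delta_{G/H} \approx \Delta_{\tilde G/\tilde H} \join \Delta_{G_p/H_p} \join S^0,
\end{equation*}
and applying the inductive hypothesis to $\tilde G/\tilde H$ yields
\begin{equation*}
\Delta_{\tilde G/\tilde H} \approx \Delta_{G_1/H_1} \join \cdots \join \Delta_{G_{p-1}/H_{p-1}} \join S^{p-3}.
\end{equation*}
Substituting and using associativity and commutativity of the join (as recorded just before Theorem \ref{theoprod}) together with the standard identity $S^{p-3} \join S^0 = S^{p-2}$ produces the desired homeomorphism
\begin{equation*}
\Delta_{G/H} \approx \Delta_{G_1/H_1} \join \cdots \join \Delta_{G_p/H_p} \join S^{p-2}.
\end{equation*}

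There is no real obstacle here: the entire argument is a clean induction once Theorem \ref{theoprod} is in hand. The only point that deserves a careful verification is that the normalizer hypothesis is inherited by the grouping $\tilde G = G_1 \times \cdots \times G_{p-1}$, which follows because the normalizer of a product subgroup in a product group factors componentwise. One should also remark, for $p=2$, that the convention $S^{-1} = \emptyset$ and $X \join \emptyset = X$ is consistent with Theorem \ref{theoprod}, so the induction starts cleanly; alternatively, one can simply take $p=2$ as the base case and $p \geq 3$ for the induction.
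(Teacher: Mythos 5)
Your proposal is correct and is essentially the paper's own argument made explicit: the paper obtains the corollary by iterating Theorem \ref{theoprod} and using associativity/commutativity of the join together with $S^0 \join \cdots \join S^0 = S^{p-2}$ for $p-1$ copies, which is exactly your induction. Your added check that the normalizer hypothesis passes to the partial product $\tilde G/\tilde H$ (since the normalizer of a product subgroup factors componentwise) is a worthwhile detail the paper leaves implicit.
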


Even when $\Delta_{G_i/H_i}=\emp$ for certain $i$, this gives
interesting results. For instance in the case
$\Delta_{G_i/H_i}=\emp$ for all $1\leq i \leq p$, we get
$\Delta_{G/H}=S^{p-2}$.

\begin{corollary} \label{cortop}
Let $G/H=G_1/H_1\times G_2/H_2$ be a
compact product homogeneous space with $\dim N_{G_i}(H_i)=\dim H_i$ for $i=1,2$. Then, if either $\Delta_{G_1/H_1}$ or
$\Delta_{G_2/H_2}$ is contractible, then $\Delta_{G/H}$ is
contractible too.
\end{corollary}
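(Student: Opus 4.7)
The plan is to deduce this corollary directly from Theorem~\ref{theoprod} together with the standard fact that the join of a contractible space with an arbitrary space is again contractible. First I would invoke Theorem~\ref{theoprod} to obtain the homeomorphism
\[
\Delta_{G/H} \;\approx\; \Delta_{G_1/H_1}\join \Delta_{G_2/H_2}\join S^0.
\]
Without loss of generality I may assume that $\Delta_{G_1/H_1}$ is contractible.

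Next I would record the general join fact: if $X$ is contractible and $Y$ is any topological space, then $X\join Y$ is contractible. A clean way to see this is to pick a point $x_0\in X$ and observe that $\{x_0\}\join Y$ is homeomorphic to the cone $CY$, hence contractible, and is a strong deformation retract of $X\join Y$: a contraction $h_t\colon X\to X$ of $X$ to $x_0$ extends to a deformation
\[
H_t\bigl([(1-\lambda)x+\lambda y,\lambda]\bigr)=[(1-\lambda)h_t(x)+\lambda y,\lambda],
\]
which leaves $\{x_0\}\join Y$ pointwise fixed and retracts $X\join Y$ onto it at $t=1$. Applying this twice (using associativity and commutativity of the join, already recorded in the excerpt), I conclude that $\Delta_{G_1/H_1}\join\bigl(\Delta_{G_2/H_2}\join S^0\bigr)$ is contractible, and hence so is $\Delta_{G/H}$.

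The only subtlety, which I would address explicitly, is the case where one of the factors is empty; recall that the authors set $\emp\join \Delta:=\Delta$. If $\Delta_{G_2/H_2}=\emp$ while $\Delta_{G_1/H_1}$ is contractible, then $\Delta_{G/H}\approx \Delta_{G_1/H_1}\join S^0$, which is the (unreduced) suspension of a contractible space, hence again contractible (for instance, it is homeomorphic to a cone on $\Delta_{G_1/H_1}\join\{*\}$, or one applies the join fact above directly since $S^0\neq\emp$). There is no obstacle here beyond careful bookkeeping: the main ingredient is entirely topological and is already packaged by Theorem~\ref{theoprod}.
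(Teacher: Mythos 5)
Your proof is correct and follows exactly the route the paper intends: Corollary \ref{cortop} is stated as an immediate consequence of Theorem \ref{theoprod} together with the standard fact that the join of a contractible space with any space is contractible, and your handling of the empty factor (recalling $\emp \join \Delta = \Delta$ and that the empty complex is declared non-contractible) covers the only bookkeeping subtlety. Nothing further is needed.
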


Conversely, we expect that if $\Delta_{G_1/H_1}$ and $\Delta_{G_2/H_2}$ are non-contractible so is $\Delta_{G/H}$.
To some extent this is known. The reduced homology groups of joins
have been computed by \mbox{J. Milnor} \cite{J.M}: If $\ti
H_q(\Delta_1,{\mathbb F})$ and $\ti H_{\ti q}(\Delta_2,{\mathbb
F})$ are non-trivial for a field ${\mathbb F}$, then 
\begn \label{eqn:Htildejoin}
\ti H_{q+\ti q+1}(\Delta_1 \join \Delta_2,{\mathbb F})
\enn
is also non-trivial.

\begin{remark}\label{remtop}

Note that Milnor's result does not guarantee non-contractibility in general. 
We do not have a flag example to illustrate this, but here we describe an example from \cite{M.C}. 
There exists a
\mbox{2-dimensional}, non-contractible simplicial complex $\Delta$
such that $\Delta \join S^0$ is contractible.
The space $\Delta$ obtained from five points
$a_0,a_1,a_2,a_3,a_4$, ten $1$-cells (joining each pair of points)
and six 2-cells given by the edge-loops $a_0a_1a_2a_3a_4a_0$,
$a_0a_4a_1a_3a_2a_0$, $a_0a_2a_4a_3a_1a_0$, $a_0a_4a_2a_1a_3a_0$,
$a_0a_3a_2a_4a_1a_0$, $a_0a_2a_1a_4a_3a_0$ is non-contractible with 
$\ti H_n(\Delta,\ZZ)=0$ for all $n \in \NN_0$: see
Example 3.3.22. and Remark 8.3.12 in \cite{M.C}. 
Since $\Delta \join S^0$ is simply connected with 
 $\ti H_n(\Delta \join S^0,\ZZ)=0$ for all $n \in \NN_0$ it follows that $\Delta \join S^0$
is contractible: see Corollary 8.3.11 in \cite{M.C}.
\end{remark}

Continuing with the hypothesis  $\dim N_G(H)=\dim H$,
in this setting we know that maximality of $\h$ in $\kf$ is equivalent to minimality of $\kf$ in $\g$ (see Definition \ref{def:min-sub}). 
There are no toral intermediate subalgebras, and both 
$\Submin =\{\kf \in \Sub_s \mid \kf \textrm{ is a minimal subalgebra}\}$ and  $\Submg$,  
the set of subalgebras generated by minimal subalgebras, are finite sets. 
We denote by $\Deltaf_{G/H}$ the corresponding flag complex.
Clearly $\Deltaf_{G/H}$ is a subcomplex of $\Delta_{G/H}$. 
By Lemma \ref{lem:DGH}, the nerve $\XGH$ is the simplicial complex $\DGH$, and we know 
$\XGH$ is homotopy equivalent to the minimal nerve $\XGHmin$.

\begin{lemma}\label{lem:rsimhomsim}
Let $G/H$ be a compact homogeneous space with $\dim N_G(H)=\dim H$. Then 
$\Deltaf_{G/H}$ is  homotopy equivalent to  $\Delta_{G/H}$. 
\end{lemma}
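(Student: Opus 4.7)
The plan is to exhibit an explicit poset retraction $r: \Sub \twoheadrightarrow \Submg$ restricting to the identity on $\Submg$ and satisfying $r(\kf) \leq \kf$ for every $\kf \in \Sub$; this will realise $\Deltaf_{G/H}$ as a deformation retract of $\DGH$, without appealing to the continuous machinery of Section \ref{sec:homotopy2}.

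First I would set up the combinatorial picture. The hypothesis $\dim N_G(H) = \dim H$ forces $\m_0 = 0$ by Lemma \ref{lemnor}, so every $\kf \in \Sub$ is non-toral and $\Sub = \Sub_s$ is finite by \cite{BWZ}. Consequently $\DGH$ and $\Deltaf_{G/H}$ are literally the order complexes of the finite posets $\Sub$ and $\Submg$, respectively, by Lemma \ref{lem:DGH} and its obvious variant. I would then define
\[
r(\kf) \;:=\; \langle \kf_0 \in \Submin \,:\, \kf_0 \leq \kf \rangle,
\]
and verify: (i) the generating set is nonempty (descend from $\kf$ in the finite poset $\Sub$ to reach a minimal element, which must lie in $\Submin$); (ii) $r(\kf) \in \Submg$ by definition and $\h < r(\kf) \leq \kf < \g$, so $r(\kf) \in \Sub$; (iii) $r$ is order-preserving, since $\kf \leq \tilde\kf$ enlarges the set of minimal subalgebras below; and (iv) $r|_{\Submg} = \Id$: if $\kf = \langle \kf_0^{(i)} \rangle$ with each $\kf_0^{(i)} \in \Submin$, then each $\kf_0^{(i)}$ is among the generators used to define $r(\kf)$, forcing $r(\kf) \geq \kf$, and combined with $r(\kf) \leq \kf$ this gives equality.

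Second I would invoke the classical poset-homotopy lemma: if $f,g: P \to Q$ are order-preserving maps of finite posets with $f(p) \leq g(p)$ for every $p$, then the induced simplicial maps on order complexes are homotopic (the standard proof subdivides each prism $\Delta^r \times [0,1]$ using the chains $f(\kf_0) \leq \cdots \leq f(\kf_j) \leq g(\kf_j) \leq \cdots \leq g(\kf_r)$, all of which are faces of the flag complex of $Q$). Applied to $f = \iota \circ r$ and $g = \Id_{\Sub}$, where $\iota: \Submg \hookrightarrow \Sub$ is the inclusion, this yields $\iota \circ r \simeq \Id_{\DGH}$. Together with $r \circ \iota = \Id_{\Deltaf_{G/H}}$, this shows that $\iota$ is a homotopy equivalence with homotopy inverse $r$.

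The only real obstacle is verifying that $r$ is well-defined on all of $\Sub$, i.e., that every $\kf \in \Sub$ dominates at least one minimal non-toral subalgebra and that the subalgebra generated by such minimals remains a proper intermediate subalgebra. Both points are immediate from finiteness of $\Sub$, the inclusion $r(\kf) \leq \kf$, and the fact that $\h < \kf_0$ for any $\kf_0 \in \Submin$. The poset-homotopy lemma itself is classical (and is essentially the elementary case of Quillen's Theorem A, whose hypothesis --- contractibility of each fibre $\Submg \cap \Sub_{\leq \kf}$ --- holds here because that fibre has maximum element $r(\kf)$ and hence a conical order complex).
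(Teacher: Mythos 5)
Your argument is correct and is essentially the paper's: the paper constructs exactly your $r(\kf)$, namely $\lf^* := \langle \lf_1,\dots,\lf_s\rangle$, the subalgebra generated by all minimal subalgebras contained in $\kf$, shows that it is the maximum of the fibre $\{\lf \in \Submg \mid \lf \leq \kf\}$, and then concludes from Quillen's Proposition 1.6 (Theorem A for posets, via contractibility of each fibre, Quillen 1.5) that the inclusion $\Submg \hookrightarrow \Sub$ induces a homotopy equivalence $\Deltaf_{G/H} \approx \DGH$. Your only deviation is cosmetic: you package the same maximum as an explicit order-preserving closure/retraction and finish with the comparable-maps homotopy lemma (Quillen 1.3) instead of quoting Theorem A, which --- as you note yourself --- is just the elementary case of the same argument.
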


\begin{proof} 
We know the set $\Submin$   of minimal proper intermediate subalgebras of $\g$ is a finite set: $\{\lf_1,\dots,\lf_t\}$, and the set of all proper intermediate subalgebras they generate, $\Submg$, is also finite. Let $i:\Submg \to \Sub$ denote the inclusion map. 
We show that for each $\kf \in \Sub$, the set $i_{\bldss\kf} :=\{\lf \in \Submg \mid \lf \leq \kf\}$   
 has a maximal element. It is clear that $i_{\bldss\kf}$ cannot be empty.  Suppose $\lf_{1},\dots, \lf_{s}$ are all the minimal subalgebras contained in $\kf$ (renumbering, possibly). Let $\lf^* := \langle \lf_1,\dots,\lf_s\rangle$, the subalgebra 
 generated by $\lf_{1},\dots, \lf_{s}$.  Clearly $\lf^* \in \Submg$, and since each  $\lf_i \leq \kf$ ($i=1,\dots,s$), we know $\lf^* \leq \kf$.  

We claim that $\lf^*$ is the maximal element of $i_{\bldss\kf}$.  To this end
suppose $\kf' \in i_{\bldss\kf}$, yet $\kf' \nleqslant \lf^*$.  By construction,  $\kf' \in \Submg$ means $\kf' = \langle \lf_{i_1},\dots, \lf_{i_m}\rangle$. If  there exists $i_j \not\in \{1,\dots,s\}$, then
$\kf' \nleqslant \kf$ since the containment $\lf_{i_j} \leq \kf$ does not hold.  Thus for all $\kf' \in i_{\bldss\kf}$, $\kf' \leq \lf^*$.

By \cite[1.5]{Qui}, since for any $\kf \in \Sub$, the set $i_{\bldss\kf}$  has a maximal element, $\Delta(i_{\bldss\kf})$ is contractible, and since this holds  for every $\kf \in \Sub$, by \cite[Prop.1.6]{Qui} 
the inclusion map $i: \Submg \to \Sub$ induces a homotopy equivalence  $\Deltaf_{G/H} \approx \Delta_{G/H}$. 
\end{proof}

Note that we obtain Corollary \ref{cor:gmax} immediately from Lemma \ref{lem:rsimhomsim}.

With this purely topological proof of Lemma \ref{lem:rsimhomsim}, we obtain
Corollary \ref{cor:LKmax}, which we could not have deduced otherwise.
We replace  $\leq$ with $\geq$, minimal with maximal, and {\sl generated from} by {\sl intersection}.

\begin{corollary}\label{cor:LKmax}
Let $G/H$ be a compact homogeneous space with $\dim N_G(H)=\dim H$. Suppose that
there exists a subalgebra $\kf \in\Sub$ such that $\kf\leq \lf$ for every $\lf \in \Sub$ which is maximal in 
$\g$. Then $\DGH$ is contractible.
\end{corollary}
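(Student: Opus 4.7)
The plan is to dualize the topological argument in the proof of Lemma \ref{lem:rsimhomsim}, replacing $\leq$ by $\geq$, minimal subalgebras by maximal ones, and the operation ``generated by'' by ``intersection of''. Since $\dim N_G(H) = \dim H$, the poset $\Sub$ is finite; let $\Sub_{\max}$ denote the subset of those $\lf \in \Sub$ that are maximal in $\g$, and let $\Sub_{\langle\max\rangle} \subset \Sub$ denote the set of all subalgebras obtained as a non-empty intersection of elements of $\Sub_{\max}$ that strictly contains $\h$. Denote its flag complex by $\Delta^{\langle\max\rangle}_{G/H}$.

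The first step is to show that the inclusion $i \colon \Sub_{\langle\max\rangle} \hookrightarrow \Sub$ induces a homotopy equivalence $\Delta^{\langle\max\rangle}_{G/H} \approx \DGH$. By \cite[Prop.~1.6]{Qui} applied to the opposite order, it suffices to check that for each $\kf \in \Sub$ the fiber $\kf/i := \{\lf \in \Sub_{\langle\max\rangle} \mid \lf \geq \kf\}$ has a minimum element. Let $\lf_1, \dots, \lf_s \in \Sub_{\max}$ be the maximal subalgebras in $\Sub$ containing $\kf$ (a non-empty collection since $\Sub$ is finite and $\kf \in \Sub$), and set $\lf^{\ast} := \lf_1 \cap \cdots \cap \lf_s$. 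Then $\h < \kf \leq \lf^{\ast}$, so $\lf^{\ast} \in \Sub_{\langle\max\rangle}$. If $\lf' = \lf_{j_1} \cap \cdots \cap \lf_{j_m} \in \Sub_{\langle\max\rangle}$ satisfies $\lf' \geq \kf$, then each $\lf_{j_r} \geq \lf' \geq \kf$ appears among $\{\lf_1, \dots, \lf_s\}$, so $\lf' \geq \lf^{\ast}$. Hence $\lf^{\ast}$ is the minimum of $\kf/i$, which is therefore contractible.

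The second step uses the hypothesis. Let $\kf \in \Sub$ satisfy $\kf \leq \lf$ for every $\lf \in \Sub_{\max}$, and set $\kf_\infty := \bigcap_{\lf \in \Sub_{\max}} \lf$. Then $\h < \kf \leq \kf_\infty$, so $\kf_\infty \in \Sub_{\langle\max\rangle}$; and every element of $\Sub_{\langle\max\rangle}$ is an intersection of maximals, hence contains $\kf_\infty$. Therefore $\kf_\infty$ is a global minimum of $\Sub_{\langle\max\rangle}$, so $\Delta^{\langle\max\rangle}_{G/H}$ is a cone with apex $\kf_\infty$ and is contractible. By the homotopy equivalence of the first step, $\DGH$ is contractible.

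The only subtlety is verifying that the intersections $\lf^{\ast}$ and $\kf_\infty$ strictly contain $\h$ and so actually lie in $\Sub$; this is immediate from $\kf \leq \lf^{\ast}$ and $\kf \leq \kf_\infty$ together with $\kf \in \Sub$. Apart from this routine check, the whole argument is a formal dualization of Lemma \ref{lem:rsimhomsim}, made possible by the symmetry of Quillen's Theorem A under reversal of the partial order.
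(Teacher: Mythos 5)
Your proof is correct and carries out exactly the dualization the paper indicates but does not spell out: replacing $\leq$ by $\geq$, minimal by maximal, and ``generated by'' by ``intersection of'' in the proof of Lemma \ref{lem:rsimhomsim}, via the order-reversal symmetry of Quillen's Proposition 1.6. The verification that $\lf^\ast$ and $\kf_\infty$ lie in $\Sub$ (i.e.\ strictly contain $\h$), which the student flags and which is precisely where the hypothesis enters, is handled correctly.
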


\subsection{{\it Examples of the simplicial complex $\DGH$}}  \label{sec:examples}

In this section we will provide several examples of compact homogeneous spaces $G/H$ for which the  
 simplicial complex $\DGH$ is non-contractible and contractible. 
We will work with the (connected) subgroups corresponding to intermediate subalgebras.
To simplify our results we declare $\ti H_{-1}(\emptyset) \neq 0$; recall that
the empty set was declared to be non-contractible.

\medskip

Let $G$ be one of the classical simple groups, that is $G={\rm SU}(n)$, $n\geq 1$, $G={\rm SO}(2n)$, $n\geq 3$, 
$G={\rm SO}(2n+1)$ or $G={\rm Sp}(n)$, $n\geq 2$.

\begin{theorem}[\cite{Bo}, \cite{Rau}]\label{thm:RA}
Let $G={\rm SU}(n)$, $n\geq 2$ and let 
$$
  H={\rm S}({\rm U}(n_1)\times \cdots \times {\rm U}(n_k))
$$
with $n =\sum_{i=1}^k n_i$, $n_1,...,n_k\geq 1$, $k\geq 2$.
Then $\ti H_{k-3}(\DGH) \neq 0$.
\end{theorem}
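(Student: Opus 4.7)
The plan is to identify $\Delta_{G/H}$ combinatorially with the order complex of the proper part of the partition lattice $\Pi_k$, and then invoke the classical (Folkman/Björner) computation of its homotopy type.

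First I would verify the structural hypothesis of Definition \ref{def:DGH}. Decompose $\m = \bigoplus_{1\leq i<j\leq k} \m_{ij}$, where $\m_{ij}$ is the real form of the space of $n_i\times n_j$ complex matrices sitting in the $(i,j)$ off-diagonal block of $\su(n)$. Each $\m_{ij}$ is $\Ad(H)$-irreducible, and the central $(k-1)$-torus of $H$ acts on $\m_{ij}$ through a character that depends on the unordered pair $\{i,j\}$; these characters are pairwise distinct, so the summands are pairwise inequivalent. In particular $\m_0 = 0$, hence $\dim N_G(H) = \dim H$ by Lemma \ref{lemnor}, and Lemma \ref{lem:DGH} applies.

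Next I would classify the connected intermediate subgroups. Since the $\m_{ij}$ are pairwise inequivalent irreducibles, any $\Ad(H)$-invariant subspace of $\m$ is a direct sum of a subset $S$ of the $\m_{ij}$. The bracket relation
\[
  [\m_{ij},\m_{j\ell}] \not\subset \h \quad\text{with a nonzero component in }\m_{i\ell}
\]
forces the set of pairs indexing $S$ to be a union of cliques in the complete graph on $\{1,\dots,k\}$; equivalently, $S$ is determined by a set partition $\pi$ of $\{1,\dots,k\}$, with
\[
  \kf_\pi := \h \oplus \bigoplus_{\substack{i<j\\ i\sim_\pi j}} \m_{ij}.
\]
One checks that $\kf_\pi$ is always a Lie subalgebra (the bracket relations among the $\m_{ij}$ respect the equivalence classes), and that $\h < \kf_\pi < \g$ precisely when $\pi$ lies in the proper part $\bar\Pi_k := \Pi_k \setminus\{\hat 0,\hat 1\}$ of the partition lattice (i.e.\ $\pi$ has at least one non-singleton block and at least two blocks). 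Moreover $\kf_\pi \subseteq \kf_{\pi'}$ iff $\pi$ refines $\pi'$. Consequently $\Delta_{G/H}$ is exactly the order complex $\Delta(\bar\Pi_k)$.

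Finally I would quote Folkman's theorem (see also Björner's lexicographic shellability of $\Pi_n$): $\Delta(\bar\Pi_k)$ is homotopy equivalent to a wedge of $(k-1)!$ copies of the sphere $S^{k-3}$, so $\tilde H_{k-3}(\Delta_{G/H}) \cong \ZZ^{(k-1)!} \neq 0$. The boundary cases fit this formula under the conventions of the excerpt: for $k=2$ the poset $\bar\Pi_2$ is empty, giving $\Delta_{G/H}=\emptyset$ and $\tilde H_{-1}(\emptyset)\neq 0$; for $k=3$, $\bar\Pi_3$ consists of three pairwise incomparable $2$-block partitions, so $\Delta_{G/H}$ is three points and $\tilde H_0\cong\ZZ^2$. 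The main obstacle is Step 2 — confirming that no exotic intermediate subalgebras exist beyond the $\kf_\pi$ — but this reduces, via the pairwise inequivalence of the $\m_{ij}$, to a straightforward bracket computation; once that identification is made, the topological conclusion is a citation.
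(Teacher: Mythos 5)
The paper states this theorem without proof, citing \cite{Bo} and \cite{Rau}, so there is no in-text argument to compare against; your proposal supplies the standard proof, which is the one those references use. The route is correct: since $\rk G=\rk H$ one has $\m_0=0$, the off-diagonal blocks $\m_{ij}$ are pairwise inequivalent $\Ad(H)$-irreducibles (the central torus separates them by the characters $\theta_i-\theta_j$), so every $\Ad(H)$-invariant subspace of $\m$ is a sum of some $\m_{ij}$; the bracket relation $[\m_{ij},\m_{j\ell}]\cap\m_{i\ell}\neq 0$ then forces the admissible edge-sets to be unions of cliques, i.e.\ set partitions, giving a poset isomorphism between $\Sub$ and the proper part $\bar\Pi_k$ of the partition lattice, after which Folkman's theorem (or Bj\"orner's EL-shellability of $\Pi_k$) yields $\Delta_{G/H}\simeq \bigvee_{(k-1)!} S^{k-3}$. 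Your treatment of the degenerate cases $k=2$ ($\Delta_{G/H}=\emptyset$, $\ti H_{-1}\neq 0$ by the paper's convention) and $k=3$ (three isolated vertices) also matches the paper's conventions.
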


Note that all subgroups $H$ of ${\rm SU}(n)$ of maximal rank are covered by the above theorem and 
that all these spaces are known admit a homogeneous K\"ahler-Einstein metric (see \cite[8.111]{Bes}).

\begin{theorem}[\cite{Rau}]\label{thm:RB}
Let $G={\rm SO}(2n)$, $n\geq 3$, and let $H$ be a compact subgroup of $G$ with $\rk G=\rk H$.
Then $\DGH$ is non-contractible if and only if
$$
  H= {\rm U}(n_1)\times \cdots \times {\rm U}(n_k)
	\quad \textrm{ or }\quad
	H= {\rm SO}(2n_1)\times \cdots  \times {\rm SO}(2n_k)
$$
where $\sum_{i=1}^k n_i=n$, $n_1,\dots,n_k\geq 1$, $k \geq 1$ in  the first case
and $k\geq 2$ in the second. Moreover,
in the first case we have $\ti H_{k-2}(\DGH)\neq 0$ and in  the second $\ti H_{k-3}(\DGH)\neq 0$. 
\end{theorem}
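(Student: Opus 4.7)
The plan is to combine the Borel--de Siebenthal classification of maximal-rank subgroups of $\SO(2n)$ with topological tools for order complexes of labelled partition posets. First I would use that, up to conjugacy, the maximal connected subgroups of $\SO(2n)$ of maximal rank are $\Un(n)$ and $\SO(2p)\times\SO(2n-2p)$; iterating, any connected maximal-rank $H<\SO(2n)$ is a product of factors of type $\Un(n_i)$ or $\SO(2m_j)$ with $m_j\geq 2$, and every intermediate subgroup $H<K<G$ is parametrized by a labelled partition of the factor set of $H$, where each block carries a label $U$ or $SO$, any block containing an $SO$-factor of $H$ is forced to carry label $SO$, and the single-block all-$SO$ label is excluded (since it gives $G$).

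For the ``only if'' direction (mixed $H$ gives contractibility), suppose $H=\Un(n_1)\times\cdots\times\Un(n_a)\times\SO(2m_1)\times\cdots\times\SO(2m_b)$ with $a,b\geq 1$ and each $m_j\geq 2$. Since the $\SO(2m_j)$-factors do not embed compatibly into any $\Un$-subgroup, $\Un(n)$ cannot contain $H$, and hence every maximal $\lf\in\Sub$ has the form $\so(2p)\oplus\so(2n-2p)$ coming from a non-trivial bipartition of the factors of $H$. The subalgebra $\kf^*=\so(2n_1)\oplus\cdots\oplus\so(2n_a)\oplus\so(2m_1)\oplus\cdots\oplus\so(2m_b)$ is a proper intermediate subalgebra (strict below $\g$ as $a+b\geq 2$, strict above $\h$ as $a\geq 1$) which, being the singleton partition with all-$SO$ label, refines every such bipartition; Corollary \ref{cor:LKmax} then yields contractibility of $\DGH$.

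For the ``if'' direction, in the pure-$SO$ case $H=\SO(2n_1)\times\cdots\times\SO(2n_k)$ with $k\geq 2$ every intermediate subgroup is again pure-$SO$-labelled (since $\SO(2n_i)$ does not sit in a $\Un$-factor for $n_i\geq 2$), so the intermediate poset is isomorphic to the proper part $\Pi_k\setminus\{\hat 0,\hat 1\}$ of the partition lattice of $\{1,\dots,k\}$, whose order complex is classically known to be a wedge of $(k-1)!$ copies of $S^{k-3}$, giving $\ti H_{k-3}(\DGH)\neq 0$. In the pure-$U$ case $H=\Un(n_1)\times\cdots\times\Un(n_k)$, the case $k=1$ is immediate since $H=\Un(n)$ is maximal so $\DGH=\emptyset$. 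For $k\geq 2$ the plan is to prove $\DGH\simeq\Sigma\Delta_{\Un(n)/H}$ and then invoke Theorem \ref{thm:RA}: decompose $\DGH$ into the contractible subcomplex of pure-$U$-labelled vertices coned by the apex $\Un(n)$, and the subcomplex of vertices carrying at least one $SO$-block, which is contractible by Quillen's fiber theorem applied to the retraction sending a labelled partition $(P,\tau)$ to the same partition with every label set to $SO$ (landing in $\Pi_k\setminus\{\hat 1\}$, a poset with minimum element and hence contractible order complex); the bridge chains joining these two contractible pieces form a gluing interface that retracts onto $\Delta_{\Un(n)/H}$, so the homotopy pushout yields the suspension. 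The main obstacle is precisely this last identification: the combinatorics of labelled partitions is delicate, and rigorously establishing the suspension structure via Quillen-type arguments while keeping track of the bridge simplices between the pure-$U$ and the at-least-one-$SO$ strata is where the technical work lies.
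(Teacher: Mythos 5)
Your outline gets the easy half right, but be aware that the paper itself does not prove Theorem \ref{thm:RB}: it is quoted from Rau{\ss}e's thesis \cite{Rau}, and the only part proved in the paper is the contractibility of the mixed-type case (Proposition \ref{prop:DGHcontr}). Your ``only if'' argument is essentially that proof: the $\SO$-fication $\kf^*$ of $\h$ is a proper intermediate subalgebra contained in every maximal intermediate subalgebra (these are all of type $\so(2p)\oplus\so(2n-2p)$ adapted to the blocks of $H$, since no $\Un(n)$ can contain an $\SO(2m)$-factor with $m\geq 2$), and then Corollary \ref{cor:LKmax} (equivalently Corollary \ref{cor:gmax}; note $\dim N_G(H)=\dim H$ is automatic here since $\rk H=\rk G$) gives contractibility. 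That direction is fine.

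The genuine gap is in the ``if'' direction, and it comes from your combinatorial model: intermediate subgroups containing the fixed maximal torus correspond to closed subsystems of $D_n$ containing that of $\h$, and these are \emph{not} parametrized by partitions with a single $U/SO$ label per block. For a block of $U$-factors there are in general several distinct $U$-type overgroups, corresponding to the inequivalent choices of compatible complex structure (sign vectors up to a global sign), and $\SO(2)=\Un(1)$ factors can be absorbed into $U$-blocks. This has two consequences. First, in the pure-$\SO$ case the theorem allows $n_i=1$, and then the poset is strictly larger than the proper part of the partition lattice $\Pi_k$ (e.g.\ for $H=\SO(2)\times\SO(2)\times\SO(4)\subset\SO(8)$ the two $\Un(2)$'s over the torus factors are intermediate subgroups your model does not see), so the classical $\Pi_k$ computation does not apply as stated. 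Second, and more seriously, in the pure-$U$ case the ``pure-$U$ stratum'' is not a cone over a single apex $\Un(n)$: there are $2^{k-1}$ maximal $U$-type subgroups containing $H$, pairwise not comparable. Already for $k=2$ with $n_1,n_2\geq 2$ the complex $\DGH$ consists of three contractible components (the two $\Un(n)$'s with opposite relative complex structure, and the cone under $\SO(2n_1)\times\SO(2n_2)$ over $\SO(2n_1)\times\Un(n_2)$ and $\Un(n_1)\times\SO(2n_2)$), i.e.\ a wedge of two $0$-spheres, whereas your proposed model $\Sigma\Delta_{\Un(n)/H}=\Sigma\,\emptyset=S^0$ has only one reduced homology class; so the claimed suspension equivalence is false, not merely unproven. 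The conclusion $\ti H_{k-2}(\DGH)\neq 0$ is still consistent with these examples, but the route you sketch would not establish it: the pure-$U$ case, which is the heart of the first assertion, requires the sign/root-subsystem bookkeeping that is exactly the content of \cite{Rau}, and it is missing here.
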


In certain cases the spaces of the first kind admit a homogeneous K\"ahler-Einstein metric: see \cite[8.113]{Bes};
here the embeddings of ${\rm U}(n_i)$ into ${\rm SO}(2n_i)$ are important ($1 \leq i \leq k$). The spaces of
the second kind do not admit  homogeneous K\"ahler-Einstein metrics unless $n_1=...=n_k=1$,  
but the spaces with 
$H= {\rm U}(n_1)\times \cdots \times {\rm U}(n_{k-1}) \times {\rm SO}(2n_k)$ do (see  \cite[8.113]{Bes}).
When $k \geq 2$ and $n_1,n_k>1$, this is an example where $H$ has mixed type.

\begin{theorem}[\cite{Rau}]\label{thm:RC}
Let $G={\rm SO}(2n+1)$, $n\geq 2$ and let $H$ be a compact subgroup of $G$ with $\rk G=\rk H$.
Then $\Delta_{G/H}$ is non-contractible if and only if
$$
  H= {\rm SO}(2n_1)\times \cdots  \times {\rm SO}(2n_k)
	\quad \textrm{ or }\quad
	H= {\rm SO}(2n_1)\times \cdots  \times {\rm SO}(2n_{k-1})\times {\rm SO}(2n_k+1) 
$$
where $\sum_{i=1}^k n_i=n$, $n_1,...,n_k\geq 1$,
 $k \geq 1$ in  the first case
and $k\geq 2$ in the second. Moreover,
in the first case we have $\ti H_{k-2}(\DGH)\neq 0$ and in  the second $\ti H_{k-3}(\DGH)\neq 0$. 
\end{theorem}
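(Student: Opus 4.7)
The plan is to combine the Borel--de Siebenthal classification of equal-rank subgroups of $\SO(2n+1)$ with the combinatorial methods used in the proofs of Theorems \ref{thm:RA} and \ref{thm:RB}. Using Lemma \ref{lem:rsimhomsim} we may replace $\Delta_{G/H}$ by the subcomplex $\Deltaf_{G/H}$ generated by minimal intermediate subalgebras. By Borel--de Siebenthal, writing $H = G_1 \times \cdots \times G_k$ with each $G_i$ of type $\SO(2m_i)$, $\SO(2m_i+1)$, or $\Un(m_i)$ (and at most one odd-orthogonal factor in $G = \SO(2n+1)$), the proper equal-rank subgroups $H < K < G$ are obtained by repeatedly merging some blocks of $H$ into a single allowed block; the minimal ones above $H$ come from merging exactly two such blocks.

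For the ``if'' direction I would construct an explicit spherical subcomplex of $\Deltaf_{G/H}$ carrying a top-dimensional cycle. In Case (a), with $H = \SO(2n_1) \times \cdots \times \SO(2n_k)$, for each proper non-empty $I \subsetneq \{1,\dots,k\}$ set
$$
  \kf_I = \so\Bigl(1 + 2\sum_{i \in I} n_i\Bigr) \oplus \bigoplus_{j \notin I} \so(2n_j),
$$
a proper intermediate subalgebra with $\kf_I \subsetneq \kf_J$ iff $I \subsetneq J$; the full set $I = \{1,\dots,k\}$ would give $\g$ and is excluded. The chains $\kf_{I_1} < \cdots < \kf_{I_r}$ form a subcomplex of $\Deltaf_{G/H}$ homeomorphic to the order complex of proper non-empty subsets of $\{1,\dots,k\}$, i.e.\ to the barycentric subdivision of $\partial \Delta^{k-1} \cong S^{k-2}$. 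The degenerate case $k = 1$ gives $H = \SO(2n)$ maximal in $G$, so $\Deltaf_{G/H} = \emp$ and $\ti H_{-1}(\emp) \neq 0$ by convention. Case (b), with one odd factor $\SO(2n_k+1)$ and $k \geq 2$, is treated analogously with the $k-1$ minimal mergers $\kf_i = \so(2n_i + 2n_k + 1) \oplus \bigoplus_{j \neq i,\, j \neq k} \so(2n_j)$ of an even block with the unique odd one, yielding a sphere $S^{k-3}$ (and the empty complex when $k=2$).

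For the ``only if'' direction I argue the contrapositive. Suppose $H$ is not of the listed forms; then $H$ contains at least one $\Un(m_i)$ factor. Since in type $B_n$ any unitary subalgebra sits inside a unique minimal even-orthogonal block, I would show that the intermediate subalgebra $\kf^\star$ obtained by replacing every $\Un(m_i)$ factor of $\h$ by the smallest $\so(2m_i)$ block containing it (and leaving the remaining factors alone) is contained in every maximal proper intermediate subalgebra of $\g$ above $\h$. Borel--de Siebenthal block-merging cannot dissolve this $\so(2M)$ block, since only $\Un$ blocks can be ``split off'' and all our $\Un$ factors have been absorbed into it. Hence $\kf^\star$ satisfies the hypothesis of Corollary \ref{cor:LKmax}, so $\Delta_{G/H}$ is contractible, contradicting the assumption.

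The main obstacle is twofold. First, in the ``if'' direction one must verify that the spherical subcomplex constructed above carries non-trivial homology in the whole $\Deltaf_{G/H}$ rather than merely bounding some larger filled-in region; this requires either an explicit retraction onto the sphere, or an application of Lemma \ref{lem:example} after locating a top-dimensional open cell of the sphere whose star in $\Deltaf_{G/H}$ is also top-dimensional. Second, in the ``only if'' direction the universal-subordinacy claim for $\kf^\star$ must be checked across all mixed configurations of $\Un$, $\SO(2m)$, and $\SO(2m+1)$ factors; this is essentially a lattice statement about sub-root-systems of $B_n$ preserving a fixed partition, and the cleanest route is induction on $k$ combined with a ``suspension'' reduction relating $\Delta_{\SO(2n+1)/H}$ to $\Delta_{\SO(2n)/H^\prime}$. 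Such a reduction would simultaneously explain the uniform degree shift of $+1$ in the homology claims relative to Theorem \ref{thm:RB}.
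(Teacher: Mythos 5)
First, a point of reference: the paper itself does not prove Theorem \ref{thm:RC} -- it is quoted from Rau\ss e's thesis \cite{Rau} -- and the only piece proved here is the contractibility half, Proposition \ref{prop:DGHcontr}. Your ``only if'' direction is essentially that argument: your $\kf^\star$ is exactly the subgroup $K$ used there. To make your sketch precise you need two small corrections: the obstruction is a $\Un(m_i)$ factor with $m_i\geq 2$ (since $\Un(1)=\SO(2)$ occurs in the allowed list), and the reason $\kf^\star$ lies in every maximal intermediate subalgebra is not that ``$\Un$ blocks can be split off'' but that every maximal equal-rank subalgebra $\lf$ with $\h<\lf<\g$ has the form $\so(2p)\oplus\so(2(n-p)+1)$ adapted to the coordinates, so the coordinate set of each $\Un(m_i)$ factor (whose root system connects all its coordinates) lies entirely in one of the two parts, whence $\so(2m_i)\subset \lf$; then Corollary \ref{cor:LKmax} applies. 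That half of your plan is sound and coincides with the paper's.

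The genuine gap is in the ``if'' direction, and it is precisely the step you defer. Your odd-merger family $\{\kf_I\}$ does form the order complex of proper nonempty subsets of $\{1,\dots,k\}$, hence an $S^{k-2}$ (resp.\ $S^{k-3}$ in case (b)), but when some $n_i=1$ the complex $\DGH$ contains further vertices (unitary subalgebras supported on the toral $\so(2)$-coordinates) and chains longer than $k-1$, so your sphere is not automatically top-dimensional and could a priori bound. The route through Lemma \ref{lem:example} does work, but only after you verify that a maximal chain $\kf_{I_1}<\cdots<\kf_{I_{k-1}}$, $|I_j|=j$, is a \emph{maximal} simplex of $\DGH$, i.e.\ admits no refinement by any intermediate subalgebra; this is a closed-sub-root-system computation in $B_n$: there is nothing strictly between $\prod_j D_{n_j}$ and $B_{n_{i_1}}\times\prod_{j\neq i_1}D_{n_j}$, nothing strictly between $B_N\times D_m$ (times fixed blocks) and $B_{N+m}$ (two $B$-blocks are never closed, and adjoining any cross root or short root generates everything), and $\so(2(n-n_{i_k})+1)\oplus\so(2n_{i_k})$ is maximal in $\g$. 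With that done, the open star of a top cell of the sphere lies in the sphere and Lemma \ref{lem:example} yields $\ti H_{k-2}(\DGH)\neq 0$ (similarly in case (b)); without it, your proposal asserts rather than proves non-contractibility. The suggested suspension reduction to Theorem \ref{thm:RB} is a plausible alternative but is likewise unproved and becomes unnecessary once the star computation is carried out.
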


Except for the Grassmannians, these spaces do not admit homogeneous K\"ahler-Einstein metrics:
see \cite[8.115]{Bes}.

\begin{remark}\label{rem:mixtype}
In the situation of Theorem \ref{thm:RC} we declare the subgroups
$$
  H= {\rm U}(n_1)\times \cdots \times {\rm U}(n_j) \times {\rm SO}(2) \times \cdots \times {\rm SO}(2),
$$	
where $n=\sum_{i=1}^k n_i =n$, $1 \leq j \leq  k$, $n_1,...,n_j \geq 2$ to be of mixed type, e.g. 
$H={\rm U}(n)$ or $H={\rm U}(n-1){\rm SO}(2)$, $n \geq 3$ are of mixed type.
\end{remark}

\begin{theorem}[\cite{Rau}]\label{thm:RD}
Let $G={\rm Sp}(n)$, $n\geq 2$ and let $H$ be a compact subgroup of $G$ with $\rk G=\rk H$.
Then $\Delta_{G/H}$ is non-contractible if and only if
$$
  H= {\rm U}(n_1)\times \cdots \times {\rm U}(n_k)
	\quad \textrm{ or }\quad
	H= {\rm Sp}(n_1)\times \cdots  \times {\rm Sp}(n_k)
$$
where $\sum_{i=1}^k n_i=n$, $n_1,...,n_k\geq 1$,
 $k \geq 1$ in  the first case
and $k\geq 2$ in the second. Moreover,
in the first case we have $\ti H_{k-2}(\DGH)\neq 0$ and in  the second $\ti H_{k-3}(\DGH)\neq 0$. 
\end{theorem}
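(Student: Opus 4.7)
The plan is to follow the same strategy employed in Theorem \ref{thm:RB} for $\SO(2n)$, adapted to the Borel--de Siebenthal classification of equal-rank subgroups of $\Sp(n)$. The connected maximal-rank subgroups of $\Sp(n)$ are precisely the products of factors $\Sp(p_\alpha)$ and $\Un(q_\beta)$ with $\sum p_\alpha + \sum q_\beta = n$. Since $\Sp(m)$ is never contained in any unitary subgroup (rank considerations), the inclusion $H\subset K$ of two such subgroups imposes a block-compatibility constraint: a $\Un$-labelled block of $K$ may only absorb $\Un$-type factors of $H$. In particular, the maximal proper intermediate subgroups containing $H$ are precisely the $\Sp(p)\times \Sp(n-p)$'s for the admissible bi-partitions of $H$'s factors, together with $\Un(n)$ itself in the single case that every factor of $H$ is of $\Un$-type.

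For the $\Sp$-product case $H=\Sp(n_1)\times \cdots \times \Sp(n_k)$, the block-compatibility constraint forces every intermediate $K$ to be of the form $\Sp(m_1)\times\cdots\times \Sp(m_s)$ with $(m_1,\dots,m_s)$ a proper coarsening of $(n_1,\dots,n_k)$. Thus $\DGH$ is exactly the order complex of the proper part of the partition lattice $\Pi_k$, which by Bj\"orner's classical theorem is homotopy equivalent to a wedge of $(k-1)!$ spheres of dimension $k-3$. Hence $\tilde H_{k-3}(\DGH)\neq 0$. For the $\Un$-product case $H=\Un(n_1)\times\cdots\times \Un(n_k)$, intermediate subgroups are pairs $(\pi,L)$ of a partition $\pi$ of $\{1,\dots,k\}$ together with a subset $L$ of its blocks labelled ``promoted to $\Sp$'' (excluding the extremes corresponding to $H$ and $G$). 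I would identify the order complex of this poset, up to homotopy, with the join $\Delta_{\Pi_k}'\join S^0$, where the $S^0$ encodes the $\Un/\Sp$ dichotomy at the top block. Milnor's formula \eqref{eqn:Htildejoin} then yields $\tilde H_{k-2}(\DGH)\neq 0$. As a sanity check: for $k=1$ the complex is empty (non-contractible by convention); for $k=2$ it consists of one isolated vertex $\Un(n)$ plus a contractible ``V'' joining $\Sp(n_1)\Un(n_2)$, $\Un(n_1)\Sp(n_2)$ to $\Sp(n_1)\Sp(n_2)$, so $\tilde H_0\neq 0$; for $k=3$ one retracts onto a wedge of two circles.

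For the converse direction, suppose $H$ is equal-rank but of neither form above: then $H$ has \emph{mixed type}, with at least one $\Un$-factor and at least one $\Sp$-factor. In this case $H \not\subset \Un(n)$, so every maximal intermediate subgroup has the form $\Sp(p)\times \Sp(n-p)$. The canonical promotion $\widetilde K := \Sp(\tilde n_1)\times\cdots\times\Sp(\tilde n_r)$, obtained from $H$ by replacing every $\Un(n_i)$-factor by the corresponding $\Sp(n_i)$, is itself an intermediate subgroup, and it is contained in every such maximal $\Sp(p)\times\Sp(n-p)$ (since any bi-partition of $H$'s factors gives an $\Sp$-product that coarsens $\widetilde K$). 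By Corollary \ref{cor:LKmax} this makes $\DGH$ contractible.

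The main obstacle will be Step 2: rigorously identifying the order complex of the labelled partition poset with the iterated join $\Delta_{\Pi_k}'\join S^0$. The $\Un/\Sp$ label on the top block does not act uniformly on the whole poset, interacting subtly with simultaneous coarsening and promotion lower down, so one must produce either a carefully chosen discrete Morse matching or an explicit homotopy equivalence, as carried out by Rau{\ss}e in \cite{Rau}. Steps 1 and 3 are essentially formal consequences of Borel--de Siebenthal plus the tools already established in Corollary \ref{cor:LKmax}; it is only this labelled-poset identification that carries the real combinatorial content.
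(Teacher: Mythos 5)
Your Step 3 — promoting every $\Un(n_i)$-factor of a mixed-type $H$ to $\Sp(n_i)$ to produce a $\widetilde K$ sitting below all maximal intermediates and then invoking Corollary~\ref{cor:LKmax} — is exactly the paper's argument in Proposition~\ref{prop:DGHcontr} (the $\Sp(n)$ case is the last paragraph of that proof), so the contractibility half of the theorem matches the paper.

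For the non-contractibility half, the paper itself gives no proof; Theorem~\ref{thm:RD} is simply cited from Rau\ss e~\cite{Rau}, so there is nothing in the source to compare against. Your Step~1 (the all-$\Sp$ case) is a complete and sound argument: once one knows the intermediate equal-rank subgroups are precisely the $\Sp$-coarsenings, $\DGH$ is the order complex of the proper part of $\Pi_k$, and Bj\"orner's theorem gives $\widetilde H_{k-3}\neq 0$. Two small remarks. First, the justification ``rank considerations'' for $\Sp(m)\not\subset\Un(q)$ is the wrong reason (for $m\leq q$ ranks do not obstruct anything); what rules it out is that the long roots $\pm 2e_i$ of a $C$-type block are never roots of an $A$-type block in the common Cartan — the argument is root-theoretic, not rank-theoretic. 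Second, and more importantly, Step~2 (the all-$\Un$ case) contains a genuine gap that you yourself flag: the order complex of the poset of labelled partitions, with coverings given by merging two like-labelled blocks or promoting a single $\Un$-block, has top chains of length $2k-1$, so the naive geometric realization has dimension $2k-3$, not $k-2$. Identifying its \emph{homotopy type} with (a wedge of spheres coming from) the suspension of $\Delta(\Pi_k^{\text{proper}})$ therefore genuinely requires a collapse — a discrete Morse matching or an EQ/CL-shellability argument or a closure/fiber lemma \`a la Quillen that eliminates the ``promote-then-merge'' redundancies. Your $k=2$ and $k=3$ sanity checks are consistent with the claim, but they do not constitute a proof, and the ``$S^0$ encodes the top-block label'' heuristic breaks down already at $k=3$, where $\Un(n)$ is not an isolated vertex but sits above the $\Un$-coarsenings. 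So: the architecture is right and agrees with \cite{Rau} in spirit, but Step~2 as written is a sketch, not a proof.
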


These spaces do not admit homogeneous K\"ahler-Einstein metrics except for ${\rm Sp}(n)/{\rm U}(n)$:
see \cite[8.115]{Bes}.

\begin{proposition}[\cite{Rau}]\label{prop:DGHcontr}
Let $G$ be one of the classical Lie groups and let $H$ be a subgroup of $G$
with $\rk G=\rk H$ of mixed type. Then $\DGH$ is contractible.
\end{proposition}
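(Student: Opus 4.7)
The plan is to apply Corollary \ref{cor:LKmax}: for each classical group $G$ and each mixed-type subgroup $H$, I will exhibit a proper intermediate subalgebra $\kf \in \Sub$ with $\h < \kf < \g$ that is contained in every maximal proper subalgebra $\lf \in \Sub$ above $\h$. This makes $\DGH$ a simplicial cone with apex the vertex of $\kf$, hence contractible. (In some cases it is more convenient to apply the dual statement, Corollary \ref{cor:gmax}, by exhibiting a $\kf$ that contains every minimal non-toral subalgebra above $\h$.) The first trivial observation is that $G = \SU(n)$ never occurs: Theorem \ref{thm:RA} shows every maximal-rank subgroup of $\SU(n)$ is of ``pure'' $U$-type, so no mixed type exists, and there is nothing to prove.

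For $G = \SO(2n+1)$, the definition of mixed type in Remark \ref{rem:mixtype} forces $H \subset \SO(2n) \subsetneq \SO(2n+1)$, so I would take $\kf = \so(2n)$. It then remains to show that every maximal proper intermediate subalgebra $\lf$ with $\h < \lf < \so(2n+1)$ satisfies $\so(2n) \leq \lf$. Using Dynkin's classification, the candidates for $\lf$ are of the form $\so(2m) \oplus \so(2n+1-2m)$ or arise from irreducible embeddings. Since all simple factors of $H$ sit in even-dimensional coordinate blocks (each $U(n_i)$ requires a $2n_i$-dimensional real ambient block and each $SO(2)$ is two-dimensional), a parity/dimension argument forces an $\SO$-splitting to have $2m = 2n$, yielding $\lf = \so(2n)$. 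The irreducible cases are excluded because $H$ has a non-trivial abelian centre sitting inside the standard Cartan of $\so(2n)$.

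For $G = \SO(2n)$ or $G = \Sp(n)$ with $H$ mixed, I would proceed by a finer case analysis on the type-mixing pattern of $H$. Write $\h = \h_U \oplus \h_V$ where $\h_U$ is the sum of the $U(n_i)$-factors and $\h_V$ is the sum of the $\so$- (resp.~$\sy$-) factors of $H$. The natural candidate is
\[
  \kf \,:=\, \h_U \,\oplus\, \so\bigl(2\textstyle\sum_{i=j+1}^{k} n_i\bigr)
  \qquad\bigl(\text{resp. }\kf \,:=\,\h_U \,\oplus\, \sy\bigl(\textstyle\sum_{i=j+1}^{k} n_i\bigr)\bigr),
\]
i.e.\ we merge all $\so$- (resp.\ $\sy$-) factors of $H$ into a single block while leaving the $\u$-factors untouched. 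Using the classification of maximal subgroups of $\SO(2n)$ and $\Sp(n)$ and analysing the isotypical decomposition of $\g/\h$, I would verify that any maximal proper $\lf$ containing $\h$ must respect the partition separating the $U$-block from the $\so$-block ($\sy$-block), since the pure $U$-type maximal subgroups $U(n) \subset \SO(2n)$ or $U(n) \subset \Sp(n)$ cannot contain a non-trivial $\so(2n_i)$ or $\sy(n_i)$ factor with $n_i \geq 2$, and conversely the pure $\SO$-splittings $\so(2m)\oplus\so(2n-2m)$ (respectively $\sy(m)\oplus\sy(n-m)$) must group together the $U$-factors inside one summand (of size $\geq 2\sum_{i \leq j} n_i$). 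Both possibilities contain $\kf$.

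The main obstacle is the bookkeeping in this last verification: several distinct families of maximal intermediate subalgebras can simultaneously contain $\h$, and one must confirm that each one contains the chosen $\kf$. This reduces to a parity and isotypical-decomposition argument using Dynkin's classification of maximal subalgebras of the classical Lie algebras, together with the observation that $\h_U$ and $\h_V$ act with inequivalent isotropy summands on $\g/\h$, forcing any intermediate subalgebra to be built block-diagonally from $\h_U$- and $\h_V$-compatible pieces. Once this structural rigidity is established, the inclusion $\kf \leq \lf$ is automatic and Corollary \ref{cor:LKmax} delivers the contractibility of $\DGH$.
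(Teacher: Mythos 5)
Your overall strategy matches the paper's: exhibit an intermediate subalgebra $\kf$ lying inside every maximal $\lf\in\Sub$ and invoke the cone criterion (the paper phrases it via Corollary~\ref{cor:gmax}, you via the equivalent Corollary~\ref{cor:LKmax}). However, your specific choice of $\kf$ is wrong in every non-trivial case. For $G=\SO(2n)$ (and analogously $\Sp(n)$) you take $\kf = \h_U \oplus \so\bigl(2\sum_{i>j} n_i\bigr)$, merging the non-$U$ blocks into one. But a maximal intermediate subalgebra $\lf = \so(2p)\oplus\so(2(n-p))$ containing $\h$ is obtained from \emph{any} partition of the index set $\{1,\dots,k\}$ into two groups, and if that partition separates two of the indices $i>j$, your merged $\so$-factor straddles the split and $\kf\not\leq\lf$. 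Concretely, take $\h = \u(2)\oplus\u(2)\oplus\so(4)\oplus\so(4)\subset\so(16)$ and $\lf = \so(8)\oplus\so(8)$ with blocks $\{1,3\}$ and $\{2,4\}$: then $\lf$ is maximal and contains $\h$, but your $\kf = \u(2)\oplus\u(2)\oplus\so(8)$ (merging blocks $3,4$) does not sit in $\lf$. The correct choice, which is what the paper uses, keeps the block partition intact and instead upgrades each $\u(n_i)$ to $\so(2n_i)$ (resp.\ $\sy(n_i)$), i.e.\ $K=\so(2n_1)\oplus\cdots\oplus\so(2n_k)$ (resp.\ $\sy(n_1)\oplus\cdots\oplus\sy(n_k)$); since any $\lf=\so(2p)\oplus\so(2(n-p))$ above $\h$ must group whole blocks, it automatically contains this $K$, and the hypothesis $n_k>1$ rules out $\lf\cong\u(n)$.

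Your treatment of $G=\SO(2n+1)$ fails for the same structural reason and has an additional omission. Taking $\kf=\so(2n)$ does not work: for $\h = \u(2)\times\so(2)\subset\so(7)$, the subalgebra $\lf=\so(4)\oplus\so(3)$ is maximal, contains $\h$, but does not contain $\so(6)$. Your parity claim that the $\SO$-splitting is forced to be $\so(2n)\oplus\so(1)$ is simply false --- any block-respecting partition gives a valid maximal $\lf$. Moreover, mixed type in $\SO(2n+1)$ does \emph{not} force $H\subset\SO(2n)$: the paper's proof explicitly has a second case $H=\Un(n_1)\times\cdots\times\Un(n_j)\times\SO(2n_{j+1})\times\cdots\times\SO(2n_k+1)$ in which $H$ does not fix a vector, and you do not address it. The fix is again the paper's: set $K=\so(2n_1)\oplus\cdots\oplus\so(2n_k)$ (or $\so(2n_1)\oplus\cdots\oplus\so(2n_k+1)$ in the second case) and check block-compatibility with each maximal~$\lf$.
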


\begin{proof}
The claim follows from Corollary \ref{cor:gmax} once we show that there exists a subgroup $K$ 
such that for every maximal $L < G$ containing $H$, we have $H<K \leq L$. Since the intermediate subgroups $L$ must contain $H$,
they must be adapted to the block decomposition dictated by $H$. 

If $G={\rm SU}(n)$, $n\geq 1$, then there exists no such $H$ of mixed type.

If $G={\rm SO}(2n)$, $n\geq 3$, then it follows $n \geq 4$ and 
$$
 H= {\rm U}(n_1)\times \cdots \times {\rm U}(n_j)
 \times {\rm SO}(2n_{j+1})\times \cdots  \times {\rm SO}(2n_k),
$$
where $k \geq 2$, 
$n=\sum_{i=1}^k n_i =n$, $1 \leq j < k$, $n_1,...,n_j \geq 2$, and $n_k>1$, while (if $k > j+1$), 
each $n_{j+1},...,n_{k-1}\geq 1$ 
(after renumbering possibly). We set
$$
 K :={\rm SO}(2n_1)\times \cdots \times {\rm SO}(2n_j)
 \times {\rm SO}(2n_{j+1})\times \cdots  \times {\rm SO}(2n_k)\,.
$$
Now let $L$  be a maximal subalgebra of $G$ with $H <L$. Then 
we know, since $n_1,n_k>1$, that $L={\rm SO}(2p)\times {\rm SO}(2(n-p))$ for some $n_1 \leq p \leq n-n_k$  
(see Section \ref{sec:eq} and \cite[Table 5.1]{Rau}). 
The condition $n_k>1$ prevents $L \cong {\rm U}(n)$. Since $H < L$ we deduce $K \leq L$.

If $G={\rm SO}(2n+1)$, $n\geq 2$, there are two possibilities. One is
$$
  H= {\rm U}(n_1)\times \cdots \times {\rm U}(n_j) \times {\rm SO}(2n_{j+1}) \times \cdots \times {\rm SO}(2n_k),
$$	
	where $n=\sum_{i=1}^k n_i =n$, $1 \leq j \leq  k$, $n_1,...,n_j \geq 2$ and  $n_{j+1},...,n_{k}\geq 1$.
 We again set 
$$
 K :={\rm SO}(2n_1)\times \cdots \times {\rm SO}(2n_j)
 \times {\rm SO}(2n_{j+1})\times \cdots  \times {\rm SO}(2n_k)\,.
$$	
As above the claim follows in this case. The other possibility is
$$	
	H= {\rm U}(n_1)\times 
		 \cdots \times {\rm U}(n_j) \times {\rm SO}(2n_{j+1}) \times
		\cdots \times {\rm SO}(2n_k+1), 
$$
	where $n=\sum_{i=1}^k n_i =n$, $1 \leq j <  k$, $n_1,...,n_j \geq 2$ and  $n_{j+1},...,n_{k}\geq 1$.
We set 
$$
 K :={\rm SO}(2n_1)\times \cdots \times {\rm SO}(2n_j)
 \times {\rm SO}(2n_{j+1})\times \cdots  \times {\rm SO}(2n_k+1)
$$	
and the claim follows as above.

If $G={\rm Sp}(n)$, $n\geq 2$ then
$$
 H= {\rm U}(n_1)\times \cdots \times {\rm U}(n_j)
	\times {\rm Sp}(n_{j+1})\times \cdots  \times {\rm Sp}(n_k),
$$
where $k\geq 2$, $\sum_{i=1}^k n_i=n$, $1 \leq j<k$, $n_1,...,n_k\geq 1$. 
We set 
$$
 K :={\rm Sp}(n_1)\times \cdots \times {\rm Sp}(n_j)
 \times {\rm Sp}(n_{j+1})\times \cdots  \times {\rm Sp}(n_k)
$$	
and the claim follows as above. 
\end{proof}

\begin{remark}
The above results of Rau\ss e can be summarized as follows. For a classical 
simple group $G$ and a homogeneous space $G/H$ for which $\rk G=\rk H$, the simplicial complex $\DGH$ is non-contractible if and only if $H$ has simple factors of the same type: see Remark \ref{rem:mixtype}.
For exceptional groups the situation is much more complicated: see \cite{Rau}. For instance, it should be possible
to find compact homogeneous spaces $G/H$ such that $\ti H_q(\DGH)\neq 0$ for
two different $q \geq 0$. So far we know of no examples of this type.
\end{remark}


\section{Miscellaneous}\label{sec:miscell}

\subsection{{\it An approach to non-existence}}\label{sec:eq}

In this section we describe examples of compact homogeneous spaces $G/H$ 
with contractible simplicial complex $\DGH$ (of equal rank) which admit or do not admit
$G$-invariant Einstein metrics. We also indicate how purely Lie theoretic assumptions
can help to consider certain combinations of the Einstein equation,
which essentially can be written as a sum of quadratic polynomials.

We will assume from now on that the isotropy representation of $G/H$
$$
  \m=\m_1 \oplus \cdots \oplus \m_\ell
$$
decomposes into isotypical summands and that $G/H$ has finite fundamental group.
For example, when  $\rk G =\rk H$ this is the case.
Under this assumption, any metric $g \in \MG$ is uniquely determined by positive numbers
$x_1,...,x_\ell>0$ with $g\vert_{m_i}=x_i \cdot Q\vert_{m_i}$, $i=1,...,\ell$, where $Q$ denotes our
background metric on $G/H$. In this case the eigenvalues $r_i$ of the Ricci tensor
$\Ric(g)\vert_{\m_i}$ are given  by
\begin{eqnarray}\label{eqn:einstein}
  r_i =\tfrac{b_i}{2x_i}-\tfrac{1}{2d_i}\sum_{j,k=1}^\ell [ijk] \tfrac{x_j}{x_ix_k}
	  +\tfrac{1}{4d_i}\sum_{j,k=1}^\ell [ijk] \tfrac{x_i}{x_jx_k}
\end{eqnarray}
for $i=1,...,\ell$: see e.g. \cite{WZ2}. Recall, the definitions of the
structure constants $[ijk]\geq 0$ and $b_i \geq 0$ can be found in Section \ref{sec:scal}.
Recall $[ijk]$ is invariant under permutation of $i,j,k$.

If $G/H$ is isotropy irreducible, that is if $\ell=1$, then
by Schur's Lemma, up to scaling, there exists only
one symmetric $G$-invariant bilinear form on $G/H$. Consequently,
each $G$-invariant metric is Einstein.

If the isotropy representation $\m$ can be decomposed into
two irreducible, inequivalent summands,
 the Einstein equation with unknowns $x_1,x_2>0$ and positive Einstein constant
$\lambda>0$ is given by
\beg
  \big(\tfrac{b_1}{2}-\tfrac{[111]}{4d_1}-\tfrac{[122]}{2d_1}\big)\cdot \tfrac{1}{x_1}
  -\tfrac{[112]}{2d_1}\cdot \tfrac{x_2}{x_1^2}
  +\tfrac{[122]}{4d_1}\cdot \tfrac{x_1}{x_2^2} &=&\lambda\\
 \big(\tfrac{b_2}{2}-\tfrac{[222]}{4d_2}-\tfrac{[112]}{2d_2}\big)\cdot \tfrac{1}{x_2}
  -\tfrac{[122]}{2d_2}\cdot \tfrac{x_1}{x_2^2}
  +\tfrac{[112]}{4d_2}\cdot \tfrac{x_2}{x_1^2} &=&\lambda \,.
\en
If $[112],\,[221]>0$, then $\h$ is a maximal subalgebra of $\g$, hence
there exists at least one solution since $\DGH= \emptyset$.

Next we consider the case when $\kf=\h\oplus \m_1$ is a subalgebra of $\g$, and thus $[112]=0$.
If $[122]=0$ as well, then there exists a solution as well, since $\DGH=\{\kf, \lf\}$ is non-contractible.
In fact this tells us $G/H$ is (on Lie algebra level) a product of isotropy irreducible homogeneous spaces.

Hence we may assume that $[112]=0$ and $[122]>0$. If $\kf$ is a toral subalgebra,
then again there exists a solution since $\DGH=\emptyset$.
Thus we are left with the case that $\kf$ is a non-toral subalgebra and $[221]>0$. 
Note that $\DGH=\{\kf\}$ is contractible in this case. By \cite{WZ2}, 
the above system admits  a solution if and only if
\begn
D:=
  \big(b_2-\tfrac{[222]}{2d_2}\big)^2
-4\big(b_1-\tfrac{[111]}{2d_1}-\tfrac{[122]}{d_1} \big) [122]
 \big(\tfrac{1}{2d_1}+\tfrac{1}{d_2}\big)
    \geq 0\,. \label{m2}
\enn
Thus in the two summand case, when $\DGH$ is contractible,
the algebraic invariant $D$ determines whether 
there exists a $G$-invariant Einstein metric on $G/H$.

\begin{remark}
For $G$ simple, the two summand case is classified, see \cite{DK}, \cite{He}.
We indicate here briefly how to achieve this classification
in case $\rk G=\rk H$. For a compact, simple Lie group $G$, the maximal subgroups $K$ 
with $\rk K=\rk G$  are well known, see e.g. \cite{WZ1}.
\begin{itemize} 
\item $G=\SU(n)$: $K=S({\rm U}(k){\rm U}(n-k))$, $1 \leq k \leq \lfloor\frac n2\rfloor$
\item $G=\SO(2n+1)$: $K=\SO(2n)$ or $K=\SO(2k)\SO(2n-2k+1)$, $1 \leq k < n$ 
\item $G=\Sp(n)$: $K={\rm U}(n)$ or  $K=\Sp(k)\Sp(n-k)$, $1\leq k \leq \lfloor\frac n2\rfloor$
\item $G=\SO(2n)$: $K={\rm U}(n)$ or $K=\SO(2k)\SO(2n-2k)$, $1 \leq k \leq \lfloor\frac n2\rfloor$  
\item $G=G_2$: $K=\SU(3),~\SO(4)$
\item $G=F_4$: $K=\Spin(9),~\Sp(1)\Sp(3),~(\SU(3))^2$ 
\item $G=E_6$: $K=\SU(6)\Sp(1),~(\SU(3))^3,~\Spin(10){\rm U}(1)$
\item $G=E_7$: $K=E_6 \SO(2),~\SU(8),~\Spin(12)\Sp(1),~\SU(3)\SU(6)$ 
\item $G=E_8$: $K=\SU(9),~\Spin(16),~(\SU(5))^2,~E_6\SU(3),~E_7\Sp(1)$. 
\end{itemize}
Now if $G/K$ is isotropy irreducible then  $K$ is a maximal subgroup in $G$ and we look for a maximal subgroup $H$ in $K$ such that not only is $K/H$ isotropy irreducible but also the isotropy representation of $K$ on $G/K$, when restricted to $H<K$, remains irreducible as a representation of $H$. There exist 
two infinite families and 11 more examples \cite{DK},\cite{He}, 
all admitting two $G$-homogeneous Einstein metrics. There are 
two spaces  which do not admit $G$-invariant Einstein metrics:
$E_7/\Spin(6)\Spin(6)\Sp(1)$ ($K=\Spin(12)\Sp(1)$) and
$E_8/ \Sp(1)\times \SU(8)$ ($K=\Sp(1)\times E_7$).

There is exactly one homogeneous space $G/K$ where $G$ is compact, connected, simple, and $K$ is a maximal subgroup of maximal rank in $G$ which is {\em not} isotropy irreducible \cite[Ex.5]{WZ1}: $G/H=E_8/ \SU(5)\SU(5)$. 
The isotropy representation has two irreducible modules, and since they are equivalent by an outer automorphism of 
$E_8$, the  space $E_8/\SU(5)\SU(5)$ admits a normal $G$-invariant Einstein metric \cite{WZ1}. 
\end{remark}

\begin{remark}
In the three summands case we comment on a possible classification in the equal rank case.
When $G$ is a classical compact Lie group, see Section \ref{sec:examples},  
Theorems  \ref{thm:RA}, \ref{thm:RB}, \ref{thm:RC}, \ref{thm:RD}, we have the following infinite families of homogeneous spaces whose isotropy representation has three irreducible summands (all admitting homogeneous Einstein
metrics):
\begin{itemize}
\item $G=\SU(n)$: $H=S({\rm U}(k_1){\rm U}(k_2){\rm U}(k_3))$ \quad ($k_1+k_2+k_3=n$, $k_i \geq 1$) 
\item $G=\SO(n)$: $H=\SO(2k_1)\SO(2k_2)\SO(m)$  \quad ($2k_1+2k_2+m=n$, $m\geq 3$, $k_i \geq 1$) 
\item $G=\Sp(n)$: $H=\Sp(k_1)\Sp(k_2)\Sp(k_3)$  \quad ($k_1+k_2+k_3=n$, $k_i \geq 1$). 
\end{itemize}
For the exceptional Lie groups we will only present the examples with 
contractible $\DGH$. Due to  Rau\ss e \cite{Rau} there 
exist only three such spaces: 
\begin{itemize}
\item
$E_7/\Spin(6)\Spin(6)\Sp(1)$, $H < K=\Spin(12)\Sp(1)$
\item 
$E_8/\Spin(10)\Spin(6)$, $H < K=\Spin(16)$   
\item
$E_8/\Spin(12)\SO(2)\Sp(1) = E_8/\Spin(12){\rm U}(2)$, $H< K_1=\Spin(12)\Spin(4),~K_2=E_7\SO(2)$ while $K_2<K_3= E_7\Sp(1)$, $K_1<K_3,~K_4=\Spin(16)$.
\end{itemize}
\end{remark}

Finally, we come back to the Example \ref{exa:mixedtype}, $G/H \in \Nl_>$.
We show how a special subgroup structure can be used to obtain
purely algebraic invariants which can (possibly) determine whether or
not $G/H$ admits a $G$-invariant Einstein metric.
Recall $G:=\SO(2n)$ where $n=\sum_{i=1}^r n_i$ with $n_1,\dots,n_r \geq 2$, $r \geq 2$, and 
$$
   H:= \Un(n_1) \times \SO(2n_2)\times \cdots \times  \SO(2n_{r}) \,.
$$
Let 
$$
 K=\SO(2n_1)\times \cdots \times  \SO(2n_{r}) 
\quad \textrm{ and }\quad
  K_*=\Un(n_1)\times\SO(2(n_2+\cdots + n_{r})) \,.
$$
We compute the isotropy representation of $G/H$. We have
$$
  \m = \m_d \oplus \m_{\blds{\kf}_*} 
	\quad
	\textrm{ and }\quad
	\m_d= \m_1 \oplus \m_2 \oplus \cdots \oplus \m_r 
$$ 
with $\kf = \h \oplus \m_1$,  and for each $i \in I:=\{2,...,r\}$, $\m_i$ corresponds to the
complement of $\SO(2n_1)\times \SO(2n_i)$ in $\SO(2(n_1+n_i))$. 
The summands $\m_1,...,\m_r$ are irreducible, but, when $r \geq 4$, the summand $\m_{\blds{\kf}_*} $ is not.

Note for each $i \in 1,...,r$ we have $[iii]=0$, $[1jk]=0$ for all $j \neq k$,
and also $[ijk]=0$ for all $i,j,k \in I$. We have $[1ii]>0$ for all $i\in I$
and $[iik]=0$ for all $i \in I$ and $k \neq 1,...,r$.
Moreover, for all $i,j \in I$, $i \neq j$, there exists precisely one $k_{ij}$ with $[ijk_{ij}]>0$.
Note $k_{ij} \neq 1,...,r$.  

We have
 $d_1=n_1(n_1-1)$, $d_i=4n_1 n_i$ for $i \in I$, and (for $Q(X,Y)=-\frac12 \tr(XY)$), $b_i=b=4(n-1)$. We see for each $i\in I$, $[1ii]=4n_1(n_1-1)n_i$. For convenience, we note that $\tfrac{[1ii]}{d_i} = \tfrac{4n_1(n_1-1)n_i}{4n_1 n_i}=(n_1-1)$.

 We compute
the Ricci curvature $\Ric(g)$ restricted to $\m_1$ and $\m_i$ for $i \in I$ using \eqref{eqn:einstein} and
Lemma \ref{lem:1p5}: 
\begin{eqnarray*}
   r_1 &=& \tfrac{c_1}{x_1} 
	+\tfrac{1}{4d_1} \sum_{i \in I}[1ii]\tfrac{x_1}{x_i^2}  = \tfrac{c_1}{x_1} +\sum_{i \in I} n_i \tfrac{x_1}{x_i^2}\,,\\
	 r_i & =& \tfrac{4(n-1)}{2x_i}
	             - \tfrac{n_1-1}{2} \tfrac{x_1}{x_i^2}
							  -\tfrac{1}{2d_i} \sum_{j \in I\bs \{i\} }[ij k_{ij}]\tfrac{1}{x_i}
								    \big(\tfrac{x_j}{x_{k_{ij}}} + \tfrac{x_{k_{ij}} }{x_j}  \big)
							+\tfrac{1}{2d_i} \sum_{j \in I\bs \{i\}}[ij k_{ij}]
								   \tfrac{x_i}{x_j x_{k_{ij}}}\,.
\end{eqnarray*}
Next, we compute the trace free part of $\Ric(g)\vert_{\m_d} \in {\rm End}(\m_d)$ restricted
to $\m_1$. (For an Einstein metric, the trace free part is zero.) We see
$$
  r_1 \sum_{i=1}^r d_i - \sum_{i=1}^r d_i r_i = r_1 \sum_{i \in I} d_i -\sum_{i \in I} d_i r_i \,,
$$
and
$$
\sum_{i \in I} d_i r_i
= \sum_{i \in I} (\tfrac{d_ib}{2x_i}
	             - \tfrac{1}{2} [1ii]\tfrac{x_1}{x_i^2})
							  -\tfrac{1}{2} \sum_{i \neq j \in I }[ij k_{ij}] \tfrac{x_{k_{ij}} }{x_ix_j}\,.
$$
Setting $d:=\sum_{i=1}^r d_i$ and $\alpha_i:=\tfrac{x_1}{x_i}$  for $i \in I$ we arrive at
\begin{eqnarray*}
 \lefteqn{4(  r_1 \sum_{i=1}^r d_i - \sum_{i=1}^r d_i r_i ) =}&&\\
	  &=&
		(\tfrac{4c_1}{x_1} +\tfrac{1}{d_1} \sum_{i \in I}[1ii]\tfrac{x_1}{x_i^2} ) 
		\sum_{i \in I} d_i
		-2\sum_{i \in I} ( \tfrac{d_ib}{x_i} - [1ii]\tfrac{x_1}{x_i^2} )
							  + 2\sum_{i \neq j \in I }[ij k_{ij}] \tfrac{x_{k_{ij}} }{x_ix_j} \\
	& > &
	 \tfrac{1}{x_1}   \sum_{i \in I}
    \big(
		 4d_ic_1 -2d_ib \alpha_i 	+[1ii]  \cdot 
		 \big( \tfrac{d}{d_1} +1 \big)\cdot \alpha_i^2 \big)\,.						
\end{eqnarray*}
Now if the structure constants of $G/H$ have the property that for all $i \in I$ the quadratic polynomials
$$
  q_i(\alpha) :=4c_1 -2b \cdot \alpha 	+\tfrac{[1ii]}{d_i} 
		 \big( \tfrac{d}{d_1} +1 \big)\cdot \alpha^2
$$
are positive for all $\alpha>0$ then $G/H$ could not admit a $G$-invariant Einstein metric. Note however,
$$
q_i(\alpha) =q(\alpha)= 2\left(4(n_1-1) - 4(n-1)\alpha +(2n-n_1-1)\alpha^2 \right)
$$
has (two positive) roots
$$
\frac{2(n-1) \pm 2\sqrt{(n-n_1)^2+2n_1}}{2n-n_1-1}\,.
$$

\begin{remark}
The  quadratic polynomial  in the above example does not imply non-existence of
$G$-invariant Einstein metrics. But clearly one already obtains  upper estimates for 
$\alpha_1,...,\alpha_r$. Moreover, when $n_1$ is large
compared to $n$, then one obtains lower bounds as well. It is an interesting problem 
to refine the methods above to obtain finer invariants and improved estimates. 
We mention that one can obtain similar estimates for a larger class of homogeneous spaces,
 yielding a huge class of non-existence examples: see  \cite{Bo05}, in particular equation (4.6).
\end{remark}


\subsection{{\it Disconnected groups}} \label{sec:disc}

In this section we briefly discuss compact homogeneous spaces $M^n=G/H$ such that
the compact Lie groups $G$ and $H$ are  not necessarily connected. If
$M^n$ is simply connected then the connected component $G_0$ of $G$ containing the identity 
acts transitively on $M^n$, with $M^n=G_0/H_0$.  
We note that these spaces are discussed in the case
that $G/H$ is isotropy irreducible, when $H$ acts irreducibly on $\g/\h$: see \cite{WZ4}.  The smaller class for which 
the action of $H_0$ is irreducible on $\g/\h$ is called strongly isotropy irreducible. 

\medskip

Essentially, all our results also hold for connected compact homogeneous spaces
$M^n=G/H$ in this more general case with $G,H$ possibly disconnected: 
see \cite{Gr} and \cite{Bo}. Without going into detail,
we note that certain adjustments are necessary.
For instance, we can no longer consider arbitrary intermediate subalgebras $\kf$.

\begin{definition}[$H$-subalgebra]
Let $G/H$ be a compact homogeneous space. We say a subalgebra $\kf$ of $\g$ is an \emph{$H$-subalgebra}
if $\h < \kf <\g$ and  $\kf$ is $\Ad(H)$-invariant.
\end{definition}

When $H$ is connected, every intermediate subalgebra $\kf$ is an $H$-subalgebra, 
but for $H$  disconnected, this is no longer true.

We review some well-known facts about Lie groups. Let $L$ be a Lie group, not necessarily connected.  Let $L_0$ denote 
the connected component of $L$ containing the identity. As is well-known, $L_0$ is not only
a Lie group itself, in fact, $L_0$ is a normal subgroup of $L$. 
Recall  that the connected Lie subgroups $P$ of $L$ are
in one-to-one correspondence with the Lie subalgebras $\p$ of $\lf=T_e L=T_e L_0$. 
Finally, recall that a subalgebra $\p$ is said to be compact if the corresponding Lie subgroup $P$ is compact.

\begin{lemma}
Let $G$ be a compact Lie group and let $H$ be a compact subgroup of $G$.
Furthermore, let $\kf$ be an $H$-subalgebra of $\g$ corresponding to $K_0$.
Then $K(\kf):=H \cdot K_0$ is an
immersed subgroup of $G$ with $H \subsetneq K(\kf) \subsetneq G$,
and $\kf$ is its Lie subalgebra.
\end{lemma}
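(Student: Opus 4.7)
The plan is to exploit the $\Ad(H)$-invariance of $\kf$ to show that $H$ normalizes $K_0$, then read off all of the asserted properties from the structure of the (possibly disconnected) group $H \cdot K_0$.

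First I would verify that $H$ normalizes $K_0$. For any $h \in H$, the $\Ad(H)$-invariance of $\kf$ gives $\Ad(h)(\kf)=\kf$. The conjugate $hK_0h^{-1}$ is a connected Lie subgroup of $G$ whose Lie algebra is $\Ad(h)(\kf)=\kf$, and since $K_0$ is the unique connected Lie subgroup of $G$ with Lie algebra $\kf$, we conclude $hK_0h^{-1}=K_0$. Hence $hK_0=K_0h$ for every $h\in H$, so the product set
\[
K(\kf):=H\cdot K_0 = K_0\cdot H
\]
is closed under multiplication and inversion, and is therefore a subgroup of $G$.

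Next I would identify the Lie algebra and the identity component. Because $\h \subset \kf$, uniqueness of connected subgroups forces $H_0\subset K_0$; in particular $H\cap K_0\supset H_0$ is a closed subgroup of $H$ of full dimension. Writing $H=\bigsqcup_{i} h_i H_0$ for coset representatives $h_i$, we obtain a finite union of translates
\[
K(\kf)=\bigsqcup_{[h_i]\in H/(H\cap K_0)} h_i\,K_0,
\]
each $h_iK_0$ being an immersed submanifold of $G$ diffeomorphic to $K_0$. Thus $K(\kf)$ carries a natural structure of an immersed Lie subgroup of $G$ with identity component $K_0$, and its Lie algebra is $T_e K_0=\kf$.

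Finally I would verify the strict containments. For $H\subsetneq K(\kf)$: if $K_0\subset H$, then $K_0\subset H_0$ (since $K_0$ is connected), forcing $\kf\subset \h$, contradicting $\h<\kf$; therefore $K(\kf)=H\cdot K_0$ strictly contains $H$. For $K(\kf)\subsetneq G$: the Lie algebra of $K(\kf)$ is $\kf$, which is a proper subalgebra of $\g$, so $\dim K(\kf)=\dim\kf<\dim\g=\dim G$, and the inclusion $K(\kf)\subset G$ is strict.

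The only mild obstacle is bookkeeping about connected components: one has to remember that $H$ (and hence $K(\kf)$) may be disconnected, and that $K_0$ is only an immersed, not embedded, subgroup, so the statement $K(\kf)$ is an immersed Lie subgroup must be phrased in terms of the intrinsic smooth structure coming from the bijection with $K_0\times (H/(H\cap K_0))$ rather than from the subspace topology of $G$.
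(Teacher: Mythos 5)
Your proof is correct and takes essentially the same route as the paper: the paper's argument simply checks that $H\cdot K_0$ is closed under multiplication and inversion, relying implicitly on the fact that conjugation by elements of $H$ preserves $K_0$ — exactly the normalization statement you derive from the $\Ad(H)$-invariance of $\kf$ and the uniqueness of the connected subgroup with a given Lie algebra. Beyond that shared core, you also make explicit the immersed Lie group structure, the identification of the Lie algebra with $\kf$, and the strict inclusions $H \subsetneq K(\kf) \subsetneq G$, which the paper's proof leaves as implicit consequences.
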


\begin{proof}
We show that $K(\kf)$ is closed under multiplication and taking inverses. Let
$h_1,h_2 \in H$
and $k_1^0,k_2^0 \in K_0$. Then we have
$$
   (h_1 k_1^0)(h_2 k_2^0) =
     h_1 h_2 (h_2^{-1}k_1^0h_2)k_2^0 \in H\cdot K_0 = K(\kf)
$$
because $h_2^{-1}k_1^0h_2 \in K_0$ and $K_0$ is a subgroup.

Next, let $h \in H$ and $k^0 \in K_0$. Then
$$
    (hk^0)^{-1}=(k^0)^{-1}h^{-1}=h^{-1}(hk^0 h^{-1})\in H\cdot K_0 = K(\kf)\,.
$$
Thus $K(\kf)$ is indeed closed under multiplication and inverses.
\end{proof}

When $H$ is connected, then $K(\kf)=K_0$. But if $H$ is
disconnected, then $H$ need not be contained in $K_0$. In any
case,  $K(\kf)$ is a (possibly disconnected) Lie subgroup of $G$ with
$T_e K(\kf)=\kf$.

The proof of Theorem \ref{theoA} works the same way in this more general case. 
The nerve $\XGH$ of $G/H$ is defined using flags of only 
non-toral $H$-subalgebras $\kf$.

\medskip

We indicate here why there are instances when considering non-connected groups $G$ and $H$ can be extremely useful.

\begin{example}
Suppose that $G$ is connected but $H$ is not. 
Then $G/H_0$ is a finite 
covering space of $G/H$. In particular the set of  $G$-invariant metrics on $G/H$ may be smaller than 
the set of  $G$-invariant metrics on $G/H_0$.
As a consequence it may be easier to show the existence of a $G$-invariant Einstein metric on $G/H$.
Of course such an Einstein metric can be lifted to a $G$-invariant Einstein metric on $G/H_0$. 
\end{example}

\begin{example}
Let $G$ be a simple, compact Lie group, let $H$ be a compact Lie group 
and $\varphi:H \to  {\rm Aut}(G)$ be a homomorphism. 
The semi-direct product $\hat G:=H \ltimes_\varphi G$ is defined as follows:
$$
   (h_1,g_1) \times_\varphi (h_2,g_2):=(h_1\cdot h_2,g_1 \cdot \varphi(h_1)(g_2))\,.
$$
It is not hard to check that this is a group action. Let $\hat H:= H \times \{e\}\subset \hat G$ and consider 
$\hat G/\hat H$. Then we have
$$
(h,e)(e,g)(h^{-1},e)= (h,\varphi(h)(g))(h^{-1},e)=(e, \varphi(h)(g))\,.
$$
This is also true for a smooth curve $g(t)$ in $G\simeq \{(e,g)\mid g \in G\} \subset \hat G$
with $g(0)=e$. As a consequence, $\hat \m:=\{0 \} \times \g$ is $\Ad(\hat H)$-invariant, so that
$\hat \g=\hat \h \oplus \hat \m$ is a reductive decomposition of $\hat \g$. Note also
that $\hat G/\hat H=\{\hat g_0 \hat H \mid \hat g_0=(h_0,g_0) \in \hat G\}$ and
$$
  \hat g_0\hat H=\{(h_0,g_0) \cdot (h,e) \mid h \in H\} = \{(h_0h,g_0)\mid h \in H\}= H \times \{g_0\}\,.
$$
Thus as smooth manifold, $\hat G/\hat H =G$.

Suppose now $H \subset G$ and $\varphi(h)(g)=h g h^{-1}$. Then $\Ad(\hat H)$
is given by the adjoint action of $H$ on $\g$.
For a first concrete example let us consider $G=S^3$ and $H=\{\pm 1,\pm i,\pm j,\pm k\}$. 
The Lie algebra of $S^3$ is spanned by $i,j,k$. Moreover, conjugation by $i\in H$ 
acts on $\langle i\rangle_{\bldss{\RR}}$ trivially, but we have
$$
  i j i^{-1}=- i j i = - k i = - j \quad \textrm{ and }\quad
  -i k i  = - i j = -k.
$$
(Conjugation by $j$ and $k$ is analogous.)
Thus $\langle i\rangle_{\bldss{\RR}}$, $\langle j\rangle_{\bldss{\RR}}$ and $\langle k\rangle_{\bldss{\RR}}$
are pairwise inequivalent $\Ad(\hat H)$-invariant subspaces.
 This shows that for a left-invariant metric $g$ on $S^3$ which is diagonal with respect to 
$\langle i\rangle_{\bldss{\RR}} \oplus \langle j\rangle_{\bldss{\RR}} \oplus \langle k\rangle_{\bldss{\RR}}$, 
 the Ricci tensor of $g$ must again be diagonal.

In our second example let $G$ be a compact, simple Lie group, connected and simply connected, such that the roots of
$\g$ are all the same length,  and let $H:=N(T)$ be the
normalizer of a maximal torus $T$ of $G$. We  compute the adjoint action of $H$ on $\g$.
 The adjoint action of $T \subset H$ is  well-known: $\tf$ is the trivial summand
and the other isotypical summands of $\Ad(T)$ are the (real) root spaces. Note that $\Ad(N(T))$ preserves
$\tf$ and consequently also preserves $\p:=\tf^\perp$, the sum of the root spaces. Since $W:=N(T)/T$ acts
simply-transitively on the roots, each of $\tf$ and $\p$ is $\Ad(H)$-irreducible.
Thus, there exist no non-toral $H$-subalgebras $\kf$, and we obtain the existence
of a $\hat G$-invariant Einstein metric on $G$.  
This family of examples builds on the family $G/T$ of isotropy irreducible spaces, in Table I, line 1 of  \cite{WZ4}.
\end{example}

\section{Appendix}\label{sec:appendices}

In this section we will
make a brief digression, first into semi-algebraic geometry. We also provide helpful background on the isotropy representation of a compact homogeneous space $G/H$. 

\subsection{{\it Semi-algebraic sets}}\label{sec:semialgebraic}

We refer the reader to  Benedetti and Risler for more details \cite{B-R}. A set $X\subset
\RR^m$ is called {\em semi-algebraic} if $X$ is defined by finitely
many polynomial equations and inequalities.  
The inequalities are allowed to be both strict and  non-strict.

Given two semi-algebraic sets  $X\subset \RR^m$ and $Y\subset \RR^n$, we say a map $f:X\to Y$ is {\em semi-algebraic}
if the graph of $f$ is a semi-algebraic subset of $\RR^{m+n}$: see \cite[Definition 2.3.2]{B-R}. 
A semi-algebraic map is not necessarily continuous, see \cite[Example 2.7.3]{B-R},  though some
authors include a requirement of continuity in their definition of a semi-algebraic map: see \cite{H.H}, \cite[Theorem 7.6]{DK0}, 
\cite[I, Proposition 3.13]{DK2}. 
The paradigm for semi-algebraic maps are polynomials.

A fundamental result in semi-algebraic geometry is the following:

\begin{theorem}[Tarski-Seidenberg]
The image of a semi-algebraic subset of $\RR^n\times \RR^m$ under the projection
$\pi:\RR^n \times \RR^m \to \RR^n$ such that $(x,y)\mapsto x$ is a semi-algebraic set.
\end{theorem}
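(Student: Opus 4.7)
The plan is to prove this by quantifier elimination, treating the projection as existential quantification and reducing to a one-dimensional statement. A semi-algebraic subset of $\RR^n \times \RR^m$ is, by definition, the set of points satisfying a quantifier-free formula $\phi(x,y)$ built from polynomial equations and inequalities. The image under $\pi$ is then $\{x \in \RR^n \mid \exists y \in \RR^m,\, \phi(x,y)\}$, so the theorem is equivalent to showing that each existential quantifier over $\RR$ can be absorbed back into the quantifier-free language.

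First, I would reduce to the case $m=1$ by an obvious induction: if the projection from $\RR^n \times \RR$ to $\RR^n$ preserves semi-algebraicity, then applying this fact $m$ times (to $\RR^{n+m} \to \RR^{n+m-1} \to \cdots \to \RR^n$) handles the general case. Next, by writing any Boolean combination in disjunctive normal form and exploiting the fact that $\exists y$ commutes with finite disjunctions, it suffices to consider a single conjunction of sign conditions $P_i(x,y) \,\epsilon_i\, 0$ with $\epsilon_i \in \{<,=,>\}$ and $P_1,\dots,P_k \in \RR[x,y]$.

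The heart of the argument is then to show that, as $y$ ranges over $\RR$ for fixed $x$, the possible sign vectors $(\operatorname{sgn}(P_1(x,y)),\dots,\operatorname{sgn}(P_k(x,y)))$ realized on the successive intervals cut out by the real roots of the $P_i(x,\cdot)$ depend on $x$ only through the signs of finitely many polynomials in $x$ alone. The classical way to achieve this is via Sturm sequences (counting real roots of a univariate polynomial in $y$ through signs of a Sturm chain, whose entries are polynomials in the coefficients) or, more efficiently, via Thom's lemma, which encodes each real root of $P_i(x,\cdot)$ by the sign pattern of its successive $y$-derivatives at that root. Augmenting $\{P_1,\dots,P_k\}$ by all $y$-derivatives and by the resultants $\operatorname{Res}_y(P_i,P_j)$, $\operatorname{Res}_y(P_i,\partial_y P_i)$, one produces a finite family $\mathcal{F} \subset \RR[x]$ such that the sign vector of $\mathcal{F}$ at $x$ determines the full sign diagram of $\{P_i(x,\cdot)\}$ on $\RR$, and in particular determines whether some $y$ satisfies the conjunction. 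Thus $\pi(X)$ is a finite union of realizations of sign conditions on $\mathcal{F}$, hence semi-algebraic.

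The main obstacle is the combinatorial bookkeeping in the step above: one must verify that the interaction between the real roots of the various $P_i(x,\cdot)$ (coincidences of roots, orderings of roots, signs on the intermediate intervals) is captured by polynomial sign conditions on $x$, uniformly in $x$. Thom's lemma together with the theory of subresultant coefficients provides exactly this uniformity, and with that in hand the proof reduces to listing the finitely many admissible sign diagrams. The full technical development, together with explicit constructions of $\mathcal{F}$, can be found in \cite{B-R}.
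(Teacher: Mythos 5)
The paper does not prove the Tarski--Seidenberg theorem; it only states it as a standard fact and refers to Benedetti--Risler \cite{B-R} for the theory of semi-algebraic sets, so there is no proof of record to compare against. That said, your sketch is a correct high-level outline of exactly the classical quantifier-elimination argument presented in \cite{B-R}: reduce to $m=1$ by induction on the number of eliminated variables, reduce by disjunctive normal form to a single conjunction of polynomial sign conditions, and then show that the realizable sign diagrams of $\{P_i(x,\cdot)\}$ over $y\in\RR$ are governed by sign conditions on a finite family of polynomials in $x$ (Sturm/Thom, subresultants). The step you flag as the ``main obstacle'' --- proving that coincidences and orderings of the roots of the $P_i(x,\cdot)$ are uniformly controlled by polynomial sign conditions in $x$ --- is indeed where all the work lives, and your sketch correctly identifies the machinery (Thom's lemma plus subresultant coefficients) needed to close it, so the proposal is sound as an outline even if not self-contained.
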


The theorem implies that intersections, unions, and complements of semi-algebraic sets are  semi-algebraic. 
Moreover, it is possible to use
first-order formulae (in the language of ordered fields with parameters in $\RR$) to obtain new semi-algebraic sets from known ones.
For instance,  given $X_1\subset \RR^n,X_2 \subset \RR^m$ semi-algebraic sets and a semi-algebraic map $F:\RR^n\times \RR^m \to \RR^k$, the set
$$
  \{ x_1 \in X_1\mid \exists x_2 \in X_2 \textrm{ s.t. } F(x_1,x_2) \leq 0\}
$$
is semi-algebraic. As another example, given endomorphisms $A,B$ of $\RR^n$,  the condition $\ker(A) \subsetneq \ker(B)$ is a semi-algebraic
condition because it is equivalent to
$$
  \{ \forall x \in \RR^n \mid A(x)=0 \Rightarrow B(x)=0\} \wedge \{ \exists y \in \RR^n \mid B(y)=0 \wedge A(y) \neq 0\}\,.
$$
Connected components of semi-algebraic sets are semi-algebraic, and
the closure, interior, or boundary of a semi-algebraic set is semi-algebraic.

The key property of compact semi-algebraic sets used in Sections \ref{sec:homotopy1} and \ref{sec:homotopy2}
is that such sets are absolute neighborhood retracts. The following result states a useful generalization of
that.

\begin{lemma}[\cite{Gr}]\label{lem:XY}
Let $X,Y \subset \RR^N$ be compact, semi-algebraic sets with $X \subset Y$ and $X \neq \emptyset$.
Suppose that for some $\delta_0>0$,  for all $\delta \in (0,\delta_0)$
there exists a continuous map
$$
  H_\delta:[0,1]\times Y \to Y
$$
with the following properties: $H_\delta(0,y)=y$ for all $y \in Y$,
$H_\delta(t,x)=x$ for all $x \in X$ and all $t \in[0,1]$, and
$$
  d(H_\delta(1,y),X)< \delta  \textrm{ for all } y \in Y\,.
$$
Then $X$ is a strong deformation retract of $Y$.
\end{lemma}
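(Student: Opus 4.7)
The plan is to exploit the fact that compact semi-algebraic subsets of $\RR^N$ are Euclidean neighborhood retracts. By the triangulability of semi-algebraic sets, there exist open neighborhoods $U_X, U_Y \subset \RR^N$ of $X$ and $Y$ together with continuous retractions $\rho_X: U_X \to X$ and $\rho_Y: U_Y \to Y$. Since $X \subset Y \subset U_Y$ with $U_Y$ open, there is $\eta > 0$ with $\{ z \in \RR^N \mid d(z,X) < \eta \} \subset U_Y$. Using compactness of $X$, the continuity of $\rho_X$, and the fact that $\rho_X|_X = \operatorname{Id}_X$, we can choose $\delta_1 \in (0, \delta_0)$ so small that $\{ u \in \RR^N \mid d(u, X) < \delta_1 \} \subset U_X$ and $\|u - \rho_X(u)\| < \eta$ for every $u$ with $d(u, X) < \delta_1$.

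Set $\phi(y) := H_{\delta_1}(1, y)$, a continuous map $Y \to Y$ with $d(\phi(y), X) < \delta_1$. For each $y \in Y$ and $s \in [0,1]$ the segment
$$ \gamma_y(s) := (1-s)\,\phi(y) + s\,\rho_X(\phi(y)) \in \RR^N $$
lies in $U_Y$, since $d(\gamma_y(s), X) \leq \|\gamma_y(s) - \rho_X(\phi(y))\| \leq \|\phi(y) - \rho_X(\phi(y))\| < \eta$. I would then define the strong deformation retraction $H: [0,1] \times Y \to Y$ in two stages, first running the hypothesized homotopy and then straightening towards $X$ and projecting back into $Y$:
$$ H(t,y) := \begin{cases} H_{\delta_1}(2t,\,y) & \text{if } t \in [0, \tfrac{1}{2}], \\ \rho_Y\!\big(\gamma_y(2t-1)\big) & \text{if } t \in [\tfrac{1}{2}, 1]. \end{cases} $$
The two formulas agree at $t=\tfrac12$ since $\phi(y) \in Y$ and $\rho_Y|_Y = \operatorname{Id}_Y$, so $H$ is continuous and $Y$-valued.

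The remaining verifications are short: $H(0,y)=y$ is immediate; $H(1,y) = \rho_Y(\rho_X(\phi(y))) = \rho_X(\phi(y)) \in X$; and for $x \in X$ the hypothesis gives $\phi(x)=x$ and $\rho_X(x)=x$, so $\gamma_x \equiv x$ and $H(t,x) = x$ for all $t$. The main obstacle in this argument is the simultaneous calibration of $\delta_1$: it must be smaller than $\delta_0$, force $\phi(y) \in U_X$, and force the entire straight-line segment $\gamma_y$ to lie inside $U_Y$ so that $\rho_Y$ can be legitimately applied. Compactness of $X$ together with continuity of $\rho_X$ make all three constraints achievable. The essential input is the ENR property of compact semi-algebraic sets; with this in hand, neither an iterative/limit construction nor any further ANR machinery is needed.
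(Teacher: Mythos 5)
Your proof is correct, but it takes a genuinely different route from the paper's. The paper invokes the Delfs--Knebusch theorem directly, which furnishes an open semi-algebraic neighborhood $U$ of $X$ \emph{inside} $Y$ together with a strong deformation retraction $G$ of $U$ onto $X$; then the homotopy is simply $H_\delta$ followed by $G$, with no need for any projection back into $Y$ because $G$ already stays there. You instead lean on the weaker ENR property of both $X$ and $Y$ separately (retractions $\rho_X\colon U_X\to X$ and $\rho_Y\colon U_Y\to Y$ from Euclidean neighborhoods, not deformation retractions), compensating with the explicit straight-line ``straightening'' $\gamma_y(s)=(1-s)\phi(y)+s\,\rho_X(\phi(y))$ in $\RR^N$; since this segment can leave $Y$, you need the projection $\rho_Y$ to return. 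The price is the careful calibration of $\eta$ and $\delta_1$ to keep the segment inside $U_Y$, which you handle correctly via the vanishing of $u\mapsto \lVert u-\rho_X(u)\rVert$ on $X$ and compactness. What your approach buys is transparency and independence from the stronger cited result: you only need the basic ENR property of compact semi-algebraic (equivalently, triangulable) sets. The paper's version is shorter because it outsources precisely the part you build by hand — the existence of a deformation retraction of a neighborhood of $X$ onto $X$ taking place within $Y$.
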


\begin{proof}
It follows from Theorem 1 of \cite{DK1}, see also \cite[III,
Theorem 1.1]{DK2}, that there exists an open semi-algebraic
neighborhood $U$ of $X$ in $Y$ and a semi-algebraic,
continuous map $G:[0,1]\times \overline{U}\to \overline{U}$, such
that the restriction $G\vert_{[0,1]\times U}$ yields a strong
deformation retraction from $U$ to $X$.

By compactness of $X$,  there exists $\delta_0>0$ such that for all $0<\delta\leq \delta_0$,
we have 
$$
  T_\delta(X):=\{y \in Y \mid d(y,X)<\delta\} \subset U\,.
	$$
	Hence we can define
$$
  H:[0,1]\times Y \to Y \textrm{ s.t. } (t,y) \mapsto \left\{
	 \begin{array}{ll}
	     H_{\delta}(2t,y)   & \textrm{ for } 0\leq t \leq \tfrac{1}{2} \\
			 G(2t-1,H_\delta(1,y))   &\textrm{ for } \tfrac{1}{2}\leq t \leq 1\,.
			\end{array} \right.
$$
The function $H$ is continuous since for $t=\tfrac{1}{2}$ we have $G(2t-1,H_\delta(1,y))=H_{\delta}(2t,y)$.
Moreover $H(t,x)=x$ for all $t \in [0,1]$ and all $x \in X$ and $H(1,Y)=X$.
This shows the claim.
\end{proof}


\subsection{{\it Lie-theory I}}\label{sec:Ltbasics}

In this section we provide details concerning the isotropy representation of a compact homogeneous
space $G/H$ and the normalizer $N_G(H)$ of $H$ in $G$. We assume, as we have  throughout the paper, that $G$ and $H$ are connected.

\medskip

\begin{definition}\label{def:m0}
For any compact homogeneous space $G/H$,
we denote by $\m_0$ the subspace of $\m$ on which $\Ad(H)$ acts trivially.
\end{definition}

Notice that if $\rk G=\rk H$, then $\m_0 = \{0\}$,  while if $H=\{e\}$, $\m_0=\g$. 
Recall also that 
\begin{eqnarray}
 \exp(\ad(X))=\Ad(\exp_G(X))\label{eqn:exp}
\end{eqnarray}
for all $X \in \g$, 
where $\exp_G:\g \to G$ denotes the exponential of $G$ and $\exp(D)=\sum_{k=0}^\infty \tfrac{D^k}{k!}$
for any endomorphism $D:\g \to \g$ \cite[II]{H.S}.

\begin{lemma}\label{lemkmm}
A subspace $\ti \m$ of $\m$ is $\Ad(H)$-invariant if and only if $[\h,\ti {\m}]\subset \ti {\m}$.
Moreover, $[\h,\m_0]=0$ and for 
any $\Ad(H)$-invariant subspace $\tilde \m$ of $\m \ominus \m_0$, 
we have $\{0\} \neq [\h,\tilde \m]$.
\end{lemma}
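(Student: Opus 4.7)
The whole lemma rests on the standard correspondence between $\Ad(H)$-invariance and $\ad(\h)$-invariance, which is available because $H$ is assumed connected. The plan is to extract the three claims from the formula $\Ad(\exp_G X)=\exp(\ad(X))$, stated in \eqref{eqn:exp}, together with the fact that a connected compact Lie group is generated by the image of its exponential map.

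First, for the equivalence. If $\tilde\m\subset\m$ satisfies $[\h,\tilde\m]\subset \tilde\m$, then by induction $\ad(X)^k(\tilde\m)\subset\tilde\m$ for every $X\in\h$ and $k\ge 0$, hence $\exp(\ad(X))(\tilde\m)\subset\tilde\m$. By \eqref{eqn:exp} this means $\Ad(\exp_G X)(\tilde\m)\subset\tilde\m$ for all $X\in\h$, and since $H$ is connected it is generated by $\exp_G(\h)$, so $\Ad(H)(\tilde\m)\subset\tilde\m$. Conversely, if $\tilde\m$ is $\Ad(H)$-invariant, then for any $X\in\h$ the curve $t\mapsto \Ad(\exp_G(tX))(Y)$ lies in $\tilde\m$ for every $Y\in \tilde\m$; differentiating at $t=0$ gives $\ad(X)(Y)=[X,Y]\in\tilde\m$, so $[\h,\tilde\m]\subset\tilde\m$.

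Second, for $[\h,\m_0]=0$. By definition of $\m_0$, the map $\Ad(h)$ restricts to the identity on $\m_0$ for every $h\in H$. In particular, for $X\in\h$ the curve $\Ad(\exp_G(tX))$ restricted to $\m_0$ is constantly the identity, so its derivative at $t=0$, namely $\ad(X)\vert_{\m_0}$, is zero. Hence $[X,Y]=0$ for all $X\in\h$, $Y\in\m_0$.

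Third, for the non-vanishing statement, I would argue by contrapositive. Let $\tilde\m\subset\m\ominus\m_0$ be an $\Ad(H)$-invariant subspace with $[\h,\tilde\m]=0$. Then $\ad(X)\vert_{\tilde\m}=0$ for every $X\in\h$, so $\exp(\ad(X))\vert_{\tilde\m}=\mathrm{Id}_{\tilde\m}$, and by \eqref{eqn:exp} together with the connectedness of $H$ this yields $\Ad(h)\vert_{\tilde\m}=\mathrm{Id}_{\tilde\m}$ for every $h\in H$. By the very definition of $\m_0$ this forces $\tilde\m\subset\m_0$, and since $\tilde\m\subset\m\ominus\m_0$ we conclude $\tilde\m=\{0\}$, which contradicts the implicit assumption that $\tilde\m$ is non-trivial. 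There is really no obstacle here beyond verifying the connectedness hypothesis is used precisely in the step where one generates $\Ad(H)$ by $\exp_G(\h)$; the rest is differentiation and the exponential formula.
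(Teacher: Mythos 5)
Your proof is correct and follows the same route as the paper's own argument: differentiate $\Ad(\exp(tX))$ to pass from $\Ad(H)$-invariance to $\ad(\h)$-invariance, and use $\Ad(\exp Z)=\exp(\ad Z)$ together with the connectedness of $H$ for the converse; the second and third claims are then the same mechanism applied to the trivial action on $\m_0$ and the contrapositive on $\m\ominus\m_0$. Your remark about the implicit non-triviality of $\tilde\m$ in the third claim is a reasonable reading of the statement.
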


\begin{proof} 
If $\ti {\m}$ is  ${\Ad}(H)$-invariant,
then differentiation yields $[\h,\ti {\m}]\subset \ti {\m}$. Conversely, suppose we know 
$[\h,\ti {\m}]\subset \ti {\m}$. Since $H$ is connected, for any $Z \in \h$, 
${\Ad}(\exp (Z))=\exp ({\ad}(Z))$. Hence, $\ti {\m}$ is
${\Ad}(H)$-invariant. This shows the first claim. The second and the third claim follow immediately.
\end{proof}

Let $\n(\h)$ denote the  Lie algebra of the normalizer $N_G(H)$  of $H$ in $G$.

\begin{lemma}\label{lemnor}
Let $G/H$ be a compact homogeneous space. Then
$\n(\h)= \h \oplus \m_0$. Moreover, $\m_0$ is a compact subalgebra.
\end{lemma}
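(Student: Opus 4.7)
The plan is to prove the two assertions of the lemma in sequence. For the direct sum decomposition $\n(\h) = \h \oplus \m_0$, the inclusion $\h \oplus \m_0 \subset \n(\h)$ is immediate since $\h$ is a subalgebra and $[\h, \m_0] = 0$ by Lemma \ref{lemkmm}. For the reverse inclusion, any $X \in \n(\h)$ decomposes as $X = X_\h + X_\m$ with respect to $\g = \h \oplus \m$; since $X_\m = X - X_\h \in \n(\h)$, we have $[X_\m, \h] \subset \h$, while $\Ad(H)$-invariance of $\m$ yields $[X_\m, \h] \subset \m$, forcing $[X_\m, \h] \subset \h \cap \m = \{0\}$ and hence $X_\m \in \m_0$. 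To see that $\m_0$ is a subalgebra, I would take $X, Y \in \m_0$ and $Z \in \h$: the Jacobi identity gives $[Z, [X,Y]] = [[Z,X], Y] + [X, [Z,Y]] = 0$, so $[X,Y]$ centralizes $\h$, while \eqref{eqn:adXscew} produces $Q([X,Y], Z) = Q(X, [Y,Z]) = 0$, forcing $[X,Y] \in \m$; combining these gives $[X,Y] \in \m \cap \z_\g(\h) = \m_0$.

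The delicate point is showing that $\m_0$ integrates to a \emph{closed} (hence compact) subgroup of $G$. I would decompose $\m_0 = \z(\m_0) \oplus [\m_0, \m_0]$ into its center and derived subalgebra. The semisimple summand $[\m_0, \m_0]$ automatically integrates to a closed compact subgroup $K \subset G$, since semisimple subalgebras of a compact Lie algebra always do. For the abelian summand, I would identify $\z(\m_0)$ as a subspace of the Lie algebra of the central torus $T^\ast := Z(N_G(H)_0)_0 \subset G$. Indeed, a direct computation shows $\z(\n(\h)) = \z(\h) \oplus \z(\m_0)$ (decompose a central element via $\h \oplus \m_0$ and note that each piece must separately centralize both $\h$ and $\m_0$), and this is the Lie algebra of the central torus $T^\ast$. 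Inside $T^\ast$, the closed subtorus $Z(H)_0$ has Lie algebra $\z(\h)$, making $\z(\h)$ rational with respect to the kernel lattice $\Lambda := \ker(\exp|_{T^\ast}) \subset \z(\n(\h))$. Here the specific choice of $Q$—inherited from the negative Killing form of $\SO(6N)$ via $G \subset \SO(6N)$—is crucial: since $Q$ is (up to a positive constant) the trace form $-\tr(XY)$ on $\so(6N)$, it takes rational values on $\Lambda$, so the $Q$-orthogonal complement $\z(\m_0)$ of the rational subspace $\z(\h)$ is itself rational, corresponding to a closed subtorus $S \subset T^\ast$ with Lie algebra $\z(\m_0)$. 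The product $S \cdot K$ is then a closed compact subgroup of $G$ with Lie algebra $\z(\m_0) \oplus [\m_0, \m_0] = \m_0$.

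The main obstacle is precisely this rationality argument: while the first two claims are routine manipulations with the Jacobi identity and $\Ad(G)$-invariance of $Q$, the compactness conclusion genuinely depends on the particular biinvariant metric, as emphasized in Remark \ref{rem:specialQwhy}. A careful write-up requires tracking how the kernel lattice of $T^\ast$ embeds into that of a maximal torus of $\SO(6N)$, and verifying that $Q$ restricted to this smaller lattice remains rational—this is intuitively clear since the trace form is integer-valued on integer skew-symmetric matrices, but the explicit bookkeeping is the nontrivial step.
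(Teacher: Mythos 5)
Your proof is correct and takes essentially the same route as the paper: the direct-sum and subalgebra claims are the same easy manipulations with $\Ad(H)$-invariance, the Jacobi identity and \eqref{eqn:adXscew}, and the compactness argument reduces in both cases to showing that the $Q$-orthogonal complement of $\z(\h)$ inside $\z(\n(\h))$ is a \emph{closed} subtorus, which is where the special choice of $Q$ from $\SO(6N)$ enters. The only cosmetic difference is that you phrase the rationality step via the kernel lattice $\Lambda$ and the rationality of the trace form on integer skew-symmetric matrices, while the paper phrases it via a $Q$-orthonormal basis of the maximal torus of $\so(6N)$ adapted to the rational structure—these are equivalent formulations of the same observation.
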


\begin{proof}
Since $[\h,\m_0]=0 \in \h$, see above, we conclude
$\m_0 \subset \n(\h)$. Conversely, suppose that $\ti \m \subset \n(\h) \cap \m$.
Then on the one hand, $[\h, \ti \m]\subset \h$ by definition of $\n(\h)$,
while on the other hand, $[\h, \ti \m] \subset \m$ because $[\h,\m]\subset \m$.
This shows $[\h, \ti \m]=0$, and thus $\m \cap \n(\h)\subset \m_0$, proving our equality.

Since $[\m_0,\m_0] \subset \n(\h)=\h \oplus \m_0$ and $[\h,\m_0]=0$, then  because $Q$ is self-adjoint 
 \eqref{eqn:adXscew}, we know that
$\m_0$ is a subalgebra of $\n(\h)$ and  thus is a subalgebra of $\g$. 
In order to show that the corresponding connected subgroup of $G$ is compact we now use the
special property of the biinvariant metric $Q$ chosen in Section \ref{sec:ginvm}.
We decompose the compact Lie algebra $\n(\h)=\h \oplus \m_0$ into the
$Q$-orthogonal sum of its semisimple part $(\h \oplus \m_0)_s=
\h_s \oplus (\m_0)_s$ and its center $\z(\h \oplus \m_0)$. Each of the Lie
algebras $\h_s$, $(\m_0)_s$ and $\z(\h \oplus \m_0)$ is compact, and so 
 $\h \cap \z(\h \oplus \m_0)$ is also compact. It remains to show that
the $Q$-orthogonal complement of $\h \cap \z(\h \oplus \m_0)$ in
$\z(\h \oplus \m_0)$ is compact. 

By the definition of $Q$,
the compact, abelian subalgebras $\h \cap \z(\h \oplus \m_0)$ and
$\z(\h \oplus \m_0)$ can be viewed as subalgebras of $\so (6N)$. 
Notice we can choose a $Q$-orthonormal (standard) basis
$(\hat e_1,...,\hat e_{3N})$ for the standard maximal (diagonal) torus
$\tf_{6N}$ of $\so(6N)$  so that every compact subtorus $\tf$ of $\tf_{6N}$
has a basis $(e_1,...,e_{\dim \bldss\tf})$, for which each $e_i$ is a rational linear combination
of the above basis elements $(\hat e_i)$ of  $\so(6N)$. Then, the $Q$-orthogonal complement in $\tf_{6N}$ of the
a subtorus $\tf$ must also have such a basis, and hence it corresponds to a compact subtorus in $\so(6N)$.  
As a consequence, the $Q$-orthogonal complement of $\h \cap \z(\h \oplus \m_0)$ in
$\z(\h \oplus \m_0)$ is the intersection of two compact subalgebras, thus a compact subalgebra.
\end{proof}

\begin{remark}\label{rem:iso}
We decompose $\m$ into its $\Ad(H)$-invariant isotypical summands (see \cite[II, Proposition 6.9]{B-D}):
$$
 \m=\bigoplus_{i=1}^{\ell_{\rm iso}}\,\p_i \,.
$$
Each $\p_i$ is a direct sum of $\Ad(H)$-irreducible summands which
are equivalent (as $\Ad(H)$-representations), while, for $i \neq j$, irreducible summands in $\p_i$ and $\p_j$
are inequivalent. Note by Lemma \ref{lemkmm},  if $\m_0$ is non-trivial, then $\m_0$ is one of the 
isotypical summands of $\m$.

Thus by Schur's Lemma,  for every $P_g \in \MG$ we have
\begn
 Q(P_g\,\cdot \,,\,\cdot \,)=
 Q((P_g)_1\,\cdot \,,\,\cdot \,)\vert_{\p_1} \perp \cdots \perp
 Q((P_g)_{\ell_{\rm iso}}\,\cdot \,,\,\cdot \,)
   \vert_{\p_{\ell_{\rm iso}}}	
\enn
where, for each $1 \leq i \leq \ell_{\rm iso}$, $(P_g)_i:\p_i \to \p_i$ is an $\Ad(H)$-equivariant, self-adjoint,
positive definite endomorphism.
Consequently, every $P_g \in \MG$ respects the decomposition $\m =\oplus _{i=1}^{\ell_{\rm iso}}\,\p_i$.
Notice this is also true for every $v \in \SymmH$. 
\end{remark}

For $\kf$ a subalgebra in $\Sub$, let $\m_{\blds\kf}:=\m \cap \kf$. For convenience, $\m_{\bldss{\h}} := \m$.

\begin{lemma}\label{lem:ad-invar}
Let $\kf$ be a subalgebra, $\h \leq \kf < \g$, let $K$ be the connected subgroup of $G$  
corresponding to $\kf$, and let $A \in \SymgH$ with $\kf\subset \ker (A)$. Then, the condition  
$[A,\ad(\kf)]=0$ is equivalent to the $\Ad(K)$-equivariance of $A$.
Furthermore, $[A,\ad(\kf)]=0$ 
if and only if $[A\vert_{\blds\kf^\perp},\ad(\m_{\blds\kf})\vert_{\blds\kf^\perp}]=0$
if and only if $[A\vert_{\bldss\m},\ad(\m_{\blds\kf})\vert_{\bldss\m}]=0$.
\end{lemma}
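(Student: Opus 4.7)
The plan is to start by establishing two preservation properties that drive everything. First I would observe that since $A\in\SymgH$ is $Q$-self-adjoint with $\kf\subset\ker(A)$, for $Y\in\kf^\perp$ and $Z\in\kf$ one has $Q(AY,Z)=Q(Y,AZ)=0$, so $A$ preserves $\kf^\perp$; and of course $A|_\kf=0$. Second, for $X\in\kf$ the operator $\ad X$ preserves $\kf$ (as $\kf$ is a subalgebra), and, using that $\ad X$ is $Q$-skew-adjoint by \eqref{eqn:adXscew}, it also preserves $\kf^\perp$: for $Y\in\kf^\perp$ and $Z\in\kf$, $Q([X,Y],Z)=-Q(Y,[X,Z])=0$ since $[X,Z]\in\kf$. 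Thus both $A$ and $\ad X$ (for $X\in\kf$) induce well-defined endomorphisms of $\kf^\perp$.

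For the first equivalence, I would use that $K$ is connected, hence generated by $\exp_G(\kf)$, and that by \eqref{eqn:exp}, $\Ad(\exp_G X)=\exp(\ad X)$ for $X\in\kf$. It follows that $A$ commutes with every $\Ad(k)$, $k\in K$, iff $A$ commutes with $\exp(\ad X)$ for every $X\in\kf$, iff (by differentiating at $0$ and re-exponentiating along one-parameter subgroups) $A$ commutes with $\ad X$ for every $X\in\kf$.

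For the equivalence $[A,\ad(\kf)]=0\iff[A|_{\kf^\perp},\ad(\m_\kf)|_{\kf^\perp}]=0$, I would split $\kf=\h\oplus\m_\kf$. For $X\in\h$, $[A,\ad X]=0$ holds automatically as the $\Ad(H)$-equivariance built into $A\in\SymgH$, so only $X\in\m_\kf$ needs checking. For such $X$ and any $Y\in\kf$, one has $AY=0$ and $[X,Y]\in\kf\subset\ker(A)$, hence $[A,\ad X]Y=A[X,Y]-[X,AY]=0$ for free. Therefore $[A,\ad X]=0$ on $\g$ reduces to its vanishing on $\kf^\perp$, and by the preservation properties above this restricted commutator is exactly $[A|_{\kf^\perp},\ad X|_{\kf^\perp}]$; running $X$ over $\m_\kf$ gives the stated equivalence.

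For the last equivalence, I would interpret $\ad X|_\m$ for $X\in\m_\kf$ as $\mathrm{pr}_\m\circ\ad X|_\m$ with $\mathrm{pr}_\m$ the $Q$-orthogonal projection onto $\m$, and use $\m=\m_\kf\oplus\kf^\perp$, which $A|_\m$ respects. On $\m_\kf$, $\ad X|_\m\circ A|_\m$ vanishes because $A|_{\m_\kf}=0$, while $A|_\m\circ(\mathrm{pr}_\m\circ\ad X)$ vanishes because $[X,\m_\kf]\subset\kf$, whose $\m$-component lies in $\m_\kf\subset\ker(A)$. On $\kf^\perp$, $\ad X$ already maps into $\kf^\perp\subset\m$, so the projection is trivial and the commutator reduces to $[A|_{\kf^\perp},\ad X|_{\kf^\perp}]$. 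The hard part is not conceptual but purely bookkeeping: one has to track carefully which operators preserve which summands of $\g=\h\oplus\m_\kf\oplus\kf^\perp$ at each step, so that the abstract commutator identity collapses to its restriction on $\kf^\perp$.
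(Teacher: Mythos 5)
Your proof is correct and follows essentially the same route as the paper: differentiation and exponentiation via \eqref{eqn:exp} for the equivalence with $\Ad(K)$-equivariance, and then the observation that $A$ and $\ad(\kf)$ respect the splitting $\g=\kf\oplus\kf^\perp$, together with the built-in $\Ad(H)$-equivariance to dispose of the $\h$-part of $\kf$. Your explicit handling of the projection onto $\m$ and of the automatic vanishing of the commutator on $\kf$ and $\m_{\blds\kf}$ just spells out bookkeeping the paper leaves implicit.
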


\begin{proof}
Let  $K$ be the connected subgroup of $G$ with Lie algebra $\kf$. We know by differentiation that 
$\Ad(K)$-equivariance of $A$ implies $[A,\ad(\kf)]=0$. To see the converse, let $A \in \SymgH$
and suppose that $[A,\ad(\kf)]=0$. Using \eqref{eqn:exp} shows that $A$ is $\Ad(K)$-equivariant.

Next, let $A \in \SymgH$ with $\kf\subset\ker(A)$. Since $\Ad(K)$, $\ad(\kf)$ respect
the decomposition $\g = \kf \oplus \kf^\perp$, it follows that $A\vert_{\blds\kf^\perp}$
is $\Ad(K)\vert_{\blds\kf^\perp}$-equivariant if and only if 
$[A\vert_{\blds\kf^\perp},\ad(\kf)\vert_{\blds\kf^\perp}]=0$.

Since $A \in \SymgH$ with $\kf\subset\ker(A)$ is $\Ad(K)$-equivariant if and only if 
$A\vert_{\blds\kf^\perp}$ is  $\Ad(K)\vert_{\blds\kf^\perp}$-equivariant
we deduce that $[A,\ad(\kf)]=0$ is equivalent to
$[A\vert_{\blds\kf^\perp},\ad(\kf)\vert_{\blds\kf^\perp}]=0$.

Since for $A \in \SymgH$ with $\kf\subset\ker(A)$ $A$ preserves $\m$ we deduce from $[A,\ad(\kf)]=0$ that
$[A\vert_{\bldss\m},\ad(\kf)\vert_{\bldss\m}]=0$, which implies $[A\vert_{\bldss\m},\ad(\m_{\blds\kf})\vert_{\bldss\m}]=0$ since
$\m_{\blds\kf}\subset \kf$.

Suppose now $[A\vert_{\bldss\m},\ad(\m_{\blds\kf})\vert_{\bldss\m}]=0$.
All elements of $\SymgH$ are $\Ad(H)$-equivariant, by definition, thus
$[A\vert_{\bldss\m},\ad(\h)\vert_{\bldss\m}]=0$. Using $\kf=\h \oplus \m_{\blds\kf}$ we conclude
$[A\vert_{\bldss\m},\ad(\kf)\vert_{\bldss\m}]=0$.
Since  $\ad(\kf)$ preserves the decomposition $\g=\kf \oplus \kf^\perp$ it follows that
$\ad(\kf)\vert_{\bldss\m}$ preserves the decomposition $\m=\m_{\blds\kf} \oplus \kf^\perp$.
We deduce $[A\vert_{\blds\kf^\perp},\ad(\kf)\vert_{\blds\kf^\perp}]=0$.
Using that for $A \in \SymgH$ with $\kf \subset \ker (A)$, this is equivalent to $[A,\ad(\kf)]=0$, 
proving the claim.
\end{proof}

Any such $A$ induces a submersion metric on $G/H$ with respect to $K/H \to G/H \to G/K$.
Notice however, that $K$ need not be a compact subgroup of $G$.

\begin{corollary}\label{cor:submkf}
Let $A \in \SymgH$ ($v \in \Si$) and let $\m_{I_1},\dots, \m_{I_\ell} \subset \m$ denote the eigenspaces of $A\vert_{\m}$
corresponding to distinct eigenvalues in increasing order.  
\begin{itemize}
\item[$(i)$] Then $[\m_{I_1},\m_{I_1}]\perp \m_{I_p}$ (i.e. $[I_1 I_1 I_p]=0$) for all $1<p \leq \ell$ 
if and only if $\kf = \h\oplus \m_{I_1}$ is a subalgebra. 
\item[$(ii)$] Furthermore,  when $\kf = \h\oplus \m_{I_1}$ is a subalgebra, 
$A \in \D(\kf)$ if and only if $\kf  \leq \ker(A)$ and for all 
$1 \leq p \neq q \leq \ell$, 
we have $[\m_{I_1},\m_{I_p}]\perp \m_{I_q}$ (i.e. $[I_1 I_p I_q]=0$).
\end{itemize}
\end{corollary}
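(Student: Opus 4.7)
The plan is to deduce both parts from two basic facts: (a) each eigenspace $\m_{I_p}$ of $A\vert_{\m}$ is $\Ad(H)$-invariant (because $A$ is $\Ad(H)$-equivariant and the eigenvalues are distinct), so in particular $[\h,\m_{I_p}]\subset \m_{I_p}$, and (b) the eigenspaces $\m_{I_1},\dots,\m_{I_\ell}$ are pairwise $Q$-orthogonal (since $A\vert_{\m}$ is $Q$-self-adjoint with distinct eigenvalues on them). I would also record at the outset that, by $\Ad(G)$-invariance of $Q$ and the identity $Q([X,Y],Z)=Q(X,[Y,Z])$, the bracket $[I_1\,I_p\,I_q]$ is really the squared $Q$-length of the orthogonal projection of $[\m_{I_1},\m_{I_p}]$ onto $\m_{I_q}$, so $[I_1 I_p I_q]=0$ is the same as $Q([\m_{I_1},\m_{I_p}],\m_{I_q})=0$.

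For part (i), I would check the subalgebra condition $[\kf,\kf]\subset\kf$ piecewise. Since $[\h,\h]\subset\h\subset\kf$ and $[\h,\m_{I_1}]\subset\m_{I_1}\subset\kf$ by observation (a), the only non-trivial piece is $[\m_{I_1},\m_{I_1}]$. Decomposing this bracket as $[\m_{I_1},\m_{I_1}]\cap\h$ (which already lies in $\kf$) plus its $\m$-projection, and then further decomposing the $\m$-projection along $\m=\bigoplus_p \m_{I_p}$, the condition that the $\m$-part lands in $\m_{I_1}=\kf\cap\m$ is, by (b), precisely the vanishing $Q([\m_{I_1},\m_{I_1}],\m_{I_p})=0$ for all $p>1$. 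This is the same as $[I_1 I_1 I_p]=0$ for all $p>1$, proving the equivalence in both directions.

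For part (ii), assuming $\kf=\h\oplus\m_{I_1}$ is already a subalgebra, the conditions $A\geq 0$ and $\tr A=1$ are unchanged, so it remains to identify when the commutator condition $[A,\ad(\kf)]=0$ holds (the condition $\kf\subset\ker A$ is one of the assumed hypotheses in the statement). By Lemma \ref{lem:ad-invar}, $[A,\ad(\kf)]=0$ is equivalent to $[A\vert_{\m},\ad(\m_{I_1})\vert_{\m}]=0$. Since $A\vert_{\m}$ is diagonalized by the decomposition $\m=\bigoplus_p \m_{I_p}$ with distinct eigenvalues, commuting with $A\vert_{\m}$ is the same as preserving each eigenspace: for every $X\in\m_{I_1}$ and every $p$, one needs $[X,\m_{I_p}]\cap\m\subset\m_{I_p}$. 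By observation (b), this is equivalent to $Q([\m_{I_1},\m_{I_p}],\m_{I_q})=0$ whenever $p\ne q$, i.e.\ $[I_1 I_p I_q]=0$ for all $1\le p\ne q\le\ell$ (the case $p=1$ here recovers the subalgebra condition of (i), and the cases $p>1$ give the genuinely new content).

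The argument is essentially bookkeeping once Lemma \ref{lem:ad-invar} is in place; the only point requiring care is the translation between ``$\ad(X)$ preserves each eigenspace of $A\vert_{\m}$'' and the symmetric vanishing of $[ijk]$. This is where I would make the $Q$-orthogonality of eigenspaces and the $\ad$-invariance of $Q$ do the work, so that preservation of $\m_{I_p}$ by $\ad(\m_{I_1})$ corresponds cleanly to $[I_1 I_p I_q]=0$ for $q\ne p$, and symmetry of $[ijk]$ in its three indices ensures no asymmetric conditions are missed.
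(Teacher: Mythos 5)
Your proposal is correct and follows essentially the same route as the paper: part $(i)$ is the same direct check using $\Ad(H)$-invariance of the eigenspaces and their $Q$-orthogonality, and part $(ii)$ reduces, via Lemma \ref{lem:ad-invar}, to the commutation of $\ad(\m_{I_1})$ with $A\vert_{\m}$, which both you and the paper translate into preservation of the eigenspaces of $A\vert_{\m}$ and hence into the vanishing of $[I_1 I_p I_q]$ for $p\neq q$. The only differences are cosmetic (the paper verifies the converse commutator identity by the explicit computation $A[X_1,X_p]-[X_1,AX_p]=0$, while you invoke the diagonalizability criterion in both directions), so there is nothing to add.
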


\begin{proof}
$(i)$ If for all $1 < p $ we have $[\m_{I_1},\m_{I_1}]\perp \m_{I_p}$, then since $[\h,\m_{I_1}] \subset \m_{I_1}$ we have that $\kf = \h \oplus \m_{I_1}$ is a subalgebra of $\g$. Conversely, if $\kf = \h \oplus \m_{I_1}\in \Sub$ is a subalgebra, then   
we know $[\m_{I_1},\m_{I_1}] \subset \kf \perp \m_{I_p}$ for all $1<p$.

\noindent $(ii)$ Suppose $\kf = \h \oplus \m_{I_1}\in \Sub$ is a subalgebra, and $A \in \D(\kf)$ or equivalently, $v \in \D(\kf)^\Si$. Since $A \in \D(\kf)$ we know $[A,\ad(\kf)]=0$. Thus, by Lemma \ref{lem:ad-invar}
we deduce $[A\vert_{\blds\kf^\perp},\ad(\m_{\blds\kf})\vert_{\blds\kf^\perp}]=0$. 
This shows that $\ad(\m_{\blds\kf})$ respects
the eigenspaces of $A\vert_{\bldss\m}$. That is, $[\m_{I_1}, \m_{I_p}] \subset \m_{I_p} \perp \m_{I_q}$ for $p \neq q$. 
Conversely, suppose $\kf = \h \oplus \m_{I_1} \leq\ker(A)$ and for all $p \neq q$, 
we have $[\m_{I_1},\m_{I_p}]\perp \m_{I_q}$. 
Thus $\ad(\kf)\vert_{\blds\kf^\perp}$ preserves the eigenspaces of $A\vert_{\blds\kf^\perp}$.
By Lemma \ref{lem:ad-invar}, to see that $A \in \D(\kf)$ we need that 
$[A\vert_{\blds\kf^\perp},\ad(\kf)\vert_{\blds\kf^\perp}]=0$ which follows from
 $[A|_{\m_{I_p}},\ad(\m_{I_1})|_{\m_{I_p}}] =0$ for every $p>1$.
Let $X_1 \in \m_{I_i}$ and $X_p \in \m_{I_p}$, $p>1$. Then
$$
[A|_{\m_{I_p}},\ad(X_1)|_{\m_{I_p}}](X_p) = A[X_1,X_p] - [X_1,A X_p]=0\,,
$$ 
since by hypothesis,  $[\m_{I_1},\m_{I_p}] \subset \m_{I_p}$. Thus our equality holds. 
\end{proof}

\medskip

Here we provide a brief overview of representation theory of compact Lie groups $K$.
Let $V$ be a vector space over $\CC$. We call a representation $\rho_{\bldss{\CC}}:K \to \Un(V) \subset {\rm Gl}(V,\CC)$
an \emph{irreducible complex representation} if there exists no proper, non-trivial 
$\CC$-vector subspace $\tilde V$ of $V$ which
is $\rho_{\bldss{\CC}}(K)$-invariant. The \emph{realization} of an irreducible complex representation
$\rho_{\bldss{\CC}}:K \to  \Un(V,\CC)$ is given by $\rho_{\bldss{\RR}}:K\to \SO(W) \subset {\rm Gl}(W,\RR)\,;\,\,
k \mapsto \rho_{\bldss{\CC}}(k)$,
where $W$ is the $\RR$-vector space induced by $V$; that is, $W=V$ as sets, but $\dim_{\bldss{\RR}} W
=2 \dim_{\bldss{\CC}} V$.
Such representations are called \emph{real representations}. Notice that the realization of an irreducible
complex representation may not be irreducible.

An irreducible real representation $\rho_{\bldss{\RR}}:K \to \SO(W)$ is called of \emph{real type}, \emph{complex type}
or \emph{quaternionic type}, respectively, if the group of intertwining operators 
$$
 G(W,K):=\{ A \in {\rm End}(W,\RR) \mid A \cdot \rho_{\bldss{\RR}}(k) =  \rho_{\bldss{\RR}}(k) \cdot A \textrm{ for all } k\in K\} 
$$
is isomorphic to $\RR$, $\CC$ or $\HH$, respectively: see \cite[II, 6.2, II, Theorem 6.7]{B-D}.

\begin{lemma}\label{lem:repcomplex}
Let $\rho_{\bldss{\CC}}:\SU(n)\to \Un(V,\CC)$ be an irreducible complex representation, $n \geq 2$.
Suppose that $\rho_{\bldss{\CC}}$ can be extended to a representation of $\Un(n)$ such that
the center of $\Un(n)$ acts non-trivially on $V$ by multiples of the identity.
Then the realization $\rho_{\bldss{\RR}}:\Un(n)\to \SO(W)$, $W=V$, is an irreducible real
representation.
\end{lemma}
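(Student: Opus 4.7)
The plan is to use the trichotomy of irreducible complex representations of a compact group (see \cite[II, Theorem 6.7]{B-D}): the realization of an irreducible complex representation is reducible as a real representation if and only if the complex representation is of real type, i.e.\ admits a $K$-equivariant conjugate-linear involution $J:V\to V$ with $J^{2}=\Id_{V}$. I will rule out such a $J$ for $K=\Un(n)$ by a one-line computation on the center, whence $\rho_{\bldss{\RR}}$ must be real irreducible.

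First, I would verify that $\rho_{\bldss{\CC}}$ is still irreducible as a complex $\Un(n)$-representation: any complex $\Un(n)$-invariant subspace is in particular $\SU(n)$-invariant, and $\rho_{\bldss{\CC}}|_{\SU(n)}$ is complex irreducible by hypothesis. Next, since the center $Z(\Un(n))=\{e^{i\theta}I_{n}\mid \theta\in\RR\}\cong S^{1}$ acts by multiples of the identity and continuous homomorphisms $S^{1}\to S^{1}$ are integer powers, there exists $m\in\ZZ$ with
$$
\rho_{\bldss{\CC}}(e^{i\theta}I_{n}) \;=\; e^{im\theta}\cdot \Id_{V}\qquad\text{for all }\theta\in\RR,
$$
and the non-triviality hypothesis forces $m\neq 0$.

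Now suppose, for contradiction, that $\rho_{\bldss{\CC}}$ were of real type as a $\Un(n)$-representation. Then there is a conjugate-linear, $\Un(n)$-equivariant involution $J:V\to V$. Applying $\Un(n)$-equivariance at $e^{i\theta}I_{n}$ and using that $J$ is conjugate-linear, I obtain
$$
e^{-im\theta}\,J(v) \;=\; J\bigl(e^{im\theta}v\bigr) \;=\; J\bigl(\rho_{\bldss{\CC}}(e^{i\theta}I_{n})v\bigr) \;=\; \rho_{\bldss{\CC}}(e^{i\theta}I_{n})J(v) \;=\; e^{im\theta}\,J(v)
$$
for every $v\in V$ and every $\theta\in\RR$. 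Hence $e^{2im\theta}=1$ for all $\theta\in\RR$, forcing $m=0$, which contradicts the choice of $m$. Therefore $\rho_{\bldss{\CC}}$, as a complex $\Un(n)$-representation, is of complex or quaternionic type, and by the trichotomy its realization $\rho_{\bldss{\RR}}:\Un(n)\to\SO(W)$ is irreducible.

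The only subtlety (and the main obstacle, such as it is) lies in correctly invoking the trichotomy: the statement of Lemma~\ref{lem:repcomplex} is sensitive to whether one regards a real structure as $\SU(n)$-equivariant or $\Un(n)$-equivariant, and the whole point is that a real structure for $\rho_{\bldss{\CC}}|_{\SU(n)}$ need not be $\Un(n)$-equivariant. The center computation above is precisely the obstruction that prevents such an extension, so even when $\rho_{\bldss{\CC}}|_{\SU(n)}$ happens to be of real type (as is the case, e.g., for $S^{k}(\CC^{2})$ with $k$ even), the $\Un(n)$-realization remains irreducible as soon as the center acts non-trivially.
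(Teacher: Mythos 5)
Your proof is correct, but it runs along a different track than the paper's. The paper argues directly on a hypothetical invariant subspace: if $\tilde W\subset W$ is a proper real $\rho_{\bldss{\RR}}(\Un(n))$-invariant subspace, then the central circle acts on it by the full circle of scalars $e^{i\varphi\alpha}$ (non-triviality gives $\alpha\neq 0$), so in particular $i\tilde W\subset \tilde W$; hence $\tilde W$ is a complex, $\SU(n)$-invariant subspace of $V$, contradicting the assumed complex irreducibility of $\rho_{\bldss{\CC}}|_{\SU(n)}$. You instead first upgrade irreducibility to the group $\Un(n)$ (any complex $\Un(n)$-invariant subspace is $\SU(n)$-invariant), then invoke the real/complex/quaternionic trichotomy of \cite{B-D} — realification of a complex irreducible is real-reducible exactly when the representation is of real type — and kill a putative $\Un(n)$-equivariant conjugate-linear involution $J$ by the central computation $e^{-im\theta}J=e^{im\theta}J$, forcing $m=0$. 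Both arguments pivot on the same fact (the center acts by a non-trivial character), but the paper's version is more elementary and self-contained, needing nothing beyond the definition of an invariant subspace, while yours makes the structural obstruction explicit: it shows that no $\Un(n)$-equivariant real structure can exist even when $\rho_{\bldss{\CC}}|_{\SU(n)}$ itself is of real type (e.g.\ $S^k(\CC^2)$ with $k$ even), which is precisely the subtlety your last paragraph rightly emphasizes. The only point to be careful about, and you handle it, is that the trichotomy must be applied to the representation as a $\Un(n)$-representation, so the preliminary check of complex $\Un(n)$-irreducibility is genuinely needed.
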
                                           

\begin{proof}
For a contradiction, suppose that $\tilde W$ is a non-trivial proper real subspace of $W$ which is 
$\rho_{\bldss{\RR}}(\Un(n))$-invariant. By assumption there exists some $\alpha \in \RR$ such that
for every $e^{i\varphi} \cdot I_n$ in the center of $\Un(n)$, we have
$\rho_{\bldss{\CC}}(e^{i\varphi} \cdot I_n)=e^{i\varphi \alpha}\cdot I_V$. Thus whenever $\tilde w \in \tilde W$,
then $i\cdot \tilde w \in \tilde W$ as well. Consequently $\tilde W$ is an $\SU(n)$-invariant complex subspace of $W=V$,
contradicting our hypothesis. Consequently $W$ is an irreducible real representation.
\end{proof}

\subsection{{\it Lie-theory II}}\label{sec:LieII}

In this section we describe the Casimir operator of the isotropy representation of a compact homogeneous
space $G/H$ and its relation to the Killing form and the structure constants.

\medskip

Let $(Z_m)_{1\leq m \leq \dim H}$ denote a $Q$-orthonormal basis of $\h$. 
Let $\m_i$ be an $\Ad(H)$-irreducible summand of $\m$. 
The {\em Casimir operator} on $\m_i$ is given by
\beg
  C_{\bldss\m_i,Q\vert_{\bldss\h}}:=-\sum_{m}{\ad}(Z_m)\circ {\ad}(Z_m):\m_i \to \m_i
\en
and it satisfies
\beg
   C_{\bldss\m_i,Q\vert_{\bldss\h}}=c_i\cdot {\Id}_{\bldss\m_i}\,.
\en
A short computation using \eqref{eqn:adXscew} shows $c_i\geq 0$. 
Clearly, $c_i=0$ if and only if $\m_i \subset \m_0$: see Lemma \ref{lemnor}.

Recall, the definitions of $d_i$, $b_i$ and $[ijk]$ can be found in Section \ref{sec:scal}
and that these numbers depend on the decomposition $f=\oplus_{i=1}^\ell \m_i$ of $\m$ choosen.

\begin{lemma}[\cite{WZ2}] \label{lem:1p5}
 Let $G/H$ be a compact homogeneous space and
let $f$ be a decomposition of $\m$.
Then for all $1 \leq i \leq \ell$,
\beg
  d_ib_i-\tfrac{1}{2}\sum_{j,k=1}^\ell [ijk]
   =2d_ic_i+\tfrac{1}{2}\sum_{j,k=1}^\ell [ijk] \geq 0\,.
\en
Moreover, the expression 
is zero if and only if $\m_i \subset \z(\g)$.
\end{lemma}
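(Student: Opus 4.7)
The plan is to compute $d_i b_i$ directly as a Hilbert--Schmidt-type trace and to recognize each piece on the right-hand side of the claimed identity. Fix a $Q$-orthonormal basis $(e_\alpha)$ of $\m_i$ and a $Q$-orthonormal basis $(Z_m) \cup (e_\beta)$ of $\g = \h \oplus \m$, where $(e_\beta)$ is adapted to the decomposition $f$. Since $B|_{\m_i} = b_i \cdot Q|_{\m_i}$ and $\ad(e_\alpha)$ is $Q$-skew-adjoint by \eqref{eqn:adXscew}, we have
\[
 d_i b_i = \sum_\alpha B(e_\alpha,e_\alpha) = -\sum_\alpha \tr_{\g}\ad(e_\alpha)^2 = \sum_\alpha\Bigl( \sum_m \|[e_\alpha,Z_m]\|^2 + \sum_\beta \|[e_\alpha,e_\beta]\|^2\Bigr).
\]

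The first double sum equals $d_i c_i$: since $\m_i$ is $\ad(\h)$-invariant,
\[
 \sum_\alpha \sum_m \|[e_\alpha,Z_m]\|^2 = \sum_m \|\ad(Z_m)|_{\m_i}\|^2_{HS} = -\sum_m \tr\bigl(\ad(Z_m)^2|_{\m_i}\bigr) = \tr\bigl(C_{\bldss\m_i,Q|_{\bldss\h}}\bigr)=d_i c_i.
\]
For the second double sum, expand $[e_\alpha,e_\beta]\in \g$ in our basis:
\[
 \|[e_\alpha,e_\beta]\|^2 = \sum_m Q([e_\alpha,e_\beta],Z_m)^2 + \sum_\gamma Q([e_\alpha,e_\beta],e_\gamma)^2.
\]
Summing the $e_\gamma$-part over $\alpha$, and over $\beta,\gamma$ grouped by the summands $\m_j,\m_k$, gives $\sum_{j,k=1}^\ell [ijk]$. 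For the $Z_m$-part, apply $Q([X,Y],Z) = -Q(Y,[X,Z])$ twice to get $Q([e_\alpha,e_\beta],Z_m) = Q(e_\beta,[Z_m,e_\alpha])$, and then for fixed $\alpha,m$ sum over $\beta$ to recognize $\sum_\beta Q(e_\beta,[Z_m,e_\alpha])^2 = \|[Z_m,e_\alpha]\|^2$ (using $[Z_m,e_\alpha]\in\m$). The resulting sum is again $d_i c_i$, exactly as in the first double sum. Combining yields $d_i b_i = 2 d_i c_i + \sum_{j,k}[ijk]$, which rearranges to the desired identity.

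Since $c_i \geq 0$ and $[ijk] \geq 0$, nonnegativity is immediate. To handle the equality case, note that $2 d_i c_i + \tfrac12 \sum_{j,k}[ijk]=0$ forces both $c_i=0$ and $[ijk]=0$ for all $j,k$. The first condition means the positive semi-definite Casimir $-\sum_m \ad(Z_m)^2|_{\m_i}$ vanishes, equivalently $[\h,\m_i]=0$, i.e.\ $\m_i\subset \m_0$. The second condition means $Q([\m_i,\m],\m)=0$, so the $\m$-component of $[\m_i,\m]$ is zero; that is, $[\m_i,\m]\subset \h$. Combining the two: for $X \in \m_i$, $Y\in\m$, $Z_m\in\h$, we use \eqref{eqn:adXscew} to get $Q([X,Y],Z_m) = -Q(Y,[X,Z_m])=0$ since $[X,Z_m]\in [\m_i,\h]=0$. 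Hence $[X,Y]\in\h$ is $Q$-orthogonal to all of $\h$, so $[X,Y]=0$, giving $[\m_i,\m]=0$ and therefore $\m_i\subset \z(\g)$. The converse is obvious. The only nonroutine point is the identity $\sum_\alpha\sum_\beta\sum_m Q([e_\alpha,e_\beta],Z_m)^2 = d_i c_i$, for which the key is to reindex via skew-adjointness to reduce it to the same Casimir computation as the $\h$-part.
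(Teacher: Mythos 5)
Your proof is correct, and it is more self-contained than the paper's. The paper disposes of the trace identity by citing \cite[La.\ 1.5]{WZ2} and only writes out the equality case, which it handles structurally: from $c_i=0$ and $[ijk]=0$ it deduces that $\m_i$ is an abelian subalgebra of $\m_0$ and that $\g':=\h\oplus\bigl(\bigoplus_{j\neq i}\m_j\bigr)$ is a subalgebra commuting with $\m_i$ (the argument behind Lemma \ref{lem:splitt}), whence $\m_i\subset\z(\g)$. You instead prove the identity from scratch — writing $d_ib_i=-\sum_\alpha\tr_\g\ad(e_\alpha)^2$, splitting the trace over $\h\oplus\m$, and recognizing the $\h$-contribution twice as $d_ic_i$ via skew-adjointness of $\ad$ — and your equality-case argument is shorter and more direct: $[ijk]=0$ for all $j,k$ gives $[\m_i,\m]\subset\h$, and then $Q([X,Y],Z)=-Q(Y,[X,Z])=0$ for $Z\in\h$ (using $[\m_i,\h]=0$) forces $[\m_i,\m]=0$, so together with $[\m_i,\h]=0$ you get $\m_i\subset\z(\g)$ without invoking any splitting lemma. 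Both routes are sound; yours buys independence from \cite{WZ2} and from Lemma \ref{lem:splitt}, while the paper's route makes visible the geometric splitting $G/H=G_1/H_1\times G_2/H_2$ that underlies the degenerate case. All the individual steps you flag (the reindexing $Q([e_\alpha,e_\beta],Z_m)=Q(e_\beta,[Z_m,e_\alpha])$, the use of $[\h,\m_i]\subset\m_i$, and the completeness of the basis $(e_\beta)$ of $\m$ when summing $\sum_\beta Q(e_\beta,[Z_m,e_\alpha])^2$) check out.
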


\begin{proof}
The equality is proved in \cite[La.1.5]{WZ2}. 

Suppose that $i=1$ and the left-hand side is zero.
Then $c_1=0$, thus $[\h,\m_1]=0$ and $\m_1 \subset \m_0$ by Lemma \ref{lemkmm}. Since $[11j]=0$ for
all $1\leq j\leq \ell$, the module $\m_1$ is an abelian subalgebra of $\m_0$. Moreover,
since for all $2 \leq j,k\leq \ell$ we have $[1jk]=0$, the subspace
$\g':=\h \oplus (\bigoplus_{j=2}^\ell \m_j)$
is a subalgebra of $\g$ commuting with $\m_1$: see Lemma \ref{lem:splitt}.
 It follows that $\m_1 \subset \z(\g)$.

The converse direction is clear.
\end{proof}

\begin{lemma} \label{lem:toral}
Let $\kf \in \Sub$ and let $f$ be a
decomposition of $\m$ such that $\kf$ is $f$-adapted,
and let $I_1=I^{\kf}_1$. Then
$$
 a_{\blds\kf}:=\sum_{j\in I_1}d_jc_j+\tfrac{1}{4}[I_1I_1I_1] \geq 0\,.
$$
Furthermore, $\kf$ is toral if and only if $a_{\blds\kf} =0$.
\end{lemma}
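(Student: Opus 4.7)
The nonnegativity $a_{\blds\kf} \geq 0$ is immediate, since each Casimir eigenvalue $c_j$ is nonnegative (as noted after Lemma \ref{lemnor} and reiterated in Section \ref{sec:LieII}), the dimensions $d_j$ are positive, and $[I_1 I_1 I_1] = \sum Q([e_\alpha,e_\beta],e_\gamma)^2 \geq 0$. So the real content is the equivalence $\kf$ toral $\Longleftrightarrow a_{\blds\kf}=0$.

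For the easy direction, suppose $\kf$ is toral. By the remark just after Definition \ref{def:toral-non-toral} (and the fact that $\tf = \h \oplus \af$ with $\af \subset \m_0$ for any toral subalgebra), we get $\m_{\blds\kf} = \m_{I_1} \subset \m_0$. Since $c_j = 0$ precisely when $\m_j \subset \m_0$, each $c_j$ with $j \in I_1$ vanishes. Toral also means $\m_{\blds\kf}$ is abelian, so $[\m_{I_1},\m_{I_1}] = 0$ and in particular $[I_1 I_1 I_1] = 0$. Hence $a_{\blds\kf} = 0$.

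Conversely, assume $a_{\blds\kf} = 0$. Since each summand is nonnegative, we must have both $c_j = 0$ for every $j \in I_1$ and $[I_1 I_1 I_1] = 0$. The first condition forces $\m_j \subset \m_0$ for every $j \in I_1$, so $\m_{I_1} \subset \m_0$. Since $\m_0$ is a subalgebra of $\g$ (Lemma \ref{lemnor}), this gives $[\m_{I_1},\m_{I_1}] \subset \m_0$. The second condition says $Q([\m_{I_1},\m_{I_1}],\m_{I_1}) = 0$; but $\kf = \h \oplus \m_{I_1}$ is a subalgebra, so $[\m_{I_1},\m_{I_1}] \subset \h \oplus \m_{I_1}$, and vanishing of the $\m_{I_1}$-component forces $[\m_{I_1},\m_{I_1}] \subset \h$.

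Combining these two containments yields $[\m_{I_1},\m_{I_1}] \subset \h \cap \m_0 \subset \h \cap \m = \{0\}$. Therefore $\m_{\blds\kf} = \m_{I_1}$ is an abelian subalgebra of $\g$, so $\kf$ is toral by Definition \ref{def:toral-non-toral}. The main (and only slightly delicate) step is this last intersection argument: one must use \emph{both} pieces of information from $a_{\blds\kf}=0$, namely $\m_{I_1} \subset \m_0$ \emph{and} $[I_1 I_1 I_1] = 0$, and exploit that $\m_0$ is a subalgebra (which in turn relied on the special choice of background metric $Q$ in Section \ref{sec:ginvm}).
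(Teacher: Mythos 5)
Your proof is correct and takes essentially the same route as the paper: decompose $a_{\blds\kf}=0$ into its nonnegative summands to get $\m_{I_1}\subset\m_0$ (from $c_j=0$) and $[I_1I_1I_1]=0$, then combine the fact that $\kf$ is a subalgebra with the fact that $\m_{I_1}$ lies in the subalgebra $\m_0$ to conclude $[\m_{I_1},\m_{I_1}]=0$. The only cosmetic difference is the order of intersections: you show $[\m_{I_1},\m_{I_1}]\subset\h\cap\m_0=\{0\}$, while the paper shows $[\m_{I_1},\m_{I_1}]\subset\m\cap\kf=\m_{I_1}$ and then kills this with $[I_1I_1I_1]=0$; these are the same argument.
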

 
\begin{proof}
Suppose that $\kf=\h \oplus \af$  is a toral subalgebra of $\g$, that is, $\af \subset \m_0$. Then clearly $a_{\blds\kf}=0$.
Conversely, when $a_{\blds\kf}=0$, then $\sum_{j\in I_1}d_jc_j=0$, so we know $\m_{I_1}:=\m_{\kf}\subset \m_0$.
Because $\kf $ is a subalgebra we conclude $[\m_{I_1},\m_{I_1}]\subset \m_{I_1}$. Since   $[I_1I_1I_1]=0$, the claim follows.
\end{proof}

\begin{corollary}\label{cor:uniformntest}
For every a compact homogeneous space $G/H$, there 
 exists a constant $n_{G/H}>0$ such that the following holds:
given any non-toral subalgebra $\kf$,
with $f$ a decomposition of $\m$ such that $\kf$ is $f$-adapted,
and letting $I_1=I^{\blds\kf}_1$, then
$$
  a_{\blds\kf}=\sum_{j\in I_1}d_j c_j+\tfrac{1}{4}[I_1I_1I_1]  \geq n_{G/H}\,.
$$
\end{corollary}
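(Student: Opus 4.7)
The plan is to combine a continuity/compactness argument with Lemma \ref{lem:toral}, which already gives $a_{\blds\kf} > 0$ for each non-toral $\kf$. The first step is to observe that, despite being written in terms of a good decomposition $f$, the quantity $a_{\blds\kf}$ depends only on $\m_{\blds\kf} = \m \cap \kf$ and not on the choice of $f$ adapted to $\kf$. Indeed, $\sum_{j \in I_1} d_j c_j$ equals $\operatorname{tr}\bigl(C_{\m_{\blds\kf}, Q|_{\bldss\h}}\bigr)$, the trace of the $\h$-Casimir operator on $\m_{\blds\kf}$, and $[I_1 I_1 I_1]$ is computed from any $Q$-orthonormal basis of $\m_{I_1} = \m_{\blds\kf}$ by the formula ${\displaystyle [I_1 I_1 I_1] = \sum_{\alpha,\beta,\gamma} Q([e_\alpha, e_\beta], e_\gamma)^2}$; both expressions are clearly invariants of the subspace $\m_{\blds\kf}$. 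Hence $a: \Sub \to [0,\infty)$ is well defined as a function of $\kf$ alone.

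Next I would argue that for each $k$, the map $\kf \mapsto a_{\blds\kf}$ is continuous on the set of $k$-dimensional elements of $\Sub$, viewed as a subset of the Grassmannian $Gr_k(\RR^{\dim \bldsg \g})$. This follows because the orthogonal projector onto $\m_{\blds\kf}$ depends continuously on $\kf \in Gr_k$, and both $\operatorname{tr}(C_{\m_{\blds\kf}, Q|_{\bldss\h}})$ and $[I_1 I_1 I_1]$ are polynomial in the entries of that projector (for the latter, pick any continuously varying $Q$-orthonormal basis of $\m_{\blds\kf}$, noting that the sum of squares is basis-independent).

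Then I would apply compactness. Fix a dimension $k$ with $\dim\h < k < \dim\g$, and let $\Sub_s^k := \{\kf \in \Sub_s \mid \dim \kf = k\}$. By Lemma \ref{lem:sequsubalgsub}, any sequence in $\Sub_s^k$ subconverges in $Gr_k$ to a subalgebra of dimension $k$, and by Lemma \ref{lem:Dkfi}(i) the limit is again non-toral. Therefore $\Sub_s^k$ is a closed, hence compact, subset of $Gr_k$. By the continuity established in the previous step and by Lemma \ref{lem:toral}, the continuous positive function $\kf \mapsto a_{\blds\kf}$ attains a positive minimum $n_k > 0$ on $\Sub_s^k$.

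Finally, since there are only finitely many integers $k$ with $\dim \h < k < \dim \g$, setting ${\displaystyle n_{G/H} := \min_k n_k > 0}$ gives the desired uniform bound. The only nontrivial point is the intrinsic/continuous nature of $a_{\blds\kf}$; once that is pinned down, compactness of each $\Sub_s^k$ (which is exactly the content of Lemmas \ref{lem:Dkfi} and \ref{lem:sequsubalgsub}) closes the argument.
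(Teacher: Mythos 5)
Your proof is correct and complete, and it takes a mildly but genuinely different route from the one in the paper. The paper also argues via sequential compactness, but by contradiction: given a sequence of non-toral $\kf_\alpha$ with $a_{\blds\kf_\alpha} \to 0$, it shows that eventually $\m_{\blds\kf_\alpha}\subset \m_0$, extracts the nonvanishing semisimple parts $(\m_{\blds\kf_\alpha})^{ss}$, and invokes the BWZ finiteness of semisimple subalgebras of $\g$ up to conjugation to conclude that their limit is again semisimple, so $[I_1I_1I_1]_{f_\infty}>0$, contradicting $a_{\blds\kf_\alpha}\to 0$. Your version instead isolates the two clean facts that make the compactness argument transparent: $(i)$ $a_{\blds\kf}$ depends only on the subspace $\m_{\blds\kf}$ and is a polynomial in the $Q$-orthogonal projector onto it — so it is continuous on the Grassmannian — and $(ii)$ $\Sub_s^k$ is closed in $Gr_k(\g)$, hence compact, by Lemma \ref{lem:sequsubalgsub} together with Lemma \ref{lem:Dkfi}(i). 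Combined with Lemma \ref{lem:toral} this gives a positive minimum $n_k$ for each $k$, and there are finitely many $k$. Note that your appeal to Lemma \ref{lem:Dkfi}(i) hides the same dependency on BWZ's finiteness result that the paper invokes directly, so the two arguments ultimately rest on the same input; the gain of your formulation is chiefly expository, since it makes the continuity of $a_{\blds\kf}$ explicit rather than tracking it implicitly through structure constants of a converging sequence of decompositions $f_\alpha$.
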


\begin{proof}
For a contradiction, suppose that there exists a sequence of non-toral subalgebras $(\kf_\alpha)_{\alpha \in \NN}$
and a sequence $(f_\alpha)_{\alpha \in \NN}$ of decompositions of $\m$, such that for each $\alpha \in \NN$, 
$\kf_\alpha$ is $f_\alpha$-adapted, with 
$a_{\blds\kf_\alpha} \to 0$ as $\alpha \to \infty$. Because the Casimir operator $C\vert_{\m}:\m \to \m$
is non-negative with kernel $\m_0$, for $\alpha$ large we must have $\m_{\bldss\kf_\alpha}\subset \m_0$.
But since $\m_0$
and $\kf_\alpha$ are subalgebras of $\g$, so is 
$\kf_\alpha^0:=\m_{\bldss\kf_\alpha}=\m \cap \kf_{\alpha}$. Since each $\kf_\alpha$ is non-toral by
assumption, $\kf_\alpha^0$ is a non-abelian subalgebra of $\m_0$ and  consequently, $\kf_\alpha$ has
a nonvanishing semisimple part $(\kf_\alpha^0)^{ss}$.  Passing to a subsequence, we may assume that
 these semisimple subalgebras  converge to a limit subalgebra
$\kf_\infty^0$ of $\m_0 \subset \g$, and also that $f_\alpha \to f_\infty$ as $\alpha \to \infty$. 
Up to conjugation,  $\g$ has only finitely many semisimple subalgebras, see \cite[Corollary 4.5]{BWZ}; thus 
 $\kf_\infty^0$ must be semisimple. Hence $[I_1I_1I_1]_{f_\infty}>0$, a contradiction. 
\end{proof}

\begin{corollary} \label{corbipos} Let $G/H$ be a compact homogeneous space
with finite fundamental group and let $f$ be a
decomposition of $\m$ into $\Ad(H)$-irreducible summands. 
For each $1\leq i \leq \ell$,
$$
  d_ib_i-\tfrac{1}{2}\sum_{j,k=1}^\ell [ijk]>0\,,
$$
and consequently
\begn
  b_{G/H}:=\sum_{i=1}^{\ell}d_ib_i> 0\,.\label{corbGH}
\enn
\end{corollary}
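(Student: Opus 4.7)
The plan is to prove the strict inequality $d_ib_i - \tfrac{1}{2}\sum_{j,k=1}^\ell [ijk] > 0$ for each $i$ by contradiction. Once it is established, the consequence $b_{G/H} > 0$ is immediate: each $d_ib_i$ then exceeds $\tfrac{1}{2}\sum_{j,k}[ijk] \geq 0$ and is in particular strictly positive, so $b_{G/H} = \sum_i d_ib_i > 0$.

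By Lemma \ref{lem:1p5}, the quantity $d_ib_i - \tfrac{1}{2}\sum_{j,k}[ijk]$ is already known to be non-negative, with equality precisely when $\m_i \subset \z(\g)$. So I would suppose, toward a contradiction, that $\m_i \subset \z(\g)$ for some $i$, and derive that $\pi_1(G/H)$ must be infinite.

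To do this, pick any nonzero $X \in \m_i$ and let $\widetilde X$ denote the $G$-invariant vector field on $G/H$ whose value at $eH$ is $X$. Because $X \notin \h$, the field $\widetilde X$ is nowhere-vanishing. The Levi-Civita connection of the normal homogeneous metric $Q$ obeys $(\nabla_{\widetilde Y}\widetilde X)\vert_{eH} = \tfrac{1}{2}[Y,X]_\m$ for $Y \in \m$ (see \cite[7.28]{Bes}), which vanishes by the centrality of $X$; using $G$-invariance once more, $\widetilde X$ is parallel on all of $(G/H, Q)$. Applying the de Rham decomposition theorem to the universal cover, the presence of a nowhere-zero parallel vector field splits off an $\RR$-factor: $\widetilde{G/H} \cong N \times \RR^k$ with $k \geq 1$. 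In particular $\widetilde{G/H}$ is non-compact, and since $G/H$ is compact this forces $\pi_1(G/H)$ to be infinite, contradicting our hypothesis.

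The main obstacle is this final topological step bridging a parallel vector field to infinite fundamental group; everything else is algebraic bookkeeping on top of Lemma \ref{lem:1p5}. A slightly different framing is to note, by specializing \eqref{eqn:einstein} to $x_j \equiv 1$, that $r_i := \tfrac{1}{2d_i}(d_ib_i - \tfrac{1}{2}\sum_{j,k}[ijk])$ is exactly the Ricci eigenvalue of $Q$ on $\m_i$; the strict inequality to be proved is then equivalent to $\ric(Q) > 0$, and a null direction for $\ric(Q)$ produces the same parallel $\RR$-factor in the universal cover via Cheeger--Gromoll, again forcing $\pi_1(G/H)$ to be infinite.
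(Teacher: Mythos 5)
Your argument is correct, but it departs from the paper at precisely the point you flag as the main obstacle. The paper also starts from Lemma~\ref{lem:1p5} to conclude that vanishing forces $\m_i\subset\z(\g)$; from there, however, it stays entirely inside Lie algebra: writing $\m=\m_i\oplus\big(\bigoplus_{j\neq i}\m_j\big)$, centrality of $\m_i$ gives $[\m_i,\m_i]=0$, and $\Ad(G)$-invariance of $Q$ gives $Q([\m_2,\m_2],\m_i)=Q(\m_2,[\m_2,\m_i])=0$, so Lemma~\ref{lem:splitt} applies and $G/H$ splits off a torus factor $G_1/H_1$ with $\g_1=\m_i$ abelian and $\h_1=0$, hence $\pi_1(G/H)$ is infinite. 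You instead pass to geometry: the central $X\in\m_i$ yields a nowhere-zero $G$-invariant parallel field (since $\nabla_{\widetilde Y}\widetilde X=\tfrac12[Y,X]_\m=0$ for the normal metric), and the de Rham decomposition of the universal cover then splits off an $\RR$-factor, contradicting compactness with finite $\pi_1$. Both routes are sound. The paper's is more elementary, self-contained within its algebraic toolkit, and avoids invoking de Rham or Cheeger--Gromoll; yours is conceptually appealing in that it identifies the quantity as a Ricci eigenvalue of $Q$ and reads the vanishing geometrically, but it imports heavier machinery for what is ultimately a Lie-algebraic statement. The topological step you worried about is standard and not actually a gap, so your proposal is a valid alternative rather than a flawed one.
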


\begin{proof}
By Lemma \ref{lem:1p5} we know if $2d_ic_i+\frac{1}{2}\sum_{j,k=1}^\ell [ijk]=0$, then 
$\m_i$ is an abelian subalgebra in $\g$ and $\m_i$ commutes with the subalgebra
$\h \oplus (\bigoplus_{j=1,j\neq i}^\ell \m_j)$. This would imply that
$G/H$ has infinite fundamental group, see Lemma \ref{lem:splitt},
contradicting our hypothesis. This proves the claim.
\end{proof}


\subsection{{\it Lie theory III: A \L ojasiewicz inequality for structure constants}}\label{sec:Loj}

We recall the \L ojasie\-wicz inequality in semi-algebraic geometry (cf.~\cite[Proposition 2.3.11]{B-R}): 
Let $K \subset \RR^n$ be a compact semi-algebraic set 
and let $f,g:K \to \RR$ be continuous and semi-algebraic. Suppose that for all $x \in K$ we have: 
$f(x)=0 \Rightarrow g(x)=0$. Thus $f^{-1}(0) \subset g^{-1}(0)$. Then there exists $N\in\NN$ and $C=C(f,g,K)\geq 0$ 
such that for all $x \in K$ we have
$$
 \vert g(x)\vert^N \leq C\cdot \vert f(x) \vert\,.
$$ 
The \L ojasiewicz inequality stated in Proposition \ref{propinequ} provides the key estimate for the proof
of Theorem \ref{theo:scalestu}.  Note that standard estimation methods cannot be
applied to the above situation.

\medskip

Let $I^{\co}=\{1,...,\dim \g\}\bs I$.

\begin{proposition} \label{propinequ}
Let $G$ be a compact, connected Lie group endowed with a  biinvariant
metric $Q$. Let  $I\subset \{1,...,\dim \,\g\}$ with $1\in I$. Let 
$Z_{\vert I\vert}$ denote the set of all $Q$-orthonormal bases  
$b:=(e_1,...,e_{\dim \bldss\g})$ of $(\g,Q)$
such that
$$
  \tf:=\oplus_{i\in I} \langle e_i \rangle
$$ 
is an abelian subalgebra. Then there exists a constant $C=C(G,Q)>0$
and an open neighborhood $U_{\vert I\vert}$ of $Z_{\vert I\vert}$ in the space of
all $Q$-orthonormal bases, such that for all  
$\tilde b:=(\tilde e_1,...,\tilde e_{\dim \bldss\g}) \in U_{\vert I\vert}$ we have
\begn \label{eqn:Loin}
  \sum_{j,k \in I}Q([\tilde e_1,\tilde e_j],\tilde e_k)^2
  \leq
  C \cdot \!\!\!\!\sum_{j\in I,k \in I^{\co}}
  Q([\tilde e_1,\tilde e_j],\tilde e_k)^2\,.
\enn
\end{proposition}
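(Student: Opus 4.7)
The plan is to combine the semi-algebraic Łojasiewicz inequality with a Lie-theoretic analysis of the zero sets of the two sides of \eqref{eqn:Loin}. Setting $\tilde \tf := \bigoplus_{i \in I} \RR\,\tilde e_i$ and writing $L(\tilde b) := \sum_{j,k \in I} Q([\tilde e_1,\tilde e_j],\tilde e_k)^2$, $R(\tilde b) := \sum_{j \in I,\, k \in I^{\co}} Q([\tilde e_1,\tilde e_j],\tilde e_k)^2$, both quantities are non-negative polynomial, hence semi-algebraic, functions on the compact semi-algebraic manifold of $Q$-orthonormal bases of $\g$. First I would reformulate geometrically: by the $Q$-skew-adjointness of $\ad(\tilde e_1)$ (biinvariance of $Q$), $L(\tilde b)$ equals the squared Hilbert--Schmidt norm of the $\tilde \tf$-component, and $R(\tilde b)$ the squared norm of the $\tilde \tf^\perp$-component, of the linear map $\ad(\tilde e_1)|_{\tilde \tf}:\tilde \tf\to \g$. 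On $Z_{|I|}$ the subspace $\tilde \tf$ is abelian and contains $\tilde e_1$, so $\ad(\tilde e_1)|_{\tilde \tf}=0$ and both $L$ and $R$ vanish.

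The crucial Lie-theoretic step is to exhibit an open neighborhood $U_0$ of $Z_{|I|}$ on which $\{R=0\}\cap U_0 \subset \{L=0\}$. The condition $R(\tilde b)=0$ says exactly that $\tilde \tf$ is $\ad(\tilde e_1)$-invariant. Because $\ad(\tilde e_1)$ is $Q$-skew-adjoint, any real $\ad(\tilde e_1)$-invariant subspace of $\g$ decomposes as $A\oplus\bigoplus_{\alpha\in S}\g_\alpha$, where $A\subset \z(\tilde e_1)=\ker\ad(\tilde e_1)$ and the $\g_\alpha$ are full real root spaces relative to a Cartan of $\g$ containing $\tilde e_1$. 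Every abelian subspace of $\g$ sits in some Cartan, and real root spaces are $Q$-orthogonal to every Cartan; hence for $\tilde b$ sufficiently close to $Z_{|I|}$ no root-space summand can appear in $\tilde \tf$, forcing $S = \emp$ and $\tilde \tf\subset \z(\tilde e_1)$. This gives $\ad(\tilde e_1)|_{\tilde \tf}=0$ and therefore $L(\tilde b)=0$.

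Having secured this containment, the semi-algebraic Łojasiewicz inequality on the compact neighborhood $\overline{U}_0$ yields $L^N \leq C_0 R$ for some $N\in\NN$, $C_0>0$. To upgrade to $L\leq CR$ I would perform a vanishing-order analysis at each $b_0 \in Z_{|I|}$: parametrizing nearby bases by $\tilde e_i=\exp(tX)e_i$ with $X\in\so(\g,Q)$, and using the identity $Q([u,v],w)=-Q(v,[u,w])$ together with $[e_i,e_j]=0$ for $i,j\in I$, one checks that every $F_{jk}(\tilde b):=Q([\tilde e_1,\tilde e_j],\tilde e_k)$ with both $j,k\in I$ has vanishing first differential at $b_0$, and so vanishes to order $\geq 2$; consequently $L$ vanishes to order $\geq 4$. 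By contrast the $F_{jk}$ with $j\in I$, $k\in I^{\co}$ have generically nonzero first derivatives along directions transverse to $Z_{|I|}$, so $R$ has a Morse--Bott structure vanishing to order exactly $2$ normal to $Z_{|I|}$. Combined with the Łojasiewicz lower bound $R \geq c\, d(\cdot,Z_{|I|})^2$ and the upper bound $L\leq C'\, d(\cdot,Z_{|I|})^4$, one obtains $L\leq C'' R^2$, and since $R$ is bounded above on $\overline{U}_0$ this improves to the desired $L\leq CR$.

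The hard part will be the Lie-theoretic containment at non-regular points of $Z_{|I|}$: at such points $\z(\tilde e_1)$ jumps in dimension and the Cartans of $\g$ containing $\tilde e_1$ form positive-dimensional families, so the assertion that a $\tilde \tf$ close to an abelian subspace cannot pick up a real root-space summand needs a careful stratified verification, most naturally through a curve-selection argument along real-analytic arcs in $\{R=0\}$ converging to a wall stratum of $Z_{|I|}$. A subsidiary obstacle is confirming the Morse--Bott normal structure of $R$, that is, verifying that the Hessian of $R$ at $b_0$ is strictly positive on the normal bundle to $Z_{|I|}$, or at least that the Łojasiewicz exponent of $R$ does not exceed $4$, so that the concluding step $L\leq CR^2\leq CR$ survives.
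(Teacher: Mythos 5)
Your reduction to the two semi-algebraic functions $L(\tilde b)=\sum_{j,k\in I}Q([\tilde e_1,\tilde e_j],\tilde e_k)^2$ and $R(\tilde b)=\sum_{j\in I,k\in I^{\co}}Q([\tilde e_1,\tilde e_j],\tilde e_k)^2$ is fine, but there are two genuine gaps, and the second one destroys the strategy. First, the containment $\{R=0\}\cap U_0\subset\{L=0\}$ is not actually proved: an $\ad(\tilde e_1)$-invariant subspace need not split as $A\oplus\bigoplus_\alpha\g_\alpha$ with \emph{full} real root spaces (eigenvalues $\alpha(\tilde e_1)$ can coincide for different roots, so invariant two-planes may sit diagonally across several root spaces), and, more importantly, closeness of $\tilde\tf$ to an abelian subalgebra does not by itself exclude two-planes in $\tilde\tf$ on which $\ad(\tilde e_1)$ rotates with small but nonzero speed, since those speeds degenerate exactly as $\tilde e_1$ approaches a singular element. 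Ruling this out is essentially the whole content of the proposition; you yourself defer it to a "stratified verification", whereas the paper does it by a compactness/contradiction argument with induction on $\vert I\vert$ (its Steps 1--3, producing limits $E_j(\infty)$ lying in iterated centralizers $\cf(p)$ and ending with the impossible $\tf(\infty)\subsetneq\cf(\tf(\infty))$).

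Second, even granting that containment, the upgrade from the classical \L ojasiewicz bound $L^N\le C_0R$ to the linear bound $L\le CR$ fails. Your chain needs $R\ge c\,d(\cdot,Z_{\vert I\vert})^2$, i.e.\ that near $Z_{\vert I\vert}$ the zero set of $R$ is $Z_{\vert I\vert}$ with nondegenerate normal Hessian; but $\{R=0\}$ is in general strictly larger than $Z_{\vert I\vert}$ near $Z_{\vert I\vert}$. Concretely, take $\vert I\vert\ge 3$ and $b_0\in Z_{\vert I\vert}$ with $e_1$ singular, so some root $\alpha$ of a Cartan containing $\tf$ satisfies $\alpha(e_1)=0$ while (after relabeling) $\alpha(e_3)\neq 0$ (e.g.\ $G=\SU(4)$ with $\tf$ a Cartan subalgebra and $e_1$ on a wall). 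Tilting $e_2$ slightly into the real root space of $\alpha$, which lies in $\z(e_1)$, keeps $[\tilde e_1,\tilde e_j]=0$ for all $j\in I$, hence $R=0$ (and $L=0$), while $[\tilde e_2,e_3]\neq 0$, so $\tilde\tf$ is no longer abelian and its distance to $Z_{\vert I\vert}$ is comparable to the tilt. Thus $R$ is not Morse--Bott along $Z_{\vert I\vert}$, the chain $L\le C'd(\cdot,Z_{\vert I\vert})^4\le C''R^2\le CR$ collapses, and no exponent bookkeeping of this kind can recover the exponent-one inequality — this is precisely the paper's remark that "standard estimation methods cannot be applied to the above situation." The paper's proof makes no comparison with the distance to $Z_{\vert I\vert}$ at all; it argues by contradiction on sequences of bases violating \eqref{eqn:Loin} with a divergent constant, and that sequential/centralizer analysis is the part your proposal leaves open.
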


\begin{proof}  First note that if $I^{\co}=\emp$, then $\g$ is abelian and
the above claim is trivially true.  
Hence we may assume that
$I^{\co}\neq \emp$ and that $\g$ is not abelian. 
We prove the result by induction on $\vert I\vert$.
It is clear that 
for $\vert I\vert =1,2$ the above claim is true, since the left hand side of \eqref{eqn:Loin}
is zero by $\Ad$-invariance of $Q$.  
Thus we may assume that $\vert I\vert \geq 3$.

We first show that for an arbitrary choice of 
$Q$-orthonormal basis $b \in Z_{\vert I\vert}$
and an arbitrary choice of  sequence 
$(b_\alpha)_{\alpha\in \NN}:=(e_1(\alpha),...,e_{\dim \bldss\g}(\alpha))_{\alpha\in \NN}$ of
$Q$-orthonormal bases for $\g$ with $\lim_{\alpha \to \infty}b_\alpha=b$, 
 such a constant $C>0$ exists (a priori, $C$ may depend on $b$ and
the sequence $(b_\alpha)$).

For a contradiction,  suppose that no such constant $C>0$ exists. In what follows we will pass 
to a subsequence whenever convenient, without explicitly mentioning it.
For $1 \leq j,k \leq \dim \g$, we set
$$
  [1jk]_\alpha :=Q([e_1(\alpha),e_j(\alpha)], e_k(\alpha))^2 \,.
$$
We set $I':= I\bs \{1\}$ and get
\begn \label{div1}
 \sum_{j,k \in I'} [1jk]_\alpha  
  &>&
	 g(\alpha) \cdot  \sum_{j\in I'\,,k \in I^{\co}} [1jk]_\alpha
\enn
where $g:\RR \to \RR$ with $\lim_{\alpha \to \infty}g(\alpha)=+\infty$.

Suppose that for each $\alpha \in \NN$, $e_1(\alpha)\in \tf(\alpha)$, where $\tf(\alpha)$
denotes a maximal abelian subalgebra of $\g$.
 Since maximal abelian subalgebras of $\g$ are conjugate, there exists 
a maximal torus $\tf(\infty)$ and
a sequence $(g_\alpha)_{\alpha \in
\NN}$ of group elements in $G$ with 
${\rm Ad}(g_\alpha)(\tf(\alpha))=\tf(\infty)$. We obtain a sequence $(b_\alpha)$ of
$Q$-orthonormal bases of $\g $ converging to $b$, with $e_1(\alpha)\in
\tf(\infty)$ for all $\alpha \in \NN$, which we again denote by  
$b_\alpha=(e_1(\alpha),..., e_{\dim \bldss\g}(\alpha))$. 
Since all terms in (\ref{div1}) are ${\rm Ad}(G)$-invariant, 
Equation (\ref{div1})  holds for each $b_\alpha$ in this sequence as well.

\medskip

{\it Step 1:} 
Let $V_1(\alpha):=\langle e_2(\alpha),...,
e_{\vert I\vert}(\alpha)\rangle_{\bldss\RR}$,
and let $V_1(\alpha)^\perp$ denote the $Q$-orthogonal complement of $V_1(\alpha)$ in $\g$.
Let $j\in I'$. Since $e_1(\alpha)\in \tf (\infty)$ we have 
\beg
  [e_1(\alpha),e_j(\alpha)]
	&=&
	 {\rm pr}_{V_1(\alpha)} ([e_1(\alpha),e_j(\alpha)]) +
	 {\rm pr}_{V_1(\alpha)^\perp}([e_1(\alpha),e_j(\alpha)])\\
	&=&
  \sum_{k\in I'\bs \{j\}}s_{jk}(\alpha) \cdot
  e_k(\alpha)+ {\rm pr}_{V_1(\alpha)^\perp}([e_1(\alpha),e_j(\alpha)])
	\,\,\,\in \,\,\,\tf (\infty)^\perp\,.
\en
As mentioned above, we prove our proposition by induction on $\vert I\vert$.
Suppose that, after renumbering,
$I=\{1,...,\vert I\vert\}$, and that (after passing to a subsequence)
$$
  \vert s_{23}(\alpha)\vert \geq \vert s_{jk}(\alpha)\vert
$$
for all $j,k\in I'$ and all $\alpha \in \NN$.
By (\ref{div1}) we know that 
$\vert s_{23}(\alpha)\vert>0$ for all  sufficiently large $\alpha$. Thus, for all $j,k\in I'$
\begn
 \lim_{\alpha \to \infty}\tfrac{\vert s_{jk}(\alpha)
\vert}{\vert s_{23}(\alpha)\vert}\in [0,1]\,. \label{eqn:limalphajk}
\enn
 Note  that
$\lim_{\alpha\to \infty} s_{23}(\alpha)=0$, since
$\tf=\oplus_{i=1}^{\vert I \vert}\langle e_i \rangle$ is
abelian.

By (\ref{div1}) we claim that for
each $j\in I'$ there exists some $k\in I'$ 
such that the limit in \eqref{eqn:limalphajk} is positive.
For $\vert I\vert =3$ this is trivially true.
To see the claim,  suppose that there exists $j_0 \in I'\bs \{2,3\}$ such 
that this limit is zero for all $k\in I'$. 
It follows that there exists a function $\tilde g:\RR \to \RR$ 
with ${\dsp \lim_{\alpha \to \infty} \tilde g(\alpha)=+\infty}$ and
\begn \label{eqn:j0k23}
  \tfrac{1}{ \vert I\vert} \cdot [123]_\alpha=
	 \tfrac{1}{ \vert I\vert}\cdot \vert s_{23}(\alpha) \vert^2 
	\geq 
	\vert s_{j_0k}(\alpha)\vert^2 \cdot \tilde g(\alpha)
	=[1j_0k]_\alpha\cdot \tilde g(\alpha)
\enn
for all $k \in I'$ and all $\alpha \in \NN$. Let $I_{j_0}:=
I \bs \{j_0\}$ and $I'_{j_0}:= I'\bs \{j_0\}$. Then
$$
\sum_{j,k \in I'} [1jk]_\alpha
=\sum_{j,k \in I'_{j_0}} [1jk]_\alpha
+2 \sum_{k \in I'_{j_0}}[1j_0k]_\alpha \,,
$$
and
\beg
  \sum_{j\in I', k\in I^{\co}} [1jk]_\alpha
	&=&
	 \sum_{j \in I_{j_0}', k \in I^{\co}} [1jk]_\alpha +
	   \sum_{k\in I^{\co}} [1j_0k]_\alpha \\
	&=&	
	\sum_{j \in I_{j_0}', k \in (I_{j_0})^\co} [1jk]_\alpha +
	   \sum_{k\in I^{\co}} [1j_0k]_\alpha 	-
		  \sum_{j \in I'_{j_0}} [1jj_0]_\alpha \,.
\en
We deduce from \eqref{div1}
\beg
  \sum_{j,k \in I'_{j_0}} [1jk]_\alpha
+  (2 +g(\alpha)) \cdot \sum_{k \in I'_{j_0}} [1j_0k]_\alpha
&>&
 g(\alpha) \cdot \sum_{j \in I'_{j_0}, k \in (I_{j_0})^\co} [1jk]_\alpha\,.
\en
By \eqref{eqn:j0k23} this yields
\beg
 \tilde g(\alpha) \cdot  \sum_{j,k \in I'_{j_0}} [1jk]_\alpha
+ (2 + g(\alpha)) \cdot [123]_\alpha
&>&
\tilde g(\alpha) \cdot g(\alpha) 
\cdot \sum_{j \in I'_{j_0}, k \in (I_{j_0})^\co} [1jk]_\alpha\,.
\en
Since  $\sum_{j,k \in I'_{j_0}} [1jk]_\alpha \geq [123]_\alpha $, using
$j_0 \neq 2,3$, this yields
\beg 
    \sum_{j,k \in I'_{j_0}} [1jk]_\alpha
&>&
\tfrac{\tilde g(\alpha) \cdot g(\alpha) }{\tilde g(\alpha)+2+g(\alpha)}
\cdot \sum_{j \in I'_{j_0}, k \in (I_{j_0})^\co} [1jk]_\alpha\,.
\en
This shows that for the torus 
${\dsp \tf_{j_0}:=\oplus_{i\in I_{j_0}} \langle e_i \rangle}$ of
dimension $\vert I\vert-1$
we obtain an inequality like \eqref{div1}, because as $\alpha \to \infty$,
$$
\tfrac{\tilde g(\alpha) \cdot g(\alpha) }{\tilde g(\alpha)+2+g(\alpha)} \to +\infty\,.
$$
 By the induction hypothesis,
we obtain a contradiction. This proves for each $j\in I'$ there exists some $k\in I'$ 
such that the limit in \eqref{eqn:limalphajk} is positive.

\medskip

{\it Step 2:} For every $j\in I'$ there exists some $k(j)\in I'\bs \{j\}$
with 
$$
  \vert s_{jk(j)}(\alpha)\vert \geq \vert s_{jk}(\alpha)\vert
$$
for all $k\in I'$ and $\alpha \in \NN$. By Step 1
we have
\begn
 \lim_{\alpha \to \infty} \tfrac{\vert s_{jk(j)}(\alpha)\vert}{\vert s_{23}
(\alpha)\vert} >0\, , \label{eqn:sjk-ratio-lower-bound}
\enn
which yields $\vert s_{jk(j)}(\alpha)\vert >0$ for all sufficiently large 
$\alpha \in \NN$.
Thus we obtain, for all $j \in I'$,
\begn
  [e_1(\alpha),e_j(\alpha)]= s_{jk(j)}(\alpha)\cdot
  (E_j^1(\alpha)+X_j^1(\alpha))\in \tf (\infty)^\perp \,,\label{Xjinf}
\enn
with
$$
  E_j^1(\alpha):=\tfrac{ {\rm pr}_{V_1(\alpha)}([e_1(\alpha),e_j(\alpha)])}{ s_{jk(j)}(\alpha)}
  \quad \textrm{ and }\quad
  X_j^1(\alpha):=\tfrac{ {\rm pr}_{V_1(\alpha)^\perp}([e_1(\alpha),e_j(\alpha)])}{ s_{jk(j)}(\alpha)}\,.
$$
The left hand side of \eqref{div1} 
is bounded from above by  $C' \cdot  (s_{23}(\alpha))^2$ by the very definition of $s_{23}(\alpha)$,
where $C'$ is independent of $\alpha$.
Meanwhile,  $\Vert X_j^1(\alpha)\Vert^2 \cdot   s_{jk(j)}(\alpha)^2 $ contributes to the right hand side.
Since by \eqref{eqn:sjk-ratio-lower-bound},
 the limit behavior of $s_{jk(j)}(\alpha)$ is like that of  $s_{23}(\alpha)$,
in order for the inequality in  \eqref{div1} to hold while $g(\alpha) \to \infty$,  we must have 
\begn
 \lim_{\alpha \to \infty} X_j^1(\alpha)= 0\,. \label{eqn:Xtozero}
\enn
Next, let $\cf(e_1)\leq \g$ denote
the centralizer of $e_1$.  
Since $e_1 \in \tf(\infty)$ and 
$[e_1,\tf']=0$, where ${\dsp \tf':=\oplus_{i\in I'} \langle e_j\rangle \subset \tf}$ 
is abelian, we obtain
\begn
\tf(\infty)
\subset \tf (\infty) \oplus 
\tf'
 \subset \cf(e_1)\,.\label{tstrcf}
\enn
Using that $1 \leq \Vert E_j^1(\alpha)\Vert \leq C''$ by the definition of $s_{jk(j)}(\alpha)$
this implies
\begn
    \underbrace{E_j(\infty)}_{\neq 0}:=
		\lim_{\alpha \to \infty} E_j^1(\alpha)=\lim_{\alpha \to \infty} (E_j^1(\alpha)+X_j^1(\alpha))
		\in
    \cf(e_1)\cap \tf (\infty)^\perp 
		\cap \tf'\,.\label{Ejinf}
\enn
This is seen as follows: By \eqref{Xjinf} we have $E_j^1(\alpha)+X_j^1(\alpha) \in \tf(\infty)^\perp$.
Because we know $\lim_{\alpha\to \infty}X_j^1(\alpha)=0$  this implies $E_j(\infty) \in \tf(\infty)^\perp$.
Moreover,  we have $E_j^1(\alpha)\in V_1(\alpha)$ by  definition. Since $V_1(\alpha)\to \tf'$ as 
$\alpha \to \infty$, we deduce $E_j(\infty)\in \tf'$, so it follows that $E_j(\infty) \in \cf(e_1)$ by  
 \eqref{tstrcf}.

An important consequence is that
\begn
  \tf(\infty) \subsetneq
  \tf (\infty) \oplus \tf'
 \subset \cf(e_1)\,.\label{cent1}
\enn
In fact we have that 
$$
  e_2,...,e_{\vert I\vert}, E_2(\infty),...,E_{\vert I\vert}(\infty) \in \cf(e_1)\,.
$$ 

\medskip

{\it Step 3:} We show by induction on $p$, for $1 \leq p \leq \dim \tf(\infty)$, that
$\tf(\infty)\subsetneq \cf (\tf(\infty))$,
which is clearly impossible. This will prove our contradiction for the specific choice of 
$Q$-orthonormal basis $b \in Z_{\vert I\vert}$
and sequence 
$(b_\alpha)_{\alpha\in \NN}:=(e_1(\alpha),...,e_{\dim \bldss\g}(\alpha))_{\alpha\in \NN}$ of
$Q$-orthonormal bases for $\g$ with $\lim_{\alpha \to \infty}b_\alpha=b$.  
Our induction hypotheses for each $p$, 
denoted by $(A1)_p$,...,$(A4)_p$,  are defined here:

Assumption $(A1)_p$:
There exist a $Q$-orthonormal frame
$(e_1^1:=e_1,...,e_1^p)$ in $\tf(\infty)$, positive numbers
$d_1(\alpha),...,d_p(\alpha)>0$, 
and $r_1^{p+1}(\alpha)\in \tf(\infty)$,  
$r_1^{p+1}(\alpha) \perp e_1^1,...,e_1^p$,
such that
\begin{eqnarray}
 e_1(\alpha)=d_1(\alpha)\cdot e_1^1+\cdots +d_p(\alpha)\cdot e_1^p
  +r_1^{p+1}(\alpha)\,.\label{eqn:rpplus1}
\end{eqnarray}
Before we define $(A2)_p$, we set $\cf(0):=\g$ and 
$$
 \cf(p) := \bigcap_{q=1}^p \cf(e_1^q)\,.
$$
For $j \in I'$ let
$$
 e_j^1=e_j
\quad \textrm{ and }\quad
 e_j^{p}(\alpha)= {\rm pr}_{\cf(p-1)}(e_j(\alpha))\,.
$$
Let
$$
 V_p(\alpha):=\langle e_j^p(\alpha),...,
e_{\vert I\vert}^p(\alpha)\rangle_{\bldss\RR}
$$
and let $V_p(\alpha)^\perp$ denote the $Q$-orthogonal complement of $V_p(\alpha)$
in $\cf(p-1)$. Let
$$
  E_j^p(\alpha):=\tfrac{ {\rm pr}_{V_p(\alpha)}([e_1(\alpha),e_j^p(\alpha)])}{ s_{jk(j)}(\alpha)}
  \quad \textrm{ and }\quad
  X_j^p(\alpha):=\tfrac{ {\rm pr}_{V_p(\alpha)^\perp}([e_1(\alpha),e_j^p(\alpha)])}{ s_{jk(j)}(\alpha)}\,,
$$
recalling, as we showed in Step 2, that $s_{jk(j)}(\alpha)>0$ for all sufficiently large $\alpha \in \NN$. 

Assumption $(A2)_p$: We have
\begn
    \tf (\infty)\subsetneq
    \tf (\infty) \oplus \tf' \subset
    \cf(p)\,.\label{A2}
\enn
Notice that because  $e_1(\alpha)\in \tf(\infty)\subset \cf(p)\subset \cf(p-1)$, we have
$$
{\rm pr}_{\cf(p-1)}([e_1(\alpha),e_j(\alpha)])=
 [e_1(\alpha),e_j^{p}(\alpha)]=s_{jk(j)}\cdot 
( E^{p}_j(\alpha)+ X^{p}_j(\alpha)) \in \cf(p-1) \cap \tf(\infty)^\perp\,.
$$

Assumption $(A3)_p$: For every $j \in I'$ we know 
${\dsp \lim_{\alpha\to \infty}X_j^p(\alpha)=0}$ and
\begn  \label{A3}
 \lim_{\alpha \to \infty} (E^{p}_j(\alpha)+X_j^p(\alpha))=E_j(\infty) \in (\cf(p)\cap \tf'\cap \tf(\infty)^\perp)\bs \{0\}\,.
\enn

Assumption $(A4)_p$: For every $j \in I'$, we have
\begn \label{A4}
 \lim_{\alpha
\to \infty} e_j^{p}(\alpha)=e_j \in \cf(p) \cap \tf'\,.
\enn 

The base case $p=1$ of our induction was essentially established in Step 2: By definition we have ${\dsp \lim_{\alpha\to \infty}e_1(\alpha)=e_1}$,
that is, we can write $e_1(\alpha)=d_1(\alpha)\cdot e_1 + r_1^2(\alpha)$ with $d_1(\alpha)>0$,
$r_1^2(\alpha)\in \tf(\infty)$ and $e_1 \perp r_1^2(\alpha)$. Thus  $(A1)_1$ holds.
Next, $(A2)_1$ is established in \eqref{cent1}.  Equations 
\eqref{eqn:Xtozero} and (\ref{Ejinf}) give $(A3)_1$. And then,
because $e_j^1(\alpha)=e_j(\alpha) \to e_j \in \tf' \subset \cf(e_1)$, $(A4)_1$ is also clear.

We now prove the induction step $p \to p+1\leq \dim \tf(\infty)$:
By the induction hypothesis $(A2)_p$ we have $\tf(\infty) \subset \cf(p)$, 
thus $[\tf_\infty,\cf(p)^\perp]\subset \cf(p)^\perp$.
For $j \in I'$ we define $e_j^{p+1}(\alpha):={\rm pr}_{\bldss{\cf}(p)}(e_j(\alpha))$.
Then by $(A1)_p$ we have
\beg
 {\rm pr}_{\bldss{\cf}(p)}( [e_1(\alpha),e_j(\alpha) ] )
  =
  [e_1(\alpha),e_j^{p+1}(\alpha) ]
	=
 [ r_1^{p+1}(\alpha),e_j^{p+1}(\alpha)  ] \in \cf(p) \cap \tf(\infty)^\perp\,.
\en
Using $(A2)_p$, the equation after \eqref{A2}, and $\cf(p-1) \supset \cf(p)$, we see 
\begn
 [e_1(\alpha),e_j^{p+1}(\alpha)]
	&=&
    s_{jk(j)}(\alpha)\cdot \underbrace{ {\rm pr}_{\bldss{\cf}(p)}
    (E_j^p(\alpha)+X_j^p(\alpha))}_{=:\ti  E^{p+1}_j(\alpha)}
    \in \cf (p)\cap \tf(\infty)^\perp\,.\label{eqn:tEjp1}
\enn 
By  $(A3)_p$ we know 
\begn  \label{eqn:Ejinfty}
\lim_{\alpha\to \infty}X_j^p(\alpha)=0
 \textrm{   and   }
 \lim_{\alpha \to \infty}\ti E^{p+1}_j(\alpha)=E_j(\infty) \in (\cf(p)\cap \tf' \cap \tf(\infty)^\perp)\bs \{0\}\,.
\enn
Since $s_{jk(j)}(\alpha)>0$ we must have $r_1^{p+1}(\alpha)\neq
0$ for all sufficiently large $\alpha \in \NN$.
Set 
$$
  e_1^{p+1}(\alpha):=\tfrac{ r_1^{p+1}(\alpha) }{\Vert r_1^{p+1}(\alpha)\Vert}\,.
$$
 Then, by passing to a further subsequence, we may assume ${\dsp \lim_{\alpha \to \infty} 
e_1^{p+1}(\alpha)=e_1^{p+1} \in \tf(\infty)}$, with $e_1^{p+1}\perp
e_1,...,e_1^p$ and $\Vert e_1^{p+1} \Vert =1$. This shows $(A1)_{p+1}$.

 The induction hypothesis $(A4)_p$ guarantees for all $j \in I'$,
\begn \label{eqn:ejcp}
 \lim_{\alpha
\to \infty} e_j^{p+1}(\alpha)=e_j \in \cf(p) \cap \tf' \,.
\enn 

Thus for all sufficiently large  $\alpha \in \NN$,
$$
 V_{p+1}(\alpha):=\langle e_2^{p+1}(\alpha),...,
e_{\vert I\vert}^{p+1}(\alpha)\rangle_{\bldss\RR}
$$
is a  subspace of $\cf(p)$ of dimension $\vert I'\vert$, converging
to $\tf'$ as $\alpha \to \infty$.
Using \eqref{eqn:rpplus1}, the definition of $\cf(p)$, and $e_1^{p+1}(\alpha)$ we deduce from \eqref{eqn:tEjp1} for all $j \in I'$
\begn
 [e_1^{p+1}(\alpha),e_j^{p+1}(\alpha)]=
    \tfrac{s_{jk(j)}(\alpha)}{\Vert
r_1^{p+1}(\alpha) \Vert}\cdot 
     ( E^{p+1}_j(\alpha)+ X^{p+1}_j(\alpha))
   \in \cf (p) \cap \tf (\infty)^\perp \,,\label{Xjinf2}
\enn 
where 
${\dsp
 E^{p+1}_j(\alpha)
	 :=
	{\rm pr}_{V_{p+1}(\alpha) }(\ti E_j^{p+1}(\alpha))\in
  \cf (p)}$ and 
		${\dsp X^{p+1}_j(\alpha)
		:=
		{\rm pr}_{V_{p+1}(\alpha)^\perp }(\ti
  E_j^{p+1}(\alpha))\in \cf (p)}$. \\  
Notice, by Equation \eqref{Xjinf2}, 
$$
 [e_1(\alpha),e_j^{p+1}(\alpha)]=s_{jk(j)}\cdot 
( E^{p+1}_j(\alpha)+ X^{p+1}_j(\alpha)) \in \cf(p) \cap \tf(\infty)^\perp\,.
$$
 Since by \eqref{eqn:Ejinfty}
${\dsp \lim_{\alpha \to \infty}\ti
E^{p+1}_j(\alpha)=E_j(\infty)\in \tf'}$ and $V_{p+1}(\alpha)\to \tf'$
as $\alpha \to \infty$, we get 
\beg 
\lim_{\alpha \to \infty} X^{p+1}_j(\alpha)=0
\quad \textrm{ and }\quad
 \lim_{\alpha \to \infty} E^{p+1}_j(\alpha)=E_j(\infty)\,.
\en 
This shows the first part of $(A3)_{p+1}$.
Next, we claim
${\dsp \lim_{\alpha \to
\infty}\tfrac{s_{jk(j)}(\alpha)}{\Vert
r_1^{p+1}(\alpha) \Vert}  =0}$.
To see this, notice that $\lim_{\alpha \to \infty}e_1^{p+1}(\alpha)=e_1^{p+1}\in \tf(\infty)$,
that $\lim_{\alpha \to \infty} e_j^{p+1}(\alpha) =e_j \in \tf'$ by  \eqref{eqn:ejcp}
and that $\lim_{\alpha \to \infty} (E_j^{p+1}(\alpha)+X_j^{p+1}(\alpha))=E_j(\infty) \in \tf'$
for all $j \in I'$. Now if  ${\dsp \tfrac{s_{jk(j)}(\alpha)}{\Vert
r_1^{p+1}(\alpha) \Vert} }$  had a uniform positive lower bound, then by passing to a subsequence if necessary,
then we deduce $Q([e_1^{p+1},e_j],E_j(\infty))>0$ by \eqref{Xjinf2}, because $E_j(\infty)\neq 0$. 
But since $e_j,E_j(\infty)\in \tf'$ and since $\tf'$ is abelian, the left-hand side equals  zero. 
This shows the above claim.

 By \eqref{Xjinf2} we now deduce  $[e_1^{p+1},e_j]=0$ for all $j \in I'$, that is,
$$
\tf(\infty) \subset
\tf (\infty) \oplus \tf' \subset \cf(e_1^{p+1})\,.
$$ 
Using this and $E_j(\infty)\in \cf(p)\cap \tf'$ we deduce
\beg
  E_j(\infty) \in \cf (p+1)
   \cap \tf (\infty)^\perp \cap \tf'\,,
\en
where $\cf(p+1)=\cf(p)\cap \cf(e_1^{p+1})$.
By \eqref{eqn:ejcp}, this  yields $(A2)_{p+1}$:
\beg
    \tf (\infty)\subsetneq
    \tf (\infty) \oplus \tf' \subset
    \cf(p+1)\,.
\en
Notice that this implies 
$$
  e_2,...,e_{\vert I\vert}, E_2(\infty),...,E_{\vert I\vert}(\infty) \in \cf(p+1)\,,
$$ 
implying both the second part of $(A3)_{p+1}$ and also $(A4)_{p+1}$.

So far we have proved that for a given $b\in Z_{\vert I\vert}$ and
a given sequence $(b_\alpha)_{\alpha\in \NN}$ converging to $b$, such 
a constant $C$ exists for all sufficiently large $\alpha$. 
We next show that $C$ can be chosen independently of both 
$b\in Z_{\vert I\vert}$ and the sequence $(b_\alpha)_{\alpha\in \NN}$.

To this end, suppose that there
exists a sequence $(b_\alpha)_{\alpha\in \NN}$  in $Z_{\vert I\vert}$
and corresponding sequences $(b_{\alpha,\beta})_{\beta \in \NN}$ with
$\lim_{\beta \to \infty}b_{\alpha,\beta}=b_\alpha$ for all $\alpha \in \NN$,
such that for the corresponding optimal constants,  as $\alpha \to \infty$, 
$C_\alpha \to +\infty$. Here by optimal we mean that the constants
$C_\alpha-1$ would not work. Since the constants $C_\alpha$
are chosen to be optimal, for each $\alpha \in \NN$ there exists $N(\alpha) \in \NN$,
such that  we may assume (after passing to a subsequence) that
\begn \label{div2}
 \sum_{j,k \in I'} [1jk]_{\alpha,\beta}  
  &\geq &
	 (C_\alpha-1) \cdot  \sum_{j\in I'\,,k \in I^{\co}} [1jk]_{\alpha,\beta}
\enn
for all $\beta \geq N(\alpha)$. By again passing to a subsequence, we may assume that this holds for all 
$\alpha,\beta \geq 1$.
Since $Z_{\vert I\vert}$ is compact,
we may assume that $\lim_{\alpha \to \infty}b_\alpha=b$. Moreover, it is clear that
we can construct a sequence $(\tilde b_\alpha)_{\alpha\in \N}$ converging
to $b$ and satisfying \eqref{div2}. Since $C_\alpha \to + \infty$
as $\alpha \to \infty$, we obtain a contradiction. This shows that   $C$ can be chosen independently.

To prove the theorem, suppose now that no such constant $C=C(G,Q)$ 
and no such  open neighborhood $U_{\vert I\vert}$ of $Z_{\vert I\vert}$ exist. 
Then there exists a sequence of bases $(b_\alpha)_{\alpha \in \NN}$ converging to $b \in Z_{\vert I\vert}$
for which \eqref{div2} holds. Contradiction.
\end{proof}


\end{document}